\title{\bfseries Balls in groups: volume, structure and growth}
\date{}
\author{\bfseries Romain Tessera and Matthew Tointon}
\crefname{prop}{Proposition}{Propositions}
\newtheorem{prop}{Proposition}[section]
\newtheorem{theorem}[prop]{Theorem}
\newtheorem{lemma}[prop]{Lemma}
\newtheorem{corollary}[prop]{Corollary}
\newtheorem{statement}[prop]{Statement}
\theoremstyle{definition}
\newtheorem{definition}[prop]{Definition}
\newtheorem*{remark*}{Remark}
\newtheorem*{remarks*}{Remarks}
\newtheorem{remark}[prop]{Remark}
\theoremstyle{remark}
\theoremstyle{plain}
\newcommand{\G}{\mathcal{G}}
\newcommand{\Cay}{\mathrm{Cay}}
\newcommand{\hdim}{\operatorname{hdim}}
\newcommand{\inj}{\operatorname{inj}}
\newcommand{\injz}{\inj^{\mathrm{Z}}}
\newcommand{\diam}{\operatorname{diam}}
\newcommand*{\rad}{\mathop{\textup{rad}}\nolimits}
\newcommand{\R}{\mathbb{R}}
\newcommand{\C}{\mathbb{C}}
\newcommand{\N}{\mathbb{N}}
\newcommand{\Z}{\mathbb{Z}}
\newcommand{\Q}{\mathbb{Q}}
\newcommand{\n}{\mathfrak{n}}
\newcommand{\g}{\mathfrak{g}}
\newcommand{\ph}{\varphi}
\newcommand{\eps}{\varepsilon}
\newcommand{\ord}{\text{\textup{ord}}}
\newcommand{\Span}{\text{\textup{Span}}}
\newcommand{\Aut}{\mathrm{Aut}\,}
\newcommand{\GL}{\mathrm{GL}}
\newcommand*{\vol}{\mathop{\textup{vol}}\nolimits}
\newcommand{\normal}{\trianglelefteqslant}
\newcommand{\la}{\langle}
\newcommand{\ra}{\rangle}
\renewcommand{\ge}{\geqslant}
\renewcommand{\le}{\leqslant}
\renewcommand{\geq}{\geqslant}
\renewcommand{\leq}{\leqslant}
\renewcommand{\hat}{\widehat}
\renewcommand{\tilde}{\widetilde}
\numberwithin{equation}{section}
\begin{document}
\maketitle

\begin{center}
\footnotesize{Institut de Math\'ematiques de Jussieu-Paris Rive Gauche\\ Email: \href{mailto:romain.tessera@imj-prg.fr}{romain.tessera@imj-prg.fr}}

\medskip

\footnotesize{School of Mathematics, University of Bristol\\ Email: \href{mailto:m.tointon@bristol.ac.uk}{m.tointon@bristol.ac.uk}}

\bigskip
\bigskip

\textit{In memory of Avinoam Mann}
\end{center}

\bigskip

\begin{abstract}
We give sharp bounds in Breuillard, Green and Tao's finitary version of Gromov's theorem on groups with polynomial growth. Precisely, we show that for every non-negative integer $d$ there exists $\eps=\eps(d)>0$ such that if $G$ is a group with finite symmetric generating set $S$ containing the identity and $|S^n|\le\eps n^{d+1}|S|$ for some positive integer $n$ then there exist normal subgroups $H,\Gamma\normal G$ with $H\le\Gamma$ such that $H\subseteq S^n$, such that $\Gamma/H$ is $d$-nilpotent (i.e. has a central series of length $d$ with cyclic factors), and such that $[G:\Gamma]\le g(d)$, where $g(d)$ denotes the maximum order of a finite subgroup of $\GL_d(\Z)$. The bounds on both the nilpotence and index are sharp; the previous best bounds were $O(d)$ on the nilpotence, and an ineffective function of $d$ on the index. In fact, we obtain this as a small part of a much more detailed fine-scale description of the structure of $G$. These results have a wide range of applications in various aspects of the theory of vertex-transitive graphs: percolation theory, random walks, structure of finite groups, scaling limits of finite vertex-transitive graphs\ldots. We obtain some of these applications in the present paper, and treat others in companion papers. Some are due to or joint with other authors.
\end{abstract}

\tableofcontents

\section{Introduction}
The broad aim of this paper is to study the algabraic structure of balls in groups in terms of their volume. In particular, we provide completely sharp bounds in the conclusion of Breuillard, Green and Tao's celebrated finitary refinement of Gromov's polynomial-growth theorem (\cref{thm:rel.hom.dim,thm:abs.hom.dim}), and provide a significantly more refined description of the resulting algebraic structure (\cref{thm:detailed.fine.scale}).

We believe our results to be interesting in their own right, but we are also strongly motivated by applications, of which there are several. In a companion paper \cite{ttresist}, we present a number of applications to random walks on vertex-transitive graphs; these results have in turn already been applied by a number of other authors. In other forthcoming work \cite{ttGH,ttLie} we will present applications to graph scaling limits. We briefly describe these works, and give full details of several other corollaries, in Section \ref{sec:apps} of the present paper.

The results of the present paper are mostly phrased as theorems about groups of polynomial growth, which is a rather restrictive class. However, in reality, much of the utility of our theorems lies in the fact that when phrased in the contrapositive they can be viewed as statements about groups that do \emph{not} exhibit polynomial growth at a given scale. It is these extremely general statements that often turn out to be useful in the probabilistic applications, for example.

The proofs of these results require both a significant amount of deep theory and a number of substantial technical innovations. They also feature a number of ingredients that are of independent interest. For example, in \cref{sec:sc} we study a quantitative, finitary notion of \emph{large-scale simple connectedness} of graphs, and obtain quantitative, finitary discrete analogues of the path-lifting property and fact that a topological covering map with simply connected range is always a homeomorphism.

\subsection{Existing results}
We start by recalling Gromov's theorem, a foundational result in geometric group theory that has itself had numerous applications, for example in probability \cite{var,DGRSY} and differential geometry \cite[\S $F_+$]{gromov.metric.structures}. The theorem was originally proved by Gromov \cite{gromov} in the 1980s, and has since been given shorter proofs by both Kleiner \cite{kleiner} and Ozawa \cite{ozawa}.
\begin{theorem}[Gromov \cite{gromov}]\label{thm:gromov}
Let $d$ be a non-negative integer, and let $G$ be a group with finite symmetric generating set $S$ containing the identity. Suppose
\begin{equation}\label{eq:o(n^(d+1)}
|S^n|=o(n^{d+1})
\end{equation}
as $n\to\infty$. Then $G$ is virtually nilpotent with growth degree at most $d$.
\end{theorem}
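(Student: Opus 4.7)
The plan is to follow Gromov's original geometric strategy, via asymptotic cones and Montgomery--Zippin theory, though one could equally well use Kleiner's approach through finite-dimensionality of the space of polynomial-growth harmonic functions. First I would note that the hypothesis $|S^n|=o(n^{d+1})$ easily yields an infinite sequence of scales $n_k\to\infty$ at which a doubling inequality $|S^{2n_k}|\le K|S^{n_k}|$ holds for some $K=K(d)$, by a pigeonhole / Bishop--Gromov-type argument. This doubling is what powers the subsequent compactness argument.

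Next I would rescale and consider the sequence of pointed metric spaces $(G,\tfrac{1}{n_k}d_S,e)$. By Gromov's compactness theorem for spaces of uniformly bounded doubling, a subsequence converges in the pointed Gromov--Hausdorff topology to a pointed, complete, locally compact, finite-Hausdorff-dimensional, homogeneous geodesic metric space $(X,d_\infty,o)$. The homogeneity comes from the fact that $G$ acts transitively on itself by isometries before passing to the limit. I would then invoke Montgomery--Zippin--Yamabe to deduce that $\Isom(X)$ is a Lie group with finitely many connected components, and hence that the identity component $\Isom(X)^0$ is a connected Lie group acting transitively on $X$.

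The next step is to construct a homomorphism $\rho\colon G_0\to\Isom(X)^0$ from a finite-index subgroup $G_0\le G$, by approximating the $G$-action on the rescaled Cayley graphs by isometries of the limit and passing to a convergent subsequence. The crucial technical point is to arrange that $\rho$ has infinite image, or equivalently that the kernel is a proper subgroup of strictly smaller growth degree. One shows that a group of polynomial growth cannot act with a fixed point on its asymptotic cone unless it is finite, by a volume-comparison argument. Then, using Jordan's theorem on finite subgroups of $\GL_n$ together with Tits' alternative (or a direct no-free-subgroup argument, since polynomial growth forbids free subgroups of rank two), the image $\rho(G_0)$ is virtually solvable.

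I would conclude by induction on $d$: the kernel of $\rho$ has strictly smaller growth degree, so by inductive hypothesis is virtually nilpotent, and combined with a virtually solvable quotient this yields that $G$ itself is virtually solvable. Wolf's theorem then upgrades virtually solvable plus polynomial growth to virtually nilpotent, and Bass's formula relating the growth degree of a nilpotent group to the dimensions of its lower-central-series factors gives the degree bound $d$. The main obstacle is the middle step: establishing that the limit space $X$ has enough structure (finite dimension, local compactness, homogeneity) to feed into Montgomery--Zippin, and then promoting the approximate $G$-action on the rescaled graphs to an honest isometric action of a finite-index subgroup on $X$ whose image is genuinely infinite. Every other step is either classical (Wolf, Tits, Bass) or relatively soft (doubling, compactness, induction).
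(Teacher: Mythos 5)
The paper does not prove Theorem~\ref{thm:gromov}: it is quoted as a classical background result, with the introduction pointing to Gromov's original paper and to the shorter proofs of Kleiner and Ozawa, and the degree bound $\deg(G)\le d$ is attributed to the Bass--Guivarc'h inequality \eqref{eq:BG}, also cited rather than proved. So there is no in-paper argument to compare against; your outline is of Gromov's own strategy, which the paper takes as given.

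As a sketch of that strategy your steps are the right ones, and you correctly flag the middle step as the obstacle. I would only warn that the phrase ``passing to a convergent subsequence'' understates the difficulty: under the $1/n_k$ rescaling, left translation by any fixed element of $G$ converges to the identity isometry of $X$, so the naive subsequential limit of the action degenerates and does not directly produce a homomorphism with infinite image. What the standard treatments actually extract from the Lie group $\Isom(X)^0$ is a surjection of a finite-index subgroup of $G$ onto $\Z$ (with Tits' alternative and polynomial growth ruling out the semisimple branch), together with the separate facts that the kernel of that surjection is finitely generated and has strictly smaller growth degree; these two facts are where the induction lives and where the hard analysis happens. Your invocation of Jordan's theorem is also slightly misplaced---in Gromov's argument it controls the case where the image is finite and bounds indices, rather than contributing to virtual solvability of an infinite image. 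The overall architecture is correct, but these are exactly the points at which the sketch must become sharp before it is a proof.
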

Given a finitely generated group $G$, and a finite symmetric generating set $S$ containing the identity, we define the \emph{growth degree} $\deg(G)$ of $G$ by
\[
\deg(G)=\inf\{d\in\R:\text{there exists $C>0$ such that $|S^n|\le Cn^d$ for all $n\in\N$}\}.
\]
It is a widely known and straightforward exercise to check that this quantity is independent of the choice of generating set. Note that $\deg(G)=\infty$ is possible, and indeed by Gromov's theorem it is in some sense the norm. If $\deg(G)<\infty$ then $G$ is said to have \emph{polynomial growth}.

The conclusion that $\deg(G)\le d$ is not normally included in the statement of Gromov's theorem, not least because it follows trivially from the well-known fact that if $G$ is virtually nilpotent then $\deg(G)\in\Z$ and for every finite symmetric generating set $S$ of $G$ containing the identity there exist $c,C>0$ such that
\begin{equation}\label{eq:BG}
cn^{\deg(G)}\le|S^n|\le Cn^{\deg(G)}
\end{equation}
for every $n\in\N$ \cite{bass,guiv}. We mention it here for comparison with the results below, which have weaker variants of the polynomial-growth hypothesis under which the implications for $\deg(G)$ are rather subtler.

There is no hope of bounding the index of a nilpotent subgroup of $G$ in \cref{thm:gromov} as stated. For example, if $G$ is a direct product of a finite simple group and $\Z^d$ then $|S^n|=o(n^{d+1})$ as $n\to\infty$ for any finite symmetric generating set $S$, but there is no uniform bound on the index of a nilpotent subgroup. In a sense, however, this example captures the only obstruction to having such a bound, thanks to the following theorem of our late collaborator Avinoam Mann, to whose memory we dedicate this paper.
\begin{theorem}[Mann {\cite[Theorem 9.8]{mann.book}}]\label{thm:mann}
Let $d\in\N$, and suppose $G$ is a virtually nilpotent group with growth degree at most $d$. Then there exist normal subgroups $H,\Gamma\normal G$ with $H\le\Gamma$ such that $H$ is finite, $\Gamma/H$ is nilpotent, and $[G:\Gamma]\le g(d)$, where $g(d)$ is the maximum order of a finite subgroup of $\GL_d(\Z)$.
\end{theorem}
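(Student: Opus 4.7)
The plan is to produce $H$ and $\Gamma$ from the maximal finite normal subgroup of $G$ and the kernel of the natural integral representation on the abelianisation of the Fitting subgroup, then use a Mal'cev/unipotency argument to show $\Gamma/H$ is nilpotent. Set $H \normal G$ to be the unique maximal finite normal subgroup of $G$; this exists because, fixing any torsion-free nilpotent finite-index normal subgroup $N_0 \normal G$, every finite normal subgroup $K \normal G$ injects into $G/N_0$ (as $K \cap N_0$ is a finite subgroup of a torsion-free group, hence trivial), giving a uniform bound on $|K|$; and products of finite normal subgroups are finite and normal. The quotient $G/H$ then has no nontrivial finite normal subgroup, so its Fitting subgroup $N := F(G/H)$ is torsion-free nilpotent of finite index in $G/H$ (any torsion in $N$ would be a finite characteristic, hence $G/H$-normal, subgroup). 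The Bass--Guivarc'h formula $\deg G = \sum_i i\cdot \rk(\gamma_i N/\gamma_{i+1}N)$ gives $r := \rk N^{\mathrm{ab}} \le \deg G \le d$.

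Conjugation on $N^{\mathrm{ab}}$ now provides a homomorphism $\psi\colon G/H \to \Aut(N^{\mathrm{ab}}) \cong \GL_r(\Z) \hookrightarrow \GL_d(\Z)$. Since $N$ acts on $N^{\mathrm{ab}}$ by inner automorphisms, $\psi$ factors through the finite group $(G/H)/N$, so its image $Q$ is a finite subgroup of $\GL_d(\Z)$ with $|Q| \le g(d)$ by definition of $g$. Let $\Gamma \normal G$ be the preimage of $\ker \psi$: then $H \le \Gamma$, $H$ is finite, and $[G:\Gamma] = |Q| \le g(d)$, so it remains only to show that $\Gamma/H$ is nilpotent.

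Iterated Hall--Witt commutator identities upgrade the kernel condition $[\Gamma/H, N] \subseteq \gamma_2 N$ to the inductive statement $[\Gamma/H, \gamma_i N] \subseteq \gamma_{i+1}N$ for all $i \ge 1$, so $\Gamma/H$ acts trivially on the associated graded Lie ring $\mathrm{gr}(N)$. Passing to the Mal'cev completion $\mathcal{N}$ of $N$, the automorphisms of $\mathcal{N}$ fixing $\mathcal{N}^{\mathrm{ab}}$ pointwise form the unipotent radical of the algebraic group $\Aut(\mathcal{N})$, which is torsion-free modulo $\mathrm{Inn}(\mathcal{N})$; combined with an analysis of the normaliser of $N$ inside $\mathcal{N}$, this shows every element of $\Gamma/H$ acts on $N$ by an inner automorphism, so $\Gamma/H = N \cdot C$ where $C := C_{\Gamma/H}(N)$. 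Now $Z(N) = N \cap C$ is a central subgroup of $C$ of finite index, so Schur's theorem gives $[C,C]$ finite; since $C$ is characteristic in $\Gamma/H$ (as the centraliser of the Fitting subgroup) and $\Gamma/H$ is characteristic in $G/H$ (as the kernel of the canonically defined $\psi$), $[C,C]$ is a finite normal subgroup of $G/H$, hence trivial. Therefore $C$ is abelian and commutes with $N$, giving $\gamma_k(\Gamma/H) = \gamma_k N$ for all $k$, so $\Gamma/H$ is nilpotent. The main technical obstacle is the Mal'cev step --- the passage from acting trivially on $\mathrm{gr}(N)$ to acting by inner automorphisms --- which requires the fact that the subgroup of $\Aut(N)$ fixing $N^{\mathrm{ab}}$ pointwise is torsion-free modulo $\mathrm{Inn}(N)$, a structural result about lattices in simply connected nilpotent Lie groups.
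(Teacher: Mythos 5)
Your overall strategy --- take a normal torsion-free nilpotent finite-index subgroup $N$, use conjugation on its graded quotients to get an integral representation of bounded rank, and let $\Gamma$ be the kernel --- is the same as the paper's (reproduced in \cref{prop:mann}, following Mann). But there are two real issues in the details, and the route you take to nilpotence of $\Gamma/H$ is both gappier and substantially more complicated than necessary.

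First, $N^{\mathrm{ab}}$ can have torsion even when $N$ is torsion-free nilpotent, so $\Aut(N^{\mathrm{ab}})\cong\GL_r(\Z)$ is false as stated; you need to act on $N^{\mathrm{ab}}/\text{tors}\cong\Z^r$. The paper avoids the issue systematically by working with the generalised commutator subgroups $\overline\gamma_i(N)$ of \eqref{eq:gen.comms}, whose successive quotients are torsion-free by construction. Second, and more seriously, the Mal'cev/unipotency step does not close as described. You have $C=C_{\Gamma/H}(N)$, and a standard argument (using that $F(G/H)=N$ and $G/H$ has no finite normal subgroups) shows $C_{G/H}(N)=Z(N)\le N$; so $C\le N$ and the assertion ``$\Gamma/H=N\cdot C$'' literally says $\Gamma/H=N$, which makes the subsequent Schur argument vacuous. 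Your justification via torsion-freeness of the unipotent radical of $\Aut(\mathcal N)$ modulo $\mathrm{Inn}(\mathcal N)$ does show that each $g\in\Gamma/H$ acts on $N$ as $\mathrm{conj}_y$ for some $y\in\mathcal N$, but it does not force $y\in N\cdot Z(\mathcal N)$: the group $(\mathrm{Inn}(\mathcal N)\cap\Aut(N))/\mathrm{Inn}(N)\cong N_{\mathcal N}(N)/(N\cdot Z(\mathcal N))$ lives in the compact nilmanifold quotient and can certainly carry torsion, so ``finite order $\Rightarrow$ trivial'' fails at exactly the point you need it. There is also a circularity in invoking ``$C$ is characteristic as the centraliser of the Fitting subgroup of $\Gamma/H$'': you cannot identify $F(\Gamma/H)$ with $N$ before you know $\Gamma/H$ is nilpotent. (That particular defect is fixable by instead taking the normal closure of $[C,C]$ in $G/H$, which is still finite.)

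The clean route, and the one the paper's \cref{prop:mann} takes, sidesteps Mal'cev completions entirely: once you know $\Gamma/H$ centralises $N/\overline\gamma_2(N)$, multilinearity of commutators modulo $\overline\gamma_{i+1}(N)$ together with \eqref{eq:gen.comm.filt} and the torsion-freeness of each $\overline\gamma_i(N)/\overline\gamma_{i+1}(N)$ shows $\Gamma/H$ centralises every $\overline\gamma_i(N)/\overline\gamma_{i+1}(N)$. Then $\overline\gamma_i(N)\le Z_{c-i+1}(\Gamma/H)$ by downward induction on $i$, so $N\le Z_c(\Gamma/H)$ and $\Gamma/H$ is centre-by-finite up to length $c$; Baer's theorem then gives $\gamma_{c+1}(\Gamma/H)$ finite, and since it is characteristic in the normal subgroup $\Gamma/H$, it is a finite normal subgroup of $G/H$, hence trivial. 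This is shorter, needs no structure theory of $\Aut(\mathcal N)$, and makes the whole ``inner automorphism'' detour unnecessary. The paper in fact defines its $\Gamma$ (there called $K$) directly as the joint centraliser $\bigcap_i C_G(\overline\gamma_i(N)/\overline\gamma_{i+1}(N))$, which by the observation above coincides with your kernel once torsion is handled correctly, and embeds $G/\Gamma$ into $\GL_h(\Z)$ with $h$ the Hirsch length of $N$, which is at most $d$ by Bass--Guivarc'h.
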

It is known that $g(d)$ is finite for all $d\in\N$, and in fact an upper bound was given by Minkowski \cite{mink} back in 1887. It is known that $g(d)\le(2d)!$ for all $d$ \cite[p.~175, eq.~(16)]{newman}, and that $g(d)\le2^dd!$ with equality achieved only by the orthogonal group $\mathrm{O}_d(\Z)$ for sufficiently large $d$ \cite{friedland}; see Section \ref{sec:g(d)} for further information on $g(d)$. It is also well known that a finite-by-nilpotent group is virtually nilpotent, so \cref{thm:mann} refines the conclusion of Gromov's theorem.
\begin{remark}\label{rem:g(d).optimal}
The upper bound $g(d)$ is the best possible bound on $[G:\Gamma]$ in terms of $d$ in \cref{thm:mann}, as can be seen by considering the example of a semidirect product of $\Z^d$ with a subgroup of $\GL_d(\Z)$ of order $g(d)$; see \cref{prop:g(d)}.
\end{remark}

We write $g(d)$ for the maximum order of a finite subgroup of $\GL_d(\Z)$ throughout the rest of this paper.

\medskip\noindent In his original paper, Gromov used a compactness argument to give a finitary version of his theorem \cite[p.~71]{gromov}. Precisely, he showed that for all $C,d\in\N$ there exist $n_0,k,c\in\N$ such that if $|S^n|\le Cn^d$ for all $n=1,2,\ldots,n_0$ then $G$ has a nilpotent subgroup of index at most $k$ and class at most $c$. The quantities $n_0,k,c$ are all ineffective.

Shalom and Tao \cite{st} adapted Kleiner's proof of Gromov's theorem to obtain a stronger version of this result, with effective bounds in the conclusion and with a weaker hypothesis under which a polynomial upper bound on $|S^n|$ need only hold for a single value of $n$. In recent joint work with Lyons and Mann, we refined the conclusion of the Shalom--Tao theorem to obtain the following optimal bound on the growth degree.
\begin{theorem}[{\cite[Corollary 1.3]{lmtt}}]\label{thm:st}
For every non-negative integer $d$ there exists $\eps=\eps(d)>0$ such that if $G$ is a group with finite symmetric generating set $S$ containing the identity and
\begin{equation}\label{eq:abs.poly.growth}
|S^n|\le\eps n^{d+1}
\end{equation}
for some $n\in\N$ then $G$ has a nilpotent subgroup of index $O_{n,d}(1)$ and growth degree at most $d$.
\end{theorem}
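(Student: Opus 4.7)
My strategy is to combine the Shalom--Tao finitary Gromov theorem \cite{st} with a uniform Bass--Guivarch lower bound in order to pin down the growth degree at exactly $\le d$. I would begin by applying Shalom--Tao: for $\eps=\eps(d)$ chosen small enough, the hypothesis \eqref{eq:abs.poly.growth} produces a nilpotent normal subgroup $N\trianglelefteqslant G$ with $[G:N]=O_{n,d}(1)$ and with nilpotency class bounded by some $c(d)$ depending only on $d$. Combined with Bass--Guivarch \eqref{eq:BG}, $\deg(G)=\deg(N)=:D$ is a non-negative integer, so the problem reduces to showing $D\le d$.

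To show $D\le d$ I argue by contradiction and assume $D\ge d+1$. The key technical input is a \emph{uniform} effective Bass--Guivarch lower bound of the form
\[
|S^m|\ge c_0(d)\,m^D\qquad\text{for all }m\ge m_0(d),
\]
with $c_0(d),m_0(d)>0$ depending only on $d$ and uniform across every pair $(G,S)$ produced by Shalom--Tao. One route is to embed $N$ as a lattice in its Malcev completion, a simply connected nilpotent Lie group of dimension $D$ and class at most $c(d)$: a power $S^r$ with $r$ controlled by $[G:N]$ contains a generating set of $N$ of bounded size, and comparison of ball volumes between $N$ and its Lie-group envelope yields constants depending only on $d$ rather than on the particular generating set.

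With such a bound in hand, if $n\ge m_0(d)$ then $|S^n|\ge c_0(d)\,n^D\ge c_0(d)\,n^{d+1}$, which contradicts \eqref{eq:abs.poly.growth} as soon as $\eps<c_0(d)$. If instead $n<m_0(d)$, the hypothesis forces $|S^n|\le\eps\,m_0(d)^{d+1}=O_d(1)$, so $G$ is itself finite of size $O_d(1)$ and the trivial subgroup satisfies the desired conclusion. Either way $D\le d$, and the nilpotent subgroup of index $O_{n,d}(1)$ already produced by Shalom--Tao then has growth degree at most $d$.

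The principal obstacle is the uniform Bass--Guivarch lower bound: while Shalom--Tao controls the algebraic invariants of $N$ (class, rank), the standard constants in \eqref{eq:BG} depend a priori on the choice of generators, and one must carefully exploit the fact that $N$ is generated by a bounded power of $S$ in order to extract constants that depend on $d$ alone.
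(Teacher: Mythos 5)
The result you are proving is cited in the paper as \cite[Corollary 1.3]{lmtt} rather than proved there, so there is no in-house argument to compare against; I will therefore just assess your proposal on its own terms. Your overall plan---finitary Gromov theorem plus a uniform Bass--Guivarc'h lower bound, then a contradiction---is the right shape, but there is a genuine gap in the step you yourself flag as the main obstacle, and the way you propose to close it does not work.

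You claim a bound $|S^m|\ge c_0(d)\,m^D$ with $c_0(d),m_0(d)$ depending only on $d$, and you propose to obtain it by noting that ``a power $S^r$ with $r$ controlled by $[G:N]$ contains a generating set of $N$,'' then comparing with the Malcev completion. The problem is that in the Shalom--Tao theorem the index $[G:N]=O_{n,d}(1)$ is \emph{not} bounded independently of $n$; as the paper observes (via the finite simple group example), this $n$-dependence is unavoidable, and $[G:N]$ can be polynomial in $n$. By \cref{lem:fin.ind.gen} the power of $S$ you need to generate $N$ is $r=2[G:N]-1$, which is therefore itself $n$-dependent. Even granting the clean uniform lower bound $|T^m|\gg_D m^D$ for generating sets $T$ of a torsion-free nilpotent group, transferring it to $S$ gives only $|S^n|\ge c(D)(n/r)^D$, and with $r$ growing with $n$ this does not contradict $|S^n|\le\eps n^{d+1}$ for any fixed small $\eps$: it just yields $\eps\ge c(D)/r^D$, which is vacuous as $r\to\infty$. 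Your phrase ``$N$ is generated by a bounded power of $S$'' is thus simply false in the Shalom--Tao regime, and the uniformity you need does not come out of this route.

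To repair the argument you must replace Shalom--Tao with a Breuillard--Green--Tao-type theorem (\cref{thm:bgt.gromov}, or \cref{thm:rel.hom.dim} once available). These give normal $H\le\Gamma\normal G$ with $H\subseteq S^n$, $\Gamma/H$ nilpotent with \emph{rank and class bounded only in terms of $d$}, and---crucially---$[G:\Gamma]$ bounded \emph{independently of $n$}. Then $\deg(G)=\deg(\Gamma/H)=D$ with $D\ll d^2$ a priori bounded in terms of $d$, a generating set $T\subseteq S^{O_d(1)}$ for $\Gamma$ exists by \cref{lem:fin.ind.gen}, and projecting modulo $H$ and applying the uniform nilpotent lower bound (\cite[Proposition 3.1]{lmtt}, or the paper's \cref{prop:growth.lower.bound.dim}) yields $|S^{O_d(m)}|\ge|T^m|\ge|(TH/H)^m|\gg_D m^D$. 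If $D\ge d+1$ this gives $|S^n|\gg_d n^{d+1}$, contradicting the hypothesis once $\eps$ is chosen small. The finite subgroup $H$, not the index, absorbs the $n$-dependence---and since $H$ is finite it does not change the growth degree.

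One further small issue: from $|S^n|\le\eps m_0(d)^{d+1}$ you cannot conclude that $G$ is finite of size $O_d(1)$; a small ball says nothing about the size of $G$. The correct observation is that $1\le|S^n|\le\eps n^{d+1}$ forces $n\ge\eps^{-1/(d+1)}$, so for $\eps$ small the case $n<m_0(d)$ simply does not arise.
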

See \cite[Theorem 1.6]{lmtt} for an explicit value of $\eps(d)$ with which \cref{thm:st} holds.

We think of a condition such as \eqref{eq:abs.poly.growth}, in which we have a bound on the volume of only a single ball in the group, as \emph{polynomial-volume} condition, in contrast to the \emph{polynomial-growth} condition \eqref{eq:o(n^(d+1)}. In particular, whilst Gromov's theorem is often referred to as his \emph{polynomial-growth theorem}, we will will refer to theorems such as \cref{thm:st} as \emph{polynomial-volume theorems}. One notable consequence of \cref{thm:st} is that a polynomial-volume hypothesis implies polynomial growth.

Just as the index in \cref{thm:gromov} cannot be bounded, so the dependence on $n$ of the index of the nilpotent subgroup in \cref{thm:st} is unavoidable. Indeed, given $n\in\N$, if $G$ is a non-abelian finite simple group of order at most $\eps_dn^{d+1}$ and $S$ is a symmetric generating set for $G$ then $S$ certainly satisfies \eqref{eq:abs.poly.growth}, but $G$ does not have a nilpotent subgroup with index bounded independently of $n$. However, again this is essentially the only obstruction to bounding the index independently of $n$, and Breuillard, Green and Tao \cite{bgt} have given a qualitative refinement of \cref{thm:st} similar to Mann's refinement of Gromov's theorem, as follows.
\begin{theorem}[Breuillard--Green--Tao {\cite[Corollary 11.5]{bgt}}\footnote{The fact that $\Gamma\normal G$ is not mentioned in the statement of \cite[Corollary 11.5]{bgt}, but it is mentioned explicitly in the proof (specifically in the proof of \cite[Corollary 11.2]{bgt}).}]\label{thm:bgt.gromov}
For every $d\in\N$ there exist $n_0^*=n_0^*(d)\in\N$ and $k^*=k^*(d)\in\N$ such that if $G$ is a group with finite symmetric generating set $S$ containing the identity and
\begin{equation}\label{eq:rel.poly.growth}
|S^n|\le n^d|S|
\end{equation}
for some $n\ge n_0^*$ then there exist normal subgroups $H,\Gamma\normal G$ with $H\le\Gamma$ such that $H\subseteq S^n$, such that $\Gamma/H$ is $O(d)$-nilpotent, and such that $[G:\Gamma]\le k^*$. 
\end{theorem}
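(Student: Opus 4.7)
The plan is to invoke the Breuillard--Green--Tao structure theorem for approximate groups and then tidy up normality to fit the statement.

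\emph{Pigeonhole to an approximate scale.} Set $K=\lfloor\log_2 n\rfloor$ and consider the ratios $|S^{2^{k+1}}|/|S^{2^k}|$ for $0\le k<K$. Their product telescopes to $|S^{2^K}|/|S|\le n^d$, so at least one ratio is at most $n^{d/K}=2^{O(d)}$. This produces a scale $m=2^k$ with $m\le n/2$ and $|S^{2m}|\le 2^{O(d)}|S^m|$. A non-commutative Pl\"unnecke--Ruzsa argument then promotes a small iterated power of $S^m$ to a finite $K$-approximate group $A\subseteq S^{cm}$, with $K=2^{O(d)}$ and $c$ an absolute constant.

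\emph{Apply the BGT structure theorem.} Since $A\supseteq S$ we have $\la A\ra=G$. The main theorem of \cite{bgt} then says that $A$ is controlled by a coset nilprogression $HP\subseteq A^{O(1)}$, where $H\normal G$ is a finite subgroup normalised by a nilprogression $P$ whose rank and step are bounded by a function of $K$, hence of $d$. In particular $H\subseteq S^{O(m)}\subseteq S^n$, provided we take $n_0^*(d)$ large enough that the implied constants fit inside the single factor of $n$. The subgroup $\Gamma_0:=\la HP\ra\le G$ has index at most some $k_0(d)$ in $G$, and $\Gamma_0/H$ inherits from $P$ a central series of length $O(d)$ with cyclic factors.

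\emph{Tidy up normality.} Replace $\Gamma_0$ by its normal core $\Gamma:=\bigcap_{g\in G}g\Gamma_0g^{-1}$ in $G$; this is normal in $G$ with index at most $k_0(d)!=:k^*(d)$, and it still contains $H$ because $H\normal G$. The quotient $\Gamma/H$ embeds in $\Gamma_0/H$ and so remains $O(d)$-nilpotent. This yields the desired triple $H,\Gamma\normal G$.

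The main obstacle I anticipate is in the second step: carefully tracking the chain of quantitative dependencies coming out of \cite{bgt} -- the approximate-group constant, the rank and step of the nilprogression, the index of the coset nilprogression inside $A$, and the exponent with which $H$ sits in a power of $S$ -- so that at the end $H$ lands inside $S^n$ itself and not merely $S^{C(d)n}$. Everything else is comparatively routine, but this containment must line up exactly with the single factor of $n$ we start from.
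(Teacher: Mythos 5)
This theorem is not proved in the paper at all: it is quoted as \cite[Corollary~11.5]{bgt}, with the paper supplying only a footnote clarifying that normality of $\Gamma$ appears in the proof of \cite[Corollary~11.2]{bgt} rather than in the quoted statement. So there is no internal proof to compare against, and the right question is simply whether your derivation from BGT's approximate-group theorem is correct.

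Your overall plan (pigeonhole to a doubling scale, pass to an approximate group, apply the BGT structure theorem, then fix normality) is indeed the standard route, and some of the worries you flag are the right ones. But there is one genuine flaw as written, in exactly the place you identify as delicate. The dyadic pigeonhole over $0\le k<\lfloor\log_2 n\rfloor$ only guarantees a good scale $m=2^k$ somewhere in $[1,n/2]$; it could easily be $m\approx n/2$. The structure theorem then places $H$ in $S^{C^*(K)m}$ with $C^*(K)$ \emph{ineffective}, so you need $C^*(K)m\le n$, and ``take $n_0^*$ large'' does nothing to help when $m\asymp n$: the constraint is between $m$ and $n$, not on $n$ alone. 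The fix is to pigeonhole only over the first $\lfloor K/2\rfloor$ dyadic scales (their product of ratios is still $\le n^d$), giving a good scale $m\le\sqrt n$ at the cost of a harmless doubling of the exponent, after which $C^*(K)\sqrt n\le n$ for $n\ge n_0^*(d)$. This is precisely what the paper's own pigeonhole lemma (\cref{lem:poly.pigeon}) is engineered to do, with the parameter $\alpha$ controlling how far below $n$ the good scale lands.

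Two smaller points. First, bounded doubling $|S^{2m}|\le K|S^m|$ at a single scale does not by itself make a power of $S^m$ into an approximate group \emph{containing} $S$; the cleanest route, and the one the paper takes, is to pigeonhole for \emph{tripling} ($q=3$ in \cref{lem:poly.pigeon}) and invoke \cref{lem:tripling->AG}, which makes $S^{2m}$ a bona fide $O_d(1)$-approximate group containing $S$. The alternative (Tao's covering lemma, \cref{prop:doubling.covered.by.app.grp}) gives an approximate group that need not contain $S$, which would break your ``$\la A\ra=G$''. Second, your normal-core step is correct but worth justifying: a finite-index subgroup $\Lambda$ of an $\ell$-nilpotent group $N$ with central series $N=N_0\ge\dots\ge N_\ell=1$ having cyclic factors inherits, via $\Lambda_i:=\Lambda\cap N_i$, a central series of length $\ell$ with cyclic factors, hence is again $\ell$-nilpotent; and $H\le\Gamma$ survives because $H\normal G$ so $H$ lies in every conjugate of $\Gamma_0$. (BGT's own proof avoids the normal core by showing $\la HP\ra$ is already normal, so the cost of your detour is only notational.)
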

Here and from now on in this paper we use asterisks to indicate bounds that are ineffective. Thus, for example, the quantities $n_0^*(d),k^*(d)$ appearing in \cref{thm:bgt.gromov} are both ineffective. Constants or bounds that are not adorned with asterisks could in principle be computed explicitly from our arguments, even if we do not always to so. See Chapter \ref{sec:background} for more details on this notation. We also follow Breuillard, Green and Tao \cite[Remark 1.9]{bgt} in defining a group to be \emph{$\ell$-nilpotent} if it admits a generating set $u_1,\ldots,u_\ell$ such that $[u_i,u_j]\in\la u_{j+1},\ldots,u_\ell\ra$ whenever $i<j$. Note, in particular, that an $\ell$-nilpotent group is nilpotent with rank, class and Hirsch length at most $\ell$. Conversely, a nilpotent group of rank $r$ and class $c$ is $O_{r,c}(1)$-nilpotent.

\begin{remarks*}\label{rem:bgt.gromov}\mbox{}
\begin{enumerate}[label=(\arabic*)]
\item It follows from the Bass--Guivarc'h formula, as presented in Section \ref{sec:nilp}, that if $G$ is a virtually $\ell$-nilpotent group then $\deg(G)\le1+\frac12\ell(\ell-1)$. The growth degree of the quotient $G/H$ given by \cref{thm:bgt.gromov} is therefore at most $O(d^2)$. Note, however, that the nilpotence of a nilpotent group cannot be bounded in terms of its growth degree since e.g. $\Z_2^m$ has growth degree $0$. The Hirsch length is always at most the growth degree, however.

\item\label{item:bgt.gromov.vnilp} It is shown in the proof of \cite[Corollary 11.7]{bgt} that if $H\normal G$ is finite and $G/H$ is nilpotent then $G$ has a nilpotent subgroup of index $|H|!$, so the group $G$ appearing in \cref{thm:bgt.gromov} has a nilpotent subgroup of index $O_d^*(|S^n|!)$ and growth degree at most $O(d^2)$.

\item The preceding remark explains why we described \cref{thm:bgt.gromov} as a \emph{qualitative} refinement of \cref{thm:st}: in contrast to \cref{thm:st}, the dependence on $d$ of the index of a nilpotent subgroup of $G$ it gives is ineffective.
\end{enumerate}
\end{remarks*}

As well as the conclusion, another important sense in which \cref{thm:bgt.gromov} refines \cref{thm:st} is that the hypothesis \eqref{eq:rel.poly.growth} of \cref{thm:bgt.gromov} is in a weaker form than that of the hypothesis \eqref{eq:abs.poly.growth} of \cref{thm:st}. In general, we call a bound of the form $|S^n|\le f(n)$ such as \eqref{eq:abs.poly.growth} an \emph{absolute} upper bound on $|S^n|$, and a bound of the form $|S^n|/|S|\le f(n)$ such as \eqref{eq:rel.poly.growth} a \emph{relative} upper bound.

The proof of \cref{thm:bgt.gromov} rests on the theory of \emph{approximate groups}. Given $K\ge1$, a subset $A$ of a group $G$ is said to be a \emph{$K$-approximate group} if it is symmetric and contains the identity and there exists $X\subseteq G$ of size at most $K$ such that $A^2\subseteq XA$. The relevance of approximate groups to polynomial growth comes from the standard fact, which follows for example from \cref{lem:poly.pigeon,lem:tripling->AG} below, that if $S$ satisfies \eqref{eq:rel.poly.growth} then there exists $m$ with $n^{1/2}\le m\le n$ such that $S^m$ is an $O_d(1)$-approximate group.

\subsection{Sharp bounds in the Breuillard--Green--Tao polynomial-volume theorem}
It is natural to wonder whether one might hope to bound the growth degree by $d$ as in \cref{thm:st} under the relative upper bound \eqref{eq:rel.poly.growth} assumed in \cref{thm:bgt.gromov}. However, this turns out to be too much to ask for: in the Heisenberg group, which is $3$-nilpotent and has growth degree $4$, Tao gave an example of to show that for each $n\in\N$ there exists a generating set $S_n$ satisfying $|S_n^n|\ll n^3|S_n|$ \cite[Example 1.11]{tao}. More generally, it is not hard to adapt this example to show that in a $d$-nilpotent fililform group $G$, which has growth degree $1+\frac12d(d-1)$, there exists, for each $n\in\N$, a generating set $S_n$ satisfying $|S_n^n|\ll_Gn^d|S_n|$. The most that one could hope for under the bound \eqref{eq:rel.poly.growth} is therefore to bound the nilpotence of $\Gamma/H$ by $d$.

In our first result we obtain both this optimal bound on the nilpotence of $\Gamma/H$ \emph{and} the optimal bound on the index of $\Gamma$ in \cref{thm:bgt.gromov}. The bound on the index, which is optimal for the same reason as given in \cref{rem:g(d).optimal}, is in stark contrast to the previous, ineffective bound given by \cref{thm:bgt.gromov}. Moreover, the optimal nilpotence bound is crucial for some of our applications, in particular the results on random walks described in Section \ref{sec:apps} below.
\begin{theorem}\label{thm:rel.hom.dim}
For every non-negative integer $d$ there exist $\eps=\eps(d)>0$ and $n_0^*=n_0^*(d)\in\N$ such that if $G$ is a group with finite symmetric generating set $S$ containing the identity and
\[
|S^n|\le\eps n^{d+1}|S|
\]
for some integer $n\ge n^*_0$ then there exist normal subgroups $H,\Gamma\normal G$ with $H\le\Gamma$ such that $H\subseteq S^n$, such that $\Gamma/H$ is $d$-nilpotent, and such that $[G:\Gamma]\le g(d)$.
\end{theorem}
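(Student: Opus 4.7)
The approach is to bootstrap the Breuillard--Green--Tao theorem (\cref{thm:bgt.gromov}) using the Bass--Guivarc'h formula and Mann's theorem (\cref{thm:mann}) in order to sharpen both the nilpotence bound and the index bound simultaneously.

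First, apply \cref{thm:bgt.gromov} to $(G,S,n)$, which is legitimate once $\eps(d)\le1$ (after absorbing the loss into a larger $n_0^*$), yielding normal subgroups $H_0\le\Gamma_0\normal G$ with $H_0\subseteq S^n$, $\Gamma_0/H_0$ being $O(d)$-nilpotent, and $[G:\Gamma_0]\le k^*(d)$. In particular $G$ is virtually nilpotent, so \eqref{eq:BG} gives two-sided polynomial estimates on $|S^n|$ in terms of $\deg(G)$. Combined with the hypothesis $|S^n|\le\eps n^{d+1}|S|$, a volume-doubling argument run along a geometric progression of scales (needed because the implicit Bass--Guivarc'h constants a priori depend on $(G,S)$, not only on $d$) forces $\deg(G)\le d$ provided $\eps(d)$ is small and $n_0^*(d)$ large enough; this is essentially the relative-volume analogue of the sharpening carried out in our earlier work \cite{lmtt}.

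With $\deg(G)\le d$ in hand, apply \cref{thm:mann} to obtain normal $H\le\Gamma\normal G$ with $H$ finite, $\Gamma/H$ nilpotent, and $[G:\Gamma]\le g(d)$. Enlarge $H$ (without increasing the index) to contain $H_0$ and the torsion subgroup of the Fitting radical of $\Gamma$, both of which are normal in $G$. Then $\Gamma/H$ is torsion-free nilpotent of Hirsch length $\deg(\Gamma/H)\le d$, and any Mal'cev basis, padded to length $d$ with identity elements, realises $\Gamma/H$ as $d$-nilpotent in the sense of BGT.

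\textbf{Main obstacle.} The most delicate step is verifying $H\subseteq S^n$ after these enlargements: Mann's theorem gives no size bound on $H$, and the torsion of the Fitting radical can be comparable to $|S|$ a priori. This is precisely where the relative polynomial-volume hypothesis must be leveraged quantitatively, forcing $|H|$ to be controlled via a counting/averaging argument using that $|S^n H|$ cannot exceed $|S^n|\cdot|H|/|H_0|$ and a doubling-type bound. I expect the cleanest realisation of this step to proceed through the finer structural result \cref{thm:detailed.fine.scale} advertised in the introduction, which should present $S^n$ as a coset nilprogression inside which $H_0$, Mann's finite normal subgroup, and the Fitting torsion are all manifestly located.
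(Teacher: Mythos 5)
Your first step goes wrong: under the \emph{relative} hypothesis $|S^n|\le\eps n^{d+1}|S|$ you cannot conclude $\deg(G)\le d$, no matter how small $\eps$ is taken. This is exactly the point the paper makes just before stating \cref{thm:rel.hom.dim}: in the Heisenberg group, which has growth degree $4$ and is $3$-nilpotent, there exist generating sets $S_n$ with $|S_n^n|\ll n^3|S_n|$, so the hypothesis with $d=3$ is satisfiable for arbitrarily large $n$ while $\deg(G)=4>d$. What the relative bound controls is the \emph{dimension} (Hirsch length) of the nilpotent piece, not the homogeneous dimension / growth degree; this is why the absolute hypothesis in \cref{thm:abs.hom.dim} yields $\deg\le d$ while the relative hypothesis in \cref{thm:rel.hom.dim} yields only $d$-nilpotence. (You implicitly conflate the two in the phrase ``Hirsch length $\deg(\Gamma/H)$''; for a torsion-free nilpotent group of class $>1$ these differ.) Correcting this requires a lower bound of the form $|S^n|\gg_d n^{\dim}|S|$ with constant depending only on $d$, which is the content of \cref{prop:growth.lower.bound.dim}, and it then has to be pushed from a nilpotent subgroup to $(G,S)$ itself, as in \cref{prop:dimension.bound}; none of that is supplied.

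The second, equally serious, problem is the step you yourself flag as the main obstacle, $H\subseteq S^n$. Mann's \cref{thm:mann} gives no bound whatever on the diameter of the finite kernel, and you propose to obtain it ``through the finer structural result \cref{thm:detailed.fine.scale}''. But in the paper \cref{thm:rel.hom.dim} is deduced directly from \cref{thm:detailed.fine.scale} in one line, and the entire content of Sections 3--6 is devoted to proving \cref{thm:detailed.fine.scale} by a bootstrap (BGT $\to$ preliminary fine-scale theorem $\to$ finitary Mann theorem via large-scale simple connectedness $\to$ effective BGT $\to$ refined fine-scale theorem, cf.\ Figure 1). Invoking \cref{thm:detailed.fine.scale} here is therefore circular. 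The genuine work is in \cref{thm:index.bound} (the finitary Mann theorem), whose proof needs the rough-covering machinery of \cref{sec:sc} to lift to a large-scale simply connected group where the nilpotent part is torsion-free, together with \cref{lem:comms.bdd.diam} to convert a size bound on $\gamma_{c+1}(K)$ into a diameter bound; your sketch engages with none of that.
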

In our first paper \cite{proper.progs} we proved that if one replaces the relative bound \eqref{eq:rel.poly.growth} with the absolute bound \eqref{eq:abs.poly.growth} in \cref{thm:bgt.gromov} then as well as bounding the nilpotence of $G/H$ by $d$, we can conclude that $\deg(G)\le d$, which is of course best possible. This is not stated explicitly in \cite{proper.progs}, but it follows easily from the proof of the much more precise \cite[Theorem 1.11]{proper.progs}. In our next result, we supplement this with the best possible bound $g(d)$ on the index, which again was previously only ineffectively bounded (by \cref{thm:bgt.gromov}).
\begin{theorem}\label{thm:abs.hom.dim}
For every non-negative integer $d$ there exist $\eps=\eps(d)>0$ and $n_0^*=n_0^*(d)\in\N$ such that if $G$ is a group with finite symmetric generating set $S$ containing the identity and
\[
|S^n|\le\eps n^{d+1}
\]
for some $n\ge n_0^*$ then there exist normal subgroups $H,\Gamma\normal G$ with $H\le\Gamma$ such that $H\subseteq S^n$, such that $\Gamma/H$ is $d$-nilpotent with growth degree at most $d$, and such that $[G:\Gamma]\le g(d)$.
\end{theorem}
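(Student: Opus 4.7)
The plan is to deduce \cref{thm:abs.hom.dim} by combining \cref{thm:rel.hom.dim} with the growth-degree bound of \cref{thm:st}. Since $|S|\ge1$, the absolute hypothesis $|S^n|\le\eps n^{d+1}$ trivially implies the relative hypothesis $|S^n|\le\eps n^{d+1}|S|$, so I would first apply \cref{thm:rel.hom.dim} (possibly with a smaller $\eps$ and a larger $n_0^*$ than the ones finally claimed) to obtain normal subgroups $H,\Gamma\normal G$ with $H\le\Gamma$, $H\subseteq S^n$, with $\Gamma/H$ being $d$-nilpotent and $[G:\Gamma]\le g(d)$. This already delivers every conclusion of \cref{thm:abs.hom.dim} except the growth-degree bound on $\Gamma/H$.

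To obtain $\deg(\Gamma/H)\le d$, I would separately apply \cref{thm:st} (again shrinking $\eps$ and enlarging $n_0^*$ if necessary), which gives $\deg(G)\le d$ directly from the absolute polynomial-volume hypothesis. The remaining step is then a standard observation about growth degree: a finite-index subgroup is quasi-isometric to the ambient group, so $\deg(\Gamma)=\deg(G)\le d$; and growth degree is non-increasing under quotients, since any finite symmetric generating set of $\Gamma/H$ lifts to a set of the same size in $\Gamma$ whose $n$-th product set surjects onto the $n$-th product set of the lift. Hence $\deg(\Gamma/H)\le\deg(\Gamma)\le d$, as required.

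The final bookkeeping is to set $\eps(d)$ to be the minimum of the values of $\eps$ furnished by \cref{thm:rel.hom.dim} and \cref{thm:st}, and $n_0^*(d)$ to be the maximum of the corresponding $n_0^*$'s. There is no genuine technical obstacle: the main substance of \cref{thm:abs.hom.dim} over what already follows from \cite{proper.progs} is the optimal index bound $g(d)$, and this is inherited directly from \cref{thm:rel.hom.dim}, which does the real work. So I expect the proof to be essentially a one-paragraph compilation, with the only care needed being the verification that a single choice of $(\eps,n_0^*)$ triggers both black boxes simultaneously under the common absolute hypothesis.
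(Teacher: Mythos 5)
Your proof is correct, but it follows a genuinely different route from the paper's. The paper deduces \cref{thm:rel.hom.dim,thm:abs.hom.dim} simultaneously from \cref{thm:detailed.fine.scale} (applied with $R=1$, taking $H=H_0$ and $\Gamma=\langle P_0\rangle$), and obtains the growth-degree bound \emph{internally}: conclusion~\ref{conc:hdim.bound} of \cref{thm:detailed.fine.scale} gives $n^{\hdim P_0}\ll_d|S^n|\le\eps n^{d+1}$, so for $\eps$ small enough $\hdim P_0\le d$, and then $\deg(\Gamma/H)=\deg\langle P_0\rangle\le\hdim P_0\le d$ (using finiteness of $H_0$ and the fact, noted in the remarks after the definition of Lie progression, that $\deg\langle P\rangle\le\hdim P$). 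You instead import the external result \cref{thm:st} from \cite{lmtt} to get $\deg(G)\le d$ and then transfer this to $\Gamma/H$ via the standard facts that growth degree is invariant under passage to finite-index subgroups and non-increasing under quotients. Both routes are valid: yours is shorter as a deduction if one accepts \cref{thm:st} as a black box, while the paper's is self-contained within the machinery of \cref{thm:detailed.fine.scale} and does not require citing \cite{lmtt}. Your bookkeeping with $\eps$ and $n_0^*$ (taking the minimum and maximum respectively) is fine; note also that the hypothesis of \cref{thm:st} places no lower-bound constraint on $n$, so there is indeed no tension between the two black boxes.
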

\begin{remark}We could make \cref{thm:rel.hom.dim,thm:abs.hom.dim} valid for all $n$ as in the abstract, rather than $n\ge n_0^*$, at the expense of making $\eps$ ineffective: choosing $\eps<1/(n_0^*)^{d+1}$ would in each case ensure that the hypothesis was not satisfied for any $n<n_0^*$.
\end{remark}

\subsection{A fine-scale polynomial-volume theorem and finitary Mal'cev completions}
An important precursor to Breuillard, Green and Tao's \cref{thm:bgt.gromov} that also played an important role in its proof (see \cite[\S3]{bgt}) is the so-called \emph{Lie model theorem} of Hrushovski \cite{hrushovski}. This roughly asserts that an approximate group can, in a suitable ultralimit, be approximated in a certain precise sense by a precompact neighbourhood of the identity in a simply connected nilpotent Lie group.

Tao \cite{tao} exploited the Lie model theorem further to study the growth of the set $S$ appearing in \cref{thm:bgt.gromov} at scales greater than $n$. To motivate his work, note that whilst the bounds of \eqref{eq:BG} are clearly sharp as $n\to\infty$, in the sense that they agree up to the constants $c$ and $C$, this masks a certain amount of `fine-scale' behaviour. For example, the group $\Z \times \Z_m$ has growth degree $1$, but for the standard generating set $S$ the volume of $S^n$ in the range $n<m$ is quadratic in $n$. Moreover, even in the group $\Z$, the volume at small scales can be polynomial of arbitrarily large degree with an appropriate choice of generating set; for instance, if we let $S=\{1,m,m^2,\ldots,m^{d-1}\}$ for $m$ very large then we have $|nS|\gg_dn^d$ for $n<m$.

Tao captured this phenomenon with a result showing that the `local' growth rate of a group of polynomial growth can change boundedly many times. To make his result precise, define a function $f:[1,\infty)\to\R$ to be \emph{piecewise monomial} if there exist $1=x_0<x_1<\ldots<x_k=\infty$ and $C_1,\ldots,C_k$ and $d_1,\ldots,d_k\ge0$ such that $f(x)=C_ix^{d_i}$ whenever $x\in[x_{i-1},x_i)$. Call the $x_i$ the \emph{boundaries} of $f$, the restrictions $f|_{[x_{i-1},x_i)}$ the \emph{pieces} of $f$, and each $d_i$ the \emph{degree} of the piece $f|_{[x_{i-1},x_i)}$.
\begin{theorem}[Tao {\cite[Theorem 1.9]{tao}}]\label{thm:tao}
For every $d>0$ there exists $n_0^*=n_0^*(d)$ such that if $G$ is a group with finite generating set $S$ containing the identity such that $|S^n|\le n^d|S|$ for some $n\ge n_0^*$ then there exists a non-decreasing continuous piecewise-monomial function $f:[1,\infty)\to[1,\infty)$ with $f(1)=1$ and at most $O_d(1)$ distinct pieces, each of degree a non-negative integer at most $O_d(1)$, such that $|S^{mn}|\asymp_df(m)|S^n|$ for all $m\in\N$.
\end{theorem}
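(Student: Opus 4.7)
The plan is to reduce, via approximate-group theory, to the case where $S^n$ is modelled by a coset nilprogression, and then analyse the growth of its dilates directly via Mal'cev coordinates.

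First, from $|S^n| \le n^d|S|$ a dyadic pigeonhole argument produces a scale $m_0 \le n$ at which $|S^{2m_0}| \le 2^d|S^{m_0}|$. By Pl\"unnecke--Ruzsa, $A := S^{m_0}$ is then an $O_d(1)$-approximate group, and by the Breuillard--Green--Tao structure theorem it is commensurable (in a controlled sense) with a coset nilprogression $HP$. Here $H$ is a finite normal subgroup of $\la A\ra$ contained in $A^{O_d(1)}$, and $P = P(u_1,\ldots,u_r;L_1,\ldots,L_r)$ is a nilprogression in a nilpotent group of step and rank at most $O_d(1)$. After a controlled refinement one may further assume $P$ is proper, so that elements of $P$ are parametrised with bounded multiplicity by their Mal'cev coordinates. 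Since, for $mn \ge m_0$, the set $S^{mn}$ is bi-commensurable with $A^{O_d(mn/m_0)}$, it suffices to compute the growth of the dilates $HP_t$ for $t \ge 1$.

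Secondly, using properness, the size of the dilate $HP_t$ can be written as $|H|$ times a product over the Mal'cev coordinates, where each factor depends piecewise-polynomially on $t$. The dependence is piecewise because each generator comes with a natural threshold (determined by $L_i$ and by its weight in the lower central series); as $t$ crosses one of finitely many such thresholds, an additional coordinate either becomes active or saturates its cyclic order, causing the monomial degree to jump by the weight of that coordinate. Since there are at most $r \le O_d(1)$ thresholds and each weight is a positive integer bounded by the step, the resulting function is piecewise-monomial with at most $O_d(1)$ pieces and each degree a non-negative integer bounded by $O_d(1)$. Continuity at each threshold follows by direct computation, and monotonicity from the fact that $HP_t$ grows with $t$.

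Finally, translating back gives
\[
|S^{mn}| \asymp_d |HP_{cm}| \asymp_d f(m)\,|HP| \asymp_d f(m)\,|S^n|,
\]
where $f$ is the piecewise-monomial function above, suitably rescaled so that $f(1) = 1$. The main obstacle is the bookkeeping required to match the \emph{relative} hypothesis $|S^n| \le n^d|S|$ with the approximate-group output: one must verify that $|S^n|$ corresponds, up to a constant factor, to $|HP|$, with the finite subgroup $H$ playing the role of $|S|$, so that the conclusion is genuinely a relative comparison rather than an absolute one. This relies on the careful choice of $m_0$ and on the uniformity, in the size of the finite kernel, of the covers coming from the structure theorem; without this matching one would only control $|S^{mn}|$ up to an additive term comparable to $|S|$, which is insufficient when $|S|$ is large.
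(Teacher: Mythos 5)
Your strategy — pigeonhole down to a small-doubling scale, pass to a coset nilprogression $HP$ via the Breuillard--Green--Tao structure theorem, and then compute the growth of its dilates/powers via Mal'cev coordinates — matches the opening moves of both Tao's proof and the paper's re-proof of the stronger \cref{cor:tao}. But there is a genuine gap at the heart of the second step.

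You claim that $|HP_t|$ is piecewise-monomial because "each generator comes with a natural threshold" at which it "saturates its cyclic order." This implicitly assumes that the collisions in $\la A\ra/H$ --- equivalently the kernel $K$ of the map from the torsion-free model $\Gamma$ onto $\la A\ra/H$ --- decompose as a product of cyclic subgroups aligned with the Mal'cev coordinates $u_1,\ldots,u_r$. In general they do not. Already in the abelian setting the kernel may be a sublattice in skew position, and detecting the scale at which it first intersects a growing box requires a Bilu-type change of basis, not a per-coordinate cap. In the nilpotent setting it is worse: $K$ is a normal subgroup of $\Gamma$ entangled with the other coordinates through commutator relations, and once the scale exceeds the injectivity radius of the parametrization the comparison $|HP^t|\asymp|\tilde P^t||H|$ breaks down entirely, leaving only an upper bound (this is precisely the dichotomy in the paper's \cref{prop:growthfunction}). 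This is the reason Tao's original argument --- and the paper's finitary reworking via \cref{thm:detailed.fine.scale,cor:tao} --- does not work with a single coset nilprogression, but instead constructs a chain of Lie models $P_0, P_1,\ldots, P_{d'}$ of strictly decreasing dimension, linked by quotient maps, each approximating $S^m$ on a separate range of scales demarcated by injectivity radii; the technical core is the Bilu-type dimension-reduction step (\cref{prop:reduce.dim.when.not.proper} and \cref{prop:chain.of.progs}), which replaces a progression that has gone improper by a lower-dimensional one absorbing part of the kernel into the symmetry group. Your sketch elides this entirely, so as written it proves the piecewise-monomial structure only up to the first injectivity radius, not for all $m\in\N$.

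A smaller point: the step "by Pl\"unnecke--Ruzsa, $A:=S^{m_0}$ is an $O_d(1)$-approximate group" is not correct as stated. Pl\"unnecke--Ruzsa is an abelian sumset inequality, and in a general group small doubling at a single scale does not by itself give approximate-group structure; one needs either small \emph{tripling} (as in \cref{lem:tripling->AG}) or a covering theorem (\cref{prop:doubling.covered.by.app.grp}). This is fixable --- run the pigeonhole as in \cref{lem:poly.pigeon} with $q=3$ to arrange $|S^{3m_0}|\ll_d|S^{m_0}|$ directly --- but the invocation of Pl\"unnecke--Ruzsa is a misstep.
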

Although not part of the statement, in proving this result Tao constructs a finite sequence of nilpotent Lie models of decreasing dimension that approximate the group $G$ on different scales. For example, for very large $m$ the group $\Z \times \Z_m$ `looks like' $\R^2$ on scales $n<m$, and (modulo the finite subgroup $\Z_m)$ `looks like' $\R$ on scales $n\ge m$. 

One of the central achievements of the present paper is to obtain a sequence of nilpotent Lie groups similar in spirit to the Lie models used by Tao, but which approximate $G$ \emph{finitarily}, rather than in an ultralimit, and with much more quantitative precision. This finiteness and precision is, for example, crucial in obtaining the optimal index bounds in \cref{thm:rel.hom.dim,thm:abs.hom.dim}, and also gives rise to various other applications, some of which we have already alluded to. It also in particular allows us to reprove \cref{thm:tao} with various bounds optimised; we defer the statement until \cref{cor:tao}, below, so that we can formulate it more generally.

We think of these nilpotent Lie groups -- or rather, certain compact subsets of them -- as `finitary Mal'cev completions' of the balls in $G$. Recall that an arbitrary torsion-free nilpotent group can be embedded as a lattice in an essentially unique simply connected nilpotent Lie group, called its \emph{Mal'cev completion}. What our main result shows is that, in the context of \cref{thm:rel.hom.dim}, each $S^m$ with $m\ge n$ can, modulo a `small' finite subgroup and up to finite index, be embedded as a \emph{generating subset} of a lattice in a simply connected nilpotent Lie group.

To make this precise we define an object called a \emph{Lie progression}. We used a preliminary version of this notion in our first paper \cite{proper.progs}, but it turns out that in order to obtain the precision required in the present paper we need to refine the definition somewhat. We start by defining a general notion of a \emph{progression} in a group. Given elements $u_1,\ldots,u_d$ of some group $G$ and positive reals $L_1,\ldots,L_d$, we define the \emph{progression} $P(u;L)=P(u_1,\ldots,u_d;L_1,\ldots,L_d)$ via
\[
P(u;L)=\{u_1^{\ell_1}\cdots u_d^{\ell_d}:\ell_i\in\Z,|\ell_i|\le L_i\}.
\]
Similarly, given elements $e_1,\ldots,e_d$ of a real vector space $V$ and positive reals $L_1,\ldots,L_d$, for a subring $A\subseteq\R$ we define the \emph{box} $B_A(e;L)=B_A(e_1,\ldots,e_d;L_1,\ldots,L_d)$ via
\[
B_A(e;L)=\{\ell_1e_1+\cdots+\ell_de_d:\ell_i\in A ,|\ell_i|\le L_i\}.
\]
We will mostly be interested in the cases where $G$ is a nilpotent Lie group, $V$ is its Lie algebra, $e_1,\ldots,e_d$ is a basis of a $V$, and $u_i=\exp e_i$.

In general, if $P$ is a progression then the sets $P^n$ can grow exponentially in $n$, so it is not clear that such objects should have anything to do with polynomial growth. However, if we impose a certain additional technical condition called \emph{upper-triangular form} on a progression then it turns out that it does exhibit polynomial growth (see \cref{lem:upper-tri.doubling}, below). Given $C>0$, we say that a tuple $(u;L)=(u_1,\ldots,u_d;L_1,\ldots,L_d)$ of group elements $u_i$ and positive reals $L_i$ is in \emph{$C$-upper-triangular form} if, whenever $1\le i<j\le d$, for all four choices of signs $\pm$ we have
\[
[u_i^{\pm1},u_j^{\pm1}]\in P\left(u_{j+1},\ldots,u_d;\textstyle{\frac{CL_{j+1}}{L_iL_j},\ldots,\frac{CL_d}{L_iL_j}}\right).
\]
In this case we also say that the progression $P(u;L)$ is in $C$-upper-triangular form. Similarly, we say that a tuple $(e;L)=(e_1,\ldots,e_d;L_1,\ldots,L_d)$ of elements $e_i$ of a real vector space and positive reals $L_i$ is in \emph{$C$-upper-triangular form} over $\R$, $\Q$ or $\Z$, respectively, if whenever $1\le i<j\le d$ we have
we have
\[
[e_i,e_j]\in B_A\left(e_{j+1},\ldots,e_d;\textstyle{\frac{CL_{j+1}}{L_iL_j},\ldots,\frac{CL_d}{L_iL_j}}\right).
\]
with $A=\R$, $\Q$ or $\Z$. Moreover, given $Q\in\N$, we say that $(e;L)$ is in \emph{$Q$-rational} $C$-upper triangular form to mean that it is in $C$-upper-triangular form over $\Q$ and the rationals $\ell_k$ needed to write each $[e_i,e_j]=\ell_{j+1}e_{j+1}+\cdots+\ell_de_d$ have denominator at most $Q$. We say that a tuple is simply in \emph{upper-triangular form} to mean that it is in $C$-upper-triangular form for some $C>0$.

\begin{definition}[Lie progression]
Suppose that $N$ is a simply connected nilpotent Lie group of dimension $d$ and homogeneous dimension $D$ with Lie algebra $\n$. Suppose further that $e_1,\ldots,e_d$ is a basis for $\n$, that $L_1,\ldots,L_d\ge1$ are such that $(e;L)$ is in upper-triangular form over $\Q$, and that the group elements $u_i=\exp e_i\in N$ are such that $(u;L)$ is in upper-triangular form. Then the progression $P=P(u;L)$ is called a \emph{raw Lie progression} with \emph{dimension} $d$ and \emph{homogeneous dimension} $D$.

If $(e;L)$ is in $C$-upper-triangular form over $\Q$, and $(u;L)$ is in $C$-upper-triangular form for a given $C>0$, then we say $P$ is in \emph{$C$-upper-triangular form}. If $(e;L)$ is in $Q$-rational upper-triangular form then we say $P$ is \emph{$Q$-rational}. If $(e;L)$ is in upper-triangular form over $\Z$ (equivalently in $1$-rational upper-triangular form) and $\exp\la e_1,\ldots,e_d\ra=\la u_1,\ldots,u_d\ra$ then we call $P$ \emph{integral}. The elements $e_i$ are called the \emph{basis} of $P$, the elements $u_i$ are called the \emph{generators} of $P$, and the $L_i$ are called the \emph{lengths} of $P$.

Now suppose $\tilde P$ is a raw Lie progression in $N$, and let $\Gamma$ be the subgroup of $N$ generated by the generators of $\tilde P$. Suppose that $G$ is a group, that $H\normal G_0\le G$, and that $\pi:\Gamma\to G_0/H$ is a homomorphism, and let $P$ be the pullback to $G_0$ of $\pi(\tilde P)$. Then $P$ is called a \emph{Lie progression projected from $N$}. The subgroup $H$ is called the \emph{symmetry group} of $P$, the raw Lie progression $\tilde P$ is called the \emph{underlying raw progression} of $P$, the group $\Gamma$ is called the \emph{lattice} of $P$, and $\pi$ is called the \emph{projector} of $P$. The dimension, homogeneous dimension, basis and generators of $\tilde P$ are called, respectively, the \emph{dimension}, \emph{homogeneous dimension}, \emph{basis} and \emph{generators} of $P$. We say that $P$ is in \emph{$C$-upper-triangular form}, \emph{$Q$-rational}, or \emph{integral} if $\tilde P$ is in $C$-upper-triangular form, $Q$-rational, or integral, respectively. We define the \emph{injectivity radius} of $P$, which we denote by $\inj P$, to be the supremum of those $j\in\N$ such that $\pi$ is injective on $\tilde P^j$. In other words, $\inj P=\sup\{j\in\N_0:\ker\pi\cap\tilde P^j=\{1\}\}$. We define the \emph{injectivity radius of $P$ modulo the centre}, which we denote by $\injz P$, via $\injz P=\sup\{j\in\N_0:\ker\pi\cap(\tilde P^j\tilde P^{-j})\subseteq Z(\Gamma)\}$.

We say that a Lie progression $P$ with symmetry group $H$ in a group $G$ is \emph{normal} in $G$ if $\la P\ra\normal G$ and $H\normal G$. Given a Lie progression $P$, we write $\tilde P$ for its underlying raw progression, $\dim P$ for its dimension, $\hdim P$ for its homogeneous dimension, and $\inj P$ for its injectivity radius.
\end{definition}
\begin{remarks*}\mbox{}
\begin{enumerate}[label=(\arabic*)]
\item A Lie progression of dimension $0$ is simply a finite subgroup.
\item The symmetry group of a Lie progression $P$ can depend on the parameterisation of $P$. For example, the set $\Z_n$ can be realised as a $0$-dimensional Lie progression with symmetry group $\Z_n$, or as a $1$-dimensional progression with trivial symmetry group. In particular, the symmetry group of $P$ does not have to equal $\{g\in G:gP=P\}$ (though see \cref{cor:sym.grp} below for a proof that this is the symmetry group if $\inj P\ge2$).
\item If $P$ is a Lie progression then $\deg(\la P\ra)\le\hdim P$, since a lattice in a finite dimensional simply connected nilpotent group $N$ has growth degree $\hdim N$.
\end{enumerate}
\end{remarks*}
If $P$ is a Lie progression with injectivity radius at least $1$ and underlying raw Lie progression $P(u;L)$ then we think of $P_\R(u;L)=\{u_1^{\ell_1}\cdots u_d^{\ell_d}:\ell_i\in\R,|\ell_i|\le L_i\}$ as the `finitary Mal'cev completion' of $P$.

The main result of this paper, which refines both \cref{thm:abs.hom.dim,thm:rel.hom.dim}, is as follows.

\begin{theorem}[fine-scale polynomial-volume theorem]\label{thm:detailed.fine.scale}\label{thm:fine.scale.intro}
For every $d,R\in\N_0$ there exist $\eps=\eps(d)>0$, $n_0^*=n_0^*(d,R)\in\N$ and $M=M(d,R)\in\N$ such that if $G$ is a group with finite symmetric generating set $S$ containing the identity and
\[
|S^n|\le\eps n^{d+1}|S|
\]
for some integer $n\ge n_0$ then there exist non-negative integers $d'\le d$ and $r_0<r_1<\cdots<r_{d'}$ such that $n^{1/2}\ll_{d,R}r_0\le n$ and $r_i\mid r_{i+1}$ for each $i$, normal $O_d(1)$-rational Lie progressions $P_0,P_1,\ldots,P_{d'}$ in $O_d(1)$-upper-triangular form with injectivity radius at least $R$, and finite subsets $X_0,X_1,\ldots,X_{d'}\subseteq G$ containing the identity such that writing $H_i$ for the symmetry group of $P_i$, $N_i$ for the nilpotent Lie group from which it is projected, $\Gamma_i<N_i$ for its lattice and $\pi_i:\Gamma_i\to\la P_0\ra/H_i$ for its projector, the following conditions are satisfied:
\begin{enumerate}[label=(\roman*)]
\item for each $i$ and every integer $m\ge r_i$ we have $X_iP_i^{\lfloor m/r_i\rfloor}\subseteq S^m\subseteq X_iP_i^{O_d(m/r_i)}$;\label{conc:S=XiPi}
\item $|X_i|\le g(\dim P_i)$ for each $i$;\label{conc:|X|}
\item $X_i\subseteq S^{g(\dim P_i)-1}$ for each $i$;\label{conc:X<S^g}
\item $X_0\supseteq X_1\supseteq\cdots\supseteq X_{d'}$;
\item for each $i$, distinct elements of $X_i$ belong to distinct cosets of $\la P_i\ra$;\label{conc:cosets}
\item $\la P_0\ra\le\la P_1\ra\le\cdots\le\la P_{d'}\ra$;
\item $H_0\le H_1\le\cdots\le H_{d'}$;
\item for each $i=1,\ldots,d'$ there exists a surjective Lie group homomorphism $\beta_i:N_{i-1}\to N_{i}$ such that $\beta_i(\Gamma_{i-1})\subseteq\Gamma_i$ and the diagram
\[
\begin{CD}
 \Gamma_{i-1}                      @>\pi_{i-1}>>           \langle P_{i-1}\rangle/H_{i-1}\\
@V\beta_iVV               @VVV\\
\Gamma_{i}     @>\pi_{i}>>    \langle P_i\rangle/H_{i}
\end{CD}
\]
commutes;\label{conc.comm.diag}
\item $\inj P_{i}\ll_{d}\frac{r_{i+1}}{r_{i}}\ll_{d,R}\inj P_{i}$ for $i=0,\ldots,d'-1$, and $\inj P_{d'}=\infty$;\label{conc:inj.rad}
\item $d\ge\dim P_0>\dim P_1>\cdots>\dim P_{d'}$;\label{conc:dimPi}
\item $\frac12d(d-1)+1\ge\hdim P_0>\hdim P_1>\cdots>\hdim P_{d'}$;
\item for each $i$ we have $m^{\dim P_i}\ll_{d,R}|S^m|/|S|$ for every $m\ge n$ with $r_i\le m<r_{i+1}$;\label{conc:dim.bound}
\item for each $i$ we have $m^{\hdim P_i}\ll_{d,R}|S^m|$ for every $m\ge n$ with $r_i\le m<r_{i+1}$;\label{conc:hdim.bound}
\item for each $i$ there exist a natural number $k_i\le\hdim P_i-\dim P_i+1$, a sequence $r_i=r_{i,0}<r_{i,1}<\cdots<r_{i,k_i}$ with $r_{i,k_i}=\infty$ if $i=d'$ and $r_{i,k_i}=r_{i+1}$ otherwise and all other $r_{i,j}\in\N$, and a sequence $\dim P_i\le d_{i,0}<d_{i,1}<\cdots<d_{i,k_i}\le\hdim P_i$ such that for every integer $m\in[r_{i,j},r_{i,j+1})$ we have
\[
\frac{|S^m|}{|S^{r_{i,j}}|}\asymp_{d,R}\left(\frac{m}{r_{i,j}}\right)^{d_{i,j}};
\]\label{conc:growth}
\item if $|G|<\infty$ then $P_i$ is abelian for all $i$ with $r_{i+1}>M\diam_S(G)^{\frac{1}{2}}$;\label{item:sqrt.scale}
\item for each $i<d'$ we have $\injz P_i\gg_{d,R}r_{i+1}^{c_i/(c_i-1)}/r_i$, where $c_i$ is the class of $P_i$.\label{item:inj.mod.z}
\end{enumerate}
\end{theorem}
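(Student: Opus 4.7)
\medskip

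\noindent\textbf{Proof proposal.} The plan is to first produce a single Lie progression $P_0$ modelling $S$ at scale $n$, and then to construct the coarser progressions $P_1,\ldots,P_{d'}$ by inductively quotienting $P_0$ as the effective growth degree drops at successive scales. For the base case, the polynomial-volume hypothesis, together with the standard pigeonhole and tripling-to-approximate-group arguments alluded to after \cref{thm:bgt.gromov}, yields an integer $m\in[n^{1/2},n]$ for which $S^m$ is an $O_d(1)$-approximate group. Feeding this into the Breuillard--Green--Tao machinery, upgraded via the refinements from \cite{proper.progs} and via the authors' Lie-model theory, should give normal subgroups $H_0,\Gamma_0\normal G$ with $H_0\subseteq S^n$, a simply connected nilpotent Lie group $N_0$ with a lattice and a Lie progression $P_0$ projected from $N_0$ into $\Gamma_0/H_0$ that, up to left translation by a small set $X_0$ of coset representatives, sandwiches $S^n$ between $X_0P_0$ and $X_0P_0^{O_d(1)}$. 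Optimising the nilpotence to $d$ (rather than $O(d)$) is forced by the Bass--Guivarc'h formula together with the sharp growth bound \cref{conc:hdim.bound}, and the optimal coset bound $|X_0|\le g(\dim P_0)$ comes from identifying $X_0$ with a finite group acting faithfully on the abelianisation of $\Gamma_0$ (so on a $\Z^{\dim P_0}$), using an argument in the spirit of Mann's \cref{thm:mann}.

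Next I build the sequence $(P_i, r_i, X_i)$ by induction on $i$. Given $P_i$ with injectivity radius roughly $r_{i+1}/r_i$ obtained from the previous step, the key observation is that for scales $m \asymp r_{i+1}$, the progression $\tilde P_i^{\lfloor m/r_i\rfloor}$ ceases to be injective and some of the generators become torsion relations modulo a new normal subgroup. Letting $K_i\normal \Gamma_i$ be the kernel of injectivity at that scale, I define $N_{i+1}=N_i/\overline{K_i^{\R}}$ (the real Zariski closure), with surjective homomorphism $\beta_{i+1}:N_i\to N_{i+1}$; this automatically gives the commutative diagram in \cref{conc.comm.diag}. The lattice, symmetry group and new Lie progression $P_{i+1}$ are then obtained by pushing forward $P_i$ under $\beta_{i+1}$ and enlarging $H_i$ to absorb the killed directions, rescaling lengths by $r_{i+1}/r_i$. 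Each such step strictly drops the dimension, giving \cref{conc:dimPi} and hence termination after at most $d$ steps; the strict drop in homogeneous dimension follows from the Bass--Guivarc'h formula applied to the quotient. The nested inclusions for $X_i$, $\la P_i\ra$ and $H_i$, together with \cref{conc:|X|,conc:X<S^g,conc:cosets}, should propagate from step to step because at each stage the finite group $X_{i+1}$ can be chosen inside $X_i$, corresponding to a subgroup of the stabiliser of the quotient Lie group.

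The fine growth description in \cref{conc:growth} should then drop out by applying an interior version of \cref{thm:tao} to each individual progression $P_i$: within the range $[r_i,r_{i+1})$ the progression is essentially a torsion-free nilpotent group and its ball-growth function, computed via the Lie algebra basis $(e;L)$ in upper-triangular form, is a piecewise monomial with non-negative integer degrees between $\dim P_i$ and $\hdim P_i$; this gives both the $k_i$ sub-thresholds and the asymptotics. The lower bounds \cref{conc:dim.bound,conc:hdim.bound} follow from the embedding $X_iP_i^{\lfloor m/r_i\rfloor}\subseteq S^m$ combined with volume estimates for Lie progressions in upper-triangular form (the lemma referenced as \cref{lem:upper-tri.doubling} in the introduction). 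The injectivity-radius bound \cref{conc:inj.rad} is essentially the definition of how $r_{i+1}$ is chosen, and the injectivity-modulo-centre bound \cref{item:inj.mod.z} will follow from a commutator-expansion argument using the fact that in a $c_i$-class nilpotent group, central commutators of length $c_i$ grow slower than the full group by a factor governed by the exponent $c_i/(c_i-1)$.

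The main obstacle, I expect, is obtaining the precise constant $g(d)$ in \cref{conc:|X|} and the nested structure \cref{conc:cosets} simultaneously with the injectivity-modulo-centre bound \cref{item:inj.mod.z}; these demand that the finite complement $X_i$ at each scale acts faithfully on the integer lattice of the \emph{abelianisation} of $P_i$, and that this action be compatible across the chain of surjections $\beta_i$. Ensuring this compatibility requires choosing $P_0$ and the $\beta_i$ in such a way that $X_0$ already embeds into $\GL_{\dim P_0}(\Z)$ and its image in each successive $\GL_{\dim P_i}(\Z)$ remains well-defined --- an algebraic coherence condition that does not follow from the standard Lie-model construction and will likely force a careful choice of lattice at the very first step. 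The finite-group case \cref{item:sqrt.scale} should be dispatched by observing that for $r_{i+1}>M\diam_S(G)^{1/2}$, the ball $P_i$ covers more than half the group in linear dimension, forcing the nilpotent structure to collapse to its abelianisation, by a standard commutator-length argument.
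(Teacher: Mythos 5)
Your skeleton is in the right place --- pigeonhole to a scale with small tripling, slice an approximate group inside the nilpotent quotient, apply the nilpotent Freiman theorem to obtain a first progression $P_0$, and then build $P_1,\ldots,P_{d'}$ by inductively killing the directions that lose injectivity --- and this matches the paper's broad outline. But there is a genuine gap that you have, to your credit, partially flagged yourself: you have no actual mechanism for proving the $g(\dim P_i)$ bound on $|X_i|$, and the mechanism you gesture at would not work.

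You propose ``identifying $X_0$ with a finite group acting faithfully on the abelianisation of $\Gamma_0$, using an argument in the spirit of Mann's \cref{thm:mann}.'' But Mann's argument requires a normal \emph{torsion-free} nilpotent subgroup, and $\langle P_i\rangle/H_i$ only looks torsion-free up to the injectivity radius; in a finite group it will certainly have torsion at large scales. More to the point, $X_i$ is a set of coset representatives, not a group, and there is no direct way to realise it as a subgroup of $\GL_{\dim P_i}(\Z)$. You note that this is an ``algebraic coherence condition that does not follow from the standard Lie-model construction and will likely force a careful choice of lattice at the very first step,'' but no choice of lattice resolves it. The paper's actual solution is a different idea: a \emph{finitary Mann theorem} (\cref{thm:index.bound}) proved by lifting $(G,S)$ to a truncated-presentation group $\hat G=\langle S\mid R_S(r)\rangle$, showing that the lifted progression there has \emph{infinite} injectivity radius via a quantitative, discrete covering-map theorem for large-scale simply connected graphs (\cref{thm:k-sc.covering,prop:k-simply.conn}), applying the genuine infinitary Mann argument inside $\hat G$ to find a normal subgroup of index $\le g(d)$ whose $(c{+}1)$st commutator subgroup is small, and then projecting back. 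Without this covering-space detour the $g(d)$ bound is simply not accessible.

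A second structural idea you omit is the bootstrapping scheme (the paper's \cref{fig}). Because the only available input at the start is the BGT theorem with \emph{ineffective} $k^*(d)$, a one-pass argument can only produce conclusions with ineffective constants (in particular one cannot even legitimately shrink $\eps$ by an amount depending on $k$ without losing effectiveness). The paper runs the preliminary fine-scale theorem and the finitary Mann theorem once to get effective index bounds (\cref{thm:rel.hom.dim.k(d).effective}), then runs them a second time with the improved input to obtain the optimal $g(d)$ bound and the dimension/homogeneous-dimension constraints. Your proposal implicitly assumes the effective constants are available on the first pass.

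Smaller points: the growth description in \cref{conc:growth} does not come from applying ``an interior version'' of \cref{thm:tao} but from a direct volume computation for nilboxes (\cref{prop:growthfunction}, via \cref{prop:prog.as.nilbox} and Cramer's rule); and the finite-group conclusion \cref{item:sqrt.scale} is not a ``standard commutator-length argument'' but rests specifically on \cref{prop:ProperCenter} (a finitary analogue of ``a discrete normal subgroup of a connected group is central'') combined with \cref{lem:finite<injz}. Finally, passing from $P_i$ at its own scale to a progression modelling the power $P_i^{\inj P_i}$ --- which is what lets you apply the dimension-reduction step --- is itself nontrivial and requires \cref{prop:powergood.rational}, which your sketch does not address.
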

\begin{remark*}
The ineffectiveness of $n_0^*$ in \cref{thm:detailed.fine.scale} arises only from its dependence on $d$; it depends effectively on $R$.
\end{remark*}
\begin{proof}[Proof of \cref{thm:rel.hom.dim,thm:abs.hom.dim} given \cref{thm:detailed.fine.scale}]
Apply \cref{thm:detailed.fine.scale} with $R=1$, set $H$ to be the resulting group $H_0$, and $\Gamma$ to be the resulting group $\la P_0\ra$. This immediately proves \cref{thm:rel.hom.dim}. To see that it also proves \cref{thm:abs.hom.dim}, note that \cref{thm:detailed.fine.scale} \ref{conc:hdim.bound} implies that $n^{\hdim P_0}\ll_{d}\eps n^{d+1}$, which as long as $\eps$ is small enough gives $\hdim P_0\le d$, and hence $\deg\la P_0\ra\le d$.
\end{proof}

We also show that the sequence of Lie progressions $P_i$, scales $r_i$ and other objects given by \cref{thm:detailed.fine.scale} is essentially unique, as follows. This can be seen as a finitary analogue of the uniqueness of the Mal'cev completion. See \S\ref{section:uniqueness} for the proof.
\begin{prop}\label{prop:uniqueness}
Given $C,D,k,Q,t,\eta\in\N$ there exists $R=R(C,D,k,Q,t,\eta)\in\N$ such that the following holds. Suppose $G$ is a group with finite symmetric generating set $S$, and there are natural numbers $r_1<\cdots<r_d$ and $r_1'<\cdots<r_{d'}'$ such that $r_1'<r_1$, subsets $X_1,\ldots,X_d,X_1',\ldots,X_{d'}'\subseteq S^t$ of size at most $k$, and $Q$-rational Lie progressions $P_1,\ldots,P_d,P_1',\ldots,P_{d'}'\subseteq G$ of dimension at most $D$ and injectivity radius at least $R$ in $C$-upper-triangular form such that $X_iP_i^{\lfloor m/r_i\rfloor}\subseteq S^m\subseteq X_iP_i^{\eta m/r_i}$ for all $m\ge r_i$ and $X_i'(P_i')^{\lfloor m/r'_i\rfloor}\subseteq S^m\subseteq X_i'(P_i')^{\eta m/r'_i}$ for all $m\ge r'_i$, such that distinct elements of $X_i$ belong to distinct cosets of $\la P_i\ra$ and distinct elements of $X_i'$ belong to distinct cosets of $\la P_i'\ra$, such that $\dim P_1>\cdots\dim P_d$ and $\dim P_1'>\cdots>\dim P_{d'}'$, and such that $r_{i+1}/r_i\le A\inj P_i$ and $r_{i+1}'/r_i'\le A\inj P_i'$ for some $A\in\N$. Then
\begin{enumerate}[label=(\arabic*)]
\item \label{item:unique.Haus}there exists $j$ such that the set $\{\log r_1,\ldots,\log r_d\}$ is at Hausdorff distance at most $\log AR$ from $\{\log r_j',\ldots,\log r_{d'}'\}$; and
\item \label{item:unique.progs}if $i\in\{1,\ldots,d\}$ satisfies either $i=d$ or $r_{i+1}\ge(AR)^2r_i$, then letting $j$ be maximal such that $r_j'\le ARr_i$, and writing $N_i$ and $N_j'$ for the respective nilpotent Lie groups from which $P_i$ and $P'_j$ are projected, $\Gamma_i$ and $\Gamma'_j$ for their respective lattices, $H_i$ and $H'_j$ for their respective symmetry groups, and $\pi_i$ and $\pi'_j$ for their respective projectors, the following conditions are satisfied:
\begin{enumerate}[label=(\roman*)]
\item $[H_i:H_i\cap H_j']\le|X_j'|$ and $[H_j':H_j'\cap H_i]\le|X_i|$;
\item $H_i\cap\la P_j'\ra=H_i\cap H_j'=\la P_i\ra\cap H_j'$, and there exist a sublattice $\Lambda_i$ of index at most $|X_j'|$ in $\Gamma_i$ such that
\[
\pi_i(\Lambda_i)=\frac{(\la P_i\ra\cap\la P_j'\ra)H_i}{H_i},
\]
a sublattice $\Lambda_j'$ of index at most $|X_i|$ in $\Gamma_j'$ such that
\[
\pi_j'(\Lambda_j')=\frac{(\la P_i\ra\cap\la P_j'\ra)H_j'}{H_j'},
\]
and an isomorphism $\psi_{ij}:\Lambda_i\to\Lambda_j'$ such that the diagram
\[
\xymatrix{  
  \Lambda_i  \ar[d]_{\pi_i}\ar[rrrr]^{\psi_{ij}} &&&&  \Lambda_j' \ar[d]^{\pi_j'}\\
\displaystyle{\frac{(\la P_i\ra\cap\la P_j'\ra)H_i}{H_i}}&\cong&\displaystyle{\frac{\la P_i\ra\cap\la P'_j\ra}{H_i\cap H_j'}}&\cong&\displaystyle{\frac{(\la P_i\ra\cap\la P_j'\ra) H_j'}{H_j'}}
  }
\]
commutes;
\item $N_i\cong N_j'$.
\end{enumerate}
\end{enumerate}
\end{prop}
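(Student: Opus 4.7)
The plan is to exploit the tight two-sided containment of each ball $S^m$ by both progression structures, and to read both the scales and the progression data off the volume function $m\mapsto|S^m|$ and off internal comparison between the two descriptions of $S^m$. The argument splits naturally into two parts corresponding to conclusions (1) and (2).

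For (1), my first step is to observe that, provided $R$ is large enough relative to $C$, $D$, $\eta$, $Q$, $A$, the polynomial-growth behaviour of upper-triangular progressions within their injective range yields $|P_i^k|\asymp k^{\dim P_i}|P_i|$ for $k$ up to a constant multiple of $\inj P_i$. Combined with the hypothesis $X_iP_i^{\lfloor m/r_i\rfloor}\subseteq S^m\subseteq X_iP_i^{\eta m/r_i}$ and the fact that distinct elements of $X_i$ lie in distinct cosets of $\la P_i\ra$, this gives $|S^m|/|S^{r_i}|\asymp(m/r_i)^{\dim P_i}$ for $r_i\le m<r_{i+1}$, and analogously in the primed sequence. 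Because $(\dim P_i)$ and $(\dim P_j')$ are both strictly decreasing, the growth exponent of $|S^m|$ changes at every $r_i$ and every $r_j'$; matching the two descriptions then forces each transition point in one sequence to sit within a multiplicative factor of $O(AR)$ of a transition point in the other, which gives the Hausdorff bound.

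For (2), I fix $i$ as in the statement and take $j$ maximal with $r_j'\le ARr_i$; by (1) the scales $r_i$ and $r_j'$ differ by at most a factor of $O(AR)$. The plan is to build the isomorphism $\psi_{ij}$ by matching the two descriptions of $S^{r_i}$ generator by generator. Each generator $u_k$ of $P_i$ lies in $S^{r_i}\subseteq X_j'(P_j')^{\eta r_i/r_j'}$, so we can write $u_k=x_k p_k$ with $x_k\in X_j'$ and $p_k\in(\tilde P_j')^{O(AR)}$. Passing to the sublattice $\Lambda_i\le\Gamma_i$ generated by those $u_k$ for which the cosets $x_k\la P_j'\ra$ coincide cuts the index by at most $|X_j'|$ and places the corresponding $p_k$ inside $\la P_j'\ra$; lifting these $p_k$ through $\pi_j'$ defines $\psi_{ij}$ on generators. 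The symmetric argument with $i$ and $j$ swapped provides an inverse up to finite index, and the containments between symmetry groups and intersections of $\la P_i\ra$ and $\la P_j'\ra$ then follow by a diagram chase using the finiteness of $X_i$ and $X_j'$. The isomorphism $N_i\cong N_j'$ is delivered by the uniqueness of the Mal'cev completion applied to the isomorphic finite-index sublattices $\Lambda_i\cong\Lambda_j'$.

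The main obstacle will be verifying that $\psi_{ij}$ is a well-defined homomorphism, i.e.\ that every relation among the generators of $\Lambda_i$ survives the generator-by-generator translation. A relation among the $u_k$ lifts via $\pi_i$ to the identity in $\la P_i\ra\subseteq G$; translating the same word through the primed generators produces an element of $\ker\pi_j'$ that lies in $(\tilde P_j')^{m_0}$ for some $m_0$ controlled by $C$, $D$, $\eta$, $Q$, $A$. The hypothesis $\inj P_j'\ge R$ then forces this element to be trivial as soon as $R\ge m_0$, and this is precisely how one determines the required $R=R(C,D,k,Q,t,\eta)$.
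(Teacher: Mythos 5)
Your approach to part~(1) rests on the claim that $|P_i^k|\asymp k^{\dim P_i}|P_i|$ for $k$ up to (a multiple of) $\inj P_i$, and hence that $|S^m|/|S^{r_i}|\asymp(m/r_i)^{\dim P_i}$ on the whole interval $[r_i,r_{i+1})$. This is false in general: by \cref{prop:growthfunction}, the growth of a Lie progression $P$ is a continuous increasing piecewise-monomial function whose exponents range over several integers between $\dim P$ and $\hdim P$, and these can differ. For instance, a Lie progression in a Heisenberg lattice in $O(1)$-upper-triangular form with comparable lengths has dimension~$3$ but grows with exponent~$4$. This has two consequences for your argument. First, the growth exponent of $|S^m|$ changes at many scales strictly between $r_i$ and $r_{i+1}$, not only at the $r_i$, so the changes of exponent by themselves cannot be used to locate the $r_i$. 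Second, it is not even automatic that the exponent \emph{drops} at each $r_{i+1}$, since the exponent just before $r_{i+1}$ lies in $[\dim P_i,\hdim P_i]$ and the exponent just after lies in $[\dim P_{i+1},\hdim P_{i+1}]$, and these ranges can overlap. (\cref{cor:tao} does bound the number of decreasing boundaries by $d$, but that result is a \emph{consequence} of \cref{thm:detailed.fine.scale}, not a tool freely available here, and it does not by itself distinguish the $r_i$ from the other boundaries.) So your proof of~(1) has a genuine gap.

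The paper instead proves~(1) without reference to the volume function at all. The key tool is a two-progression comparison lemma (\cref{lem:uniqueness}), proved by lifting both progressions to the finitely-presented cover $\hat G=\la S\mid R_S(4\alpha r)\ra$ using \cref{prop:index.reduc.tf}. In $\hat G$ both progressions acquire \emph{infinite} injectivity radius, so the lattices become literal subgroups of $\hat G$, and commensurability (hence equality of the Mal'cev completions, hence of the dimensions) follows cleanly from \cref{lem:sym.grps.commensurable,lem:finite.subgroup.of.proper}. Applied to a pair $(P_i,P_j')$ at comparable scales this forces $\dim P_i=\dim P_j'$, and since both sequences of dimensions are strictly decreasing, a short pigeonhole/monotonicity argument pins the scales to within a factor $AR$.

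Your approach to~(2) — matching generators one at a time and invoking the injectivity radius to verify relations — is closer in spirit to the actual proof, but your construction of $\Lambda_i$ is too vague to yield the stated index bound, and the crucial well-definedness issue you flag is handled in the paper precisely by passing to $\hat G$, where the verification is automatic because $\hat\pi_i$ and $\hat\pi_j'$ are injective. You would need that lifting step (or something equivalent) to make your sketch rigorous, and once you have it, the cleanest formulation is essentially \cref{lem:uniqueness}. So I would recommend first establishing a version of \cref{lem:uniqueness} and deriving both~(1) and~(2) from it, as the paper does.
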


\subsection{Applications and extensions}\label{sec:apps}
One of the most important aspects of the theory we develop in this paper is the range of applications it provides. In this section we give a brief summary of some highlights. We leave most of the details until Chapter \ref{chap:apps} and various forthcoming papers.

\medskip

\paragraph{Probability.} Some of the most striking applications to date of this work are those to probability on groups and graphs. We cover this aspect of our theory in much more detail in a companion paper \cite{ttresist}, but to add some context to the present paper let us briefly describe a particular application to the \emph{recurrence} or \emph{transience} of the simple random walk on a vertex-transitive graph.

Given a graph $\G$, recall that the \emph{return probability} of the simple random walk starting at some vertex $x$ is the probability that the random walk returns to $x$ at some point after its initial step. The walk is called \emph{recurrent} if this probability is $1$, and \emph{transient} otherwise. A famous theorem of Varopoulos \cite{varoRec} (see also \cite[\S 6]{grigo}) states that the simple random walk on a vertex-transitive graph is recurrent if and only if the graph has polynomial growth of degree at most $2$, the main content being the fact that if the growth is superquadratic then the random walk is transient.

In our companion paper \cite{ttresist} we prove a finitary refinement of this result, which roughly shows that if a ball in a vertex-transitive graphs has volume slightly superquadratic in its radius then the probability that a simple random walk starting at the centre of that ball returns to the centre before escaping the ball is bounded uniformly away from $1$. As well as implying Varopoulos's theorem, a minor variation of this also verifies and strengthens a certain analogue of Varopoulos's result for \emph{finite} graphs conjectured by Benjamini and Kozma \cite{bk}. We refer the reader to the companion paper \cite{ttresist} for full statements, but we take the opportunity now to advertise the following rather striking corollary; see \cite{ttresist} for a proof.
\begin{corollary}[gap at $1$ for return probabilities of random walks on vertex-transitive graphs]\label{cor:gap}
There exists a universal constant $c>0$ such that the simple random walk on a connected, locally finite vertex-transitive graph is either recurrent or has return probability at most $1-c$.
\end{corollary}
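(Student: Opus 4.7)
The plan is to prove the contrapositive: if $\G$ is not recurrent, then $p\le 1-c$ for a universal $c>0$. Since the return probability of the simple random walk on any graph satisfies $p = 1 - G(x,x)^{-1}$, where $G(x,x) = \sum_{n\ge 0} p_n(x,x)$ is the Green's function, it suffices to produce a universal upper bound on $G(x,x)$ across all transient, connected, locally finite vertex-transitive graphs.

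The first step is the standard heat-kernel rewriting
\[
G(x,x)\asymp\sum_{n\ge1}|B_{\sqrt{n}}(x)|^{-1}\asymp\int_1^\infty\frac{r\,dr}{|B_r(x)|},
\]
which on vertex-transitive graphs holds with universal implicit constants (the matching upper and lower heat-kernel bounds are one of the key probabilistic inputs developed in the companion paper \cite{ttresist}). The next step is to apply \cref{thm:detailed.fine.scale} to $\G$ and to exploit the integrality phenomenon it implies for infinite vertex-transitive graphs, namely that the growth degree of such a graph is a non-negative integer, or is infinite. Combined with Varopoulos's recurrence dichotomy, this forces $\deg(\G)\ge 3$ once $\G$ is transient, and conclusion \ref{conc:hdim.bound} of \cref{thm:detailed.fine.scale}, applied in the top regime $i=d'$, then gives $|B_r(x)|\gg r^3$ for all $r\ge r_{d'}$. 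This bounds the tail of the integral universally by a multiple of $\int r^{-2}\,dr = O(1)$.

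The main obstacle is to bound the small-scale contribution $\int_1^{r_{d'}} r\,dr/|B_r(x)|$ uniformly, since both $r_{d'}$ and the homogeneous dimensions $\hdim P_i$ of the intermediate regimes depend on $\G$, and the latter can a priori be much smaller than $3$. This is precisely the role of the finitary refinement of Varopoulos's theorem established in \cite{ttresist}, which guarantees that any ball of slightly superquadratic volume has return-before-escape probability bounded away from $1$ by a universal constant. Combining this refinement with the decomposition of $\G$ into $O_d(1)$ geometric regimes provided by conclusion \ref{conc:growth} of \cref{thm:detailed.fine.scale}, and eliminating the $d$-dependence in the final constants by exploiting that each $\hdim P_i$ must be a non-negative integer (so only finitely many fine-scale ``shapes'' actually arise in the transient case), yields the required universal upper bound on $G(x,x)$. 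We refer to \cite{ttresist} for the full argument.
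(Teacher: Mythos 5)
The paper does not prove \cref{cor:gap}; it states the result and cites the companion paper \cite{ttresist} for the proof, so there is no internal argument of the paper to compare against. Treating your sketch on its own terms, it has two genuine gaps. The first is the claim that the $d$-dependence of the constants from \cref{thm:detailed.fine.scale} can be eliminated ``by exploiting that each $\hdim P_i$ must be a non-negative integer, so only finitely many fine-scale shapes arise in the transient case''. That is not true: $d$, $d'$, the scales $r_i$, and every implied constant $\ll_{d,R}$ in conclusions \ref{conc:hdim.bound} and \ref{conc:growth} depend on the graph and are unbounded across the class of transient transitive graphs; integrality of the degrees does not bound the number of admissible configurations or the constants. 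In particular, your assertion $|B_r|\gg r^3$ for $r\ge r_{d'}$ carries an implicit constant $\gg_{d,R}$ that degenerates with $d$, and $r_{d'}$ itself is uncontrolled, so the Green's-function tail is not bounded universally this way.

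The second gap is that the two probabilistic inputs in your sketch are never actually connected. The finitary Varopoulos theorem from \cite{ttresist} controls the probability of escaping a ball before returning, i.e., it upper-bounds the effective resistance $R_{\mathrm{eff}}(o\leftrightarrow\partial B_n)$; your Green's-function decomposition $G(x,x)\asymp\sum_n|B_{\sqrt n}(x)|^{-1}$ is a heat-kernel object, and you do not explain how one bounds the other (and your ``$\asymp$'' tacitly assumes a universal on-diagonal heat-kernel \emph{lower} bound, which is the nontrivial direction). The Green's-function route appears to be an unnecessary detour. Once the theory of this paper is available, the contrapositive of \cref{cor:Benj} at $d=2$ together with Varopoulos gives universal $\eps,n_0$ such that if $\G$ is transient then $\beta_\G(n)>\eps n^3\beta_\G(1)$ for \emph{every} $n\ge n_0$; applying the finitary Varopoulos theorem at every such scale gives $R_{\mathrm{eff}}(o\leftrightarrow\partial B_n)\le C/\deg(o)$ uniformly in $n$, and since these resistances increase to $R_{\mathrm{eff}}(o\leftrightarrow\infty)$, one gets $\Prob(\text{never return})\ge c$ directly. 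That is presumably much closer to what \cite{ttresist} actually does, though strictly speaking the paper defers the whole argument there and so do you.
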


Since we first circulated a preprint of \cite{ttresist}, a number of other probabilistic applications of the results in it have also emerged, including universality theorems for cover-time fluctuations \cite{berestycki2023covertime}, a comparison between the mixing time of the interchange process and random walks on transitive graphs \cite{hermon2021interchange}, and non-triviality of the supercritical phase for percolation on finite transitive graphs \cite{hutch-toint}.

\paragraph{Finite groups.} In the case that $G$ is finite, a natural value of $n$ at which to seek to apply \cref{thm:rel.hom.dim} is the \emph{diameter} of $G$ with respect to $S$, denoted and defined by $\diam_S(G)=\min\{n\in\N:S^n=G\}$. For $n=\diam_S(G)$, the hypothesis of \cref{thm:rel.hom.dim} translates to a condition of the form
\begin{equation}\label{eq:large.diam}
\diam_S(G)\ge A\left(\frac{|G|}{|S|}\right)^\frac1{d+1}.
\end{equation}
Breuillard and the second author \cite{bt} have studied groups satisfying \eqref{eq:large.diam}, calling them \emph{almost flat}. One of their main results used \cref{thm:bgt.gromov} to show that if $\delta>0$ and $G$ is a group with diameter large enough in terms of $d$ and $\delta$ and satisfying \eqref{eq:large.diam}, then $G$ has a normal subgroup $H\subseteq S^{\lfloor\diam_S(G)^{1/2+\delta}\rfloor}$ such that $G/H$ has an abelian subgroup of index and rank at most $O_{d,\delta}(1)$ \cite[Theorem 4.1 (2)]{bt}.

Using \cref{thm:detailed.fine.scale}, we can remove the need for $\delta$ in this result, as well as giving the optimal bound on the rank and an explicit bound on the index, as follows (we parametrise the exponent as $2/(d+2)$ rather than $1/(d+1)$ purely to have a cleaner relationship between it and the bound on the rank).
\begin{corollary}\label{cor:bt}
For every non-negative integer $d$ there exist $A=A(d)$ and $D^*=D^*(d)$ such that if $G$ is a finite group with symmetric generating set $S$ containing the identity such that $\diam_S(G)\ge D^*$ and
\begin{equation}\label{eq:bt}
\diam_S(G)\ge A\left(\frac{|G|}{|S|}\right)^\frac2{d+2},
\end{equation}
then there is a normal subgroup $H\normal G$ contained in $S^{O_d(\diam_S(G)^{1/2})}$ such that $G/H$ has an abelian subgroup of rank at most $d$ and index at most $g(d)$.
\end{corollary}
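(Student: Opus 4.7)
The plan is to apply the fine-scale polynomial-volume theorem (\cref{thm:detailed.fine.scale}) at scale $n := \diam_S(G)$ and extract from the resulting sequence of Lie progressions one whose symmetry group has the desired properties. Since $S^n = G$, hypothesis \eqref{eq:bt} rearranges to $|S^n|/|S| \le (n/A)^{(d+2)/2}$. Because $(d+2)/2 \le d+1$ with strict inequality for $d\ge 1$, choosing $A = A(d)$ and $D^* = D^*(d)$ large enough ensures $|S^n|/|S| \le \eps(d)\,n^{d+1}$, so the theorem applies (with, say, $R = 1$). This yields scales $r_0 < r_1 < \cdots < r_{d'}$ with $r_0 \gg_d \sqrt{n}$ and normal Lie progressions $P_0,\ldots,P_{d'}$ with symmetry groups $H_0 \le H_1 \le \cdots \le H_{d'}$.

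Let $M = M(d,1)$ be the constant furnished by conclusion \ref{item:sqrt.scale}, and set $i^* := \max\{i : r_i \le M\sqrt{n}\}$. Assuming $r_0 \le M\sqrt{n}$, this maximum is well-defined and its maximality forces $r_{i^*+1} > M\sqrt{n}$ (with the convention $r_{d'+1} = \infty$), whence conclusion \ref{item:sqrt.scale} makes $P_{i^*}$ abelian. Take $H := H_{i^*}$. Normality of $P_{i^*}$ as a Lie progression gives $H \normal G$. The containment $H \subseteq P_{i^*}$, which holds because $1 \in \tilde P_{i^*}$ forces $P_{i^*}$ to be a union of $H$-cosets containing the identity coset, combined with conclusion \ref{conc:S=XiPi} applied at $m = r_{i^*}$, which gives $P_{i^*} \subseteq X_{i^*}P_{i^*} \subseteq S^{r_{i^*}}$, shows that $H \subseteq S^{r_{i^*}} \subseteq S^{M\sqrt{n}}$, as required.

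Since $P_{i^*}$ is abelian the Lie group $N_{i^*}$ is abelian, so its lattice $\Gamma_{i^*}$ is free abelian of rank $\dim P_{i^*} \le d$ by conclusion \ref{conc:dimPi}; because $\pi_{i^*}$ is a homomorphism, $\langle P_{i^*}\rangle/H = \pi_{i^*}(\Gamma_{i^*})$ is abelian of rank at most $d$. Applying conclusion \ref{conc:S=XiPi} for $i = 0$ at $m = n$ yields $G = S^n \subseteq X_0 P_0^{O_d(n/r_0)} \subseteq X_0\langle P_0\rangle$, so conclusions \ref{conc:|X|} and \ref{conc:dimPi} give $[G : \langle P_0\rangle] \le |X_0| \le g(\dim P_0) \le g(d)$; the nesting $\langle P_0\rangle \le \langle P_{i^*}\rangle$ from the theorem then upgrades this to $[G : \langle P_{i^*}\rangle] \le g(d)$. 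Thus $\langle P_{i^*}\rangle/H$ is an abelian subgroup of $G/H$ of rank at most $d$ and index at most $g(d)$, completing the proof in the generic case.

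The main obstacle is the degenerate case $r_0 > M\sqrt{n}$, where the prescription defining $i^*$ fails. In this case every $P_i$ is abelian by \ref{item:sqrt.scale}, but the naive choice $H = H_0$ only gives $H_0 \subseteq S^{r_0}$, which may be as large as $S^n$. Handling this case needs a separate argument, either by refining the constants to preclude it for an appropriately chosen $R$, or by exploiting the abelian structure of $\langle P_0\rangle/H_0$ together with the tight approximation $G \subseteq X_0 P_0^{O_d(n/r_0)}$ (where $n/r_0 < \sqrt{n}/M$ is small) to build a strictly smaller normal subgroup of $G$ still yielding a virtually abelian quotient of bounded rank and index.
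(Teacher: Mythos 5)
Your reduction has a genuine gap, and you have correctly diagnosed where it lies: in the case $r_0 > M\sqrt{n}$. The difficulty is that this case is not an edge condition you can wave away by ``refining constants'' — it is the generic situation when you apply \cref{thm:detailed.fine.scale} at the full diameter scale $n = \diam_S(G)$. The theorem only guarantees $n^{1/2}\ll_{d,R} r_0 \le n$, so there is nothing preventing $r_0$ from sitting close to $n$ (think of $\Z_N$ with the standard generating set at scale $n \approx N/2$). When that happens, $H_0 \subseteq S^{r_0}$ is no better than $H_0 \subseteq G$, the sequence $r_i$ offers no foothold, and the ``abelian structure plus bounded cover'' sketch in your final paragraph does not actually produce a normal subgroup of small radius; it only repackages the problem. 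So the argument as written does not close.

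The fix, which is what the paper does, is to apply \cref{thm:detailed.fine.scale} at scale $\lceil\gamma^{1/2}\rceil$ rather than at scale $\gamma = \diam_S(G)$. That forces $r_0 \le \lceil\gamma^{1/2}\rceil$, so the index $i^*$ maximal with $r_{i^*} \le M\gamma^{1/2}$ always exists and the degenerate case disappears. The price is that you can no longer use $S^n = G$ to verify the polynomial-volume hypothesis; instead one uses the universal growth lower bound $|G| = \beta(\gamma) \ge \frac{1}{3}\lceil\gamma/r\rceil|S^r|$ from \cref{lem:growth.lb.rel.lin} with $r = \lceil\gamma^{1/2}\rceil$, giving $|G| \ge \frac{1}{6}\gamma^{1/2}|S^{\lceil\gamma^{1/2}\rceil}|$, which combined with \eqref{eq:bt} yields $|S^{\lceil\gamma^{1/2}\rceil}|\le 6A^{-(d+2)/2}(\gamma^{1/2})^{d+1}|S|$. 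Choosing $A$ so that $6A^{-(d+2)/2}=\eps(d)$ then lets the theorem fire at the half scale. Once $i^*$ is well-defined, the remainder of your argument in the non-degenerate case — invoking \ref{item:sqrt.scale} for abelianity, \ref{conc:dimPi} for the rank, \ref{conc:|X|} and \ref{conc:S=XiPi} for the index and the containment $H_{i^*}\subseteq P_{i^*}\subseteq S^{r_{i^*}}\subseteq S^{M\gamma^{1/2}}$ — matches the paper's proof essentially line for line (the detour via $\la P_0\ra$ to bound the index is unnecessary, since $|X_{i^*}|\le g(\dim P_{i^*})\le g(d)$ directly, but it is not wrong).
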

\begin{remark*}
To see that the bound on the rank is sharp, consider the group $G=\Z_{n^2}\times\Z_{Cn}^{d-1}$ with respect to its standard generating set.
\end{remark*}

\paragraph{Vertex-transitive graphs.} Trofimov \cite{trof} has famously given an extension of Gromov's theorem to vertex-transitive graphs of polynomial growth. Here, as usual, a vertex-transitive graph $\G$ is said to have \emph{polynomial growth} if, writing $\beta_\G(n)$ for the number of vertices in a ball of radius $n$ in $\G$, we have $\beta_\G(n)\le Cn^d$ for some $C,d\in\N$ and all $n\in\N$. Trofimov's result can be stated in various ways (see e.g. \cite[Theorem 2.1]{ttTrof} or \cite{woess}), but a particularly simple formulation states that a vertex-transitive graph of polynomial growth is \emph{quasi-isometric} to some locally finite Cayley graph, where, as usual, given $C\ge1$ and $K\ge0$ and metric spaces $X,Y$, a map $f:X\to Y$ is said to be a \emph{$(C,K)$-quasi-isometry} if
\[
C^{-1}d(x,y)-K\leq d(f(x),f(y))\leq Cd(x,y)+K
\]
for every $x,y\in X$, and if every $y\in Y$ lies at distance at most $K$ from $f(X)$. Since polynomial growth is preserved by quasi-isometries, this Cayley graph is in turn virtually nilpotent by Gromov's theorem.

Like Gromov, Trofimov also proved a finitary version of his result stating that for all $C,d\in\N$ there exists $n_0^*\in\N$ such that if $\beta_\G(n)\le Cn^d$ for all $n=1,2,\ldots,n_0^*$ then $\G$ has polynomial growth \cite{trof.finitary}. In a previous paper of ours \cite{ttTrof}, we proved a stronger finitary refinement of Trofimov's theorem, analogous to the refinement of Gromov's theorem provided by \cref{thm:bgt.gromov}. By inserting \cref{thm:detailed.fine.scale} into the proof of that result, we can immediately refine its conclusions further. The resulting statement requires some additional notation, which we defer until \cref{sec:VTGs}; we state the result as \cref{cor:trof}.

A particular corollary of our work on Trofimov's theorem was a certain extension to vertex-transitive graphs of Breuillard and the second author's original version of \cref{cor:bt} \cite[Corollary 2.5]{ttTrof}. Amongst other things, this extension implies that for every $\eps\in(0,1]$ and $\delta>0$ there exists $n_0^*=n_0^*(\eps,\delta)\in\N$ such that if $\G$ is a connected, finite vertex-transitive graph of diameter at least $n_0^*$ satisfying
\[
\diam(\G)\ge \left(\frac{|\G|}{\beta_\G(1)}\right)^\eps
\]
then $\G$ is $(1,\diam(\G)^{\frac12+\delta})$-quasi-isometric to a Cayley graph of a group containing an abelian subgroup with rank and index at most $O_{\eps,\delta}(1)$.

Using \cref{cor:bt}, we can remove the need for $\delta$ in this result, and give the optimal bound on the rank of the abelian subgroup and an explicit bound on its index, as follows.
\begin{corollary}\label{thm:trof.finite}
For every $d\in\N_0$ there exist $A=A(d)>0$ and $n_0^*=n_0^*(d)\in\N$ such that if $\G$ is a connected, finite vertex-transitive graph such that $\diam(\G)\ge n_0$ and
\[
\diam(\G)\ge A\left(\frac{|\G|}{\beta_\G(1)}\right)^\frac2{d+2}
\]
then $\G$ is $(1,O_d(\diam(\G)^{\frac{1}{2}}))$-quasi-isometric to a Cayley graph of a group containing an abelian subgroup with rank at most $d$ and index at most $g(d)$.
\end{corollary}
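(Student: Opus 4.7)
My plan is to combine the refined finitary Trofimov theorem (\cref{cor:trof}) with our sharp finite-group corollary \cref{cor:bt}, following the template used in \cite[Corollary 2.5]{ttTrof} but using \cref{cor:bt} in place of the weaker finite-group input available to Breuillard and the second author. The idea is that \cref{cor:trof} reduces the problem from vertex-transitive graphs to groups at the cost of a $(1,O_d(\diam(\G)^{1/2}))$-error, and \cref{cor:bt} then supplies the sharp abelian-by-finite structure on the resulting finite group.

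First I would apply \cref{cor:trof} to $\G$, whose hypothesis is satisfied provided $\diam(\G)$ exceeds some threshold depending only on $d$. This should yield a finite group $G$ with finite symmetric generating set $S$ containing the identity such that $\Cay(G,S)$ is $(1,O_d(\diam(\G)^{1/2}))$-quasi-isometric to $\G$, and such that, up to multiplicative constants depending only on $d$, one has $\diam_S(G)\asymp_d\diam(\G)$ and $|G|/|S|\asymp_d|\G|/\beta_\G(1)$. Feeding $(G,S)$ into \cref{cor:bt}, with $A$ chosen large enough in terms of the constants from the previous step, the assumed inequality $\diam(\G)\ge A(|\G|/\beta_\G(1))^{2/(d+2)}$ translates into the hypothesis $\diam_S(G)\ge A'(|G|/|S|)^{2/(d+2)}$ of \cref{cor:bt} for a suitable $A'=A'(d)$. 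This yields a normal subgroup $H\normal G$ with $H\subseteq S^K$ for some $K=O_d(\diam_S(G)^{1/2})=O_d(\diam(\G)^{1/2})$, such that $G/H$ admits an abelian subgroup of rank at most $d$ and index at most $g(d)$.

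To conclude, since $H\normal G$ and $H\subseteq S^K$, the quotient homomorphism $\pi:G\to G/H$ induces a $(1,K)$-quasi-isometry from $\Cay(G,S)$ onto $\Cay(G/H,\pi(S))$: each $H$-coset has diameter at most $K$ in $\Cay(G,S)$, and the quotient map is a bijection on the cosets that preserves $S$-adjacency. Composing with the quasi-isometry produced in the first step yields the required $(1,O_d(\diam(\G)^{1/2}))$-quasi-isometry from $\G$ onto a Cayley graph of $G/H$. The only real obstacle is bookkeeping: verifying that the constants emerging from \cref{cor:trof} are compatible with the hypothesis of \cref{cor:bt} for appropriate $A$ and $n_0^*$. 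All of the substantive content lives in those two inputs; relative to \cite[Corollary 2.5]{ttTrof}, the present statement is sharper precisely because \cref{cor:bt} gives the exact rank bound $d$, the explicit index bound $g(d)$, and removes the $\delta$ loss in the exponent.
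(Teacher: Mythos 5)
Your proposal matches the paper's argument: both apply \cref{cor:trof} to pass to a finite Cayley graph at the cost of a $(1,O_d(\diam(\G)^{1/2}))$-error, verify (via the vertex-stabiliser bound and the lower bound $\diam_{S_1}(G/H_0)\ge\gamma-\gamma^{1/2}$) that the resulting group and generating set satisfy the hypothesis of \cref{cor:bt}, and then feed the result of \cref{cor:bt} into a further quotient quasi-isometry. The paper's write-up (as \cref{cor:trof.finite}) carries along the extra structural conclusions and makes the constant bookkeeping explicit, but the substance is identical to what you describe.
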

See \cref{cor:trof.finite}, below, for a more detailed version of \cref{thm:trof.finite} featuring a number of additional conclusions that we omitted from \cref{thm:trof.finite} for simplicity.

\paragraph{Growth of balls with polynomial volume.} An important application of our previous paper on vertex-transitive graphs was an extension of \cref{thm:tao} to that setting \cite[Corollary 1.4]{ttTrof}. Using \cref{thm:detailed.fine.scale} intead of \cref{thm:tao} in the proof of that result, we can now obtain the optimal bound on the degrees of the pieces in the resulting piecewise-monomial function, as well as some further information on the changes in degree, as follows.
\begin{corollary}[growth of balls with polynomial volume]\label{cor:tao}
For every $d\in\N_0$ there exist $\eps=\eps(d)>0$ and $n_0^*=n_0^*(d)\in\N$ such that if $\G$ is a connected, locally finite vertex-transitive graph such that $\beta_\G(n)\le \eps n^{d+1}\beta_\G(1)$ for some $n\ge n_0$, then there exists a non-decreasing continuous piecewise-monomial function $f:[1,\infty)\to[1,\infty)$ with $f(1)=1$ such that
$\beta_\G(m)\asymp_df(m/n)\beta_\G(n)$
for every $m\ge n$, and such that
\begin{enumerate}[label=(\roman*)]
\item the pieces of $f$ have degree at most $\frac12d(d-1)+1$;\label{item:degrees-rel}
\item the number of boundaries of $f$ across which the degree decreases is at most $d$;\label{item:drops}
\item the number of boundaries of $f$ across which the degree increases is at most $\frac16d^3-\frac12d^2+\frac13d$.\label{item:increases}
\end{enumerate}
If $\beta_\G(n)\le \eps n^{d+1}$, then in fact
\begin{enumerate}
\item[(i\,$'$)]the pieces of $f$ have degree at most $d$.\label{item:degrees-abs}
\end{enumerate}
\end{corollary}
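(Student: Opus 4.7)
The plan is to bootstrap \cref{thm:detailed.fine.scale} to vertex-transitive graphs via the strategy already used in our companion work \cite{ttTrof}, and then to read off the function $f$ directly from conclusion \ref{conc:growth}.

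First I would reduce to the Cayley-graph setting. Our earlier refinement of Trofimov's theorem from \cite{ttTrof} shows that a vertex-transitive graph $\G$ satisfying a polynomial-volume hypothesis at scale $n$ can be related (via a quasi-isometry of controlled parameters, modulo a finite stabiliser) to a Cayley graph $\Cay(G,S)$ whose ball growth matches $\beta_\G$ up to constants depending only on $d$. This is precisely the reduction that was used to deduce the vertex-transitive version of \cref{thm:tao}. Having performed it, we may assume $\beta_\G(m)\asymp_d |S^m|/|S|\cdot\beta_\G(1)$ for all $m\ge n$, so it suffices to build $f$ satisfying $|S^m|/|S^n|\asymp_d f(m/n)$ for $m\ge n$.

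Next I would apply \cref{thm:detailed.fine.scale} (with, say, $R=1$) to $(G,S)$ to obtain scales $r_0<r_1<\cdots<r_{d'}$ and Lie progressions $P_0,\ldots,P_{d'}$. Conclusion \ref{conc:growth} gives, within each block $[r_i,r_{i+1})$, a monomial decomposition into $k_i$ sub-intervals with strictly increasing degrees $d_{i,0}<d_{i,1}<\cdots<d_{i,k_i}$ lying in $[\dim P_i,\hdim P_i]$. Splicing these together (and renormalising by $n$) produces a non-decreasing, continuous, piecewise-monomial function $f:[1,\infty)\to[1,\infty)$ with $f(1)=1$ satisfying $|S^m|\asymp_d f(m/n)|S^n|$ on $[n,\infty)$; continuity is ensured by matching constants at each breakpoint, which is automatic from the $\asymp_d$ estimates in \ref{conc:growth}.

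The main bookkeeping is in verifying \ref{item:degrees-rel}--\ref{item:increases}. Assertion \ref{item:degrees-rel} is immediate: every piece has degree at most $\hdim P_0\le\frac12d(d-1)+1$ by conclusion (xi). In the absolute-volume case, the argument in the proof of \cref{thm:abs.hom.dim} forces $\hdim P_0\le d$, giving (i$'$). For \ref{item:drops}, note that within a single block $[r_i,r_{i+1})$ the degrees $d_{i,j}$ strictly \emph{increase}, so any drop in degree must occur at one of the inter-block boundaries $r_1,\ldots,r_{d'}$; since $d'\le\dim P_0\le d$ by (x), this yields the bound $d$. For \ref{item:increases}, the number of within-block increases is $\sum_{i=0}^{d'}(k_i-1)$, and each inter-block boundary contributes at most one further increase; using $k_i\le\hdim P_i-\dim P_i+1\le1+\tfrac12\dim P_i(\dim P_i-1)-\dim P_i+2$ (via the Bass--Guivarc'h bound $\hdim P_i\le1+\tfrac12\dim P_i(\dim P_i-1)$ from the remark after \cref{thm:bgt.gromov}) and the fact that the values $\dim P_0>\dim P_1>\cdots>\dim P_{d'}$ are distinct non-negative integers bounded by $d$, a direct summation bounded by $\sum_{k=0}^{d}\bigl(\tfrac12 k^2-\tfrac32 k+2\bigr)$ yields the cubic bound $\tfrac16 d^3-\tfrac12 d^2+\tfrac13 d$ once one carries out the arithmetic (the worst case being $\dim P_i=d-i$).

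The only subtlety I expect is the arithmetic optimisation in \ref{item:increases}: one needs to check that the worst case really is the one where the dimensions decrease one at a time, and to verify the cubic identity precisely as stated. Everything else is either a direct quotation of a conclusion of \cref{thm:detailed.fine.scale} or a straightforward combinatorial observation about strictly monotone sequences; the heavy lifting has already been done by the structure theorem.
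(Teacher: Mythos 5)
Your overall architecture (reduce to a Cayley graph via the Trofimov-type result, apply \cref{thm:detailed.fine.scale} with $R=1$, and splice together the piecewise-monomial pieces coming from conclusion \ref{conc:growth}) is the same as the paper's, up to the cosmetic difference that the paper works directly with the functions $f_i$ of \cref{prop:growthfunction} rather than the packaged conclusion \ref{conc:growth}, and is more careful about the $n^{\lambda}$ additive error from the quasi-isometry (it applies the Cayley case to $S^{\lceil n/2\rceil}$ and then runs the comparison of \cref{lem:growth.VT->Cay} on both sides). Parts \ref{item:degrees-rel}, (i\,$'$) and \ref{item:drops} are handled essentially correctly.

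The problem is in \ref{item:increases}, and it is a real gap, not a presentational one. You count the increase boundaries as $\sum_{i=0}^{d'}(k_i-1)$ within-block jumps \emph{plus one extra per inter-block boundary}, for a total of $\sum_i(k_i-1)+d'$. That already cannot give $\tfrac16 d^3-\tfrac12 d^2+\tfrac13 d$: at $d=1$ the target is $0$, yet your count allows at least the one inter-block jump. The fix is to observe that an inter-block increase at $r_{i+1}$ can be absorbed into block $i$: the degree jumps from $a_i$ up to $b_{i+1}$, and since $b_{i+1}\le\hdim P_{i+1}\le\hdim P_i$, the whole increasing run from the first degree of block $i$ through the first degree of block $i+1$ still lives in $[\dim P_i,\hdim P_i]$, so it still contributes at most $\hdim P_i-\dim P_i$ jumps. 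Hence the total number of increase boundaries is at most $\sum_{i=0}^{d'}(\hdim P_i-\dim P_i)$ with no $+d'$ correction — this is the bound the paper uses. With that in hand the arithmetic also needs correcting: what you need is $\sum_{i=1}^{d}\bigl(\tfrac12 i(i-1)+1-i\bigr)=\sum_{i=1}^{d}\bigl(\tfrac12 i^2-\tfrac32 i+1\bigr)=\tfrac16 d^3-\tfrac12 d^2+\tfrac13 d$, not $\sum_{k=0}^{d}\bigl(\tfrac12 k^2-\tfrac32 k+2\bigr)$; the latter equals $\tfrac16 d^3-\tfrac12 d^2+\tfrac13 d+d+2$, which already fails at $d=1$ (it gives $3$ rather than $0$). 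The summand should be the bound on $\hdim P_i-\dim P_i$, not on $k_i$, and the $k=0$ term should be omitted since a $0$-dimensional Lie progression has $\hdim-\dim=0$.

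One further small point: you should justify in the reduction step that the additive $n^{\lambda}$ error from the quasi-isometry does not perturb the degrees of the monomial pieces; the paper does this by running the Cayley argument on $S^{\lceil n/2\rceil}$ and sandwiching $\beta_\G(m)/\beta_\G(n)$ between $|S^{m-\lfloor n/2\rfloor}|/|S^n|$ and $|S^m|/|S^{\lceil n/2\rceil}|$. As written, your "we may assume $\beta_\G(m)\asymp_d|S^m|/|S|\cdot\beta_\G(1)$" is a bit too quick.
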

\begin{remark*}
The bounds in \ref{item:degrees-rel}, (i$'$) and \ref{item:drops} are all sharp. Indeed, equality in (i\,$'$) and \ref{item:drops} is achieved in $\Z_{m_1}\times\cdots\times\Z_{m_d}$ with respect to its standard generating set, and it is not hard to adapt \cite[Example 1.11]{tao} to achieve equality in \ref{item:degrees-rel} in a lattice in a filiform Lie group of dimension $d$ for arbitrary $d\ge3$. This last example also shows that the bound in \ref{item:increases} has to be at least quadratic in $d$.
\end{remark*}
An immediate consequence of \cref{cor:tao} that is of particular interest for applications is the following result, in which the bounds $\frac12d(d-1)+1$ and $d$ on the degree of polynomial growth are both optimal by the same examples as in the previous remark.
\begin{corollary}[polynomial volume implies polynomial growth]\label{cor:Benj}
For every $d\in\N$ there exist $\eps=\eps(d)>0$ and $n_0^*=n_0^*(d)\in\N$ such that the following holds for any connected, locally finite vertex-transitive graph $\G$.
\begin{enumerate}[label=(\roman*)]
\item If $\beta_\G(n)\le\eps n^{d+1}\beta_\G(1)$ for some $n\ge n_0$ then $\beta_\G(m)\ll_d(m/n)^{\frac12d(d-1)+1}\beta_\G(n)$ for every $m\ge n$.\label{item:Benj.rel}
\item If $\beta_\G(n)\le\eps n^{d+1}$ for some $n\ge n_0$ then $\beta_\G(m)\ll_d(m/n)^d\beta_\G(n)$ for every $m\ge n$.
\end{enumerate} 
\end{corollary}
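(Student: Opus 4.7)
The plan is to derive \cref{cor:Benj} directly from \cref{cor:tao} by a short elementary argument bounding any continuous piecewise-monomial function with non-negative, bounded-degree pieces by a single monomial of the maximal allowed degree.

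First I would apply \cref{cor:tao} with the same $d$ (and the $\eps$ and $n_0^*$ it supplies). Under hypothesis \ref{item:Benj.rel} this yields a non-decreasing continuous piecewise-monomial $f:[1,\infty)\to[1,\infty)$ with $f(1)=1$ and all pieces of degree at most $D=\tfrac12 d(d-1)+1$ such that
\[
\beta_\G(m)\asymp_d f(m/n)\,\beta_\G(n)\qquad\text{for all }m\ge n;
\]
under hypothesis (ii) the same holds with $D=d$. So it suffices to show that any such $f$ satisfies $f(x)\le x^D$ for all $x\ge 1$, which combined with the $\asymp_d$ above immediately gives the desired $\beta_\G(m)\ll_d (m/n)^D\beta_\G(n)$.

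To prove this pointwise bound on $f$, let $1=x_0<x_1<\cdots<x_k=\infty$ be the boundaries and let $d_i\in[0,D]$ denote the degree of $f|_{[x_{i-1},x_i)}$. Continuity at each $x_i$ forces the constants of the successive pieces to match, so by induction
\[
f(x)=\left(\frac{x}{x_{i-1}}\right)^{d_i}\prod_{j=1}^{i-1}\left(\frac{x_j}{x_{j-1}}\right)^{d_j}\qquad\text{for }x\in[x_{i-1},x_i).
\]
Telescoping from $x_0=1$ gives $x^D=(x/x_{i-1})^D\prod_{j=1}^{i-1}(x_j/x_{j-1})^D$, so
\[
\frac{f(x)}{x^D}=\left(\frac{x}{x_{i-1}}\right)^{d_i-D}\prod_{j=1}^{i-1}\left(\frac{x_j}{x_{j-1}}\right)^{d_j-D}\le 1,
\]
since every ratio in sight is at least $1$ and every exponent difference is at most $0$.

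I do not expect any real obstacle: the deep content sits entirely in \cref{cor:tao} (and thus ultimately in \cref{thm:detailed.fine.scale}), while the passage from \cref{cor:tao} to \cref{cor:Benj} is just the telescoping observation above. The only minor point to check is that the implicit constant in the conclusion of \cref{cor:Benj} can be taken to depend only on $d$, which is automatic because the $\asymp_d$ constant from \cref{cor:tao} depends only on $d$ and the bound $f(x)\le x^D$ has no further constant.
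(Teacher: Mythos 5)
Your proof is correct, and it is exactly the derivation the paper has in mind when it calls \cref{cor:Benj} ``an immediate consequence'' of \cref{cor:tao}: apply \cref{cor:tao}, then use the telescoping observation that a continuous piecewise-monomial $f$ with $f(1)=1$ and all piece degrees in $[0,D]$ satisfies $f(x)\le x^D$ for $x\ge1$. The only thing worth flagging is that in part~(ii) you are implicitly using that $\beta_\G(n)\le\eps n^{d+1}$ also gives $\beta_\G(n)\le\eps n^{d+1}\beta_\G(1)$, so \cref{cor:tao} applies with its additional conclusion~(i$'$); this is trivially true since $\beta_\G(1)\ge1$, but it is the one step you did not spell out.
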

The fact that we obtain the optimal $\frac12d(d-1)+1$ bound on the growth degree in \ref{item:Benj.rel} is crucial for the results on random walks we described above.

\paragraph{Volume doubling.} In his proof of \cref{thm:gromov}, Gromov used the fact that if $G$ has polynomial growth of degree $d$ then there exists $K=K(d)$ such that, for any finite symmetric generating set $S$ of $G$ containing the identity, there exists an infinite set $N$ of natural numbers such that
\begin{equation}\label{eq:doubling}
|S^{2n}|\le K|S^n|
\end{equation}
for all $n\in N$. A condition of the type \eqref{eq:doubling} is called a \emph{volume-doubling condition} at scale $n$, or for brevity a \emph{doubling condition} at scale $n$.

In fact, as is well known and proved in \cref{lem:poly.pigeon} below, if $|S^n|\le n^d|S|$ for a given $n\in\N$ then there exists $m\in\N$ with $\sqrt{n}\le m\le n$ such that $|S^{2m}|\le K|S^m|$. The proof of \cref{thm:bgt.gromov} also uses this observation, and in fact relies on the following similar result with a doubling hypothesis in place of the polynomial-volume hypothesis.
\begin{theorem}[Breuillard--Green--Tao {\cite[Corollary 11.2]{bgt}}\footnote{The bound $H\subseteq S^{O^*_K(n)}$ and the explicit bound of $6\log_2K$ on the class of $\Gamma/H$ are not stated in \cite[Corollary 11.2]{bgt}, but they follow from using \cite[Theorem 2.12]{bgt} instead of \cite[Corollary 1.7]{bgt} in the proof. The fact that $\Gamma\normal G$ is also not stated in \cite[Corollary 11.2]{bgt}, but is mentioned explicitly in the proof.}]
\label{thm:bgt.doubling}
For every $K\ge1$ there exist $n_0^*=n_0^*(K)\in\N$ and $k_2^*=k_2^*(K)\in\N$ such that if $G$ is a group with finite symmetric generating set $S$ containing the identity and
\[
|S^{2n}|\le K|S^n|
\]
for some $n\ge n_0^*$ then there exist normal subgroups $H,\Gamma\normal G$ with $H\le\Gamma$ such that $H\subseteq S^{O^*_K(n)}$, such that $\Gamma/H$ is nilpotent of rank at most $O_K^*(1)$ and class at most $6\log_2K$, and such that $[G:\Gamma]\le k_2^*$.
\end{theorem}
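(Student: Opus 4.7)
The plan is to reduce the doubling hypothesis to a statement about approximate groups, and then invoke the refined Breuillard--Green--Tao structure theorem for approximate groups with explicit bookkeeping of the nilpotency class.

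First, by the Plünnecke--Ruzsa inequality, the bound $|S^{2n}|\le K|S^n|$ upgrades to $|S^{cn}|\le K^{O(c)}|S^n|$ for every constant $c$. In particular $|S^{4n}|\le K^{O(1)}|S^{2n}|$, so $S^{2n}$ has tripling $K^{O(1)}$, and by the tripling-to-approximate-group lemma (\cref{lem:tripling->AG}) some iterated product $A:=S^{O(n)}$ is a $K^{O(1)}$-approximate group with $|A|\asymp_K|S^n|$. This places us in position to apply the BGT machinery.

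Second, invoke the refined Breuillard--Green--Tao structure theorem \cite[Theorem 2.12]{bgt}, as indicated in the footnote of the statement. This produces a finite subgroup $H$ normalised by $A$ and contained in a bounded power $A^{O(1)}$, together with a coset nilprogression $HP$ of rank $O_K^*(1)$ and class at most $6\log_2K$, with $HP\subseteq A^{O(1)}$ and $A\subseteq X\cdot HP$ for some $X$ of size $O_K^*(1)$. The explicit $6\log_2K$ bound on the class is the reason for invoking Theorem~2.12 rather than the softer Corollary~1.7: it comes from the Gromov-style iteration on nested approximate subgroups where, at each step, the doubling constant effectively halves while the nilpotency step increases by one, so one exhausts the filtration after $\log_2K$ steps (up to the factor $6$ which tracks the loss inside a single step of the iteration).

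Third, extract the data. Set $\Gamma:=\la HP\ra$; replacing it by the intersection of its $G$-conjugates if necessary (which incurs only bounded index since $G/\Gamma$ acts on the cosets covered by $X$), we obtain $\Gamma\normal G$ with $[G:\Gamma]\le k_2^*(K)$. The group $H$ is normal in $G$ because it is characterised as the set of elements of $\Gamma$ whose conjugation action on the nilprogression generators is trivial modulo high powers, a characterisation preserved by $G$-conjugation. Then $\Gamma/H$ is nilpotent of rank $O_K^*(1)$ and class at most $6\log_2K$, and $H\subseteq A^{O(1)}\subseteq S^{O_K^*(n)}$.

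The main obstacle is the effective class bound $6\log_2K$: the rank bound and the index bound are ineffective consequences of Hrushovski's Lie model theorem (via the ultrafilter precompactness argument at the core of BGT), but the class bound requires explicit quantitative control of how the nilprogression's step grows as one passes to nested approximate subgroups of smaller doubling. Executing this quantitative iteration, rather than appealing to the qualitative statement, is precisely what \cite[Theorem 2.12]{bgt} accomplishes, and is the reason one obtains an \emph{effective} function of $K$ (namely $6\log_2K$) here rather than the ineffective $O_K^*(1)$ that would come from the softer Corollary~1.7.
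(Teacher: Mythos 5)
This theorem is not proved in the paper: it is quoted verbatim from Breuillard--Green--Tao \cite[Corollary 11.2]{bgt}, and the paper's ``proof'' consists entirely of the citation together with the footnote explaining which of BGT's internal statements to use to extract the explicit class bound and the normality of $\Gamma$. Your proposal tries to reconstruct BGT's argument from scratch, and while the overall shape (approximate group, then structure theorem, then bookkeeping) is right, there is a genuine gap in the first step.

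The first step asserts that ``by the Pl\"unnecke--Ruzsa inequality, the bound $|S^{2n}|\le K|S^n|$ upgrades to $|S^{cn}|\le K^{O(c)}|S^n|$''. Pl\"unnecke--Ruzsa is an abelian (or, more generally, $K$-approximate-group) statement and simply does not hold for arbitrary symmetric sets in a non-abelian group: small doubling of a ball does \emph{not} imply small tripling at comparable cost. This is precisely the content of the paper's companion result \cref{thm:doubling=>tripling}, which is a theorem in its own right and, moreover, yields the much weaker bound $\exp(\exp(O(K^2)))$ rather than $K^{O(1)}$, and only after the additional hypothesis $n\ge 2K^2$. Likewise, the Ruzsa covering lemma applied to a ball requires control of $|S^{3n}|$, not $|S^{2n}|$, so you cannot pass from the doubling hypothesis to an approximate group by a covering argument either. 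BGT's actual mechanism for \cite[Corollary 11.2]{bgt} does not route through an explicit doubling-to-tripling inequality at all: they pass to an ultraproduct, in which the sequence of balls becomes a (locally compact) approximate group to which the Hrushovski Lie model machinery applies directly; this is also why the threshold $n_0^*(K)$ and the containment $H\subseteq S^{O^*_K(n)}$ come out ineffective. Your second and third steps are broadly aligned with the footnote's pointer to \cite[Theorem 2.12]{bgt}, but the justification you offer for $H\normal G$ (``characterised as the set of elements whose conjugation action on the nilprogression generators is trivial'') is not an argument BGT make and would need substantial work to turn into a proof; their own proof of \cite[Corollary 11.2]{bgt} establishes $\Gamma\normal G$ directly, which is why the present paper's footnote simply refers to that proof.
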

\begin{remark*}
One can obtain the improved bound of $H\subseteq S^{12n}$ at the expense of bounding the nilpotency class by $O(K^2\log K)$ instead of $6\log_2K$; see \cite[Theorem 2.12]{bgt}.
\end{remark*}
The same methods that allow us to optimise the bounds in \cref{thm:bgt.gromov} allow us to effectivise the bounds on the rank and index in \cref{thm:bgt.doubling}, as follows.
\begin{theorem}\label{thm:bgt.doubling.k.effective}
For every $K\ge1$ there exists $n_0^*=n_0^*(K)\in\N$ such that if $G$ is a group with finite symmetric generating set $S$ containing the identity and
\[
|S^{2n}|\le K|S^n|
\]
for some $n\ge n_0^*$ then there exist normal subgroups $H,\Gamma\normal G$ with $H\le\Gamma$ such that $H\subseteq S^{O_K^*(n)}$, such that $\Gamma/H$ is $\exp(\exp(O(K^2)))$-nilpotent of class at most $6\log_2K$, and such that $[G:\Gamma]\le g(\exp(\exp(O(K^2))))$. If
\begin{equation}\label{eq:bgt.tripling.hyp}
|S^{3n}|\le K|S^n|,
\end{equation}
then we can in fact obtain that $\Gamma/H$ is $\exp(O(\log^3K))$-nilpotent and $[G:\Gamma]\le g(\exp(O(\log^3K)))$, without changing any of the other conclusions.
\end{theorem}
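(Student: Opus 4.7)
The strategy is to reduce the doubling hypothesis to a relative polynomial-volume hypothesis to which \cref{thm:rel.hom.dim} applies, thereby obtaining the sharp $g(d)$ index bound and the $d$-nilpotence conclusion. From \cref{thm:bgt.doubling} we already obtain normal subgroups $H,\Gamma\normal G$ with $H\subseteq S^{O^*_K(n)}$, the class bound $c\le 6\log_2 K$, and the remaining structural conditions; what is missing is only the replacement of the ineffective bounds on the rank of $\Gamma/H$ and on the index $[G:\Gamma]$ by the claimed effective ones.

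Since doubling at scale $n$ makes $T:=S^n$ a $K$-approximate set, it follows from Pl\"unnecke--Ruzsa that $T$ is a genuine $K^{O(1)}$-approximate group in the doubling case; under the stronger tripling hypothesis \eqref{eq:bgt.tripling.hyp}, $T$ is directly a $K$-approximate group, with no Pl\"unnecke--Ruzsa blow-up. Regarding $T$ as a new symmetric generating set of $\langle S\rangle$ containing the identity, the doubling (resp.\ tripling) hypothesis at scale $n$ for $S$ becomes a doubling (resp.\ tripling) hypothesis at scale $1$ for $T$. An effective form of the Breuillard--Green--Tao approximate-group theorem, combined with the Lie progression framework of the present paper, then controls $T$ by a Lie progression $P$ whose dimension $D$ is effectively bounded in terms of $K$: explicitly $D\le\exp(\exp(O(K^2)))$ for doubling (the double exponential reflecting the compounded cost of the Pl\"unnecke--Ruzsa step and the effective Lie model), and $D\le\exp(O(\log^3 K))$ under tripling (avoiding the Pl\"unnecke--Ruzsa step altogether).

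Once $T$ is controlled by a Lie progression of effective dimension $D$, the polynomial bound $|T^m|\ll_K m^{D+1}|T|$ for $m\ge1$ is automatic, supplying the relative polynomial-volume hypothesis of \cref{thm:rel.hom.dim}. Applying that theorem to $(G,T)$ with parameter $d=D$ at a suitable scale $m=O^*_K(1)$ in terms of $T$ (equivalently $O^*_K(n)$ in terms of $S$) produces $\Gamma'\normal G$ with $[G:\Gamma']\le g(D)$ and $\Gamma'/H'$ that is $D$-nilpotent, for some $H'\normal G$ with $H'\subseteq T^m=S^{O^*_K(n)}$. Intersecting $\Gamma'\cap\Gamma$ and combining $H'$ with the $H$ from \cref{thm:bgt.doubling}, while retaining the sharper class bound $c\le6\log_2K$ from the latter and the $\ell$-nilpotence bound $\ell\le D$ from the former, yields the stated conclusion; the required sharp index bound $g(D)$ is then inherited directly from \cref{thm:rel.hom.dim}.

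The main obstacle is extracting the effective dimension bound for the Lie progression attached to $T$. In the original Breuillard--Green--Tao/Hrushovski paradigm this bound arises from a nonstandard ultralimit argument and is therefore ineffective; making it explicit requires quantitative versions of the Hrushovski Lie model theorem, of Sanders's noncommutative Pl\"unnecke--Ruzsa estimates, and of the escape-from-subvarieties step that drops the dimension of the Lie model. This bookkeeping is technically intricate and is responsible for the double-exponential shape of the bound in the doubling case; by contrast, the tripling case bypasses the Pl\"unnecke--Ruzsa layer entirely, which is what allows the substantial improvement to $\exp(O(\log^3 K))$. Once the effective dimension is in hand, the remainder of the argument — applying \cref{thm:rel.hom.dim} and reconciling its output with that of \cref{thm:bgt.doubling} — is essentially routine.
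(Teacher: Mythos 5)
There is a genuine gap, and it sits precisely in the step you describe as "essentially routine." You propose to obtain the class bound $c\le6\log_2 K$ from the $\Gamma$ of \cref{thm:bgt.doubling} and the index bound $g(D)$ from the $\Gamma'$ of \cref{thm:rel.hom.dim}, and then "reconcile" them by intersecting. But $\Gamma'\cap\Gamma$ has index at most $g(D)\cdot[G:\Gamma]$, and $[G:\Gamma]$ from \cref{thm:bgt.doubling} is ineffective; you cannot "inherit" the bound $g(D)$ for the intersection. Conversely, if you keep only $\Gamma'$ to preserve the index bound, \cref{thm:rel.hom.dim} gives only $D$-nilpotence and hence class $\le D\approx\exp(\exp(O(K^2)))$, far worse than $6\log_2 K$. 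The paper avoids this by never splitting the two bounds across different subgroups: \cref{prop:XP.doubling} applies \cref{thm:nilp.frei} inside a nilpotent quotient of class $\le6\log_2 K$ (coming from \cref{st:bgt.doubling}), which caps the \emph{step} of the resulting nilpotent progression at $6\log_2K$ \emph{and} its rank at $\exp(\exp(O(K^2)))$ (resp.\ $\exp(O(\log^3K))$) simultaneously; \cref{thm:index.bound} then produces a single normal subgroup $K$ with $[G:K]\le g(\dim P_0)$ and $\gamma_{c+1}(K)\subseteq S^{O^*_K(n)}$, and the final step uses \cref{thm:doubling.multiscale} with the improved $k_i$ and \cref{lem:sym.grp.normal} to realise $\Gamma=\la Q_0\ra$ and $H$ as its (normal) symmetry group. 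In particular, the paper's proof never invokes \cref{thm:rel.hom.dim}.

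There is also a secondary error in your account of where the dimension bound comes from. There is no Pl\"unnecke--Ruzsa inequality in nonabelian groups, and it is \emph{not} the case that doubling at a single scale makes $S^n$ a $K^{O(1)}$-approximate group; this is exactly why \cref{thm:doubling=>tripling} only gives an $\exp(\exp(O(K^2)))$-approximate group, which is the true source of the double exponential. Likewise, the paper does not prove an effective Lie model theorem or an effective "escape from subvarieties" step; the dimension bound comes from \cref{thm:nilp.frei} (the nilpotent Freiman theorem), which gives a nilpotent progression of rank $e^{O(s^2)}K\log^{O(s)}2K$ for an $s$-step nilpotent ambient group, applied with $s\le6\log_2K$. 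These misidentifications do not by themselves invalidate the numerology, but they are worth correcting because they lead you to the wrong internal structure (and in particular to the mistaken belief that one can decouple the class bound from the construction of the progression).
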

We also provide a more refined statement, along the lines of \cref{thm:fine.scale.intro} but with the polynomial-growth assumption replaced by a volume-doubling assumption; see \cref{thm:doubling.multiscale}.

\begin{remark}
We believe it is within reach to bound the nilpotence by $O(\log K)$ in \cref{thm:bgt.doubling.k.effective} under the small-tripling assumption \eqref{eq:bgt.tripling.hyp}. This would also lead to the bound $[G:\Gamma]\le g(O(\log K))$. Such bounds should follow from the proof of \cref{thm:rel.hom.dim} if one could show that if $S$ is a symmetric generating set for a group with a bounded-index torsion-free $d$-nilpotent subgroup $\Gamma$ then $A=S^n\cap\Gamma$ satisfies $|A^2|\ge2^d|A|$ for sufficiently large $n$. Replacing \cref{prop:dimension.bound} by such a result in the proof of \cref{thm:rel.hom.dim} should lead to the desired bound; indeed, even the weaker bound $|A^2|\ge(1+\eps)^d|A|$ should suffice, although this would lead to a worse constant in the resulting $O(\log K)$.

In the case of a torsion-free \emph{abelian} group, a result like this follows from the \emph{Freiman $2^n$ theorem} \cite[Theorem 5.20]{tao-vu}, which states roughly that if $A\subseteq\R^d$ is a finite set with doubling constant $K<2^d$ then there is a large subset of $A$ contained in an affine subspace of dimension strictly less than $d$. We are not aware of a corresponding result for finite subsets of simply connected nilpotent Lie groups in general.
\end{remark}

Despite the similarity between the conclusions of \cref{thm:bgt.gromov,thm:bgt.doubling}, an analogue of \cref{thm:tao} for balls satisfying a doubling condition has so far proved elusive. The closest thing to such a result in the existing literature seems to be a theorem of Breuillard and the second author \cite[Theorem 1.1]{bt}, which does imply a polynomial upper bound on $|S^{mn}|$ for all $m\in\N$, but only under a slightly stronger hypothesis than \eqref{eq:doubling}, namely $|S^{2n+1}|\le K|S^n|$.

In a short companion paper \cite{ttdoubling}, we show that the hypothesis of this result can in fact be weakened to $|S^{2n}|\le K|S^n|$, as follows.
\begin{theorem}[{\cite[Theorem 1.2]{ttdoubling}}]\label{thm:doubling=>tripling}
Let $K\ge1$. Suppose $S$ is a symmetric subset of a group containing the identity, and $|S^{2n}|\le K|S^n|$ for some integer $n\ge2K^2$. Then $|S^{3n}|\le\exp(\exp(O(K^2)))|S^n|$, and $S^{2n}$ is an $\exp(\exp(O(K^2)))$-approximate group.
\end{theorem}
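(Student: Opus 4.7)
The plan is to derive small tripling $|S^{3n}|\le\exp(\exp(O(K^2)))|S^n|$ at scale $n$ from the doubling hypothesis, and then invoke \cref{lem:tripling->AG} to obtain the approximate-group conclusion for $S^{2n}$.

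The central step is passing from doubling to tripling. In an abelian group this would be immediate from the Pl\"unnecke--Ruzsa inequality, which gives $|3A|\le K^3|A|$ from $|2A|\le K|A|$; however, in the non-commutative setting small doubling of a general symmetric set does \emph{not} imply small tripling, so one must exploit the additional structure available from the fact that the set in question is an iterated product $S^n$ of a generating set. My plan is to use a multiplicative pigeonhole on the chain
\[
|S^n|\le|S^{n+1}|\le\cdots\le|S^{2n}|,
\]
whose extreme ratio is at most $K$, to locate a scale $m\in[n,2n)$ at which the local growth function $j\mapsto|S^{m+j}|$ is tightly controlled over a long window. The hypothesis $n\ge 2K^2$ is used precisely here: it guarantees enough scales in the chain to extract one with sufficiently slow local growth, whereas without it the pigeonhole cannot produce a window that is simultaneously stable and wide.

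With such a stable scale in hand, I would combine the Ruzsa covering lemma with non-commutative Pl\"unnecke--Ruzsa-type inequalities (in the spirit of Tao and Petridis) to produce a covering of $S^{3n}$ by a controlled number of translates of $S^n$. Each application of covering introduces a multiplicative loss that can be as large as $\exp(O(K^2))$, and the argument must be iterated to propagate the covering across the range of scales between $m$ and $3n$; this accumulation of losses produces the double-exponential bound in the conclusion. Once the tripling bound $|S^{3n}|\le\exp(\exp(O(K^2)))|S^n|$ is established, the approximate-group property of $S^{2n}$ follows immediately from \cref{lem:tripling->AG}.

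The main obstacle is maintaining the effective lower bound $n\ge 2K^2$ on the scale at which doubling is assumed. A non-effective approach via the Breuillard--Green--Tao classification of approximate groups would yield structurally clean conclusions, but would require $n$ to exceed an ineffective function of $K$; the concrete, small lower bound on $n$ in our hypothesis forces the use of iterative covering arguments whose exponential losses compound into the double-exponential factor. Proving this without the intermediate tripling step--for instance by directly producing an approximate group inside $S^{O(n)}$ from the doubling hypothesis--would be the natural route to improving the quantitative bound, but any such route appears to require new combinatorial input beyond the Pl\"unnecke--Ruzsa machinery.
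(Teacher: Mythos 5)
The paper does not actually prove this statement: it is quoted verbatim from the companion paper \cite{ttdoubling}, where the proof appears. There is therefore no in-paper argument for you to match against. Judged on its own terms, however, your sketch contains a genuine gap at its core.

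You correctly identify the structure of the problem: abelian Pl\"unnecke--Ruzsa is unavailable, small doubling of a general symmetric set does not yield small tripling, and one must exploit the fact that the doubling set is a ball $S^n$ in a generating set together with the scale condition $n\ge 2K^2$. You are also right that, once tripling at scale $n$ is in hand, \cref{lem:tripling->AG} immediately gives that $S^{2n}$ is an approximate group with the stated quality, so the final step is fine. The difficulty is that the middle of your argument is not actually an argument. Ruzsa covering applied to $A=B=S^n$ gives only $S^n\subseteq XS^{2n}$, which is vacuous; Tao's non-commutative Pl\"unnecke (\cref{prop:doubling.covered.by.app.grp}) gives $S^n\subseteq XU$ for some $O(K^{18})$-approximate group $U\subseteq S^{2n}$, but one cannot then compute $(XU)^3$ because $X$ need not normalise $U$; and Petridis's inequality controls $|A^mB|$ only for a \emph{minimising} $B$, which $S^n$ has no reason to be. None of the tools you name produces ``a covering of $S^{3n}$ by a controlled number of translates of $S^n$'' from $|S^{2n}|\le K|S^n|$ alone, and you do not explain what the pigeonholed ``stable scale'' $m\in[n,2n)$ actually contributes beyond the observation that one local ratio $|S^{m+1}|/|S^m|$ is close to $1$. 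That observation is the right place to start (and is where $n\ge 2K^2$ enters), but converting it into control of $|S^{3n}|$ is precisely the non-trivial content of the theorem, and it is left unexplained. As written, ``iterate covering arguments and accumulate exponential losses'' is a description of the expected shape of the bound, not a derivation of it.

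In short: the reductions at either end (pigeonhole on scales, and \cref{lem:tripling->AG} at the finish) are sound, but the mechanism by which a well-chosen scale $m$ turns doubling into tripling is missing, and the cited covering/Pl\"unnecke machinery does not supply it by itself.
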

In the same paper, we prove similar results for locally compact groups and vertex-transitive graphs. Combining the result on vertex-transitive graphs with our other work, we can then also, at last, provide an analogue of \cref{thm:tao} under a doubling hypothesis, generalised to the setting of vertex-transitive graphs and with effective bounds on the number and degree of the monomial pieces.
\begin{corollary}[polynomial growth of balls with bounded doubling]\label{cor:Tao.doubling}
For every $K\ge1$ there exists $n_0^*=n_0^*(K)\in\N$ such that if $\G$ is a connected, locally finite vertex-transitive graph such that $\beta_\G(2n)\le K\beta_\G(n)$ for some $n\ge n_0$ then there exists a non-decreasing continuous piecewise-monomial function $f:[1,\infty)\to[1,\infty)$ with $f(1)=1$ and at most $\exp(\exp(O(K^2)))$ distinct pieces, each with degree a non-negative integer at most $\exp(\exp(O(K^2)))$, such that
$\beta_\G(m)\asymp_d^*f(m/n)\beta_\G(n)$
for every $m\ge n$. If $\beta_\G(3n)\le K\beta_\G(n)$ then the number of pieces and their degrees can be bounded above by $\exp(O(\log^3K))$.
\end{corollary}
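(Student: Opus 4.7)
The plan is to mimic the deduction of \cref{cor:tao} from \cref{thm:detailed.fine.scale}, substituting in the doubling analogues developed elsewhere in the paper. First, I would invoke the vertex-transitive version of \cref{thm:doubling=>tripling} from \cite{ttdoubling} to convert $\beta_\G(2n)\le K\beta_\G(n)$ into an effective tripling estimate $\beta_\G(3n)\le K'\beta_\G(n)$ with $K'=\exp(\exp(O(K^2)))$; under the stronger tripling hypothesis this step is trivial with $K'=K$. Iterating gives $\beta_\G(m)\ll K'(m/n)^{\log_3K'}\beta_\G(n)$ for all $m\ge n$, placing the graph in a polynomial-volume regime above scale $n$.

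Next, I would apply \cref{thm:doubling.multiscale}, the promised multi-scale refinement under a doubling hypothesis, in its vertex-transitive form (obtained by the same Trofimov-style reduction used to extend \cref{cor:tao} from Cayley graphs to vertex-transitive graphs). This provides a finite sequence of scales $r_0\asymp n<r_1<\cdots<r_{d'}$, normal Lie progressions $P_i$ and subsets $X_i$ approximating the balls of $\G$ on each range $[r_i,r_{i+1})$ in the style of \cref{thm:detailed.fine.scale}\ref{conc:S=XiPi}, together with a further subdivision of each such range into at most $\hdim P_i-\dim P_i+1$ subintervals on which $\beta_\G(m)$ is monomial with integer degree in $[\dim P_i,\hdim P_i]$, in the style of \ref{conc:growth}. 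Concatenating these monomial segments produces the non-decreasing continuous piecewise-monomial function $f$ with $\beta_\G(m)\asymp f(m/n)\beta_\G(n)$ for $m\ge n$.

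The remaining step is bookkeeping: the total number of pieces is at most $\sum_i(\hdim P_i-\dim P_i+1)=O((\dim P_0)^3)$, and each piece-degree is at most $\hdim P_0\le\tfrac12\dim P_0(\dim P_0-1)+1$. By the vertex-transitive analogue of \cref{thm:bgt.doubling.k.effective}, one has $\dim P_0\le\exp(\exp(O(K^2)))$ under doubling and $\dim P_0\le\exp(O(\log^3K))$ under tripling, and these double-exponential (respectively $\exp(O(\log^3K))$) bounds absorb the cubic overhead to yield exactly the claimed bounds on both the piece count and the piece degrees.

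I expect the main obstacle to lie not in the counting but in the vertex-transitive extension of \cref{thm:doubling.multiscale}. The structure theorems are naturally phrased for Cayley graphs of approximate groups, and transferring them to a general connected, locally finite vertex-transitive graph requires setting up a suitable group action with finite stabilisers on $\G$ and tracking how all scales, Lie progressions and covering constants survive the resulting quasi-isometry, with everything depending only on $K$. The templates in \cite{ttTrof} and \cite{ttdoubling} should provide the framework, but verifying that every multi-scale conclusion survives this transfer (in particular the intra-range monomial decomposition underlying \ref{conc:growth}) is the most delicate part of the argument.
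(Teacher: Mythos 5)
Your proposal matches the paper's own (very terse) deduction: the paper proves this by running the proof of \cref{cor:tao} with \cite[Theorem~2.3]{ttTrof} replacing \cref{cor:trof}, \cref{thm:doubling.multiscale} replacing \cref{thm:detailed.fine.scale}, and an application of \cite[Corollary~1.4]{ttdoubling} to make the Trofimov-style reduction work under the doubling hypothesis, which is exactly the combination of ingredients you identify. One small imprecision worth flagging: \cref{thm:doubling.multiscale} has no analogue of conclusion~\ref{conc:growth}, so the intra-range monomial decomposition must come, as in the proof of \cref{cor:tao}, from applying \cref{prop:growthfunction} directly to each $P_i$ it produces (and the piece count then follows the same boundary-counting as in \cref{cor:tao}, with $d$ replaced by the $\exp(\exp(O(K^2)))$ or $\exp(O(\log^3K))$ dimension bound), rather than being an explicit conclusion of the multi-scale theorem.
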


\paragraph{Uniform finite presentation for groups of polynomial growth.} Easo and Hutchcroft \cite{unif.finite.pres} have very recently proved a uniform version of the classical fact that a group of polynomial growth is finitely presented. As well as being of independent interest, their result is a crucial ingredient in their resolution of Schramm's famous \emph{locality conjecture} in percolation \cite{Eas-Hut}.

To state their result formally, we borrow some terminology from their paper. Let $G$ be a group with finite generating set $S$, so that $G\cong F_S / R$ for some normal subgroup $R$ of the free group $F_S$. For each $n \in\N$, let $R_n$ be the set of words of length at most $2^n$ in the free group $F_S$ that are equal to the identity in $G$, and let $\langle R_n \rangle^{F_S}$ be the normal subgroup of $F_S$ generated by $R_n$. Say that $(G,S)$ has a \emph{new relation on scale $n$} if $\langle R_{n+1} \rangle^{F_S}\ne\langle R_n \rangle^{F_S}$.
\begin{theorem}[Easo--Hutchcroft \cite{unif.finite.pres}]\label{thm:fin.pres}
For each $K>0$ there exist $n_0^*=n_0^*(K)\in\N$ such that if $G$ is a group and $S$ is a finite symmetric generating set for $G$ containing the identity and satisfying $|S^{3n}|\le K|S^n|$ for some integer $n\ge n_0^*$ then
\[
\#\Bigl\{m\in \mathbb{N} : m\ge \log_2n \text{ and $(G,S)$ has a new relation on scale $m$} \Bigr\}\ll^*_{K,|S|}1.
\]
\end{theorem}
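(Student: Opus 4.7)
The plan is to combine the fine-scale structure theorem with the large-scale simple connectedness technology developed in \cref{sec:sc}. First, since $|S^{3n}|\le K|S^n|$ gives tripling (hence a fortiori doubling) at scale $n$, I would invoke the doubling analogue of \cref{thm:detailed.fine.scale} promised in \cref{thm:doubling.multiscale}, producing a finite sequence of normal $O_K(1)$-rational Lie progressions $P_0,P_1,\ldots,P_{d'}$ in $O_K(1)$-upper-triangular form at scales $n^{1/2}\ll_K r_0 < r_1 < \cdots < r_{d'}$, with $d'\le O_K(1)$, injectivity radii satisfying $\inj P_i\gg_K r_{i+1}/r_i$, and coset representatives $X_i$ of size $O_K(1)$ such that $X_iP_i^{\lfloor m/r_i\rfloor}\subseteq S^m\subseteq X_iP_i^{O_K(m/r_i)}$ for every $m\ge r_i$. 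Each $P_i$ is projected via a lattice $\Gamma_i$ in a simply connected nilpotent Lie group $N_i$, and consecutive progressions are linked by surjective Lie group homomorphisms $\beta_{i+1}:N_i\to N_{i+1}$.

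Next, I would fix $i$ and a dyadic exponent $m$ with $r_i\le 2^m<2^{m+1}\le r_{i+1}$, and argue that $(G,S)$ has no new relation on scale $m$. The strategy is to single out, once and for all, a finite set of \emph{defining relators} associated to each $P_i$: the commutator words witnessing $C$-upper-triangular form of $(u;L)$, a presentation of the finite symmetry group $H_i$, and the transition words built from $\beta_{i+1}$ that re-express a generator of $P_i$ in terms of generators of $P_{i+1}$. All of these have $S$-length $O^*_K(r_i)$ once $r_0$ is fixed, so they all belong to $R_m$ for every $m\ge\log_2 r_i$. Granted these relators, the `finitary Mal'cev completion' $P_\R(u;L)$ inside $N_i$ serves as a simply connected cover of $S^{2^m}$ that is genuinely injective up to scale $\inj P_i\gg_K r_{i+1}/r_i$. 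Any loop of length at most $2^{m+1}$ in $\Cay(G,S)$ therefore lifts to a loop in $N_i$, which is null-homotopic; discretising a null-homotopy by means of the quantitative finitary path-lifting machinery of \cref{sec:sc} expresses the loop as a product of conjugates of the defining relators, showing $\langle R_{m+1}\rangle^{F_S}=\langle R_m\rangle^{F_S}$.

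A new relation can thus occur only at a scale $m$ for which $2^m$ is comparable to one of the transition radii $r_i$ (including the initial scale $r_0$), giving at most $d'+1\le O_K(1)$ scales at which the model must be updated. At each such scale, a bounded number of new defining relators suffice \emph{once} one knows how to express the generators of the new $P_i$ as words in $S$; at the initial scale $\asymp\log_2 r_0$, however, this rewriting consumes up to $O^*_{K,|S|}(1)$ relators because it must account for every word in $S$ of length $O^*_K(r_0)$, which is where the $|S|$-dependence enters. Summing yields the bound $\ll^*_{K,|S|}1$. The principal obstacle I foresee is the rigorous verification of the covering/path-lifting claim with uniform control throughout the whole window $[r_i,r_{i+1})$: one must show that the `thickness' and filling properties of the lift depend only on the injectivity radius and upper-triangular constants of $P_i$, not on the particular dyadic scale $2^m$ inside the window. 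This uniformity is exactly what the quantitative large-scale simple connectedness results of \cref{sec:sc} are designed to deliver, and the argument above essentially packages them together with the multiscale Lie-progression description.
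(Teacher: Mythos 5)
You have the right high-level strategy — invoke the doubling analogue \cref{thm:doubling.multiscale} of the fine-scale theorem, then argue that within each window $[r_i,r_{i+1})$ the lift of $P_i$ prevents new relations, so new relations can only appear near the finitely many transition radii — and this is indeed the paper's route. There are, however, three issues.

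First, and most substantively, your mechanism for ruling out new relations within a window is vaguer than what is actually needed, and the way you describe it would be hard to make rigorous. You propose to single out explicit defining relators, lift loops to $N_i$, and discretise a null-homotopy. The paper instead lifts $P_i$ to a Lie progression $\hat P_i$ in the truncated presentation $\hat G=\la S\mid R_S(4\eta r_i)\ra$, shows via \cref{prop:index.reduc.tf} (which is where the large-scale simple connectedness enters, in the proof that $\inj\hat P_i=\infty$) that $\hat X\hat P_i^m\subseteq\hat S^{mr_i}\subseteq\hat X\hat P_i^{2\eta m}$, and then deduces directly that $\ker\psi_i\cap\hat S^{\lfloor\alpha r_{i+1}\rfloor}=\{1\}$ for some $\alpha\asymp_K1$. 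This last fact \emph{is} precisely the statement that no word of length $\le\alpha r_{i+1}$ that dies in $G$ is a new relation at that scale, with no need to build filling diagrams: the key observation is simply that a non-trivial element of $\hat S^{\lfloor\alpha r_{i+1}\rfloor}$ lies in some $\hat x\hat\pi_i(q)$ with either $x\ne1$, or $q\in\hat H_i\setminus\{1\}$, or $q\in\tilde P_i^{\inj P_i}\setminus\hat H_i$, and in each case its image under $\psi_i$ is visibly non-trivial. Your informal appeal to ``the finitary Mal'cev completion as a simply connected cover of $S^{2^m}$'' glosses over exactly the point where the work happens.

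Second, your account of where the $|S|$-dependence enters is wrong, and — worse — unnecessary. You suggest the initial scale consumes $O^*_{K,|S|}(1)$ relators because one ``must account for every word in $S$ of length $O^*_K(r_0)$''. But the statement counts \emph{scales} $m$ with a new relation, not relators, so the size of $R_m$ is not what needs to be bounded. In fact the paper's argument produces a bound depending only on $K$ (this is \cref{cor:fin.pres}, which strictly improves \cref{thm:fin.pres}): the number of bad scales is at most the number of integers in $\bigcup_{i=1}^{d'}\bigl(\log_2 r_i+[\log_2\alpha,\log_2(4\eta)]\bigr)$ together with those in $[\log_2 n,\log_2(4\eta r_0)]$, and each of these sets has $\ll^*_K1$ elements. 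No $|S|$-dependence is required anywhere.

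Third, a minor but genuine slip: you write $n^{1/2}\ll_K r_0$ for the first scale, which is the bound from the polynomial-volume fine-scale theorem. Under the doubling hypothesis, \cref{thm:doubling.multiscale} gives $n\le r_0\ll^*_{K,R}n$. This matters because to bound the number of scales between $\log_2 n$ and $\log_2(4\eta r_0)$, you need $r_0/n\ll^*_K1$; a lower bound $r_0\gg n^{1/2}$ on its own (without the matching upper bound $r_0\ll^*_Kn$ or at least $r_0\le n$) would not suffice.
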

Using \cref{thm:doubling=>tripling}, we can of course immediately weaken the hypothesis $|S^{3n}|\le K|S^n|$ of \cref{thm:fin.pres} to $|S^{2n}|\le K|S^n|$. As it turns out, we can also obtain \cref{thm:fin.pres} as a fairly quick corollary of the arguments of the present paper, and with a stronger conclusion in which the bound is uniform in $|S|$; see \cref{cor:fin.pres}. Proving this theorem in this way also leads to the following more quantitatively precise version under a polynomial-volume assumption.
\begin{corollary}\label{cor:fin.pres.poly}
Given $d\in\N_0$ there exist $\eps=\eps(d)>0$ and $n_0^*=n_0^*(d)\in\N$ such that if $G$ is a group with finite symmetric generating set $S$ containing the identity and
\[
|S^n|\le\eps n^{d+1}|S|
\]
for some $n\ge n_0^*$ then there exist $d'\le d$ and compact intervals $U_1,\ldots,U_{d'}$ of diameter at most $O_d(1)$ in $\R$ such that every scale greater than $\log_2n$ on which $(G,S)$ has a new relation lies in some $U_i$.
\end{corollary}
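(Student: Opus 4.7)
Fix $d\in\N_0$, choose $\eps=\eps(d)>0$ sufficiently small and $R=R(d)\in\N$ sufficiently large (both to be determined), and apply \cref{thm:detailed.fine.scale} to the hypothesis. This yields an integer $k\le d$ and scales $r_0<r_1<\cdots<r_k$ with $r_0\le n$, together with $O_d(1)$-rational, $O_d(1)$-upper-triangular Lie progressions $P_0,\ldots,P_k$ satisfying $\inj P_i\ge R$ and $\inj P_i\asymp_{d,R}r_{i+1}/r_i$ for $i<k$. Let $i_0$ be the largest index with $r_{i_0}\le n$, and for a constant $C=C(d)=O_d(1)$ to be chosen, set $U_j:=[\log_2 r_{i_0+j}-C,\log_2 r_{i_0+j}+C]$ for $j=1,\ldots,k-i_0$. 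There are at most $k-i_0\le d$ such intervals, matching the count in the corollary.

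\textbf{Step 2: no new relations strictly inside a $P_i$-regime.} I want to show that for any integer $m>\log_2 n$ with $m\notin\bigcup_j U_j$ no new relation appears at scale $m$. Any such $m$ satisfies $\log_2 r_i+C<m<\log_2 r_{i+1}-C$ for some $i\ge i_0$ (with the convention $r_{k+1}=\infty$), and by choosing $R$ large relative to $C$, conclusion \ref{conc:S=XiPi} of \cref{thm:detailed.fine.scale} gives $S^{2^{m+1}}\subseteq X_i P_i^{O_d(2^{m+1}/r_i)}$ with the $P_i$-power well within the injectivity radius, so that $\pi_i$ identifies this set faithfully with a subset of $\Gamma_i/H_i$.

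\textbf{Step 3: bounded-length presentation of $\Gamma_i/H_i$.} Because $P_i$ is in $O_d(1)$-rational $O_d(1)$-upper-triangular form, the lattice $\Gamma_i$ admits a Mal'cev-style presentation on the generators $u_1,\ldots,u_{\dim P_i}$ consisting of the $O_d(1)$ commutator identities $[u_j^{\pm1},u_k^{\pm1}]=$ word of length $\ll_d 1$ in $u_{k+1},\ldots,u_{\dim P_i}$ together with $O_d(1)$ rationality relations; adjoining $O_d(1)$ further relations describes the finite symmetry group $H_i$ (which has order $\ll_d 1$ by \ref{conc:|X|} and the commutative diagrams \ref{conc.comm.diag}). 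By \ref{conc:S=XiPi} and \ref{conc:X<S^g} each $u_j$ lies in $S^{O_d(r_i)}$, so pulled back to $F_S$ these defining relations are words of length $\ll_d r_i$. Choosing $C$ in terms of $d$ so that $2^{m-C}\gg_d r_i$ guarantees every defining relation lies in $R_m$ whenever $m>\log_2 r_i+C$. A Mal'cev normal-form reduction then rewrites any word of length $\le 2^{m+1}$ in $S$ that is trivial in $G$ as a product of conjugates of these defining relations, so $\langle R_{m+1}\rangle^{F_S}=\langle R_m\rangle^{F_S}$, establishing the claim.

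\textbf{Main obstacle.} The delicate step is controlling word lengths throughout the normal-form reduction of Step 3: each application of an upper-triangular commutator identity replaces a product $u_j u_k$ by $u_k u_j\cdot(\text{commutator word})$ and the commutator words involve strictly higher-indexed generators, but the cumulative blow-up must be kept to $O_d(2^{m+1})$ so that every intermediate word remains inside the injective image $\pi_i(\tilde P_i^{O_d(R)})$ and hence the local computation faithfully mirrors the nilpotent Lie group $N_i$ rather than losing information about $G$. The $O_d(1)$-upper-triangular form bounds the per-step length increase, and the large-$R$ choice ensures that the full reduction stays in the faithful range of $\pi_i$. Quantifying this into an explicit length bound, effective in $C$, $Q$, $R$ and $\dim P_i$, is the essential technical content and plays the role of the standard fact that a Mal'cev-coordinated nilpotent lattice has a bounded-complexity polycyclic presentation.
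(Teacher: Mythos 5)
Your plan — place intervals around the $\log_2 r_i$ and argue there are no new relations strictly inside each $P_i$-regime — is exactly the strategy of the paper's proof. But the execution has a factual error and, more importantly, leaves the central step unproved, which you yourself acknowledge in the ``Main obstacle'' paragraph.

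First, the factual error: you assert that $H_i$ has order $\ll_d 1$, citing conclusion \ref{conc:|X|}. That conclusion bounds $|X_i|$, not $|H_i|$. In fact $|H_i|$ cannot be bounded in terms of $d$ alone — for $\Z\times\Z_N$ with $d=1$ one eventually has $H_i\cong\Z_N$ with $N$ arbitrary. So ``adjoining $O_d(1)$ further relations describes $H_i$'' does not follow as stated.

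Second, and more seriously, Step~3 is the whole ballgame and your argument does not supply it. The issue is not (as you seem to fear) controlling intermediate word lengths in a Mal'cev normal-form reduction; it is that the reduction takes place in $\Gamma_i/H_i$ via $\pi_i$, whereas what you must produce is a filling in $F_S/\langle R_m\rangle^{F_S}$, and the identification of $S^{2^{m+1}}$ with a subset of $\Gamma_i/H_i$ is a bijection of sets, not a homomorphism of groups defined at the level of the free group. To transport a Mal'cev filling back to a consequence of $R_m$ you need to know that the truncated group $F_S/\langle R_m\rangle^{F_S}$ already looks like $G$ (equivalently, like $\Gamma_i/H_i$ up to cosets) on the relevant scale — which is essentially what you are trying to prove. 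The paper breaks this circularity with a genuinely different mechanism: for each $i$ it forms the truncated-presentation group $\hat G=\langle S\mid R_S(4\eta r_i)\rangle$ (the ``covering group'' of \cref{section:reducInftyRadius}), invokes \cref{prop:index.reduc.tf} to show that the lift $\hat P_i$ of $P_i$ to $\hat G$ has \emph{infinite} injectivity radius, and then checks directly by a case analysis that $\ker\psi_i\cap\hat S^{\lfloor\alpha r_{i+1}\rfloor}=\{1\}$, i.e.\ that any word of length $\le\alpha r_{i+1}$ trivial in $G$ is already trivial in $\hat G$. Behind \cref{prop:index.reduc.tf} sits the rough-covering-map theorem \cref{thm:k-sc.covering} and the large-scale path-lifting result \cref{prop:k-simply.conn}, which are precisely the tools that replace your ad hoc Mal'cev reduction and supply the quantitative control you identify as missing. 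So your outline is correct in spirit, but what you call the ``essential technical content'' is indeed the entire proof, and it is not reducible to standard Mal'cev normal-form facts; it requires the machinery of \cref{sec:sc} (or an equivalent) that your argument does not invoke.
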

\begin{remarks*}\mbox{}
\begin{enumerate}[label=(\arabic*)]
\item Each interval $U_i$ appearing in \cref{cor:fin.pres.poly} is situated at a scale corresponding to the interface between the progressions $P_{i-1}$ and $P_i$ appearing in \cref{thm:fine.scale.intro}. In our proof of \cref{thm:fin.pres} (or more precisely of \cref{cor:fin.pres}) we apply a result (\cref{thm:doubling.multiscale}) with a doubling hypothesis but a similar conclusion to \cref{thm:fine.scale.intro}, and again the scales at which new relations appear all lie near the interfaces between successive progressions in the conclusion of that theorem.
\item The bound of $d$ on the number of intervals $U_i$ in \cref{cor:fin.pres.poly} is sharp, as can be seen by considering groups of the form $G=\prod_{i=1}^d(\Z/m_i\Z)$ with respect to their standard generating sets.
\end{enumerate}
\end{remarks*}

\paragraph{Scaling limits.} An early application of \cref{thm:bgt.gromov}, by Benjamini, Finucane and the first author \cite{bft}, was to \emph{scaling limits} of sequences of vertex-transitive graphs. Briefly, a sequence of compact metric spaces $X_1,X_2,\ldots$ is said to \emph{Gromov--Hausdorff converge} or \emph{GH converge} to a compact metric space $X$ if there exist $(1+o(1),o(1))$-quasi-isometries $X_n\to X$.
The \emph{scaling limit} of the sequence $(X_n)$, if it exists, is the GH limit of the sequence $X_1',X_2',\ldots$, in which each $X_n'$ is the space $X_n$ with the metric scaled by $\diam(X_n)^{-1}$. Benjamini, Finucane and the first author's main result shows that if $(\G_n)$ is a sequence of finite connected vertex-transitive graphs with diameters tending to infinity satisfying $\diam(\G_n)\ge|\G_n|^\delta$ then $(\G_n)$ has a subsequence with a scaling limit that is a torus of finite dimension with an invariant Finsler metric \cite[Theorem 1]{bft}. An analogous result for infinite vertex-transitive graphs satisfying a relative polynomial-volume condition is given by \cite[Theorem 3.2.2]{bft}, and generalised in an earlier paper of ours \cite[Remark 2.7]{ttTrof}. See \cite{bft} for precise statements and definitions.

In the preprint \cite{ttGH}, we obtain a number of refinements of these results, for example featuring the addition of sharp bounds on the dimensions of the limiting objects; that preprint is largely self contained, but borrows a number of arguments from the present paper. Furthermore, \cref{thm:trof.finite} can be seen as a finitary analogue of the result on scaling limits of finite vertex-transitive graphs (although neither one immediately implies the other), and in a forthcoming paper \cite{ttLie} we will use the results of the present paper to give even more precise finitary analogues of this and other scaling-limit results from \cite{bft}.

\subsection{Proof of Theorem \ref{thm:fine.scale.intro}: structure and highlights}

The proof of \ref{thm:fine.scale.intro} uses both a range of deep theory and a number of substantial new ingredients. Starting from Breuillard, Green and Tao's \cref{thm:bgt.gromov} -- to which we will refer hereafter in this discussion as the \emph{BGT theorem} -- the first step is to produce a preliminary \emph{fine-scale} result along the lines of Theorem \ref{thm:fine.scale.intro} -- hereafter the \emph{main theorem}. Precisely, once we assume the conclusion of the BGT theorem, the \emph{nilpotent Freiman theorem} of the second author (quoted below as \cref{thm:nilp.frei}) can be used to show that $S^n$ can be approximated by a Lie progression. There is then the substantial challenge of converting this single Lie progression into the sequence of Lie progressions with large injectivity radius required in our main theorem. The theory we developed in our first paper \cite{proper.progs} points in this general direction, but to actually implement it in the proof of the main theorem requires a whole raft of supporting technical propositions concerning the structure and growth of Lie progressions. We prove most of these propositions in Chapter \ref{ch:progs}, and then combine all of the above ingredients to prove a preliminary fine-scale result in Chapter \ref{ch:fine-scale}.

\begin{figure}[t]
\begin{center}
\begin{tikzpicture}
\node[draw] at (0,12) {BGT};
\draw[->] (0,11.5) -- node[right] {Nilpotent Freiman + refined first paper} (0,9.5);
\node[draw] at (0,9) {Preliminary fine-scale theorem with BGT bounds};
\draw[->] (0,8.5) -- node[right] {Finitary Mann theorem} (0,6.5);
\node[draw] at (0,6) {BGT with effective bounds};
\draw[->] (0,5.5) -- node[right] {Nilpotent Freiman + refined first paper} (0,3.5);
\node[draw] at (0,3) {Preliminary fine-scale theorem with effective bounds};
\draw[->] (0,2.5) -- node[right] {Finitary Mann theorem + further refinement} (0,0.5);
\node[draw] at (0,0) {Main theorem};
\node at (-2,0) {$\subseteq$};
\node[draw] at (-5,0) {BGT with optimal bounds};
\end{tikzpicture}
\caption{Scheme of the proof of the main theorem (\cref{thm:fine.scale.intro}).}\label{fig}
\end{center}
\end{figure}
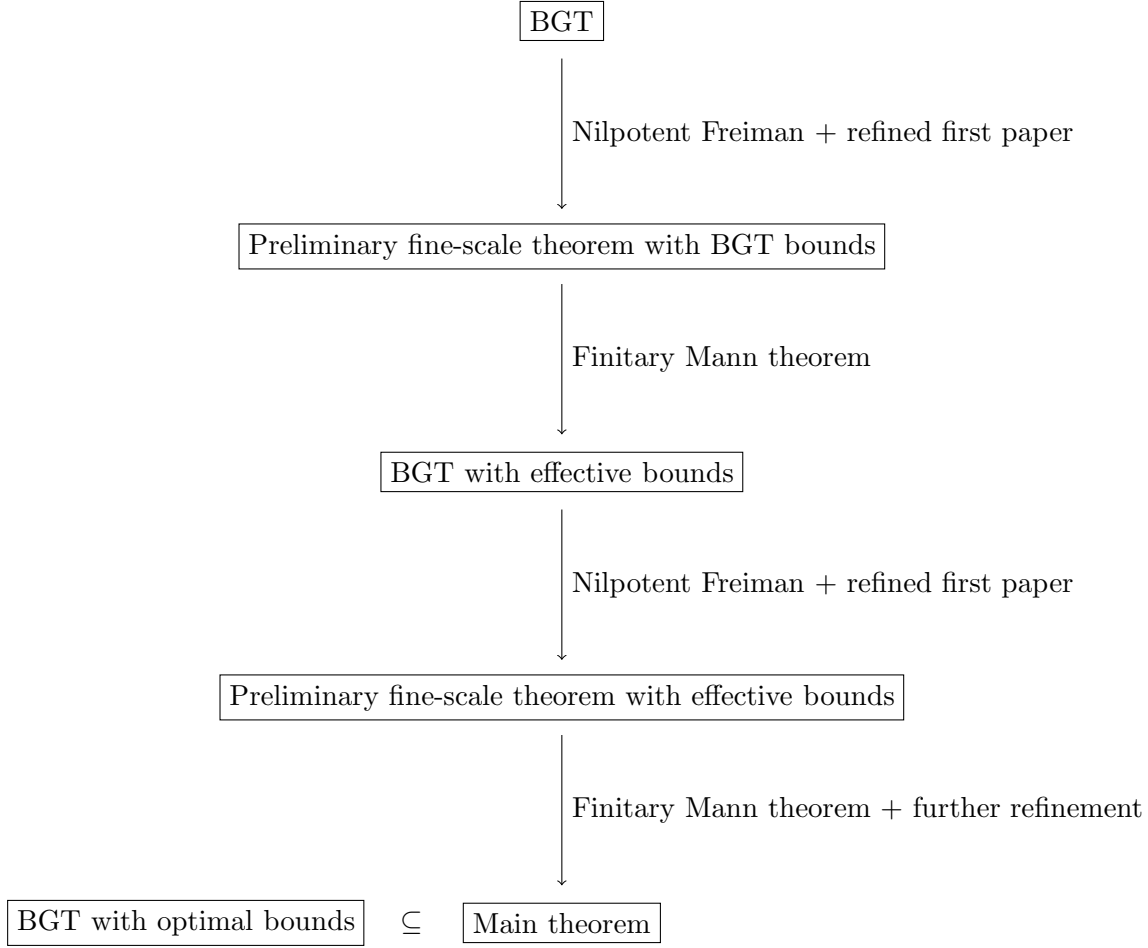

This is a good start, but the fine-scale result we end up with falls quantitatively well short of the optimal bounds we seek in our main theorem. One reason is technical, and involves losses that stem from passing to a subgroup of finite index in our first paper. We explain this issue in more detail, and also how to refine the results of our first paper to overcome it, in \cref{section:integral/rational}. A much more fundamental problem is that any fine-scale theorem obtained in this way will inherit the bounds of the BGT theorem we start with. Recalling that we obtain our optimal bounds in the BGT theorem as a corollary of the optimal bounds in our main theorem, this issue might at first glance appear to doom us to an eternity wandering backwards round a circular argument. However, and rather remarkably, it turns out that we are able to use our preliminary fine-scale theorem, with \emph{ineffective} bounds inherited from the original BGT theorem, to obtain a BGT theorem with \emph{effective} (though still not optimal) bounds. Feeding this in at the start and running the same argument again, we again obtain a fine-scale result, but this time with \emph{effective} (though still not optimal) inherited bounds. Finally, the same magic that turned ineffective bounds in the fine-scale theorem into effective bounds in BGT now turns bounds these effective bounds into \emph{optimal} bounds in the fine-scale theorem and, in particular, the BGT theorem. This `bootstrapping' argument and its key ingredients -- some of which we describe shortly -- are illustrated in \cref{fig}.

A key result that allows us to boost the bounds from ineffective to effective, and then from effective to optimal, is a certain quantitative finitary version of Mann's \cref{thm:mann}. We present this result as \cref{thm:index.bound}. The \emph{quantitative} aspects of this proposition mostly just emerge from Mann's own proof of \cref{thm:mann}, which we reproduce in \cref{prop:mann}. The key innovation lies in its \emph{finitary} nature, which can be summarised roughly as follows: whilst Mann's theorem takes as its hypothesis the existence of a torsion-free nilpotent subgroup of finite index, our result takes as its hypothesis the existence of a Lie progression of large injectivity radius, finitely many translates of which cover $S^n$ and are `locally disjoint'. The large injectivity radius is a finitary analogue of Mann's torsion-free hypothesis, whilst the `local disjointness' is a finitary analogue of the disjointness of distinct cosets of a subgroup. (See the discussion around \cite[Lemma 2.4]{ttdoubling} for another recent argument of ours in which we use a `local  disjointness' property to obtain coset-like behaviour from certain translates of a set.)

There are various reasons why we need this finitary version of Mann's theorem. Most obvious is that our theorems have non-trivial content for finite groups, in which context Mann's original theorem is trivial. A more subtle reason is that in order to bound the sizes of the sets $X_i$ in our main theorem, we need to apply our quantitative Mann theorem to \emph{every} Lie progression $P_i$; in order to do this using the infinitary Mann theorem we would require the torsion subgroup of $\la P_i\ra$ to be exactly the symmetry group of $P_i$, and even in an infinite group, this is satisfied only by the last Lie progression, $P_{d'}$.

The key difficulty in \emph{proving} our finitary Mann-type theorem arises from the potential for torsion at very large scales. The way we deal with this is to lift the progression to a certain discrete, large-scale analogue of a topological covering space. This entails a detailed study of large-scale simple connectedness of graphs, which we describe in \cref{sec:sc}. In particular, in \cref{thm:k-sc.covering} we prove a quantitative, discrete, large-scale analogue of the fact that a topological covering map with simply connected range is always a homeomorphism, and in \cref{prop:k-simply.conn} we prove a quantitative, discrete, large-scale analogue of the path-lifting property. We believe both of these results to be of independent interest, but they also lie at the heart of our finitary Mann-type theorem, as well as playing an important role in the proof of our uniqueness result, \cref{prop:uniqueness}.

The finitary Mann theorem is crucial in obtaining the bounds in our fine-scale theorems, but it still leaves a significant amount of technical work to prove our main theorem. We undertake this work and complete the proof in Chapter \ref{ch:proof}. This is what we mean by `further refinement' in the final step of \cref{fig}.

Let us now briefly highlight some further results that are particularly important in our proofs and applications.
\begin{itemize}
\item In \S\ref{section:real/LieProg} we study the interplay between a Lie progression and its `finitary Mal'cev completion'.
\item A crucial part of the proof of the main theorem is essentially to show that, for each $i<d'$, the power $P_i^{\inj P_i}$ of the progression $P_i$ can be approximated by a Lie progression of lower dimension -- this lower-dimensional progression then becomes $P_{i+1}$. However, even the fact that $P_i^{\inj P_i}$ can be approximated by a progression of the \emph{same} dimension as itself is far from trivial, and turns out to be an important stepping stone on the way to producing $P_{i+1}$. We achieve this in \cref{prop:powergood.rational}, which shows that an arbitrary power of a Lie progression can be well approximated by another Lie progression projected from the same Lie group. This result makes extensive use of the material on finitary Mal'cev completions, as well as a result on the nilpotent geometry of numbers generalised from our first paper (\cref{prop:pp.5.1}).
\item It is a well-known and easy fact that a normal discrete subgroup of a connected topological group must be central (see \S\ref{section:injectModCenter}). This phenomenon has a finitary and quantitative analogue for Lie progressions, in the form of a lower bound on the radius of injectivity modulo the centre, which we derive in \cref{prop:ProperCenter}. This is essential to yield conclusions \ref{item:sqrt.scale} and \ref{item:inj.mod.z}  of \cref{thm:detailed.fine.scale}, the first of which in turn allows us to obtain \cref{cor:bt}.
\end{itemize}

\begin{remark*}Strictly speaking, the first application of the nilpotent Freiman theorem indicated in \cref{fig}, when we pass from the original BGT theorem to the first iteration of our fine-scale theorem, is probably not necessary, because the proof of the original BGT theorem implicitly gives a similar progression structure. However, by the time we arrive at the second iteration of the BGT theorem, with our preliminary effective bounds, this implicit progression structure has disappeared, and the nilpotent Freiman theorem appears to be crucial to recovering it.
\end{remark*}

\subsection*{Acknowledgements} We are grateful to Itai Benjamini for asking some of the questions that inspired this project, to Vladimir Trofimov for help with the references, and to Tom Hutchcroft for sharing an early draft of \cite{unif.finite.pres} with us.

\section{Background and notation}\label{sec:background}

We use the convention that the natural numbers $\N$ do not include $0$, and write $\N_0=\N\cup\{0\}$.

Recall that we write $g(d)$ for the maximum order of a finite subgroup of $\GL_d(\Z)$.

In general, we use $G$ to denote a group, and $\G$ to denote a graph, although in each case we will explicitly introduce the given object. We use the notation $\G(G,S)$ to denote the \emph{Cayley graph} on a group $G$ with respect to a generating set $S$, i.e. the graph with vertex set $G$, and with elements $g$ and $h$ connected by an edge exactly when $g\ne h$ and $g\in h(S\cup S^{-1})$.

Given a graph $\G$, a vertex $x\in\G$ and non-negative real $r\ge0$, we write $B_\G(x,r)$ for the ball of radius $r$ centred at $x$ in the graph metric on $\G$. If $\G$ is transitive then we write $\beta_\G(r)$ for the number of vertices in a ball of radius $r$, and $\sigma_\G(n)$ for the number of vertices in a sphere of radius $r$.

We adopt a standard version of asymptotic notation, in which $O(X)$ means a quantity bounded above by a constant multiple of $X$; $\Omega(X)$ means a quantity bounded below by a positive constant multiple of $X$; $X\ll Y$ and $Y\gg X$ both mean that $X$ is bounded above by a constant multiple of $Y$; and $X\asymp Y$ means that $X\ll Y$ and $Y\ll X$ both hold. If the implied constant depends on some other parameters $\lambda_1,\ldots,\lambda_k$, we indicate this with subscripts, e.g. $O_{\lambda_1,\ldots,\lambda_k}(X)$, $X\ll_{\lambda_1,\ldots,\lambda_k}Y$.

Whenever we use this notation in this fashion, it means that the implied constant is \emph{effective}, i.e. could be computed explicitly from our arguments. If the implied constant is ineffective, we use the same notation but with the addition of an asterisk, e.g. $O^*(X)$, $X\ll^*Y$. We caution that, for example, the notation $O_\lambda(X)$ means a quantity that is at most a constant multiple of $X$, with the constant depending effectively on the parameter $\lambda$. However, despite the absence of an asterisk, the quantity $O_\lambda(X)$ could still be ineffective if $\lambda$ is itself an ineffective constant. This occurs on some occasions in our proofs, but never in the statements of our results.

Given elements $u_1,\ldots,u_d$ of a group or Lie algebra and positive reals $L_1,\ldots,L_d$, and given integers $i_1<\ldots<i_k\in[1,d]$, we often abbreviate $(u_{i_1},\ldots,u_{i_k};L_{i_1},\ldots,L_{i_k})$ to simply $(u_{i_1},\ldots,u_{i_k};L)$.

\subsection{Growth}
We have the following universal absolute lower bound on the growth of a connected vertex-transitive graph.
\begin{lemma}\label{lem:sphere.size.1}
Let $\G$ be a connected vertex-transitive graph. Then $\sigma_\G(n)\ge2$ for every positive integer $n<\diam(\G)$. In particular, $\beta_\G(n)\ge2n+1$ for all $n<\diam(\G)$, and $|\G|\ge2\diam(\G)$.
\end{lemma}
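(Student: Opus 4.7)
The plan is to establish the sphere bound $\sigma_\G(n) \geq 2$ for $1 \leq n \leq \diam(\G) - 1$; the bounds $\beta_\G(n) \geq 2n+1$ and $|\G| \geq 2\diam(\G)$ will then follow by easy summation. I would prove the sphere bound by strong induction on $n$. The base case $n = 1$ is immediate: a connected $1$-regular graph is $K_2$, which has diameter $1$, so if $\diam(\G) \geq 2$ then every vertex has at least two neighbours.

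For the inductive step, assume $\sigma_\G(k) \geq 2$ for every $1 \leq k \leq n - 1$, and seek $\sigma_\G(n) \geq 2$ for some $n < \diam(\G)$. Fix a vertex $x$ and a geodesic $x = x_0, x_1, \ldots, x_{n+1}$, possible since $\diam(\G) \geq n + 1$. By vertex-transitivity, fix an automorphism $\phi$ with $\phi(x_1) = x_0$. Being an isometry, $\phi$ sends $x_i$ to a point at distance $i - 1$ from $x$; in particular $\phi(x_{n+1}) \in \sigma_\G(n)$, and if $\phi(x_{n+1}) \neq x_n$ we are done. In the remaining case $\phi(x_{n+1}) = x_n$, I would reduce to the situation in which $\phi(x_i) = x_{i-1}$ for every $i = 1, \ldots, n+1$: if this fails, let $j \in \{2, \ldots, n\}$ be the smallest index with $\phi(x_j) \neq x_{j-1}$, so that $\phi(x_j) \in \sigma_\G(j-1)$ already exhibits a second element of that sphere. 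I replace the original geodesic by the modified geodesic $x_0, x_1, \ldots, x_{j-2}, \phi(x_j), \phi(x_{j+1}), \ldots, \phi(x_{n+1}), x_{n+1}$, itself a geodesic of length $n + 1$, pick a new automorphism mapping its second vertex to $x_0$, and iterate until the shift property holds.

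Once $\phi(x_i) = x_{i-1}$ for every $i$, I consider the orbit $x_0, \phi(x_0), \phi^2(x_0), \ldots$ Iterating the shift relation gives $\phi^k(x_k) = x_0$, and applying the isometry $\phi^k$ yields $d(x_0, \phi^k(x_0)) = k$ for every $k = 0, \ldots, n+1$; in particular $\phi^n(x_0) \in \sigma_\G(n)$. Moreover $\phi^n(x_0) \neq x_n$, since otherwise applying $\phi^{-1}$ and using $\phi^{-1}(x_n) = x_{n+1}$ (the shift relation at $i = n+1$) would give $\phi^{n-1}(x_0) = x_{n+1}$, a vertex at distance $n+1$ from $x_0$ rather than at the required distance $n - 1$. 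This furnishes a second element of $\sigma_\G(n)$. The bound $\beta_\G(n) = 1 + \sum_{k=1}^n \sigma_\G(k) \geq 2n + 1$ then follows immediately, and for $|\G| \geq 2\diam(\G)$: if $D = \diam(\G) < \infty$ then $|\G| = \beta_\G(D) \geq \beta_\G(D-1) + 1 \geq 2D$, the infinite-diameter case being trivial. The main obstacle in this plan is cleanly justifying the iterative reduction to the uniform shift case $\phi(x_i) = x_{i-1}$, which is the only step that really uses the inductive hypothesis.
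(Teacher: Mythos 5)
Your argument has a genuine gap, which you flag yourself: the iterative reduction to the shift case $\phi(x_i)=x_{i-1}$ for all $i$ is not well-founded. Concretely, if $j\ge 3$ then the modified geodesic has the same first two vertices $x_0,x_1$ as the original, so the new automorphism can legitimately be taken to be $\phi$ itself; but then the shift property for the modified geodesic fails already at index $j-1<j$ (if $\phi^2(x_j)$ equalled $x_{j-2}=\phi(x_{j-1})$, injectivity of $\phi$ would force $\phi(x_j)=x_{j-1}$, contradicting minimality of $j$), so the iteration can stall or even regress. I also do not see where the inductive hypothesis genuinely enters: knowing $\sigma_\G(k)\ge 2$ for $k<n$ tells you the conclusion you re-derive at stage $j-1$, but it supplies no automorphism adapted to your particular geodesic, which is what the shift argument actually needs. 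The final orbit computation is fine contingent on the shift property, but that reduction is the crux and remains unestablished.

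The paper's proof is shorter, non-inductive, and takes a different route: it argues by contraposition from $\sigma_\G(n)=1$. If $\diam(\G)\ge 2n$, the midpoint $z$ of a geodesic between two vertices $x,y$ at distance $2n$ has both $x$ and $y$ at distance $n$ from $z$, so $\sigma_\G(n)\ge 2$; hence $\diam(\G)<2n$, in particular finite. If moreover $\diam(\G)>n$, then the unique vertex at distance $n$ from a basepoint $o$ separates $o$ from the vertices farther away and is therefore a cut vertex; by transitivity every vertex would then be a cut vertex, but a vertex at distance $\diam(\G)$ from $o$ cannot be one, a contradiction. This cut-vertex trick packages the entire use of transitivity into one step and sidesteps the need to construct a specially adapted automorphism for a given geodesic, which is precisely the point where your approach runs into trouble.
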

\begin{proof}
We may assume that $\sigma_\G(n)=1$ and show that $\diam(\G)=n$. We show first that $\diam(\G)<2n$. We prove the contrapositive: if there existed vertices $x,y$ with $d(x,y)=2n$ then there would be a vertex $z$ at distance $n$ from each of $x$ and $y$, so that $\sigma_\G(n)\ge2$.

Now let $o\in\G$. If $\diam(\G)>n$ then the unique vertex at distance $n$ from $o$ is a cut point for $\G$, and hence by transitivity every vertex is a cut point for $\G$. However, we have already shown that $\diam(\G)<\infty$, and no vertex at distance $\diam(\G)$ from $o$ can be a cut point.
\end{proof}
The bounds of \cref{lem:sphere.size.1} are all tight, and attained by cycles of even length and the bi-infinite path.

We also have universal relative lower bounds on the growth of connected vertex-transitive graphs, or more generally connected regular graphs.
Write $\rad_x(\G)=\min\{r\in\N:B_\G(x,r)=\G\}$ for the \emph{radius} of a conntected graph $\Gamma$ centred at $x\in\Gamma$. Note that if $\Gamma$ is vertex-transitive then $\diam(\G)=\rad_x(\G)$ for every $x\in\G$.
\begin{lemma}\label{lem:growth.lb.rel.lin}
Let $\G$ be a connected $k$-regular graph, let $x\in\G$, and fix $n\in\N$ with $n\le\rad_x(\G)$. Then $|B_\G(x,n)|\ge\frac{1}{3}(k+1)n$.
\end{lemma}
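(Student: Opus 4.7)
The plan is to write $|B_\G(x,n)|=\sum_{i=0}^n|S_i|$, where $S_i=\{y\in\G:d_\G(x,y)=i\}$, and to lower-bound this sum by partitioning the index set $\{0,1,\ldots,n\}$ into consecutive blocks each of whose spheres together contribute at least $k+1$ vertices. The hypothesis $n\le\rad_x(\G)$ ensures that $S_i\ne\emptyset$ for every $0\le i\le n$.

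Two elementary inequalities drive the argument. First, $|S_0|+|S_1|\ge k+1$, because $|S_0|=1$ and the $k$ distinct neighbors of $x$ all lie in $S_1$. Second, for each $1\le i\le n-1$, $|S_{i-1}|+|S_i|+|S_{i+1}|\ge k+1$: choosing any $v\in S_i$ (which exists since $i\le n\le\rad_x(\G)$), the vertex $v$ together with its $k$ neighbors forms a set of $k+1$ distinct vertices, each at distance $i-1$, $i$ or $i+1$ from $x$ by the triangle inequality.

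I would then partition $\{0,1,\ldots,n\}$ as $\{0,1\}\sqcup\{2,3,4\}\sqcup\{5,6,7\}\sqcup\cdots$, continuing as far as possible, so that at most two indices are left over; each such leftover singleton sphere contributes at least $1$ since every $S_i$ is nonempty. A short case analysis on $n\bmod 3$ then shows that the resulting lower bound on $|B_\G(x,n)|$ meets or exceeds $(k+1)n/3$ in every case. The only residue class requiring genuine care is $n\equiv 2\pmod 3$ (i.e.\ $n=3m-1$), where only a single leftover sphere is available; here the asymmetric boundary block $\{0,1\}$, which spans only two indices yet still yields $k+1$ rather than a mere $2$, is precisely what preserves the inequality.
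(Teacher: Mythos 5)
Your proof is correct, but it packages the argument differently from the paper. The paper fixes a geodesic $x=x_0,x_1,\ldots,x_n$ emanating from $x$ (which exists because $n\le\rad_x(\G)$ forces a vertex at distance exactly $n$) and observes that the closed unit balls $B_\G(x_{3m},1)$ with $0\le 3m<n$ are pairwise disjoint --- two centres on the geodesic are at distance $3|m-m'|\ge 3$ --- of size $k+1$, and all contained in $B_\G(x,n)$; summing over the $\lceil n/3\rceil$ such balls finishes in one line with no case analysis. Your argument uses the same local fact (a closed unit ball has $k+1$ vertices and is contained in three consecutive spheres) but aggregates by partitioning the index set $\{0,\ldots,n\}$ rather than by exhibiting disjoint sets, which is why you end up doing arithmetic on $n\bmod 3$. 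Your remark about the asymmetric leading block $\{0,1\}$ is genuinely the crux: with the naive partition $\{0\}\sqcup\{1,2,3\}\sqcup\cdots$, a trailing singleton contributes only $1$ while the deficit against the target $(k+1)n/3$ is of order $(k+1)/3$, so the bound would fail once $k\ge 6$. One small slip in your prose: under your partition all three residue classes hold with slack, and the tightest case is actually $n\equiv 0\pmod 3$ (surplus exactly $2$), not $n\equiv 2\pmod 3$; this does not affect correctness. The paper's geodesic formulation is preferable chiefly because disjointness makes the count $\lceil n/3\rceil\ge n/3$ immediate, with no residue bookkeeping.
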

\begin{proof}
Since $n\le\rad_x(\G)$ there is a geodesic of length $n$ starting at $x$. Writing $x=x_0,x_1,\ldots,x_n$ for the vertices of this geodesic in increasing order of distance from $x$, the lemma follows from the fact that the balls $B_\G(x_{3m},1)$ with $m\in\N_0$ and $3m<n$ are disjoint subsets of $B_\G(x,n)$ of size $k+1$.
\end{proof}

We now record two standard results about finite-index subgroups of finitely generated groups that are relevant to growth.

\begin{lemma}[{\cite[Lemma 7.2.2]{hall}}]\label{lem:fin.ind.gen}
Let $k\in\N$. Suppose $G$ is a group with a finite symmetric generating set $S$ containing the identity, and $H$ is a subgroup of index $k$ in $G$. Then $H\cap S^{2k-1}$ generates $H$.
\end{lemma}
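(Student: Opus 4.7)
The plan is to apply the standard Schreier rewriting process, but with a transversal chosen carefully so that all Schreier generators have controlled word length in $S$.

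First, I would construct a transversal $T=\{t_1,\ldots,t_k\}$ for the right cosets $H\backslash G$ with $t_1=1$ such that $|t_i|_S\le k-1$ for every $i$, where $|\cdot|_S$ denotes word length with respect to $S$. To do this, consider the right Schreier coset graph on the vertex set $H\backslash G$, with an edge from $Hg$ to $Hgs$ for each $s\in S$. Since $S$ generates $G$, this graph on $k$ vertices is connected, so a BFS tree rooted at the coset $H$ has depth at most $k-1$. Choosing $t_i$ to be the product of the edge labels along the unique BFS path from $H$ to the coset assigned to $t_i$ gives such a transversal. For each $g\in G$, write $\bar g$ for the unique element of $T$ with $g\in H\bar g$.

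Next I would run the telescoping argument. Given $h\in H$, fix a word $h=s_1s_2\cdots s_n$ with $s_i\in S$ and set $g_j=s_1\cdots s_j$, so $g_0=1$ and $g_n=h$; in particular $\bar g_0=\bar g_n=1$. Then the identity
\[
h=\prod_{j=1}^{n}\bigl(\bar g_{j-1}\,s_j\,\bar g_j^{-1}\bigr)
\]
holds by telescoping, since consecutive $\bar g_j^{-1}\bar g_j$ pairs cancel. Each factor lies in $H$: from $g_{j-1}\in H\bar g_{j-1}$ we get $g_j=g_{j-1}s_j\in H\bar g_{j-1}s_j$, and since also $g_j\in H\bar g_j$ we conclude $\bar g_{j-1}s_j\bar g_j^{-1}\in H$. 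Moreover, because $S$ is symmetric and $|\bar g_{j-1}|_S,|\bar g_j|_S\le k-1$, each factor has $S$-length at most $(k-1)+1+(k-1)=2k-1$; since $1\in S$, we also have $S^j\subseteq S^{2k-1}$ for $j\le 2k-1$, so each factor lies in $H\cap S^{2k-1}$. This exhibits $h$ as a product of elements of $H\cap S^{2k-1}$, completing the proof.

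I do not expect any significant obstacle: the only real content is the BFS construction of a transversal of radius at most $k-1$ in the coset graph, which is needed to bound the lengths of the Schreier generators. Everything else is standard Schreier rewriting with careful length bookkeeping.
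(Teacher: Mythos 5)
Your proof is correct and is essentially the standard proof cited in Hall: you build a Schreier (prefix-closed / BFS) transversal for $H\backslash G$, so that all coset representatives have $S$-length at most $k-1$, and then apply the Schreier rewriting (telescoping) identity to express an arbitrary $h\in H$ as a product of the Schreier generators $\bar g_{j-1}s_j\bar g_j^{-1}$, each of which visibly lies in $H$ and has $S$-length at most $(k-1)+1+(k-1)=2k-1$. The only bookkeeping points — that the factors lie in $H$ because $H\bar g_{j-1}s_j = H\bar g_j$, that shorter words lie in $S^{2k-1}$ because $1\in S$, and that $\bar g_j^{-1}$ has the same length as $\bar g_j$ because $S$ is symmetric — are all handled correctly. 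This is the same route as the paper's cited source.
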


\begin{lemma}[{\cite[Lemma 11.2.1]{tointon.book}}]\label{lem:ball.cosets=>index}
Let $k\in\N$. Suppse $G$ is a group with a finite symmetric generating set $S$ containing the identity, and $H$ is a subgroup of index at least $k$ in $G$. Then $S^{k-1}$ has non-empty intersection with at least $k$ distinct left cosets of $H$.
\end{lemma}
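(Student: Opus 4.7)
The plan is to reduce this to a purely graph-theoretic statement about the connected Schreier graph on left cosets. Specifically, consider the graph $\mathcal{S}$ whose vertex set is $G/H$ (left cosets) and whose edges connect $xH$ to $sxH$ for each $s \in S$. Since $S$ generates $G$ and the $G$-action on $G/H$ by left multiplication is transitive, $\mathcal{S}$ is connected. A straightforward unwinding of definitions shows that the ball of radius $r$ around $H$ in $\mathcal{S}$ consists of exactly those cosets that meet $S^r$, using the hypothesis $e \in S$ to absorb shorter words into $S^r$. Thus the statement reduces to showing that this ball has at least $k$ vertices.

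The key observation is the following elementary growth bound for connected graphs: if $\Gamma$ is a connected graph with $|V(\Gamma)| \ge k$, then for any vertex $v$ the ball $B_\Gamma(v, k-1)$ contains at least $k$ vertices. I would prove this by a one-line case split: either $B_\Gamma(v, k-1) = V(\Gamma)$, in which case it has $\ge k$ vertices by assumption; or there exists some $w \notin B_\Gamma(v, k-1)$, so $d_\Gamma(v,w) \ge k$, and then a geodesic from $v$ to $w$ supplies $k$ distinct vertices (one at each distance $0, 1, \ldots, k-1$) all lying in $B_\Gamma(v, k-1)$.

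Applying this observation to $\mathcal{S}$ with $v = H$, using that $|V(\mathcal{S})| = [G:H] \ge k$ and that $\mathcal{S}$ is connected, gives $|B_\mathcal{S}(H, k-1)| \ge k$, which by the identification above is exactly the statement that $S^{k-1}$ meets at least $k$ distinct left cosets of $H$. The only step requiring any care is verifying that $B_\mathcal{S}(H, r) = \{gH : g \in S^r\}$; this uses the fact that $e \in S$, so that $S^j \subseteq S^r$ for all $j \le r$, and hence a path of length $\le r$ in $\mathcal{S}$ from $H$ to $gH$ written as $gH = s_\ell \cdots s_1 H$ with $\ell \le r$ exhibits $g \in s_\ell \cdots s_1 H \subseteq S^\ell H \subseteq S^r H$, so $gH$ meets $S^r$; conversely any $g \in S^r$ yields such a path by reading off a length-$r$ word for $g$. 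I expect no real obstacle in this argument; the whole proof is essentially two sentences once the Schreier graph is set up correctly.
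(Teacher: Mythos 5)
Your proof is correct. The paper does not actually prove this lemma itself but cites it directly from \cite[Lemma 11.2.1]{tointon.book}, so there is no in-paper argument to compare against. Your reduction to the Schreier coset graph and the elementary ball-growth bound for connected graphs is exactly the standard route; the book's proof is essentially equivalent, typically phrased as the observation that the count of cosets meeting $S^n$ is nondecreasing in $n$ and, once it stops strictly increasing, has already saturated at $[G:H]$ --- which is the same dichotomy your "either the ball is everything, or a geodesic escapes it" case split encodes. The one step you flagged as needing care, the identification $B_{\mathcal{S}}(H,r)=\{gH : g\in S^r\}$ using $1\in S$ to absorb shorter words, is handled correctly.
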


Finally, the next lemma shows that, in certain circumstances, if $S^n$ is covered by a few translates of some set $Q$ then the further growth of $S$ is controlled by that of $Q$.
\begin{lemma}\label{lem:inclusions.local}
Let $k,n\in\N$. Suppose $G$ is a group with symmetric generating set $S$ containing the identity, and $X\subseteq S^k$ and $Q\subseteq G$ satisfy $S^{n+k}\subseteq XQ$. Then
\[
S^{mn+k}\subseteq XQ^m
\]
for all $m\in\N$. In particular, if $X\subseteq S^n$ and $S^{2n}\subseteq XQ$ then
\[
S^{mn}\subseteq XQ^{m-1}
\]
for every integer $m\ge2$.
\end{lemma}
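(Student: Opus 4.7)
The proof is a straightforward induction on $m$, and the crucial observation is that $X \subseteq S^k$ lets one absorb extra factors of $S^n$ at each step back into $XQ$.

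Base case ($m=1$) is exactly the hypothesis $S^{n+k} \subseteq XQ$. For the inductive step, assume $S^{mn+k} \subseteq XQ^m$. Since $S$ contains the identity, $S^{(m+1)n+k} = S^n \cdot S^{mn+k}$, so
\[
S^{(m+1)n+k} \subseteq S^n \cdot X Q^m.
\]
Now use $X \subseteq S^k$ to get $S^n \cdot X \subseteq S^{n+k}$, and apply the hypothesis $S^{n+k} \subseteq XQ$ once more to obtain
\[
S^{(m+1)n+k} \subseteq S^{n+k} \cdot Q^m \subseteq XQ \cdot Q^m = XQ^{m+1},
\]
closing the induction.

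The second assertion is just a special case: setting $k=n$ in the first assertion (which is allowed since $X \subseteq S^n$ and $S^{2n} = S^{n+k} \subseteq XQ$) yields $S^{(m'+1)n} \subseteq XQ^{m'}$ for all $m' \in \N$, and relabeling $m = m'+1$ gives $S^{mn} \subseteq XQ^{m-1}$ for every integer $m \ge 2$.

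There is no real obstacle here; the only thing to keep straight is the order of multiplication and the fact that $S^{a+b} = S^a \cdot S^b$ (which uses $1 \in S$). Symmetry of $S$ is not actually needed for this particular argument, but the statement is phrased in that generality for consistency with the rest of the paper.
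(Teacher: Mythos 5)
Your proof is correct and follows essentially the same argument as the paper: induction on $m$ via the decomposition $S^{(m+1)n+k} = S^n \cdot S^{mn+k}$, using $S^n X \subseteq S^{n+k} \subseteq XQ$ to close the inductive step, and the second assertion is the special case $k=n$ after relabeling. The observation that symmetry of $S$ is not needed here is accurate.
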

\begin{proof}
The case $m=1$ is true by hypothesis, and for $m>1$ by induction we have $S^{mn+k}=S^nS^{(m-1)n+k}\subseteq S^nXQ^{m-1}\subseteq S^{n+k}Q^{m-1}\subseteq XQ^m$, as required.
\end{proof}

\subsection{Approximate groups}\label{sec:ag}

Given $K\ge1$, a \emph{$K$-approximate group} is a symmetric subset $A$ of a group containing the identity and satisfying $A^2\subseteq XA$ for some set $X$ of cardinality at most $K$. As we described in the introduction, approximate groups play a central role in the proof of \cref{thm:bgt.gromov}. They are also crucial in many of the arguments of the present paper.

In this section we record a number of specific results on approximate groups for use in later chapters. For more detailed background on approximate groups and information on some of their many other applications, see the second author's book \cite{tointon.book} or the surveys  surveys \cite{bgt.survey,app.grps,ben.icm,helf.survey,sand.survey}. 

At the most basic level, the relevance of approximate groups to the study of polynomial volume arises from the following results.
\begin{lemma}[{\cite[Lemma 8.4]{proper.progs}}]\label{lem:poly.pigeon}
Let $M,D>0$, let $\alpha,\beta\in(0,1)$, and let $q\in\N$. Then there exists $N=N_\alpha$ such that if $S$ is a finite subset of a group with $|S^n|\le Mn^D|S|$ for some $n\ge\max\{N,M(q\beta^{-1})^\frac{D+1}{1-\alpha}\}$ then there exists $k\in\N$ satisfying $n^\alpha<k<\beta n$ such that $|S^{qk}|\le q^{\frac{D+1}{1-\alpha}}|S^k|$.
\end{lemma}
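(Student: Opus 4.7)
The plan is the standard pigeonhole / geometric-iteration argument at scale ratio $q$. I would argue by contradiction: supposing $|S^{qk}|>q^{(D+1)/(1-\alpha)}|S^k|$ for every integer $k\in(n^\alpha,\beta n)$, I aim to violate the hypothesis $|S^n|\le Mn^D|S|$.

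Set $k_0=\lfloor n^\alpha\rfloor+1$ (so that $n^\alpha<k_0\le n^\alpha+1$) and $k_j=q^jk_0$, and let $L$ be the largest non-negative integer for which the intermediate terms $k_0,\ldots,k_{L-1}$ all lie in the interval $(n^\alpha,\beta n)$ and $k_L\le n$. A short case split according to whether $q\beta\le 1$ (in which case $k_L\le n$ is automatic from $k_{L-1}<\beta n$, forcing $q^Lk_0\ge\beta n$) or $q\beta>1$ (in which case $k_{L-1}\le n/q<\beta n$ is automatic from $k_L\le n$, forcing $q^Lk_0>n/q$) shows that in both cases one has $q^Lk_0\ge\beta n/q$. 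Iterating the assumed strict lower bound $L$ times gives $|S^{k_L}|>q^{L(D+1)/(1-\alpha)}|S|$, while $k_L\le n$ combined with the polynomial-volume hypothesis yields $|S^{k_L}|\le|S^n|\le Mn^D|S|$; hence $q^L<M^{(1-\alpha)/(D+1)}n^{D(1-\alpha)/(D+1)}$.

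Substituting the lower bound $q^L\ge\beta n/(qk_0)$ and using the elementary identity
$1-\alpha-\tfrac{D(1-\alpha)}{D+1}=\tfrac{1-\alpha}{D+1},$
which is precisely what makes the exponent $(D+1)/(1-\alpha)$ of the statement appear, one rearranges to
$n^{(1-\alpha)/(D+1)}<\tfrac{k_0}{n^\alpha}\cdot M^{(1-\alpha)/(D+1)}\cdot\tfrac{q}{\beta},$
i.e., $n<M(q/\beta)^{(D+1)/(1-\alpha)}$ up to the multiplicative factor $(k_0/n^\alpha)^{(D+1)/(1-\alpha)}$. Since $k_0/n^\alpha\le 1+n^{-\alpha}$, the role of $N_\alpha$ is purely to ensure, by taking $n\ge N_\alpha$ with $N_\alpha$ depending only on $\alpha$, that this factor is close enough to $1$ to produce the contradiction with the hypothesised lower bound on $n$.

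The only real obstacle is bookkeeping this rounding error at the endpoints of the iteration, and this is exactly what $N_\alpha$ is for; the exponent $(D+1)/(1-\alpha)$ and the product $q/\beta$ in the threshold are both forced by the displayed arithmetic identity, so the argument is essentially unique.
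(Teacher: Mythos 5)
Your overall strategy is the right one — geometric iteration along the ladder $k_j=q^jk_0$ and a contradiction argument — but the final step has a genuine gap that the appeal to $N_\alpha$ does not close. You weaken to the uniform bound $q^Lk_0\ge\beta n/q$ in both cases and then derive
\[
n<M(q/\beta)^{\frac{D+1}{1-\alpha}}\bigl(k_0/n^\alpha\bigr)^{\frac{D+1}{1-\alpha}},
\]
but since $k_0=\lfloor n^\alpha\rfloor+1>n^\alpha$ the correction factor is \emph{strictly greater than} $1$ for every $n$. Having a factor ``close to $1$'' is of no use: the hypothesis gives only $n\ge M(q/\beta)^{\frac{D+1}{1-\alpha}}$, with no slack, so an inequality of the form $n<M(q/\beta)^{\frac{D+1}{1-\alpha}}\cdot(1+\varepsilon)$ is never a contradiction, however small $\varepsilon>0$ is. Moreover the exponent $\tfrac{D+1}{1-\alpha}$ on the correction factor depends on $D$, which $N_\alpha$ does not, so one cannot even make the factor uniformly small. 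So the sentence asserting that $N_\alpha$ is ``purely'' there to tame this factor misidentifies where the actual slack is.

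The slack is in the factor of $q$ that you throw away when you pass from $q^Lk_0\ge\beta n$ (which you correctly establish in the case $q\beta\le1$) to the uniform $q^Lk_0\ge\beta n/q$. Keeping the sharper bound in that case gives
\[
n<M\Bigl(\frac{k_0}{\beta n^\alpha}\Bigr)^{\frac{D+1}{1-\alpha}},
\]
and now, since the statement is vacuous for $q=1$, one may assume $q\ge2$, and then $k_0/n^\alpha\le 1+n^{-\alpha}\le 2\le q$ once $n\ge N_\alpha$ with, say, $N_\alpha=1$; this really does give $n<M(q/\beta)^{\frac{D+1}{1-\alpha}}$, a contradiction. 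For the case $q\beta>1$, on the other hand, the best available lower bound when the iteration terminates because $q^{L+1}k_0>n$ really is $q^Lk_0>n/q$, and repeating the computation one needs $k_0/n^\alpha\le 1/\beta$, i.e.\ $n^\alpha\ge\beta/(1-\beta)$. That requirement depends on $\beta$, so it cannot be enforced by an $N$ depending only on $\alpha$; either the iteration has to be arranged more carefully in that regime or the dependence of $N$ in the statement is being read too literally. In any case, the proof as you have written it does not yet produce the contradiction it claims.
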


\begin{lemma}[{\cite[Proposition 6.1]{ttTrof}}]\label{lem:tripling->AG}
Let $K\ge1$, and suppose $A$ is a precompact symmetric open set in a locally compact group with a Haar measure $\mu$. Suppose that $\mu(A^3)\le K\mu(A)$. Then $A^2$ is a precompact open $K^3$-approximate group.
\end{lemma}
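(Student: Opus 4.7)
The plan is to derive the conclusion via the classical combination of the Ruzsa triangle inequality (used to bound $\mu(A^5)$) and the Ruzsa covering lemma (used to turn this measure bound into a finite cover of $A^4$ by translates of $A^2$). Symmetry, openness, precompactness and containment of the identity in $A^2$ are all immediate from the hypotheses on $A$ (assuming $A$ is non-empty), so the real task is to produce a finite set $X\subseteq G$ with $|X|\le K^3$ and $A^4\subseteq XA^2$.

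First I would establish the growth estimates $\mu(A^4)\le K^2\mu(A)$ and $\mu(A^5)\le K^3\mu(A)$ by two applications of the measured Ruzsa triangle inequality, which takes the form
\[
\mu(U)\,\mu(VW^{-1})\le\mu(UV^{-1})\,\mu(UW^{-1})
\]
for precompact sets $U,V,W$ in a locally compact group. Taking $U=A$, $V=A^2$, $W=A^{-2}$ and using the symmetry of $A$ to identify $A^{-1}=A$, one obtains $\mu(A)\mu(A^4)\le\mu(A^3)^2\le K^2\mu(A)^2$, hence $\mu(A^4)\le K^2\mu(A)$. Taking instead $U=A$, $V=A^2$, $W=A^{-3}$ one obtains $\mu(A)\mu(A^5)\le\mu(A^3)\mu(A^4)\le K^3\mu(A)^2$, hence $\mu(A^5)\le K^3\mu(A)$.

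The covering step is then the standard Ruzsa maximality argument: let $X\subseteq A^4$ be a maximal subset such that the left-translates $\{xA\}_{x\in X}$ are pairwise disjoint. Such an $X$ is automatically finite, since $XA\subseteq A^5$ is precompact and each $xA$ has positive Haar measure $\mu(A)$. Disjointness gives $|X|\mu(A)=\mu(XA)\le\mu(A^5)\le K^3\mu(A)$, so $|X|\le K^3$. Maximality gives that, for every $y\in A^4$, the translate $yA$ meets $xA$ for some $x\in X$, and hence $y\in xAA^{-1}=xA^2$ by symmetry of $A$. Thus $A^4\subseteq XA^2$, which shows $A^2$ is a $K^3$-approximate group.

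The only real obstacle is extracting the precise constant $K^3$ in the non-commutative Pl\"unnecke--Ruzsa-style bound $\mu(A^5)\le K^3\mu(A)$; this is exactly what the two-step application of the Ruzsa triangle inequality delivers from the tripling hypothesis, after which the covering step yields the stated constant without further loss.
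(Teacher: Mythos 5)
Your proof is correct, and the Ruzsa triangle inequality plus Ruzsa covering argument is the standard route to this kind of tripling-to-approximate-group statement; it is essentially the argument used in the cited reference \cite[Proposition 6.1]{ttTrof}. Two small remarks worth recording: first, the general form of the Ruzsa triangle inequality you state has $\mu(UV^{-1})$ where the usual formulation gives $\mu(VU^{-1})$, which coincide only when the group is unimodular (so that $\mu(E^{-1})=\mu(E)$); this is harmless here since all the sets involved are powers of a symmetric $A$, hence symmetric themselves, but it is worth noting the general form you wrote is not quite correct. Second, the finiteness of the maximal packing set $X$ relies on $\mu(A)>0$, which holds because $A$ is a nonempty open set and Haar measure assigns positive mass to nonempty open sets; you implicitly use this when writing $|X|\mu(A)\le\mu(A^5)$, and it is good to state it.
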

\begin{prop}[Tao {\cite[Theorem 4.6]{tao.product.set}}]\label{prop:doubling.covered.by.app.grp-lc}
Let $K\ge1$, and suppose $A$ is a precompact symmetric open set in a locally compact group with a Haar measure $\mu$. Suppose that $\mu(A^2)\le K\mu(A)$. Then there exists a precompact open $O(K^{O(1)})$-approximate group $U$ with measure $\mu(U)\le O(K^{O(1)})\mu(A)$ and a finite set $X$ of cardinality at most $O(K^{O(1)})$ such that $A\subseteq XU$.
\end{prop}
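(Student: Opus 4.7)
The plan is to follow the standard non-commutative Pl\"unnecke--Ruzsa strategy: extract a symmetric approximate group from $A$ by passing to the difference set $V := AA^{-1}$, and cover $A$ by a small number of left translates of $V$ via the Ruzsa covering lemma.

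First I would apply a greedy Ruzsa covering argument. Choose $X \subseteq A$ maximal subject to the translates $\{xA : x \in X\}$ being pairwise disjoint; since their union lies in $A^2$, we have $|X|\mu(A) \le \mu(A^2) \le K\mu(A)$, so $|X| \le K$. Maximality then forces $A \subseteq X \cdot AA^{-1} = XV$: indeed, for any $y \in A$, extending the family by $yA$ would violate disjointness, so $yA \cap xA \ne \emptyset$ for some $x \in X$, whence $y \in xAA^{-1} = xV$.

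Next I would show that $V$ has bounded tripling, i.e.\ $\mu(V^3) \le K^{O(1)}\mu(V)$. In the non-commutative setting this is the main technical hurdle, since the classical abelian Pl\"unnecke inequality is unavailable. Instead one uses Tao's non-commutative Ruzsa calculus: the Ruzsa triangle inequality
\[
\mu(C)\mu(BD^{-1}) \le \mu(BC^{-1})\mu(CD^{-1})
\]
for precompact measurable $B,C,D$ with $\mu(C)>0$ (proved by a Fubini argument over Haar measure), combined with iterated applications and Petridis-style tensor power bounds, controls every product of $A$'s and $A^{-1}$'s of length $k$ by at most $K^{O(k)}\mu(A)$. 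Specialising to the six-fold product $V^3 = AA^{-1}AA^{-1}AA^{-1}$ yields $\mu(V^3) \le K^{O(1)}\mu(A) \le K^{O(1)}\mu(V)$, as $\mu(V) \ge \mu(A)$.

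Finally, \cref{lem:tripling->AG} applied to $V$ shows that $U := V^2$ is a precompact open $K^{O(1)}$-approximate group, and the previous step gives $\mu(U) \le K^{O(1)}\mu(A)$. Since $V \subseteq U$, the first step yields $A \subseteq XV \subseteq XU$, completing the proof. The main obstacle is the tripling bound for $V$: obtaining polynomial-in-$K$ control on $\mu(A^{\eps_1} \cdots A^{\eps_k})$ for arbitrary sign patterns $\eps_i \in \{\pm 1\}$ purely from the doubling hypothesis on $A$ is a genuinely non-trivial fact and requires the full strength of the non-commutative Ruzsa calculus.
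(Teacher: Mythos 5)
The first step of your argument --- covering $A$ by at most $K$ left translates of $A^2$ via the greedy Ruzsa covering lemma --- is fine, and since $A$ is symmetric you have $V=AA^{-1}=A^2$. The gap is in the tripling bound. You claim that iterated Ruzsa triangle inequalities and ``Petridis-style'' bounds give $\mu(V^3)=\mu(A^6)\le K^{O(1)}\mu(A)$ from the doubling hypothesis alone, but no such Pl\"unnecke--Ruzsa inequality is available in non-abelian groups, even for symmetric sets containing the identity. Let $H$ be a finite group of order $N$, let $G=H\ast\Z$ be the free product (discrete, hence locally compact, with counting measure), let $t$ generate the $\Z$ factor, and take $A=H\cup\{t,t^{-1}\}$. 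This $A$ is symmetric, open, precompact, contains the identity, has $|A|=N+2$, and $A^2\subseteq H\cup Ht\cup Ht^{-1}\cup tH\cup t^{-1}H\cup\{t^{\pm2}\}$, so $|A^2|\le 5|A|$. But $A^3\supseteq HtH$ and $(h_1,h_2)\mapsto h_1th_2$ is injective in a free product, so $|A^3|\ge N^2$; thus $\mu(A^3)/\mu(A)\to\infty$ as $N\to\infty$ while $K$ stays bounded. Already $\mu(A^3)$ cannot be bounded by $K^{O(1)}\mu(A)$, so a fortiori neither can $\mu(A^6)$. Instantiating Ruzsa's triangle inequality with $B,C,D$ built from powers of $A$ produces only tautologies of the form $|A|\le|A^2|$, and Petridis's tensor-power argument relies essentially on commutativity.

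This failure is exactly why the proposition is non-trivial: the approximate group $U$ covering $A$ cannot in general be taken to be a fixed power of $A$. In the example above the right choice is $U=H$, a \emph{proper subset} of $A^2$. The actual proof of \cite[Theorem 4.6]{tao.product.set} (and of the finite analogue recorded as \cref{prop:doubling.covered.by.app.grp}, which correspondingly produces $U\subseteq A^2$ rather than a power of $A$) builds $U$ from a popularity-type subset of $A^2$ --- roughly, the symmetric set of $x$ for which $\mu(A\cap xA)\ge\mu(A)/2K$ --- shows that this set has measure $\gg K^{-O(1)}\mu(A)$ and that its doubling \emph{is} polynomially controlled, and only then invokes a device like \cref{lem:tripling->AG} to extract the approximate group. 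Your outline skips this essential construction.
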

When $A$ is finite, we have the following slightly more precise bounds in \cref{prop:doubling.covered.by.app.grp-lc} (no doubt one can compute precise bounds from the proof of \cref{prop:doubling.covered.by.app.grp-lc} as well, but such precision would have limited impact on our main results so we do not pursue this).
\begin{prop}[{\cite[Theorem 2.5.6]{tointon.book}}\footnote{The reference states that $U$ is an $O(K^{24})$-approximate group, but the proof there actually gives an $O(K^{18})$-approximate group.}]\label{prop:doubling.covered.by.app.grp}
Let $K\ge1$, and suppose $A$ is a finite subset of a group satisfying $|A^2|\le K|A|$. Then there exists an $O(K^{18})$-approximate group $U\subseteq A^2$ and a set $X\subseteq A$ of size at most $K^2$ such that $A\subseteq XU$.
\end{prop}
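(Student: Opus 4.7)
The plan is to combine the standard tools of non-abelian additive combinatorics: the non-commutative Plünnecke--Ruzsa inequality (due to Ruzsa and Tao) and Ruzsa's covering lemma. The first step is to upgrade the hypothesis $|A^2|\le K|A|$ to control over arbitrary finite products. The non-commutative Plünnecke--Ruzsa inequality yields $|A^{\epsilon_1}A^{\epsilon_2}\cdots A^{\epsilon_k}|\le K^{O(k)}|A|$ for every word of length $k$ in $A$ and $A^{-1}$; in particular $|A^3|$, $|A^4|$, $|A^5|$, $|AA^{-1}|$, $|A^{-1}A|$, $|A^2A^{-2}|$ are all bounded by $K^{O(1)}|A|$. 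These are the key combinatorial inputs.

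For the construction of $U\subseteq A^2$, the natural candidate is $U=A^2$ itself, after a mild symmetrization if necessary. In the case most relevant for applications in the paper, where $A$ is already symmetric with $1\in A$ (for instance $A=S^n$), the set $U=A^2$ is obviously symmetric, lies in $A^2$, and contains the identity. To verify it is an approximate group, I would apply Ruzsa's covering lemma to cover $A^4$ by few translates of $A^2$: since $|A^4\cdot A|=|A^5|\le K^4|A|$, the covering lemma produces a set $Y\subseteq A^4$ with $|Y|\le K^4$ such that $A^4\subseteq YAA^{-1}=YA^2$, showing $U$ is a $K^4$-approximate group. For general $A$, a short preliminary step is needed -- either replace $A$ by a translate $a_0^{-1}A$ for some $a_0\in A$ to insert the identity, or take $U=A^2\cap A^{-2}$ (which is symmetric, contained in $A^2$, and nonempty provided $A\cap A^{-1}\neq\emptyset$) -- and then verify $U$ is an approximate group by the same Plünnecke--Ruzsa plus Ruzsa-covering scheme, with the symmetrization costing an $O(K^{O(1)})$ factor that accounts for the weaker exponent $18$ in the statement.

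For the covering $A\subseteq XU$, the set $X$ is produced by another application of Ruzsa's covering lemma: applied to the pair $(A,A)$ using $|A\cdot A|\le K|A|$, it gives a set $X\subseteq A$ with $|X|\le K$ such that $A\subseteq XAA^{-1}$, and one then locates $AA^{-1}$ inside $U$ after the symmetrization; this yields the bound $|X|\le K^2$ in the statement with room to spare. The main obstacle is not conceptual but arises from the accounting: obtaining the precise exponents $K^{18}$ and $K^2$ requires optimizing each Plünnecke--Ruzsa and Ruzsa-covering invocation and carefully tracking the losses absorbed by the symmetrization step. The value $K^{18}$ is known not to be sharp but is convenient and entirely adequate for the applications of the proposition elsewhere in the paper.
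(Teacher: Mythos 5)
Your proposal has a fatal gap in the very first step. The non-commutative Pl\"unnecke--Ruzsa inequality does \emph{not} control higher product sets under a mere doubling hypothesis $|A^2|\le K|A|$: in non-abelian groups it requires a tripling hypothesis $|A^3|\le K|A|$ (or control on all products of some bounded length). Bounded doubling alone does not imply bounded tripling. For instance, in the free group $F_2=\la a,b\ra$, take $A=\{a^{-n},\ldots,a^n\}\cup\{b\}$; then $|A^2|=O(|A|)$, but $A^3\supseteq\{a^iba^j:|i|,|j|\le n\}$ has size $\gg|A|^2$. So the claims $|A^3|,|A^4|,|A^5|\le K^{O(1)}|A|$ are simply false in general, which means your candidate $U=A^2$ (or a symmetrization of it) need not be an approximate group, and the Ruzsa covering steps built on those bounds do not go through.

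The actual proof (in \cite[Theorem 2.5.6]{tointon.book}, following Tao) takes a genuinely different route. Rather than trying to verify that $A^2$ itself has small growth, one constructs $U$ as a \emph{symmetry set} of the form $\{g\in G:|A\cap gA|\ge c|A|\}$ for an appropriate threshold $c\asymp 1/K$. The doubling hypothesis, combined with a Cauchy--Schwarz/popularity argument, shows directly that this set is symmetric, contains the identity, has size comparable to $|A|$, sits inside $AA^{-1}$ (and, after a suitable shift of $A$, inside $A^2$), and satisfies $U^2\subseteq XU$ for a small $X$, i.e.\ it is a $K^{O(1)}$-approximate group. Crucially, this construction never asserts control over $A^3$; it extracts an approximate group without passing through the tripling bound that your argument relies on. If you want to pursue your approach, you would first need a ``doubling implies tripling on a large subset'' lemma, which is essentially what the symmetry-set construction achieves, and at that point you have recovered the standard proof rather than a shortcut around it.
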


Finally, it will be useful to have the following standard result.
\begin{lemma}[{\cite[Proposition 2.6.5]{tointon.book}}]\label{lem:slicing}
Suppose that $A$ is a $K$-approximate group inside a group $G$ and that $H\le G$, and let $m\ge2$ be an integer. Then $A^m\cap H$ is a $K^{2m-1}$-approximate group.
\end{lemma}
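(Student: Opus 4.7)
The goal is to show that $A^m\cap H$, which is manifestly symmetric and contains the identity, admits a covering $(A^m\cap H)^2\subseteq Y(A^m\cap H)$ by at most $K^{2m-1}$ left translates.

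The plan is first to blow up powers. Iterating the defining inclusion $A^2\subseteq XA$ with $|X|\le K$, a standard induction gives $A^k\subseteq X_kA$ with $|X_k|\le K^{k-1}$ for every $k\ge 1$. Applied with $k=2m$ this yields a set $X=X_{2m}\subseteq G$ of cardinality at most $K^{2m-1}$ with
\[
A^{2m}\subseteq XA.
\]
Since $(A^m\cap H)^2\subseteq A^{2m}\cap H$, it therefore suffices to cover $A^{2m}\cap H$ by at most $K^{2m-1}$ left translates of $A^m\cap H$.

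For this I would use a standard ``pick a representative per coset'' trick adapted to the intersection with $H$. Let $X'=\{x\in X:xA\cap H\ne\emptyset\}$, and for each $x\in X'$ choose one element $y_x\in xA\cap H$; set $Y=\{y_x:x\in X'\}$, so $|Y|\le|X|\le K^{2m-1}$ and $Y\subseteq H$. Now pick any $z\in A^{2m}\cap H$; then $z\in xA$ for some $x\in X$, and since $z\in H$ we have $x\in X'$. Writing $z=xa$ and $y_x=xa'$ with $a,a'\in A$, the symmetry of $A$ gives
\[
y_x^{-1}z=(a')^{-1}a\in A^{-1}A=A^2\subseteq A^m,
\]
and this element also lies in $H$ (as a product of two elements of $H$). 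Hence $z\in y_x(A^m\cap H)\subseteq Y(A^m\cap H)$, proving $A^{2m}\cap H\subseteq Y(A^m\cap H)$ and completing the proof.

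There is no real obstacle here; the only thing to be mildly careful about is to ensure that the chosen representatives $y_x$ lie in $H$ (not merely in $G$), which is why one selects them from $xA\cap H$ rather than from $xA$ itself, and that the hypothesis $m\ge 2$ is used exactly once, in the inclusion $A^2\subseteq A^m$. The bound $K^{2m-1}$ (as opposed to the perhaps more natural $K^{2m}$) comes from the fact that a single application of the approximate-group inclusion is ``free'': $A^k\subseteq X_kA$ uses $k-1$ factors of $K$, and here $k=2m$.
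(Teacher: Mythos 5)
Your proof is correct and is essentially the standard argument cited from \cite[Proposition 2.6.5]{tointon.book}: iterate the covering inclusion to get $A^{2m}\subseteq XA$ with $|X|\le K^{2m-1}$, and then replace each relevant $x\in X$ by a representative $y_x\in xA\cap H$ to produce a covering set inside $H$. The only subtlety---that one must choose the covering translates to lie in $H$, and that $m\ge 2$ is needed to absorb $A^2\cap H$ into $A^m\cap H$---is exactly what you flagged, so nothing is missing.
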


\subsection{Nilpotent groups and commutators}\label{sec:nilp}

Given elements $x,y$ of a group $G$, we adopt the convention that the commutator $[x,y]=x^{-1}y^{-1}xy$. Given elements $x_1,\ldots,x_k$ of a group, we then define the \emph{simple commutator} $[x_1,\ldots,x_k]_k$ recursively via $[x_1]_1=x_1$ and
\[
[x_1,\ldots,x_k]_k=[[x_1,\ldots,x_{k-1}]_{k-1},x_k]
\]
for $k\ge2$; thus, in particular, $[x_1,x_2]_2=[x_1,x_2]$ by definition. We drop the subscript $k$ from the brackets when it is clear from the context what value it takes, such as is the case for $[x_1,\ldots,x_k]=[x_1,\ldots,x_k]_k$, for example.

We define the commutator $[H_1,H_2]$ of two subgroups $H_i\le G$ via $[H_1,H_2]=\langle[h_1,h_2]:h_i\in H_i\rangle$. Recall that the \emph{lower central series} of $G$ is the series
\[
G=\gamma_1(G)\ge\gamma_2(G)\ge\cdots
\]
defined by setting $\gamma_1(G)=G$ and $\gamma_k=[\gamma_{k-1}(G),G]$ for all $k\ge2$. The \emph{upper central series} of $G$ is the series
\[
\{1\}=Z_0(G)\le Z_1(G)\le Z_2(G)\le\cdots
\]
defined by setting $Z_0(G)=\{1\}$ and $Z_{k}(G)=\{z\in G:[z,g]\in Z_{k-1}G)\,\,\forall g\in G\}$ for all $k\ge2$; thus, each $Z_k(G)$ is defined so that $Z_k(G)/Z_{k-1}(G)$ is the centre of $G/Z_{k-1}(G)$.

Following \cite{mpty}, for each $k\in\N$ we also define the \emph{generalised commutator subgroup} $\overline G_k$ by
\begin{equation}\label{eq:gen.comms}
\overline\gamma_k(G)=\{g\in G:(\exists n\in\N)(g^n\in \gamma_k(G))\}.
\end{equation}
The generalised commutator subgroups are all characteristic in $G$, and if $G$ is finitely generated then each $\overline\gamma_k(G)$ contains $\gamma_k(G)$ as a finite-index subgroup \cite[Lemma 4.1]{mpty}. Moreover, we have
\begin{equation}\label{eq:gen.comm.filt}
[\overline\gamma_i(G),\overline\gamma_j(G)]\le\overline\gamma_{i+j}(G)
\end{equation}
for all $i,j\in\N$ \cite[Lemma 4.5]{mpty}.

The group $G$ is said to be \emph{nilpotent} if there exists $k$ such that $\gamma_k(G)=\{1\}$. The \emph{(nilpotency) class} of $G$ is then defined to be the (unique) $c\in\N$ such that $\gamma_c(G)\ne\gamma_{c+1}=\{1\}$. Equivalently (as it turns out), the class of $G$ is the (unique) $c$ such that $Z_{c-1}(G)\ne Z_c(G)=G$.

The \emph{Bass--Guivarc'h formula} \cite{bass,guiv} states that if $G$ is nilpotent of class $c$ then the growth degree of $G$ can be expressed as  \[
\deg(G)=\sum_{i=1}^{c} ir(i),
\]
where $r(i)$ is the torsion-free rank of the abelian quotient $\gamma_i(G)/\gamma_{i+1}(G)$, i.e., the number of infinite factors in the decomposition of this quotient as a direct sum of cyclic groups.

The set of finite-order elements of a nilpotent group $G$ form a subgroup $T(G)\le G$, called the \emph{torsion subgroup} of $G$. The torsion subgroup is trivially characteristic, and $G/T(G)$ is torsion-free. If $G$ is finitely generated then $T(G)$ is finite.

We now follow a set up in \cite[\S1]{bg} that was in turn based on \cite[\S11.1]{hall} to define some more general commutators than just the simple ones described above. We define \emph{(formal) commutators} in the letters $x_1,\ldots,x_r$ recursively by defining each $x_i$ and $x_i^{-1}$ to be a formal commutator, and for every pair $\alpha,\alpha'$ of commutators defining $[\alpha,\alpha']$ also to be a formal commutator. We also write $[\alpha',\alpha]=[\alpha,\alpha']^{-1}$. To each commutator $\alpha$ we assign a \emph{weight vector} $\chi(\alpha)=(\chi_1(\alpha),\ldots,\chi_r(\alpha))$, defined recursively by setting $\chi_i(x_j^{\pm1})=\delta_{ij}$ and, given two formal commutators $\alpha,\alpha'$ in the $x_j$, defining $\chi([\alpha,\alpha'])=\chi(\alpha)+\chi(\alpha')$. We define the \emph{total weight} $|\chi(\alpha)|$ of a commutator $\alpha$ to be $\|\chi(\alpha)\|_1$. We call $\chi_i(\alpha)$ the \emph{weight} of $x_i$ in $\alpha$, or the \emph{$x_i$-weight} of $\alpha$. We define a commutator $[\alpha,\alpha']$ to be a \emph{trivial commutator} if $\alpha=\alpha'$ or if either $\alpha$ or $\alpha'$ is trivial.

Of course, if the letters $x_i$ are elements that generate a group $G$ then we may interpret commutators recursively via $[\alpha,\beta]=\alpha^{-1}\beta^{-1}\alpha\beta$. It is easy to see that a trivial commutator always has the identity element as its interpretation. If $G$ is nilpotent of class $c$ then those commutators of total weight greater than $c$ also have trivial interpretations in $G$.

Following \cite[\S11.1]{hall}, we distinguish certain commutators, which we denote by $u_1,u_2,\ldots$, as \emph{basic commutators}. These are so called because in a free group $F$ with free generators $x_1,\ldots,x_r$ and lower central series $F=F_1>F_2>\ldots$ the basic commutators total weight $k$ in the $x_i$ form a free basis of the free abelian group $F_k/F_{k+1}$ (see \cite[\S11.1]{hall}).

We define the basic commutators recursively. For $i=1,\ldots,r$ we set $u_i=x_i$. Then, having defined the basic commutators $u_1,\ldots,u_m$ of total weight less than $k$, we define a commutator $\alpha$ of total weight $k$ to be basic if
\begin{enumerate}
\item $\alpha=[u_i,u_j]$ for some $u_i,u_j$ with $i>j$, and
\item if $u_i=[u_s,u_t]$ then $j\ge t$.
\end{enumerate}
We then label the basic commutators of total weight $k$ as $u_{m+1},\ldots,u_{m'}$, ordered arbitrarily subject to the constraint that basic commutators with the same weight vector are consecutive. Note that the arbitrariness of the order implies that the list of basic commutators is not uniquely defined. Note, however, that if $r\ge2$ the commutators $[[\cdots[[x_2,x_1],x_1]\cdots],x_1]$ are always basic, so there are always basic commutators of every total weight, whereas if $r=1$ then $x_1$ is the unique basic commutator.

\subsection{Nilpotent Lie groups}\label{sec:nilp.Lie}\label{sec:basic.comms}
Every finitely generated torsion-free nilpotent group $G$ embeds as a lattice in a simply connected nilpotent Lie group. This Lie group is unique up to isomorphism, and is called the \emph{Mal'cev completion} of $G$ \cite{malcev,raghu}. We sometimes denote the Mal'cev completion of $G$ by $G^\R$.
By the Mal'cev rigidity theorem \cite[Theorem 2.11]{raghu}, every homomorphism from a torsion-free nilpotent group $G$ to a simply connected nilpotent Lie group $N$ uniquely extends to a Lie group homomorphism $G^\R\to N$.
In particular, if $G$ and $H$ are finitely generated torsion-free nilpotent groups with Mal'cev completions $G^\R$ and $H^\R$ then every homomorphism $G\to H$ extends to a homomorphism $G^\R\to H^\R$.

The Mal'cev completion of the free nilpotent group of class $c$ on generators $x_1,\ldots,x_d$ is the free nilpotent \emph{Lie} group of class $c$ on generators $x_1,\ldots,x_d$. Indeed, let $\varphi$ be the obvious homomorphism from the free nilpotent group of class $c$ on generators $x_1,\ldots,x_d$ to the free nilpotent Lie group of class $c$ on generators $x_1,\ldots,x_d$. By the Mal'cev rigidity theorem, $\varphi$ uniquely extends to a Lie group homomorphism $\varphi^\R$ defined on the Mal'cev completion. By construction, this extended morphism is surjective, and since both Lie groups are simply connected and have the same dimension, $\varphi^\R$ must be an isomorphism.

Given a Lie algeba $\g$ over $\R$, we define the \emph{lower central series} of $\g$ inductively by $\g_1=\g$ and $\g_{k+1}=[\g_k,\g]=\Span_\R\{[X,Y]:X\in\g_k,Y\in\g\}$. The Lie algebra $\g$ is said to be \emph{nilpotent} if there exists $k$ such that $\g_k=\{0\}$. The \emph{class} of $G$ is then defined to be the (unique) $c\in\N$ such that $\g_c\ne\g_{c+1}=\{0\}$.

If $G$ is a simply connected nilpotent Lie group with Lie algebra $\g$ then $\g$ is nilpotent of the same class \cite{malcev}. There are mutually inverse diffeomorphisms $\exp:\g\to G$ and $\log:G\to\g$, and one can describe the group operation in $G$ in terms of addition and the Lie bracket in $\g$ via the \emph{Baker--Campbell--Hausdorff formula}, which states that for elements $X,Y\in\g$ we have
\begin{equation}\label{eq:bch}
\textstyle\exp(X)\exp(Y)=\exp(X+Y+\frac{1}{2}[X,Y]+\frac{1}{12}[X,[X,Y]]+\cdots)
\end{equation}
\cite[Theorem 1.2.1]{cor-gre}. The terms in this expression are all iterated Lie brackets in $X$ and $Y$ multiplied by rational coefficients. The precise values of the rationals coefficients are not important for our arguments; all that matters is that in a nilpotent Lie group the series has only finitely many non-zero terms, and these depend only on the class of the group.

Simply connected nilpotent Lie groups are \emph{uniquely divisible}, which is to say that if $G$ is such a group and $g\in G$ then for every $n\in\N$ there exists a unique $h\in G$ such that $h^n=g$.

We define the \emph{dimension} of a simply connected nilpotent Lie group $G$, denoted $\dim G$, to be the dimension of its Lie algebra as a real vector space. The class of $G$ is at most $\dim G$, so any bound that in principle depends on both the class and dimension of $G$ can in fact be taken to depend only on the dimension.

We define the \emph{homogeneous dimension} of $G$, denoted $\hdim G$, by
\[
\hdim G=\sum_{k=1}^sk(\dim\g_k-\dim\g_{k+1})=\sum_{k=1}^s\dim\g_k.
\]
If $\Gamma$ is a lattice in $G$ then $\dim\g_k-\dim\g_{k+1}$ is exactly the torsion-free rank of the abelian quotient $\gamma_k(\Gamma)/\gamma_{k+1}(\Gamma)$; in particular, if $N$ is a finitely generated nilpotent group with torsion subgroup $T$ then $\deg(N)$ is exactly the homogeneous dimension of the Mal'cev completion of $N/T$.

A basis $x_1,\ldots,x_d$ for a nilpotent Lie algebra $\g$ is called a \emph{strong Mal'cev basis} if $\Span_\R(x_i,\ldots,x_d)$ is an ideal of $\g$ for each $i$. It follows from \cite[Proposition 1.2.7]{cor-gre} that if $x_1,\ldots,x_d$ is a strong Mal'cev basis for $\g$ and $\g$ is the Lie algebra of a simply connected nilpotent Lie group $G$ then, writing $u_i=\exp x_i$ for each $i$, every element of $G$ has a unique expression in the form $u_1^{\ell_1}\cdots u_d^{\ell_d}$ with $\ell_i\in\R$. Note in particular that the upper-triangular-form property of the basis of a Lie progression implies that it is a strong Mal'cev basis for the Lie algebra of the Lie group from which the progression is projected.

\begin{lemma}[{\cite[Lemma 4.3]{proper.progs}}]\label{lem:dilates.subgroup}
Let $c\in\N$. Then there exists $n=n(c)\in\N$ if $G$ is a connected, simply connected nilpotent Lie group of nilpotency class at most $c$, and $\Lambda$ is an additive subgroup of the Lie algebra of $G$ with $[\Lambda,\Lambda]\subseteq\Lambda$, then $\exp(n\cdot\Lambda)$ is a subgroup of $G$.
\end{lemma}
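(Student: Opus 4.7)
The plan is to use the Baker--Campbell--Hausdorff formula \eqref{eq:bch} to compute products of elements of $\exp(n\Lambda)$ explicitly, and to choose $n$ so that each BCH term lies in $n\Lambda$.

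First I would observe that the hypothesis $[\Lambda,\Lambda]\subseteq\Lambda$, applied iteratively, implies that every iterated Lie bracket of elements of $\Lambda$ lies in $\Lambda$ (since $[[\Lambda,\Lambda],\Lambda]\subseteq[\Lambda,\Lambda]\subseteq\Lambda$, etc.). Consequently, if $X_1,\ldots,X_k\in n\Lambda$ and we write $X_i=nY_i$ with $Y_i\in\Lambda$, then any iterated Lie bracket $B(X_1,\ldots,X_k)$ of total weight $k$ in the $X_i$ equals $n^k B(Y_1,\ldots,Y_k)\in n^k\Lambda$.

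Second I would invoke the BCH formula in the form
\[
\exp(X)\exp(Y)=\exp\!\left(\sum_{k=1}^{c}\sum_{\alpha}\frac{p_\alpha}{q_\alpha}\,\alpha(X,Y)\right),
\]
where the inner sum runs over iterated Lie brackets $\alpha$ of total weight $k$ in two letters, the $p_\alpha/q_\alpha\in\Q$ are the BCH coefficients, and only finitely many terms appear because $G$ has class at most $c$. The crucial point is that the set of denominators $\{q_\alpha\}$ appearing in this truncation is a \emph{finite set depending only on $c$}. Let $Q=Q(c)$ be the least common multiple of all these $q_\alpha$.

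Third I would set $n=n(c):=Q^{c-1}$ (say) and verify closure. Given $X,Y\in n\Lambda$, the weight-$k$ term $\frac{p_\alpha}{q_\alpha}\alpha(X,Y)$ lies in $\frac{n^k}{q_\alpha}\Lambda$, and since $q_\alpha\mid Q\mid n$ and $k\ge1$, we have $n^k/q_\alpha\in n\Z$ whenever $k\ge2$ (and equals $n$ itself when $k=1$, giving the expected $X+Y\in n\Lambda$). Summing these contributions, which all lie in $n\Lambda$ since $\Lambda$ is an additive group, we obtain $\exp(X)\exp(Y)\in\exp(n\Lambda)$. Closure under inversion is immediate from $\exp(X)^{-1}=\exp(-X)$ and the fact that $\Lambda$ is closed under negation, and the identity is $\exp(0)$. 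Hence $\exp(n\Lambda)$ is a subgroup.

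The only mildly delicate point is the bookkeeping in step three to confirm that $n^k/q_\alpha\in n\Z$ for all $k$ up to $c$; this is where we need $n$ to absorb a sufficiently large power of the BCH denominators. Taking $n$ to be the LCM of $q_\alpha$ already suffices (since $n\mid n^{k-1}(n/q_\alpha)$ for $k\ge 2$), but setting $n=Q^{c-1}$ makes the divisibility transparent without optimising constants.
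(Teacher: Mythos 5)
Your argument is correct, and it is essentially the standard BCH proof of this fact (the paper cites the lemma from \cite{proper.progs} without reproducing its proof, but the only reasonable approach is exactly the one you take). The key observations — that $[\Lambda,\Lambda]\subseteq\Lambda$ propagates to all higher iterated brackets, that the truncated BCH series has finitely many rational coefficients depending only on $c$, and that scaling $\Lambda$ by a common multiple $Q$ of the denominators forces every BCH term of weight $k\ge 2$ to land in $Q^{k-1}(Q/q_\alpha)\Lambda\subseteq Q\Lambda$ — are all sound. One small simplification worth noting: you do not need $n=Q^{c-1}$; as you yourself observe at the end, $n=Q$ already works, since for $k\ge 2$ the factor $n^{k-1}$ alone already contributes the needed multiple of $n$, independently of where $n/q_\alpha$ sits. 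The divisibility $n\mid p_\alpha n^{k-1}(n/q_\alpha)$ is clear for $k\ge 2$, the weight-$1$ term is $X+Y\in n\Lambda$ trivially, and closure under inversion and the identity are immediate, so the proof is complete as you present it.
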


\begin{lemma}[{\cite[Lemma 5.1]{bg}}]\label{lem:bg.rational.power}
Let $c,t\in\N$. There exist rational polynomials $p_1,\ldots,p_m$ drawn from a finite list depending only on $c$, and indices $i_1,\ldots,i_m\in\{1,\ldots,t\}$, such that if $y_1,\ldots,y_t$ are elements in a nilpotent Lie group of class $c$ and $\eta\in\R$ then $(y_1\cdots y_t)^\eta=y_{i_1}^{p_1(\eta)}\cdots y_{i_m}^{p_m(\eta)}$.
\end{lemma}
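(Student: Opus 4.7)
The plan is to induct on the class $c$, with the inductive hypothesis including the statement that the polynomials can be taken from a finite list $\mathcal L_{c-1}$ depending only on $c-1$. For $c=1$ the ambient group is abelian and $(y_1\cdots y_t)^\eta=y_1^\eta\cdots y_t^\eta$, so one may take $\mathcal L_1=\{\eta\}$.

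For the inductive step, pass to the quotient $N/\gamma_c(N)$, where $N$ is the ambient Lie group. This quotient has class at most $c-1$, so by induction there exist indices $i_1,\ldots,i_m\in\{1,\ldots,t\}$ and polynomials $p_1,\ldots,p_m\in\mathcal L_{c-1}$ with
\[
(y_1\cdots y_t)^\eta = y_{i_1}^{p_1(\eta)}\cdots y_{i_m}^{p_m(\eta)}\cdot z(\eta)
\]
for some $z(\eta)\in\gamma_c(N)$. The subgroup $\gamma_c(N)$ is central in $N$ (since $\gamma_{c+1}(N)=\{1\}$), abelian, and uniquely divisible, so via $\exp$ it identifies with the real vector space $\gamma_c(\n)$ spanned by iterated Lie brackets of weight $c$ in the $\log y_i$. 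The Baker--Campbell--Hausdorff formula shows that $\log z(\eta)$ is a polynomial in $\eta$ of degree at most $c$, so we can write
\[
z(\eta) = \prod_k w_k^{q_k(\eta)},
\]
where each $w_k$ is a simple commutator $[y_{j_1^{(k)}},\ldots,y_{j_c^{(k)}}]_c$ and each $q_k$ is a rational polynomial.

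To turn each factor $w_k^{q_k(\eta)}$ into a word in real powers of the $y_i$, I exploit centrality: since $\gamma_{c+1}(N)=\{1\}$, the map $a\mapsto[a,b_1,\ldots,b_{c-1}]$ is a homomorphism from $N$ into $\gamma_c(N)$, and by unique divisibility this extends to real exponents to yield
\[
[a,b_1,\ldots,b_{c-1}]^\alpha = [a^\alpha,b_1,\ldots,b_{c-1}]\qquad(\alpha\in\R).
\]
Unrolling the nested commutator on the right via $[u,v]=u^{-1}v^{-1}uv$ gives an explicit word in factors $a^{\pm\alpha}$ and $b_i^{\pm 1}$, so each $w_k^{q_k(\eta)}$ becomes a word in $y_i^{\pm q_k(\eta)}$ and $y_i^{\pm 1}$. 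Assembling everything, $(y_1\cdots y_t)^\eta$ is a product of real powers of the $y_i$ with exponents drawn from $\mathcal L_c:=\mathcal L_{c-1}\cup\{\pm q_k\}\cup\{\pm 1\}$.

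The main obstacle is to show that $\mathcal L_c$ is a \emph{finite} list depending only on $c$. The cleanest route is to reduce to the free nilpotent Lie group of class $c$ on $t$ generators, where $z(\eta)$ is determined by comparing the BCH expansion of $y_{i_1}^{p_1(\eta)}\cdots y_{i_m}^{p_m(\eta)}$ with $\exp(\eta\log(y_1\cdots y_t))$ modulo $\gamma_{c+1}$: the coefficient of each basic weight-$c$ Lie bracket in $\log z(\eta)$ is then a rational polynomial combination of the $p_i(\eta)$, with coefficients built from the BCH rationals truncated at weight $c$. Since $\mathcal L_{c-1}$ is finite and the BCH rationals up to weight $c$ form a finite set, only finitely many distinct $q_k$ arise. (Alternatively one can first prove the identity for integer $\eta=n$ by the Hall--Petresco collection formula, whose exponents are all of the form $\binom{n}{k}$ for $k\le c$, and then extend to real $\eta$ by the unique divisibility of simply connected nilpotent Lie groups and analyticity.)
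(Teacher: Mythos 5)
The paper does not actually prove this lemma; it cites \cite[Lemma 5.1]{bg} and only remarks on how the stated version differs (arbitrary nilpotent Lie group rather than free, real rather than rational $\eta$). Your self-contained attempt by induction on the class $c$ is therefore a genuinely different route, and its outline --- pass to $N/\gamma_c(N)$, control the central error $z(\eta)\in\gamma_c(N)$ via BCH, and unroll the weight-$c$ commutators using that $a\mapsto[a,b_1,\ldots,b_{c-1}]$ is a homomorphism into the centre --- is sound. You also correctly identify the crux: showing that the resulting list $\mathcal L_c$ is finite and depends only on $c$.

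It is exactly there that the argument has a real gap. You claim that ``the coefficient of each basic weight-$c$ Lie bracket in $\log z(\eta)$ is a rational polynomial combination of the $p_i(\eta)$'' and that finiteness of $\mathcal L_{c-1}$ together with finiteness of the BCH rationals up to weight $c$ forces finitely many $q_k$. But the coefficient $q_k$ is a \emph{sum} over subsets of $\{1,\ldots,m\}$ of size at most $c$, with $m$ the length of the inductively produced word $W_{t,c-1}(\eta)=y_{i_1}^{p_1(\eta)}\cdots y_{i_m}^{p_m(\eta)}$, and $m$ grows with $t$. Even after grouping summands with the same product $\prod_j p_{j}$, the aggregated coefficient in front of that product is a sum of BCH rationals whose number of terms is $t$-dependent; finiteness of the ingredients therefore does not by itself give a $t$-independent list of $q_k$'s, and your stated bound ``degree at most $c$'' for $\log z(\eta)$ also does not follow from what is written (a product of $c$ polynomials of degree $\le c-1$ has degree up to $c(c-1)$). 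To close the gap one can run a universality/substitution argument: work in the free nilpotent Lie group on $t$ generators, note that the coefficient of a basic weight-$c$ bracket supported on indices $S\subseteq\{1,\ldots,t\}$ with $|S|\le c$ is preserved by the homomorphism killing all generators outside $S$, and therefore depends only on the ``type'' of the bracket (its pattern of repetitions and relative order), of which there are boundedly many in terms of $c$. But this requires strengthening the inductive hypothesis so that the word $W_{t,c-1}(\eta)$ is constructed compatibly with such substitutions, which is not part of your induction as stated.

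Your parenthetical alternative (Hall--Petresco for integer $\eta$, then extend to all $\eta$) is closer in spirit to the argument of \cite{bg} and is probably the cleaner route, but it is also under-specified: after Hall--Petresco one still has to decompose $\tau_k(y)^{\binom{\eta}{k}}$ recursively into powers of the original generators while keeping the list of polynomials finite, and the extension from integers to reals needs more than ``analyticity'' (two real-analytic functions agreeing on $\Z$ need not agree on $\R$); one must use that both sides are \emph{polynomial} in $\eta$ in exponential coordinates of the second kind, so agreement on infinitely many integers forces agreement identically.
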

\begin{proof}[Remarks on the proof]
This is not quite the statement of \cite[Lemma 5.1]{bg}, but it follows from the proof as presented there. Specifically, it is not clear from the statement of \cite[Lemma 5.1]{bg} that the same polynomials and indices work for arbitrary elements $y_j$ in an arbitrary nilpotent Lie group of class $c$, but this follows from the proof (or alternatively from applying the statement there to the free nilpotent Lie group of class $c$ on generators $y_1,\ldots,y_t$). Moreover, in the statement of \cite[Lemma 5.1]{bg} the exponent $\eta$ is assumed to be rational, but the proof mentions explicitly that $\eta$ may in fact be an arbitrary real. (One could also use a density argument to pass from the rationals to the reals.)
\end{proof}

We define a framework for describing iterted Lie brackets in the Lie algebra, analogous to higher-weight commutators in a group. Specifically, given elements $v_1,\ldots,v_r$ of a Lie algebra, we define every $v_j$ to be a Lie bracket of weight $1$ in the $v_i$, and for every pair $\alpha,\alpha'$ of Lie brackets of weights $\omega,\omega'$, respectively, we define $[\alpha,\alpha']$ to be a Lie bracket in the $v_i$ of weight $\omega+\omega'$. We again extend $v_1,\ldots,v_r$ to a list $\overline v_1,\ldots,\overline v_d$ of \emph{basic Lie brackets} in the $v_i$; these are defined in exactly the same way as the basic commutators in a group, except that the bracket now represents the Lie bracket instead of the commutator. We caution that if $v_i=\log x_i$ for each $i$ then the basic commutators in the $v_i$ are not in general equal to the logarithms of the basic commutators in the $x_i$.

The following is well known, but we are not aware of a reference.
\begin{prop}\label{prop:compact.central}
Let $N$ be a connected nilpotent Lie group, and suppose $H$ is a compact subgroup of $N$. Then $H$ is central in $N$.
\end{prop}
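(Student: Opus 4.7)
The plan is to show that for every $h \in H$, the adjoint action $\mathrm{Ad}(h)$ on $\mathfrak{n} = \mathrm{Lie}(N)$ is trivial, which will force $h$ to commute with every element of $N$ since $N$ is connected.

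First I would recall that because $N$ is a connected nilpotent Lie group, the exponential map $\exp : \mathfrak{n} \to N$ is surjective. (One way to see this: writing $N = \tilde N / D$ with $\tilde N$ the simply connected cover, the exponential $\widetilde{\exp} : \mathfrak{n} \to \tilde N$ is a diffeomorphism, and $\exp$ factors as the composition of $\widetilde{\exp}$ with the covering projection $\tilde N \to N$, so it is surjective.) Hence for each $h \in H$ there exists $X \in \mathfrak{n}$ with $h = \exp X$, and consequently
\[
\mathrm{Ad}(h) = \exp(\mathrm{ad}\, X).
\]

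The key observation is then that $\mathrm{Ad}(h)$ is simultaneously unipotent and contained in a compact subgroup of $GL(\mathfrak{n})$. Unipotence holds because $\mathfrak{n}$ is a nilpotent Lie algebra, so $\mathrm{ad}\, X$ is a nilpotent endomorphism of $\mathfrak{n}$, making $\exp(\mathrm{ad}\, X)$ unipotent. For the compactness, note that $\mathrm{Ad} : N \to GL(\mathfrak{n})$ is a continuous homomorphism, so $\mathrm{Ad}(H)$ is a compact subgroup of $GL(\mathfrak{n})$. Since every compact subgroup of $GL(\mathfrak{n})$ is conjugate to a subgroup of $\mathrm{O}(\dim \mathfrak{n})$, its elements are diagonalisable with eigenvalues of modulus $1$; a unipotent element of such a group has all eigenvalues equal to $1$ and is diagonalisable, hence equal to the identity. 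Therefore $\mathrm{Ad}(h) = \mathrm{id}_{\mathfrak{n}}$.

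Finally, I would translate this back to the group. For every $Y \in \mathfrak{n}$ we have
\[
h \exp(Y) h^{-1} = \exp(\mathrm{Ad}(h)\, Y) = \exp(Y),
\]
so $h$ commutes with every element of $\exp(\mathfrak{n}) = N$, meaning $h \in Z(N)$. Since this holds for every $h \in H$, we conclude $H \subseteq Z(N)$, as required. The only step that requires any real care is the first one — surjectivity of $\exp$ for a \emph{non-simply connected} connected nilpotent Lie group — but this follows immediately from the simply connected case by passing to the universal cover.
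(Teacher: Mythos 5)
Your proof is correct, and it takes a genuinely different route from the paper's. The paper works with the universal cover $\tilde N$: it observes that $N=\tilde N/\Gamma$ for a discrete normal (hence central) subgroup $\Gamma\cong\Z^d$, passes to the central extension $1\to\Gamma_\R/\Gamma\to N\to\tilde N/\Gamma_\R\to1$ with $\tilde N/\Gamma_\R$ simply connected, and uses the fact that a simply connected nilpotent Lie group has no nontrivial compact subgroups to force $H\subseteq\Gamma_\R/\Gamma$, which is central. You instead argue entirely in the adjoint representation: $\mathrm{Ad}(h)=\exp(\mathrm{ad}\,X)$ is unipotent because $\mathfrak n$ is nilpotent, while $\mathrm{Ad}(H)$ is a compact subgroup of $\GL(\mathfrak n)$ and hence conjugate into the orthogonal group, so a unipotent element of it must be the identity; surjectivity of $\exp$ on a connected nilpotent group then gives $h\in Z(N)$. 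Your approach is more self-contained at the group-theoretic level (no need to manufacture the Mal'cev completion $\Gamma_\R$ of the fundamental group or to appeal to the structure of central extensions), at the cost of invoking the standard fact that every compact subgroup of $\GL_n(\R)$ embeds in $\mathrm{O}(n)$. Both are clean; yours is arguably the more standard textbook argument, while the paper's proof fits better with the Mal'cev-completion machinery already in use throughout the paper.
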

\begin{proof}
Let $\tilde{N}$ be the universal cover of $N$. Then $N$ is the quotient of $\tilde{N}$ by a discrete normal subgroup $\Gamma\le\tilde{N}$. Note that as a discrete normal subgroup of a connected group, $\Gamma$ is central: the map $\tilde{N}\times \Gamma\to \Gamma$ defined by $(g,\gamma)\mapsto [g,\gamma]$ is continuous, and hence constant with respect to $g$. It follows that $\Gamma$ is isomorphic to $\Z^d$. Its Mal'cev completion $\Gamma_\R$ is therefore a central subgroup of $\tilde{N}$ isomorphic to $\R^d$. We therefore have a central extension $1\to \Gamma_\R \to \tilde{N}\to\tilde{N} /\Gamma_\R\to1$, which induces a central extension 
$1\to \Gamma_\R/\Gamma \to N\to\tilde{N} /\Gamma_\R\to1$. Since  $\tilde{N}/\Gamma_\R$ is simply connected, any compact subgroup is contained in $\Gamma_\R/\Gamma$, and therefore is central.
\end{proof}

\subsection{The value of $g(d)$ and its optimality in our results}\label{sec:g(d)}
The exact value of $g(d)$ is apparently known for all $d$, but unfortunately not all of the details are published. According to Friedland \cite[p. 3519]{friedland} and Mann \cite[p. 88]{mann.book}, Feit computed $g(d)$ for all values of $d$ and characterised those finite subgroups of maximal order in $\GL_d(\Z)$, showing in particular that $g(d)=2^dd!$ for $d=1,3,5$ and $d\ge11$. These results appear in an unpublished preprint \cite{feit}, which we have not seen. For sufficiently large $d$, Friedland \cite{friedland} proves this bound and shows that the unique subgroup of $\GL_d(\Z)$ with maximum order is the orthogonal group $\mathrm{O}_d(\Z)$. The best general bound we are aware of with a published proof is $g(d)\le(2d)!$ \cite[p.~175, eq.~(16)]{newman}.

It is perhaps worth providing a slight update to Friedland's account of Feit's work. Friedland reports that for large $d$, Feit's work depends on a certain bound on the \emph{Jordan number}~$j(d)$. The name and definition of this quantity come from Jordan's theorem, which states that for each $d\in\N$ there exists $j(d)\in\N$ such that an arbitrary finite subgroup of $\GL_d(\C)$ contains a normal abelian subgroup with index at most $j(d)$. According to Friedland, for large $d$ Feit's argument relies on the bound $j(d)\le(d+2)!$ for $d\ge64$, which appears in an unpublished manuscript left by Weisfeiler \cite{weisfeiler} when he disappeared hiking in Chile in 1985. However, since then, Collins \cite{collins} has computed the optimal value of $j(d)$ for all $d$, rendering Weisfeiler's unpublished work unnecessary.

In any case, our results as stated do not rely on knowledge of $g(d)$. Moreover, irrespective of its precise value, the fact that $g(d)$ is the optimal index bound in \cref{thm:mann} and our main results follows from the following proposition.
\begin{prop}\label{prop:g(d)}
Let $K$ be a finite subgroup of maximum size in $\GL_d(\Z)$, and let $G=\Z^d\rtimes K$. Suppose $H,\Gamma\normal G$ with $H\le\Gamma$ such that $H$ is finite, $\Gamma/H$ is nilpotent and $[G:\Gamma]<\infty$. Then $H=\{1\}$ and $\Gamma\le\Z^d$.
\end{prop}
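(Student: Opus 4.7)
My plan is to establish the two conclusions sequentially, first $H=\{1\}$, and then $\Gamma\le\Z^d$, by direct computation inside the semidirect product. Throughout, I will write elements of $G$ as pairs $(v,k)$ with $v\in\Z^d$ and $k\in K$, so that multiplication is $(v_1,k_1)(v_2,k_2)=(v_1+k_1v_2,k_1k_2)$.

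\textbf{Step 1: $H=\{1\}$.} Take $(v,k)\in H$. Since $H\normal G$, for every $w\in\Z^d$ the element
\[
(w,1)(v,k)(w,1)^{-1}=(v+(1-k)w,k)
\]
lies in $H$. Because $H$ is finite, the set $\{v+(1-k)w:w\in\Z^d\}$ must be finite, which forces $(1-k)\Z^d=\{0\}$, and hence $k=1$. Thus $H\le\Z^d$, and since $\Z^d$ is torsion-free while $H$ is finite, $H=\{1\}$.

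\textbf{Step 2: $\Gamma\le\Z^d$.} With $H=\{1\}$, the hypothesis is that $\Gamma\normal G$ is nilpotent of some class $c$ and has finite index in $G$. Let $\Lambda=\Gamma\cap\Z^d$; this is a finite-index subgroup of $\Z^d$, hence a lattice of full rank. Take any $g=(v,k)\in\Gamma$ and any $(w,1)\in\Lambda$. A direct computation shows that
\[
[(w,1),g]=(w,1)^{-1}g^{-1}(w,1)g=((k^{-1}-1)w,1).
\]
Since the result lies again in $\Lambda$, we may iterate and obtain, for every $j\ge1$, the identity
\[
[(w,1),\underbrace{g,\ldots,g}_{j}]=((k^{-1}-1)^{j}w,1).
\]
Nilpotence of class $c$ forces this to vanish when $j=c$, and therefore $(k^{-1}-1)^{c}w=0$ for every $w\in\Lambda$. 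As $\Lambda$ spans $\Q^d$, this means $(k^{-1}-1)^{c}=0$ as an endomorphism of $\Q^d$.

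\textbf{Step 3: ruling out nontrivial $k$.} Here is the only point requiring a non-computational input: since $K$ is finite, $k$ has finite order, hence $k^{-1}$ is diagonalizable over $\C$ with eigenvalues that are roots of unity. The eigenvalues of $k^{-1}-1$ are therefore of the form $\zeta-1$ with $\zeta$ a root of unity, and $(k^{-1}-1)^{c}=0$ on a diagonalizable operator forces every eigenvalue to be $0$. Thus every $\zeta=1$, so $k=1$. This shows that the projection of $\Gamma$ to $G/\Z^d\cong K$ is trivial, which is precisely the statement $\Gamma\le\Z^d$.

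The only real content is in Step 2/3; Step 1 is essentially an orbit-finiteness argument. I do not anticipate a serious obstacle, the main point being to set up the commutator calculation cleanly and to remember that finite-order elements of $\GL_d(\Z)$ are semisimple over $\C$, which upgrades ``$(k^{-1}-1)^{c}$ nilpotent'' to ``$k^{-1}-1$ zero''.
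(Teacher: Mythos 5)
Your proof is correct. Step 1 is essentially the paper's argument (both exploit normality and conjugation by $\Z^d$ to show the $K$-component of any element of $H$ must be trivial, then use torsion-freeness of $\Z^d$). Step 2/3, however, takes a genuinely different route. The paper argues ``softly'': since $\Gamma$ is nilpotent and virtually abelian, $\Gamma/T(\Gamma)$ is both torsion-free nilpotent and virtually abelian, hence abelian, so $[\Gamma,\Gamma]\le T(\Gamma)$ is finite (here $T(\Gamma)$ denotes the torsion subgroup); it then exhibits a commutator $(k(z)-z,1)\in[\Gamma,\Gamma]$ of infinite order to derive a contradiction. Your argument is more explicit and self-contained: you iterate the commutator $[(w,1),g]=((k^{-1}-1)w,1)$ to translate nilpotence of class $c$ into the linear-algebraic statement $(k^{-1}-1)^c=0$ on $\Q^d$, then use that a finite-order element of $\GL_d(\Q)$ is semisimple over $\C$ to conclude $k^{-1}-1$ cannot be nilpotent unless it is zero. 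This avoids the structure theory of finitely generated nilpotent groups (finiteness of the torsion subgroup, the fact that a torsion-free nilpotent virtually abelian group is abelian) at the cost of one piece of linear algebra. The computations are correct, and the induction step is sound because each iterated commutator stays of the form $(\,\cdot\,,1)$ and hence remains in $\Lambda=\Gamma\cap\Z^d$.
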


To prove \cref{prop:g(d)} it will be convenient to have the following straightforward lemma (which is trivial once we know that $K=\mathrm{O}_n(\Z)$).
\begin{lemma}
Suppose $K\le\GL_d(\Z)$ is a finite subgroup of maximum size. Then the action of $K$ on $\Z^d$ has no non-zero fixed points.
\end{lemma}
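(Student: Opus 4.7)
The plan is to prove the contrapositive-style statement by arguing that if $K$ fixes a nonzero vector, then $|K|$ cannot reach the maximum $g(d)$. The key ingredient will be a simple inductive bound $g(d)\ge 2g(d-1)$ for $d\ge 1$, which is immediate: given any finite subgroup $K_0\le\GL_{d-1}(\Z)$ of order $g(d-1)$, the block-diagonal subgroup $\bigl\{\left(\begin{smallmatrix}A & 0\\ 0 & \pm 1\end{smallmatrix}\right):A\in K_0\bigr\}$ sits inside $\GL_d(\Z)$ and has order $2g(d-1)$. This will supply the strict inequality $g(d-1)<g(d)$ that drives the contradiction.

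Suppose then, for contradiction, that $K$ fixes some $v\in\Z^d\setminus\{0\}$. The first step is to produce a $K$-invariant rank-$(d-1)$ sublattice complementary to $\Q v$. Since $K$ is finite, averaging the standard inner product on $\Q^d$ over $K$ yields a $K$-invariant inner product, and the orthogonal complement $W=(\Q v)^{\perp}$ is a $K$-invariant hyperplane satisfying $\Q^d=\Q v\oplus W$. Set $L:=W\cap\Z^d$; this is a free $\Z$-module of rank $d-1$ preserved by $K$.

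Next I would check that the induced representation $K\to\GL(L)$ is faithful. If $k\in K$ acted trivially on $L$, then since $k$ also fixes $v$, it would act trivially on $\Z v\oplus L$, a finite-index sublattice of $\Z^d$; $\Z$-linearity then forces $k=\id$. Choosing a $\Z$-basis of $L$ identifies $\GL(L)\cong\GL_{d-1}(\Z)$ and embeds $K$ as a finite subgroup there, so $|K|\le g(d-1)$. Combined with the inductive bound above, this yields $|K|\le g(d-1)\le g(d)/2<g(d)$, contradicting the maximality of $|K|$.

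The argument is essentially routine representation-theoretic linear algebra, so I do not anticipate a serious obstacle; the only mild subtlety is ensuring that the complement $W$ is chosen $K$-invariantly (handled by averaging) and that the restricted action on $L$ is faithful (handled by the finite-index observation). The cases $d=0$ and $d=1$ are either vacuous or handled directly by $\GL_1(\Z)=\{\pm 1\}$.
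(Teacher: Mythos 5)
Your proof is correct, but the mechanism you use to embed $K$ into $\GL_{d-1}(\Z)$ differs from the paper's. The paper extends the fixed vector (after reducing to a unimodular one) to an arbitrary $\Z$-basis of $\Z^d$ and observes that every element of $K$ is then block upper-triangular of the form $\left(\begin{smallmatrix}1&\ast\\0&A\end{smallmatrix}\right)$; projecting onto the bottom-right block gives a homomorphism $K\to\GL_{d-1}(\Z)$, and its kernel is trivial because any nonidentity unipotent matrix $\left(\begin{smallmatrix}1&\ast\\0&I\end{smallmatrix}\right)$ has infinite order, whereas $K$ is finite. You instead average the standard inner product over $K$ to produce a $K$-invariant complementary sublattice $L$, and establish faithfulness via the observation that an integer matrix acting trivially on a full-rank sublattice is the identity. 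Both routes then conclude using $g(d)\geq 2g(d-1)$ (which the paper realizes concretely by exhibiting the block-diagonal group $\{\pm1\}\times K'$ rather than stating the inequality abstractly). The paper's version is somewhat more elementary, since it needs no invariant form and no complement -- just any basis extension -- while yours is perhaps more conceptually transparent about where the rank-$(d-1)$ representation comes from. One small thing to make explicit if you wrote this up: you should note that $v\notin W$ (which follows from positive-definiteness of the averaged form, so $\la v,v\ra\neq 0$), as this is what makes $\Z v\oplus L$ a direct sum of full rank.
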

\begin{proof}
Suppose $K$ has a non-zero fixed point $z\in\Z^d$. We may assume without loss of generality that $z$ is unimodular, and hence extend it to a basis for $\Z^d$, with respect to which every element of $K$ will be of the form
\[
\left(\begin{array}{c|c}
1 & \ast \\
\hline
0 & A
\end{array}\right)
\]
for some $A\in\GL_{d-1}(\Z)$. We may then define a homomorphism $\ph:K\to\GL_{d-1}(\Z)$ by
\[
\ph\left(\begin{array}{c|c}
1 & \ast \\
\hline
0 & A
\end{array}\right)=A,
\]
the kernel of which consists entirely of matrices of the form
\[
\left(\begin{array}{c|c}
1 & \ast \\
\hline
0 & I
\end{array}\right).
\]
Such a matrix is either the identity or of infinite order, so since $K$ is finite we conclude that $\ker\ph=I$ and hence that $K$ is isomorphic to a subgroup $K'\le\GL_{d-1}(\Z)$. The matrices of the form
\[
\left(\begin{array}{c|c}
\pm1 & 0 \\
\hline
0 & A
\end{array}\right)
\]
with $A\in K'$ then form a finite subgroup of $\GL_d(\Z)$ with size larger than that of $K$.
\end{proof}

\begin{proof}[Proof of \cref{prop:g(d)}]
To see that $H=\{1\}$, suppose on the contrary that $(0,1)\ne(w,k)\in H$. If $k=1$ then $w\ne0$, contradicting the finiteness of $H$. If $k\ne1$ then there exists $z\in\Z^d$ such that $k(z)\ne z$. We then have $(-z,1)(w,k)(z,1)(w,k)^{-1}=(k(z)-z,1)\in H$ by normality, again contradicting the finiteness of $H$.

Suppose now that $\Gamma\le G$ is a finite-index nilpotent subgroup. First, note that since $G$ is virtually abelian, every quotient of $\Gamma$ is virtually abelian, including in particular the quotient by the torsion subgroup $T(\Gamma)$. The quotient $\Gamma/T(\Gamma)$ is therefore both torsion-free nilpotent and virtually abelian, and hence torsion-free abelian. In particular, this shows that $[\Gamma,\Gamma]\le T(\Gamma)$. Since $\Gamma$ is a finitely generated nilpotent group, $T(\Gamma)$ is finite, so that $[\Gamma,\Gamma]$ is also finite.

Now suppose that, contrary to our claim, $\Gamma\not\le\Z^d$, so that there exists $(w,k)\in\Gamma$ with $k\ne1$. Since $\Gamma'=\Gamma\cap\Z^d$ has finite index in $\Z^d$, every non-trivial element of $K$ has a non-trivial action on $\Gamma'$; in particular, there exists $(z,1)\in\Gamma'$ such that $k(z)\ne z$. We then have $(-z,1)(w,k)(z,1)(w,k)^{-1}=(k(z)-z,1)\in[\Gamma,\Gamma]$, contradicting the finiteness of $[\Gamma,\Gamma]$.
\end{proof}

\section{Lie progressions}\label{ch:progs}

Recall from the introduction that we define the \emph{progression} $P(u;L)$ on \emph{generators} $u_1,\ldots,u_d\in G$ with \emph{lengths} $L_1,\ldots,L_d$ is to be
\[
P(u;L)=\{u_1^{\ell_1}\cdots u_d^{\ell_d}:|\ell_i|\le L_i\}.
\]
We also extend this notation by defining
\[
P(u;\infty)=\{u_1^{\ell_1}\cdots u_d^{\ell_d}:\ell_i\in\Z\}.
\]
We caution that in our previous paper \cite{proper.progs} we used the term \emph{ordered progression} and the notation $P_\ord(u;L)$, to distinguish this notion of progression from various other notions of progression that played a more prominent role at the time we wrote that paper. We also insisted that the lengths $L_i$ be integers in that paper.

Progressions are ``almost" symmetric in the sense that for group elements $u_1,\ldots,u_d$ and lengths $L_1,\ldots,L_d>0$  the progression $P(u,L)$ trivially satisfies 
\begin{equation}\label{eq:inverseProg}
P(u,L)^{-1}\subset P(u,L)^d.
\end{equation}
It is also not hard to see that for $m\in\N$ and $L_i\ge1$ we have
\begin{equation}\label{eq:dilateProg}
P(u;mL)\subset P(u,L)^{2dm}.
\end{equation}

Recall from the introduction that the tuple $(u;L)=(u_1,\ldots,u_d;L_1,\ldots,L_d)$ is in \emph{$C$-upper-triangular form} if, whenever $1\le i<j\le d$, for all four choices of signs $\pm$ we have
\begin{equation}\label{eq:C-upp-tri}
[u_i^{\pm1},u_j^{\pm1}]\in P\left(u_{j+1},\ldots,u_d;\textstyle{\frac{CL_{j+1}}{L_iL_j},\ldots,\frac{CL_d}{L_iL_j}}\right).
\end{equation}
In this case we also describe the progression $P(u;L)$ as being in \emph{$C$-upper-triangular form}. Note that if $(u;L)$ is in upper-triangular form then $\la u_1,\ldots,u_d\ra=P(u;\infty)$.

Given $\mu>0$, we say that the progression $P(u;L)$ is \emph{$\mu$-proper} if the elements $u_1^{\ell_1}\cdots u_d^{\ell_d}$ are all distinct as the $\ell_i$ range over those integers with $|\ell_i|\le\mu L_i$.

We noted in our previous paper \cite{proper.progs} that proper progressions in upper-triangular form have bounded doubling and polynomial growth. To state this precisely, we recall a definition from that paper. Given a progression $P=P(u_1,\ldots,u_d;L)$ in upper-triangular form, for every pair $i,j$ with $i<j$ and every one of the four possible choices of sign there is by definition some (not necessarily unique) expression $u_{j+1}^{\ell_{j+1}}\cdots u_d^{\ell_d}$ for $[u_i^{\pm1},u_j^{\pm1}]$. For every pair $i,j$ with $i<j$ and every one of the four possible choices of sign we fix arbitrarily one such expression, which we call the \emph{$P$-expression} for $[u_i^{\pm1},u_j^{\pm1}]$. We then define \emph{weights} $\zeta(k)$ of the $u_k$ by setting $\zeta(k)=1$ if $u_k$ does not appear in the $P$-expression for any $[u_i^{\pm1},u_j^{\pm1}]$, and
\[
\zeta(k)=\max\{\zeta(i)+\zeta(j):\text{$u_k$ appears in the $P$-expression for some $[u_i^{\pm1},u_j^{\pm1}]$}\}
\]
otherwise. Note that this is recursively well-defined, although the definition may depend on the choice of $P$-expression.
\begin{lemma}[{\cite[Lemma 2.1]{proper.progs}\footnote{In \cite[Lemma 2.1]{proper.progs} the $L_i$ are assumed to be positive integers, but one can reduce to that case by replacing each $L_i$ with $\lfloor L_i\rfloor$ and then disregarding those $u_i$ with $L_i=0$.}}]\label{lem:upper-tri.doubling.dilate}
Suppose that $P=P(u_1,\ldots,u_d;L_1,\ldots,L_d)$ is a progression in $C$-upper triangular form. Then for each $n\in\N$ we have $P^n\subseteq P(u;O_{C,d}(n^\zeta L))$, where we abbreviate $n^\zeta L=(n^{\zeta(1)}L_1,\ldots,n^{\zeta(d)}L_d)$.
\end{lemma}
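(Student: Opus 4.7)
The plan is to prove this by a Hall-style collection process, with an induction on the generator index. Every element $x \in P^n$ is a product of $n$ canonical words and hence expressible as an unordered word in the letters $u_i^{\pm 1}$ of total $u_i$-weight at most $nL_i$. The goal is to rewrite $x$ in the canonical form $u_1^{\ell_1}\cdots u_d^{\ell_d}$ with $|\ell_i| \le O_{C,d}(n^{\zeta(i)} L_i)$, by sweeping the $u_1$'s into the leftmost positions, then the $u_2$'s into the next block, and so on.

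Concretely, one iteration of the sweep uses the identity $u_j u_i^{\epsilon} = u_i^{\epsilon} u_j [u_j, u_i^{\epsilon}]$ for $i < j$; the rightmost factor is, by the $C$-upper-triangular-form hypothesis, an element of $P(u_{j+1}, \ldots, u_d; CL_{j+1}/(L_i L_j), \ldots, CL_d/(L_i L_j))$ (up to a factor of $d$ coming from \eqref{eq:inverseProg} if one prefers to work with inverses), whose expansion introduces only letters $u_k^{\pm 1}$ with $k > j$. Processing in the order $i = 1, 2, \ldots, d$ guarantees termination, since each sweep leaves the letters $u_1, \ldots, u_i$ in their final positions. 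The crucial book-keeping is then inductive: the number of $u_k$-letters present at the moment the sweep reaches $u_k$ is at most $nL_k$ original copies, plus the contributions of every commutator $[u_i, u_j]$ with $i<j<k$ whose $P$-expression involves $u_k$. By the inductive bounds established for lower-indexed sweeps, such a commutator is invoked at most $(n^{\zeta(i)} L_i)(n^{\zeta(j)} L_j)$ times and contributes $O_C(L_k/(L_i L_j))$ to the $u_k$-exponent per invocation, for a net contribution $O_C(n^{\zeta(i)+\zeta(j)} L_k)$, which is at most $O_C(n^{\zeta(k)} L_k)$ by the defining recursion for $\zeta$. Summing over the $O_d(1)$ pairs $(i,j)$ yields $|\ell_k| \ll_{C,d} n^{\zeta(k)} L_k$.

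The principal obstacle is precisely this feedback effect: a commutator produced while collecting low-indexed letters can produce new instances of higher-indexed letters that must themselves be collected, and a naive accounting would blow up at each recursion. The definition of $\zeta$ is tailored exactly to absorb this blow-up: the clause $\zeta(k) \ge \zeta(i) + \zeta(j)$ encodes the multiplicative combination of the total $u_i$- and $u_j$-weight bounds at the instant $[u_i, u_j]$ is expanded, and without it the recursion would not close. A secondary technical point is well-foundedness: one must fix a single $P$-expression for each $[u_i^{\pm 1}, u_j^{\pm 1}]$ before the recursive definition of $\zeta$ makes sense, and then ensure that the collection process expands commutators only via these fixed expressions, so that the accounting is consistent with the weights computed a priori.
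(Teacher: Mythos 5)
Your proof is correct and is the natural collection argument one expects here; the paper cites \cite{proper.progs} for this lemma rather than re-proving it, and the cited proof proceeds along the same lines with the same $\zeta$-weight bookkeeping. The only step you compress is the appeal to monotonicity of the $u_j$-count: during the $i$-th sweep (with $i<j$) the running number of $u_j$-letters is dominated by its final value, so the inductively established bound $O_{C,d}(n^{\zeta(j)}L_j)$ on $|\ell_j|$ is what licenses your claim that $[u_i,u_j]$ is invoked at most $(n^{\zeta(i)}L_i)(n^{\zeta(j)}L_j)$ times.
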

\begin{corollary}[{\cite[Corollary 2.2]{proper.progs}}]\label{lem:upper-tri.doubling}
Let $d\in\N$ and $\mu,C>0$. Suppose that $P$ is a $\mu$-proper progression of rank $d$ in $C$-upper-triangular form. Then $|P^n|\le O_{C,\mu,d}(n^{O_d(1)})|P|$ for every $n\in\N$.
\end{corollary}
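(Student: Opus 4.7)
The plan is to combine the dilation bound from the preceding \cref{lem:upper-tri.doubling.dilate} with the trivial upper bound $|P(u;M)|\le\prod_i(2\lfloor M_i\rfloor+1)$ on any progression and the $\mu$-properness hypothesis, which provides a matching lower bound on $|P|$. The first step is to apply \cref{lem:upper-tri.doubling.dilate} to conclude that $P^n\subseteq P(u;O_{C,d}(n^{\zeta(1)}L_1,\ldots,n^{\zeta(d)}L_d))$, so that
\[
|P^n|\le\prod_{i=1}^d\bigl(2\,O_{C,d}(n^{\zeta(i)}L_i)+1\bigr).
\]
The second step is to use the definition of $\mu$-properness, which says exactly that the map $(\ell_1,\ldots,\ell_d)\mapsto u_1^{\ell_1}\cdots u_d^{\ell_d}$ is injective on the integer box $\{|\ell_i|\le\mu L_i\}$, and hence
\[
|P|\ge\prod_{i=1}^d(2\lfloor\mu L_i\rfloor+1).
\]

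The third step is to take the ratio factor by factor. For indices $i$ with $\mu L_i\ge1$ the denominator factor is $\asymp_\mu L_i$ and the numerator factor is $\ll_{C,d}n^{\zeta(i)}L_i$, so the ratio is $\ll_{C,\mu,d}n^{\zeta(i)}$. For indices $i$ with $\mu L_i<1$ the denominator factor is $1$ but the numerator factor is $\ll_{C,\mu,d}n^{\zeta(i)}$ as well, because $L_i<1/\mu$ bounds it by a constant depending on $C,\mu,d$ times $n^{\zeta(i)}$. Either way, we obtain
\[
\frac{|P^n|}{|P|}\ll_{C,\mu,d}\prod_{i=1}^dn^{\zeta(i)}=n^{\sum_i\zeta(i)}.
\]

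The final step is to observe that the weights $\zeta(i)$ are defined by a recursion that depends only on the choices of $P$-expressions, which are in turn indexed by pairs $i<j$ in $\{1,\ldots,d\}$; consequently each $\zeta(i)$, and hence $\sum_{i=1}^d\zeta(i)$, is bounded by a quantity depending only on $d$. This yields the desired bound $|P^n|\le O_{C,\mu,d}(n^{O_d(1)})|P|$. There is no real obstacle here; the only minor technical point is the bookkeeping needed to handle coordinates where $\mu L_i<1$, which is absorbed cleanly into the $C,\mu,d$-dependent constant as above.
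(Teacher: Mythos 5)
Your argument is correct and is essentially the natural deduction of the corollary from \cref{lem:upper-tri.doubling.dilate}, which is surely the content of the cited \cite[Corollary 2.2]{proper.progs}; the paper itself only records the reference and does not reprove it, so there is no in-paper proof to compare against. The one small point to tidy up is your Step 2: the inequality $|P|\ge\prod_{i=1}^d(2\lfloor\mu L_i\rfloor+1)$ only follows directly from $\mu$-properness when $\mu\le1$, because for $\mu>1$ the witnessing tuples $(\ell_1,\ldots,\ell_d)$ with $L_i<|\ell_i|\le\mu L_i$ produce elements outside $P$ and so cannot be used to lower-bound $|P|$. The fix is trivial: observe that $\mu$-properness implies $\mu'$-properness for every $0<\mu'\le\mu$, so one may replace $\mu$ by $\min(\mu,1)$ at the outset. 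Beyond that, your factor-by-factor comparison, the treatment of indices with $\mu L_i<1$ versus $\mu L_i\ge1$, and the bound $\sum_i\zeta(i)\ll_d1$ (for instance via the crude inductive estimate $\zeta(k)\le2^{k-1}$, which is all that is needed since only an $O_d(1)$ exponent is claimed) are all sound and give exactly the stated dependencies.
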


One of the other notions of progression we used in \cite{proper.progs}, and which plays a brief role in the present parer, is a special kind of progression called a \emph{nilpotent progression}, originally defined by Breuillard and Green \cite[Definition 1.4]{bg}. Given elements a nilpotent group $G$ of class $s$, we define the \emph{nilpotent progression} with \emph{generators} $x_1,\ldots,x_r\in G$ and \emph{lengths} $L_1,\ldots,L_r>0$ to be the progression $P=P(x_1,\ldots,x_d;L_1,\ldots,L_d)$ obtained by extending $x_1,\ldots,x_r$ to a complete list $x_1,\ldots,x_d$ of basic commutators in the $x_i$ of weight at most $s$, and setting $L_i=L^{\chi(u_i)}$ for every $i>r$. Here we use the notation $L^\chi$ to denote the quantity $L_1^{\chi_1}\cdots L_r^{\chi_r}$. We define $r$ to be the \emph{rank} and $s$ to be the \emph{step} of $P$. We showed in our first paper \cite[Proposition 3.4]{proper.progs} that a nilpotent progression of rank $r$ and step $s$ is in $O_{r,s}(1)$-upper-triangular form.

Nilpotent progressions enter this paper via the following result of the second author \cite{nilp.frei,polylog}. 
\begin{theorem}[{\cite[Corollary 1.9]{polylog}}]\label{thm:nilp.frei}
Let $s\in\N$ and $K\ge1$. Suppose $G$ is an $s$-step nilpotent group and $A\subseteq G$ is a finite $K$-approximate group. Then there exist a subgroup $H\subseteq A^{K^{e^{O(s)}}}$ normalised by $A$ and a nilpotent progression $P$ of rank at most $e^{O(s^2)}K\log^{O(s)}2K$ such that
\[
A\subseteq HP\subseteq HA^{e^{O(s^3)}K^{s+1}\log^{O(s^2)}2K}.
\]
\end{theorem}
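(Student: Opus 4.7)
The plan is to proceed by induction on the nilpotency step $s$. The base case $s=1$ reduces to the Freiman--Ruzsa theorem for an arbitrary abelian group (Green--Ruzsa's theorem): given a $K$-approximate group $A$ in an abelian group, there exist a finite subgroup $H\subseteq A^{O(1)}$ and a proper generalised arithmetic progression $P$ of rank $O(\log^{O(1)} K)$ with $A\subseteq H+P\subseteq H+A^{K^{O(1)}}$. This matches the required quantitative shape when $s=1$ (an ordinary progression being a nilpotent progression of step $1$), so the base case is clear.

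For the inductive step, let $G$ be $s$-step nilpotent. Apply $\pi:G\to G/\gamma_s(G)$ and note that $\pi(A)$ is a $K$-approximate group in the $(s-1)$-step group $\pi(G)$. By induction there is a subgroup $\bar H\subseteq\pi(A)^{K^{e^{O(s)}}}$ normalised by $\pi(A)$ and a nilpotent progression $\bar P=P(\bar x_1,\dots,\bar x_r;\bar L_1,\dots,\bar L_r)$ of rank $\bar r\le e^{O(s^2)}K\log^{O(s)}2K$ with $\pi(A)\subseteq\bar H\bar P\subseteq\bar H\pi(A)^{e^{O(s^3)}K^{s}\log^{O(s^2)}2K}$. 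Lift each $\bar x_i$ to some $x_i\in A^{e^{O(s^3)}K^{s}\log^{O(s^2)}2K}$, and let $H_1\subseteq G$ be the full preimage intersected with the corresponding power of $A$; this exhibits $A$ as covered, modulo the central group $\gamma_s(G)$, by translates of the lifted progression $P(x_1,\dots,x_r;\bar L_1,\dots,\bar L_r)$ completed by its formal basic commutators.

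It then remains to understand the ``central fibre'' $A\cap\gamma_s(G)\cdot(\text{error from lifting})$. Since $\gamma_s(G)\le Z(G)$, this fibre sits in an abelian group, and by \cref{lem:slicing} and pigeonholing it is a $K^{O(1)}$-approximate group. Apply the abelian base case to obtain a coset progression $H_2+Q$ inside $\gamma_s(G)$ covering it. Setting $H=H_1H_2$ (still normalised by $A$, since $H_2$ is central) and enlarging the lifted progression by adjoining $Q$ to the list of basic-commutator generators of weight $s$ yields the desired nilpotent progression containing $A$; the outer inclusion $HP\subseteq HA^{\cdots}$ is obtained by backwards expansion of each basic commutator using \eqref{eq:inverseProg}, \eqref{eq:dilateProg} and tripling/Plünnecke--Ruzsa estimates.

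The main obstacle is that in a nilpotent progression the higher-weight lengths are not free parameters: for every basic commutator $u_i$ of weight vector $\chi$ one must have $L_i=L^{\chi}$. The lengths $\bar L_i$ coming from the inductive hypothesis, together with the independently obtained lengths of $Q$ inside $\gamma_s(G)$, will not in general satisfy this rigid relation. Reconciling them requires two quantitative devices: first, a pigeonholing step that replaces each true length by a slightly smaller ``rounded'' length respecting the monomial constraint, with the loss absorbed by passing from $A$ to $A^{K^{O(s)}}$; and second, a commutator-collection argument (Hall--Petresco together with the nilpotent Baker--Campbell--Hausdorff formula, as in \S\ref{sec:nilp.Lie}) to re-express arbitrary words in the $x_i$ in the canonical ordered form $x_1^{\ell_1}\cdots x_d^{\ell_d}$ with controlled $\ell_i$. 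The number of basic commutators of weight exactly $s$ in $r$ letters is at most $r^s$, and accumulating this factor across $s$ induction steps is precisely what produces the $e^{O(s^2)}$-type overhead in the rank and the $e^{O(s^3)}$-type overhead in the covering exponent.
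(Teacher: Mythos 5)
Theorem \ref{thm:nilp.frei} is not proved in this paper; it is imported as \cite[Corollary 1.9]{polylog}, an external result of the second author, so your proposal can only be measured against the known proof strategy rather than against a proof written out here.

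Your high-level plan — induct on the step $s$, quotient by $\gamma_s(G)$, apply the inductive hypothesis in the resulting $(s-1)$-step group, lift the progression, and then handle the central fibre with an abelian Freiman theorem — is indeed the correct architecture, and you are right to single out the rigid monomial constraint $L_i=L^{\chi(i)}$ on the higher-weight lengths as the place where the friction concentrates. Two small inaccuracies along the way: the rank bound $O(\log^{O(1)}K)$ and covering exponent $K^{O(1)}$ in the base case require Sanders's theorem for approximate subgroups of arbitrary abelian groups, not Green--Ruzsa, whose bounds are far worse; and the approximate-group constant of the central fibre degrades to $K^{O(s)}$ rather than $K^{O(1)}$ because the lifting errors compound across the induction.

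The genuine gap is the single sentence you give for the hard inclusion $HP\subseteq HA^{\cdots}$. Lifting $\bar x_i\in\pi(A)^{m}$ to $x_i\in A^{m}$ only tells you $x_i^{\ell}\in A^{m|\ell|}$, which is useless once $|\ell|$ is of order $\bar L_i$, and none of the tools you invoke repairs this: \eqref{eq:inverseProg} and \eqref{eq:dilateProg} are purely formal identities about progressions, Plünnecke--Ruzsa is an abelian inequality, and Hall--Petrescu/BCH collection rewrites a word into ordered normal form $x_1^{\ell_1}\cdots x_d^{\ell_d}$ but says nothing about whether $[x_i,x_j]^{\ell_i\ell_j}$ or a basic commutator raised to an exponent $\bar L^{\chi}$ actually stays inside a controlled power of $A$. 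That quantitative commutator-control estimate — showing that ordered products whose image in $G/\gamma_s(G)$ lies in $\pi(A)^{O(1)}$ themselves stay inside $A^{\mathrm{poly}(K)}\cdot\gamma_s(G)$, with a handle on the central error set — is the technical core of Tointon's argument, proved by interleaving the abelian input with the commutator calculus, and it is precisely this lemma that makes the induced weight-$s$ lengths $\bar L^{\chi}$ reconcilable with the central progression $Q$. Your ``pigeonhole to round the lengths'' device is not how this reconciliation is achieved, and without the commutator-control lemma the inductive step does not close; this is not a fill-in-the-details issue but the main content of the theorem.
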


The following result shows that the notion of Lie progression that we use in the present paper generalises the notion of nilpotent progression.
\begin{prop}\label{prop:nilp.prog=>Lie.prog}
Suppose $P$ is a nilpotent progression of rank $r$ and step $s$. Then $P$ is an $O_{r,s}(1)$-rational Lie progression of dimension at most $(4r)^s$ in $O_{r,s}(1)$-upper-triangular form with trivial symmetry group, projected from the free nilpotent Lie group of rank $r$ and class $s$.
\end{prop}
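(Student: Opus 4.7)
The plan is to realize $P$ as a Lie progression projected from the free nilpotent Lie group $N$ of rank $r$ and class $s$ via the tautological projector. Let $y_1,\ldots,y_r$ denote the free generators of $N$, and let $v_1,\ldots,v_d$ be the basic commutators in $y_1,\ldots,y_r$ of total weight at most $s$, indexed in parallel with the basic commutators $u_1,\ldots,u_d$ defining $P$. Set $e_i=\log v_i\in\n=\Lie(N)$ and $\Gamma=\langle y_1,\ldots,y_r\rangle=\langle v_1,\ldots,v_d\rangle$, which is the free nilpotent group of rank $r$ and class $s$ embedded as a lattice in $N$. By the universal property of $\Gamma$, there is a unique homomorphism $\pi:\Gamma\to G$ sending $y_i\mapsto x_i$, and hence $v_i\mapsto u_i$. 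Choosing $H=\{1\}$ and $G_0=\langle P\rangle$, the pullback to $G_0$ of $\pi(P(v;L))$ is precisely $P(u;L)=P$, so the Lie progression data are in place with trivial symmetry group.

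For the basis and dimension requirements, a standard induction on weight via the Baker--Campbell--Hausdorff (BCH) formula shows that $e_i=\bar e_i+(\text{higher multi-weight terms})$, where $\bar e_i$ is the basic Lie bracket in $\log y_1,\ldots,\log y_r$ parallel to $v_i$; indeed, for $i>r$ with $v_i=[v_a,v_b]$, BCH gives $\log[v_a,v_b]=[e_a,e_b]+(\text{iterated brackets in }e_a,e_b\text{ of strictly greater total weight})$, and the inductive hypothesis converts the leading term to $[\bar e_a,\bar e_b]+\text{h.o.t.}=\bar e_i+\text{h.o.t.}$ Since the $\bar e_i$ form a Hall basis of $\n$, the transition matrix to $(e_i)$ is unitriangular with respect to the multi-weight filtration, so $(e_i)$ is itself a basis of $\n$ and $d=\dim\n$. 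The bound $d\le(4r)^s$ follows from the combinatorial count: a basic commutator of weight $k$ is determined by a binary tree with $k$ leaves (at most $C_{k-1}\le 4^{k-1}$ options) and a leaf-labelling (at most $r^k$), and $\sum_{k=1}^s 4^{k-1}r^k\le(4r)^s$. Applying \cite[Proposition 3.4]{proper.progs} to $P(v;L)\subset N$ and to $P\subset G$ respectively yields the required $O_{r,s}(1)$-upper-triangular form for $(v;L)$ and $(u;L)$.

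The main technical step, and the main obstacle, is to deduce $O_{r,s}(1)$-rational $O_{r,s}(1)$-upper-triangular form for $(e;L)$. The key observation is that the \emph{free} nilpotent Lie algebra carries a multi-weight grading $\n=\bigoplus_\chi\n_\chi$ preserved by the Lie bracket, and a refinement of the induction above in fact gives $e_i\in\bigoplus_{\chi\ge\chi(v_i)}\n_\chi$ (componentwise ordering); hence $[e_i,e_j]\in\bigoplus_{\chi\ge\chi(v_i)+\chi(v_j)}\n_\chi$. Writing $[e_i,e_j]=\sum_k q_k e_k$ in the basis $(e_k)$, only indices $k$ with $\chi(v_k)\ge\chi(v_i)+\chi(v_j)$ appear, which forces $k>j$ (since our basic commutators are ordered by total weight and $|\chi(v_i)|\ge 1$) and $L_k/(L_iL_j)=L^{\chi(v_k)-\chi(v_i)-\chi(v_j)}\ge 1$ (using $L_l\ge 1$). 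The rationals $q_k$ arise from universal BCH combinations together with the unitriangular change of basis between $(\bar e_i)$ and $(e_i)$; both are universal identities in the free Lie algebra of class $s$ on $r$ generators and so have $O_{r,s}(1)$-bounded values and $O_{r,s}(1)$-bounded denominators. Therefore $|q_k|\le O_{r,s}(1)\le O_{r,s}(1)\cdot L_k/L_iL_j$ with denominators at most $Q=O_{r,s}(1)$, completing the verification. The use of the multi-weight grading is what turns the coefficient estimate into an automatic consequence of weight additivity, reducing the bound to the boundedness of a finite collection of universal BCH constants.
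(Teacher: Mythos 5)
Your proposal is correct and follows the same route as the paper's proof: realize $P$ as the pullback, via the tautological homomorphism $\pi$, of the corresponding nilprogression on the free nilpotent group of rank $r$ and class $s$ embedded in its Mal'cev completion, with trivial symmetry group, and then reduce the upper-triangular-form claims to facts about the free case. The difference is one of packaging. Where the paper simply cites \cite[Propositions~3.4 and~4.4]{proper.progs} for the upper-triangular form of $(u;L)$ and the rational upper-triangular form of $(e;L)$, and \cite[Proposition~5.3.3]{tointon.book} for the bound $d\le(4r)^s$, you supply self-contained arguments: a Catalan count for the dimension bound, and a multi-weight-grading argument for the rational upper-triangular form of the logarithms, showing $e_i=\log v_i\in\bigoplus_{\chi\ge\chi(v_i)}\n_\chi$ and then reading off both the index restriction $k>j$ and the size bound $L_k/(L_iL_j)\ge1$ from weight additivity, with coefficient and denominator bounds coming from universal BCH identities. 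This is presumably close in spirit to the content of the cited \cite[Proposition~4.4]{proper.progs}, and it correctly handles the caveat (noted in \S\ref{sec:basic.comms}) that $\log$ does not carry basic commutators to basic Lie brackets. One small point worth spelling out: ``unitriangular with respect to the multi-weight filtration'' refers to a partial order, so the inversion giving $\bar e_k = e_k + (\text{terms in }e_l\text{ with }\chi(v_l)>\chi(v_k))$ should be carried out by a (routine) descending induction on total weight, which is finite since weights are bounded by $s$; once stated this way the step is airtight.
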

\begin{proof}
Let $u_1,\ldots,u_r$ be the standard generators of the free nilpotent group $N$ of rank $r$ and class $s$, and extend them to the list $u_1,\ldots,u_d$ of basic commutators. The number $d$ of basic commutators in this list is certainly at most the number of all commutators of weight at most $s$ in $r$ generators, of which there are at most $(4r)^s$ \cite[Proposition 5.3.3]{tointon.book}. Let $\pi:N\to\la P'\ra$ be the unique homomorphism such that $\pi(u_i)=x_i$ for each $i$. We showed in \cite[Proposition 3.4]{proper.progs} that $(u_1,\ldots,u_d;L)$ is in $O_{r,s}(1)$-upper-triangular form. Furthermore, embedding $N$ in the free nilpotent Lie group of rank $r$ and class $s$, it follows from \cite[Proposition 4.4]{proper.progs} that $(\log u_1,\ldots,\log u_d;L)$ is in $O_{r,s}(1)$-rational $O_{r,s}(1)$-upper-triangular form.
\end{proof}

\subsection{Basic properties of Lie progressions}\label{section:basicLieProg}
The following result shows that the symmetry group of a Lie progression behaves on scales within the injectivity radius like the torsion subgroup of a nilpotent group.
\begin{lemma}\label{lem:finite.subgroup.of.proper}
Suppose $P$ is a Lie progression with symmetry group $H$, and that $K\subseteq P^{\lfloor\inj P/2\rfloor}$ is a finite subgroup. Then $K\le H$.
\end{lemma}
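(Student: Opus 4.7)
The plan is to lift $K$ injectively into the lattice $\Gamma \le N$ and exploit the fact that $N$ is torsion-free.

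Set $j=\lfloor \inj P/2\rfloor$, so that $\pi$ is injective on $\tilde P^{2j}\subseteq \tilde P^{\inj P}$. Since $P\subseteq G_0$ is, by definition, the pullback of $\pi(\tilde P)$ under the quotient $G_0\to G_0/H$, and since $H\normal G_0$, one checks that $P^j$ is the pullback of $\pi(\tilde P^j)=\pi(\tilde P)^j$; indeed $1\in\tilde P$ forces $H\subseteq P$, and the reverse inclusion follows from the fact that $\pi$ is a homomorphism. Because $K\subseteq P^j$, this yields $KH/H\subseteq \pi(\tilde P^j)$, and since $H\normal G_0$ the product $KH$ is itself a subgroup of $G_0$, so $KH/H$ is a finite subgroup of $G_0/H$.

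Now define a map $\phi\colon KH/H\to \tilde P^j$ by sending $kH\in KH/H$ to the unique $\tilde k\in\tilde P^j$ with $\pi(\tilde k)=kH$; the well-definedness uses injectivity of $\pi$ on $\tilde P^j$. The key observation is that for $a,b\in KH/H$ the element $\phi(a)\phi(b)$ lies in $\tilde P^{2j}$, on which $\pi$ is still injective. Since both $\phi(a)\phi(b)$ and $\phi(ab)$ are mapped by $\pi$ to $ab$, they must coincide, so in particular $\phi(a)\phi(b)\in\tilde P^j$ and $\phi$ is multiplicative. The same argument applied to $\phi(a)\phi(a^{-1})$ and $1\in\tilde P^{2j}$ shows $\phi(a^{-1})=\phi(a)^{-1}$; this is where the non-symmetry of $\tilde P$ (only $\tilde P^{-1}\subseteq \tilde P^d$) would otherwise cause trouble, but the injectivity on $\tilde P^{2j}$ sidesteps the issue completely. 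Hence $\phi$ is a group homomorphism and its image $\tilde K:=\phi(KH/H)$ is a finite subgroup of $\Gamma$.

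Finally, $\Gamma$ is a lattice in the simply connected nilpotent Lie group $N$, so $\Gamma$ is torsion-free. Therefore $\tilde K=\{1\}$, which gives $KH/H=\pi(\tilde K)=\{1\}$, i.e.\ $K\le H$. I expect the only step requiring any care is the closure of $\tilde K$ under inversion, because raw progressions are not literally symmetric; but as noted, the doubled injectivity radius $2j\le\inj P$ is built precisely to absorb this.
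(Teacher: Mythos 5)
Your proof is correct and follows essentially the same strategy as the paper's: use injectivity of $\pi$ on $\tilde P^{2\lfloor\inj P/2\rfloor}$ to lift $KH/H$ to an isomorphic finite subgroup of the lattice $\Gamma$, then invoke torsion-freeness of $\Gamma$ to conclude. The paper packages the lifting step into \cref{lem:local.hom.pullback} (via \cref{lem:local.hom.inverse}), whereas you unfold it explicitly; your unfolding also makes visible the needed inclusion $KH/H\subseteq\pi(\tilde P^j)$, which the paper's terse one-line proof leaves implicit.
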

To prove this it will be convenient to introduce a variant of the classical notion of a \emph{Freiman homomorphism}, which we will call a \emph{local homomorphism}. Given groups $G$ and $H$, a subset $A\subseteq G$ containing the identity, and a subset $B\subseteq G$ containing $A^2$, we define a map $\ph:B\to H$ to be a \emph{local homomorphism} on $A$ if $\ph(ab)=\ph(a)\ph(b)$ for all $a,b\in A$. Note that if $A$ is a subgroup of $G$ then $\ph$ is a local homomorphism if and only if it is a genuine group homomorphism.
\begin{lemma}\label{lem:local.hom.inverse}
Suppose that $G$ and $H$ are groups and that $A\subseteq G$ contains the identity. Suppose further that $\ph:A^2\to H$ is injective on $A^2$ and a local homomorphism on $A$, and define $\psi:\ph(A^2)\to A^2$ by $\psi(\ph(a))=a$. Then $\psi$ is an injective local homomorphism on $\ph(A)$.
\end{lemma}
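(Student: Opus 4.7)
The plan is essentially a formal verification from the definitions, with no real obstacle. The only subtle point is checking that the products $h_1h_2$ we need actually lie in the domain $\ph(A^2)$ of $\psi$ so that $\psi(h_1h_2)$ makes sense.

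First I would note that $\psi$ is well-defined as a map $\ph(A^2)\to A^2$ precisely because $\ph$ is injective on $A^2$, so for each $h\in\ph(A^2)$ there is a unique $a\in A^2$ with $\ph(a)=h$, and we set $\psi(h)=a$. Injectivity of $\psi$ on $\ph(A)\subseteq\ph(A^2)$ is then immediate: if $h_1,h_2\in\ph(A)$ satisfy $\psi(h_1)=\psi(h_2)$, writing $h_i=\ph(a_i)$ with $a_i\in A$ gives $a_1=\psi(h_1)=\psi(h_2)=a_2$ and hence $h_1=h_2$.

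Next I would verify the local homomorphism property. Fix $h_1,h_2\in\ph(A)$ and write $h_i=\ph(a_i)$ with $a_i\in A$. Since $\ph$ is a local homomorphism on $A$, we have
\[
h_1h_2=\ph(a_1)\ph(a_2)=\ph(a_1a_2),
\]
and as $a_1a_2\in A^2$ this shows that $h_1h_2\in\ph(A^2)$, so $\psi(h_1h_2)$ is defined. Then by definition of $\psi$,
\[
\psi(h_1h_2)=\psi(\ph(a_1a_2))=a_1a_2=\psi(h_1)\psi(h_2),
\]
which is exactly the local homomorphism property for $\psi$ on $\ph(A)$. This completes the verification.
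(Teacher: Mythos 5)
Your proof is correct and follows essentially the same route as the paper, which simply records the chain $\psi(\ph(a)\ph(b))=\psi(\ph(ab))=ab=\psi(\ph(a))\psi(\ph(b))$ for $a,b\in A$. You are slightly more explicit in spelling out why $\psi$ is well-defined, why the product $h_1h_2$ lands in $\ph(A^2)$, and why injectivity holds, but these are the same observations the paper's one-line proof implicitly relies on.
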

\begin{remark*}
It is necessary in \cref{lem:local.hom.inverse} to assume that $\ph$ is injective on $A^2$ and define $\psi$ on $\ph(A^2)$. For example, if $\ph:\Z\to\Z/(2m+1)\Z$ is the quotient homomorphism then the restriction of $\ph$ to $A=\{-m,\ldots,m\}$ is a local homomorphism, injective on $A$, but the map $\psi:\Z/(2m+1)\Z\to A$ given by $\psi(n)\mapsto n$ is not a local homomorphism, since $\psi(m+1)=-m\ne m+1=\psi(m)+\psi(1)$. This in turn will be the reason why in \cref{lem:finite.subgroup.of.proper} we have to assume $K\subseteq P^{\lfloor\inj P/2\rfloor}$ rather than $K\subseteq P^{\inj P}$.
\end{remark*}
\begin{proof}
Given $a,b\in A$ we have $\psi(\ph(a)\ph(b))=\psi(\ph(ab))=ab=\psi(\ph(a))\psi(\ph(b))$. 
\end{proof}
\begin{lemma}\label{lem:local.hom.pullback}
Suppose that $G$ and $H$ are groups and that $A\subseteq G$. Suppose further that $\ph:A^2\to H$ is injective on $A^2$ and a local homomorphism on $A$, and that $K\le H$ is such that $K\subseteq\ph(A)$. Then the map $K\to G$ defined by $\ph(a)\mapsto a$ is an injective group homomorphism. In particular, $\ph^{-1}(K)$ is a subgroup of $G$ isomorphic to $K$.
\end{lemma}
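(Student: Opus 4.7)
The plan is straightforward once one sees that the hypothesis ``injective on $A^2$, local homomorphism on $A$'' is exactly strong enough to let us invert $\ph$ on the subgroup $K$. The key subtlety is that $\psi$ has to be defined by the rule $\ph(a)\mapsto a$ with $a\in A$, yet to check multiplicativity we need to evaluate $\ph$ on the product $a_1a_2\in A^2$; it is precisely here that we need injectivity on \emph{$A^2$} rather than merely on $A$.

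First I would check that $\psi:K\to G$, $\ph(a)\mapsto a$, is well-defined and injective. Since $K\subseteq\ph(A)$, every $k\in K$ has at least one preimage in $A$, and since $A\subseteq A^2$ (recall $1\in A$ by definition of local homomorphism) and $\ph$ is injective on $A^2$, this preimage is unique; this gives well-definedness. Injectivity of $\psi$ is then immediate: if $\psi(k_1)=\psi(k_2)=a$ then $k_1=\ph(a)=k_2$.

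Next I would verify multiplicativity. Fix $k_1,k_2\in K$ and write $k_i=\ph(a_i)$ with $a_i\in A$, so $\psi(k_i)=a_i$. Let $a\in A$ be the unique element with $\ph(a)=k_1k_2$, so $\psi(k_1k_2)=a$. The local homomorphism property on $A$ gives
\[
\ph(a_1a_2)=\ph(a_1)\ph(a_2)=k_1k_2=\ph(a).
\]
Both $a$ and $a_1a_2$ lie in $A^2$, so injectivity of $\ph$ on $A^2$ forces $a=a_1a_2$, i.e. $\psi(k_1k_2)=\psi(k_1)\psi(k_2)$. Thus $\psi$ is an injective group homomorphism, and its image $\psi(K)$ is a subgroup of $G$ isomorphic to $K$.

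Finally I would identify $\psi(K)$ with $\ph^{-1}(K)$. By construction $\psi(K)=\{a\in A:\ph(a)\in K\}\subseteq\ph^{-1}(K)$. Conversely, if $g\in A^2$ satisfies $\ph(g)\in K\subseteq\ph(A)$, then $\ph(g)=\ph(a)$ for some $a\in A\subseteq A^2$, and injectivity of $\ph$ on $A^2$ yields $g=a\in A$, so $g\in\psi(K)$. Hence $\ph^{-1}(K)=\psi(K)$ is a subgroup of $G$ isomorphic to $K$, as required. There is no real obstacle here; the whole argument is bookkeeping, and the one point to be careful about is keeping the two elements being compared inside $A^2$ whenever injectivity of $\ph$ is invoked.
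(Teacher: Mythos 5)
Your proof is correct and follows essentially the same approach as the paper's: define the inverse map $\ph(a)\mapsto a$ and use injectivity of $\ph$ on $A^2$ to verify multiplicativity. The only difference is presentational — the paper factors the multiplicativity check through \cref{lem:local.hom.inverse}, whereas you re-derive that step inline and additionally spell out the verification that $\psi(K)=\ph^{-1}(K)$.
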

\begin{proof}
Define $\psi:\ph(A^2)\to A^2$ by $\psi(\ph(a))=a$; this is injective by definition, and a local homomorphism on $\ph(A)$ by \cref{lem:local.hom.inverse}. In particular, $\psi$ is a local homomorphism on $K$, so that $\psi|_K:K\to G$ is in fact an injective group homomorphism.
\end{proof}
\begin{proof}[Proof of \cref{lem:finite.subgroup.of.proper}]
Let $\tilde P$ be the underlying raw Lie progression of $P$, write $\Gamma$ for its lattice, and write $\pi:\Gamma\to\la P\ra/H$ for its projector. Since $\pi$ is injective on $(\tilde P^{\lfloor\inj P/2\rfloor})^2$, \cref{lem:local.hom.pullback} implies that $\pi^{-1}(KH/H)$ is a subgroup of $\Gamma$ isomorphic to $KH/H$. In particular it is finite, and hence trivial because $\Gamma$ has no torsion.
\end{proof}
In particular, \cref{lem:finite.subgroup.of.proper} can be used to show that the Lie progressions appearing in results like \cref{thm:detailed.fine.scale} are automatically normal, as follows.
\begin{lemma}\label{lem:sym.grp.normal}
Suppose that $G$ is a group with finite symmetric generating set $S$ containing the identity, that $P\subseteq G$ is a Lie progression generating a normal subgroup of $G$, and that $X\subseteq G$ is a finite subset containing the identity such that $XP\subseteq S$, such that $S^2\subseteq XP^\eta$ for some $\eta\in\N$, and such that $X\cap\la P\ra=\{1\}$. Suppose further that $\inj P\ge2\eta$. Then the symmetry group of $P$ is normal in $G$.
\end{lemma}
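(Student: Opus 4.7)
The plan is to show $sHs^{-1}=H$ for each $s\in S$, which suffices since $S$ generates $G$. The key observation is that once we decompose elements of $S$ using the covering $S\subseteq XP^\eta$ (which holds because $1\in S$ implies $S\subseteq S^2$), the $P^\eta$ part automatically normalises $H$, and the $X$ part can be controlled using the hypothesis $X\cap\la P\ra=\{1\}$.

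First I would record the ambient positions of $H$. Since $P$ is the pullback to $G_0$ of $\pi(\tilde P)$, and $\pi(\tilde P)$ contains the identity coset $H\in G_0/H$ (because $1\in\tilde P$), we get $H\subseteq P\subseteq\la P\ra$. Moreover the definition gives $H\normal G_0$, and since $\la P\ra\le G_0$, this yields $H\normal\la P\ra$. Now fix $s\in S$; using $1\in S$ and the hypothesis $S^2\subseteq XP^\eta$ we write $s=xp$ with $x\in X$ and $p\in P^\eta$. Because $p\in\la P\ra$ and $H\normal\la P\ra$, we obtain $pHp^{-1}=H$, so
\[
sHs^{-1}=xpHp^{-1}x^{-1}=xHx^{-1},
\]
reducing the problem to showing $xHx^{-1}=H$ for each $x\in X$.

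Next I would bound $xHx^{-1}$ inside $P^\eta$. From $XP\subseteq S$ and $1\in P$ we have $X\subseteq S$, and by symmetry of $S$ also $x^{-1}\in S$. Since $H\subseteq P$ and $P\subseteq S$ (again using $1\in X$), we get
\[
xHx^{-1}\subseteq xPx^{-1}\subseteq S\cdot S=S^2\subseteq XP^\eta.
\]
On the other hand $\la P\ra\normal G$ and $H\le\la P\ra$ force $xHx^{-1}\subseteq\la P\ra$. For any $h\in H$ write $xhx^{-1}=x'p'$ with $x'\in X$ and $p'\in P^\eta$; then $x'=(x'p')(p')^{-1}\in\la P\ra$, and the hypothesis $X\cap\la P\ra=\{1\}$ gives $x'=1$, so $xhx^{-1}\in P^\eta$. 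Thus $xHx^{-1}\subseteq P^\eta\subseteq P^{\lfloor\inj P/2\rfloor}$, where the last inclusion uses $\inj P\ge 2\eta$.

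Finally, $xHx^{-1}$ is a finite subgroup of $G$ contained in $P^{\lfloor\inj P/2\rfloor}$, so \cref{lem:finite.subgroup.of.proper} gives $xHx^{-1}\le H$, and equality follows from $|xHx^{-1}|=|H|<\infty$. The conceptual heart of the argument is recognising that the normality of $\la P\ra$ in $G$ together with the condition $X\cap\la P\ra=\{1\}$ conspires to trap each conjugate $xhx^{-1}$ inside $P^\eta$; the rest is routine unpacking of the definitions of pullback and symmetry group. The only mild subtlety is the bookkeeping to ensure that the final set lies in $P^{\lfloor\inj P/2\rfloor}$ rather than $P^{\inj P}$, which is exactly why the hypothesis is $\inj P\ge 2\eta$ and not just $\inj P\ge\eta$.
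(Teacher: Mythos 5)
Your proof is correct and follows the paper's argument essentially verbatim: bound $xHx^{-1}\subseteq S^2\subseteq XP^\eta$, use $\la P\ra\normal G$ together with $X\cap\la P\ra=\{1\}$ to trap $xHx^{-1}$ inside $P^\eta\subseteq P^{\lfloor\inj P/2\rfloor}$, and then invoke \cref{lem:finite.subgroup.of.proper}; the explicit reduction $sHs^{-1}=xHx^{-1}$ via $s=xp$ is what the paper compresses into ``$P$ normalises $H$ and $G$ is generated by $X\cup P$''. One small slip in the justification of $xPx^{-1}\subseteq S^2$: the facts you cite at that point ($x\in S$, $P\subseteq S$, $x^{-1}\in S$) only give $S^3$; the correct route is $xH\subseteq xP\subseteq XP\subseteq S$ followed by right-multiplication by $x^{-1}\in S$, which you could have extracted directly from the hypothesis $XP\subseteq S$ that you recorded at the start of that paragraph.
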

\begin{proof}
Write $H$ for the symmetry group of $P$. Given $x\in X$ we have $xHx^{-1}\subseteq S^2\subseteq XP^\eta$. Since $\la P\ra\normal G$ and $X\cap\la P\ra=\{1\}$, this implies that $xHx^{-1}\subseteq P^\eta$, and hence by \cref{lem:finite.subgroup.of.proper} that $xHx^{-1}\le H$. Since $P$ normalises $H$ and $G$ is generated by $X\cup P$, this proves the result.
\end{proof}
The following result is for interest only.
\begin{corollary}\label{cor:sym.grp}
If $P$ is a Lie progression in a group $G$ and $\inj P\ge2$ then the symmetry group of $P$ is exactly the set $\{g\in G:gP=P\}$.
\end{corollary}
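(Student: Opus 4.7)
The plan is to establish both inclusions of $H = \{g \in G : gP = P\}$ separately. The containment $H \subseteq \{g : gP = P\}$ is immediate from the definition of a Lie progression: $P$ is the pullback to $G_0$ of $\pi(\tilde P) \subseteq G_0/H$ along the quotient $q : G_0 \to G_0/H$, so $P$ is a union of left cosets of $H$ in $G_0$, and left multiplication by any $h \in H$ preserves each such coset.

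For the converse, suppose $gP = P$ with $g \in G$. Since $1 \in \tilde P$ gives $H \subseteq P$ and in particular $1 \in P$, one immediately deduces $g \in gP = P \subseteq G_0$. Applying $q$ to both sides of $gP = P$ yields $q(g) \cdot \pi(\tilde P) = \pi(\tilde P)$ in $G_0/H$. The natural next step is to introduce
\[
Y := \{y \in G_0/H : y \cdot \pi(\tilde P) = \pi(\tilde P)\},
\]
which is a subgroup of $G_0/H$ containing $q(g)$. Since $1 \in \pi(\tilde P)$, one has $Y \subseteq \pi(\tilde P)$, so $Y$ is finite. The task will then reduce to showing that $Y = \{1\}$, since this will give $q(g) = 1$, i.e.\ $g \in H$.

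To verify that $Y$ is trivial, I would mimic the lifting argument from the proof of \cref{lem:finite.subgroup.of.proper}. The hypothesis $\inj P \geq 2$ is exactly what makes $\pi$ injective on $\tilde P^2$, and $\pi$ is a genuine group homomorphism, hence a local homomorphism on $\tilde P$. Applying \cref{lem:local.hom.pullback} with $A = \tilde P$, $\ph = \pi$ and $K = Y$ (noting that $Y \subseteq \pi(\tilde P) = \pi(A)$) then lifts $Y$ isomorphically to a finite subgroup of $\Gamma$. Since $\Gamma$ sits inside the simply connected nilpotent Lie group $N$, which is torsion-free, this lifted subgroup---and hence $Y$ itself---must be trivial, completing the argument.

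There is no substantive obstacle here: the corollary is essentially a direct consequence of \cref{lem:finite.subgroup.of.proper} together with the observation that the stabiliser of $\pi(\tilde P)$ in $G_0/H$ is automatically a finite subgroup of $\pi(\tilde P)$. The only delicate point is organising the data so that \cref{lem:local.hom.pullback} applies with $A = \tilde P$ (as opposed to a higher power of $\tilde P$)---which is precisely why the bound $\inj P \geq 2$ appears in the hypothesis, exactly paralleling how the bound $K \subseteq P^{\lfloor \inj P/2\rfloor}$ arises in \cref{lem:finite.subgroup.of.proper}.
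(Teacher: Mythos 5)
Your proof is correct and takes essentially the same approach as the paper: the paper simply observes that $\{g\in G:gP=P\}$ is a finite subgroup contained in $P\subseteq P^{\lfloor\inj P/2\rfloor}$ and containing the symmetry group $H$, and then applies \cref{lem:finite.subgroup.of.proper} directly. Your version unfolds that lemma's proof inline (working with the stabiliser $Y=K/H$ in the quotient and invoking \cref{lem:local.hom.pullback} with $A=\tilde P$), but the underlying argument is identical.
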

\begin{proof}
The set $\{g\in G:gP=P\}$ is a subgroup contained in $P$ and containing the symmetry group of $P$, so this follows from \cref{lem:finite.subgroup.of.proper}.
\end{proof}

\subsection{Rational and continuous progressions, and boxes in Lie algebras}\label{section:real/LieProg}
In our first paper \cite{proper.progs}, we proved the following result for passing between boxes in upper-triangular form over $\Z$ in a nilpotent Lie algebra and progressions in upper-triangular form over $\Z$ in the corresponding Lie group.
\begin{prop}[{\cite[Proposition 4.1]{proper.progs}}]\label{prop:pp4.1.orig}
Let $C>0$. Suppose $G$ is a connected, simply connected nilpotent Lie group with Lie algebra $\g$. Suppose $e_1,\ldots,e_d$ is a basis for $\g$ such that $\exp\langle e_1,\ldots,e_d\rangle$ is a subgroup of $G$, and $L_1,\ldots,L_d\ge1$ are such that $(e;L)$ is in $C$-upper-triangular form over $\Z$. Then, writing $u_i=\exp e_i$, we have 
\[
P(u;L)\subseteq \exp B_\Z(e;O_{C,d}(L))
\]
and
\[
\exp B_\Z(e;L)\subseteq P(u;O_{C,d}(L)).
\]
Moreover, $P(u;L)$ is in $O_{C,d}(1)$-upper-triangular form.
\end{prop}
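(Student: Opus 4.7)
}
The unifying tool for all three assertions will be the Baker--Campbell--Hausdorff formula~\eqref{eq:bch}. Since $\g$ is nilpotent of class $s\le d$, the BCH series truncates, and each term is a fixed rational multiple of an iterated Lie bracket in the arguments. Throughout, I shall use the $C$-upper-triangular hypothesis on $(e;L)$ in the following form: any iterated Lie bracket of depth $k\ge2$ in $e_{i_1},\ldots,e_{i_k}$ is an integer combination $\sum_{j>\max_p i_p}c_{j}e_{j}$ with $|c_j|$ bounded by a polynomial in $C$ times $L_{j}/(L_{i_1}\cdots L_{i_k})$ --- this follows by an easy induction on $k$ from the depth-$2$ hypothesis~\eqref{eq:C-upp-tri}, using the Jacobi identity and the fact that inserting a factor $e_i$ into a bracket re-expresses lower indices in terms of strictly higher ones with the expected length bound.

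For the forward inclusion $P(u;L)\subseteq\exp B_\Z(e;O_{C,d}(L))$, I would take a word $u_1^{\ell_1}\cdots u_d^{\ell_d}$ with $|\ell_i|\le L_i$ and apply BCH left-to-right to rewrite it as $\exp(\sum a_i e_i)$. Each BCH step introduces correction terms of the form (iterated brackets) whose leading arguments are among $e_1,\ldots,e_{d-1}$, but by the upper-triangular bound above, every such bracket feeds back only into the coefficients $a_j$ of strictly higher-indexed basis elements, and contributes an integer amount (after clearing BCH denominators via the hypothesis that $\exp\la e_1,\ldots,e_d\ra\le G$, which is exactly the integrality statement needed to absorb the rational coefficients $\tfrac12,\tfrac1{12},\ldots$). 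A careful tally of the multinomial weights, together with the telescoping $L_j/(L_{i_1}\cdots L_{i_k})\cdot L_{i_1}\cdots L_{i_k}=L_j$, yields $|a_j|\ll_{C,d}L_j$.

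The reverse inclusion $\exp B_\Z(e;L)\subseteq P(u;O_{C,d}(L))$ is proved by inverting this procedure: starting from $\exp(\sum\ell_i e_i)$, peel off $u_1^{\ell_1}$ from the left using $\exp(\sum\ell_ie_i)=u_1^{\ell_1}\exp(\sum_{i\ge2}\ell_ie_i+(\text{BCH corrections}))$, then $u_2^{\ell_2'}$ from what remains, and so on. At each stage the BCH corrections only ever affect the coefficients of basis elements with strictly larger index, so the induction closes and the exponents at stage $i$ remain of size $O_{C,d}(L_i)$. The third claim, that $P(u;L)$ is in $O_{C,d}(1)$-upper-triangular form, then follows by combining the two inclusions: for $i<j$, BCH expresses $[u_i^{\pm1},u_j^{\pm1}]$ as $\exp$ of a sum whose leading term is $\pm[e_i,e_j]$ and whose higher-order terms are brackets of even greater depth and hence (by the depth-$k$ bound above) have coefficients of $e_k$ of size $O_{C,d}(L_k/(L_iL_j))$; applying the forward inclusion to this expression yields the required membership in $P(u_{j+1},\ldots,u_d;O_{C,d}(L_{j+1}/L_iL_j),\ldots,O_{C,d}(L_d/L_iL_j))$.

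The principal obstacle is the bookkeeping: one must simultaneously control the sizes of the coefficients produced by iterating BCH and verify that the rational denominators appearing in BCH are cleared by integrality. The size control is handled by the telescoping property of the upper-triangular bounds described in the first paragraph, and the integrality is handled by the hypothesis that $\exp\la e_1,\ldots,e_d\ra$ is already a subgroup, which forces every product of the $u_i$ to lie in $\exp$ of the $\Z$-span of the $e_i$ and so supplies exactly the combinatorial identities needed to cancel denominators at every depth.
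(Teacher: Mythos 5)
Your proposal is correct and takes essentially the same route as the paper: the statement in question is cited from \cite{proper.progs}, but the paper does prove the closely analogous rational version \cref{prop:pp4.1}, and its proof — via the BCH formula \eqref{eq:bch} together with the upper-triangular bound on iterated brackets (\cref{lem:upper-tri}, corresponding to your first paragraph) and a left-to-right peeling induction (the displayed inclusion in the paper's proof of \eqref{eq:dilates.concl.2}, corresponding to your second paragraph) — is exactly what you describe. The paper organises the BCH manipulation through the modular lemmas \cref{lem:comms.1,lem:upper-tri,lem:QL.group}, whereas you argue directly, but the substance is identical.

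Two small points worth tidying. First, in your third paragraph you write that one ``applies the forward inclusion'' to pass from $[u_i^{\pm1},u_j^{\pm1}]\in\exp B_\Z(e_{j+1},\ldots,e_d;\cdot)$ to membership in $P(u_{j+1},\ldots,u_d;\cdot)$: you mean the \emph{reverse} inclusion $\exp B_\Z(e;L)\subseteq P(u;O_{C,d}(L))$, not the forward one. Second, applying that reverse inclusion to the truncated tuple $(e_{j+1},\ldots,e_d;L_{j+1},\ldots,L_d)$ requires knowing $\exp\la e_{j+1},\ldots,e_d\ra$ is itself a subgroup; this does follow (by BCH, $\Span_\R(e_{j+1},\ldots,e_d)$ is an ideal, so $\log(gh^{-1})$ lies in $\la e_1,\ldots,e_d\ra\cap\Span_\R(e_{j+1},\ldots,e_d)=\la e_{j+1},\ldots,e_d\ra$), but it is a step one should say out loud rather than leave implicit.
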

A particular consequence of this result, which we use repeatedly in this paper, is the following.
\begin{corollary}\label{cor:pp4.1.group}
Suppose $G$ is a connected, simply connected nilpotent Lie group with Lie algebra $\g$. Suppose $e_1,\ldots,e_d$ is a basis for $\g$ in upper-triangular form over $\Z$ such that $\exp\langle e_1,\ldots,e_d\rangle$ is a subgroup of $G$. Then, writing $u_i=\exp e_i$ for each $i$, we have $\exp\langle e_1,\ldots,e_d\rangle=\la u_1,\ldots,u_d\ra=P(u;\infty)$.
\end{corollary}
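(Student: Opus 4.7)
The plan is to observe that two of the three equalities are immediate, so the content reduces to a single application of \cref{prop:pp4.1.orig}. The inclusion $P(u;\infty)\subseteq\la u_1,\ldots,u_d\ra$ is by definition of $P(u;\infty)$, and since $\exp\la e_1,\ldots,e_d\ra$ is a subgroup containing each $u_i=\exp e_i$, we also have $\la u_1,\ldots,u_d\ra\subseteq\exp\la e_1,\ldots,e_d\ra$. The only non-trivial inclusion is therefore $\exp\la e_1,\ldots,e_d\ra\subseteq P(u;\infty)$, and this will be deduced from \cref{prop:pp4.1.orig} by a careful choice of lengths.

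Fix $\ell_1,\ldots,\ell_d\in\Z$; the task is to show that $\exp(\ell_1 e_1+\cdots+\ell_d e_d)\in P(u;\infty)$. The hypothesis that the basis is in upper-triangular form over $\Z$ furnishes integer structure constants $c_{ijk}$ (with $i<j<k$) such that $[e_i,e_j]=\sum_{k>j}c_{ijk}e_k$. I will exhibit lengths $L_1,\ldots,L_d\ge 1$ with $L_i\ge|\ell_i|$ for each $i$ such that $(e;L)$ is in $C$-upper-triangular form over $\Z$ for some $C$. Set $C=\max\bigl(1,\max_{i<j<k}|c_{ijk}|\bigr)$, and define the $L_k$ inductively in $k$: at each step pick $L_k\ge\max\bigl(|\ell_k|,\,\max_{i<j<k}L_iL_j\bigr)$. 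Then $|c_{ijk}|L_iL_j\le CL_k$ for all $i<j<k$, so $(e;L)$ is in $C$-upper-triangular form over $\Z$ in the sense required by the hypothesis of \cref{prop:pp4.1.orig}.

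With this choice of $L$ in hand, $\ell_1 e_1+\cdots+\ell_d e_d$ lies in $B_\Z(e;L)$ by construction, and \cref{prop:pp4.1.orig} yields
\[
\exp B_\Z(e;L)\subseteq P(u;O_{C,d}(L))\subseteq P(u;\infty).
\]
In particular $\exp(\sum_i\ell_ie_i)\in P(u;\infty)$, establishing the remaining inclusion and hence the corollary. The only mild subtlety in this argument is engineering $L$ to satisfy the upper-triangular-form bounds alongside the constraint $L_i\ge|\ell_i|$; the inductive choice above handles this by letting each $L_k$ grow as fast as required relative to its predecessors, and the resulting constant $C$ depends only on the ambient basis and not on the target element.
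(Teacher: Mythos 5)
Your proof is correct and takes essentially the same approach as the paper: both establish the easy inclusions $P(u;\infty)\subseteq\la u_1,\ldots,u_d\ra\subseteq\exp\la e_1,\ldots,e_d\ra$ and then close the cycle by placing an arbitrary integer combination $\sum_i\ell_ie_i$ in some box $B_\Z(e;L)$ whose tuple $(e;L)$ is in $C$-upper-triangular form, so that \cref{prop:pp4.1.orig} yields $\exp B_\Z(e;L)\subseteq P(u;O_{C,d}(L))\subseteq P(u;\infty)$. The one cosmetic difference is that you take extra care to engineer a single $C$ (independent of $L$ and of the target element) by forcing $L_k\ge L_iL_j$; the paper simply allows the constant $C_L$ to depend on $L$, which suffices since all one needs is that \emph{some} $C$ makes \cref{prop:pp4.1.orig} applicable, and the conclusion $\subseteq P(u;\infty)$ is insensitive to the size of $O_{C,d}(L)$. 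Your streamlining of the paper's first paragraph (you get $\la u_1,\ldots,u_d\ra=P(u;\infty)$ for free from the chain of inclusions, rather than separately deducing it from $P(u;1)$ being in upper-triangular form) is a minor but genuine simplification.
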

\begin{proof}
Since $(e;1)$ is in $C$-upper-triangular form for some $C>0$, \cref{prop:pp4.1.orig} implies that $P(u;1)$ is in upper-triangular form, and hence that $\la u_1,\ldots,u_d\ra=P(u;\infty)$.

Clearly $\la u_1,\ldots,u_d\ra\le\exp\langle e_1,\ldots,e_d\rangle$. On the other hand, an arbitrary element of $\exp\langle e_1,\ldots,e_d\rangle$ belongs to $B_\Z(e;L)$ for some $L_i\in\N$, and since $(e;L)$ is in $C_L$-upper-triangular form for some $C_L>0$, \cref{prop:pp4.1.orig} implies that $B_\Z(e;L)\subseteq P(u;\infty)$.
\end{proof}

In the present paper we will also need analogues of \cref{prop:pp4.1.orig} over the rationals and reals. More precisely, given elements $u_1,\ldots,u_d$ in a Lie group and positive reals $L_1,\ldots,L_d$, we define the
\emph{rational progression} $P_\Q(u;L)$ via
\[
P_\Q(u;L)=\{u_1^{\ell_1}\cdots u_d^{\ell_d}:\ell_i\in\Q\cap[-L_i,L_i]\},
\]
and the \emph{continuous progression} $P_\R(u;L)$ via
\[
P_\R(u;L)=\{u_1^{\ell_1}\cdots u_d^{\ell_d}:\ell_i\in[-L_i,L_i]\}.
\]
We say that $(u;L)=(u_1,\ldots,u_d;L_1,\ldots,L_d)$ is in $C$-upper-triangular form \emph{over $\Q$} if for every $i<j\le d$ and for all four choices of signs $\pm$ we have
\[
[u_i^{\pm1},u_j^{\pm1}]\subseteq P_\Q\left(u_{j+1},\ldots,u_d;\textstyle{\frac{CL_{j+1}}{L_iL_j},\ldots,\frac{CL_d}{L_iL_j}}\right),
\]
and similarly over $\R$.
\begin{prop}\label{prop:pp4.1}
Let $C>0$. Suppose $G$ is a connected, simply connected nilpotent Lie group with Lie algebra $\g$. Suppose $e_1,\ldots,e_d$ is a basis for $\g$, and that $L_1,\ldots,L_d\ge1$ are such that $(e;L)$ is in $C$-upper-triangular form over $\Q$. Then, writing $u_i=\exp e_i$, we have 
\begin{equation}\label{eq:dilates.concl.1}
P_\Q(u;L)\subseteq \exp B_\Q(e;O_{C,d}(L))
\end{equation}
and
\begin{equation}\label{eq:dilates.concl.2}
\exp B_\Q(e;L)\subseteq P_\Q(u;O_{C,d}(L)).
\end{equation}
Moreover, $P_\ord(u;L)$ is in $O_{C,d}(1)$-upper-triangular form over $\Q$.
\end{prop}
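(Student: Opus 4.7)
The plan is to adapt the proof of the integer case (\cref{prop:pp4.1.orig}, from \cite{proper.progs}) to the rational setting. The key feature of the integer proof that carries over unchanged is the Baker--Campbell--Hausdorff formula \eqref{eq:bch}, whose coefficients are rational; consequently every step of the integer argument that produces a $\Z$-linear combination of the $e_i$ will here produce a $\Q$-linear combination, and the $C$-upper-triangular form over $\Q$ is exactly the hypothesis needed to keep iterated brackets of the $e_i$ inside the $\Q$-span with coefficients controlled by $C$, $d$ and the $L_i$.

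First I would observe that the hypothesis makes $V := \Q e_1+\cdots+\Q e_d$ a Lie subalgebra of $\g$, so by BCH the set $\exp V$ is a subgroup of $G$; this plays the role of the ``$\exp\la e_1,\ldots,e_d\ra$ is a subgroup'' hypothesis in the integer case. To prove inclusion \eqref{eq:dilates.concl.1}, given $u_1^{\ell_1}\cdots u_d^{\ell_d}$ with $\ell_i\in\Q$ and $|\ell_i|\le L_i$, I would show by induction on $i$ that
\[
\log\bigl(u_1^{\ell_1}\cdots u_i^{\ell_i}\bigr) = \sum_{j\le i}\ell_j e_j + \sum_{j>i} c_j^{(i)} e_j
\]
with $c_j^{(i)}\in\Q$ and $|c_j^{(i)}|\le O_{C,d}(L_j)$: at each step one multiplies by $u_{i+1}^{\ell_{i+1}}$ and applies BCH, whose correction terms are iterated brackets in $e_1,\ldots,e_{i+1}$ which the upper-triangular form over $\Q$ places in $B_\Q(e_{i+2},\ldots,e_d;O_{C,d}(L))$ with coefficients of the required size. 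For inclusion \eqref{eq:dilates.concl.2} I would run this procedure in reverse, peeling off $u_1^{c_1}$ from $\exp(\sum c_j e_j)$ and inducting on dimension, using that the restriction to $\Q e_2+\cdots+\Q e_d$ inherits the upper-triangular property with the same constants. Finally, the ``moreover'' statement would follow by computing $[u_i^{\pm 1},u_j^{\pm 1}]$ via BCH and expressing the result as $\exp$ of an element of $B_\Q\bigl(e_{j+1},\ldots,e_d;O_{C,d}(L_k/(L_iL_j))_{k>j}\bigr)$, using the upper-triangular hypothesis on $(e;L)$, and then converting to a rational progression via \eqref{eq:dilates.concl.2}.

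The principal obstacle is bookkeeping the BCH expansion together with the rational upper-triangular coefficients $c_{ij}^k$ (which satisfy $|c_{ij}^k|\le CL_k/(L_iL_j)$ but may have arbitrarily large denominators): one must verify that when the iterated-bracket terms of BCH are fully expanded in the basis $e_1,\ldots,e_d$, each basis coefficient has absolute value $O_{C,d}(L_j)$, and similarly for the inverse direction. This works because the BCH series terminates after a bounded number of iterations (the class is at most $d$), the BCH coefficients come from a fixed finite list depending only on $d$, and only the absolute-value bounds on the $c_{ij}^k$ enter the estimates -- the denominators themselves are never constrained. Consequently all bounds depend only on $C$ and $d$, and the argument goes through exactly as in the integer case with $\Q$ systematically replacing $\Z$.
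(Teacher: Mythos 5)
Your proposal is correct and follows essentially the same route as the paper: the paper proves \eqref{eq:dilates.concl.1} by writing $P_\Q(u;L)=\exp B_\Q(e_1;L_1)\cdots\exp B_\Q(e_d;L_d)$ and iterating a BCH-based doubling lemma (its \cref{lem:QL.group}), proves \eqref{eq:dilates.concl.2} by peeling off $\exp(\ell_1 e_1)$ and inducting on dimension, and obtains the ``moreover'' by computing $\log[u_i^{\pm1},u_j^{\pm1}]$ via the bracket-form machinery (\cref{lem:comms.1,lem:upper-tri}) and then invoking \eqref{eq:dilates.concl.2} -- exactly the three steps you describe. Your observation that $\Q e_1+\cdots+\Q e_d$ being a Lie subalgebra replaces the ``$\exp\la e_1,\ldots,e_d\ra$ is a subgroup'' hypothesis of the integer version, and that only absolute-value bounds on the rational structure constants (never their denominators) enter the estimates, correctly identifies why the integer argument transfers verbatim.
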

\begin{remark}\label{rem:pp4.1}By density of $\Q$ in $\R$, the same result holds with $\Q$ replaced by $\R$, a fact that we use later without further explicit mention.
\end{remark}

The proof of \cref{prop:pp4.1} is essentially the same as that of \cref{prop:pp4.1.orig}, and uses certain functions mapping a set of letters to a commutator or Lie bracket in those letters. We call these functions \emph{bracket forms}. To define bracket forms, we momentarily treat brackets as formal objects that can be interpreted either as commutators or as Lie brackets depending on the context. Following \cite[Definition 3.2]{nilp.frei}, given letters $v_1,\ldots,v_r$, the function $\alpha_i$ defined by $\alpha_i(v_1,\ldots,v_r)=v_i$ is a bracket form of weight $1$, and then given two bracket forms $\alpha,\alpha'$ of weights $\omega,\omega'$, respectively, the function $[\alpha,\alpha']$ defined by $[\alpha,\alpha'](v_1,\ldots,v_r)=[\alpha(v_1,\ldots,v_r),\alpha'(v_1,\ldots,v_r)]$ is a bracket form of weight $\omega+\omega'$. Thus, for example, the function $\alpha:(w_1,w_2)\mapsto[w_1,[w_1,w_2]]$ is a bracket form of weight $3$, and if $x_1,x_2$ are elements of a group then $\alpha(x_1,x_2)$ is the commutator $[x_1,[x_1,x_2]]$, whilst if $v_1,v_2$ are elements of a Lie algebra then $\alpha(v_1,v_2)$ is the Lie bracket $[v_1,[v_1,v_2]]$.
\begin{lemma}[{\cite[Lemma 4.6]{proper.progs}}]\label{lem:comms.1}
Let $\alpha$ be a bracket form of weight $m$. Then there exists a sequence $\beta_1,\beta_2,\ldots$ of bracket forms of weight greater than $m$, of which at most finitely many have any given weight, and rationals $q_1,q_2,\ldots$ such that if $x_1,\ldots,x_m$ are elements of a connected, simply connected nilpotent Lie group, and $v_i=\log x_i$ are elements of the corresponding Lie algebra, then
\[
\log\alpha(x_1,\ldots,x_m)=\alpha(v_1,\ldots,v_m)+q_1\beta_1(v_1,\ldots,v_m)+q_2\beta_2(v_1,\ldots,v_m)+\cdots,
\]
with each $\beta_j$ featuring each $v_i$ at least once.
\end{lemma}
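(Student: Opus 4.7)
The plan is to induct on the weight $m$ of $\alpha$. For $m=1$ we have $\alpha(x_1)=x_1$ and $\log\alpha(x_1)=v_1=\alpha(v_1)$, so no corrections are needed. For $m\ge 2$ write $\alpha=[\alpha',\alpha'']$ where $\alpha'$ and $\alpha''$ are bracket forms of weights $m',m''<m$ with $m'+m''=m$. Because $\alpha$ has weight $m$ and takes $m$ inputs, each $x_i$ occurs as a leaf of $\alpha$ exactly once, so I can partition $\{x_1,\dots,x_m\}$ into the leaves of $\alpha'$ and those of $\alpha''$ and apply the inductive hypothesis separately to $\log\alpha'(\ldots)$ and $\log\alpha''(\ldots)$. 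This yields rational expansions of the form $\alpha'+\sum_j q'_j\beta'_j$ and $\alpha''+\sum_k q''_k\beta''_k$ (each evaluated on the respective subset of the $v_i$), in which every $\beta'_j$ (resp.\ $\beta''_k$) has weight strictly greater than $m'$ (resp.\ $m''$), features each of its own variables, and only finitely many appear at any given weight.

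Next I apply the commutator form of the Baker--Campbell--Hausdorff formula: for $Y,Z$ in a nilpotent Lie algebra, $\log[\exp Y,\exp Z]=[Y,Z]+\sum_\ell r_\ell\gamma_\ell(Y,Z)$, where each $\gamma_\ell$ is an iterated Lie bracket of weight $\ge 3$ in two letters, the $r_\ell$ are rational, and only finitely many $\gamma_\ell$ occur at each weight. Moreover each $\gamma_\ell$ must use \emph{both} letters, since $\log[x,y]$ vanishes whenever $x$ or $y$ is the identity. Setting $Y=\log\alpha'(\ldots)$ and $Z=\log\alpha''(\ldots)$ and expanding the right-hand side by multilinearity of the Lie bracket yields
\[
\log\alpha(x_1,\dots,x_m)=\alpha(v_1,\dots,v_m)+\sum_j q_j\beta_j(v_1,\dots,v_m),
\]
with each $\beta_j$ a bracket form in $v_1,\dots,v_m$ and each $q_j\in\Q$. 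The leading piece $\alpha(v_1,\dots,v_m)$ arises as $[\alpha'(\cdot),\alpha''(\cdot)]$ from substituting the main terms into $[Y,Z]$.

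The main obstacle is the bookkeeping needed to verify that every remaining $\beta_j$ has weight strictly greater than $m$ and features every $v_i$. For the weight bound: a term arising from $[Y,Z]$ other than the leading one substitutes at least one $\beta'_j$ or $\beta''_k$ into one of the two slots, which increases the weight of that slot beyond $m'$ or $m''$ respectively, so the total weight strictly exceeds $m'+m''=m$. A term arising from some $\gamma_\ell$ of weight $a+b\ge 3$ with $a,b\ge 1$ substitutes bracket forms of weight $\ge m'$ into each of its $a$ $Y$-slots and $\ge m''$ into each of its $b$ $Z$-slots, producing total weight at least $am'+bm''\ge m+\min(m',m'')>m$. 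For the variable-coverage claim, the $\beta'_j$ already span all variables of $\alpha'$ and the $\beta''_k$ all variables of $\alpha''$; since every $\gamma_\ell$ uses both $Y$ and $Z$ (and so does the leading $[\alpha'(\cdot),\alpha''(\cdot)]$), each resulting $\beta_j$ features every $v_i$. Finally the finiteness-per-weight property is preserved because at each weight the expansion combines only finitely many ingredients from each of the two inductive lists and from the BCH expansion.
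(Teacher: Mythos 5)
Your argument is correct and uses the natural (and almost certainly the intended) strategy: structural induction on $\alpha=[\alpha',\alpha'']$, combined with the commutator form of Baker--Campbell--Hausdorff, $\log[\exp Y,\exp Z]=[Y,Z]+\sum_\ell r_\ell\gamma_\ell(Y,Z)$ with each $\gamma_\ell$ an iterated bracket of bidegree $(a,b)$, $a,b\ge1$, $a+b\ge3$, and rational $r_\ell$. The bookkeeping -- lower bound $am'+bm''>m$ on the weight of substituted terms, coverage of all variables because every $\gamma_\ell$ uses both $Y$ and $Z$ and each factor $\beta'_j$/$\beta''_k$ (or $\alpha'$/$\alpha''$) features all of its own variables, and finiteness per weight from the finiteness of the BCH expansion at each bidegree together with the inductive finiteness -- is carried out correctly. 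The paper itself only cites \cite[Lemma 4.6]{proper.progs} rather than proving the lemma, so there is nothing to compare against line-by-line, but the approach matches the standard one.
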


\begin{lemma}\label{lem:upper-tri}
Suppose $e_1,\ldots,e_d$ are elements of a Lie algebra and $L_1,\ldots,L_d>0$ are such that $(e;L)$ is in $C$-upper-triangular form over $\Q$. Let $\beta$ be a bracket form of weight $r$. Then for every $i_1\le\ldots\le i_r$ we have
\[
\textstyle\beta(e_{i_1},\ldots,e_{i_r})\in B_\Q\left(e_{i_r+1},\ldots,e_d;\textstyle{\frac{O_{C,d,r}(L_{i_r+1})}{L_{i_1}\cdots L_{i_r}},\ldots,\frac{O_{C,d,r}(L_d)}{L_{i_1}\cdots L_{i_r}}}\right)
\]
\end{lemma}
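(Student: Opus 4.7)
The plan is to prove the lemma by strong induction on the weight $r$, with base case $r=2$ (the case $r=1$ is implicitly excluded, since $e_{i_1}\notin B_\Q(e_{i_1+1},\ldots,e_d;\cdot)$ in general). For $r=2$ the only bracket forms produce $\pm[e_{i_1},e_{i_2}]$: if $i_1=i_2$ this vanishes, and if $i_1<i_2$ the conclusion is precisely the definition of $C$-upper-triangular form over $\Q$ (the sign and the reversed-order case $[e_{i_2},e_{i_1}]$ being absorbed by the symmetry of the box).

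For the inductive step with $r\ge 3$, write $\beta=[\beta',\beta'']$ where $\beta'$, $\beta''$ are bracket forms of weights $r'$, $r''$ with $r'+r''=r$. Since $\beta$ has weight equal to its arity, each argument of $\beta$ appears exactly once, and so the argument positions partition as $\{1,\ldots,r\}=J'\sqcup J''$ according to whether they are used by $\beta'$ or by $\beta''$, with $|J'|=r'$ and $|J''|=r''$. Set $m'=\max_{\alpha\in J'}i_\alpha$ and $m''=\max_{\alpha\in J''}i_\alpha$; then $\max(m',m'')=i_r$. By the inductive hypothesis applied to $\beta'$ and to $\beta''$ (after reindexing the respective argument lists in nondecreasing order, which does not change the bracket form up to relabelling), one obtains rational expansions
\[
\beta'\bigl((e_{i_\alpha})_{\alpha\in J'}\bigr)=\sum_{a>m'}c'_a e_a,\qquad \beta''\bigl((e_{i_\alpha})_{\alpha\in J''}\bigr)=\sum_{b>m''}c''_b e_b,
\]
with $|c'_a|\le O_{C,d,r}(L_a)/\prod_{\alpha\in J'}L_{i_\alpha}$ and analogously for $c''_b$.

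Expanding $[\beta',\beta'']$ bilinearly gives $\beta(e_{i_1},\ldots,e_{i_r})=\sum_{a>m',\,b>m''}c'_a c''_b[e_a,e_b]$. Terms with $a=b$ vanish. For $a\ne b$, the $C$-upper-triangular form assumption (combined with $[e_a,e_b]=-[e_b,e_a]$ to handle both orderings) writes $[e_a,e_b]=\sum_{c>\max(a,b)}t_{ab,c}e_c$ with $|t_{ab,c}|\le CL_c/(L_aL_b)$. Two bookkeeping observations then close the induction: (a) every surviving term satisfies $\max(a,b)\ge \max(m',m'')+1=i_r+1$, so the expansion is supported on $e_{i_r+1},\ldots,e_d$; and (b) the $L_aL_b$ factors cancel precisely between the inductive bounds and the single-step bound, while $J'\sqcup J''=\{1,\ldots,r\}$ yields $\prod_{\alpha\in J'}L_{i_\alpha}\cdot\prod_{\alpha\in J''}L_{i_\alpha}=L_{i_1}\cdots L_{i_r}$. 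Summing over at most $d^2$ pairs $(a,b)$ absorbs into the implicit constant and produces the coefficient bound $O_{C,d,r}(L_c)/(L_{i_1}\cdots L_{i_r})$ required by the statement.

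There is no real obstacle in this proof: it is routine induction, and the only substantive content is spotting the two cancellations in (b) and verifying (a). The identity $\max(m',m'')=i_r$ — a consequence of the fact that the largest index $i_r$ must appear in either $J'$ or $J''$ — is what guarantees that the output lies in the stated box, and the precise balance between the inductive factor $L_a L_b$ in the numerators and the single-step factor $L_a L_b$ in the denominator is what makes the scaling $L_c/(L_{i_1}\cdots L_{i_r})$ come out exactly as required.
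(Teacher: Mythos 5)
Your approach — induction on the weight $r$ with base case $r=2$, expanding $\beta=[\beta',\beta'']$ bilinearly and then applying the single-step triangular relation $[e_a,e_b]\in B_\Q(e_{\max(a,b)+1},\ldots;\cdot)$ — is exactly the ``routine induction on $r$'' the paper has in mind, and the scaling cancellation you identify in (b) is the real content. However, there is a gap in your inductive step: you apply the inductive hypothesis to both $\beta'$ and $\beta''$, but when one of them, say $\beta'$, has weight $r'=1$, the inductive hypothesis is simply not available (your base case is $r=2$), and the claimed expansion $\beta'\bigl((e_{i_\alpha})_{\alpha\in J'}\bigr)=\sum_{a>m'}c'_ae_a$ is in fact false in that case: the left-hand side is $e_{m'}$ itself, whose index \emph{equals} $m'$ rather than exceeding it. Your observation (a), that every surviving pair satisfies $\max(a,b)\ge i_r+1$, therefore also fails in that sub-case.

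The fix is cheap but necessary. When $r'=1$ one has $a=m'$ and $b>m''$, hence $\max(a,b)\ge\max(m',m''+1)\ge\max(m',m'')=i_r$ — one less than (a) asserts, but still enough, since $[e_a,e_b]$ is then supported on $e_c$ with $c\ge\max(a,b)+1\ge i_r+1$, which is all the conclusion requires. The scaling bookkeeping survives because the factor $L_a=L_{m'}=L_{i_j}$ in the denominator of the single-step bound $|t_{ab,c}|\le CL_c/(L_aL_b)$ exactly supplies the contribution the (nonexistent) inductive estimate for $\beta'$ would have produced. Separately, ``weight equals arity'' does not by itself force each argument to appear exactly once — a weight-$3$ form on three letters can use one letter twice and another not at all — so the partition $J'\sqcup J''=\{1,\ldots,r\}$ tacitly invokes the convention, consistent with how the lemma is fed by \cref{lem:comms.1}, that $\beta$ features every letter; that is worth flagging, since otherwise the stated conclusion can fail when $i_r$ is an index not actually appearing in $\beta$.
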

\begin{proof}
This is a routine induction on $r$.
\end{proof}

\begin{lemma}\label{lem:QL.group}
Suppose $e_1,\ldots,e_d$ is a basis of the Lie algebra $\g$ of a connected, simply connected nilpotent Lie group $G$, and suppose $L_1,\ldots,L_d>0$ are such that $(e;L)$ is in $C$-upper-triangular form over $\Q$. Then
\[
(\exp B_\Q(e;L))^2\subseteq\exp B_\Q(e;O_{C,d}(L)).
\]
\end{lemma}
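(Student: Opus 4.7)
The plan is to apply the Baker--Campbell--Hausdorff formula to express $\exp(X)\exp(Y)$ as $\exp(Z)$ for $X,Y \in B_\Q(e;L)$, and then expand $Z$ as a $\Q$-linear combination of the $e_i$ with coefficients bounded by $O_{C,d}(L_i)$ in the appropriate slot, using Lemma \ref{lem:upper-tri} to control the contributions from iterated Lie brackets.

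More precisely, I would start by fixing $X = \sum_{i=1}^d a_i e_i$ and $Y = \sum_{i=1}^d b_i e_i$ with $a_i,b_i \in \Q$ and $|a_i|, |b_i| \le L_i$. Since $G$ is nilpotent of class $s \le d$, the BCH formula \eqref{eq:bch} terminates and gives $\exp(X)\exp(Y) = \exp(Z)$, where
\[
Z = X + Y + \sum_{\alpha} q_\alpha\, \alpha(X,Y),
\]
the sum being over a finite collection (depending only on $s$, hence on $d$) of bracket forms $\alpha$ of weight between $2$ and $s$, with rational coefficients $q_\alpha$ depending only on $d$. Rationality of $Z$ (as a $\Q$-combination of the $e_i$) will follow automatically once we show the estimates, since the $a_i$, $b_i$, $q_\alpha$ are all rational and Lemma \ref{lem:upper-tri} (via its proof) produces rational coefficients.

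Next, I would expand each $\alpha(X,Y)$ by multilinearity, turning it into a finite $\Q$-linear combination of terms of the form $c\cdot \beta(e_{j_1},\ldots,e_{j_r})$, where $\beta$ is a bracket form of weight $r = |\alpha|$, the $j_k$ range over $\{1,\ldots,d\}$, and $c$ is a product of some of the $a_{j_k}$'s and $b_{j_k}$'s; in particular $|c| \le L_{j_1}\cdots L_{j_r}$. Using antisymmetry of the Lie bracket I can permute the entries so that $j_1 \le \cdots \le j_r$ (absorbing the sign into a modified bracket form drawn from a still-finite list depending only on $d$). Lemma \ref{lem:upper-tri} then gives
\[
\beta(e_{j_1},\ldots,e_{j_r}) \in B_\Q\!\left(e_{j_r+1},\ldots,e_d;\; \tfrac{O_{C,d}(L_{j_r+1})}{L_{j_1}\cdots L_{j_r}},\ldots,\tfrac{O_{C,d}(L_d)}{L_{j_1}\cdots L_{j_r}}\right),
\]
so that the coefficient of $e_k$ (for any $k > j_r$) in $c\cdot\beta(e_{j_1},\ldots,e_{j_r})$ is a rational of absolute value at most $O_{C,d}(L_k)$.

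Finally, I would collect terms: the coefficient of each $e_k$ in $Z$ is a sum, over the bracket forms $\alpha$ appearing in BCH and the finitely many tuples $(j_1,\ldots,j_r)$ of indices that could feed into $\alpha$, of contributions each bounded by $O_{C,d}(L_k)$ (for $k=1,\ldots,d$, taking into account also the two degree-$1$ terms $X$ and $Y$, which contribute at most $2L_k$ to the coefficient of $e_k$). Since the total number of such contributions is bounded in terms of $d$ alone, this yields $Z \in B_\Q(e; O_{C,d}(L))$, as required. There is no real obstacle here beyond careful bookkeeping of the expansion; the essential analytic work has already been done in Lemma \ref{lem:upper-tri}.
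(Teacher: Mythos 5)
Your argument is correct and is exactly the approach the paper takes: the paper's proof of this lemma is a one-line appeal to the Baker--Campbell--Hausdorff formula \eqref{eq:bch} together with Lemma~\ref{lem:upper-tri}, and you have simply written out the multilinear expansion and bookkeeping that this appeal compresses. One small imprecision worth flagging for your own records: reordering the arguments of a bracket form into increasing order requires the Jacobi identity as well as antisymmetry and in general replaces a single bracket form by a finite $\Z$-linear combination of bracket forms (still drawn from a list depending only on $d$), rather than a single sign-modified form; this does not affect the bound, since the number of resulting terms remains $O_d(1)$.
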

\begin{proof}
This follows from the Baker--Campbell--Hausdorff formula \eqref{eq:bch} and Lemma \ref{lem:upper-tri}.
\end{proof}

\begin{proof}[Proof of \cref{prop:pp4.1}]
The inclusion \eqref{eq:dilates.concl.1} follows from writing
\[
P_\Q(u;L)=\exp B_\Q(e_1;L_1)\cdots\exp B_\Q(e_d;L_d)
\]
and applying \cref{lem:QL.group} repeatedly.

To prove \eqref{eq:dilates.concl.2}, observe using the Baker--Campbell--Hausdorff formula \eqref{eq:bch} that for $\ell_i\in\Q$ we have
\[
\exp(-\ell_1e_1)\exp(\ell_1e_1+\cdots+\ell_de_d)\subseteq\exp\Span_\Q(e_2,\ldots,e_d).
\]
Since $e_1,\ldots,e_d$ is a basis for $\g$ and $\exp:\g\to G$ is bijective, this combines with Lemma \ref{lem:QL.group} to imply that
\[
\exp B_\Q(e;L)\subseteq P_\Q(u_1;L_1)\exp B_\Q(e_2,\ldots,e_d;O_{C,d}(L_2),\ldots,O_{C,d}(L_d)),
\]
from which \eqref{eq:dilates.concl.2} follows by induction.

To see that $P_\ord(u;L)$ is in $O_{C,d}(1)$-upper-triangular form over $\Q$, note first that  \cref{lem:comms.1,lem:upper-tri} imply that for $i<j$ we have
\[
[u_i^{\pm1},u_j^{\pm1}]\in\exp B_\Q\left(e_{j+1},\ldots,e_d;\textstyle{\frac{O_{C,d}(L_{j+1})}{L_iL_j},\ldots,\frac{O_{C,d}(L_d)}{L_iL_j}}\right).
\]
It therefore follows from \eqref{eq:dilates.concl.2} applied to
\[
B_\Q\left(e_{j+1},\ldots,e_d;\textstyle{\frac{O_{C,d}(L_{j+1})}{L_iL_j},\ldots,\frac{O_{C,d}(L_d)}{L_iL_j}}\right)
\]
that
\[
[u_i^{\pm1},u_j^{\pm1}]\in P_\Q\left(u_{j+1},\ldots,u_d;\textstyle{\frac{O_{C,d}(L_{j+1})}{L_iL_j},\ldots,\frac{O_{C,d}(L_d)}{L_iL_j}}\right).\qedhere
\]
\end{proof}

\begin{lemma}\label{lem:pp.L2.1}
Suppose $G$ is a connected, simply connected nilpotent Lie group with Lie algebra $\g$. Suppose $e_1,\ldots,e_d\in\g$ and $L_1,\ldots,L_d\ge1$ are such that $(e;L)$ is in $C$-upper-triangular form over $\Q$, and let $u_i=\exp e_i$ for each $i$.
Then $P_\Q(u;L)^n\subseteq P_\Q(u;O_{C,d,n}(L))$ for each $n\in\N$.
\end{lemma}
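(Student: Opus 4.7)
The plan is to transfer the multiplication from the group $G$ to the additive structure of the Lie algebra $\g$ via the exponential map, using \cref{prop:pp4.1,lem:QL.group} as the main tools. The statement is really a rational analogue of the doubling/dilation bound that, in the integer case, was proved in our first paper, so the argument here should closely mirror that one.

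First I would apply \cref{prop:pp4.1} to push the progression into the Lie algebra: since $(e;L)$ is in $C$-upper-triangular form over $\Q$, there exists $K_1=K_1(C,d)\ge1$ such that
\[
P_\Q(u;L)\subseteq \exp B_\Q(e;K_1 L).
\]
Next I would iterate \cref{lem:QL.group} starting from the tuple $(e;K_1 L)$. The one thing to verify is that after each doubling step the upper-triangular hypothesis is preserved with a controlled constant: if $(e;L)$ is in $C$-upper-triangular form over $\Q$ and $K\ge 1$, then $(e;K L)$ is in $CK$-upper-triangular form over $\Q$ (the commutator bound $CL_{j+1}/(L_i L_j)$ becomes $CK\cdot(KL_{j+1})/((KL_i)(KL_j))$). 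Consequently, a straightforward induction on $k$ produces constants $K_2^{(k)}=O_{C,d,k}(1)$ with
\[
\bigl(\exp B_\Q(e;K_1 L)\bigr)^{2^k}\subseteq \exp B_\Q\bigl(e;K_2^{(k)} K_1 L\bigr).
\]
Since $0\in B_\Q(e;L)$, the set $\exp B_\Q(e;K_1 L)$ contains the identity, so the same inclusion holds with any $n\le 2^k$ in place of $2^k$; choosing $k=\lceil\log_2 n\rceil$ yields an analogous bound for every $n\in\N$.

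Finally, the scaled tuple $(e;K_2^{(k)} K_1 L)$ is still in $O_{C,d,n}(1)$-upper-triangular form over $\Q$ by the rescaling observation above, so a second application of \cref{prop:pp4.1}, this time in the direction $\exp B_\Q(e;\,\cdot\,)\subseteq P_\Q(u;\,\cdot\,)$, gives
\[
\exp B_\Q\bigl(e;K_2^{(k)} K_1 L\bigr)\subseteq P_\Q\bigl(u;O_{C,d,n}(L)\bigr).
\]
Chaining the three inclusions yields $P_\Q(u;L)^n\subseteq P_\Q(u;O_{C,d,n}(L))$, as required.

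There is no real obstacle here: the argument is essentially a three-line sandwich through the Lie algebra, and the only point requiring care is the bookkeeping of constants during the doubling iteration, which is handled cleanly by the scaling identity for the upper-triangular-form constant. In particular, no new ideas beyond \cref{prop:pp4.1,lem:QL.group} (and the fact that the Baker--Campbell--Hausdorff series terminates, which is already embedded in those statements) are needed.
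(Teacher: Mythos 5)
Your proof is correct and follows essentially the same route as the paper's: sandwich through the Lie algebra via \cref{prop:pp4.1} on both ends, with iterated applications of \cref{lem:QL.group} in the middle, and a rescaling check on the upper-triangular constant. The only cosmetic difference is that you organise the iteration by doubling in $\lceil\log_2 n\rceil$ steps rather than implicitly compounding, which is immaterial since the final implied constant is permitted to depend on $n$.
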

\begin{remark}\label{rem:pp.L2.1}
As with \cref{prop:pp4.1}, by density of $\Q$ in $\R$ we automatically have the same result with $\Q$ replaced by $\R$.
\end{remark}
\begin{proof}
We have
\begin{align*}
P_\Q(u;L)^n&\subseteq(\exp B_\Q(e;O_{C,d}(L)))^n&&\text{(by \cref{prop:pp4.1})}\\
             &\subseteq\exp B_\Q(e;O_{C,d,n}(L))&&\text{(by \cref{lem:QL.group})}\\
             &\subseteq P_\Q(u;O_{C,d,n}(L))&&\text{(by \cref{prop:pp4.1}).}
\end{align*}
\end{proof}

In the next two results, just as in the definition of nilpotent progressions, given $L\in\R^r$ and $\chi\in\N^r$ we use the notation $L^\chi$ to denote the quantity $L_1^{\chi_1}\cdots L_r^{\chi_r}$. If $f_1,\ldots,f_r$ are elements of some nilpotent Lie algebra of class $c$, extended to a list $\overline f=f_1,\ldots,f_d$ of basic Lie brackets of weight at most $c$, and $L_1,\ldots,L_r>0$, then $B_\R(\overline f;L^\chi)$ is the set of linear combinations of $\sum_i\lambda_i f_i$ where  $|\lambda_i|\leq L^{\chi(i)}$. We also write $L\overline f=(L^{\chi(1)}f_1,\ldots,L^{\chi(d)}f_d)$. We remark that $B_\R(\overline f;L^\chi)$ is the continuous version of the \emph{nilbox} $\mathfrak{B}(f_1,\ldots,f_r;L)$ appearing in \cite{bg}.

\begin{lemma}\label{lem:real.B.upper-tri}
Let $x_1,\ldots,x_r$ be elements of a simply connected nilpotent Lie group of class $c$. Write $f_i=\log x_i$ for each $i$, and extend $f_1,\ldots,f_r$ to a list $\overline f=f_1,\ldots,f_d$ of basic Lie brackets of weight at most $c$. Then for every $L\in\N^d$ we have $[B_\R(\overline f;L^\chi),B_\R(\overline f;L^\chi)]\subseteq O_{r,c}(1)B_\R(\overline f;L^\chi)$.
\end{lemma}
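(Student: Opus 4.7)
My plan is to compute $[u,v]$ directly for two arbitrary elements of the box and expand in the basis of basic Lie brackets, with the key observation being that the structure constants come from the free nilpotent Lie algebra and therefore depend only on $r$ and $c$.

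First, take $u=\sum_{i=1}^d \lambda_i f_i$ and $v=\sum_{j=1}^d \mu_j f_j$ with $|\lambda_i|\le L^{\chi(i)}$ and $|\mu_j|\le L^{\chi(j)}$. By bilinearity,
\[
[u,v]=\sum_{i,j}\lambda_i\mu_j\,[f_i,f_j].
\]
For any pair $i,j$, the bracket $[f_i,f_j]$ is itself a Lie bracket in the original generators $f_1,\ldots,f_r$ of weight vector $\chi(i)+\chi(j)$. If the total weight $|\chi(i)|+|\chi(j)|$ exceeds $c$, then $[f_i,f_j]=0$ because the ambient Lie algebra has class $c$, so these terms contribute nothing.

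For the remaining pairs, I would appeal to the universal property of the free nilpotent Lie algebra $\mathfrak{f}_{r,c}$ of class $c$ on generators $y_1,\ldots,y_r$. Inside $\mathfrak{f}_{r,c}$, the basic Lie brackets of weight at most $c$ form an $\R$-basis, and for each $i,j$ with $|\chi(i)|+|\chi(j)|\le c$ one has a unique expansion
\[
[\bar y_i,\bar y_j]=\sum_{k:\,\chi(k)=\chi(i)+\chi(j)} c_{ij,k}\,\bar y_k
\]
with rationals $c_{ij,k}$ depending only on $r$ and $c$ (where $\bar y_1,\ldots,\bar y_{d}$ is the extension of $y_1,\ldots,y_r$ to basic Lie brackets). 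Mapping $y_i\mapsto f_i$ gives a Lie algebra homomorphism from $\mathfrak{f}_{r,c}$ onto the Lie subalgebra generated by $f_1,\ldots,f_r$ (since the target has class at most $c$), which sends $\bar y_k$ to $f_k$, so the same identity $[f_i,f_j]=\sum_k c_{ij,k} f_k$ holds in our setting with the same rational coefficients.

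Substituting into the expansion of $[u,v]$ and collecting terms,
\[
[u,v]=\sum_{k}\Bigl(\sum_{\substack{i,j\\ \chi(i)+\chi(j)=\chi(k)}}\lambda_i\mu_j\,c_{ij,k}\Bigr)f_k.
\]
The coefficient of $f_k$ is bounded in absolute value by a sum of at most $d^2=O_{r,c}(1)$ terms, each of which is at most $|c_{ij,k}|L^{\chi(i)}L^{\chi(j)}=|c_{ij,k}|L^{\chi(k)}$; since the $|c_{ij,k}|$ are likewise bounded by a constant depending only on $r$ and $c$, the coefficient is at most $O_{r,c}(1)L^{\chi(k)}$. Hence $[u,v]\in O_{r,c}(1)B_\R(\overline f;L^\chi)$, which is what we wanted. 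The only mild conceptual step is the passage through the free Lie algebra to fix the structure constants once and for all; beyond that the argument is a routine bilinear expansion.
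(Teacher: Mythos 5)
Your proof is correct but takes a genuinely different route from the paper. The paper reduces to the free nilpotent Lie group of rank $r$ and class $c$, rescales via the identity $B_\R(\overline f;L^\chi)=B_\R(L\overline f;1)$ to reduce to $L=1$, and then concludes by compactness: $B_\R(\overline f;1)$ is a compact neighbourhood of the origin, so its bracket with itself is a bounded set. You instead expand $[u,v]$ bilinearly and bound the structure constants $c_{ij,k}$ directly, using the multi-degree grading of the free nilpotent Lie algebra to ensure each $[f_i,f_j]$ is a combination of basic brackets $f_k$ with $\chi(k)=\chi(i)+\chi(j)$, and pushing the resulting rational identities forward to the given Lie algebra via the universal property. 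Both proofs rest on the same essential reduction to the free case (the paper invokes it explicitly, you implicitly through the Lie algebra homomorphism $y_i\mapsto f_i$); they differ in how they finish. Your structure-constant computation is more explicit and in principle yields a concrete value for the $O_{r,c}(1)$, which is precisely what the paper's subsequent remark observes can be done at the cost of more work; the paper's compactness argument is shorter to write but gives no explicit constant on its own.
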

\begin{proof}
We may assume that $x_1,\ldots,x_r$ freely generate the free nilpotent group of rank $r$ and class $c$, embedded in the free nilpotent Lie group of rank $r$ and class $c$. Since $B_\R(\overline f;L^\chi)=B_\R(L\overline{f};1)$, we may assume that $L=1$. The lemma then follows from the fact that $B_\R(\overline f;1)$ is a compact neighbourhood of the origin.
\end{proof}

\begin{lemma}\label{lem:freeequivalences}
Let $x_1,\ldots,x_r$ be elements of a simply connected nilpotent Lie group of class $c$. Write $f_i=\log x_i$ for each $i$, and extend $f_1,\ldots,f_r$ to a list $\overline f=f_1,\ldots,f_d$ of basic Lie brackets of weight at most $c$. Then for every $n\in\N$ and  for every $L\in\N^d$, we have
\begin{enumerate}[label=(\roman*)]
\item $\exp(B_\R(\overline f;(nL)^\chi))\approx_{r,c}\exp(B_\R(f;nL))$, \label{item:freeequiv.1}
\item $\exp(B_\R(\overline f;(nL)^\chi))\approx_{r,c,n}\exp(B_\R(\overline f;L^\chi))$, \label{item:freeequiv.2}
\item $\exp(B_\R(\overline f;L^\chi))^n\subseteq\exp(B_\R(\overline f;(O_{r,c,n}(L))^\chi))$, \label{item:freeequiv.iv}
\item $P_{\R}(x;L)^n\approx_{r,c}\exp(B_\R(\overline f;(nL)^\chi)$, \label{item:freeequiv.i}
\item $P_{\R}(x;L)^n\approx_{r,c}\exp(B_\R(f;nL)$. \label{item:freeequiv.3}
\end{enumerate}
\end{lemma}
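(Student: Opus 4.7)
The plan is to reduce all five assertions to the case where $x_1,\ldots,x_r$ freely generate the free nilpotent Lie group of class $c$, in which $f_1,\ldots,f_d$ is a basis of the Lie algebra and every box and progression becomes a fixed compact neighbourhood of the identity. By Mal'cev rigidity and the universal property of the free nilpotent Lie group, any inclusion established in the free case transfers to an arbitrary simply connected nilpotent Lie group of class at most $c$ under the substitution $y_i\mapsto x_i$.

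The logical backbone is \ref{item:freeequiv.iv}. Given $v_1,\ldots,v_n\in B_\R(\overline f;L^\chi)$, the Baker--Campbell--Hausdorff formula \eqref{eq:bch} expresses $\exp(v_1)\cdots\exp(v_n)$ as $\exp(w)$, where $w$ is a sum of iterated Lie brackets in the $v_j$ with rational coefficients, and only those of total weight $\leq c$ survive (the number of surviving terms depending only on $c$ and $n$). Iterating \cref{lem:real.B.upper-tri} shows that each such iterated bracket of elements of $B_\R(\overline f;L^\chi)$ lies in $O_{r,c,n}(1)\cdot B_\R(\overline f;L^\chi)$, so $w\in B_\R(\overline f;(O_{r,c,n}(L))^\chi)$, giving \ref{item:freeequiv.iv}. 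Assertion \ref{item:freeequiv.2} follows: one inclusion is trivial from $L\leq nL$, and the reverse is obtained by passing to Mal'cev coordinates of the second kind, $\exp(v)=\prod_i\exp(a_if_i)$ with $|a_i|=O_{r,c}(n^{|\chi(i)|}L^{\chi(i)})$ (the change of coordinates being polynomial), and writing each one-parameter factor $\exp(a_if_i)$ as $O_{r,c,n}(1)$ copies of $\exp(\pm L^{\chi(i)}f_i)\in\exp(B_\R(\overline f;L^\chi))$.

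For \ref{item:freeequiv.1}, \ref{item:freeequiv.i} and \ref{item:freeequiv.3} the constants in $\approx$ must depend only on $r$ and $c$, which demands a sharper version of the preceding book-keeping. The key observation is that the nilpotent dilation $(nL)^{\chi(i)}=n^{|\chi(i)|}L^{\chi(i)}$ matches precisely the size of a weight-$|\chi(i)|$ iterated commutator of generators with exponents at most $nL_j$; consequently, the BCH expansion applied to $rn$ factors of $P_\R(x;L)$ produces a sum of iterated Lie brackets whose coefficient on the basic bracket $f_i$ is bounded by $O_{r,c}((nL)^{\chi(i)})$, yielding $P_\R(x;L)^n\subseteq\exp(B_\R(\overline f;(O_{r,c}(nL))^\chi))$. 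Combined with the trivial inclusion $B_\R(f;nL)\subseteq B_\R(\overline f;(nL)^\chi)$, this supplies one direction of \ref{item:freeequiv.1} and \ref{item:freeequiv.i}. The reverse direction again uses Mal'cev second-kind coordinates, but now each factor $\exp(a_if_i)$ with $i>r$ is rewritten as a bounded-depth iterated commutator of weight-one elements $x_{j_\ell}^{b_\ell}$ with $|b_\ell|\leq O_{r,c}(nL_{j_\ell})$, so that each factor lies in $\exp(B_\R(f;nL))\subseteq P_\R(x;L)^{O_{r,c}(n)}$; higher-weight BCH corrections are absorbed by induction on $|\chi(i)|$. Assertion \ref{item:freeequiv.3} is then immediate from combining \ref{item:freeequiv.1} and \ref{item:freeequiv.i}.

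The main obstacle will be the $n$-uniformity of the constants in \ref{item:freeequiv.1} and \ref{item:freeequiv.i}: unlike \ref{item:freeequiv.2} and \ref{item:freeequiv.iv}, these require the decompositions to have length scaling linearly in $n$ with no worse multiplicative constant depending on $n$. This forces us to exploit the precise matching between the nilpotent scaling $(nL)^\chi$ and iterated commutator size, and to handle the resulting BCH corrections by induction on the weight of basic brackets rather than by the soft argument that suffices for \ref{item:freeequiv.2}.
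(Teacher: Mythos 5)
Your overall strategy coincides with the paper's in reducing to the free nilpotent Lie group, and your treatment of \ref{item:freeequiv.2} and \ref{item:freeequiv.iv} is sound (the paper uses a softer ``compact generating set'' argument for \ref{item:freeequiv.2} and a logarithm-plus-boundedness argument for \ref{item:freeequiv.iv}, but your BCH computation also works). The real divergence is at \ref{item:freeequiv.1}, \ref{item:freeequiv.i} and \ref{item:freeequiv.3}, where the constants must be uniform in $n$, and here you take a genuinely different route. The paper reduces \ref{item:freeequiv.1} to $L=n=1$ by applying the Lie algebra dilation $\delta_{1/n}$ (sending $\overline f_i\mapsto n^{-|\chi(i)|}\overline f_i$), where compactness finishes it; and for \ref{item:freeequiv.i} it invokes Guivarc'h's ball--box theorem, which supplies a constant $C=O_{r,c}(1)$ with $\exp(B_\R(\overline f;(n/C)^\chi))\subset P_\R(x;1)^n\subset\exp(B_\R(\overline f;(Cn)^\chi))$ for $n\ge n_0(r,c)$, and then absorbs the factor $C$ by one application of \ref{item:freeequiv.2}. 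You instead propose to establish the ball--box comparison directly by book-keeping the BCH expansion of an $n$-fold product — which the paper itself remarks is a viable route to explicit constants.

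The gap is that this hardest step is only sketched. Your claim that ``the BCH expansion applied to $rn$ factors of $P_\R(x;L)$ produces a sum of iterated Lie brackets whose coefficient on the basic bracket $f_i$ is bounded by $O_{r,c}((nL)^{\chi(i)})$'' is precisely the content of one half of Guivarc'h's theorem: to get the exponent $|\chi(i)|$ on $n$ (rather than the cruder $c$ that the argument you used for \ref{item:freeequiv.iv} gives), you need the finer count that the number of weight-$w$ iterated brackets of $n$ letters is $O_c(n^w)$, together with control on how generic iterated brackets decompose over the basic ones. The reverse inclusion $\exp(B_\R(\overline f;(nL)^\chi))\subseteq P_\R(x;L)^{O_{r,c}(n)}$ needs you to write each $\exp(a_if_i)$ with $|a_i|\le(nL)^{\chi(i)}$ as a product of $O_{r,c}(n)$ elements of $P_\R(x;L)$; your plan — express it as a nested commutator of $x_{j_\ell}^{b_\ell}$ with $|b_\ell|\le O_{r,c}(nL_{j_\ell})$ and absorb BCH corrections by induction on $|\chi(i)|$ — is the right idea, but the induction is not set up, and the corrections live in weights both above and below $|\chi(i)|$ (since $f_i$ is a Lie bracket, $[\exp u,\exp v]$ differs from $\exp[u,v]$ by terms of higher weight than $[u,v]$ but whose $\overline f_j$-components can have any $j$). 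You correctly flag this as ``the main obstacle,'' but as written the proposal identifies the obstacle rather than overcoming it, whereas the paper sidesteps it by citing \cite[Theorem II.1]{Gui} and using \ref{item:freeequiv.2} once to trade $Cn$ for $n$.

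Two smaller points: for \ref{item:freeequiv.iv}, the number of surviving BCH terms in an $n$-fold product grows with $n$ (roughly like $n^c$), so the ``depending only on $c$ and $n$'' phrasing is right but you should not later try to reuse this count for the $n$-uniform statements; and in \ref{item:freeequiv.2} the bound $|a_i|=O_{r,c}((nL)^{\chi(i)})$ on the Mal'cev coordinates of $v\in B_\R(\overline f;(nL)^\chi)$ is plausible but worth justifying, since the coordinate change mixes weights.
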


\begin{proof}
We may assume that $x_1,\ldots,x_r$ freely generate the free nilpotent group of rank $r$ and class $c$, embedded in the free nilpotent Lie group of rank $r$ and class $c$. Observe that for all $n\in \N$ we have $P_{\R}(x;nL)=P_{\R}(x^L;n)$, $B_\R(\overline f;(nL)^\chi)=B_\R(L\overline{f};n^\chi)$, and $B_\R(f;nL)=B_\R(Lf;n)$, which reduces the the proof of these statements to the case where $L=1$. 

Moreover, for the proof of \ref{item:freeequiv.1}, we can similarly reduce to the case where both $L$ and $n$ equal $1$, in which case it simply follows from the fact that $\exp(B_\R(f;1))$ and $\exp(B_\R(\overline f;1))$ are both compact generating sets of the the free nilpotent Lie group of rank $r$ and class $c$. The same argument applies to \ref{item:freeequiv.2}.
Taking logarithms on both sides, \ref{item:freeequiv.iv} follows from the fact that every bounded set of the Lie algebra is contained in $B_\R(\overline f;C^\chi)$ for a sufficiently large $C$. 
We now turn to the proof of \ref{item:freeequiv.i}, which we have reduced to $P_{\R}(x;1)^n\approx_{r,c}\exp(B_\R(\overline f;n^\chi)$. 
For $n=1$, this follows from the fact that $P_{\R}(x;1)$ and $\exp(B_\R(\overline f;1))$ are compact generating sets. 
The proof of \cite[Theorem II.1]{Gui} gives the existence of constants $C=O_{r,c}(1)$ and $n_0=O_{r,c}(1)$ such that for all $n\geq n_0$.
\[ \exp(B_\R(\overline f;(n/C)^\chi))\subset P_{\R}(x;1)^n\subset \exp(B_\R(\overline f;(Cn)^\chi)).\]
We conclude by \ref{item:freeequiv.2} (applied with $n=C$).
Finally \ref{item:freeequiv.3} results from the combination of \ref{item:freeequiv.1} and \ref{item:freeequiv.i}.
\end{proof}

\begin{remark*}
Explicit constants can be obtained in \cref{lem:freeequivalences} using the Baker--Campbell--Hausdorff formula instead of the above ``compact generating set'' argument.
\end{remark*}

\begin{prop}\label{prop:goodReducReal1}
Let $G$ be a connected, simply connected nilpotent Lie group with Lie algebra $\g$ with strong Mal'cev basis $e_1,\ldots,e_d$. Write $u_i=\exp e_i$ for each $i$, write $\Gamma=\langle u_1,\ldots,u_d\rangle$, and let $L_1,\ldots,L_d\in\N$ be such that $(u;L)$ is in $C$-upper-triangular form. Then for every $k\in\N$ we have
\[
P(u,L)^k\subseteq P_{\R}(u;L)^k\cap\Gamma
\]
and
\[
P_{\R}(u;L)^k\cap\Gamma\subseteq P(u,L)^{O_{d,C}(k)}.
\]
\end{prop}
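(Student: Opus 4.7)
The first inclusion $P(u,L)^k\subseteq P_\R(u;L)^k\cap\Gamma$ is immediate, since $P(u;L)\subseteq P_\R(u;L)$ by definition and $P(u;L)\subseteq\Gamma$ because each $u_i\in\Gamma$. The content of the proposition is therefore the reverse inclusion.

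The plan is to route the argument through the unique Mal'cev normal form supplied by the strong Mal'cev basis $e_1,\ldots,e_d$; recall that the upper-triangular-form hypothesis forces this basis to be strong Mal'cev, as noted in \cref{sec:nilp.Lie}. First I would prove a real-exponent analogue of \cref{lem:upper-tri.doubling.dilate}, namely
\[
P_\R(u;L)^k\subseteq P_\R(u;O_{C,d}(k^\zeta L)).
\]
The integer proof adapts verbatim: one pushes each generator in a product to its correct slot using the relations in \eqref{eq:C-upp-tri}, tracking the resulting commutator corrections weight-by-weight via $\zeta$; real exponents cause no difficulty since every manipulation is an instance of the Baker--Campbell--Hausdorff formula, which is valid over $\R$. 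Consequently, any $g\in P_\R(u;L)^k\cap\Gamma$ admits a decomposition $g=u_1^{\ell_1}\cdots u_d^{\ell_d}$ with $\ell_i\in\R$ and $|\ell_i|\le O_{C,d}(k^{\zeta(i)}L_i)$. Since $e_1,\ldots,e_d$ is a strong Mal'cev basis this decomposition is unique, and since the remark following \eqref{eq:C-upp-tri} identifies $\Gamma$ with $P(u;\infty)=\{u_1^{n_1}\cdots u_d^{n_d}:n_i\in\Z\}$, uniqueness forces $\ell_i\in\Z$. In particular $g\in P(u;M)$ with $M_i=O_{C,d}(k^{\zeta(i)}L_i)$.

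It remains, and this is the main obstacle, to establish the ``polynomial reachability'' statement that $P(u;M)\subseteq P(u;L)^{O_{C,d}(k)}$ whenever $M_i\ll_{C,d}k^{\zeta(i)}L_i$. I would prove this by a downward induction on $j$, handling $u_j^{\ell_j}$ one generator at a time. When $\zeta(j)=1$ one just chops $u_j^{\ell_j}$ into $O(k)$ blocks of the form $u_j^{\le L_j}$, each lying in $P(u;L)$. When $\zeta(j)\ge 2$, the definition of $\zeta$ supplies indices $i<i'<j$ with $\zeta(i)+\zeta(i')=\zeta(j)$ such that the $P$-expression of some $[u_i^{\pm 1},u_{i'}^{\pm 1}]$ contains $u_j$ with a nonzero coefficient $a$ of size $|a|\ll_C L_j/(L_iL_{i'})$. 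Applying the standard generalised commutator identity
\[
[u_i^\beta,u_{i'}^\gamma]=[u_i,u_{i'}]^{\beta\gamma}\cdot W_{\beta,\gamma},
\]
in which $W_{\beta,\gamma}$ is a product of commutators of weight strictly greater than $\zeta(i)+\zeta(i')$ in $u_i$ and $u_{i'}$, and using the inductive hypothesis to realise $u_i^\beta$ with $|\beta|\ll k^{\zeta(i)}L_i$ and $u_{i'}^\gamma$ with $|\gamma|\ll k^{\zeta(i')}L_{i'}$ as products of $O_{C,d}(k)$ elements of $P(u;L)$, one obtains $u_j^{a\beta\gamma}$ modulo debris supported on indices strictly greater than $j$. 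Varying $\beta$ and $\gamma$ yields arbitrary $u_j^{\ell_j}$ with $|\ell_j|\ll_{C,d}k^{\zeta(j)}L_j$ in $O_{C,d}(k)$ factors, and the higher-index debris is absorbed by the downward induction. Reordering the per-$j$ pieces via the upper-triangular relations (again at a cost of $O_{C,d}(k)$ factors) then gives $g\in P(u;L)^{O_{C,d}(k)}$. The delicate bookkeeping point is ensuring that the debris produced at each level of the recursion stays within the $O_{C,d}(k^{\zeta(j')}L_{j'})$ envelope for the next generator, so that the induction closes uniformly in $C$ and $d$; the weight estimates of \cref{lem:comms.1,lem:upper-tri} are designed to manage precisely this.
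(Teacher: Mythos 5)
Your opening reduction is sound: the first inclusion is indeed trivial, the real-exponent sharpening of Lemma~\ref{lem:upper-tri.doubling.dilate} is provable by BCH manipulations (it is a sharper relative of Lemma~\ref{lem:pp.L2.1}), and the step from real Mal'cev coordinates to integer ones via uniqueness and $\Gamma=P(u;\infty)$ is exactly right. You have correctly identified the crux as the ``polynomial reachability'' claim $P(u;M)\subseteq P(u;L)^{O_{C,d}(k)}$ when $M_i\ll_{C,d}k^{\zeta(i)}L_i$. However, this is where the argument has a genuine gap rather than just deferred bookkeeping. The reachability statement is the hard direction of the Guivarc'h volume comparison (it says the word ball of radius $O(k)$ \emph{fills} the $k^\zeta$-dilated box, not just that it is \emph{contained} in one), and your sketch does not establish it. Specifically: the identity $[u_i^\beta,u_{i'}^\gamma]=[u_i,u_{i'}]^{\beta\gamma}\cdot W_{\beta,\gamma}$ is not a correct collection formula in a general nilpotent group (the Hall--Petrescu expansion has the commutator corrections intertwined with binomial-type polynomials in $\beta,\gamma$, not factored off cleanly); the claim that reordering the per-$j$ pieces costs only $O_{C,d}(k)$ extra factors is unjustified, as naive commutation of an $O(k)$-term word in a nilpotent group can generate super-linear debris; and the envelope control ensuring the debris at index $j'$ stays within $O(k^{\zeta(j')}L_{j'})$ as the induction proceeds is precisely the substance of the theorem, not an afterthought. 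Moreover, the induction direction (you say ``downward on $j$'') is at odds with the fact that producing $u_j^{\ell_j}$ via commutators creates corrections on indices \emph{greater} than $j$, which suggests an upward pass is what closes.

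The paper's own proof avoids reachability entirely. It passes to the free nilpotent group and proves Proposition~\ref{prop:FreeReducCase}: after rescaling to $L_i=1$, the lattice $N$ sits cocompactly inside the free nilpotent Lie group $N^\R$, so the inclusion $N\hookrightarrow N^\R$ is a quasi-isometry for the respective word metrics, and the nearest-lattice-point retraction $\phi:N^\R\to N$ (through a compact fundamental domain $D$) is a coarse left-inverse, hence also a quasi-isometry. Consequently $P_\R(x;1)^k\subseteq P(x;1)^{O_{d,s}(k)}D$, so the bulk of $P_\R(u;L)^k$ is already covered by $P(u;L)^{O(k)}$ and only a \emph{bounded} remainder $P_\R(u;L)^{O_d(1)}$ is left, which Lemma~\ref{lem:pp.L2.1}, the strong Mal'cev basis, and \eqref{eq:dilateProg} absorb. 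This encapsulates the Guivarc'h-type estimate in a single appeal to the Milnor--\v Svarc lemma rather than a multi-level combinatorial construction. If you want to pursue your route, the reachability lemma is true and would be a reasonable addition to the toolkit, but it needs a full Hall--Petrescu collection argument with explicit debris bookkeeping; as written the proposal does not supply it.
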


We first prove a version of \cref{prop:goodReducReal1} in a free nilpotent group.
\begin{prop}\label{prop:FreeReducCase}
Let $N_{d,s}$ be the free nilpotent group of class $s$ on $d$ generators $x_1,\ldots x_d$, embedded in the free nilpotent Lie group of class $s$ on $d$ generators, and let $L_1,\ldots, L_d>0$. Then for all $k\in \N$ we have
\begin{equation}\label{eq:FreeReducCase1}
P_{\R}(x;L)^k\subset  P(x^L;1)^{O_{d,s}(k)}P_{\R}(x;L)^{O_{d,s}(1)}\subset P(x;L)^{O_{d,s}(k)}P_{\R}(x;L)^{O_{d,s}(1)}.
\end{equation}
Moreover,
\begin{equation}\label{eq:FreeReducCase2}
P_{\R}(x;L)^k\cap N\subset  P(x;L)^{O_{d,s}(k)}.
\end{equation}
\end{prop}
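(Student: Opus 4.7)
The plan is to prove both displayed inclusions by a joint induction on the nilpotency class $s$, with the crucial auxiliary input being the standard commensurability $P(c;(nL)^\chi)\subset P(x;L)^{O_{d,s}(n)}$ between the nilpotent progression on the full list $c=(c_1,\ldots,c_D)$ of basic commutators and its base progression on $x=(x_1,\ldots,x_d)$. The second inclusion in \eqref{eq:FreeReducCase1} is immediate because $x_i^{L_i}\in P(x;L)$ gives $P(x^L;1)\subset P(x;L)^d$. In the base case $s=1$, identifying $N_{d,1}$ with $\Z^d\subset\R^d$, Euclidean division handles both statements at once: for $g=\sum_i\ell_i x_i\in P_\R(x;L)^k$ with $|\ell_i|\le kL_i$ we will write $\ell_i=L_i n_i+r_i$ with $n_i\in\Z$, $|n_i|\le k$, $|r_i|\le L_i$, split $g=(\sum n_i L_i x_i)+(\sum r_i x_i)$ to obtain \eqref{eq:FreeReducCase1}, and observe that when $g\in N$ the remainder is also integral and lies in $P(x;L)$.

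For the inductive step let $\pi\colon N_{d,s}\to N_{d,s-1}$ be the canonical projection. Given $g\in P_\R(x;L)^k$, we will apply the inductive hypothesis to $\pi(g)$ and lift the resulting decomposition verbatim to get $g=\tilde h_1\tilde h_2 z$ with $\tilde h_1\in P(x^L;1)^{O_{d,s}(k)}$, $\tilde h_2\in P_\R(x;L)^{O_{d,s}(1)}$ and $z=(\tilde h_1\tilde h_2)^{-1}g\in\gamma_s(N_{d,s})$; by \eqref{eq:inverseProg} and the triangle inequality we have $z\in P_\R(x;L)^{O_{d,s}(k)}$. Since $\gamma_s(N_{d,s})$ is abelian and central, freely generated by the basic commutators $\{c_\alpha\}$ of weight $s$, and since at the top of the lower central series \cref{lem:comms.1} gives $\log c_\alpha=\overline{f}_\alpha$ with no surviving Baker--Campbell--Hausdorff corrections, we can write $z=\prod_\alpha c_\alpha^{a_\alpha}$ and read off $|a_\alpha|\le O_{d,s}(k^s L^{\chi(\alpha)})$ from \cref{lem:freeequivalences}\,\ref{item:freeequiv.i}.

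The crux is the decomposition of each $c_\alpha^{a_\alpha}$. I would write $a_\alpha=L^{\chi(\alpha)}(n_\alpha+\mu_\alpha)$ with $n_\alpha\in\Z$, $|n_\alpha|\le O_{d,s}(k^s)$ and $|\mu_\alpha|\le 1$. The top-of-lower-central-series identity
\[
c_\alpha^{L^{\chi(\alpha)}}=[x_{i_1}^{L_{i_1}},\ldots,x_{i_s}^{L_{i_s}}]
\]
(valid because at class $s$ every Hall--Petresco correction term vanishes) rewrites the lattice piece in the rescaled generators $y_i=x_i^{L_i}$ as $c_\alpha(y)^{n_\alpha}$, and the nilpotent-progression comparison then places $c_\alpha(y)^{n_\alpha}$ in $P(y;1)^{O_{d,s}(k)}=P(x^L;1)^{O_{d,s}(k)}$ whenever $|n_\alpha|\le O(k^s)$. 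The fractional remainder $\prod c_\alpha^{L^{\chi(\alpha)}\mu_\alpha}=\exp\bigl(\sum_\alpha\mu_\alpha L^{\chi(\alpha)}\overline{f}_\alpha\bigr)$ lies in $\exp B_\R(\overline{f};L^\chi)\subset P_\R(x;L)^{O_{d,s}(1)}$ by \cref{lem:freeequivalences}\,\ref{item:freeequiv.i}, and centrality of the lattice piece then permits rearranging to exhibit $g\in P(x^L;1)^{O_{d,s}(k)}P_\R(x;L)^{O_{d,s}(1)}$.

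For \eqref{eq:FreeReducCase2} the same induction applies with $g\in P_\R(x;L)^k\cap N$; now $z\in\gamma_s(N_{d,s})\cap N$, so each $a_\alpha$ is an integer with $|a_\alpha|\le O(k^s L^{\chi(\alpha)})$, placing $z$ directly in the dilated nilpotent progression $P(c;(kL)^\chi)$, which by the same commensurability lies in $P(x;L)^{O_{d,s}(k)}$. The main technical obstacle is precisely this commensurability $P(c;(nL)^\chi)\subset P(x;L)^{O_{d,s}(n)}$, a classical property of nilpotent progressions flowing from \cref{prop:nilp.prog=>Lie.prog} and the upper-triangular-form machinery of \cref{lem:upper-tri.doubling.dilate}; it underwrites every step where a Mal'cev-coordinate bound in $\gamma_s$ must be translated back into a word-length bound in the base generators $x_1,\ldots,x_d$.
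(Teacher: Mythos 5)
Your proposal takes a genuinely different route from the paper's. The paper reduces at a stroke to the case $L_i=1$ via the automorphism $x_i^{L_i}\mapsto x_i$ (which simultaneously normalises $P_\R(x;L)\mapsto P_\R(x;1)$ and $P(x^L;1)\mapsto P(x;1)$), and then settles (\ref{eq:FreeReducCase1}) for $L=1$ by a compactness argument: pick a compact fundamental domain $D$ for $N$ in $N^\R$, observe that $g\mapsto(\text{unique }\gamma\in N\text{ with }g\in\gamma D)$ is a quasi-isometry, and deduce $P_\R(x;1)^k\subset P(x;1)^{O(k)}D$; inclusion (\ref{eq:FreeReducCase2}) then falls out by intersecting with $N$ and using $P_\R(x;L)\cap N=P(x;L)$. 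Your approach instead inducts on the class $s$, projecting to $N_{d,s-1}$, lifting, and handling the surplus central element $z\in\gamma_s$ by hand through an explicit decomposition $a_\alpha=L^{\chi(\alpha)}(n_\alpha+\mu_\alpha)$ and the class-$s$ identity $c_\alpha(y)=c_\alpha(x)^{L^{\chi(\alpha)}}$. That identity is correct (all Hall--Petresco corrections land in $\gamma_{s+1}=\{1\}$, and the same computation works for non-simple basic commutators of weight $s$ as well), and the rearrangement using centrality of $z$ is sound. The trade-off is that your argument is more explicit — it would in principle give effective constants without invoking a compactness step — while the paper's is far shorter.

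However, there is a genuine gap. You flag the commensurability $P(c;(nL)^\chi)\subset P(x;L)^{O_{d,s}(n)}$ between the full nilprogression (on the basic commutators $c$) and powers of the base progression $P(x;L)$ as the ``crucial auxiliary input'' and ``main technical obstacle'', and you claim it ``flows from'' \cref{prop:nilp.prog=>Lie.prog} and \cref{lem:upper-tri.doubling.dilate}. It does not: \cref{prop:nilp.prog=>Lie.prog} only identifies a nilprogression as a Lie progression in upper-triangular form, and \cref{lem:upper-tri.doubling.dilate} only bounds powers of a progression that is \emph{already} in upper-triangular form (which $P(x;L)$, on the raw generators alone, is not). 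Neither lemma relates the nilprogression back to $P(x;L)$. Worse, the natural route to this commensurability — pass to the continuous nilbox via \cref{lem:freeequivalences} and then intersect with $N$ — is precisely (\ref{eq:FreeReducCase2}) again, so invoking it as a black box in the inductive step risks circularity. The commensurability \emph{is} a true fact (cf.\ \cite[Lemma~2.1]{bg}), and can be established directly by decomposing each exponent $n_\alpha$ with $|n_\alpha|\le m^{|\chi(\alpha)|}$ as a bounded sum of $|\chi(\alpha)|$-fold products of integers of size at most $m$ and then using the top-weight commutator identity — but this is a nontrivial lemma in its own right, roughly as much work as the induction it is meant to feed, and you have neither proved it nor cited a result that delivers it. A smaller issue: in the base case you assert the Euclidean-division remainder $r_i=\ell_i-L_in_i$ is integral whenever $g\in N$; this is only true when $L_i\in\Z$, whereas the proposition only assumes $L_i>0$. (This implicit integrality assumption is also latent in the paper's own proof of (\ref{eq:FreeReducCase2}) and in the application, where $L_i\in\N$, so it is not a flaw unique to your argument, but you should note it.)
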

\begin{proof}
The second inclusion of \eqref{eq:FreeReducCase1} is trivial. To prove the first inclusion, note first that applying the automorphism of $N_{\R}$ that maps $x_i^{L_i}$ to $x_i$ reduces the statement to the case where $L_i=1$, and so it suffices to prove that
\[
P_{\R}(x;1)^k\subset  P(x;1)^{O_{d,s}(k)}P_{\R}(x;1)^{O_{d,s}(1)}.
\]
Let $D$ be a compact subset such that $N D=N^{\R}$. Observe that the map $\phi:N_{\R} \to N$ that sends $g\in N_{\R}$ to the unique $\gamma\in N$ such that $g\in \gamma D$ is a left-inverse of the inclusion $N\to N_{\R}$. Since the latter is a quasi-isometry for the word metric associated to $P_{\R}(x;1)$ and $P(x;1)$, we deduce that  $\phi$ itself is a quasi-isometry. In particular, the ball $P_{\R}(x;1)^k$ of radius $k$ must me contained in the preimage  by $\phi$ of a ball of radius $O_{d,s}(k)$. This implies that 
\[
P_{\R}(x;1)^k\subset  P(x;1)^{O_{d,s}(k)}D,
\]
and so the first inclusion of \eqref{eq:FreeReducCase1} follows from the fact that $P_{\R}(x;1)$ generates $N^{\R}$. The inclusion \eqref{eq:FreeReducCase2} then follows from the fact that $P_{\R}(x;L)\cap N=P(x;L)$.
\end{proof}

\begin{proof}[Proof of Proposition \ref{prop:goodReducReal1}]
The first conclusion is trivial.

Write $s$ for the nilpotency class of $\Gamma$, noting that $s\ll_d1$, and let $N_{d,s}$ be the free nilpotent group of class $s$ on $d$ generators $x_1,\ldots x_d$. Write $\pi:N_{d,s}\to \Gamma$ for the homomorphism mapping $x_i$ to $u_i$ for each $i$. Then note that
\begin{align*}
P_{\R}(u;L)^k&\subseteq P(u;L)^{O_{d}(k)}P_{\R}(u;L)^{O_d(1)}
                                &\text{(by Proposition \ref{prop:FreeReducCase})}\\
         &\subseteq P(u;L)^{O_{d}(k)}P_{\R}(u;O_{d,C}(L))
                                &\text{(by Lemma \ref{lem:pp.L2.1})}.
\end{align*}
Since the $e_i$ form a strong Mal'cev basis and $\Gamma=P(u;\infty)$, it follows that
\begin{align*}
P_{\R}(u;L)^k\cap\Gamma&\subseteq P(u;L)^{O_{d}(k)}P(u;O_{d,C}(L))\\
           &\subseteq P(u;L)^{O_{d,C}(k)} &\text{(by \eqref{eq:dilateProg})},
\end{align*}
which gives the second conclusion.
\end{proof}

\subsection{Passing between arbitrary Lie progressions and integral Lie progressions}\label{section:integral/rational}
An important technical step implicit in our first paper was to show that a rational Lie progression can be covered by a few translates of an integral Lie progression. This allowed us to carry out the main arguments of that paper in that more specialised setting, where the geometry-of-numbers arguments in particular behave much better. This step can be captured explicitly in the following form; for completeness we provide a proof at the end of this section.
\begin{prop}\label{lem:make.integral}
Suppose $P=P(u;L)$ is a $Q$-rational raw Lie progression of dimension $d$ in $C$-upper-triangular form with basis $e_1,\ldots,e_d$. Then there exists a natural number $M\ll_dQ$ and a subset $X\subseteq P(u;M)$ such that $(Me;L)$ is in $CM$-upper-triangular form over $\Z$, such that $(u^M;L)$ is in $O_{C,d,Q}(1)$-upper-triangular form, such that $\exp\la Me_1,\ldots,Me_d\ra=\la u_1^M,\ldots,u_d^M\ra=P(u^M;\infty)$, such that $X\cap P(u^M;\infty)=\{1\}$, and such that
\begin{equation}\label{eq:make.integral}
P^r\subseteq XP(u^M;L)^{O_{C,d,Q}(r)}
\end{equation}
and
\begin{equation}\label{eq:make.integral.bdd}
XP(u^M;L)^r\subseteq P^{O_{d,Q}(r)}
\end{equation}
for all $r\in\N$.
\end{prop}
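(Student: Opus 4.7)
The plan is to set $M = n(d)Q$, where $n(d) \in \N$ is the constant supplied by \cref{lem:dilates.subgroup} for nilpotency class $d$. This gives $M \ll_d Q$ and $Q \mid M$, which are the two arithmetic properties that drive everything. First I would dispatch the structural conclusions on $(Me;L)$ and $(u^M;L)$: the $Q$-rationality of $(e;L)$ gives $[e_i,e_j] = \sum_k \ell_k e_k$ with $\ell_k \in \tfrac{1}{Q}\Z$ and $|\ell_k| \le CL_k/(L_iL_j)$, so that $[Me_i,Me_j] = \sum_k (M\ell_k)(Me_k)$ has integer coefficients $M\ell_k$ of size $\le CML_k/(L_iL_j)$, verifying $CM$-upper-triangularity over $\Z$. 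The additive subgroup $\Lambda_0 := \langle Qe_1,\ldots,Qe_d\rangle$ is closed under brackets (again by $Q$-rationality), so \cref{lem:dilates.subgroup} makes $\exp(n(d)\Lambda_0) = \exp\langle Me_1,\ldots,Me_d\rangle$ a subgroup of $N$; combined with the integer upper-triangularity of $(Me;L)$, \cref{cor:pp4.1.group} identifies this subgroup with $\langle u_1^M,\ldots,u_d^M\rangle = P(u^M;\infty)$. Finally, \cref{prop:pp4.1.orig} applied to $(Me;L)$ yields that $(u^M;L)$ is in $O_{CM,d}(1) = O_{C,d,Q}(1)$-upper-triangular form.

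Next I would produce $X$. Write $\Gamma = P(u;\infty) = \langle u_1,\ldots,u_d\rangle$ and $\Gamma_0 = P(u^M;\infty)$. The upper-triangular form of $(e;L)$ makes $e_1,\ldots,e_d$ a strong Mal'cev basis, so elements of $\Gamma$ have unique expressions $u_1^{a_1}\cdots u_d^{a_d}$ with $a_i \in \Z$, and this bijection $\Gamma \leftrightarrow \Z^d$ carries $\Gamma_0$ exactly onto $M\Z^d$. The upper-triangular relations needed to reduce products to normal form are over $\Z$, so the group operation expressed in these coordinates is an integer polynomial map $(g,h)\mapsto ((gh)_1,\ldots,(gh)_d)$, and the identities $g\cdot 1 = g$ and $1\cdot h = h$ force the correction polynomial $R_i(g,h) := (gh)_i - g_i - h_i$ to have every monomial involving at least one coordinate of $g$ and at least one of $h$. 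An immediate consequence is that left or right multiplication by any element of $\Gamma_0$ preserves all normal-form coordinates modulo $M$; hence $\Gamma_0$ is normal in $\Gamma$ and each coset of $\Gamma_0$ is characterized by its coordinate tuple modulo $M$. Taking $X := \{u_1^{c_1}\cdots u_d^{c_d} : 0 \le c_i < M\}$ therefore gives a system of coset representatives of cardinality $M^d \ll_{d,Q} 1$, which lies in $P(u;M)$ and meets $P(u^M;\infty)$ only at the identity.

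The two inclusions are then essentially bookkeeping. For \eqref{eq:make.integral.bdd}, the hypothesis $L_i \ge 1$ ensures $u_i \in P$, whence $P(u;M) \subseteq P^{dM}$, and $P(u^M;L) \subseteq P(u;ML) \subseteq P^{2dM}$ by \eqref{eq:dilateProg}; both are contained in $P^{O_{d,Q}(1)}$, giving $XP(u^M;L)^r \subseteq P^{O_{d,Q}(r)}$ for $r\ge 1$. For \eqref{eq:make.integral}, given $g \in P^r$ let $x \in X$ be the coset representative of $g\Gamma_0$ and set $\gamma_0 = x^{-1}g \in \Gamma_0$; since $X^{-1} \subseteq P^{O_{d,Q}(1)}$ by \eqref{eq:inverseProg} we have $\gamma_0 \in P^{O_{d,Q}(r)}$. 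Because $M \ge 1$, $P_\R(u^M;L) = P_\R(u;ML) \supseteq P_\R(u;L)$, so $\gamma_0 \in \Gamma_0 \cap P_\R(u^M;L)^{O_{d,Q}(r)}$, and applying \cref{prop:goodReducReal1} to $(u^M;L)$ (in $O_{C,d,Q}(1)$-upper-triangular form) and its lattice $\Gamma_0$ gives $\gamma_0 \in P(u^M;L)^{O_{C,d,Q}(r)}$, whence $g = x\gamma_0 \in XP(u^M;L)^{O_{C,d,Q}(r)}$.

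The main obstacle is the second paragraph: in a non-abelian nilpotent group the normal-form coordinate map is polynomial rather than homomorphic, and one must exploit the vanishing structure of the BCH-type correction polynomial to conclude that $\Gamma_0$ is normal in $\Gamma$ and that $X$ is really a transversal. A minor technical nuisance is that \cref{prop:goodReducReal1} requires integer lengths, which I would handle by replacing each $L_i$ with $\lceil L_i\rceil$ at the outset—an $O_d(1)$ perturbation absorbed into the constants via \eqref{eq:dilateProg}.
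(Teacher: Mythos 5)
Your handling of the structural conclusions (choice of $M = n(d)Q$, upper-triangularity of $(Me;L)$ and $(u^M;L)$, identification of $\exp\langle Me_1,\ldots,Me_d\rangle$ with $P(u^M;\infty)$) is correct and matches the paper. Your deductions of \eqref{eq:make.integral.bdd} and \eqref{eq:make.integral} from the transversal property are also sound. However, there is a genuine gap in the second paragraph, where you justify that $X:=\{u_1^{c_1}\cdots u_d^{c_d}:0\le c_i<M\}$ is a transversal. You assert that ``the group operation expressed in these coordinates is an integer polynomial map'' because the upper-triangular relations are over $\Z$. This is false once the class exceeds $2$: pushing $u_j^b$ past $u_i^a$ produces correction terms involving binomial coefficients such as $\binom{a}{2}=a(a-1)/2$, so the Mal'cev multiplication polynomials are merely rational with integer \emph{values}, not integer \emph{coefficients}. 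As a consequence, the derived claim that ``left or right multiplication by any element of $\Gamma_0$ preserves all normal-form coordinates modulo $M$'' -- and hence that $\Gamma_0$ is normal in $\Gamma$ -- is also false. Concretely, in a filiform lattice of class $3$ with $[u_1,u_2]=u_3$, $[u_1,u_3]=u_4$, $u_4$ central, one has $u_2\cdot u_1^M=u_1^M u_2 u_3^{-M}u_4^{\binom{M}{2}}$, and $\binom{M}{2}\not\equiv 0\pmod M$ when $M$ is even; so the fourth coordinate is not preserved mod $M$, and indeed $u_2 u_1^M u_2^{-1}\notin\Gamma_0$, so $\Gamma_0\not\normal\Gamma$.

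Interestingly, your conclusion that $X$ is a set of left-coset representatives is nevertheless true: the index $[\Gamma:\Gamma_0]$ equals $M^d$ by a covolume comparison, and injectivity of $x\mapsto x\Gamma_0$ on $X$ follows by descending induction through the normal filtration $\Gamma\supseteq\langle u_2,\ldots,u_d\rangle\supseteq\cdots$, using that the first coordinate is a homomorphism $\Gamma\to\Z$ (because $\langle u_2,\ldots,u_d\rangle\normal\Gamma$) with $\ph_1(\Gamma_0)=M\Z$. So the construction can be repaired, but not by the argument you give. It is worth noting that the paper sidesteps the issue altogether: rather than building a transversal, it takes $P(u;M)$ as the covering set, applies \cref{lem:inclusions.local} to get $P^r\subseteq P(u;M)P(u^M;L)^{O(r)}$, shows via \cref{lem:absorption} that $Y:=P(u;M)\cap P(u^M;\infty)$ is absorbed into a bounded power of $P(u^M;L)$, and sets $X=(P(u;M)\setminus Y)\cup\{1\}$. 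This $X$ may hit cosets more than once, but it satisfies $X\cap P(u^M;\infty)=\{1\}$ and all the required inclusions without any analysis of the coset structure of $\Gamma_0$ in $\Gamma$.
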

In the present paper, where we want to have optimal bounds on the indices in \cref{thm:rel.hom.dim,thm:abs.hom.dim}, and more generally on the sizes of the sets $X_i$ appearing in \cref{thm:fine.scale.intro}, we cannot afford for the set $X$ arising in \cref{lem:make.integral} to appear in our conclusions. The second, and most important, aim of this section is therefore to develop tools that will allow us to `undo' \cref{lem:make.integral} once we have applied the results of our first paper, passing back to a rational Lie progression and absorbing the error set $X$ in the process.

The main tool we use for this is the following result, in which we have a raw Lie progression $P(u;L)$ (which for the sake of generality is assumed to be $Q$-rational in the proposition but in our applications will be integral) generating a lattice that has finite index in the lattice $\Gamma'$ of primary interest, and we show that $P(u;L)$ can be well approximated by a rational progression $P(u',L')$ that generates the whole of $\Gamma'$.

\begin{prop}\label{lem:gettingridofX}
Suppose that $P(u;L)$ is a $Q$-rational raw Lie progression of dimension $d$ in $C$-upper-triangular form in a simply connected nilpotent Lie group $N$. Let $k\in\N^d$, and suppose further that $P(u^{1/k},\infty)$ is a group, and that $\Gamma'$ is some other group satisfying $P(u;\infty)\le\Gamma'\le P(u^{1/k},\infty)$.
Then there exists an $O_{Q,d,k}(1)$-rational raw Lie progression $P(u',L')$ in $O_{C,d,k}(1)$-upper-triangular form such that $P(u';\infty)=\Gamma'$, and such that
\begin{enumerate}[label=(\roman*)]
\item $P(u;L)\subseteq P(u',O_{C,d,k}(L'))$,
\item $P(u';L')\subseteq P(u^{1/k},O_{C,d,k}(L))$.
\end{enumerate}
\end{prop}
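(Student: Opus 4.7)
The plan is to work in the Lie algebra $\n = \log N$ and construct a new strong Mal'cev basis adapted to the intermediate lattice $\Gamma'$. The inclusions will then follow by Proposition~\ref{prop:pp4.1}, translating them into comparisons of boxes in $\n$, which reduce to a triangular change of basis with explicitly bounded entries.

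First I would set $e_i = \log u_i$. The $C$-upper-triangular form of $(e;L)$ over $\Q$ implies that each subspace $\g_j := \Span_\R(e_j,\ldots,e_d)$ is an ideal of $\n$, so $N_j := \exp \g_j$ is a closed connected normal subgroup of $N$, yielding a descending chain $N = N_1 \triangleright N_2 \triangleright \cdots \triangleright N_d \triangleright N_{d+1} = \{1\}$ with each $N_j/N_{j+1} \cong \R$. Under the hypothesis that $P(u^{1/k},\infty)$ is a group, Corollary~\ref{cor:pp4.1.group} identifies it with $\exp\langle e_1/k_1,\ldots,e_d/k_d\rangle_\Z$. Since $\Gamma'$ sits between two cocompact lattices in $N$ it is itself a lattice, and the filtration $\Gamma'_j := \Gamma'\cap N_j$ gives infinite-cyclic successive quotients $\Gamma'_j/\Gamma'_{j+1}$.

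Second, I would pick generators $u'_j$ of $\Gamma'_j$ modulo $\Gamma'_{j+1}$; these form a strong Mal'cev basis of $\Gamma'$, so in particular $P(u';\infty) = \la u'_1, \ldots, u'_d \ra = \Gamma'$. Writing $e'_j = \log u'_j$, the facts that $u_j \in \Gamma'_j$ and $\Gamma' \le P(u^{1/k},\infty)$ force $(u'_j)^{m_j} \equiv u_j \pmod{N_{j+1}}$ for some positive integer $m_j$ dividing $k_j$. Consequently
\[
e'_j = \frac{1}{m_j}\,e_j + \sum_{i>j} \alpha_{ji}\, e_i
\]
with each $\alpha_{ji}\in (1/k_i)\Z$, and by a Hermite-style reduction (replacing $u'_j$ by its product with suitable powers of $u'_{j+1},\ldots,u'_d$) we may further assume $|\alpha_{ji}|\le 1/m_i$. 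Combined with the $Q$-rationality of the original structure constants of $e$, this makes the new structure constants $[e'_i, e'_j]$ rationals whose denominator is bounded in terms of $Q$, $d$ and $k_1\cdots k_d$, giving $O_{Q,d,k}(1)$-rationality.

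Third, I would verify that $(e'; L')$ is in $O_{C,d,k}(1)$-upper-triangular form over $\Q$ for a suitable choice of lengths $L'$. Since each $\g_j$ is an ideal, $[e'_i, e'_j] \in \g_{j+1}$; expanding bilinearly and using the $C$-upper-triangular bounds for $(e;L)$ together with $|\alpha_{ji}|\le 1/m_i \le 1$ gives explicit bounds on the $e_q$-coefficients of $[e'_i, e'_j]$ for $q > j$. Inverting the triangular change of basis from $e$ to $e'$ (whose entries are bounded in terms of $d$ and $k$) then produces the required bound in the $e'$-basis, with constant $O_{C,d,k}(1)$.

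Finally, I would deduce inclusions (i) and (ii) from Proposition~\ref{prop:pp4.1}, which converts them into the box comparisons $B_\Q(e;L)\subseteq B_\Q(e'; O_{C,d,k}(L'))$ and $B_\Q(e'; L')\subseteq B_\Q(e/k; O_{C,d,k}(L))$; both reduce to linear-algebra estimates using the change-of-basis matrix above. \emph{The main obstacle} is choosing the $L'_j$ so that both containments hold simultaneously: (ii) forces $L'_j$ not to be too large (it must survive scaling by the factor $1/m_j \le 1$ into the $e_j$-axis and still fit inside $O(L_j/k_j)$), while (i) forces $L'_j$ not to be too small (it must be able to accommodate the $m_j$-fold stretch needed to recover $e_j$). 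A choice of the form $L'_j \asymp L_j$ works provided one allows the implicit constants in (i) and (ii) to absorb factors of $k_j/m_j \le k_j$; one must finally verify that the resulting $L'_j\ge 1$, which follows from $L_j\ge 1$ once the proportionality constant is chosen not to exceed $1$.
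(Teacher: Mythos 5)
Your overall strategy is broadly parallel to the paper's: both work in the Lie algebra $\n$, both use the filtration $\g_j = \Span_\R(e_j,\ldots,e_d)$ induced by upper-triangular form, and both obtain the new generators $u'_j$ from $\Gamma'\cap N_j$ modulo $\Gamma'\cap N_{j+1}$. The gap is in your third and fourth steps, where you argue that the choice $L'_j\asymp L_j$, together with the bound $|\alpha_{ji}|\le 1/m_i\le 1$ on the off-diagonal entries, yields $O_{C,d,k}(1)$-upper-triangular form and the two box containments. This is false when the $L_i$ have very different magnitudes. Concretely, if $e'_1 = \tfrac1{m_1}e_1 + \alpha_{12}e_2 + \cdots$ with $|\alpha_{12}|\approx 1$, then expanding $[e'_1,e'_j]$ bilinearly produces a cross term $\alpha_{12}\tfrac1{m_j}[e_2,e_j]$ whose $e_q$-coefficient (for $q>j$) is of size up to $CL_q/(L_2L_j)$; with $L'=L$ the upper-triangular constraint demands $\ll CL_q/(L_1L_j)$, and the discrepancy factor $L_1/L_2$ is unbounded whenever $L_1\gg L_2$. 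The same mismatch kills $B_\Q(e;L)\subseteq B_\Q(e';O_{C,d,k}(L'))$: writing $\ell_j e_j$ in the $e'$-basis introduces $e'_i$-components of size $O(L_j)$ for $i>j$, which is not $O(L_i)$ unless $L_j\ll L_i$. Your discussion of ``the main obstacle'' considers only the diagonal entries $1/m_j$ of the change-of-basis matrix, not the off-diagonal entries $\alpha_{ji}$, and it is precisely the off-diagonal entries interacting with lengths of disparate scales that make the naive bookkeeping fail.

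The paper handles this by proving a strengthened inductive statement in which $L'$ is a \emph{reordering} of $L$ (not $L'_j\asymp L_j$ with the same index), and by normalising the order before each step so that $L_d\ge C^{i-d}L_i$ for all $i$ (equation \eqref{eq:L_d.almost.max}), i.e.\ the final, central direction is essentially the longest. The induction then quotients by $u_d^\R$ and applies the hypothesis in $N/u_d^\R$; lifting back, each $e'_i$ differs from its projection only in its $e_d$-component (with $|a_{id}|\le 1$), and because $\sum_{i<d}L_i\ll_{C,d}L_d$ the resulting error can be absorbed into $L'_d$. It is this reordering-plus-quotienting mechanism that lets the error terms be controlled uniformly, and it is the ingredient missing from your proposal. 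Without it there is no choice of $L'$ for which your change of basis yields an $O_{C,d,k}(1)$ upper-triangular constant.
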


\begin{proof}
We will prove by induction on $d$ that the desired conclusions hold as well as
\begin{itemize}
\item[(i')]  $P_\R(u,L)\subseteq P_\R(u',O_{C,d,k}(L'))$,
\item[(ii')] $P_\R(u',L') \subseteq P_\R(u^{1/k},O_{C,d,k}(L))$,
\end{itemize}
with the list $L'_1,\ldots,L'_d$ consisting of the same numbers as the list $L_1,\ldots,L_d$, possibly reordered, and, writing $e_i=\log u_i$ and $e_i'=\log u_i'$, that each $e'_i$ can be expressed in the form $\sum_{j=1}^da_{ij}e_j$ for some $a_{ij}\in\{0,\frac1{k_j},\ldots,\frac{k_j-1}{k_j}\}$.

We may assume that $C\ge1$. We claim that we may then also assume that
\begin{equation}\label{eq:L_d.almost.max}
L_d\ge C^{i-d}L_i
\end{equation}
for $i=1,\ldots,d-1$. Indeed, if this is not the case, then let $i$ be maximal such that this inequality fails. If $e_i$ is central then we may simply shift $e_i$ and $L_i$ to the far right (so that $e_d$ becomes $e_{d-1}$ and $e_i$ becomes $e_d$) without affecting the upper-triangular form, after which this inequality holds for all $i$. On the other hand, if $e_i$ is not central then there exists $j$ such that $[e_i,e_j]\ne0$, and then the upper triangular form implies that there exists $k>i$ such that $CL_k\ge L_iL_j$, and hence in particular $L_k\ge C^{-1}L_i>C^{d-i-1}L_d\ge C^{d-k}L_d$, contradicting the maximality of $i$.
 
Write $\pi:N\to N/u_d^\R$ for the quotient homomorphism, and $H=\Gamma'\cap u_d^\R$. Define $\bar u_i=\pi(u_i)$ and $\bar e_i=\log\bar u_i$ for $i=1,\ldots,d-1$, and note that $\pi(P)=P(\bar u;L)$ is a $Q$-rational raw Lie progression of dimension $d-1$ in $C$-upper-triangular form in the simply connected nilpotent Lie group $N/u_d^\R$. By induction, there therefore exists an $O_{Q,d,k}(1)$-rational raw Lie progression $P(\bar u',\bar L')$ in $O_{C,d,k}(1)$-upper-triangular form such that $P(\bar u',\infty)=\Gamma'/H$, such that
\begin{itemize}
\item[(iii)] $P(\bar{u},L)\subseteq P(\bar{u}',O_{C,d,k}(\bar{L}'))$,
\item[(iv)] $P(\bar{u}',\bar{L}')\subseteq P(\bar{u}^{1/k},O_{C,d,k}(L))$,
\end{itemize}
such that
\begin{itemize}
\item[(iii')] $P_\R(\bar{u},L)\subseteq P_\R(\bar{u}',O_{C,d,k}(\bar{L}'))$,
\item[(iv')] $P_\R(\bar{u}',\bar{L}')\subseteq P_\R(\bar{u}^{1/k},O_{C,d,k}(L))$,
\end{itemize}
such that $\bar L'$ is a reordering of $L_1,\ldots,L_{d-1}$, and such that, writing $\bar e_i=\log\bar u_i$ and $\bar e_i'=\log\bar u_i'$, we have $\bar e'_i=\sum_{j=1}^{d-1} a_{ij}\bar{e}_j$ with $a_{ij}\in\{0,\frac1{k_j},\ldots,\frac{k_j-1}{k_j}\}$.

We start by defining $e'_i$, and hence $u'_i=\exp e'_i$, for $i<d$. By definition, $\sum_{j=1}^{d-1} a_{ij}e_j\in\log\Gamma'+\langle e_d/k_d\rangle$, so there exists $q\in\N$ such that $\sum_{j=1}^{d-1} a_{ij}e_j+(q/k_d)e_d\in\log\Gamma'$. Since $e_d\in\log\Gamma'$, we may assume that $q\in \{0,\ldots,k_d-1\}$, and then define $a_{id}=q/k_d$ and set
\[
e'_i=\sum_{j=1}^{d} a_{ij}e_j.
\]
We next define $e'_d$, and hence $u'_d=\exp e'_d$. Specifically, we set $e'_d=e_d/n$, with $n$ the unique positive divisor of $k_d$ such that $u'_d$ is a generator of $H$. The set $e'_1,\ldots,e_d'$ is then a basis for the Lie algebra $\n$, the set $u'_1,\ldots,u'_d$ is a generating set for $\Gamma'$, and we have $a_{ij}\in\{0,\frac1{k_j},\ldots,\frac{k_j-1}{k_j}\}$ as required. Finally, we define  $L'=(\bar{L}'_1,\ldots, \bar{L}'_{d-1}, L_d)$, noting that this is a reordering of $L$ as required.

We now verify the inclusions (i), (ii), (i') and (ii'), and the rationality and upper-triangularity of $P(u',L')$.
First, note that by \cref{prop:pp4.1,lem:pp.L2.1} (see \cref{rem:pp4.1,rem:pp.L2.1}), (iii') and (iv') give
\begin{itemize}
\item[(iii'')] $B_\R(\bar{e};L)\subseteq B_{\R}(\bar{e}';O_{C,d,k}(L'))$,
\item[(iv'')] $B_{\R}(\bar{e}';L')\subseteq B_{\R}(\bar{e};O_{C,d,k}(L))$.
\end{itemize}
We claim moreover that
\begin{itemize}
\item[(i'')]  $B_\R(e;L)\subseteq B_\R(e';O_{C,d,k}(L'))$,
\item[(ii'')] $B_\R(e';L') \subseteq B_\R(e;O_{C,d,k}(L))$.
\end{itemize}
Indeed, (iii'') implies that $B_\R(e_1,\ldots,e_{d-1};L)\subseteq B_{\R}(e'_1,\ldots,e'_{d-1};O_{C,d,k}(L'))$ modulo $\R e_d$. Since $|a_{id}|\le1$ for each $i$, this in turn implies that
\begin{align*}
B_\R(e_1,\ldots,e_{d-1};L)&\subseteq B_{\R}(e'_1,\ldots,e'_{d-1};O_{C,d,k}(L'))+B_{\R}(e'_d;O_{C,d,k}(\textstyle\sum_{i=1}^{d-1}L'_i))\\
       &=B_{\R}(e'_1,\ldots,e'_{d-1};O_{C,d,k}(L'))+B_{\R}(e'_d;O_{C,d,k}(\textstyle\sum_{i=1}^{d-1}L_i)),
\end{align*}
and then \eqref{eq:L_d.almost.max} gives (i'') as claimed. The proof of (ii'') from (iv'') is similar.

We now claim that $(e',L')$ is in $O_{Q,d,k}(1)$-rational $O_{C,d,k}(1)$-upper-triangular form, as required by the proposition. The fact that $(\bar e';\bar L')$ is in $O_{Q,d,k}(1)$-rational $O_{C,d,k}(1)$-upper-triangular form implies that
\[
[e'_i,e'_j]=b_{j+1}e'_{j+1}+\cdots+b_{d-1}e'_{d-1}+qe'_d
\]
for some rationals
\[
b_\ell\in\left[-\frac{O_{C,d,k}(L'_\ell)}{L'_iL'_j},\frac{O_{C,d,k}(L'_\ell)}{L'_iL'_j}\right]
\]
with denominators at most $O_{Q,d,k}(1)$, and some $q\in\R$. The fact that $q$ is also rational with denominator at most $O_{Q,d,k}(1)$ follows from the fact that $[e'_i,e'_j]=\sum_{\ell,\ell'}a_{i\ell}a_{j\ell'}[e_\ell,e_{\ell'}]$, the $Q$-rational upper-triangular form of $(e;L)$, and the bounds on the denominators of the $a_{i\ell}$ and the $b_\ell$. The fact that
\[
q\in\left[-\frac{O_{C,d,k}(L'_d)}{L'_iL'_j},\frac{O_{C,d,k}(L'_d)}{L'_iL'_j}\right]
\]
follows because
\begin{align*}
L'_iL'_j[e'_i,e'_j]&\in[B_\R(e,O_{C,d,k}(L)),B_\R(e,O_{C,d,k}(L))]&&\text{(by (ii''))}\\
       &\subseteq B_\R(e,O_{C,d,k}(L))&&\text{(by the upper-triangular form of $(e,L)$)}\\
       &\subseteq B_\R(e',O_{C,d,k}(L'))&&\text{(by (i'')).}
\end{align*}

We now claim that $(u',L')$ is in $O_{C,d,k}(1)$-upper-triangular form over $\Z$. It is in $O_{C,d,k}(1)$-upper-triangular form over $\Q$ by \cref{prop:pp4.1}, so since $e_1',\ldots,e_d'$ is a strong Mal'cev basis it suffices to show that $[u_i',u_j']\in P(u',\infty)$ for each $i,j$. To see this, note first that $(\bar u',\bar L')$ is in upper-triangular form over $\Z$ by definition, which means that for an arbitrary given pair $i,j$ of indices such that $1\le i<j<d$, we have
\[
[u_i',u_j']=(u'_{j+1})^{\ell_{j+1}}\cdots(u'_{d-1})^{\ell_{d-1}}(u'_d)^{a/b}
\]
for some $\ell_i\in\Z$ and some reduced fraction $a/b$. It follows that $(u'_d)^{1/b}\in\Gamma'$. Since $u_d'$ generates $\Gamma'\cap u_d^\R$, this in turn implies that $b=1$.

Finally, \cref{prop:pp4.1} implies that (i'') and (ii'') give (i') and (ii'), as required, and the fact that for all $T=(T_1,\ldots,T_d)\in\N^d$ we have
\[
P_\R(u,T)\cap\Gamma'\supseteq P(u,T),
\]
\[
P_\R(u',T)\cap\Gamma'=P(u',T)
\]
and
\[
P_\R(u^{1/k},T)\cap\Gamma'\subseteq P(u^{1/k},T)
\]
means in particular that (i) and (ii) follow from (i') and (ii').
\end{proof}

In order to apply the above result, we need the lattice $\Gamma'$ to be contained in a lattice of the form $P(u^{1/k},\infty)$. Our next result is an auxiliary lemma showing that once we have $(u^{1/k},L)$ in upper-triangular form, on increasing $k$ we can assume that $P(u^{1/k},\infty)=\exp\la\frac1{k_1}e_1,\ldots,\frac1{k_d}e_d\ra$.
\begin{lemma}\label{lem:divide.basis}
Let $C>0$ and $m\in\N$. Suppose that $G$ is a simply connected nilpotent Lie group with Lie algebra $\g$, that $e_1,\ldots,e_d$ is a basis for $\g$, and that $L_1,\ldots,L_d>0$ are such that $(e;L)$ is in $C$-upper-triangular form over $\Z$. Then, writing $n=n(d)$ for the natural number given by \cref{lem:dilates.subgroup}, setting $k_i=m^{2^{i-1}}n^{2^{i-1}-1}$ for $i=1,\ldots,d$, and writing $u_i=\exp e_i$ for each $i$, the following conditions hold:
\begin{enumerate}[label=(\roman*)]
\item $(\frac1ke;L)$ is in $O_{d,m}(C)$-upper-triangular form over $\Z$;
\item $(u^{1/k},L)$ is in $O_{C,d,m}(1)$-upper-triangular form;
\item $\exp\left\la\frac1{k_1}e_1,\ldots,\frac1{k_d}e_d\right\ra=\la u_1^{1/k_1},\ldots,u_d^{1/k_d}\ra=P(u^{1/k};\infty)$.
\end{enumerate}
\end{lemma}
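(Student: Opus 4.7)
The plan is to run everything on the observation that the exponent sequence $2^{i-1}$ is chosen so that the divisibility $nk_ik_j\mid k_\ell$ holds whenever $1\le i<j<\ell\le d$. Concretely, for such $i,j,\ell$ one computes
\[
nk_ik_j=m^{2^{i-1}+2^{j-1}}n^{2^{i-1}+2^{j-1}-1},\qquad k_\ell=m^{2^{\ell-1}}n^{2^{\ell-1}-1},
\]
and the inequalities $2^{i-1}+2^{j-1}\le2^j\le2^{\ell-1}$ and $2^{i-1}+2^{j-1}-1\le2^{\ell-1}-1$ give $nk_ik_j\mid k_\ell$, and \emph{a fortiori} $k_ik_j\mid k_\ell$. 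This single arithmetic fact drives all three conclusions.

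For (i), by the $C$-upper-triangular form hypothesis we may write $[e_i,e_j]=\sum_{\ell>j}a_\ell e_\ell$ with $a_\ell\in\Z$ and $|a_\ell|\le CL_\ell/(L_iL_j)$, so
\[
\bigl[\tfrac{e_i}{k_i},\tfrac{e_j}{k_j}\bigr]=\sum_{\ell>j}\frac{a_\ell k_\ell}{k_ik_j}\cdot\frac{e_\ell}{k_\ell}.
\]
The coefficients $b_\ell:=a_\ell k_\ell/(k_ik_j)$ are integers by the divisibility above, and $|b_\ell|\le(k_\ell/(k_ik_j))\cdot CL_\ell/(L_iL_j)$, where the ratio $k_\ell/(k_ik_j)$ is bounded in terms of $d$ and $m$ only. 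Hence $(\frac1ke;L)$ is in $O_{d,m}(C)$-upper-triangular form over~$\Z$.

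For (iii), let $\Lambda=\la\frac{e_1}{nk_1},\ldots,\frac{e_d}{nk_d}\ra$, so that $n\cdot\Lambda=\la\frac{e_1}{k_1},\ldots,\frac{e_d}{k_d}\ra$. Repeating the above computation with $nk_i,nk_j,nk_\ell$ in place of $k_i,k_j,k_\ell$ and using $nk_ik_j\mid k_\ell$ (equivalently $(nk_i)(nk_j)\mid n\cdot nk_\ell$) shows $[\Lambda,\Lambda]\subseteq\Lambda$. Since the nilpotency class of $G$ is at most $d$, \cref{lem:dilates.subgroup} then yields that $\exp(n\cdot\Lambda)=\exp\la\frac{e_1}{k_1},\ldots,\frac{e_d}{k_d}\ra$ is a subgroup of~$G$. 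Combining this subgroup property with (i), \cref{cor:pp4.1.group} applied to the basis $\frac{e_1}{k_1},\ldots,\frac{e_d}{k_d}$ gives
\[
\exp\la\tfrac{e_1}{k_1},\ldots,\tfrac{e_d}{k_d}\ra=\la u_1^{1/k_1},\ldots,u_d^{1/k_d}\ra=P(u^{1/k};\infty),
\]
where the middle equality uses that $\exp(e_i/k_i)=u_i^{1/k_i}$ by unique divisibility of simply connected nilpotent Lie groups. This is (iii). Finally, for (ii), the same subgroup property together with (i) allows us to apply \cref{prop:pp4.1.orig} to the basis $\frac{e_1}{k_1},\ldots,\frac{e_d}{k_d}$ with lengths $L$, yielding that $P(u^{1/k};L)$ is in $O_{C,d,m}(1)$-upper-triangular form.

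The only delicate point is Step~1: the $k_i$ must grow fast enough to absorb both a factor of $n$ and a product $k_ik_j$ into a later $k_\ell$, yet slowly enough that the distortion ratio $k_\ell/(k_ik_j)$ remains bounded in terms of $d$ and $m$. The doubly-exponential choice $k_i=m^{2^{i-1}}n^{2^{i-1}-1}$ is precisely what accomplishes this balance; everything else is a short direct verification using results already established.
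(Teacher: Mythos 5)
Your proof is correct and follows essentially the same route as the paper's. The paper organises the argument by first observing that $\bigl(\tfrac{e_1}{(mn)},\tfrac{e_2}{(mn)^2},\ldots,\tfrac{e_d}{(mn)^{2^{d-1}}};L\bigr)$, i.e.\ $(\tfrac{e}{nk};L)$, is in $O_{d,m}(C)$-upper-triangular form over $\Z$, and deduces both (i) and the hypothesis of \cref{lem:dilates.subgroup} from that single fact; you instead verify (i) directly via $k_ik_j\mid k_\ell$ and then separately check $[\Lambda,\Lambda]\subseteq\Lambda$ via the stronger $nk_ik_j\mid k_\ell$. Both rest on the identical arithmetic observation that $2^{i-1}+2^{j-1}\le2^{\ell-1}$ for $i<j<\ell$, and both then invoke \cref{lem:dilates.subgroup}, \cref{cor:pp4.1.group} and \cref{prop:pp4.1.orig} in the same way, so the difference is purely one of exposition.
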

\begin{proof}
First, note that
\[
\left(\frac1{mn}e_1,\frac1{(mn)^2}e_2,\ldots,\frac1{(mn)^{2^{d'-1}}}e_d;L\right)
\]
is in $O_{m,d}(C)$-upper-triangular form over $\Z$. This implies that $(\frac1ke;L)$ is in $O_{m,d}(C)$-upper-triangular form over $\Z$; since the nilpotency class of $G$ is at most $d$, it also means that we may apply \cref{lem:dilates.subgroup} to conclude that $\exp\left\la\frac1{k_1}e_1,\ldots,\frac1{k_{d'}}e_d\right\ra$ is a group. It then follows from \cref{cor:pp4.1.group} that this group is equal to $\la u_1^{1/k_1},\ldots,u_d^{1/k_d}\ra=P(u^{1/k};\infty)$, and from \cref{prop:pp4.1.orig} that $(\hat u^{1/k},L)$ is in $O_{C,d,m}(1)$-upper-triangular form.
\end{proof}

Finally, following result will allow us, once we have applied \cref{lem:gettingridofX} to a progression coming from \cref{lem:make.integral}, to absorb the error set $X$ into a power of the resulting progression.

\begin{lemma}\label{lem:absorption}
Suppose $P=P(u;L)$ is an infinitely proper progression of dimension $d$ in $C$-upper-triangular form, and that $X\subseteq\langle P\rangle$ is such for all $x\in X$ we have $x^2 \in XP^r$ Then $X\subseteq P(u;O_{C,d,r}(L))$.
\end{lemma}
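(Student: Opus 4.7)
The plan is to induct on the dimension $d$, exploiting the fact that $C$-upper-triangular form forces the top generator $u_d$ to be central in $\langle P\rangle$: for each $i<d$, the commutator $[u_i^{\pm1},u_d^{\pm1}]$ must lie in a progression over the empty index set, hence is trivial. The base case $d=0$ is immediate since $\langle P\rangle=\{1\}$.

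For the inductive step, let $\pi\colon\langle P\rangle\to\langle P\rangle/\langle u_d\rangle$ be the quotient homomorphism. Then $\pi(P)=P(\pi(u_1),\ldots,\pi(u_{d-1});L_1,\ldots,L_{d-1})$ is an infinitely proper $(d-1)$-dimensional progression in $C$-upper-triangular form, and $\pi(X)$ satisfies $\pi(x)^2\in\pi(X)\pi(P)^r$ for every $\pi(x)\in\pi(X)$. By the inductive hypothesis, $\pi(X)\subseteq P(\pi(u);O_{C,d,r}(L_1),\ldots,O_{C,d,r}(L_{d-1}))$. Since $P$ is infinitely proper, every $x\in\langle P\rangle$ has a unique Mal'cev expression $x=u_1^{\ell_1(x)}\cdots u_d^{\ell_d(x)}$, and the induction yields $|\ell_i(x)|\le O_{C,d,r}(L_i)$ for each $i<d$ and each $x\in X$.

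To bound the last coordinate, fix $x\in X$ and write $x^2=x'p$ with $x'\in X$ and $p\in P^r$. By \cref{lem:upper-tri.doubling.dilate}, $p\in P(u;O_{C,d,r}(L))$, so writing $p$ in Mal'cev form gives $p=u_1^{q_1}\cdots u_d^{q_d}$ with $|q_i|\le O_{C,d,r}(L_i)$. Using the centrality of $u_d$, define $y=u_1^{\ell_1(x)}\cdots u_{d-1}^{\ell_{d-1}(x)}$ and analogously $y'$, $q'$ from $x'$ and $p$, so that $x^2=y^2u_d^{2\ell_d(x)}$ and $x'p=y'q'u_d^{\ell_d(x')+q_d}$. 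Both $y^2$ and $y'q'$ are products of elements in $P(u;O_{C,d,r}(L))$, a progression in $O_{C,d,r}(1)$-upper-triangular form, so by a further application of \cref{lem:upper-tri.doubling.dilate} they again lie in $P(u;O_{C,d,r}(L))$; extracting their $u_d$-coordinates $\Phi_d(y^2)$ and $\Phi_d(y'q')$ gives bounds of size $O_{C,d,r}(L_d)$. Equating $u_d$-coordinates in $x^2=x'p$ yields
\[
2\ell_d(x)=\ell_d(x')+q_d+\Phi_d(y'q')-\Phi_d(y^2),
\]
and hence $2|\ell_d(x)|\le|\ell_d(x')|+O_{C,d,r}(L_d)$. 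Taking the supremum $M=\sup_{x\in X}|\ell_d(x)|/L_d$ (finite in the applications of interest, where $X$ is an a priori finite error set such as that produced by \cref{lem:make.integral}) and absorbing gives $M\le O_{C,d,r}(1)$, completing the induction.

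The main obstacle is the bookkeeping: verifying that each pass through projection, dilation, and squaring preserves the upper-triangular-form constant up to an $O_{C,d,r}(1)$ factor, and that the $u_d$-coordinates of $y^2$ and $y'q'$ really stay within the $O_{C,d,r}(L_d)$ budget rather than receiving large higher-commutator contributions from the first $d-1$ coordinates. A secondary subtlety is that the supremum argument tacitly uses that $X$ is bounded, which is automatic in the intended use but would fail for, e.g., $X=\langle P\rangle$ itself.
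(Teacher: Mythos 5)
Your proof is correct and follows essentially the same route as the paper's: induct on dimension, project modulo the central top generator to handle the first $d-1$ coordinates via the inductive hypothesis, then exploit the fact that the $u_d$-coordinate doubles under squaring while $XP^r$ perturbs it only additively by $O_{C,d,r}(L_d)$. The only cosmetic difference is that the paper selects the element of $X$ whose $u_d$-coordinate is maximal in absolute value rather than passing to a supremum; both versions tacitly assume this quantity is finite (which holds in every application, where $X$ is a finite error set), a point you rightly flag.
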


\begin{proof} Since $P$ is infinitely proper, every element $x$ of $X$ has a unique decomposition as $u_1^{n_1}\ldots u_d^{n_d}$. It is enough to show that $n_j=O_{C,d,r}(L_j)$ for each $j$. We proceed by induction on $d$. We do not technically need to separate out the case $d=1$, but it is instructive to do so. In that case, let $x\in X$ be such that $|n_1|$ is maximal. Then $x^2$ has coordinate $2n_1$, but belongs to $XP^r$, so that $2|n_1|\leq rL_1+|n_1|$, and hence $|n_1|\leq rL_1$, as required. In general, by projecting modulo $\langle u_d\rangle$ and applying the induction hypothesis we may assume that for each $j<d$ the $j$th coordinate of every element of $X$ is in $O_{C,r,d}(L_j)$. This means that if $x\in X$ is such that $|n_d|$ is maximal, then $x\in P(u;{O_{C,d,r}(L)})u_d^{n_d}$. By \cref{lem:upper-tri.doubling.dilate} and the centrality of $u_d$, this in turn implies that $x^2\in P(u;{O_{C,d,r}(L)})u_d^{2n_d}$. In particular, the $d$th coordinate $m_d$ of $x^2$ satisfies $m_d=2n_d+O_{C,d,r}(L_d)$. However, since $x^2\in XP^r$, \cref{lem:upper-tri.doubling.dilate} and the centrality of $u_d$ imply that $m_d=n_d'+O_{C,d,r}(L_d)$ for some $n_d'\in\Z$ with $|n'_d|\le|n_d|$, so that $n_d=O_{C,d,r}(L_d)$ as required.
\end{proof}

We close this section by proving \cref{lem:make.integral}.

\begin{proof}[Proof of \cref{lem:make.integral}]
First, note that $(Qe,L)$ is in upper-triangular form over $\Z$, so that by \cite[Lemma 4.3]{proper.progs} there exists $Q'=Q'(d)\in\N$ such that $\exp\langle QQ'e_1,\ldots,QQ'e_d\rangle$ is a subgroup of $N$. Set $M=QQ'$, noting that $M\le O_d(Q)$ and $(Me;L)$ is in $CM$-upper-triangular form over $\Z$ as required. \cref{prop:pp4.1.orig} then implies that $(u^M;L)$ in $O_{C,d,Q}(1)$-upper-triangular form, and \cref{cor:pp4.1.group} implies that $\exp\la Me_1,\ldots,Me_d\ra=\la u_1^M,\ldots,u_d^M\ra=P(u^M;\infty)$. Set
\[
S=P\cup P^{-1}\cup P(u;M)P(u^M;L)\cup(P(u;M)P(u^M;L))^{-1},
\]
so that $S$ is symmetric and contains both $P$ and $P(u;M)P(u^M;L)$. It follows from \cref{lem:upper-tri.doubling.dilate} and \cite[Lemma 4.5]{proper.progs} that
\[
S^2\subseteq P^{O_{d,Q}(1)}\subseteq P(u;O_{C,d,Q}(L))\subseteq P(u;M)P(u^M;O_{C,d,Q}(L))\subseteq P(u;M)P(u^M;L)^{O_{C,d,Q}(1)}.
\]
Applying \cref{lem:inclusions.local} with $X=P(u;M)$ therefore implies that
\begin{equation}\label{eq:make.integral.prelim}
P^r\subseteq P(u;M)P(u^M;L)^{O_{C,d,Q}(r)}
\end{equation}
for all $r\in\N$.

Now set $Y=P(u;M)\cap P(u^M;\infty)$, noting that $Y^2\subseteq P(u;M)P(u^M;L)^{O_{C,d,Q}(1)}$ by \eqref{eq:make.integral.prelim}, and hence that $Y^2\subseteq YP(u^M;L)^{O_{C,d,Q}(1)}$. \cref{lem:absorption} then implies that $Y\subseteq P(u^M;L)^{O_{C,d,Q}(1)}$. Setting $X=(P(u;M)\setminus Y)\cup\{1\}$, it then follows from \eqref{eq:make.integral.prelim} that
\[
P^r\subseteq XYP(u^M;L)^{O_{C,d,Q}(r)}\subseteq P(u^M;L)^{O_{C,d,Q}(r)}
\]
for all $r\in\N$, giving \eqref{eq:make.integral}.

Finally, \eqref{eq:make.integral.bdd} is trivial, even with $X$ replaced by its superset $P(u;M)$.
\end{proof}

\subsection{Powers of Lie progressions}\label{section:powersLieProg}

In this section we show that an arbitrary power of a Lie progression can be approximated by another Lie progression projected from the same lattice.

\begin{prop}\label{prop:powergood.rational}
Suppose $P_0=P(u^{(0)},L^{(0)})$ is a $Q$-rational raw Lie progression of dimension $d$ in $C$-upper-triangular form. Let $m\in\N$. Then there exists an $O_{d,Q}(1)$-rational raw Lie progression $P=P(u,L)$ in $O_{C,d,Q}(1)$-upper-triangular form in the same Lie group as $P_0$ such that
\begin{equation}\label{eq:powergood.rational}
P_0^m\subseteq P\subseteq P_0^{O_{C,d,Q}(m)}.
\end{equation}
\end{prop}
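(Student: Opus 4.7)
The natural candidate is $P=P(u^{(0)};L)$ with the same generators as $P_0$ and dilated lengths $L_i=Am^{\zeta(i)}L_i^{(0)}$, where $\zeta(i)\ll_d 1$ are the weights of $u_i^{(0)}$ associated with a fixed choice of $P_0$-expressions for the commutators $[u_i^{(0)\pm},u_j^{(0)\pm}]$ as in \cref{lem:upper-tri.doubling.dilate}, and $A=O_{C,d}(1)$ is a constant large enough that \cref{lem:upper-tri.doubling.dilate} directly gives $P_0^m\subseteq P$. Since $P$ uses the same Lie-algebra basis as $P_0$, the rationals $a_k$ in $[e_i^{(0)},e_j^{(0)}]=\sum_k a_ke_k^{(0)}$ are unchanged and keep denominators at most $Q$, so $P$ is still $Q$-rational. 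The upper-triangular form of both $(u;L)$ and $(e;L)$ is preserved with constant $AC$: whenever $a_k\ne 0$ the defining property of $\zeta$ gives $\zeta(k)\ge\zeta(i)+\zeta(j)$, whence $L_k/(L_iL_j)\ge A^{-1}L_k^{(0)}/(L_i^{(0)}L_j^{(0)})$.

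\paragraph{Reduction to the integral case.} The substantive content is the reverse inclusion $P\subseteq P_0^{O_{C,d,Q}(m)}$. My plan is to reduce to the integral case via \cref{lem:make.integral}. Inspection of its proof shows $M=O_d(Q)$ depends only on $d,Q$, so a single $M$ works for both $P_0$ and $P$; this produces error sets $X_0,X_0'\subseteq P(u^{(0)};M)\subseteq P_0^{O_{d,Q}(1)}$ (using \eqref{eq:dilateProg}) and integral Lie progressions $P_0':=P((u^{(0)})^M;L^{(0)})$ and $P'':=P((u^{(0)})^M;Am^\zeta L^{(0)})$ with $X_0(P_0')^r\subseteq P_0^{O_{d,Q}(r)}$ and $P\subseteq X_0'(P'')^{O_{C,d,Q}(1)}$. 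Since $1\in X_0$ by construction, $(P_0')^r\subseteq P_0^{O_{d,Q}(r)}$. Therefore once I establish the integral inclusion $P''\subseteq(P_0')^{O_{C,d,Q}(m)}$, I get $P\subseteq X_0'(P_0')^{O_{C,d,Q}(m)}\subseteq P_0^{O_{d,Q}(1)}\cdot P_0^{O_{C,d,Q}(m)}=P_0^{O_{C,d,Q}(m)}$, as required.

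\paragraph{Integral case by induction on dimension.} To prove $P''\subseteq(P_0')^{O_{C,d,Q}(m)}$ I argue by induction on $d$, with a secondary induction on the weight $\zeta(d)$. The base $d=1$ is immediate. For $d\ge 2$, the $C$-upper-triangular form forces $u_d^{(0)}$ to be central in the lattice $\langle u^{(0)}\rangle$, since no $u_k$ with $k>d$ occurs in the $P_0$-expression of $[u_i^{(0)\pm},u_d^{(0)\pm}]$. Projection modulo $\langle u_d^{(0)}\rangle^{\R}$ yields a $(d-1)$-dimensional integral Lie progression satisfying the inductive hypothesis, which handles every factor of an element of $P''$ except its central $(u_d^{(0)})^{Mn}$ part with $|Mn|\le Am^{\zeta(d)}L_d^{(0)}$. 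It therefore remains to show that every such $(u_d^{(0)})^{Mn}$ lies in $(P_0')^{O_{C,d,Q}(m)}$.

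\paragraph{Main obstacle.} The hard step is this final claim. If $\zeta(d)=1$ it is trivial. Otherwise $u_d^{(0)}$ appears with nonzero exponent $\ell_d$, of absolute value at most $CL_d^{(0)}/(L_i^{(0)}L_j^{(0)})$, in the $P_0$-expression of some commutator $[u_i^{(0)\pm},u_j^{(0)\pm}]$ with $\zeta(i)+\zeta(j)=\zeta(d)$ and $\zeta(i),\zeta(j)<\zeta(d)$. By the secondary induction, each of $((u_i^{(0)})^M)^{a/M}$ and $((u_j^{(0)})^M)^{b/M}$ lies in $(P_0')^{O(m)}$ for $a,b$ divisible by $M$ of sizes $O(m^{\zeta(i)}L_i^{(0)})$ and $O(m^{\zeta(j)}L_j^{(0)})$ respectively, hence so does the group commutator of these elements. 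By the Baker--Campbell--Hausdorff formula this commutator equals $\exp(ab\,\ell_d\, e_d^{(0)})$ modulo corrections in generators $u_k^{(0)}$ with $j<k<d$ of strictly smaller weight than $\zeta(d)$, and these corrections are themselves in $(P_0')^{O(m)}$ by the secondary induction. A suitable choice of $a$ and $b$ then realises $(u_d^{(0)})^{Mn}$ for every $|Mn|\le Am^{\zeta(d)}L_d^{(0)}$, completing the argument. Equivalently, and perhaps more cleanly, one may conduct this step in the Lie algebra via Baker--Campbell--Hausdorff, reducing it to the ``Guivarc'h-type'' box inclusion $B_\R(e^{(0)};m^\zeta L^{(0)})\subseteq(\exp B_\R(e^{(0)};L^{(0)}))^{O_{C,d}(m)}$, which one obtains by lifting to the free nilpotent Lie group and applying \cref{lem:freeequivalences}, or directly from the paper's generalised nilpotent-geometry-of-numbers result \cref{prop:pp.5.1}.
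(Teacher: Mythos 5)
Your candidate $P=P(u^{(0)};Am^{\zeta}L^{(0)})$ is in general strictly larger than $P_0^{O_{C,d,Q}(m)}$, and this is a genuine gap that breaks the whole argument. \cref{lem:upper-tri.doubling.dilate} only gives an \emph{upper} bound $P_0^m\subseteq P(u^{(0)};O_{C,d}(m^\zeta L^{(0)}))$, and this bound can be very loose when the lengths $L_i^{(0)}$ are imbalanced. A concrete failure: take the discrete Heisenberg group with $u_3=[u_1,u_2]$, $e_3=[e_1,e_2]$, and lengths $L^{(0)}=(1,1,N)$ for $N$ large; this is $1$-rational and in $1$-upper-triangular form for any $C\ge1$. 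Then $\zeta=(1,1,2)$, so your $P$ has $u_3$-coordinate of size $\asymp m^2N$. But the $u_3$-coordinate of an element of $P_0^m$ is at most $mN+O(m^2)$ (the $mN$ from the $u_3$-factors in each of the $m$ terms, the $m^2$ from commutators of the $u_1,u_2$-factors). So to cover $u_3^{\,\asymp m^2N}$ one needs $P_0^{\,\gtrsim\sqrt{N}\,m}$, and $\sqrt{N}$ is not $O_{C,d,Q}(1)$. For the same reason the ``Guivarc'h-type box inclusion'' $B_\R(e^{(0)};m^\zeta L^{(0)})\subseteq(\exp B_\R(e^{(0)};L^{(0)}))^{O_{C,d}(m)}$ you appeal to at the end is false: \cref{lem:freeequivalences} compares $P_\R(x;L)^m$ with the \emph{nilbox} $\exp(B_\R(\bar e;(mL)^\chi))$, which is a box in the \emph{full} list of basic Lie brackets $\bar e$ and is generically a thin slice of the naive $d$-dimensional box $B_\R(e^{(0)};m^\zeta L^{(0)})$. (In the example, $B_\R(\bar e;(mL)^\chi)$ has $e_3$-extent $\asymp mN+m^2$, not $m^2N$.) Correspondingly, your ``Main obstacle'' step breaks down: to realise $u_3^{\,c}$ with $|c|\asymp m^2N$ as a commutator $[u_1^a,u_2^b]$ you would need $|ab|\asymp m^2N$, but the inductive bounds only allow $|a|\lesssim m$ and $|b|\lesssim m$.

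The paper's route around this is precisely to drop the insistence on keeping the original basis. The proof of \cref{prop:Powergood} forms the convex body $B=B_\R(\bar e;(mL)^\chi)$ (the tight approximant to $P_\R(u;L)^m$), observes $[B,B]\subseteq\alpha B$ with $\alpha=O_d(1)$, and then invokes the geometry-of-numbers result \cref{prop:pp.5.1} to produce a \emph{new} lattice basis $e'_1,\ldots,e'_d$ and lengths $L''$ with $B\subseteq B_\R(e';L'')\subseteq O_d(1)B$ in upper-triangular form. The new progression $P(u';L')$ has generators $u_i'=\exp e_i'$ that differ from the $u_i^{(0)}$; without this change of basis there is no way to match the genuinely anisotropic growth of $P_0^m$. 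So the candidate needs to be changed, not just the verification.

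A minor secondary point, subsumed by the above: the $\zeta$ weights are attached to the chosen $P$-expressions for the \emph{group} commutators $[u_i^{\pm1},u_j^{\pm1}]$, while $Q$-rational upper-triangular form over $\Q$ is a statement about the Lie bracket coefficients $a_k$ in $[e_i^{(0)},e_j^{(0)}]=\sum_ka_ke_k^{(0)}$. These need not agree, so the implication ``$a_k\ne0\Rightarrow\zeta(k)\ge\zeta(i)+\zeta(j)$'' is not automatic; one would have to run the $\zeta$-recursion using a weight scheme compatible with both and argue via \cref{lem:comms.1}. But repairing this does not fix the main gap.
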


It is convenient to prove \cref{prop:powergood.rational} for integral Lie progressions first and then generalise to arbitrary Lie progressions using the material of the previous section.

\begin{prop}\label{prop:Powergood}
Suppose $P(u;L)$ is an integral raw Lie progression with dimension $d$ in $C$-upper-triangular form. Let $m\in\N$. Then there exists an integral raw Lie progression $P(u',L')$ in $O_d(1)$-upper-triangular form such that $P(u;L)^m\subseteq P(u',L')\subseteq P(u;L)^{O_{C,d}(m)}$.
\end{prop}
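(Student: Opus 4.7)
The plan is to approximate $P(u;L)^m$ by a box in the Lie algebra, use the upper-triangular structure to find a well-adapted new basis for a sublattice, and translate back. First, combining Lemma \ref{lem:upper-tri.doubling.dilate} with Proposition \ref{prop:pp4.1.orig} sandwiches $P(u;L)^m$ between inner and outer Lie-algebra boxes:
\[
P(u;L)^m \;\subseteq\; \exp B_\Z(e;\Lambda) \;\subseteq\; P(u;L)^{O_{C,d}(m)},
\]
with $\Lambda_i \asymp_{C,d} m^{\zeta(i)} L_i$, where $\zeta$ is the weight function from Lemma \ref{lem:upper-tri.doubling.dilate}. The second containment is the more subtle of the two: it requires observing that, in an upper-triangular progression, a power $u_k^M$ with $\zeta(k)\ge 2$ and $M\asymp m^{\zeta(k)}L_k$ can be realised in $O_{C,d}(m)$ steps of $P(u;L)$ rather than the naive $O(m^{\zeta(k)})$, by writing $u_k^M$ (up to lower-order terms) as an iterated commutator of $u_i^{mL_i},u_j^{mL_j}$ with $\zeta(i)+\zeta(j)=\zeta(k)$, and proceeding by induction on the weight.

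The central step is then to build a new basis $e_1',\ldots,e_d'$ of $\mathfrak{n}$ together with integer lengths $L_1',\ldots,L_d'$ so that $(e';L')$ is in $O_d(1)$-upper-triangular form over $\Z$ and satisfies $B_\Z(e;\Lambda)\subseteq B_\Z(e';L')\subseteq B_\Z(e;O_{C,d}(\Lambda))$. The construction is inductive in $k$: writing $[e_i,e_j]=\sum_{\ell>j}c^{ij}_\ell e_\ell$ with $|c^{ij}_\ell|\le CL_\ell/(L_iL_j)$, one chooses $e_k' = e_k/\alpha_k$ with $\alpha_k$ a positive rational, determined by the relations involving the index $k$ on the right, so that $\alpha_k$ is large enough to cancel the factor $C$ in the ratio $|c^{ij}_k|/(L'_k/(L'_iL'_j))$ yet small enough that $L_k'\asymp_{C,d}\Lambda_k$, and such that the divisibility relation $\alpha_i\alpha_j\mid c^{ij}_k\alpha_k$ holds so the new commutator coefficients are integers. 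By \cref{lem:dilates.subgroup}, taking a further bounded dilation of the $\alpha_k$ if necessary guarantees that $\exp\langle e_1',\ldots,e_d'\rangle$ is a subgroup of $N$, which (being a sublattice of $\langle e_1,\ldots,e_d\rangle$ of index $O_{C,d}(1)$) will be the lattice of our new progression.

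Setting $u_i'=\exp e_i'$ and invoking Proposition \ref{prop:pp4.1.orig} in the reverse direction then produces the integral raw Lie progression $P(u';L')$ in $O_d(1)$-upper-triangular form, with the box containments translating directly into $P(u;L)^m\subseteq P(u';L')\subseteq P(u;L)^{O_{C,d}(m)}$ after absorbing bounded dilates into $L'$. The main obstacle will be the inductive construction of the rescalings $\alpha_k$: they must be chosen to absorb the $C$-dependence of the commutator coefficients into the lattice itself (thereby making the new upper-triangular constant depend only on $d$) while simultaneously ensuring integrality of the rescaled coefficients and closure of $\exp\langle e'\rangle$ under multiplication. The key observation enabling this is that the weight structure of $\zeta$ forces the ``tight'' commutator triples $(i,j,k)$ with $\zeta(k)=\zeta(i)+\zeta(j)$ to form a directed acyclic hierarchy, allowing the $\alpha_k$ to be defined recursively without circular constraints.
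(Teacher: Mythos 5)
Your central step is fundamentally wrong, in two places.

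First, the sandwich $P(u;L)^m\subseteq\exp B_\Z(e;\Lambda)\subseteq P(u;L)^{O_{C,d}(m)}$ with $\Lambda_i\asymp m^{\zeta(i)}L_i$ does not hold. Take the discrete Heisenberg group with $[e_1,e_2]=e_3$, $L_1=L_2=1$, $L_3=N$ for $N$ large; then $\zeta(3)=2$, so your $\Lambda_3\asymp m^2N$, but $u_3^{m^2N}\notin P(u;L)^{O(m)}$: the commutator $[u_1^m,u_2^m]$ produces $u_3^{m^2}$ (not $u_3^{m^2N}$), and $(u_3^N)^{m^2}$ needs $m^2$ steps, not $O(m)$. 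Your iterated-commutator argument for the outer containment produces the wrong power of $u_k$ precisely because $\zeta(k)$ records only which indices appear in the $P$-expressions, not the actual size of $P^m$ in the $e_k$-direction. The correct reference body is the nilbox $B_\R(\overline e;(mL)^\chi)$ over the \emph{extended} list $\overline e$ of basic Lie brackets (the paper gets $P_\R(u;L)^m\approx_d\exp B_\R(\overline e;(mL)^\chi)$ from \cref{lem:freeequivalences}); in the Heisenberg example above the $e_3$-coordinate of that body is $mL_3+m^2L_1L_2$, not $m^2L_3$. The nilbox is moreover not a rectangular box over the original basis $e$ in general, since the iterated brackets $\overline e_k$ ($k>d$) have mixed support.

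Second, and more structurally, a diagonal rescaling $e_k'=e_k/\alpha_k$ cannot achieve what you want. With $L'_k\asymp\alpha_k\Lambda_k$ (forced by your box-comparability requirement) the ratio $|c'^{ij}_\ell|\big/\bigl(L'_\ell/(L'_iL'_j)\bigr)$ is \emph{invariant} under diagonal rescaling — the upper-triangular constant is a projective invariant of the structure constants. You cannot make the constant $O_d(1)$ by changing $\alpha_k$ unless you also let the box proportions drift by $O_{C,d}(1)$ factors in a carefully recursive way, and even then the construction breaks down because the nilbox is a genuine centrally symmetric convex polytope, not a box aligned with rescaled coordinate axes. This is exactly where the paper invokes its nilpotent geometry-of-numbers lemma (\cref{prop:pp.5.1}, a Minkowski/Bilu-type result from the first paper): it produces a genuinely \emph{new} basis $e'$ of the lattice $\Lambda=\la e_1,\ldots,e_d\ra$ — a real change of basis, not a rescaling of $e$ — together with lengths $L''$ such that $B\subseteq B_\R(e';L'')\subseteq O_d(1)B$ and $(e';L'')$ is in $\alpha$-upper-triangular form with $\alpha=O_d(1)$. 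The $C$-independence of the final constant is a feature of the Minkowski reduction, not of the rescaling. Your proof is missing this ingredient entirely, and it cannot be replaced by what you propose.
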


In proving \cref{prop:Powergood} we use the following slight generalisation of a result from our first paper. We say a convex body $B$ in a real vector space is \emph{strictly thick} with respect to a lattice $\Lambda$ if there exists some $\lambda<1$ such that $\lambda B\cap\Lambda$ generates $\Lambda$.

\begin{prop}\label{prop:pp.5.1}
Let $\alpha\in\N$ and $d\in\N$. Suppose $\n$ is a nilpotent Lie algebra of dimension $d$ and $\Lambda$ is a lattice in $\n$ satisfying $[\Lambda,\Lambda]\subseteq\Lambda$. Suppose further that $B$ is a symmetric convex body in $\n$ such that $[B,B]\subset\alpha B$ and $\alpha^{-1}B$ is strictly thick with respect to $\Lambda$. Then there exists a basis $e_1,\ldots,e_d$ of $\Lambda$ and $L_1,\ldots,L_d>0$ such that
\[
B\subseteq B_\R(e;L)\subseteq O_d(1)B,
\]
and such that $(e;L)$ is in $\alpha$-upper-triangular form. If $\alpha\in\N$ then we may also insist that $L_i\in\N$ for each $i$.
\end{prop}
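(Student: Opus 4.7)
The plan is to prove this by induction on the nilpotency class $s$ of $\n$, adapting the argument of Proposition 5.1 of the authors' first paper \cite{proper.progs}. The relaxation from the earlier setting is the replacement of a non-strict thickness condition by the hypothesis that $\alpha^{-1}B$ is \emph{strictly} thick, which provides some slack $\lambda<1$ that will be exploited below.

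The base case $s=1$ has $\n$ abelian, so the upper-triangular form is vacuous and the task reduces to finding a basis $e_1,\ldots,e_d$ of $\Lambda$ and positive reals $L_1,\ldots,L_d$ with $B\subseteq B_\R(e;L)\subseteq O_d(1)B$. This is a classical consequence of Mahler--Minkowski reduction applied to $\Lambda$ and $B$; strict thickness ensures that $\Lambda$ is sufficiently well represented inside a slight shrinkage of $B$ for a short basis to exist.

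For the inductive step, I would let $\n_s$ be the final nonzero term of the lower central series of $\n$ (a $k$-dimensional central subspace), first apply the base case to the lattice $\Lambda\cap\n_s$ inside $\n_s$ with convex body $B\cap\n_s$ to obtain $e_{d-k+1},\ldots,e_d$ and lengths $L_{d-k+1},\ldots,L_d$, and then pass to the quotient $\bar\n=\n/\n_s$ (of class $s-1$) with the induced lattice $\bar\Lambda$ and convex body $\bar B=\pi(B)$, where the hypotheses transfer routinely (strict thickness, the commutator-containment condition, and the bracket bound $[\bar B,\bar B]\subseteq\alpha\bar B$ all pass to quotients). The inductive hypothesis yields $\bar e_1,\ldots,\bar e_{d-k}$ with lengths $\bar L_1,\ldots,\bar L_{d-k}$ in $\alpha$-upper-triangular form, which I would lift to $e_i\in\Lambda$ while using additive shifts by $\Z$-combinations of $e_{d-k+1},\ldots,e_d$ to minimise the $\n_s$-components of the lifts. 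The inclusions $B\subseteq B_\R(e;L)\subseteq O_d(1)B$ would then follow by decomposing vectors of $B$ using the projection $\pi$ together with the $\n_s$-basis from the base case. For the upper-triangular form, if $j>d-k$ then $e_j\in\n_s$ is central in $\n$, so $[e_i,e_j]=0$ and there is nothing to check; while for $i<j\le d-k$ the $\bar\n$-components of $[e_i,e_j]$ are controlled by the inductive hypothesis, and the $\n_s$-components are controlled via $L_iL_j[e_i,e_j]\in[B_\R(e;L),B_\R(e;L)]\subseteq O_d(\alpha)B\subseteq O_d(\alpha)B_\R(e;L)$.

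The main obstacle I anticipate is obtaining the exact constant $\alpha$ in the upper-triangular form, rather than the $O_d(\alpha)$ that the crude commutator estimate above naively produces. I would address this by rescaling each $L_i$ by an appropriate constant factor at the end of the argument: the strict thickness hypothesis provides precisely the slack (through the parameter $\lambda<1$) needed to do so without violating $B\subseteq B_\R(e;L)$, and the residual constants are then absorbed into the $O_d(1)$ factor in $B_\R(e;L)\subseteq O_d(1)B$. The final integrality claim when $\alpha\in\N$ is obtained by rounding each $L_i$ down to an integer, at the cost of a further constant factor which is absorbed similarly.
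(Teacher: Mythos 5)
Your approach is genuinely different from the paper's, and it runs into a gap at exactly the point you flag. The paper does \emph{not} re-prove the result: it observes that $[B,B]\subseteq\alpha B$ implies $[\alpha^{-1}B,\alpha^{-1}B]\subseteq\alpha^{-1}B$, so the body $\alpha^{-1}B$ satisfies the hypotheses of the $\alpha=1$ case, which is exactly \cite[Proposition 5.1]{proper.progs} and can be used as a black box. That result yields a basis $e_1,\ldots,e_d$ of $\Lambda$ and integers $L_i$ with $\alpha^{-1}B\subseteq B_\R(e;L)\subseteq O_d(1)\alpha^{-1}B$ and $(e;L)$ in \emph{exact} $1$-upper-triangular form. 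Scaling all lengths uniformly by $\alpha$ then gives $B\subseteq B_\R(e;\alpha L)\subseteq O_d(1)B$, and the upper-triangular constant scales correctly: the bound $L_k/(L_iL_j)$ on the $e_k$-coefficient of $[e_i,e_j]$ rewrites as $\alpha(\alpha L_k)/((\alpha L_i)(\alpha L_j))$, which is precisely $\alpha$-upper-triangular form for the lengths $\alpha L_i$. That is the whole proof.

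The gap in your proposal is the final rescaling step, which as described does not work. First, the parameter $\lambda<1$ witnessing strict thickness is not quantified in the hypothesis and can be arbitrarily close to $1$, so it provides no fixed amount of slack with which to absorb an $O_d(1)$ loss; indeed the implied constants in the conclusion must not depend on $\lambda$. Second, and more fundamentally, no rescaling $L_i\mapsto t_iL_i$ with $t_i\ge1$ can improve an $O_d(\alpha)$-upper-triangular form to an exactly $\alpha$-upper-triangular form. After such a rescaling the upper-triangular constant at a triple $(i,j,k)$ is multiplied by $t_it_j/t_k$, and one needs $t_it_j/t_k\le1/C_d$ for every $(i,j,k)$ with nonzero coefficient; but in the critical case $d_k=d_i+d_j$ (e.g.\ $[e_1,e_2]=e_3$ in a Heisenberg algebra), any weight-respecting choice $t_i=C^{d_i}$ gives $t_it_j/t_k=1$, so the factor $C_d$ is not removable. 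To repair the argument you would need to run your induction with $\alpha^{-1}B$ in place of $B$ (verifying that the $\alpha=1$ hypotheses hold) and obtain exact $1$-upper-triangular form there, then scale up by $\alpha$ at the end — which is what the paper's cited result already does for you, making the induction redundant. I would also note that when passing to $\Lambda\cap\n_s$, it is not immediate that strict thickness of $\alpha^{-1}B$ with respect to $\Lambda$ gives strict thickness of $\alpha^{-1}(B\cap\n_s)$ with respect to $\Lambda\cap\n_s$; that transfer is part of what \cite[Proposition 5.1]{proper.progs} handles, and you would need to justify it.
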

\begin{proof}
The case $\alpha=1$ is \cite[Proposition 5.1]{proper.progs} from our first paper, and the general statement follows from applying this to the strictly thick convex body $\alpha^{-1}B$. Indeed, the fact that $[B,B]\subseteq\alpha B$ implies that $[\alpha^{-1}B,\alpha^{-1}B]=\alpha^{-2}[B,B]\subseteq\alpha^{-1}B$, so \cite[Proposition 5.1]{proper.progs} implies that there exists a basis $e_1,\ldots,e_d$ of $\Lambda$ and integers $L_1,\ldots,L_d$ such that $\alpha^{-1}B\subseteq B_\R(e;L)\subseteq O_d(\alpha^{-1})B$ and such that $(e;L)$ is in $1$-upper-triangular form. This easily implies that $B\subseteq B_\R(e;\alpha L)\subseteq O_d(1)B$ and that $(e;\alpha L)$ is in $\alpha$-upper-triangular form.
\end{proof}
\begin{remark}
By adapting the proof of \cite[Proposition 5.1]{proper.progs} it is not hard to obtain $L_i\in\N$ in \cref{prop:pp.5.1} even without the assumption that $\alpha\in\N$.
\end{remark}

\begin{proof}[Proof of Proposition \ref{prop:Powergood}]
Write $e_i=\log u_i$ for each $i$, and extend $e$ to a list $\overline e$ of basic Lie brackets as in \cref{sec:basic.comms}. It follows from \cref{lem:real.B.upper-tri} and \cref{lem:freeequivalences} \ref{item:freeequiv.i} that the strictly thick convex body $B=B_\R(\overline e;(mL)^\chi)$ satisfies
\[
P_{\R}(u;L)^m\approx_d \exp(B)
\]
and $[B,B]\subseteq\alpha B$ for some $\alpha=\alpha_d\in\N$. If $m\le\alpha$ then \cref{lem:upper-tri.doubling.dilate} implies that the proposition is satisfied by taking $u'_i=u_i$ and each $L_i'$ some bounded multiple of $L_i$, so we may assume that $m>\alpha$. This implies that $\alpha^{-1}B$ is strictly thick, so by \cref{prop:pp.5.1} there exists a basis $e_1',\ldots,e_d'$ of the lattice $\la e_1,\ldots,e_d\ra$ and integers $L_1'',\ldots,L_d''$ such that $(e';L'')$ is in $\alpha$-upper-triangular form and such that
\begin{equation}\label{eq:B.approx.box}
B\subseteq B_\R(e';L'')\subseteq kB
\end{equation}
for some integer $k=k_d$. Since $B$ is a convex body, we have $kB=\{kb:b\in B\}$, and hence $\exp(kB)=\exp\{kb:b\in B\}\subseteq(\exp B)^k$, and so \eqref{eq:B.approx.box} implies that
$\exp(B)\approx_d\exp(B_{\R}(e';L''))$. On the other hand, Lemma \ref{lem:freeequivalences} \ref{item:freeequiv.3}   implies that writing $u_i'=\exp e_i'$ for each $i$ we have
$\exp(B_{\R}(e';L''))\approx_d P_{\R}(u',L'')$, and hence
\begin{equation}\label{eq:Powergood.real}
P_{\R}(u;L)^m\approx_d P_{\R}(u',L''),
\end{equation}
whilst \cref{prop:pp4.1.orig} implies that $(u';L'')$ is in $O_d(1)$-upper-triangular form.

Note that $\la u_1,\ldots,u_d\ra=\la u_1',\ldots,u_d'\ra$ by \cref{cor:pp4.1.group}. Writing $\Gamma$ for this group, \cref{prop:goodReducReal1} implies that
\[
P(u;L)^m\subseteq P_{\R}(u;L)^m\cap\Gamma\subseteq P_{\R}(u',L'')^{O_d(1)}\cap\Gamma\subseteq P(u',L'')^{O_d(1)}.
\]
\cref{lem:upper-tri.doubling.dilate} then implies that there exist $L'_i\ll_d L''_i$ for each $i$ such that
\[
P(u;L)^m\subseteq P(u',L').
\]
On the other hand, \cref{prop:goodReducReal1} and \eqref{eq:Powergood.real} imply that
\[
P(u',L')\subseteq P(u',L'')^{O_d(1)}\subseteq P_\R(u',L'')^{O_d(1)}\cap\Gamma\subseteq P_{\R}(u;L)^{O_d(m)}\cap\Gamma\subseteq P(u;L)^{O_{C,d}(m)}.
\]
Finally, both $(e',L')$ and $(u',L')$ are in $O_d(1)$-upper-triangular form because $(e',L'')$ and $(u',L'')$ are.
\end{proof}

\begin{proof}[Proof of \cref{prop:powergood.rational}]
Write $e_i^{(0)}=\log u_i^{(0)}$ for each $i$. Let $M\ll_dQ$ and $X\subseteq P(u^{(0)};M)$ be as given by \cref{lem:make.integral}, so that $(Me^{(0)};L^{(0)})$ is in $CM$-upper-triangular form over $\Z$, so that $((u^{(0)})^M;L)$ is in $O_{C,d,Q}(1)$-upper-triangular form, so that $\exp\la Me_1^{(0)},\ldots,Me_d^{(0)}\ra=\la(u_1^{(0)})^M,\ldots,(u_d^{(0)})^M\ra=P((u^{(0)})^M;\infty)$, and so that
\begin{equation}\label{eq:translate.collisions.rational.fin-ind}
P_0^r\subseteq XP((u^{(0)})^M;L^{(0)})^{O_{C,d,Q}(r)}
\end{equation}
and
\begin{equation}\label{eq:XP0^M^r.in.P0^r}
XP((u^{(0)})^M;L^{(0)})^r\subseteq P_0^{O_{d,Q}(r)}
\end{equation}
for every $r\in\N$.

Let $n=n(C,d,Q)\in\N$ be an integer to be specified shortly. By \cref{prop:Powergood}, there exist a basis $e_1^{(1)},\ldots,e_d^{(1)}$ for $\la Me_1^{(0)},\ldots,Me_d^{(0)}\ra$ and $L^{(1)}_1,\ldots,L^{(1)}_d\in\N$ such that $(e^{(1)},L^{(1)})$ is in $O_d(1)$-upper triangular form over $\Z$ and such that, writing $u_i^{(1)}=\exp e_i^{(1)}$ for each $i$, we have
\begin{equation}\label{eq:collisions.rational.powergood}
P((u^{(0)})^M;L^{(0)})^{nm}\subseteq P(u^{(1)},L^{(1)})\subseteq P((u^{(0)})^M;L^{(0)})^{O_d(nm)}.
\end{equation}

By \cref{lem:divide.basis}, there exist natural numbers $a_1,\ldots,a_d\ll_{d,Q}1$, each one an integer multiple of $M$, such that $(\frac1{a_1} e^{(1)}_1,\ldots,\frac1{a_d} e^{(1)}_d;L^{(1)})$ is in $O_{d,Q}(1)$-upper-triangular form over $\Z$, such that $((u^{(1)})^{1/a},L^{(1)})$ is in $O_{d,Q}(1)$-upper-triangular form, and such that $\exp\left\la\frac1{a_1} e^{(1)}_1,\ldots,\frac1{a_d} e^{(1)}_d\right\ra=P((u^{(1)})^{1/a},\infty)$. We may therefore apply \cref{lem:gettingridofX} to conclude that there exists an $O_{d,Q}(1)$-rational raw Lie progression $P(u,L^{(2)})$ in $O_{C,d,Q}(1)$-upper-triangular form such that $P(u;\infty)=\la P_0\ra$, and such that
\begin{equation}\label{eq:P1.in.P2}
P(u^{(1)},L^{(1)})\subseteq P(u,O_{C,d,Q}(L^{(2)})),
\end{equation}
and
\begin{equation}\label{eq:P2.in.P1/a}
P(u,L^{(2)})\subseteq P((u^{(1)})^{1/a},O_{C,d,Q}(L^{(1)})).
\end{equation}

By \eqref{eq:translate.collisions.rational.fin-ind} and \eqref{eq:XP0^M^r.in.P0^r} we have $X^2\subseteq P_0^{O_{d,Q}(1)}\subseteq XP((u^{(0)})^M,L^{(0)})^{O_{C,d,Q}(1)}$. By choosing $n$ large enough in terms of $C$, $d$ and $Q$, we may therefore ensure that $X^2\subseteq XP(u^{(1)},L^{(1)})\subseteq XP(u,O_{C,d,Q}(L^{(2)}))\subseteq XP(u,L^{(2)})^{O_{C,d,Q}(1)}$. \cref{lem:absorption} then implies that $X\subseteq P(u,O_{C,d,Q}(L^{(2)}))$, which combines with \eqref{eq:P1.in.P2} and \cref{lem:upper-tri.doubling.dilate} to show that
\begin{equation}\label{eq:collisions.rational.1->2}
XP(u^{(1)},L^{(1)})\subseteq P(u;L)
\end{equation}
for some $L\le O_{C,d,Q}(L^{(2)})$.

Provided $n$ is chosen large enough in terms of $C$, $d$ and $Q$, \eqref{eq:translate.collisions.rational.fin-ind}, \eqref{eq:collisions.rational.powergood} and \eqref{eq:collisions.rational.1->2} imply that $P_0^m\subseteq P(u;L)$, giving the first required inclusion. Moreover \eqref{eq:collisions.rational.powergood} implies in particular that $(u_i^{(0)})^M\in P(u^{(1)},L^{(1)})$ for each $i$, so that for a given $i$ there exist $\ell_1,\ldots\ell_d\in\N$ with $|\ell_j|\le L_j^{(1)}$ such that $(u_i^{(0)})^M=(u^{(1)}_1)^{\ell_1}\cdots(u^{(1)}_d)^{\ell_d}$. \cref{lem:bg.rational.power} therefore implies that $u_i^{(0)}=(u^{(1)}_{j_1})^{\ell_{j_1}p_1(1/M)}\cdots(u^{(1)}_{j_t})^{\ell_{j_t}p_t(1/M)}$ for some indices $j_1,\ldots,j_t$ and rational polynomials $p_1,\ldots,p_t$ depending only on $d$. In particular, this implies that $u_i^{(0)}\in P_\Q(u^{(1)};L^{(1)})^{O_{d}(1)}$, and then \cref{lem:pp.L2.1} implies that $X\subseteq P_\Q(u^{(1)};O_{C,d,Q}(L^{(1)}))$. Moreover, \eqref{eq:P2.in.P1/a} and \cref{lem:pp.L2.1} imply that $P(u;L)\subseteq P((u^{(1)})^{1/a};O_{C,d,Q}(L^{(1)}))\subseteq P_\Q(u^{(1)};O_{C,d,Q}(L^{(1)}))$. Since $P(u;L)\subseteq\Gamma=XP(u^{(1)};\infty)$, and since $e^{(1)}_1,\ldots,e^{(1)}_d$ is a strong Mal'cev basis, by \cref{lem:pp.L2.1} it follows that $P(u,L)\subseteq XP(u^{(1)};O_{C,d,Q}(L^{(1)}))$. By \eqref{eq:collisions.rational.powergood} and \eqref{eq:XP0^M^r.in.P0^r}, this in turn implies that
\[
P(u;L)\subseteq P_0^{O_{C,d,Q}(nm)}.
\]
Since $n$ depends only on $C,d,Q$, this gives the second required inclusion.
\end{proof}

\subsection{Lower bounds on growth in Lie progressions}\label{section:LowerBounds}

As we mentioned right near the beginning of the introduction to this paper, the upper bound on the growth degree of $G$ in Gromov's theorem as we stated it in \cref{thm:gromov} follows trivially from the polynomial-growth hypothesis \eqref{eq:o(n^(d+1)} and the existence of the growth lower bound $|S^n|\gg n^{\deg G}$ given by \eqref{eq:BG}. We prove the upper bounds on the dimension and homogeneous dimension of the progressions arising in \cref{thm:detailed.fine.scale} in a similar way, but with the asymptotic growth lower bound of \eqref{eq:BG} replaced by the following quantitative volume lower bounds.
\begin{prop}\label{prop:dimension.bound}
Given $d,k,t\in\N$ there exists $\gamma=\gamma(d,k,t)>0$ such that if $G$ is a group with finite symmetric generating set $S$ containing the identity, $X\subseteq S^t$ contains the identity and satisfies $|X|\le k$, and $P$ is a Lie progression of dimension at most $d$ such that $XP^r\subseteq S^{rn}\subseteq XP^{\eta r}$ for all $r\in\N$ for some $n,\eta\in\N$, then
\[
|S^m|\gg_{\eta,d,k,t}m^{\hdim P}\qquad\text{and}\qquad|S^m|\gg_{\eta,d,k,t}m^{\dim P}|S|
\]
for all $m\in\N$ satisfying $\gamma\eta\le m/n\le\inj P$.
\end{prop}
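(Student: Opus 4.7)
The plan is to exploit the covering inclusions $XP^r\subseteq S^{rn}\subseteq XP^{\eta r}$ to transfer the well-understood polynomial growth of the Lie progression $P$ to a lower bound on $|S^m|$. Setting $r=\lfloor m/n\rfloor$, which lies in $[\gamma\eta/2,\inj P]$ once $\gamma$ is taken large enough, the leftmost inclusion gives $|S^m|\ge|S^{rn}|\ge|XP^r|\ge|P^r|$. Writing $\tilde P$ for the underlying raw Lie progression, $\Gamma$ for its lattice in the ambient simply connected nilpotent Lie group $N$, $H$ for the symmetry group of $P$, and $\pi:\Gamma\to\la P\ra/H$ for the projector, the assumption $r\le\inj P$ forces $\pi$ to be injective on $\tilde P^r$, so that $P^r$ is a union of disjoint cosets of $H$ and $|P^r|=|H|\cdot|\tilde P^r|\ge|\tilde P^r|$.

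The heart of the proof is a polynomial lower bound on $|\tilde P^r|$. Since $(u;L)$ is in $O_d(1)$-upper-triangular form, the upper bound $|\tilde P^r|\ll_d r^{\hdim P}|\tilde P|$ of \cref{lem:upper-tri.doubling.dilate} has a matching lower bound: one shows that $\tilde P^r$ contains a dilated progression $P(u;\Omega_d(r^\zeta L))$, where $\zeta$ is the weight function appearing in \cref{lem:upper-tri.doubling.dilate} and satisfies $\sum_i\zeta(i)=\hdim P$. The properness of $\tilde P$, a consequence of $\inj P\ge 1$, then yields $|\tilde P^r|\asymp_d r^{\hdim P}|\tilde P|$ for $r$ exceeding a constant $\gamma_0(d)$; one then chooses $\gamma\ge\gamma_0$. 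An equivalent formulation is that $\tilde P^r$ contains a ball of radius $\asymp r$ in the Cayley graph of $\Gamma$ generated by the $u_i$, to which the Bass--Guivarc'h lower bound applies uniformly in the upper-triangular constants.

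Given this, the two conclusions will follow directly. For the homogeneous-dimension bound, $|S^m|\ge|\tilde P^r|\gg_d r^{\hdim P}|\tilde P|$, which combined with $|\tilde P|\ge 1$ and $r\asymp m/n$ yields the required growth. For the dimensional bound, the other side of the inclusion chain $S\subseteq S^n\subseteq XP^\eta$ gives $|S|\le k|P^\eta|\ll_d k\eta^{\hdim P}|P|$, hence $|P|\gg_{k,\eta,d}|S|$; combining with $|P^r|\asymp_d r^{\hdim P}|P|$ yields
\[
|S^m|\;\ge\;|P^r|\;\gg_{k,\eta,d}\;r^{\hdim P}|S|\;\ge\;r^{\dim P}|S|,
\]
since $\hdim P\ge\dim P$ and $r\ge 1$. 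The principal obstacle is establishing the $|\tilde P^r|$ lower bound with a constant depending only on $d$: the typical Bass--Guivarc'h estimate depends on the lattice, whereas uniformity is needed across all Lie progressions of dimension at most $d$. The hypothesis that $(u;L)$ is in $O_d(1)$-upper-triangular form is precisely what makes this possible, by controlling both the nilpotent commutator structure and the relative shape of the lengths $L_i$.
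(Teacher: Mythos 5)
There is a genuine gap, and it is fundamental. The heart of your argument passes from $|S^m|$ to $|P^r|$ with $r=\lfloor m/n\rfloor$, and then invokes a polynomial lower bound on $|\tilde P^r|$. Even granting that lower bound, what you obtain is $|S^m|\gg_{d}(m/n)^{\hdim P}$ and $|S^m|\gg_{d,k,\eta}(m/n)^{\dim P}|S|$. The proposition asserts $|S^m|\gg m^{\hdim P}$ and $|S^m|\gg m^{\dim P}|S|$, and these differ from your bounds by a factor of $n^{\hdim P}$ (resp. $n^{\dim P}$), which is unbounded. Writing $|\tilde P|\ge1$ does nothing to close this; closing it would require showing $|\tilde P|\gg_{\eta,d,k,t}n^{\hdim P}$, which is true but is not a triviality — it essentially requires the device the paper actually uses. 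The paper's proof avoids the loss entirely by constructing, via \cref{prop:fromStoT}, a symmetric generating set $T\subseteq S^{2t+1}$ for $\la P\ra$ satisfying $P\subseteq T^n\subseteq P^{\rho\eta}$ with $\rho=\rho(d,k,t)$, lifting $T$ to a generating set $\tilde T$ of the lattice $\Gamma$, and then bounding $|S^m|\ge|\tilde T^{\lceil\omega m\rceil}||H|$ for $\omega=1/(8\rho\eta)$. Here $\lceil\omega m\rceil$ is proportional to $m$ (not to $m/n$), because $T$ sits inside a bounded power of $S$ rather than inside $S^n$. Applying the universal Bass--Guivarc'h lower bounds (\cref{prop:growth.lower.bound.dim} for the $\dim P$ version, and \cite[Prop.~3.1]{lmtt} for the $\hdim P$ version) to the generating set $\tilde T$ then gives the conclusion with the correct $m$-dependence. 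This change of generating set is the step your proposal is missing.

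A secondary issue: the claim $|\tilde P^r|\asymp_d r^{\hdim P}|\tilde P|$ is false in general, even for $r$ inside the injectivity radius. For example, in the discrete Heisenberg group a raw Lie progression $P(u;L)$ with $L_1=L_2=1$ and $L_3=N$ large has $|\tilde P|\asymp N$ and $|\tilde P^r|\asymp r^3N$ for $1\le r\ll N$, which is $\ll r^{\hdim P}|\tilde P|=r^4N$. The growth of a progression at intermediate scales is governed by a piecewise-monomial function whose degree climbs from $\dim P$ to $\hdim P$ (\cref{prop:growthfunction}), not by a single monomial of degree $\hdim P$. The correct universal lower bounds — the ones the paper uses — are $|\tilde T^r|\gg_d r^{\dim P}|\tilde T|$ and $|\tilde T^r|\gg_d r^{\hdim P}$, with no matching $r^{\hdim P}|\tilde T|$ lower bound, and these are applied to a generating set $\tilde T$, not to $\tilde P$ itself. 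Note in particular that the two conclusions of the proposition deliberately have different exponents: the relative bound involves $\dim P$ while the absolute bound involves $\hdim P$, precisely because the stronger statement $|S^m|\gg m^{\hdim P}|S|$ is false.
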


We start by proving universal lower bounds on the growth of a generating set for the lattice of~$P$.
\begin{prop}\label{prop:growthLowerBound}\label{prop:growth.lower.bound.dim}
Let $d\in\N$. Suppose that $\Gamma$ is a finitely generated torsion-free nilpotent group with Mal'cev completion of dimension $d$, and that $S$ is a finite symmetric generating subset of $\Gamma$ containing the identity. Then
\[
|S^n|\gg_dn^d|S|
\]
for all $n\in\N$.
\end{prop}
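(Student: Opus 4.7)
The plan is to prove this by induction on the Mal'cev dimension $d$ of $\Gamma$. The base case $d=0$ is trivial since $\Gamma$ is then the trivial group and $|S^n|=1=n^0|S|$. For the inductive step with $d\ge 1$, I would choose a primitive element $z$ of $\gamma_c(\Gamma)\cap\Gamma$, where $c$ is the nilpotency class of $\Gamma$, with $\la z\ra$ isolated in $\Gamma$; this makes $z$ central, places $z$ as deep as possible in the lower central series, and ensures that $\bar\Gamma=\Gamma/\la z\ra$ is torsion-free nilpotent of Mal'cev dimension $d-1$. Writing $\pi\colon\Gamma\to\bar\Gamma$ and $\bar S=\pi(S)$, the inductive hypothesis then gives $|\bar S^n|\gg_d n^{d-1}|\bar S|$.

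The main task is to estimate the fibres of $\pi$ restricted to $S^n$. I would split into two cases depending on the maximum fibre $M=\max_{\bar s\in\bar S}|\pi^{-1}(\bar s)\cap S|$. In the first case $M\ge 2$, I fix $\bar t\in\bar S$ achieving the maximum, with lifts $t_1,\ldots,t_M\in S$ satisfying $t_j=t_1z^{a_j}$ for distinct $a_j\in\Z$. For any $\bar g\in\bar S^{n-1}$ and any preimage $g\in S^{n-1}$ of $\bar g$, the elements $t_jg$ all project to $\bar t\bar g$ and, using the centrality of $z$ together with the Cauchy--Davenport inequality $|A+B|\ge|A|+|B|-1$ applied in $\la z\ra\cong\Z$, yield the fibre bound $|F_{S^n}(\bar t\bar g)|\ge|F_{S^{n-1}}(\bar g)|+M-1$. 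Summing over $\bar g\in\bar S^{n-1}$ (embedded injectively into $\bar S^n$ via $\bar g\mapsto\bar t\bar g$) gives the recursion $|S^n|\ge|S^{n-1}|+(M-1)|\bar S^{n-1}|$, which telescopes using the inductive bound to
\[
|S^n|\gg_d (M-1)|\bar S|n^d,
\]
and since $M\ge|S|/|\bar S|\ge 2$, this yields $|S^n|\gg_d n^d|S|$.

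In the opposite case $M=1$, the map $\pi$ is injective on $S$ so $|\bar S|=|S|$. One can show that $\Gamma$ cannot be abelian in this case, because in an abelian group any pair of distinct elements of $S$ has its difference lying in some one-parameter subgroup, and by choosing $z$ in the direction of that difference one obtains $M\ge 2$. Hence $c\ge 2$, and Pansu-type estimates on word-length in nilpotent groups give $|z^j|_S\asymp j^{1/c}$, so that $|S^m\cap\la z\ra|\gg_d m^c\ge m^2$. Using the inclusion $g\cdot(S^{n/2}\cap\la z\ra)\subseteq F_{S^n}(\bar g)$ for any preimage $g\in S^{n/2}$ of $\bar g\in\bar S^{n/2}$, I obtain
\[
|S^n|\ge|\bar S^{n/2}|\cdot|S^{n/2}\cap\la z\ra|\gg_d n^{d-1}|S|\cdot n^c\ge n^{d+1}|S|.
\]

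The hard part will be executing the recursion in Case $M\ge 2$ cleanly: the Cauchy--Davenport step depends on careful bookkeeping of how distinct lifts of the factorisation $\bar t\bar g$ produce distinct fibre elements via central shifts, and the telescoped sum must be combined with the right choice of $\bar t$ and the inductive estimate on $|\bar S^k|$ to extract the factor $|S|$ rather than merely $|\bar S|$. A secondary obstacle is verifying that the choice of $z$ as a primitive element of $\gamma_c(\Gamma)\cap\Gamma$ with $\la z\ra$ isolated is always available and results in a torsion-free quotient of the expected Mal'cev dimension, which follows from working in the Mal'cev completion $N$ and picking a rational one-parameter subgroup of $Z(N)\cap\gamma_c(N)$.
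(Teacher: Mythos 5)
Your high-level strategy (induct on the Mal'cev dimension, quotient by an isolated central direction, use Cauchy--Davenport to control fibres, and combine with the inductive hypothesis) is the same as the paper's, and your Case $M\ge2$ argument is a correct and arguably cleaner execution of it: the fibre recursion $|S^n|\ge|S^{n-1}|+(M-1)|\bar S^{n-1}|$ is right, and telescoping combined with $(M-1)|\bar S|\ge\frac{M-1}{M}|S|\ge\frac12|S|$ (note: not your ``$|S|/|\bar S|\ge2$,'' which need not hold) gives the bound. However, Case $M=1$ has a genuine gap.

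The problem is that you choose $z$ as an arbitrary primitive element of $\gamma_c(\Gamma)$ and then invoke ``Pansu-type estimates'' to claim $|S^m\cap\la z\ra|\gg_d m^c$. Pansu/Guivarc'h estimates give $|z^j|_S\asymp j^{1/c}$ with implied constants depending on $\Gamma$, $S$ and $z$, not only on $d$; so they do not yield a lower bound with a $d$-only constant. More fundamentally, when $\gamma_c(\Gamma)$ has rank at least $2$, there may be \emph{no} non-trivial element of $\la z\ra$ of word length bounded in terms of $d$, so even a linear lower bound $|S^m\cap\la z\ra|\gg_d m$ is false. A concrete counterexample: let $\Gamma$ be the free nilpotent group of class $2$ on three generators $x_1,x_2,x_3$, with $S=\{1,x_1^{\pm1},x_2^{\pm1},x_3^{\pm1}\}$, and let $z=[x_1,x_2][x_1,x_3]^N[x_2,x_3]^{N^2}$. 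This $z$ is primitive in $\gamma_2(\Gamma)\cong\Z^3$ with $\la z\ra$ isolated, and $M=1$, but the shortest non-trivial element of $\la z\ra$ has word length $\asymp N$ (since a central element $[x_1,x_2]^a[x_1,x_3]^b[x_2,x_3]^c$ has word length $\asymp\max(\sqrt{|a|},\sqrt{|b|},\sqrt{|c|})$), so $|S^m\cap\la z\ra|=1$ for all $m\lesssim N$.

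The fix is the paper's: rather than choosing the central direction first, pick $x_1,\ldots,x_c\in S$ with $[x_1,\ldots,x_c]\ne1$ (possible because $S$ generates and $\gamma_c(\Gamma)\ne\{1\}$) and let the one-parameter subgroup $H$ pass through this commutator, so that $\Gamma\cap H$ contains a non-trivial element of word length $\le 3\cdot2^{c-1}\ll_d1$. With this choice, \cref{lem:sumset.lower.bound} gives $|S^{\lceil n/2\rceil}\cap H|\gg_d n|S^2\cap H|$, the induction applied to $\Gamma/(\Gamma\cap H)$ gives $|S^{\lfloor n/2\rfloor}H/H|\gg_d n^{d-1}|SH/H|$, and \cref{lem:lb.quotient.kernel} combines the two to give $|S^n|\gg_d n^d|SH/H|\,|S^2\cap H|\ge n^d|S|$. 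Once $z$ (equivalently $H$) is chosen this way, your case split becomes unnecessary: the paper's single argument subsumes both of your cases, and avoids the non-uniform Pansu input entirely.
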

We use the following standard result from additive combinatorics.
\begin{lemma}\label{lem:sumset.lower.bound}
Let $A$ and $B$ be finite subsets of $\Z$. Then $|A+B|\ge |A|+|B|-1$. In particular, by induction on $n$ we have $|nA|\ge n(|A|-1)$.
\end{lemma}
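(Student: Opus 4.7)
The plan is to prove the first inequality by exhibiting an explicit chain of $|A|+|B|-1$ strictly increasing elements inside $A+B$, and then to iterate this to obtain the $n$-fold bound.

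First, I would write $A=\{a_1<a_2<\cdots<a_s\}$ and $B=\{b_1<b_2<\cdots<b_t\}$ with $s=|A|$ and $t=|B|$, in strictly increasing order. The key observation is that the chain
\[
a_1+b_1<a_1+b_2<\cdots<a_1+b_t<a_2+b_t<a_3+b_t<\cdots<a_s+b_t
\]
is strictly increasing (the first $t$ inequalities use $b_1<\cdots<b_t$, the remaining $s-1$ use $a_1<\cdots<a_s$). This yields $t+(s-1)=|A|+|B|-1$ distinct elements of $A+B$, proving the first inequality.

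For the second part, I would proceed by induction on $n$. The base case $n=1$ is the trivial inequality $|A|\ge|A|-1$. For the inductive step, assuming $|nA|\ge n(|A|-1)+1$ (a slight strengthening that I would carry through the induction, since it holds at $n=1$), apply the first inequality to the pair $(nA,A)$ to get
\[
|(n{+}1)A|=|nA+A|\ge|nA|+|A|-1\ge n(|A|-1)+1+|A|-1=(n{+}1)(|A|-1)+1,
\]
which closes the induction and in particular implies the stated bound $|nA|\ge n(|A|-1)$.

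There is no real obstacle here; the only subtlety is choosing a slightly stronger inductive hypothesis (with the ``$+1$'') so that the inductive step closes cleanly when we subtract $1$ from the application of the first inequality. The statement as given follows immediately by discarding this extra $+1$.
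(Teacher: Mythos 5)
Your proof is correct and uses essentially the same approach as the paper: exhibit a strictly increasing chain of length $|A|+|B|-1$ inside $A+B$ (the paper fixes $b_1$ and increases $a$ first, then fixes $a_p$ and increases $b$; yours is the mirror image, which is equivalent). One small inaccuracy in your commentary: the strengthened inductive hypothesis is not actually needed, since $n(|A|-1)+|A|-1=(n+1)(|A|-1)$, so the naive induction with hypothesis $|nA|\ge n(|A|-1)$ already closes — though of course carrying the extra $+1$ does no harm.
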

\begin{proof}
Label the elements of $A$ as $a_1<\ldots<a_p$ and the elements of $B$ as $b_1<\ldots<b_q$, and note that $a_1+b_1<a_2+b_1<\cdots<a_p+b_1<a_p+b_2<\cdots<a_p+b_q$, so that all of the elements in this last list are distinct.
\end{proof}
We also use the following combinatorial lemma.
\begin{lemma}[{\cite[Lemmas 2.6.2 \& 2.6.3]{tointon.book}}]\label{lem:lb.quotient.kernel}
Suppose $G$ is a group, $H\le G$ is a subgroup, and $A\subseteq G$ is a finite symmetric subset. Then $|A|\le|AH/H||A^2\cap H|$ and $|A^{m+n}|\ge|A^mH/H||A^n\cap H|$ for every $m,n\in\N$.
\end{lemma}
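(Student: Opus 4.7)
The plan is to prove both inequalities by standard fiber-counting over the quotient map $G \to G/H$. Both statements relate the size of powers of $A$ to the product of a ``quotient part'' (how many $H$-cosets are hit) and a ``kernel part'' (how many elements fall into $H$ itself).

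For the first inequality, I would partition $A$ according to which $H$-coset each element lies in. There are exactly $|AH/H|$ such cosets $aH$ represented in $A$. It therefore suffices to show that for every such coset, $|A \cap aH| \le |A^2 \cap H|$. Fix $a \in A \cap aH$; then for any other $a' \in A \cap aH$ we have $a^{-1}a' \in H$, and since $A$ is symmetric we also have $a^{-1} \in A$, so $a^{-1}a' \in A^2 \cap H$. The map $a' \mapsto a^{-1}a'$ is clearly injective, giving the required bound on each fiber, and summing over cosets yields $|A| \le |AH/H|\,|A^2\cap H|$.

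For the second inequality, the idea runs in reverse: I build a large subset of $A^{m+n}$ by taking one representative $g \in A^m$ from each coset in $A^m H/H$ and translating $A^n \cap H$ by it. Since $g \in A^m$ and $A^n \cap H \subseteq A^n$, each set $g(A^n \cap H)$ lies in $A^{m+n}$ and has cardinality exactly $|A^n \cap H|$. Moreover, distinct representatives $g, g'$ give sets lying in distinct $H$-cosets $gH$ and $g'H$, so the translates are pairwise disjoint. Taking the union over all $|A^m H/H|$ representatives gives $|A^{m+n}| \ge |A^m H/H|\,|A^n \cap H|$, as required.

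There is no real obstacle here; both statements are elementary counting. The only subtlety worth flagging is that the symmetry hypothesis $A = A^{-1}$ is genuinely used in the first inequality (to pass from $a^{-1}a' \in A \cdot A^{-1}$ to $a^{-1}a' \in A^2$), whereas the second inequality holds for arbitrary finite $A$ and does not require symmetry.
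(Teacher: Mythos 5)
Your proof is correct, and it is the standard fiber-counting argument used in the cited reference \cite[Lemmas 2.6.2 \& 2.6.3]{tointon.book}; the paper itself does not reprove the lemma but simply cites the book. Your remark about where symmetry is and isn't needed is also accurate.
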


\begin{proof}[Proof of \cref{prop:growthLowerBound}]
We proceed by induction on $d$, the base case $d=1$ following from \cref{lem:sumset.lower.bound}. Let $c$ be the class of $\Gamma$, and let $N$ be the Mal'cev completion of $\Gamma$. Let $x_1,\ldots,x_c\in S$ be such that $[x_1,\ldots,x_c]\ne1$, and let $H$ be the central one-parameter subgroup of $N$ generated by $[x_1,\ldots,x_c]$. Let $r\ll_d1$ be the word length of $[x_1,\ldots,x_c]$. Since $|S^n|\ge|S|$ for all $n\in\N$, it suffices to consider only $n\ge2r$. Given $n\ge2r$, we claim that $|S^{\lfloor n/2\rfloor}\cap H|\gg_dn|S^2\cap H|$. To see this, first note that $S^r\cap H$ contains both the identity and $[x_1,\ldots,x_c]^{\pm1}$, and hence that $|S^r\cap H|\ge3$. \cref{lem:sumset.lower.bound} and the fact that $n\ge2r$ therefore imply that $|S^{\lceil n/2\rceil}\cap H|\ge|(S^r\cap H)^{\lfloor n/2r\rfloor}|\gg\lfloor n/2r\rfloor|S^r\cap H|\gg_dn|S^2\cap H|$ as claimed. Since $\dim(N/H)=d-1$, by induction on $d$ we have that $|S^{\lfloor n/2\rfloor}H/H|\gg_dn^{d-1}|SH/H|$, and the proposition then follows from \cref{lem:lb.quotient.kernel}.
\end{proof}

We now reduce \cref{prop:dimension.bound} to the case $X=\{1\}$ via the following lemma.
  
\begin{lemma}\label{prop:fromStoT}
Let $\eta,t,n\in\N$, and let $G$ be a group with a finite symmetric generating set $S$ containing the identity. Suppose $X\subseteq S^t$ and $Q\subseteq G$ both contain the identity and satisfy
\[
XQ^r\subseteq S^{rn}\subseteq XQ^{\eta r}
\]
for $r=1$ and $r=2t+1$. Then writing $\hat Q=Q\cup Q^{-1}$ and $T=S^{2t+1}\cap\hat Q^{2\eta}$ we have $S\subseteq XT$;
\begin{equation}\label{eq:S^j.cap.Q}
S^j\cap\hat Q^\eta\subseteq T^j,\qquad j=1,\ldots,n;
\end{equation}
and
\[
\hat Q\subseteq T^n\subseteq\hat Q^{4^{|X|}(2t+1)\eta}.
\]
\end{lemma}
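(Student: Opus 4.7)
The proof treats the three conclusions in sequence. The first inclusion $S \subseteq XT$ is immediate from the hypothesis: given $s \in S \subseteq S^n \subseteq X\hat Q^\eta$, writing $s = xq$ with $x \in X$ and $q \in \hat Q^\eta$, the element $q = x^{-1}s$ lies in $S^{t+1} \cap \hat Q^\eta \subseteq S^{2t+1} \cap \hat Q^{2\eta} = T$.

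For $S^j \cap \hat Q^\eta \subseteq T^j$ with $j \le n$, the plan is a telescoping argument on partial products. Since $S$ is symmetric, inverting the hypothesis yields $S^n \subseteq \hat Q^\eta X^{-1}$, so each partial product $p_k = s_1 \cdots s_k$ of an element $s = s_1 \cdots s_j \in S^j \cap \hat Q^\eta$ admits a decomposition $p_k = q_k y_k$ with $q_k \in \hat Q^\eta$ and $y_k \in X^{-1}$; one selects the boundary decompositions $q_0 = y_0 = 1$ (since $p_0 = 1$) and $q_j = s$, $y_j = 1$ (since $p_j = s \in \hat Q^\eta$ and $1 \in X$). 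Defining $\tau_k := y_{k-1} s_k y_k^{-1}$, the product $\prod_{k=1}^{j} \tau_k = y_0 \, s \, y_j^{-1} = s$ telescopes, and clearly $\tau_k \in X^{-1} S X \subseteq S^{2t+1}$. Expanding $\tau_k = y_{k-1} p_{k-1}^{-1} p_k y_k^{-1}$ and substituting $p_{k-1}^{-1} = y_{k-1}^{-1} q_{k-1}^{-1}$ and $p_k = q_k y_k$ causes the $y$'s to cancel, giving $\tau_k = q_{k-1}^{-1} q_k \in \hat Q^{2\eta}$; hence $\tau_k \in T$, as required.

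The lower bound $\hat Q \subseteq T^n$ follows by applying the second conclusion with $j = n$: since $Q \subseteq XQ \subseteq S^n$ and $S$ is symmetric we have $\hat Q \subseteq S^n$, and trivially $\hat Q \subseteq \hat Q^\eta$. The upper bound $T^n \subseteq \hat Q^{4^{|X|}(2t+1)\eta}$ is the main technical obstacle. Setting $L = (2t+1)\eta$ and using $T \subseteq S^{2t+1}$, the hypothesis with $r = 2t+1$ gives $T^n \subseteq S^{n(2t+1)} \subseteq XQ^L$, so $T^n$ is covered by at most $|X|$ translates of $Q^L$. I expect the proof to proceed by an iterative elimination: starting from this cover, one successively eliminates the non-identity representatives $x \in X$ that appear, using the approximate multiplicative relation $X \cdot X \subseteq X Q^L$ (inherited from $S^{2t} \subseteq X Q^L$) together with the already-established inclusion $\hat Q \subseteq T^n$ to absorb each such $x$ into the trivial coset at the cost of roughly quadrupling the $\hat Q$-length allowance. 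Since at most $|X|$ such eliminations are required, the bound $\hat Q^{4^{|X|}L}$ emerges. The main difficulty will be making this elimination step quantitative and controlling the growth of the $\hat Q$-length at each iteration.
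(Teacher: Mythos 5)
The first three conclusions are handled correctly and by essentially the same arguments as the paper. Your telescoping proof of \eqref{eq:S^j.cap.Q} is a cosmetic variant: the paper builds the suffix products $g_i = s_i\cdots s_1$ and decomposes $g_i = x_i q_i$ with $x_i\in X$, $q_i\in \hat Q^\eta$, then writes $g = \prod (q_i q_{i-1}^{-1})$ with each factor equal to $x_i^{-1}s_i x_{i-1}\in T$; you build prefixes and decompose in the opposite order. Both are fine. Likewise $S\subseteq XT$ and $\hat Q\subseteq T^n$ are the same as the paper.

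The gap is in the upper bound $T^n\subseteq\hat Q^{4^{|X|}(2t+1)\eta}$. You write ``I expect the proof to proceed by an iterative elimination'' and close with ``The main difficulty will be making this elimination step quantitative'' — so by your own account this is not a proof, only a guess at its shape. The specific mechanism you propose (using $X\cdot X\subseteq XQ^L$ together with the already-established $\hat Q\subseteq T^n$ to ``absorb'' non-identity representatives into the trivial coset) does not correspond to what actually works, and I don't see how to turn it into a proof: the issue is not that some $x\in X$ appears in a product expansion, but that an element of $T^j$ might a priori land in $xQ^L$ for $x\ne 1$, and nothing in your sketch forces it back to the trivial coset.

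The correct argument has two pieces, and the elimination you sense is needed happens \emph{before} $T$ enters the picture, as a pure covering statement about $X$ and $\hat Q$. One first proves a \emph{separation claim}: there exist $X'\subseteq X$ and $m$ with $(2t+1)\eta\le m\le 4^{|X|}(2t+1)\eta$ such that $X\hat Q^{(2t+1)\eta}\subseteq X'\hat Q^m$ and $x\hat Q^m\cap\hat Q^{2m}=\varnothing$ for every $x\in X'\setminus\{1\}$. This is proved greedily: start with $X'=X$, $m=(2t+1)\eta$; if separation fails for some $x\ne 1$ then $x\in\hat Q^{3m}$, so $x\hat Q^m\subseteq\hat Q^{4m}$, and one may delete $x$ from $X'$ while replacing $m$ by $4m$. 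Each of the $\le|X|$ deletions quadruples $m$, which is where $4^{|X|}$ comes from. Then one proves $T^j\subseteq\hat Q^m$ for all $j\le n$ by induction on $j$: the base case holds since $T\subseteq\hat Q^{2\eta}\subseteq\hat Q^m$; for the inductive step $T^j\subseteq T^{j-1}T\subseteq\hat Q^{m+2\eta}\subseteq\hat Q^{2m}$, while also $T^j\subseteq S^{(2t+1)n}\subseteq XQ^{(2t+1)\eta}\subseteq X'\hat Q^m$, and the separation property now forces $T^j$ into the trivial coset $\hat Q^m$. Note that $\hat Q\subseteq T^n$ plays no role here; the crucial input is the disjointness of $x\hat Q^m$ from $\hat Q^{2m}$, not a multiplicative relation on $X$.
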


\begin{proof}
First, the fact that $S\subseteq XQ^\eta$ and $X\subseteq S^t$ implies that $S\subseteq X(S^{t+1}\cap Q^\eta)\subseteq XT$, as required.

Next, let $j\in\{1,\ldots,n\}$, and let $g\in S^j\cap\hat Q^\eta$. By definition there exist elements $s_1,\ldots,s_j\in S$ such that $g=s_j\cdots s_1$. Set $g_0=1$ and $g_i=s_is_{i-1}\cdots s_1$ for $i=1,\ldots,j$, noting that $g=g_j$. Since $S^j\subseteq S^n\subseteq XQ^\eta$, it follows that for each $i=0,1,\ldots,j$ there exist $x_i\in X$ and $q_i\in\hat Q^\eta$ such that $g_i=x_iq_i$. Moreover, since $1,g_j\in\hat Q^\eta$ we may assume that $q_0=x_0=x_j=1$, and hence $g=q_j$. We then have $q_{i}q_{i-1}^{-1}=x_is_ix_{i-1}^{-1}\in S^{2t+1}\cap\hat Q^{2\eta}=T$ for each $i$, and hence $g=q_j\in T^j$, giving \eqref{eq:S^j.cap.Q} as required.

Since $\hat Q\subseteq S^n$, the case $j=n$ of \eqref{eq:S^j.cap.Q} implies in particular that $\hat Q\subseteq T^n$. It therefore remains only to show that $T^n\subseteq\hat Q^{4^{|X|}(2t+1)\eta}$. To prove this, we first claim that there exist $X'\subseteq X$ and $m\in\N$ satisfying $(2t+1)\eta\le m\le4^{|X|}(2t+1)\eta$ such that
\begin{equation}\label{eq:T<P.2}
X\hat Q^{(2t+1)\eta}\subseteq X'\hat Q^m
\end{equation}
and
\begin{equation}\label{eq:T<P.3}
x\hat Q^m\cap\hat Q^{2m}=\varnothing
\end{equation}
for every $x\in X'\setminus\{1\}$. To prove this claim, we first check whether it holds with $X'=X$ and $m=(2t+1)\eta$. These choices certainly satisfy \eqref{eq:T<P.2}. If they also satisfy \eqref{eq:T<P.3} then the claim holds. If not, there exists $x\in X'\backslash\{1\}$ such that $x\hat Q^m\cap\hat Q^{2m}\ne\varnothing$, and hence $x\in\hat Q^{3m}$ and so
\[
x\hat Q^m\subseteq\hat Q^{4m}.
\]
If we replace $X'$ by $X'\backslash\{x\}$ and $m$ by $4m$ then \eqref{eq:T<P.2} is therefore still satisfied, and so we check again whether \eqref{eq:T<P.3} is satisfied, and repeat if necessary. This process terminates after at most $|X|$ steps, and so the claim is proved.

We now claim that $T^j\subseteq\hat Q^m$ for $j=1,\ldots,n$, which of course implies in particular that $T^n\subseteq\hat Q^{4^{|X|}(2t+1)\eta}$ as required. The case $j=1$ follows from the definition of $T$ and the fact that $m\ge2\eta$, so by induction we may assume that $T^{j-1}\subseteq\hat Q^m$, and hence that $T^j\subseteq\hat Q^{m+2\eta}$. Since $T^n\subseteq S^{(2t+1)n}$ we also have $T^n\subseteq XQ^{(2t+1)\eta}$ by hypothesis, and hence $T^j\subseteq X'\hat Q^m$. It follows that $T^j\subseteq\hat Q^{m+2\eta}\cap X'\hat Q^m$, and hence, by definition of $X'$ and $m$, that $T^j\subseteq\hat Q^m$, as claimed.
\end{proof}

\begin{proof}[Proof of \cref{prop:dimension.bound}]
Applying \cref{prop:fromStoT} and \eqref{eq:inverseProg} we obtain a positive integer $\rho=\rho(d,k,t)$ and a symmetric subset $T\subseteq G$ containing the identity such that $P\subseteq T^n\subseteq P^{\rho\eta}$ and $S\subseteq XT$. Note that this last containment implies that $|T|\ge|S|/k$.

Set $\omega=1/(8\rho\eta)$ and $\gamma=8\rho$, and suppose that $m\in\N$ satisfies $m/n\ge\gamma\eta$, so that $m/8\ge \omega m\ge n$. We then have
\begin{align*}
\big\lceil\lceil\omega m\rceil/n\big\rceil\rho\eta&\le4\omega\rho\eta m/n&&\text{(since $\omega m\ge n$)}\\
   &= m/2n\\
   &\le\lfloor m/n\rfloor-1&&\text{(since $ m\ge8n$),}
\end{align*}
and hence
\begin{equation}\label{eq:T.pow.k<P.pow.k/n}
T^{\lceil\omega m\rceil}\subseteq T^{\lceil\lceil\omega m\rceil/n\rceil n}\subseteq P^{\lfloor m/n\rfloor-1}.
\end{equation}
The hypothesis on $P$ implies that $P^{\lfloor m/n\rfloor}\subseteq S^m$. Writing $H$ for the symmetry group of $P$, we may therefore conclude from \eqref{eq:T.pow.k<P.pow.k/n} that $T^{\lceil\omega m\rceil}H\subseteq S^m$. Provided $\inj P\ge m/n$, we may also conclude from \eqref{eq:T.pow.k<P.pow.k/n} that, writing $\pi$ for the projector of $P$, there exists a unique symmetric subset $\tilde T\subseteq\tilde P^{\lfloor m/n\rfloor}$ containing the identity such that $\pi(\tilde T)=TH/H$, and that this set satisfies $|TH|=|\tilde T||H|$ and $|T^{\lceil\omega m\rceil}H|=|\tilde T^{\lceil\omega m\rceil}||H|$. This implies in particular that $|S^m|\ge|\tilde T^{\lceil\omega m\rceil}||H|$.

\cref{prop:growth.lower.bound.dim} implies that $|\tilde T^{\lceil\omega m\rceil}|\gg_d(\omega m)^{\dim P}|\tilde T|\gg_{\eta,d,k,t}m^{\dim P}|\tilde T|$, whilst \cite[Proposition~3.1]{lmtt} implies that $|\tilde T^{\lceil\omega m\rceil}|\gg_{d}(\omega m)^{\hdim P}\gg_{\eta,d,k,t}m^{\hdim P}$, and so we conclude that
\[
|S^m|\gg_{\eta,d,k,t}m^{\hdim P}
\]
and
\[
|S^m|\gg_{\eta,d,k,t}m^{\dim P}|\tilde T||H|=m^{\dim P}|T|\ge m^{\dim P}|S|/k,
\]
as required.
\end{proof}

\subsection{Growth of Lie progressions}\label{section:growth}
In his proof of \cref{thm:tao}, Tao \cite[\S4]{tao} implicitly proved a partial analogue of \cref{thm:detailed.fine.scale} valid in a certain ultralimit. In his argument, objects called \emph{ultra coset nilprogressions} played a role analogous to that of the Lie progressions $P_i$ appearing in our \cref{thm:detailed.fine.scale}, and a key part of his proof consisted of estimating the growth of these ultra coset progressions.

In order to prove \cref{cor:tao}, we adapt Tao's argument to our finitary setting, the more refined nature of which allows us to make the quantitative aspects of his computation much more precise, to an extent that allows us to obtain the improvements of \cref{cor:tao} compared to \cref{thm:tao}. The most important improvements are to the bounds on the degrees, which are sharp in our result and in particular lead to \cref{cor:Benj}.

The main technical content of this section lies in the following proposition.
\begin{prop}[fine-scale growth of Lie progressions]\label{prop:growthfunction}
Suppose $P$ is a $Q$-rational Lie progression of dimension $d$ in $C$-upper-triangular form. Then there exists a continuous, increasing, piecewise-monomial function $f$, with degree increasing, bounded below by $d$, and bounded above by $\hdim P$, such that $|P^n|\ll_{C,d,Q}f(n)$ for all $n\in\N$ and $|P^n|\asymp_{C,d,Q}f(n)$ for all $n\le\inj P$.
\end{prop}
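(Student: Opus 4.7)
Write $\tilde P$, $\pi$, and $H$ for the underlying raw Lie progression, projector and symmetry group of $P$. Since $|P^n|=|H|\cdot|\pi(\tilde P^n)|$, equality $|P^n|=|H|\cdot|\tilde P^n|$ holds whenever $\pi$ is injective on $\tilde P^n$, in particular when $n\le\inj P$, and the upper bound $|P^n|\le|H|\cdot|\tilde P^n|$ holds always. It therefore suffices to approximate $|\tilde P^n|$ by a piecewise-monomial function of $n$. Using \cref{lem:make.integral} we absorb a factor $M\ll_dQ$ into the implicit constants and reduce to the case where $\tilde P$ is integral, so that $(e_i)_{i=1}^d$ is a strong Mal'cev basis for the lattice $\Gamma=\la u_1,\ldots,u_d\ra$ and every element of $\Gamma$ has a unique normal-form expression $u_1^{a_1}\cdots u_d^{a_d}$ with $a_i\in\Z$.

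The key step is to sandwich $\tilde P^n$, up to constants $O_{C,d,Q}(1)$, between two coordinate boxes
\[
\bigl\{u_1^{a_1}\cdots u_d^{a_d}:|a_i|\le\Lambda_i(n)\bigr\},
\]
where the \emph{effective lengths} $\Lambda_i:[1,\infty)\to[1,\infty)$ are defined as the maximum, over all basic commutators $\alpha$ in the $u_j$ whose fixed $P$-expression involves $u_i$, of the monomials $n^{|\chi(\alpha)|}L^{\chi(\alpha)}$. As a maximum of finitely many positive monomials in $n$, each $\Lambda_i$ is continuous and piecewise monomial, with degree non-decreasing in $n$ (the higher-degree monomial eventually dominates). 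The upper inclusion of the sandwich refines \cref{lem:upper-tri.doubling.dilate}: tracking the $P$-expressions rather than only the coarse bound $n^\zeta L$ gives the sharper coordinate control $|a_i|\ll_{C,d}\Lambda_i(n)$. Uniqueness of the normal form then yields $|\tilde P^n|\asymp_{C,d,Q}\prod_{i=1}^d\Lambda_i(n)$, and we set $f(n)=|H|\prod_{i=1}^d\Lambda_i(n)$.

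A product of continuous, piecewise-monomial functions with non-decreasing degrees is again of this form, with pieces a common refinement of those of the factors, so $f$ has the required shape. At the smallest scales each $\Lambda_i(n)=nL_i$ contributes $1$ to the total degree, giving $\deg f=d$; at the largest scales each $\Lambda_i(n)$ realises its monomial of maximal exponent $\zeta(i)$, giving $\deg f=\sum_{i=1}^d\zeta(i)$, which equals $\hdim P$ by the Bass--Guivarc'h identity applied to the filtration on $\Gamma$ induced by the upper-triangular form. Monotonicity of the degree across break points is automatic, since at each breakpoint of some $\Lambda_i$ the dominant monomial switches from a lower- to a higher-degree one, and no breakpoint can decrease the degree of the product.

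\textbf{Main obstacle.} The technical heart of the argument is the converse (lower) inclusion in the sandwich: for each $i$ and each basic commutator $\alpha$ contributing to $\Lambda_i(n)$, one must realise $u_i^m$ as a word of length $O_{C,d}(n)$ in $\tilde P$ for $m$ of order $n^{|\chi(\alpha)|}L^{\chi(\alpha)}$. This should be proved by induction on the weight $|\chi(\alpha)|$, using that $u_j^{L_j}\in\tilde P$ for each $j$ and that the commutator of two such powers produces, modulo higher-weight terms controlled by the upper-triangular constraints, the appropriate element in the next layer of the lower central series. Everything else—continuity of $f$, the upper bound of $|P^n|$ by $|H|\cdot|\tilde P^n|$ beyond $\inj P$, and the monotonicity of the degree—then follows routinely from the structure of $f$ as a product of maxima of monomials.
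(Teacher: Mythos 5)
Your coordinate-box strategy is genuinely different from the paper's proof, which passes through Lebesgue volumes of boxes in the Lie algebra, van der Corput's lattice-point bound, and Cramer's rule (\cref{prop:prog.as.nilbox}, \cref{lem:cramer}, \cref{lem:box=poly}). Unfortunately, the central identity $|\tilde P^n|\asymp_{C,d,Q}\prod_{i=1}^d\Lambda_i(n)$ that your argument rests on is false. The problem is that a single basic commutator $\alpha$ contributes a one-dimensional direction in the Lie algebra, yet its normal-form coordinates can be nonzero along several of the $e_i$ simultaneously; each $\Lambda_i$ then absorbs the monomial $n^{|\chi(\alpha)|}L^{\chi(\alpha)}$, and the product over-counts that single direction.

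Concrete counterexample: take the $4$-dimensional, $2$-step nilpotent Lie algebra with basis $e_1,\ldots,e_4$, relations $[e_1,e_2]=e_3+e_4$ and all other brackets among the $e_i$ vanishing, lengths $L_1=L_2=L$ and $L_3=L_4=L^2$, which is in $O(1)$-upper-triangular form. Your formula gives $\Lambda_1(n)=\Lambda_2(n)=nL$ and $\Lambda_3(n)=\Lambda_4(n)\asymp n^2L^2$, so $\prod_i\Lambda_i(n)\asymp n^6L^6$. But in the group $[u_1,u_2]=u_3u_4$ with $u_3,u_4$ central, so the commutator contributions to the $u_3$- and $u_4$-coordinates of any product of $n$ elements of $\tilde P$ are \emph{identical}; the coordinates $(A_3,A_4)$ therefore satisfy $|A_3-A_4|\le 2nL^2$, while individually each is $O(n^2L^2)$. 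Thus the pair $(A_3,A_4)$ ranges over only $\asymp n^2L^2\cdot nL^2=n^3L^4$ values, and $|\tilde P^n|\asymp n^5L^6$ — off from your prediction by a full factor of $n$. Note also that here $\hdim N=5$ while $\sum_i\zeta(i)=1+1+2+2=6$, so the appeal to "the Bass--Guivarc'h identity" to show $\sum_i\zeta(i)=\hdim P$ is likewise false; the $\zeta$-weights do not in general track the lower central series filtration when a $P$-expression involves more than one $u_k$. The degree-$6$ piece your $f$ would acquire already violates the conclusion $\deg f\le\hdim P$ of the proposition.

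The missing ingredient is exactly what the paper's determinant coefficients $\vol(B_\R(e_{i_1},\ldots,e_{i_d};L^\chi))$ in \cref{lem:box=poly} supply: they vanish (or shrink) when the chosen basic Lie brackets are linearly dependent or nearly parallel, which is what prevents the over-counting. Your upper inclusion $\tilde P^n\subseteq\{u_1^{a_1}\cdots u_d^{a_d}:|a_i|\ll\Lambda_i(n)\}$ is plausibly correct (it is a refinement of \cref{lem:upper-tri.doubling.dilate}), but the reverse inclusion — and hence the lower bound on $|\tilde P^n|$ — fails, and the failure is not fixable by adjusting constants, since the exponent of $n$ is wrong.
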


We start by noting that it is sufficient to prove \cref{prop:growthfunction} with a polynomial function in place of a piecewise-monomial function, as follows.

\begin{lemma}\label{lem:poly=piecewise.mono}
Let $k,r\in\Z$ with $0\le k\le r$, let $\alpha_k,\alpha_{k+1},\ldots,\alpha_r\ge0$, and define a polynomial $f$ by $f(x)=\alpha_kx^k+\alpha_{k+1}x^{k+1}+\cdots+\alpha_rx^r$. Then there exist $x_k=1\le x_{k+1}\le\cdots\le x_r<x_{r+1}=\infty$ such that the piecewise-monomial function $h:[1,\infty)\to[0,\infty)$ defined by $h(x)=\alpha_ix^i$ for $x\in[x_i,x_{i+1})$ is continuous and satisfies $f(x)\asymp_rh(x)$ for $x\ge1$.
\end{lemma}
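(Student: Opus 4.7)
The plan is to approximate $f$ by its pointwise largest monomial. Define $g(x) := \max_{k \le i \le r} \alpha_i x^i$ on $[1,\infty)$. Since $g$ is the maximum of $r-k+1$ non-negative terms whose sum is $f$, we get the trivial sandwich $g(x) \le f(x) \le (r-k+1)g(x)$, so $f \asymp_r g$. It therefore suffices to realise $g$ in the piecewise-monomial form demanded by the lemma, padding with empty intervals where necessary to obtain one piece for each degree $i \in \{k,\ldots,r\}$.

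The decisive observation for describing $g$ is that for $x \ge 1$ and any $i > j$, the ratio $\alpha_i x^i/(\alpha_j x^j) = (\alpha_i/\alpha_j)x^{i-j}$ is non-decreasing in $x$. Thus once the degree-$i$ monomial overtakes the degree-$j$ one, it stays ahead, and the argmax defining $g(x)$ is a non-decreasing integer-valued step function of $x$. Listing its distinct values as $j_1 < j_2 < \cdots < j_s$ (each with $\alpha_{j_t}>0$), the transition point $y_t$ between pieces $j_t$ and $j_{t+1}$ is forced by the equation $\alpha_{j_t}y_t^{j_t} = \alpha_{j_{t+1}}y_t^{j_{t+1}}$, giving $y_t = (\alpha_{j_t}/\alpha_{j_{t+1}})^{1/(j_{t+1}-j_t)}$; this same equation guarantees continuity of $g$ across $y_t$. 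With the conventions $y_0 := 1$ and $y_s := \infty$, we have $g(x) = \alpha_{j_t}x^{j_t}$ on $[y_{t-1},y_t)$.

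To convert this to the form stipulated in the lemma we insert empty pieces for every index in $\{k,\ldots,r\}\setminus\{j_1,\ldots,j_s\}$: declare $x_i := 1$ for $k \le i \le j_1$, $x_i := y_t$ for $j_t < i \le j_{t+1}$ and each $1 \le t \le s-1$, and $x_{r+1} := \infty$. The sequence is then non-decreasing, and the function $h$ defined by $h(x) = \alpha_i x^i$ on $[x_i, x_{i+1})$ agrees with $g$ on every non-empty piece, giving $h \asymp_r f$. The main obstacle, minor but worth stating, is the bookkeeping of the empty pieces together with the edge cases where $\alpha_r = 0$ or $f \equiv 0$: the former is handled by first truncating trailing zero coefficients so that $j_s = r$ and $x_r < \infty$, and the latter is trivial.
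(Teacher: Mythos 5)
Your proof takes the same approach as the paper's: define $h$ as the pointwise maximum $\max_i \alpha_i x^i$, establish the sandwich $h(x)\le f(x)\le(r-k+1)h(x)$, and use the monotonicity of the maximising index (the degree-$i$ term overtakes the degree-$j$ term at most once when $i>j$) to realise $h$ as a continuous piecewise-monomial function of the required form. The paper's proof is terser on the bookkeeping of the $x_i$ and the empty-interval / trailing-zero edge cases, but the argument is substantively identical.
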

\begin{proof}
We claim that setting $h(x)=\max_i\alpha_ix^i$ satisfies the lemma. This function $h$ is certainly continuous. Moreover, if $0<x<y$ and if $i$ and $j$ are such that $h(x)=\alpha_ix^i$ and $h(y)=\alpha_jy^j$ then we have $\alpha_ix^i\geq \alpha_jx^j$ and $\alpha_iy^i\leq\alpha_jy^j$. This in turn implies that $(y/x)^i\le(y/x)^j$, and hence that $i\le j$, so that the $i$ for which $\alpha_ix^i$ is maximal is non-decreasing in $x$. This easily implies the existence of $x_k=1\le x_{k+1}\le\cdots\le x_r<x_{r+1}=\infty$ such that $h(x)=\alpha_ix^i$ for $x\in[x_i,x_{i+1})$. Finally, following Tao \cite[\S4]{tao}, note that for every $a,b>0$ we have $a+b\asymp\max\{a,b\}$, so that $f(x)\asymp_rh(x)$ as required.
\end{proof}

The next step in the proof of \cref{prop:growthfunction} is the following estimate. Recall that we defined basic Lie brackets in Section \ref{sec:basic.comms}. 
\begin{prop}\label{prop:prog.as.nilbox}
Suppose that $G$ is a connected, simply connected nilpotent Lie group with Lie algebra $\g$ of dimension $d$, and $P(x;L)\subseteq G$ is an integral raw Lie progression with basis $e_1,\ldots,e_d$ in $C$-upper-triangular form. Then extending $e=(e_1,\ldots,e_d)$ to a list $\overline e=(e_1,\ldots,e_r)$ of basic Lie brackets, and writing $\vol$ for Lebesgue measure on the Lie algebra $\g$ of $G$, normalised so that the lattice $\Lambda=\la e_1,\ldots,e_d\ra$ has determinant $1$, we have
\[
\vol(B_\R(\overline e;(nL)^\chi))\ll_{C,d}|P(x;L)^n|\ll_d\vol(B_\R(\overline e;(nL)^\chi))
\]
for all $n\in\N$.
\end{prop}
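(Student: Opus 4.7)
The plan is to transfer $|P(x;L)^n|$ to a lattice-point count inside the Lie algebra $\g$ and then compare it with the volume via a standard convex-geometric argument.

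First, I would sandwich $P(x;L)^n$ using Proposition~\ref{prop:goodReducReal1}: writing $\Gamma=\langle x_1,\ldots,x_d\rangle$, we have
\[
P(x;L)^n \;\subseteq\; P_\R(x;L)^n\cap\Gamma \;\subseteq\; P(x;L)^{O_{C,d}(n)}.
\]
To transfer $P_\R(x;L)^n$ into the Lie algebra, I would apply Lemma~\ref{lem:freeequivalences} in the free simply connected nilpotent Lie group $\widetilde N$ of class $c\le d$ on $d$ generators $\widetilde x_1,\ldots,\widetilde x_d$ and push the resulting inclusions forward along the unique surjective Lie group homomorphism $\widetilde N\to G$ sending $\widetilde x_i$ to $x_i$; its differential sends each basic Lie bracket in the $\widetilde e_i=\log\widetilde x_i$ to the corresponding $\overline e_i$ by naturality. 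This yields inclusions in $G$,
\[
P_\R(x;L)^n \;\subseteq\; \exp B_\R\bigl(\overline e; K(nL)^\chi\bigr),
\qquad
\exp B_\R\bigl(\overline e; (nL)^\chi\bigr) \;\subseteq\; P_\R(x;L)^{Kn},
\]
with constant $K=K(d)$ independent of $C$, since the bound in Lemma~\ref{lem:freeequivalences} depends only on the rank and class, both of which are at most $d$.

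Second, I would convert counts in $\Gamma$ into counts in the additive lattice $\Lambda:=\langle e_1,\ldots,e_d\rangle_\Z\subset\g$. Because $(e;1)$ is in upper-triangular form over $\Z$, Corollary~\ref{cor:pp4.1.group} gives $\exp\Lambda=\Gamma$; and because $\exp\colon\g\to G$ is bijective in the simply connected nilpotent setting, it restricts to a bijection $\Lambda\to\Gamma$, so $|\exp B\cap\Gamma|=|B\cap\Lambda|$ for every Borel $B\subseteq\g$. Combined with the first step, this yields
\[
|P(x;L)^n| \;\le\; \bigl|B_\R\bigl(\overline e; K(nL)^\chi\bigr)\cap\Lambda\bigr|,
\qquad
\bigl|B_\R\bigl(\overline e;(nL)^\chi\bigr)\cap\Lambda\bigr| \;\le\; |P(x;L)^{O_{C,d}(n)}|.
\]

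The main technical step is the convex-geometric estimate
\[
\bigl|B_\R(\overline e;M^\chi)\cap\Lambda\bigr| \;\asymp_d\; \vol\bigl(B_\R(\overline e;M^\chi)\bigr)
\qquad\text{whenever $M_i\ge 1$ for every $i$.}
\]
The set $B_\R(\overline e;M^\chi)$ is a centrally symmetric zonotope (a Minkowski sum of the segments $[-M^{\chi(i)},M^{\chi(i)}]\overline e_i$), and it contains the full-dimensional coordinate box $B_\R(e;M)$ spanned by the weight-one vectors $e_1,\ldots,e_d$, which form a basis of $\Lambda$. For this inscribed coordinate box one has $|B_\R(e;M)\cap\Lambda|=\prod_{i=1}^d(2\lfloor M_i\rfloor+1)\asymp\vol(B_\R(e;M))$; using that each basic bracket $\overline e_i$ itself lies in $\Lambda$ (by the integer upper-triangular form), a standard Blichfeldt-style tiling of $\g$ by translates of a fundamental parallelepiped of $\Lambda$ then upgrades this to the full zonotope with only a $d$-dependent loss. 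The main obstacle here is that $\overline e$ consists of $r\ge d$ vectors and is not in general a basis of $\g$, so the count cannot simply be read off a product formula; but the inscribed coordinate box, the integrality of $\overline e$ in $\Lambda$, and the fact that every weight vector $\chi(i)$ is a positive integer make the tiling go through. Combining the three steps and using the homogeneity of $\vol$ (so that rescaling $M^\chi\to KM^\chi$ changes the volume by an $O_{K,d}(1)$ factor) gives the upper bound $|P(x;L)^n|\ll_d\vol(B_\R(\overline e;(nL)^\chi))$; for the lower bound I replace $n$ by $n/O_{C,d}(1)$ in the second inequality above, incurring an $O_{C,d}(1)$ loss in $\vol$, to obtain $\vol(B_\R(\overline e;(nL)^\chi))\ll_{C,d}|P(x;L)^n|$, as required.
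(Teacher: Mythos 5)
Your overall scheme---transfer $|P(x;L)^n|$ to a lattice-point count inside the nilbox via \cref{prop:goodReducReal1} and \cref{lem:freeequivalences}, then compare the count to the volume---matches the paper's, and your use of the same supporting lemmas is correct. The genuine gap is in the central convex-geometric estimate: the lower bound $|B_\R(\overline e;M^\chi)\cap\Lambda| \gg_d \vol(B_\R(\overline e;M^\chi))$.

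Your argument for that step starts from the inscribed coordinate box $B_\R(e;M)$, notes that the count is comparable to the volume there, and claims a Blichfeldt-style tiling ``upgrades this to the full zonotope.'' But this cannot work in the stated form: $\vol(B_\R(e;M)) = 2^d\prod M_i$, whereas $\vol(B_\R(\overline e;M^\chi))$ is typically of a strictly higher polynomial order in the $M_i$ (indeed, capturing this discrepancy is the entire point of using the nilbox). So a count lower bound inherited from the inscribed coordinate box is off by a polynomial factor and no bounded-loss tiling can recover it. The integrality of the $\overline e_i$ in $\Lambda$ and the fact that the weight vectors have integer entries, which you also invoke, are likewise not what makes the lower bound go through. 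The correct tool, which the paper uses, is van der Corput's theorem applied directly to the symmetric convex body $B_\R(\overline e;M^\chi)$: it yields $|B_\R(\overline e;M^\chi)\cap\Lambda| \ge 2^{-d}\vol(B_\R(\overline e;M^\chi))$ in one stroke, with no detour through the coordinate box. (Your upper bound $|B\cap\Lambda|\ll_d\vol(B)$ via a fundamental-domain tiling is fine and matches the paper.)

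Two smaller issues. First, when you ``replace $n$ by $n/O_{C,d}(1)$'' for the lower bound, the resulting exponent may drop below $1$ for small $n$; the paper treats the range $n\le k_d$ separately, using the bounded doubling of $P(x;L)$ from \cref{lem:upper-tri.doubling}, and you would need the same case split. Second, in Step~2 there is no need to pass through the free nilpotent Lie group and push forward: \cref{lem:freeequivalences} is already stated for arbitrary elements of a simply connected nilpotent Lie group (the free-group reduction happens inside its proof), so you can apply it directly in $G$; this is harmless but extra.
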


\begin{lemma}\label{lem:box.dilate.size}
Let $v_1,\ldots,v_r\in\R^d$, let $L_1,\ldots,L_r>0$, and let $\alpha_1,\ldots,\alpha_r\ge1$. Then
\[
\vol(B_\R(v,\alpha L))\le\lceil\alpha_1\rceil\cdots\lceil\alpha_r\rceil\vol(B_\R(v,L)).
\]
\end{lemma}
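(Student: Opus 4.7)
The plan is to cover $B_\R(v,\alpha L)$ by at most $\lceil\alpha_1\rceil\cdots\lceil\alpha_r\rceil$ translates of $B_\R(v,L)$ and then conclude by translation invariance and subadditivity of Lebesgue measure. Recall that by definition
\[
B_\R(v;\alpha L)=\{\ell_1v_1+\cdots+\ell_rv_r:|\ell_i|\le\alpha_iL_i\},
\]
and analogously for $B_\R(v;L)$.

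The key step is a one-dimensional covering. For each $i$, since $\alpha_i\ge1$ we have $\lceil\alpha_i\rceil\cdot 2L_i\ge 2\alpha_iL_i$, so the interval $[-\alpha_iL_i,\alpha_iL_i]$ can be covered by $\lceil\alpha_i\rceil$ closed intervals each of length $2L_i$, i.e.\ by $\lceil\alpha_i\rceil$ translates of $[-L_i,L_i]$; for definiteness, one may take the translates centred at $t_{i,k}=-\alpha_iL_i+(2k-1)L_i$ for $k=1,\ldots,\lceil\alpha_i\rceil$. Taking the product of these one-dimensional covers over $i=1,\ldots,r$ gives
\[
B_\R(v,\alpha L)\subseteq\bigcup_{(k_1,\ldots,k_r)}\Bigl(\sum_{i=1}^rt_{i,k_i}v_i+B_\R(v,L)\Bigr),
\]
where the union is over tuples with $1\le k_i\le\lceil\alpha_i\rceil$, and so contains at most $\prod_i\lceil\alpha_i\rceil$ sets.

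Applying $\vol$, using subadditivity and the translation invariance of Lebesgue measure on $\R^d$, immediately yields
\[
\vol(B_\R(v,\alpha L))\le\sum_{(k_1,\ldots,k_r)}\vol(B_\R(v,L))\le\lceil\alpha_1\rceil\cdots\lceil\alpha_r\rceil\,\vol(B_\R(v,L)),
\]
as required. There is no real obstacle; the only point where a hypothesis is used is the step $\lceil\alpha_i\rceil\cdot 2L_i\ge 2\alpha_iL_i$, which is exactly the content of $\alpha_i\ge1$.
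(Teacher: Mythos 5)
Your proof is correct and takes essentially the same approach as the paper: cover $B_\R(v,\alpha L)$ by $\lceil\alpha_1\rceil\cdots\lceil\alpha_r\rceil$ translates of $B_\R(v,L)$ and invoke translation invariance and subadditivity. The only cosmetic difference is that the paper first reduces to the case of integer $\alpha_i$ before doing the covering, whereas you work directly with $\lceil\alpha_i\rceil$ intervals; this is the same idea written out explicitly.
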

\begin{proof}
It suffices to consider the case in which the $\alpha_i$ are integers, and in that case $B_\R(v,\alpha L)$ can be covered by $\alpha_1\cdots\alpha_r$ translates of $B_\R(v,L)$.
\end{proof}

\begin{proof}[Proof of \cref{prop:prog.as.nilbox}]
To prove the upper bound, let $n\in\N$, let $Q=\{\ell_1e_1+\cdots+\ell_de_d:\ell_i\in[0,1]\}$, and note that
\begin{align*}
P^n&\subseteq\exp\Lambda\cap P_\R(x;L)^n\\
   &\subseteq\exp\Lambda\cap\exp\big(B_\R(\overline e;(nL)^\chi)\big)^{O_d(1)}&&\text{(by Lemma \ref{lem:freeequivalences} \ref{item:freeequiv.i})}\\
   &\subseteq\exp\big(\Lambda\cap B_\R(\overline e;(O_d(nL))^\chi)\big)&&\text{(by Lemma \ref{lem:freeequivalences} \ref{item:freeequiv.iv})},
\end{align*}
and hence
\begin{align*}
|P^n|&\le|\Lambda\cap B_\R(\overline e;(O_d(nL))^\chi)|\\
    &=\vol\big(\big(\Lambda\cap B_\R(\overline e;(O_d(nL))^\chi)\big)+Q\big)\\
    &\le\vol B_\R(\overline e;(O_d(nL))^\chi)\\
    &\ll_d\vol B_\R(\overline e;(nL)^\chi)&\text{(by \cref{lem:box.dilate.size}),}
\end{align*}
as required.

We now come to the lower bound. A result of van der Corput \cite{vdC} states that for any convex body $K\subset\R^d$ we have $|\Z^d\cap K|\ge\frac{1}{2^d}\vol(K)$, which in this case implies that
\[
\vol(B_\R(\overline e;(nL)^\chi))\le2^d|\Lambda\cap B_\R(\overline e;(nL)^\chi)|
\]
for every $n\in\N$. Since
\begin{align*}
\exp\big(\Lambda\cap B_\R(\overline e;(nL)^\chi)\big)
    &\subset\la x_1,\ldots,x_d\ra\cap P_\R(x;L)^{O_d(n)}&&\text{(by Lemma \ref{lem:freeequivalences} \ref{item:freeequiv.i})}\\
    &\subset P(x,L)^{O_{C,d}(n)}&&\text{(by \cref{prop:goodReducReal1}),}
\end{align*}
this implies that for some constant $k=k_d>0$ and every $n\in\N$ we have
\begin{equation}\label{eq:vol<}
\vol(B_\R(\overline e;(nL)^\chi))\ll_d|P(x,L)^{kn}|.
\end{equation}
Given $n\ge k$ we therefore have
\begin{align*}
\vol(B_\R(\overline e;(nL)^\chi))
     &\ll_d\vol(B_\R(\overline e;(\lfloor n/k\rfloor L)^\chi))&&\text{(by \cref{lem:box.dilate.size})}\\
     &\ll_d|P(x,L)^n|&&\text{(by \eqref{eq:vol<}),}
\end{align*}
as required. For $n\le k$, note that by \cref{lem:upper-tri.doubling} we have $|P(x,L)^{k^2}|\ll_{C,d}|P(x,L)|$, and hence
\begin{align*}
\vol(B_\R(\overline e;(nL)^\chi))
     &\le\vol(B_\R(\overline e;(kL)^\chi))\\
     &\ll_d|P(x,L)^{k^2}|&&\text{(by \eqref{eq:vol<})}\\
     &\ll_{C,d}|P(x,L)^n|,
\end{align*}
as required.
\end{proof}

To estimate the expression $\vol(B_\R(\overline e;(nL)^\chi))$ appearing in \cref{prop:prog.as.nilbox} we use the following linear-algebraic lemma, which was implicit in \cite[\S4]{tao}.
\begin{lemma}\label{lem:cramer}
Let $d,r\in\N$ with $r\ge d$, and let $M_1,\ldots,M_r>0$. Suppose $\R^d$ is spanned by elements $v_1,\ldots,v_r$. Then there exist $i_1<\ldots<i_d$ such that
\[
B_\R(v;M)\subset r\cdot B_\R(v_{i_1},\ldots,v_{i_d};M_{i_1},\ldots,M_{i_d}).
\]
\end{lemma}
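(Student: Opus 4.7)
The plan is to apply Cramer's rule after a suitable rescaling and an optimal choice of basis. First I would rescale, setting $w_i = M_i v_i$ for $i=1,\ldots,r$; this turns the box $B_\R(v;M)$ into the set $\{\sum_j \ell_j' w_j : |\ell_j'|\le 1\}$ under the substitution $\ell_j' = \ell_j/M_j$, so the problem becomes one about expressing each $w_j$ in terms of some carefully chosen $d$ of the $w_i$.

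Next I would select $i_1<\cdots<i_d$ so as to maximise $|\det(w_{i_1},\ldots,w_{i_d})|$ over all $d$-element subsets of $\{1,\ldots,r\}$. Since the $v_i$ span $\R^d$ so do the $w_i$, hence this maximum is strictly positive and $w_{i_1},\ldots,w_{i_d}$ (equivalently $v_{i_1},\ldots,v_{i_d}$) form a basis of $\R^d$. For each $j\notin\{i_1,\ldots,i_d\}$ write $v_j=\sum_{k=1}^d c_{jk} v_{i_k}$; by Cramer's rule
\[
c_{jk}=\frac{\det(v_{i_1},\ldots,v_{i_{k-1}},v_j,v_{i_{k+1}},\ldots,v_{i_d})}{\det(v_{i_1},\ldots,v_{i_d})}=\frac{M_{i_k}}{M_j}\cdot\frac{\det(w_{i_1},\ldots,w_{i_{k-1}},w_j,w_{i_{k+1}},\ldots,w_{i_d})}{\det(w_{i_1},\ldots,w_{i_d})},
\]
and by maximality of the chosen determinant the last ratio has absolute value at most $1$, so $|c_{jk}|M_j\le M_{i_k}$.

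Finally, given $\sum_{j=1}^r\ell_j v_j\in B_\R(v;M)$, substituting the expressions for the $v_j$ with $j\notin\{i_1,\ldots,i_d\}$ yields
\[
\sum_{j=1}^r\ell_j v_j=\sum_{k=1}^d\Bigl(\ell_{i_k}+\sum_{j\notin\{i_1,\ldots,i_d\}}\ell_j c_{jk}\Bigr) v_{i_k},
\]
and the coefficient of $v_{i_k}$ is bounded in absolute value by $M_{i_k}+(r-d)M_{i_k}\le rM_{i_k}$, giving exactly the required inclusion $B_\R(v;M)\subset r\cdot B_\R(v_{i_1},\ldots,v_{i_d};M_{i_1},\ldots,M_{i_d})$. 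There is no real obstacle here beyond spotting that scaling by the $M_i$ is what makes the Cramer ratios dimensionless, so that choosing a maximal-volume subset of the rescaled vectors converts the lemma into a one-line determinant comparison.
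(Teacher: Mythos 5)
Your proof is correct and follows essentially the same route as the paper: both select the $d$-element subset maximising the (rescaled) determinant and then bound the coordinates via Cramer's rule, noting that the resulting ratios of determinants are at most $1$ in absolute value by maximality. The only cosmetic difference is that the paper phrases the maximisation in terms of volumes of boxes rather than explicitly introducing the rescaled vectors $w_i=M_iv_i$, but this is the same calculation.
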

\begin{proof}
Pick the $i_1<\ldots<i_d$ that maximise $\vol(B_\R(v_{i_1},\ldots,v_{i_d};M_{i_1},\ldots,M_{i_d}))$. 
On reordering the $v_i$, we can assume that $i_j=j$ for all $1\leq j\leq d$. 
It suffices to show that for a given $v_k$ we have
\begin{equation}\label{eq:one.basis.vector}
M_kv_k\in  B_\R(v_{1},\ldots,v_{d};M_{1},\ldots,M_{d}).
\end{equation}
View each $v_i$ as a column vector, and write $A$ for the $d\times d$ matrix with columns $M_{1}v_{1},\ldots,M_{d}v_{d}$. Cramer's rule implies that the solution $y\in\R^d$ to the equation $Ay=M_kv_k$ satisfies
\[
|y_j|=\frac{\vol(B_\R(v_{1},\ldots,v_{j-1},v_k,v_{j+1},\ldots,v_{d};M_{1},\ldots,M_{j-1},M_k,M_{j+1},\ldots,M_{d}))}{\vol(B_\R(v_{1},\ldots,v_{d};M_{1},\ldots,M_{d}))}
\]
(i.e.\ with $v_{j}$ replaced by $v_k$ in the numerator). By maximality, this implies in particular that $|y_j|<1$, which gives \eqref{eq:one.basis.vector}, as required.
\end{proof}

\begin{lemma}\label{lem:box=poly}
In the setting of \cref{prop:prog.as.nilbox} there exists a polynomial $f$ in which every non-trivial term has a positive coefficient and degree between $d$ and $\hdim N$ such that $\vol(B_\R(\overline e;(nL)^\chi))\asymp_df(n)$ for every $n\in\N$.
\end{lemma}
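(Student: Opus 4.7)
The plan is to reduce the computation of $\vol(B_\R(\overline e;(nL)^\chi))$ to a sum over those $d$-element subsets $I=\{i_1<\cdots<i_d\}\subseteq\{1,\ldots,r\}$ for which $e_{i_1},\ldots,e_{i_d}$ form a basis of $\g$, using the Cramer-type estimate of \cref{lem:cramer}.

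First I would combine \cref{lem:cramer} (which supplies an upper bound up to a factor of $r^d$) with the trivial lower bound $B_\R(e_{i_1},\ldots,e_{i_d};M_{i_1},\ldots,M_{i_d})\subseteq B_\R(\overline e;M)$, valid for any $d$-subset, to obtain
\[
\vol(B_\R(\overline e;M))\asymp_d\max_{I\text{ a basis}}M_{i_1}\cdots M_{i_d}\,|\det(e_{i_1},\ldots,e_{i_d})|.
\]
Since there are only $\binom{r}{d}=O_d(1)$ candidate subsets $I$, this maximum is comparable to the corresponding sum, and substituting $M_i=(nL)^{\chi(e_i)}=n^{|\chi(e_i)|}L^{\chi(e_i)}$ yields
\[
\vol(B_\R(\overline e;(nL)^\chi))\asymp_d\sum_{I\text{ a basis}}c_I\,n^{w_I},
\]
with $c_I=L^{\chi(e_{i_1})+\cdots+\chi(e_{i_d})}|\det(e_{i_1},\ldots,e_{i_d})|>0$ and $w_I=|\chi(e_{i_1})|+\cdots+|\chi(e_{i_d})|$. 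Calling this right-hand side $f(n)$, positivity of the coefficients in the collected polynomial is automatic since each $c_I>0$.

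The degrees $w_I$ remain to be estimated. The lower bound $w_I\ge d$ is immediate since every basic Lie bracket has weight at least one. The upper bound $w_I\le\hdim N$ is the crux: for each $k\ge1$, those $e_i$ with $i\in I$ of weight at most $k-1$ must project onto a spanning set of $\g/\g_k$, since the remaining members of $I$ project to $0$; because the basic Lie brackets of weight at most $k-1$ span $\g$ modulo $\g_k$, this forces $\#\{i\in I:|\chi(e_i)|\ge k\}\le\dim\g_k$. Summing over $k$ gives
\[
w_I=\sum_{k\ge1}\#\{i\in I:|\chi(e_i)|\ge k\}\le\sum_{k\ge1}\dim\g_k=\hdim N,
\]
where the last identity is the telescoping $\sum_k k(\dim\g_k-\dim\g_{k+1})=\sum_k\dim\g_k$.

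The one genuinely non-routine step is this upper bound on $w_I$, which rests on the standard fact that the basic Lie brackets of weight at most $k-1$ span $\g$ modulo $\g_k$---a direct consequence of the inductive construction of the list $\overline e$ together with the fact that $e_1,\ldots,e_d$ generate $\g$ as a Lie algebra. Everything else is linear-algebraic bookkeeping applied to \cref{lem:cramer}.
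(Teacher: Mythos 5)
Your proof is correct and follows essentially the same route as the paper: defining the polynomial as a sum (or equivalently, via the $O_d(1)$-term count, a maximum) over $d$-subsets of the basic Lie brackets, invoking \cref{lem:cramer} for the comparison, and bounding the degree above by $\hdim N$ using the observation that a linearly independent $d$-subset can contain at most $\dim\g_k$ vectors of weight $\ge k$. The paper phrases the degree bound via linear independence in $\g_k$ where you argue via the low-weight vectors spanning $\g/\g_k$, but these are the same observation, and the aside about basic Lie brackets of weight $\le k-1$ spanning $\g/\g_k$ is superfluous — linear independence of the $d$ chosen vectors alone already forces the bound.
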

\begin{proof}
Following Tao \cite[\S4]{tao}, define the polynomial $f$ by
\[
f(x)=\sum_{1\le i_1<\ldots<i_d\le r}\vol(B_\R(e_{i_1},\ldots,e_{i_d};L^\chi))x^{\sum_{j=1}^d|\chi(i_j)|}.
\]
This polynomial certainly has no negative coefficients, and each non-zero term in the sum defining $f$ has degree at least $d$. To see that the degree of $f$ is at most $\hdim N$, note that the coefficient $\vol(B_\R(e_{i_1},\ldots,e_{i_d};L^\chi))$ is non-zero only when the vectors $e_{i_1},\ldots,e_{i_d}$ are linearly independent. This means that in every such coefficient, and for every $k\in\N$, the number of vectors $e_{i_j}$ with $|\chi(i_j)|\ge k$ must be at most $\dim\n_k$. In particular, for every set $e_{i_1},\ldots,e_{i_d}$ of vectors satisfying $\vol(B_\R(e_{i_1},\ldots,e_{i_d};L^\chi))\ne0$ we have $\sum_{j=1}^d|\chi(i_j)|\le\sum_{k=1}^d\dim\n_k=\hdim N$, as required.

It remains to prove that $\vol(B_\R(\overline e;(nL)^\chi))\asymp_df(n)$ for $n\in\N$. First, note that
\[
f(n)=\sum_{1\le i_1<\ldots<i_d\le r}\vol(B_\R(e_{i_1},\ldots,e_{i_d};(nL)^\chi))
\]
for $n\in\N$. It then follows from Lemma \ref{lem:cramer} that $\vol(B_\R(\overline e;(nL)^\chi))\le r^df(n)$. On the other hand, the fact that $B_\R(e_{i_1},\ldots,e_{i_d};(nL)^\chi)\subseteq B_\R(\overline e;(nL)^\chi)$ for every $i_1,\ldots,i_d$ implies that $f(n)\le\textstyle{r\choose d}\vol(B_\R(\overline e;(nL)^\chi))$.
\end{proof}

\begin{proof}[Proof of \cref{prop:growthfunction}]
Write $x_1,\ldots,x_d$ for the generators of $P$ and $L_1,\ldots,L_d$ for its lengths. It follows from \cref{lem:make.integral} that there exists a natural number $M\ll_dQ$ and a subset $X\subseteq G$ of size at most $O_{d,Q}(1)$ such that $P(x^M;L)$ is an integral raw Lie progression in $O_{C,d,Q}(1)$-upper-triangular form, and such that
\begin{equation}\label{eq:growth-make.integral}
\tilde P^n\subseteq XP(x^M;L)^{O_{C,d,Q}(n)}
\end{equation}
and
\begin{equation}\label{eq:growth-integral.bdd}
P(x^M;L)^n\subseteq\tilde P^{kn}
\end{equation}
for all $n\in\N$ and some $k=k(d,Q)$. It then follows from \cref{prop:prog.as.nilbox,lem:box=poly} that there exists a continuous, increasing, piecewise-monomial function $f$ with degree increasing, bounded below by $d$, and bounded above by $\hdim P$, such that $|P(x^M;L)^n|\asymp_{C,d,Q}f(n)$ for all $n\in\N$.

We claim that $|\tilde P^n|\asymp_{C,d,Q}f(n)$ for all $n\in\N$. We certainly have $|\tilde P^n|\ll_{C,d,Q}f(n)$ for all $n\in\N$, thanks to \eqref{eq:growth-make.integral} and the upper bound on the degree of $f$. For $n\ge k$, \eqref{eq:growth-integral.bdd} and the upper bound on the degree of $f$ imply that $f(n)\ll_df(\lfloor n/k\rfloor)\ll_{C,d,Q}|\tilde P^n|$. This implies in particular that for $n<k$ we have $f(n)\le f(k)\ll \ll_{C,d,Q}|\tilde P^k|\ll_{C,d}|\tilde P^n|$, the last bound coming from \cref{lem:upper-tri.doubling}, and so the claim is proved. Since $|P^n|\le|\tilde P^n||H|$ for all $n\in\N$ and $|P^n|=|\tilde P^n||H|$ for all $n\le\inj P$, where $H$ is the symmetry group of $P$, the proposition follows.
\end{proof}

\subsection{Injectivity modulo the centre}\label{section:injectModCenter}
It is a standard fact that a discrete normal subgroup of a connected group is central; to see why, note that if $\Gamma$ is a discrete normal subgroup of a connected topological group $G$ then the map $G\times \Gamma\to \Gamma$ defined by $(g,\gamma)\mapsto [g,\gamma]$ is continuous, and hence constant with respect to $g$. The following general property of Lie progressions can be seen as a finitary analogue of this fact.
\begin{prop}\label{prop:ProperCenter}
Given $C\ge1$ and $d\in\N$ there exists $R=R(C,d)$ such that if $P$ is a Lie progression of class $c$ and dimension $d$ in $C$-upper-triangular form with injectivity radius at least $R$, the lengths of which are all at least $m\in\N$, then $\injz P\ge m^{\frac{1}{c-1}}$.
\end{prop}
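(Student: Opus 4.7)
Let $N$ be the ambient simply connected nilpotent Lie group, $\g$ its Lie algebra with lower central series $\g=\g_1\supseteq\g_2\supseteq\cdots\supseteq\g_{c+1}=0$, $\Gamma$ the lattice of $P$ and $\pi$ its projector. Write $e_1,\ldots,e_d$ and $u_1,\ldots,u_d$ for the basis and generators of the underlying raw progression $\tilde P=P(u;L)$, with weights $\zeta_i\in\{1,\ldots,c\}$ chosen so that $e_i\in\g_{\zeta_i}$. The plan is to fix $j\le m^{1/(c-1)}$ and any $\gamma\in\ker\pi\cap(\tilde P^j\tilde P^{-j})$, and to show that $[u_k,\gamma]=1$ for every $k$; since the $u_k$ generate $\Gamma$, this forces $\gamma\in Z(\Gamma)$, and therefore $\injz P\ge m^{1/(c-1)}$.

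I first control $\log\gamma$: \eqref{eq:inverseProg}, \cref{lem:upper-tri.doubling.dilate} and \cref{prop:pp4.1} (over $\R$, via \cref{rem:pp4.1}) together give $\tilde P^j\tilde P^{-j}\subseteq\exp B_\R(e;O_{C,d}(j^\zeta L))$, so the $e_i$-coefficient $a_i$ of $\log\gamma$ satisfies $|a_i|\ll_{C,d}j^{\zeta_i}L_i$. I then estimate $\log[u_k,\gamma]$ via the Baker--Campbell--Hausdorff formula
\[
\log[u_k,\gamma]=[e_k,\log\gamma]+\sum_\alpha q_\alpha\alpha(e_k,\log\gamma),
\]
where the sum runs over iterated Lie brackets $\alpha$ of weight at least $3$ in the two variables $e_k,\log\gamma$, with rational coefficients. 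The leading term expands as $[e_k,\log\gamma]=\sum_i a_i[e_k,e_i]$, and crucially the filtration property $[\g_s,\g_t]\subseteq\g_{s+t}$ together with the class-$c$ hypothesis forces $[e_k,e_i]=0$ whenever $\zeta_k+\zeta_i>c$. Combining this with the upper-triangular bound from \cref{lem:upper-tri} (giving $|e_\ell\text{-coefficient of }[e_k,e_i]|\ll_{C,d}L_\ell/(L_kL_i)$), one obtains
\[
|e_\ell\text{-coefficient of }[e_k,\log\gamma]|\ll_{C,d}\frac{L_\ell}{L_k}\sum_{\zeta_i\le c-\zeta_k}j^{\zeta_i}\ll_{C,d}\frac{j^{c-1}L_\ell}{L_k}\le\frac{j^{c-1}L_\ell}{m}.
\]
The higher-order BCH brackets $\alpha(e_k,\log\gamma)$ of weight $\ge3$ are estimated in the same way using \cref{lem:upper-tri}, but each additional occurrence of $e_k$ contributes an extra factor $1/L_k\le1/m$, so these terms are strictly dominated by the leading term.

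Since $j\le m^{1/(c-1)}$ gives $j^{c-1}\le m$, the above bound simplifies to $|e_\ell\text{-coefficient of }\log[u_k,\gamma]|\ll_{C,d}L_\ell$ uniformly in $\ell$, whence $\log[u_k,\gamma]\in B_\R(e;O_{C,d}(L))$. \cref{prop:pp4.1} and \eqref{eq:dilateProg} then yield $[u_k,\gamma]\in\tilde P^A$ for some $A=A(C,d)$. Since $\gamma\in\ker\pi\normal\Gamma$ we have $[u_k,\gamma]\in\ker\pi$, so choosing $R=R(C,d)\ge A$ ensures $[u_k,\gamma]\in\ker\pi\cap\tilde P^{\inj P}=\{1\}$ and completes the proof.

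The main obstacle is the BCH bookkeeping: one must verify uniformly that every iterated bracket of weight at least $3$ in $e_k,\log\gamma$ contributes strictly less to each $e_\ell$-coefficient than the leading term $[e_k,\log\gamma]$, which it does thanks to the extra $1/L_k$ factor produced by each additional $e_k$. Equally crucial is the class-$c$ constraint $\zeta_k+\zeta_i\le c$, which truncates the key sum at $j^{c-1}$ rather than $j^c$ and thereby selects the critical exponent $1/(c-1)$ in the threshold $j=m^{1/(c-1)}$.
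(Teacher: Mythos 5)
Your overall strategy matches the paper's: to show that any $\gamma\in\ker\pi\cap(\tilde P^j\tilde P^{-j})$ for $j\le m^{1/(c-1)}$ commutes with each generator $u_k$, by proving the commutator $[u_k,\gamma]$ lies in $\tilde P^{O_{C,d}(1)}\cap\ker\pi$ and invoking a large injectivity radius. However, there is a genuine gap in your coordinate-level execution, which the paper sidesteps entirely.

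The problem is that you use two incompatible notions of weight as if they were the same. You \emph{define} $\zeta_i$ by the Lie filtration, $e_i\in\g_{\zeta_i}$, and then invoke this in the crucial vanishing $[e_k,e_i]=0$ when $\zeta_k+\zeta_i>c$. But the bound $|a_i|\ll_{C,d}j^{\zeta_i}L_i$ that you extract from \cref{lem:upper-tri.doubling.dilate} uses the \emph{combinatorial} weight $\zeta(i)$ defined via $P$-expressions, which is a different invariant. The basis of a Lie progression is a strong Mal'cev basis (so $\Span(e_i,\ldots,e_d)$ is an ideal), but it is not in general adapted to the lower central series, and there is no reason for the combinatorial weight from \cref{lem:upper-tri.doubling.dilate} to agree with, or even be bounded by, the Lie weight $\max\{s:e_i\in\g_s\}$. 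Without a lemma relating the two, your truncation $\sum_{\zeta_i\le c-\zeta_k}j^{\zeta_i}\ll j^{c-1}$ is not justified: you restrict the sum using Lie weights but estimate its terms using combinatorial weights.

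The paper avoids this by working with the symmetric convex body $\Omega=B_\R(e;L)$ rather than individual coordinates. \cref{lem:smallComm} shows that for $g\in\exp(\Omega)^n$ one can factor out a \emph{central} element $z\in\gamma_c(G)$ so that $\log(gz)\in O_c(n^{c-1})\Omega$; since $[gz,h]=[g,h]$ and $\log h\in\eps\Omega$, the bilinearity of the bracket plus $[\Omega,\Omega]\subseteq C\Omega$ immediately gives $\log[g,h]\in O_{c,C}(n^{c-1}\eps)\Omega$. Taking $n=m^{1/(c-1)}$, $\eps=1/m$ then puts the commutator in $O(1)\Omega$, and \cref{lem:RealCommutators,lem:LieCommutators} convert this into $[g,x_i]\in P(x,L)^{O_{C,d}(1)}$. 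The exponent $c-1$ thus emerges from the central-factoring trick rather than from any weight accounting.

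A secondary imprecision: you say the higher-order BCH terms are ``strictly dominated'' because each extra $e_k$ contributes $1/L_k\le1/m$. That is correct for extra copies of $e_k$, but brackets of weight $\ge3$ with only \emph{one} copy of $e_k$ and two or more copies of $\log\gamma$ are not strictly dominated --- they are of the \emph{same} order as the leading term, because the class-$c$ constraint still caps the $j$-exponent at $c-1$ but gives no extra gain. The conclusion still holds since there are only $O_c(1)$ such terms, but your stated reason is the wrong one, and in the paper this too is handled automatically by $\log(gz)\in O_c(n^{c-1})\Omega$ and the convex-body calculus.
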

Conclusion \ref{item:inj.mod.z} of \cref{thm:detailed.fine.scale} follows (though not trivially) from \cref{prop:ProperCenter}. Note that \cref{prop:ProperCenter} holds by definition in the case $c=1$ under the convention that $m^{1/0}=\infty$.

The proof of \cref{prop:ProperCenter} rests on a series of lemmas.
\begin{lemma}\label{lem:smallComm}
Let $C,\eps>0$ and $n\in\N$. Suppose $G$ is a simply connected nilpotent Lie group of class $c\ge2$ with Lie algebra $\g$, and $\Omega\subseteq\g$ is a symmetric convex body satisfying $[\Omega,\Omega]\subseteq C\Omega$. Then 
\[
\log[g,h]\in O_{c,C}(n^{c-1}\eps)\Omega
\]
for every $g\in\exp(\Omega)^n$ and $h\in\exp(\eps\Omega)$.
\end{lemma}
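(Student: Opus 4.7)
\emph{Plan of proof.} Write $g = \exp(v_1)\cdots\exp(v_n)$ with $v_i\in\Omega$, and $h=\exp(y)$ with $y\in\eps\Omega$, so that
\[
[g,h] = \exp(-v_n)\cdots\exp(-v_1)\exp(-y)\exp(v_1)\cdots\exp(v_n)\exp(y).
\]
The strategy is to iterate the Baker--Campbell--Hausdorff formula \eqref{eq:bch} on this product of $2n+2$ exponentials and truncate at weight $c$, obtaining a finite expansion of $\log[g,h]$ as a sum of terms of the shape $q_\beta\, \beta(v_{i_1},\ldots,v_{i_r};\,y,\ldots,y)$, where $\beta$ is a bracket form of weight $r+s$ with $r$ $v$-slots and $s$ $y$-slots, the indices $i_1,\ldots,i_r$ lie in $\{1,\ldots,n\}$, and the coefficients $q_\beta\in\Q$ depend only on $\beta$. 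The finiteness of the expansion and the uniform bound on the $|q_\beta|$ will be obtained by performing the computation inside the free class-$c$ nilpotent Lie algebra on abstract generators $v_1,\ldots,v_n,y$, in which all weight-$>c$ iterated brackets vanish automatically.

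Two structural observations then restrict the admissible $(r,s)$. First, since $[g,h]=1$ whenever $g=1$ (all $v_i=0$) or $h=1$ ($y=0$), the surviving terms must involve at least one $v$-argument and at least one $y$-argument, so $r\ge 1$ and $s\ge 1$. Second, nilpotency of class $c$ forces $r+s\le c$. For each admissible $(r,s)$ and each bracket form $\beta$ of weight $r+s$, multilinearity of the iterated bracket in each slot, combined with iterated application of $[\Omega,\Omega]\subseteq C\Omega$, places each individual term inside $\eps^s C^{r+s-1}\Omega$; since there are $O_c(n^r)$ index-tuples and only $O_c(1)$ bracket forms of that weight, the symmetry and convexity of $\Omega$ give that the combined contribution of all such terms lies in $O_{c,C}(n^r\eps^s)\Omega$.

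Summing over the permissible $(r,s)$, the claim $\log[g,h]\in O_{c,C}(n^{c-1}\eps)\Omega$ reduces to the inequality $n^r\eps^s\ll_c n^{c-1}\eps$, whose extremal case is $(r,s)=(c-1,1)$; this is automatic in the regime (say $\eps\le n$) relevant to the intended application to \cref{prop:ProperCenter}. The main anticipated obstacle is the careful bookkeeping in the iterated BCH expansion, in particular verifying that the coefficients $q_\beta$ are bounded independently of $n$. This is precisely why the expansion is best performed abstractly in the free class-$c$ nilpotent Lie algebra on the letters $v_1,\ldots,v_n,y$ before specialising to $\g$: there, each surviving term is indexed by a choice of bracket form together with an index-tuple, and the coefficient is manifestly determined by $\beta$ alone.
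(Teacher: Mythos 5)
Your proof is correct in spirit and takes a genuinely different route from the paper's. The paper first bounds $\log g$ in one shot via BCH, obtaining $\log g\in O_c(n)\Omega+\cdots+O_c(n^c)[\Omega,\ldots,\Omega]_c$; it then replaces $g$ by $gz$ with $z\in\gamma_c(G)$ central (so $[gz,h]=[g,h]$) in order to drop the weight-$c$ component of $\log(gz)$, and finally expands $\log[gz,h]$ in the \emph{two} variables $\log(gz),\log h$ via \cite[Lemma 4.6]{proper.progs}, concluding by multilinearity. You instead iterate BCH directly on the full product of $2n+2$ exponentials and organize by bracket forms in all of $v_1,\ldots,v_n,y$. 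The trade-off: the paper's route keeps all BCH bookkeeping in two variables and outsources the hard combinatorics to an already-established lemma, at the price of the $gz$ modification (which, as your approach shows, is not strictly needed, since any term with total weight exceeding $c$ vanishes automatically). Your route is self-contained but requires a correct handling of the $(2n+2)$-variable iterated BCH expansion.

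There is one genuine imprecision in your argument worth flagging. You assert that ``the coefficient is manifestly determined by $\beta$ alone.'' This is not quite true: the coefficient of a given bracket in $\log(e^{X_1}\cdots e^{X_m})$ depends on the pattern of the index-tuple, not only on the abstract bracket form. For example, in $\log(e^{X_1}e^{X_2}e^{X_3})$ truncated at weight $3$, the terms $[X_1,[X_1,X_2]]$ and $[X_1,[X_2,X_3]]$ carry coefficients $\tfrac{1}{12}$ and $\tfrac14$ respectively, despite having the same bracket form. What \emph{is} true, and suffices for your bound, is that the coefficient depends only on $\beta$ together with the order/equality pattern of the indices $(i_1,\ldots,i_r)$, and there are $O_c(1)$ such patterns; equivalently, the coefficient of any given bracket can be read off from $\log(e^{X_{j_1}}\cdots e^{X_{j_w}})$ for a product of at most $c$ exponentials by setting the remaining variables to zero, so it lies in a finite list depending only on $c$. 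Once this is stated correctly, your estimate ``combined contribution is $O_{c,C}(n^r\eps^s)\Omega$'' goes through. Finally, you are right to note that the extremal inequality $n^r\eps^s\ll_c n^{c-1}\eps$ requires something like $\eps\le n$; the paper's own proof has the same implicit restriction (the worst off-diagonal terms there also scale as $n^{c-b}\eps^b$ with $b\ge2$), and it is harmless in the applications, where $\eps\ll_{C,d}1\le n$.
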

\begin{proof}
Let $g\in\exp(\Omega)^n$ and $h\in\exp(\eps\Omega)$. It follows from the Baker--Campbell--Hausdorff formula that
\begin{equation}\label{eq:bch.app}
\log g\in O_c(n)\Omega+O_c(n^2)[\Omega,\Omega]+\cdots+O_c(n^c)[\Omega,\ldots,\Omega]_c,
\end{equation}
and hence that there exists $z\in\gamma_c(G)$ such that $gz$ satisfies
\[
\log(gz)\in O_c(n)\Omega+O_c(n^2)[\Omega,\Omega]+\cdots+O_c(n^{c-1})[\Omega,\ldots,\Omega]_{c-1}.
\]
Note that since $z$ is central in $G$ we have $[gz,h]=[g,h]$. We showed in \cite[Lemma 4.6]{proper.progs} that there are bracket forms $\beta_1,\ldots,\beta_{k_c}$ of weight at least $3$ and at most $c$ and rationals $q_1,\ldots,q_{k_c}$, all of which depend only on $c$, such that
\[
\log[(gz),h]=[\log(gz),\log h]+q_1\beta_1(\log(gz),\log h)+\cdots+q_{k_c}\beta_{k_c}(\log(gz),\log h).
\]
The result then follows from the bilinearity of the Lie bracket.
\end{proof}

\begin{lemma}\label{lem:RealCommutators}
Let $C>0$ and $m\in\N$. Suppose $G$ is a simply connected nilpotent Lie group of class $c\ge2$ and dimension $d$ with Lie algebra $\g$ with basis $e_1,\ldots,e_d$, and write $x_i=\exp e_i$ for each $i$. Suppose $L_1,\ldots,L_d\ge1$ are such that $(e;L)$ is in $C$-upper-triangular form over $\R$. Then
\[
[g,h]\in P_{\R}(x,L)^{O_{C,d}(1)}
\]
for every $g\in P_{\R}(x,L)^{m^{\frac{1}{c-1}}}$ and $h\in P_{\R}(x,L/m)$.
\end{lemma}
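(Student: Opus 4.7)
The plan is to reduce the statement to Lemma \ref{lem:smallComm} by passing between the continuous progression $P_\R(x,L)$ and the exponential image of the box $\Omega:=B_\R(e;L)$ in the Lie algebra. First I would record that the $C$-upper-triangular-form hypothesis on $(e;L)$, together with bilinearity of the Lie bracket (as in \cref{lem:upper-tri}), gives $[\Omega,\Omega]\subseteq C'\Omega$ for some $C'=O_{C,d}(1)$. The real-valued version of \cref{prop:pp4.1} (see \cref{rem:pp4.1}) then yields a constant $k=O_{C,d}(1)$ such that $P_\R(x,L)\subseteq \exp(k\Omega)$ and $\exp(\Omega)\subseteq P_\R(x,kL)$, so in particular $g\in\exp(k\Omega)^{m^{1/(c-1)}}$.

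Next I would control $h$. Writing $h=\exp(v_1)\cdots\exp(v_d)$ with $v_i\in (L_i/m)[-1,1]\,e_i\subseteq(1/m)\Omega$, the Baker--Campbell--Hausdorff formula \eqref{eq:bch} expresses $\log h$ as $\sum v_i$ plus iterated brackets of the $v_i$. A bracket of weight $k$ lies in $O_c(1)(1/m)^k[\Omega,\ldots,\Omega]_k$, and applying $[\Omega,\Omega]\subseteq C'\Omega$ repeatedly gives $[\Omega,\ldots,\Omega]_k\subseteq (C')^{k-1}\Omega$, so each summand lies in $O_{C,d,c}((1/m)^k)\Omega$. Summing over $k=1,\dots,c$ and using $m\ge 1$ and $c\le d$ yields $h\in\exp\bigl((k'/m)\Omega\bigr)$ for some $k'=O_{C,d}(1)$.

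The central step is to apply \cref{lem:smallComm} with convex body $k\Omega$ (which satisfies $[k\Omega,k\Omega]\subseteq kC'\cdot(k\Omega)$, i.e.\ the hypothesis of \cref{lem:smallComm} with constant $O_{C,d}(1)$), with $n=m^{1/(c-1)}$, and with $\varepsilon=k'/(km)=O_{C,d}(1/m)$. Since $n^{c-1}\varepsilon=O_{C,d}(1)$, the lemma gives $\log[g,h]\in O_{C,d}(1)\cdot k\Omega=O_{C,d}(1)\Omega$, equivalently $[g,h]\in\exp B_\R(e;O_{C,d}(L))$. Converting back via \cref{prop:pp4.1}, real version, gives $[g,h]\in P_\R(x;O_{C,d}(L))$; splitting each factor $x_i^{\alpha L_i}$ with $|\alpha|\le O_{C,d}(1)$ into $O_{C,d}(1)$ factors $x_i^{\beta L_i}$ with $|\beta|\le 1$ (and interspersing identities to match the ordered form) absorbs the enlarged lengths into a bounded number of copies of $P_\R(x,L)$, yielding the desired $[g,h]\in P_\R(x,L)^{O_{C,d}(1)}$.

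The main thing to get right is the bookkeeping of constants through the successive approximations, and in particular checking that each implicit constant depends only on $C$ and $d$; the class $c$ appears in the BCH expansion and in \cref{lem:smallComm}, but is harmless because $c\le d$. No idea beyond \cref{lem:smallComm} is needed.
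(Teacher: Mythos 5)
Your proof is correct and follows essentially the same route as the paper's: pass from $P_\R(x,L)$ to boxes in the Lie algebra, reduce to \cref{lem:smallComm}, and translate back. The only cosmetic difference is that you cite the real version of \cref{prop:pp4.1} (via \cref{rem:pp4.1}) for the translations between progressions and boxes, whereas the paper uses \cref{lem:freeequivalences}\ref{item:freeequiv.3}; both routes supply the same estimates, and your explicit verification that $[\Omega,\Omega]\subseteq O_{C,d}(1)\Omega$ (needed for \cref{lem:smallComm}) is a point the paper leaves implicit.
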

\begin{proof}
Let $g\in P_{\R}(x,L)^{m^{\frac{1}{c-1}}}$ and $h\in P_{\R}(x,L/m)$. \cref{lem:freeequivalences} \ref{item:freeequiv.3} implies that
\[
g\in\exp(B_\R(e,L))^{O_d(m^{\frac{1}{c-1}})},\qquad h\in\exp(B_\R(e,L/m))^{O_d(1)},
\]
and then the Baker--Campbell--Hausdorff formula implies that
\[
h\in\exp(B_\R(e,O_{C,d}(L/m)))
\]
(cf. \eqref{eq:bch.app}). We therefore have
\begin{align*}
[g,h]&\in\exp(B_\R(e,O_{C,d}(L)))&&\text{(by \cref{lem:smallComm})}\\
   &\subseteq P_{\R}(x,L)^{O_{C,d}(1)}&&\text{(by \cref{lem:freeequivalences} \ref{item:freeequiv.3}),}
\end{align*}
as required.
\end{proof}

\begin{lemma}\label{lem:LieCommutators}
Let $m\in\N$ and $C>0$. Suppose $G$ is a simply connected nilpotent Lie group of class $c\ge2$ and dimension $d$ with Lie algebra $\g$ with basis $e_1,\ldots,e_d$, and write $x_i=\exp e_i$ for each $i$. Suppose $L_1,\ldots,L_d\ge1$ are such that $(x;L)$ is in $C$-upper-triangular form and $(e;L)$ is in $C$-upper-triangular form over $\R$. Then
\[
[g,h]\in P(x;L)^{O_{C,d}(1)}
\]
for every $g\in P(x,L)^{m^{\frac{1}{c-1}}}$ and $h\in P(x,L/m)$.
\end{lemma}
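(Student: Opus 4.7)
The plan is to deduce this integer-version lemma directly from its real counterpart \cref{lem:RealCommutators} combined with \cref{prop:goodReducReal1}, which lets us pass from a continuous progression intersected with the lattice back to an integer progression at the cost of a bounded multiplicative constant in the power.

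Concretely, first I would observe that $g \in P(x,L)^{m^{1/(c-1)}} \subseteq P_\R(x,L)^{m^{1/(c-1)}}$ and $h \in P(x,L/m) \subseteq P_\R(x,L/m)$, and moreover $g,h$ both lie in the lattice $\Gamma = \langle x_1,\ldots,x_d\rangle = P(x;\infty)$. Since $(e;L)$ is in $C$-upper-triangular form over $\R$ by hypothesis, \cref{lem:RealCommutators} applies and yields
\[
[g,h] \in P_\R(x,L)^{O_{C,d}(1)}.
\]
Because $g,h \in \Gamma$, the commutator $[g,h]$ belongs to $\Gamma$ as well, so
\[
[g,h] \in P_\R(x,L)^{O_{C,d}(1)} \cap \Gamma.
\]

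Next I would invoke \cref{prop:goodReducReal1}. Its hypothesis requires $e_1,\ldots,e_d$ to be a strong Mal'cev basis and $(u;L)$ to be in $C$-upper-triangular form; the former follows from the upper-triangular form of $(u;L)$ as noted in \cref{sec:nilp.Lie}, and the latter is given. The conclusion is that
\[
P_\R(x,L)^k \cap \Gamma \subseteq P(x,L)^{O_{C,d}(k)}
\]
for each $k \in \N$. Applying this to $k = O_{C,d}(1)$ gives $[g,h] \in P(x,L)^{O_{C,d}(1)}$, as required.

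There is a minor technical point that \cref{prop:goodReducReal1} is stated for integer lengths $L_i \in \N$, whereas here we only assume $L_i \ge 1$ are real. This is resolved by replacing each $L_i$ with $\lfloor L_i \rfloor \ge 1$: the resulting progression on integer lengths satisfies the same upper-triangular hypotheses up to an $O_{C,d}(1)$ change in constants, and the containments $P(x,L) \supseteq P(x,\lfloor L\rfloor)$ and $P_\R(x,L) \subseteq P_\R(x,2\lfloor L\rfloor) \subseteq P_\R(x,\lfloor L\rfloor)^{O_d(1)}$ (the last by \eqref{eq:dilateProg} applied in the continuous setting) absorb the rounding error into the $O_{C,d}(1)$ constant. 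Since the entire argument is essentially a one-line combination of two previous results, I do not expect any genuine obstacle; the only thing to check carefully is that the hypotheses of \cref{lem:RealCommutators} and \cref{prop:goodReducReal1} are indeed both satisfied, which they are by the explicit upper-triangular form assumptions in the statement.
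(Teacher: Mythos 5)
Your proof is correct and is essentially identical to the paper's own one-line argument, which also combines \cref{lem:RealCommutators} with \cref{prop:goodReducReal1} in exactly the way you describe. The additional care you take regarding the strong Mal'cev basis hypothesis and the rounding of the lengths $L_i$ to integers is sound and makes explicit a couple of minor points that the paper glosses over.
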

\begin{proof}
Given $g\in P(x,L)^{m^{\frac{1}{c-1}}}$ and $h\in P(x,L/m)$ we have
\begin{align*}
[g,h] & \in P_{\R}(x,L)^{O_{C,d}(1)}\cap\langle x_1,\ldots,x_d\rangle && \text{(by \cref{lem:RealCommutators})}\\
        & \subseteq P(x,L)^{O_{C,d}(1)}  &&  \text{(by \cref{prop:goodReducReal1}),} 
\end{align*}        
as required.
\end{proof}

\begin{proof}[Proof of Proposition \ref{prop:ProperCenter}]
Write $H$ for the symmetry group of $P$, $\pi$ for its projector, and $x_1,\ldots,x_d$ for its generators. If the injectivity radius of $P$ modulo the centre is less than $m^{\frac{1}{c-1}}$ then there exits a non-central $g\in P(x,L)^{m^{\frac{1}{c-1}}}\cap\ker\pi$. The fact that $g$ is not central implies that there exists $i\in \{1,\ldots,d\}$ such that $[g,x_i]\ne 1$, and since $L_i\ge m$ we have $x_i\in P(x,L/m)$, so \cref{lem:LieCommutators} implies that $[g,x_i]\in P(x,L)^{O_{C,d}(1)}\cap\ker\pi$. We may therefore take $R$ to be the constant implied by the $O_{C,d}(1)$ notation in this expression.
\end{proof}

The following result shows that \cref{prop:ProperCenter} can be applied to any progression containing a large power of a generating set (recall the definition of the weights $\zeta(i)$ from just before \cref{lem:upper-tri.doubling.dilate}).

\begin{lemma}\label{prop:LowerBoundL_i}
Let $G$ be a group with finite symmetric generating set $S$ containing the identity, let $C>0$ and $d,m\in\N$, and let $x_1,\ldots,x_d\in G$ and $L_1,\ldots,L_d\in\N$. Suppose $P(x;L)$ is infinitely proper and in $C$-upper-triangular form and contains $S^m$. Then $L_i\ge C^{1-\zeta(i)}m^{\zeta(i)}$ for each $i$.
\end{lemma}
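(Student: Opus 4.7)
The plan is to induct on $\zeta(i)$. Because $P(x;L)$ is infinitely proper and in $C$-upper-triangular form, one has $\la x_1,\ldots,x_d\ra=P(x;\infty)$ and every element of this group admits a unique expression $x_1^{\ell_1}\cdots x_d^{\ell_d}$, so the coordinate functions $\pi_i(x_1^{\ell_1}\cdots x_d^{\ell_d}):=\ell_i$ are well defined. The inclusions $S\subseteq S^m\subseteq P(x;L)\subseteq\la x_1,\ldots,x_d\ra$ together with $\la S\ra=G$ force $G=\la x_1,\ldots,x_d\ra$, so each $\pi_i$ is in fact defined on the whole of $G$.

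\smallskip

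\noindent\emph{Base case $\zeta(i)=1$.} The heart of the argument will be to show that $\pi_i:G\to\Z$ is a group homomorphism. Set $N_i=\la x_{i+1},\ldots,x_d\ra$; the upper-triangular form makes $N_i$ normal in $G$ and $\bar x_i$ automatically central in $G/N_i$. The hypothesis $\zeta(i)=1$ means exactly that $x_i$ appears in \emph{no} $P$-expression, so every commutator $[\bar x_j,\bar x_l]$ with $j<l<i$ lies in $\la\bar x_{l+1},\ldots,\bar x_{i-1}\ra$. Consequently $\bar H_{i-1}:=\la\bar x_1,\ldots,\bar x_{i-1}\ra$ consists exactly of the elements whose Mal'cev expression in $G/N_i$ has trivial $\bar x_i$-coordinate, which by infinite propriety yields $\bar H_{i-1}\cap\la\bar x_i\ra=\{1\}$. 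Combined with centrality of $\bar x_i$, this gives an internal direct-product decomposition $G/N_i=\bar H_{i-1}\times\la\bar x_i\ra$, and the composite $G\twoheadrightarrow G/N_i\twoheadrightarrow\la\bar x_i\ra\cong\Z$ is precisely $\pi_i$. Because $\pi_i(x_i)=1$ this homomorphism is surjective, so since $S$ generates $G$ there exists $s\in S$ with $\pi_i(s)\neq0$. Then $s^m\in S^m\subseteq P(x;L)$ forces $L_i\ge|\pi_i(s^m)|=m|\pi_i(s)|\ge m=C^{1-\zeta(i)}m^{\zeta(i)}$.

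\smallskip

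\noindent\emph{Inductive step $\zeta(i)\ge2$.} Here no further appeal to the hypothesis $S^m\subseteq P(x;L)$ is needed; one just combines the upper-triangular relations with the inductive bounds. By the definition of $\zeta(i)$ one can pick $j<l<i$ with $\zeta(j)+\zeta(l)=\zeta(i)$ such that $x_i$ appears with a nonzero integer coefficient $\ell_i^{jl}$ in the $P$-expression for some $[x_j^{\pm1},x_l^{\pm1}]$. The $C$-upper-triangular relation \eqref{eq:C-upp-tri} gives $|\ell_i^{jl}|\le CL_i/(L_jL_l)$, so $|\ell_i^{jl}|\ge1$ forces $L_i\ge L_jL_l/C$. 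Substituting the inductive bounds $L_j\ge C^{1-\zeta(j)}m^{\zeta(j)}$ and $L_l\ge C^{1-\zeta(l)}m^{\zeta(l)}$ then yields $L_i\ge C^{1-\zeta(j)-\zeta(l)}m^{\zeta(j)+\zeta(l)}=C^{1-\zeta(i)}m^{\zeta(i)}$, closing the induction.

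\smallskip

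The main obstacle is the base case, specifically the verification that $\pi_i$ is a homomorphism; once the direct-product structure of $G/N_i$ afforded by $\zeta(i)=1$ is in place, the remaining steps are elementary. The inductive step is essentially bookkeeping with the upper-triangular constants, which is why the power of $C$ in the conclusion is $1-\zeta(i)$: one loses a factor of $C$ at each splitting but gains one back for each base-case invocation.
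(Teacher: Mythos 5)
Your argument is correct and follows the same route as the paper's: reduce the $\zeta(i)=1$ case to the fact that the $x_i$-coordinate map $G\to\Z$ is a homomorphism, apply it to some $s\in S$ with nonzero $i$th coordinate and observe that $s^m$ then has $x_i$-coordinate of absolute value at least $m$, and handle general $i$ by induction on $\zeta(i)$ via the upper-triangular relations. The only point of departure is that the paper imports the homomorphism property from \cite[Lemma 9.1]{proper.progs}, whereas you derive it from scratch via the decomposition $G/N_i=\la\bar x_1,\ldots,\bar x_{i-1}\ra\times\la\bar x_i\ra$; your derivation is sound, and the inductive step you spell out is exactly the argument the paper leaves to the reader.
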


\begin{proof}
This is similar to part of the proof of \cite[Theorem 1.11]{proper.progs}. Enumerate $\{i:\zeta(i)=1\}$ as $i_1,\ldots,i_r$. It is shown in \cite[Lemma 9.1]{proper.progs} that the map $\pi:G\to\Z^r$ defined by $x_1^{\ell_1}\cdots x_d^{\ell_d}\mapsto(\ell_{i_1},\ldots,\ell_{i_r})$ is a homomorphism. Since $S$ generates $G$, this implies that for each $j=1,\ldots,r$ there exists $s_j\in S$ with positive $x_{i_j}$-coordinate, and then that $s_j^m$ has $x_{i_j}$-coordinate at least $m$. Since $s_j^m\in P(x;L)$, this forces $L_{i_j}\ge m$, which proves the lemma for all $i$ with $\zeta(i)=1$. The bound for general $i$ then follows by induction, using the upper-triangular form.
\end{proof}

Finally, we have the following analogue of \cref{lem:finite.subgroup.of.proper} for $\injz P$, which shows that finite subgroups of Lie progressions in some sense behave on scales within the injectivity radius modulo the centre like compact subgroups of connected nilpotent Lie groups, in that they are central (see \cref{prop:compact.central}).

\begin{lemma}\label{lem:finite<injz}
Suppose $P$ is a Lie progression with symmetry group $H$, and that $K\subseteq P^{\lfloor\injz P/2\rfloor}$ is a finite subgroup. Then $KH/H\le Z(\la P\ra/H)$.
\end{lemma}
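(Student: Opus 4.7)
The plan is to adapt the argument of \cref{lem:finite.subgroup.of.proper} to the injectivity-radius-modulo-centre setting, replacing the honest local inverse of $\pi$ on $\tilde P^{\inj P}$ by a map that inverts $\pi$ only up to $Z(\Gamma)$ on $\tilde P^{\injz P}$. Fix $k\in K$ of order $n$; the goal is to show $kH\in Z(\la P\ra/H)$. For each $j\in\{0,1,\ldots,n\}$ the power $k^j$ lies in $K\subseteq P^{\lfloor\injz P/2\rfloor}$, so I may choose lifts $\tilde\kappa_j\in\tilde P^{\lfloor\injz P/2\rfloor}$ with $\pi(\tilde\kappa_j)=k^j H$ (taking $\tilde\kappa_0=1$).

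The key step is to show that these lifts behave like powers of $\tilde\kappa_1$ modulo $Z(\Gamma)$. For each $j$, both $\tilde\kappa_1\tilde\kappa_j$ and $\tilde\kappa_{j+1}$ lie in $\tilde P^{\injz P}$ (using $\lfloor\injz P/2\rfloor+\lfloor\injz P/2\rfloor\le\injz P$) and have the same image $k^{j+1}H$ under $\pi$, so their ratio lies in $\ker\pi\cap\tilde P^{\injz P}\tilde P^{-\injz P}$, which by definition of $\injz P$ is contained in $Z(\Gamma)$. Iterating gives $\tilde\kappa_1^j\equiv\tilde\kappa_j\pmod{Z(\Gamma)}$ for every $j$, and applying this at $j=n$, together with the observation that $\tilde\kappa_n\in\ker\pi\cap\tilde P^{\injz P}\subseteq Z(\Gamma)$ (since $k^n=1$), yields $\tilde\kappa_1^n\in Z(\Gamma)$.

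The main obstacle is then to bootstrap from ``$\tilde\kappa_1^n$ is central in $\Gamma$'' to ``$\tilde\kappa_1$ itself is central'', which would immediately give $kH=\pi(\tilde\kappa_1)\in\pi(Z(\Gamma))\subseteq Z(\la P\ra/H)$. I plan to resolve this by embedding $\Gamma$ in its Mal'cev completion $N$ and invoking the fact that in a simply connected nilpotent Lie group the centraliser of a nonzero power of an element coincides with the centraliser of the element itself: both are the connected Lie subgroup of $N$ exponentiating $\{X\in\n:[X,\log\tilde\kappa_1]=0\}$, since $\log\tilde\kappa_1^n=n\log\tilde\kappa_1$ in $N$. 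Intersecting with $\Gamma$ gives $Z_\Gamma(\tilde\kappa_1)=Z_\Gamma(\tilde\kappa_1^n)=\Gamma$, so $\tilde\kappa_1\in Z(\Gamma)$; since $\pi(Z(\Gamma))$ is automatically central in $\pi(\Gamma)=\la P\ra/H$, running this over all $k\in K$ completes the proof.
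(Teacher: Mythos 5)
Your proof is correct, but it takes a genuinely different route from the paper's. The paper works globally: it sets $R=\injz P$, forms the central subgroup $Z_R=\la\ker\pi\cap(\tilde P^R\tilde P^{-R})\ra$ of $\Gamma$ (and hence of $N$, since $\log\Gamma$ spans $\n$), and notes that the induced map $\pi_R:\Gamma/Z_R\to\la P\ra/H$ is honestly injective on $\tilde P^R Z_R/Z_R$. It can then pull $KH/H$ back to a finite, hence compact, subgroup of $N/Z_R$ via \cref{lem:local.hom.pullback} and invoke \cref{prop:compact.central} to conclude centrality. You instead argue element by element: you lift the powers $k^j$ of a fixed $k\in K$ to $\tilde\kappa_j\in\tilde P^{\lfloor\injz P/2\rfloor}$, use the defining property of $\injz P$ to show $\tilde\kappa_1\tilde\kappa_j\equiv\tilde\kappa_{j+1}\pmod{Z(\Gamma)}$ and hence $\tilde\kappa_1^n\in Z(\Gamma)$, and then bootstrap to $\tilde\kappa_1\in Z(\Gamma)$ via the equality $C_N(g^n)=C_N(g)$ in the Mal'cev completion (equivalently, the isolation of centralizers in finitely generated torsion-free nilpotent groups). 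Both approaches ultimately lean on structure theory of simply connected nilpotent Lie groups, but through different lemmas; your version avoids \cref{prop:compact.central} entirely at the cost of a slightly more computational power-lifting argument, and both handle the convention $\injz P=\infty$ correctly. One small point worth making explicit if you write this up: the clean way to see $C_N(g)=\exp\{Y\in\n:[Y,\log g]=0\}$ is to note that $hgh^{-1}=g$ with $h=\exp Y$, $g=\exp X$ forces $(\exp(\mathrm{ad}_Y)-\mathrm{id})X=0$, and $\exp(\mathrm{ad}_Y)-\mathrm{id}=\mathrm{ad}_Y\cdot F(\mathrm{ad}_Y)$ with $F(0)=1$ and $\mathrm{ad}_Y$ nilpotent, so $F(\mathrm{ad}_Y)$ is invertible and $[Y,X]=0$.
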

Note that with the convention $A^\infty=\la A\ra$ the statement of \cref{lem:finite<injz} still makes sense for $\injz P=\infty$, and indeed it still holds in this case by the same proof.
\begin{proof}Let $N$ be the nilpotent Lie group from which $P$ is projected, and write $\Gamma<N$ for its lattice and $\pi:\Gamma\to\la P\ra/H$ for its projector. Abbreviate $R=\injz P$, and define $Z_R=\la\ker\pi\cap(\tilde P^R\tilde P^{-R})\ra$, noting that $Z_R$ is central in $\Gamma$ by definition, and hence central in $N$ as $\log \Gamma$ generates the Lie algebra of $N$. Moreover, $\pi$ factors through the homomorphism $\pi_R:\Gamma/Z_R\to\la P\ra/H$ defined by $\pi_R(gZ_R)=\pi(g)$.

We claim that $\pi_R$ is injective on $\tilde P^RZ_R/Z_R$. Indeed, if $\pi_R(pZ_R)=\pi_R(qZ_R)$ for some $p,q\in\tilde P^R$ then by definition of $\pi_R$ we have $\pi(p)=\pi(q)$, and then by definition of $R$ we have $pq^{-1}\in Z_R$. \cref{lem:local.hom.pullback} therefore implies that $\pi_R^{-1}(K/H)$ is a subgroup of $\Gamma/Z_R$ isomorphic to $K/H$. In particular, $\pi_R^{-1}(K/H)$ is a compact subgroup of $N/Z_R$. \cref{prop:compact.central} therefore implies that $\pi_R^{-1}(K/H)$ is central in $\Gamma/Z_R$, which implies the desired result.
\end{proof}

\section{A preliminary fine-scale polynomial-volume theorem}\label{ch:fine-scale}

In this chapter we prove a preliminary version of \cref{thm:detailed.fine.scale}, with bounds depending on those we have in \cref{thm:bgt.gromov}. In fact, we will gradually improve the bounds of \cref{thm:bgt.gromov} as we proceed through our argument, and we will apply our preliminary version of \cref{thm:detailed.fine.scale} for various different instances of these bounds. As such, it will be useful to record exactly how the bounds of the result we are about to prove depend on the bounds we have in \cref{thm:bgt.gromov}.

In order to make these dependencies as transparent as possible, we define the following `statement'.
\begin{statement}\label{thm:rel.hom.dim.k(d)}
If $G$ is a group with finite symmetric generating set $S$ containing the identity and
\[
|S^n|\le\eps_0(d)n^{d+1}|S|
\]
for some integer $n\ge n_0(d)$, then there exist normal subgroups $H,\Gamma\normal G$ with $H\le\Gamma$ such that $H\subseteq S^n$, such that $\Gamma/H$ is nilpotent with class at most $O_d(1)$, and such that $[G:\Gamma]\le k(d)$.
\end{statement}
We will then often write sentences such as `Let $\eps_0(d)$, $n_0(d)$ and $k(d)$ be such that \cref{thm:rel.hom.dim.k(d)} holds'. Note, for example, that \cref{thm:bgt.gromov} shows that \cref{thm:rel.hom.dim.k(d)} holds with $k(d)=k^*(d+1)$, $\eps_0(d)=1$ and $n_0(d)=n_0^*(d+1)$, whilst \cref{thm:rel.hom.dim} will show that it holds with $k(d)=g(d)$ and some potentially much smaller value of $\eps_0$.

The main result of this chapter is as follows.
\begin{theorem}[preliminary fine-scale polynomial-volume theorem]\label{thm:multiscale}
Let $d,R\in\N_0$, and let $\eps_0=\eps_0(2d+1)>0$, $n_0=n_0(2d+1)\in\N$ and $k=k(2d+1)\in\N$ be such that \cref{thm:rel.hom.dim.k(d)} holds. Then there exist $n_1=n_1(d,R,n_0,k)\in\N$ such that if $G$ is a group with finite symmetric generating set $S$ containing the identity and
\[
|S^n|\le\eps n^{d+1}|S|
\]
for some integer $n\ge n_1$ and $\eps=\eps_0$, then there exist a set $X\subseteq S^k$ of cardinality at most $k$ containing the identity, non-negative integers $d'\ll_d1$ and $r_0<r_1<\cdots<r_{d'}$ such that $n^{1/2}\le r_0\le n<r_1$ and $r_i\mid r_{i+1}$ for each $i$, and $O_d(1)$-rational Lie progressions $P_0,P_1,\ldots,P_{d'}$ of dimension at most $O_d(1)$ in $O_d(1)$-upper-triangular form with injectivity radius at least $R$, each generating the same normal subgroup of $G$, such that writing $H_i$ for the symmetry group of $P_i$, $N_i$ for the nilpotent Lie group from which it is projected, $\Gamma_i<N_i$ for its lattice and $\pi_i:\Gamma_i\to\la P_0\ra/H_i$ for its projector, the following conditions are satisfied:
\begin{enumerate}[label=(\roman*)]
\item for each $i$ and every integer $m\ge r_i$ we have $XP_i^{\lfloor m/r_i\rfloor}\subseteq S^m\subseteq XP_i^{O_d(m/r_i)}$;\label{item:multiscale.1st}
\item distinct elements of $X$ belong to distinct cosets of $\la P_0\ra$;
\item $\dim P_0>\dim P_1>\cdots>\dim P_{d'}$;
\item $\hdim P_0>\hdim P_1>\cdots>\hdim P_{d'}$;
\item $H_0\le H_1\le\cdots\le H_{d'}$;
\item for each $i=1,\ldots,d'$ there exists a surjective Lie group homomorphism $\beta_i:N_{i-1}\to N_{i}$ such that $\beta_i(\Gamma_{i-1})=\Gamma_i$ and
the diagram
\[
\begin{CD}
 \Gamma_{i-1}                      @>\pi_{i-1}>>           \langle P_0\rangle/H_{i-1}\\
@V\beta_iVV               @VVV\\
\Gamma_{i}     @>\pi_{i}>>    \langle P_0\rangle/H_{i}
\end{CD}
\]
commutes;
\item each $H_i$ is the pullback to $\la P_0\ra$ of $\pi_0(\Gamma_0\cap\ker(\beta_i\circ\cdots\circ\beta_1))$;
\item $\inj P_{i}\ll_{d}\frac{r_{i+1}}{r_{i}}\ll_{d,R}\inj P_{i}$ for $i=0,\ldots,d'-1$, and $\inj P_{d'}=\infty$;\label{item:multiscale.prelim.inj}
\item for each $i$ we have $m^{\dim P_i}\ll_{d,k,R}|S^m|/|S|$ for every $m\ge n$ with $r_i\le m<r_{i+1}$;\label{item:multiscale.dim.0}
\item for each $i$ we have $m^{\hdim P_i}\ll_{d,k,R}|S^m|$ for every $m\ge n$ with $r_i\le m<r_{i+1}$.\label{item:multiscale.hdim.0}
\end{enumerate}
Moreover, if $\eps>0$ is allowed to depend in addition on $d$, $k$ and $R$ then we may conclude further that
\begin{enumerate}[resume,label=(\roman*)]
\item $\dim P_0\le d$, and hence $d'\le d$;\label{item:multiscale.next}
\item $\hdim P_0\le\frac12d(d-1)+1$.
\end{enumerate}
\end{theorem}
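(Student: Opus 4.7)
The plan is to first build a single Lie progression approximating $S^n$ at some scale $r_0 \in [n^{1/2}, n]$, and then iteratively rebuild it on successive scales, each time dropping the dimension once the injectivity radius is exhausted, until one reaches a progression with infinite injectivity radius.

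For the initial progression, I would apply \cref{lem:poly.pigeon} to locate a scale $r_0 \in [n^{1/2}, n]$ at which $|S^{qr_0}| \le q^{O_d(1)} |S^{r_0}|$ for some bounded $q$, so that by \cref{lem:tripling->AG} and \cref{prop:doubling.covered.by.app.grp} the set $S^{r_0}$ is covered by boundedly many translates of an $O_d(1)$-approximate group. Applying \cref{thm:rel.hom.dim.k(d)} with input parameter $2d+1$ (to absorb the multiplicative slack between $|S|$ and $|S^{r_0}|$) produces normal subgroups $H_0 \le \Gamma \normal G$ with $H_0 \subseteq S^n$, with $\Gamma/H_0$ nilpotent of class $O_d(1)$, and with $[G:\Gamma] \le k$. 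Slicing (\cref{lem:slicing}) yields a bounded approximate subgroup of $\Gamma$, and projecting to the nilpotent quotient $\Gamma/H_0$ and applying the nilpotent Freiman theorem (\cref{thm:nilp.frei}) produces a nilpotent progression of bounded rank and step, which I would convert to a Lie progression via \cref{prop:nilp.prog=>Lie.prog} and pull back to $G$ to form the starting Lie progression $P^{(0)}$. The set $X$ is taken to be a transversal for $\la P^{(0)} \ra$ in $G$, which sits inside $S^k$ by \cref{lem:fin.ind.gen}, and normality of the symmetry groups will follow from \cref{lem:sym.grp.normal}.

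For the iteration, given $P_i$ at scale $r_i$ with $\inj P_i \ge R$, I would stop with $d' = i$ if $\inj P_i = \infty$; otherwise I would set $r_{i+1}$ to be a suitable multiple of $r_i \cdot \inj P_i$ and rebuild $P_i^{r_{i+1}/r_i}$ via \cref{prop:powergood.rational} as a rational Lie progression in the same Lie group $N_i$. The finiteness of $\inj P_i$ forces the projector $\pi_i$ to acquire a non-trivial kernel $K_i$ on the rebuilt progression, and by \cref{lem:finite<injz} together with \cref{prop:compact.central} this kernel lies centrally in $N_i$. Setting $N_{i+1}$ to be the quotient of $N_i$ by the closed connected central subgroup generated by $K_i$ yields a simply connected nilpotent Lie group of strictly smaller dimension, and the quotient map $\beta_{i+1}: N_i \to N_{i+1}$ makes the commutative diagram in (vi) tautological. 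The image of the rebuilt progression in $N_{i+1}$, after a further application of \cref{prop:powergood.rational} and \cref{prop:pp.5.1} to restore $\inj \ge R$ with $O_d(1)$-upper-triangular form and $O_d(1)$-rationality, becomes the new Lie progression $P_{i+1}$. The absorption of error cosets in this rebuilding uses the tools of \S\ref{section:integral/rational}, in particular \cref{lem:make.integral,lem:gettingridofX,lem:absorption}, and $H_{i+1}$ is defined as the pullback of $\pi_0(\Gamma_0 \cap \ker(\beta_{i+1} \circ \cdots \circ \beta_1))$.

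Strict dimension decrease forces termination after $d' = O_d(1)$ steps, and applying \cref{prop:dimension.bound} to each $P_i$ in conjunction with the covering relation \ref{item:multiscale.1st} directly yields the growth lower bounds \ref{item:multiscale.dim.0} and \ref{item:multiscale.hdim.0}. The moreover clause follows from the same lower bound at $m = n$: combining $|S^n| \gg_{d,k,R} n^{\dim P_0} |S|$ with the hypothesis $|S^n| \le \eps n^{d+1} |S|$ forces $\dim P_0 \le d$ once $\eps$ is chosen small enough in terms of $d,k,R$, and similarly $|S^n| \gg_{d,k,R} n^{\hdim P_0}$ combined with $\dim P_0 \le d$ and the Bass--Guivarc'h formula gives $\hdim P_0 \le \tfrac12 d(d-1) + 1$ (the maximum homogeneous dimension of a $d$-dimensional simply connected nilpotent Lie group). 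The main obstacle will be the rebuilding step: it must simultaneously ensure strictly decreasing dimension, injectivity radius at least $R$, $O_d(1)$-rationality and upper-triangular form, and a symmetry group normal in $G$, all while preserving \ref{item:multiscale.1st} with implicit constants depending only on $d$, and controlling all of these uniformly across the iteration is where the full toolkit developed in \cref{ch:progs} is needed.
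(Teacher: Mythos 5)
Your proposal follows the paper's high-level strategy (build an initial Lie progression $P_0$ via approximate-group and nilpotent-Freiman tools, then iteratively reduce dimension whenever the injectivity radius is exhausted until it becomes infinite), but the mechanism you give for the dimension-reduction step contains a genuine error, and the derivation of the growth lower bounds is too casual.

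On the dimension-reduction step: you argue that when $\inj P_i$ is finite, the kernel $K_i$ of the rebuilt projector is central in $N_i$ by \cref{lem:finite<injz} and \cref{prop:compact.central}, and then quotient $N_i$ by the central closed connected subgroup generated by $K_i$. This does not work. The kernel of $\pi_i$ is a discrete subgroup of $\Gamma_i$, not a compact subgroup of $N_i$, so \cref{prop:compact.central} does not apply; and it is normal in $\Gamma_i$ but not in $N_i$, so the standard discrete-normal-implies-central argument is also unavailable. Moreover, \cref{lem:finite<injz} applies to finite subgroups sitting inside $P^{\lfloor \injz P / 2\rfloor}$, which is not the setting here (the kernel elements at scale $\inj P_i$ can be well outside the range controlled by $\injz P_i$; indeed \cref{prop:ProperCenter} shows $\injz$ is typically much smaller than $\inj$ when the class exceeds $1$). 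In the paper's proof the dimension drop is achieved via \cref{prop:properprog} (built on the Bilu-type geometry-of-numbers result \cref{prop:reduce.dim.when.not.proper} from the first paper), whose output kernel $\ker\beta$ is a connected \emph{normal} subgroup of $N_i$ (with $z_i$'s satisfying an upper-triangular bracket relation), not a central one. You cannot substitute a centrality argument here; the actual ingredient is the lattice/box selection inside the Lie algebra.

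On conclusions \ref{item:multiscale.dim.0} and \ref{item:multiscale.hdim.0}: you write that \cref{prop:dimension.bound} applied to $P_i$ together with \ref{item:multiscale.1st} ``directly yields'' the lower bounds. But \cref{prop:dimension.bound} gives a lower bound only in the window $\gamma\eta \le m/r_i \le \inj P_i$, and for $m$ near the top of $[r_i,r_{i+1})$ you have $m/r_i \asymp \inj P_i$, where the proposition begins to fail (you need strictly within the injectivity radius, and you need the multiplicative slack $\gamma\eta$). The paper handles this by choosing a suitable lower index $j \le i$ and a suitably rescaled point $\lfloor\sigma^{d'-j}m\rfloor$, and exploits monotonicity of $\dim P_j$ and $\hdim P_j$ in $j$ to transfer the bound. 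This careful scale selection, with its dependence on $R$, is not optional; without it the claimed bound is not established across the whole range $r_i \le m < r_{i+1}$.

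A minor point: the citation for $X \subseteq S^k$ should be \cref{lem:ball.cosets=>index}, not \cref{lem:fin.ind.gen} (the latter concerns generating a finite-index subgroup, not locating coset representatives).

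The rest of the proposal — the use of \cref{lem:poly.pigeon}, \cref{lem:tripling->AG}/\cref{prop:doubling.covered.by.app.grp}, \cref{lem:slicing}, \cref{thm:nilp.frei}, \cref{prop:nilp.prog=>Lie.prog} to build $P_0$; \cref{prop:powergood.rational} and the integral/rational machinery to absorb error cosets; defining $H_{i+1}$ as the pullback of the kernel; terminating after $O_d(1)$ steps by strict dimension decrease; and deducing the moreover clause from the growth lower bound at $m=n$ — all matches the paper's approach. The proposal is salvageable, but you must replace the centrality-based quotienting with the \cref{prop:properprog}/\cref{prop:reduce.dim.when.not.proper} argument and supply the scale-selection argument for the lower bounds.
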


It is worth remarking at this point that although $n_1$ depends effectively on $d,n_0,k$, and as such is not adorned with an asterisk, \cref{thm:rel.hom.dim.k(d)} is not currently known to hold for any effective value of $n_0$, so \cref{thm:multiscale} cannot currently give an effective bound on $n_1$.

\subsection{Construction of Lie progressions with large injectivity radius}

In our first paper \cite{proper.progs} we adapted an argument from Bilu \cite{bilu} from the abelian case to show that an integral Lie progression with small injectivity radius can be approximated by an integral Lie progression of lower dimension. In this section we extend that result to rational Lie progressions, as follows.

\begin{prop}\label{prop:properprog}
Let $R\in\N$. Suppose $P_0$ is a $Q$-rational Lie progression in $C$-upper-triangular form with dimension $d$, injectivity radius less than $R$ and symmetry group $H_0$, projected from the nilpotent Lie group $N_0$ with lattice $\Gamma_0$ by $\pi_0:\Gamma_0\to\la P_0\ra/H_0$. Then there exists a normal subgroup $H_1\normal\la P_0\ra$ containing $H_0$ and satisfying
\begin{equation}\label{eq:pi(Gamma_0)small.diam}
H_1\subseteq P_0^{O_{C,d,Q,R}(1)},
\end{equation}
and an $O_{d,Q}(1)$-rational Lie progression $P_1$ in $O_{C,d,Q}(1)$-upper-triangular form with dimension strictly less than $d$, injectivity radius at least $R$ and symmetry group $H_1$ such that, writing $N_1$ for the nilpotent Lie group from which $P_1$ is projected, $\Gamma_1\le N_1$ for its lattice, and $\pi_1:\Gamma_1\to\la P_1\ra/H_1$ for its projector, there exists a surjective Lie group homomorphism $\beta:N_0\to N_1$ such that $\beta(\Gamma_0)=\Gamma_1$, such that
\begin{equation}\label{eq:properprog.containment}
\beta(\tilde P_0)\subseteq\tilde P_1\subseteq\beta(\tilde P_0)^{O_{C,d,Q,R}(1)},
\end{equation}
such that the diagram
\begin{equation}\label{eq:quot.prog.diagram-bilu.revisited}
\begin{CD}
 \Gamma_0                      @>\pi_0>>           \langle P_0\rangle/H_0\\
@V\beta VV               @VVV\\
\Gamma_1     @>\pi_1>>    \langle P_0\rangle/H_1
\end{CD}
\end{equation}
commutes, and such that $H_1$ is the pullback to $\la P_0\ra$ of $\pi_0(\Gamma_0\cap\ker\beta)$. 
\end{prop}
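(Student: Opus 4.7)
The overall plan is to reduce to the integral case, where an analogous statement was established in our first paper \cite{proper.progs} via an adaptation of Bilu's argument, and then to lift the resulting integral quotient progression back to a rational one using the machinery of \cref{section:integral/rational}. First I would apply \cref{lem:make.integral} to replace the underlying raw progression of $P_0$ by its integral approximant $P(u^M;L)$ with $M \ll_d Q$, at the cost of introducing a bounded finite set $X \subseteq P(u;M)$ satisfying $X \cap P(u^M;\infty)=\{1\}$ and the two-sided containments in \eqref{eq:make.integral}, \eqref{eq:make.integral.bdd}. Since the error incurred at each power is only a multiplicative $O_{C,d,Q}(1)$, the integral approximant still has its injectivity radius bounded above in terms of $C$, $d$, $Q$ and $R$.

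Next, I would invoke the integral version of the proposition from \cite{proper.progs}, giving a surjective Lie group homomorphism $\beta\colon N_0\to N_1'$ whose kernel is (in effect) the Mal'cev closure in $N_0$ of the short elements of the kernel of the projector of $P(u^M;L)$. This produces an integral quotient raw progression $\beta(P(u^M;L))$ of strictly smaller dimension in $N_1'$, which we may further approximate (at the cost of a multiplicative $O_{C,d,Q,R}(1)$) by an integral raw Lie progression of injectivity radius at least some prescribed $R'=R'(C,d,Q,R)$, chosen large enough. I would then \emph{define} $H_1$ to be the pullback to $\la P_0\ra$ of $\pi_0(\Gamma_0\cap\ker\beta)$, so that $H_1\supseteq H_0$ automatically and $H_1\normal\la P_0\ra$. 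The bound $H_1\subseteq P_0^{O_{C,d,Q,R}(1)}$ follows because $\ker\beta\cap\Gamma_0$ is generated by elements of $\tilde P_0^{O_R(1)}$, and products of boundedly many such elements live in a bounded power of $\tilde P_0$ by \cref{lem:upper-tri.doubling.dilate}.

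Finally, to obtain a rational (rather than integral) quotient progression, I would apply \cref{lem:divide.basis} inside $N_1'$ and then \cref{lem:gettingridofX} with $\Gamma'=\beta(\Gamma_0)$ playing the role of the enlarged lattice. Since $\beta(X)\subseteq\beta(\la P_0\ra)\subseteq\Gamma'$, \cref{lem:absorption} applied to the resulting $O_{d,Q}(1)$-rational raw progression $\tilde P_1$ in $O_{C,d,Q}(1)$-upper-triangular form allows $\beta(X)$ to be absorbed into $\tilde P_1$ at the cost of inflating its lengths by a factor $O_{C,d,Q,R}(1)$, yielding the required two-sided inclusion $\beta(\tilde P_0)\subseteq\tilde P_1\subseteq\beta(\tilde P_0)^{O_{C,d,Q,R}(1)}$. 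The projector $\pi_1$ is defined by composing $\pi_0$ with the descent to $\la P_0\ra/H_1$, so that the commutativity of \eqref{eq:quot.prog.diagram-bilu.revisited} and the identification of $H_1$ are built in. The hardest step in practice will be the bookkeeping in this lifting phase: one must check that the radius of injectivity of $\tilde P_1$ modulo $H_1$ ends up at least $R$ (rather than $R$ divided by some $O_{C,d,Q,R}(1)$ factor), which forces $R'$ to be chosen suitably large in the integral step, and that the various Mal'cev-rigidity extensions of $\beta$ to the rational hulls are compatible.
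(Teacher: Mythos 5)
Your overall strategy matches the paper's: reduce to the integral case via \cref{lem:make.integral}, invoke the integral version (\cref{prop:reduce.dim.when.not.proper}) from the first paper, then pass back to a rational progression over the full lattice $\beta(\Gamma_0)$ via \cref{lem:divide.basis}, \cref{lem:gettingridofX} and \cref{lem:absorption}. However, your argument for the bound $H_1\subseteq P_0^{O_{C,d,Q,R}(1)}$ has a genuine gap, and it is the most delicate step of the proof.

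You assert that $\ker\beta\cap\Gamma_0$ ``is generated by elements of $\tilde P_0^{O_R(1)}$, and products of boundedly many such elements live in a bounded power of $\tilde P_0$.'' This does not establish the claim. First, $\ker\beta\cap\Gamma_0$ is typically an \emph{infinite} lattice in the positive-dimensional Lie group $\ker\beta$, so being generated by short elements does not make the group itself bounded; arbitrary-length products of short elements are not short, and \cref{lem:upper-tri.doubling.dilate} only controls a \emph{fixed} power of $\tilde P_0$. What actually needs to be shown is that the \emph{image} $\pi_0(\Gamma_0\cap\ker\beta)$ is contained in a bounded power of $P_0$, and this uses two ingredients that your sketch does not put together: (a) the integral result already gives that $\pi_0(\Gamma_0'\cap\ker\beta)=H_1'/H_0$ lies in $Q_0^{O_{C,d,Q,R}(1)}/H_0$, and (b) for each $x\in X_0$ with $x\Gamma_0'\cap\ker\beta\neq\varnothing$, there exists a \emph{bounded} coset representative $xq\in\ker\beta\cap\tilde P_0^{O_{C,d,Q}(1)}$. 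Step (b) is where the explicit elements $z_1,\ldots,z_m\in\ker\beta\cap\tilde Q_0$ with $[z_i,z_j]\in\Span_\R(z_{j+1},\ldots,z_m)$ and $\ker\beta=P_\R(z,\infty)$, which the integral \cref{prop:reduce.dim.when.not.proper} supplies, are essential: one writes $xp=z_1^{r_1}\cdots z_m^{r_m}$ with $r_i\in\R$ and iteratively subtracts integer multiples of the $z_i$ to reduce each coordinate to $[0,1)$, then uses \cref{lem:bg.rational.power} and \cref{lem:pp.L2.1} to conclude the result is in $\tilde P_0^{O_{C,d,Q}(1)}$. You mention these $z_i$ only in passing (``the Mal'cev closure \ldots of the short elements'') but never use them, and without them the bound on $H_1$ does not follow.

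A second, smaller point: your claim that the injectivity radius of the integral approximant $Q_0$ is automatically bounded by $O_{C,d,Q}(R)$ ``since the error incurred at each power is only multiplicative'' is too quick. A small kernel element $g\in\tilde P_0^R\cap\ker\pi_0$ only lands in $X_0\tilde Q_0^{O(R)}$, and if the $X_0$-component is nontrivial one does not directly get a small kernel element of $Q_0$. One needs to pass to a power $g^\eta$ with $\eta\ll_d M$ (using \cref{lem:bg.rational.power}) to land in $\Gamma_0'$, and only then can $X_0\cap\Gamma_0'=\{1\}$ be used to conclude $g^\eta\in\tilde Q_0^{O_{C,d,Q}(R)}\cap\ker\pi_0$.
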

Note that \eqref{eq:pi(Gamma_0)small.diam}, \eqref{eq:properprog.containment} and the commutativity of \eqref{eq:quot.prog.diagram-bilu.revisited} imply that $P_0\subseteq P_1\subseteq P_0^{O_{C,d,Q,R}(1)}$.

Our earlier result for integral progressions was similar to the following (we modify various details here to reflect what we need in the present paper).

\begin{prop}\label{prop:reduce.dim.when.not.proper}
Let $R\in\N$. Suppose $P_0$ is an integral Lie progression in $C$-upper-triangular form with dimension $d$, injectivity radius less than $R$ and symmetry group $H_0$, projected from the nilpotent Lie group $N_0$ with lattice $\Gamma_0$ by $\pi_0:\Gamma_0\to\la P_0\ra/H_0$. Then there exists a normal subgroup $H_1\normal\la P_0\ra$ containing $H_0$ and satisfying
\[
H_1\subseteq P_0^{O_{C,d,R}(1)},
\]
and an integral Lie progression $P_1$ in $O_{d}(1)$-upper-triangular form with dimension strictly less than $d$, injectivity radius at least $R$ and symmetry group $H_1$ such that, writing $N_1$ for the nilpotent Lie group from which $P_1$ is projected, writing $\Gamma_1\le N_1$ for its lattice, and writing $\pi_1:\Gamma_1\to\la P_1\ra/H_1$ for its projector, there exists a surjective Lie group homomorphism $\beta:N_0\to N_1$ such that $\beta(\Gamma_0)=\Gamma_1$, such that
\[
\beta(\tilde P_0)\subseteq\tilde P_1\subseteq\beta(\tilde P_0)^{O_{C,d,R}(1)},
\]
such that the diagram
\begin{equation}\label{eq:quot.prog.diagram}
\begin{CD}
 \Gamma_0                      @>\pi_0>>           \langle P_0\rangle/H_0\\
@V\beta VV               @VVV\\
\Gamma_1     @>\pi_1>>    \langle P_0\rangle/H_1
\end{CD}
\end{equation}
commutes, and such that $H_1$ is the pullback to $\la P_0\ra$ of $\pi_0(\Gamma_0\cap\ker\beta)$. There also exist elements $z_1,\ldots,z_m\in\ker\beta\cap\tilde P_0$ such that $[z_i,z_j]\in\Span_\R(z_{j+1}\ldots,z_m)$ whenever $i<j$, and such that $\ker\beta=P_\R(z,\infty)$.
\end{prop}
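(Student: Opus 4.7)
The plan is to adapt the argument we used in our first paper \cite{proper.progs} to establish essentially this same statement, extracting from that proof the explicit sequence $z_1,\ldots,z_m$ of short generators of $\ker\beta$ in upper-triangular form.

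First, since $\inj P_0<R$, by definition there exists a non-identity element of $\ker\pi_0\cap\tilde P_0^R$. Write $u_i=\exp e_i$ for the generators, and let $K$ denote the subgroup of $\Gamma_0$ generated by the normal closure in $\Gamma_0$ of $\ker\pi_0\cap\tilde P_0^R$. By \cref{lem:upper-tri.doubling.dilate} we have $\tilde P_0^R\subseteq P(u;O_{C,d,R}(L))$, and then \cref{prop:pp4.1.orig} gives $\log(\tilde P_0^R)\subseteq B_\R(e;O_{C,d,R}(L))$. Working from the center outward (exactly as in \cite{proper.progs}), one shows that the $\R$-span $\h:=\Span_\R(\log K)$ is a proper Lie ideal of $\n_0:=\log N_0$; the properness comes from the fact that, after reordering the strong Mal'cev basis so that the flag refines the lower central series, at least one of the outermost coordinates of every element of $\log K$ must vanish (since otherwise, by integrality together with \cref{lem:dilates.subgroup}, $K$ would contain an element of $\tilde P_0$ outside $\ker\pi_0$).

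Next I would apply a geometry-of-numbers argument analogous to \cref{prop:pp.5.1}, applied to the sublattice $\log\Gamma_0\cap\h$ (with respect to the flag inherited from the strong Mal'cev basis of $\n_0$) and the convex body $B_\R(e;O_{C,d,R}(L))\cap\h$. This yields a basis $z_1,\ldots,z_m$ of $\log\Gamma_0\cap\h$ with $m<\dim\h\le d-1$... well with $m=\dim\h<d$, each $z_i\in B_\R(e;O_{C,d,R}(L))\cap\h$, and $[z_i,z_j]\in\Span_\R(z_{j+1},\ldots,z_m)$ for $i<j$. By \cref{prop:pp4.1.orig}, each $\exp z_i$ lies in $\tilde P_0^{O_{C,d,R}(1)}$, and by construction $\ker\beta\cap\Gamma_0=\la\exp z_1,\ldots,\exp z_m\ra$ and $\ker\beta=\exp\h=P_\R(\exp z;\infty)$.

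Finally I would set $N_1=N_0/\exp\h$, $\beta:N_0\to N_1$ the quotient, and $\Gamma_1=\beta(\Gamma_0)$, which is a lattice in $N_1$ of dimension $d-m<d$. Reordering the basis $e_1,\ldots,e_d$ so that $z_1,\ldots,z_m$ occupy the tail positions (which is compatible with preserving upper-triangular form because $\h$ is an ideal), the images $\beta(e_{i_1}),\ldots,\beta(e_{d-m})$ of the remaining basis vectors, together with lengths $L_{i_1},\ldots,L_{i_{d-m}}$, constitute a strong Mal'cev basis for $\log N_1$ in $O_d(1)$-upper-triangular form over $\Z$, yielding the required integral raw Lie progression $\tilde P_1$ with $\tilde P_1=\beta(\tilde P_0)$. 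I would then define $H_1\normal\la P_0\ra$ to be the pullback of $\pi_0(\Gamma_0\cap\ker\beta)$, which is normal because $\ker\beta\normal\Gamma_0$, and define $\pi_1$ as the map induced by $\pi_0$ via the universal property of the quotient $\Gamma_1=\Gamma_0/(\Gamma_0\cap\ker\beta)$. Commutativity of \eqref{eq:quot.prog.diagram} is then immediate, the containment $H_1\subseteq P_0^{O_{C,d,R}(1)}$ follows from the fact that $\ker\beta\cap\Gamma_0$ is generated by the $\exp z_i\in\tilde P_0^{O_{C,d,R}(1)}$, and the bound $\inj P_1\ge R$ holds because we have quotiented out precisely the $\pi_0$-collisions on $\tilde P_0^R$.

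The main obstacle will be the geometry-of-numbers step, where we need to produce the $z_i$ simultaneously satisfying the upper-triangular bracket condition, lying in $\tilde P_0$ (up to bounded powers), and forming a basis of $\ker\beta\cap\Gamma_0$, together with the verification that $\h$ is a \emph{proper} ideal so that the quotient genuinely reduces dimension. In \cite{proper.progs} we treated these points in the integral setting by an inductive argument moving outward from the center of $N_0$, and the same argument adapts here with the extra bookkeeping required to keep track of the explicit basis $z_1,\ldots,z_m$.
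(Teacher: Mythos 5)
Your overall strategy matches what actually underlies this proposition: the paper itself disposes of it by citing \cite[Proposition 7.3]{proper.progs} and explaining how the extra conclusions (the homomorphism $\beta$, the commutative diagram, the pullback description of $H_1$, the elements $z_i$) are extracted from the proof of \cite[Proposition 7.6]{proper.progs}, and your sketch correctly reconstructs the Bilu-style mechanism of that proof: locate the short kernel elements in $\tilde P_0^R$, pass to a Lie ideal $\h$ containing their logarithms, choose a well-adapted basis $z_1,\ldots,z_m$ of $\log\Gamma_0\cap\h$ by a geometry-of-numbers argument in the spirit of \cref{prop:pp.5.1}, and quotient by $\exp\h$.

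The final step, however, has a genuine gap: it is not true that a single such quotient yields $\inj P_1\ge R$ ``because we have quotiented out precisely the $\pi_0$-collisions on $\tilde P_0^R$''. With your definitions, $\ker\pi_1=\beta(\ker\pi_0)$, so a non-trivial element of $\ker\pi_1\cap\tilde P_1^R$ arises from some $g\in\ker\pi_0$ with $\beta(g)\in\beta(\tilde P_0)^R$; such a $g$ need only lie in $\tilde P_0^R\ker\beta$, not in $\tilde P_0^{O_{C,d,R}(1)}$, and hence need not have been among the collisions you killed. Collapsing $\h$ can therefore create \emph{new} short relations, and one pass gives no lower bound at all on $\inj P_1$. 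The argument must be iterated, reducing the dimension by at least one at each stage and terminating after at most $d$ rounds; this is exactly the inductive structure of \cite[Proposition 7.6]{proper.progs} (the maps $\Phi_j$ and groups $\Gamma_j$, $H_{j+1}$ there), and it is why the conclusion only asserts $\dim P_1<d$ rather than $\dim P_1=d-\dim\h$, with constants depending on $R$. A secondary issue: your justification that $\h$ is a proper ideal does not work as written, since $K\le\ker\pi_0$ by construction and so can never ``contain an element of $\tilde P_0$ outside $\ker\pi_0$''; properness can in fact fail (when $\la P_0\ra/H_0$ is finite), in which case the iteration must be allowed to terminate at a $0$-dimensional progression, and the containment $H_1\subseteq P_0^{O_{C,d,R}(1)}$ again requires the iterative bookkeeping rather than following directly from the $z_i$ lying in $\tilde P_0^{O_{C,d,R}(1)}$.
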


\begin{proof}
This is very close to the statement of \cite[Proposition 7.3]{proper.progs} (to which we refer hereafter in this proof as `the reference'), but it differs in several details. The first of these is just terminological: in the reference, the Lie progressions are not stated to be integral, but that is just because Lie progressions in that paper are all integral by definition. We now describe and address the remaining differences.
\begin{itemize}
\item Instead of injectivity radius, the reference uses a closely related notion called \emph{properness}: a Lie progression $P$ with generators $u_1,\ldots,u_d$, lengths $L_1,\ldots,L_d$ and projector $\pi$ is called \emph{$m$-proper} for $m\in\N$ if the elements $\pi(u_1^{\ell_1}\cdots u_d^{\ell_d})$ are all distinct as the $\ell_i$ range over those integers with $|\ell_i|\le mL_i$. However, it easily follows from \eqref{eq:dilateProg} and \cref{lem:upper-tri.doubling.dilate} that the truth of the result is unaffected by interchanging these two notions.
\item In the reference the symmetry group $H_0$ is assumed to be trivial. This is easily adressed by applying the result there in the quotient $\langle P_0\rangle/H_0$.
\item In the reference there is no mention of the homomorphism $\beta$. In particular, it is not therefore stated that $\beta(\Gamma_0)=\Gamma_1$, that $\beta(\tilde P_0)\subseteq\tilde P_1\subseteq\beta(\tilde P_0)^{O_{C,d,R}(1)}$, or that the diagram \eqref{eq:quot.prog.diagram} commutes. The homomorphism $\beta$ is in fact the homomorphism $\Phi$ appearing in the proof of \cite[Proposition 7.6]{proper.progs}, and \eqref{eq:quot.prog.diagram} is part of the commutative diagram \cite[(7.5)]{proper.progs}. The fact that $\beta(\Gamma_0)=\Gamma_1$ follows from the fact that $\Phi_j(\Gamma_j)=\Gamma_{j+1}$ in the proof of \cite[Proposition 7.6]{proper.progs} (or from $\beta(\tilde P_0)\subseteq\tilde P_1\subseteq\beta(\tilde P_0)^{O_{C,d,R}(1)}$, which we are about to prove). The fact that $\beta(\tilde P_0)\subseteq\tilde P_1\subseteq\beta(\tilde P_0)^{O_{C,d,R}(1)}$ follows from the proof of \cite[(7.1)]{proper.progs} but with \cite[(7.7)]{proper.progs} replaced by the expression $\Phi(\exp B_\Z(e;k_{C,d}rL))\subseteq\exp B_\Z(e';rL')\subseteq\Phi(\exp B_\Z(e;O_{C,d,m}(rL)))$; this in turn follows from the penultimate displayed equation in the proof of \cite[Proposition 7.6]{proper.progs} and the commutativity of \cite[(7.5)]{proper.progs}.
\item In the reference, $H_1$ is not stated to be the pullback to $\langle P_0\rangle$ of $\pi_0(\Gamma_0\cap\ker\beta)$, but this follows from the definition of $H_{j+1}$ in the proof of \cite[Proposition 7.6]{proper.progs} and the commutativity of \cite[(7.5)]{proper.progs}.
\item In the reference, there is no explicit mention of the elements $z_i$ in the statement of the proposition. However, they do appear explicitly in its proof, specifically in the proof of \cite[Proposition 7.6]{proper.progs}.\qedhere
\end{itemize}
\end{proof}

\begin{proof}[Proof of \cref{prop:properprog}]
Write $u_1,\ldots,u_d\in\Gamma_0$ for the generators, $e_1,\ldots,e_d$ for the basis, and $L_1,\ldots,L_d>0$ for the lengths of $P_0$, so that $\tilde P_0=P(u;L)$ and $e_i=\log u_i$. Let $M\ll_dQ$ be as given by \cref{lem:make.integral}, let $X_0\subseteq P(u;M)$ be the subset given by the same lemma, and write $\Gamma_0'=\la u_1^M,\ldots,u_d^M\ra$. Thus, $\tilde Q_0=P(u^M;L)$ is an integral raw Lie progression in $O_{C,d,Q}(1)$-upper-triangular form and $\la\tilde Q_0\ra=\Gamma_0'=\exp\la Me_1,\ldots,Me_d\ra=P(u^M,\infty)$. Furthermore, $X_0\cap\Gamma_0'=\{1\}$, and
\begin{equation}\label{eq:X.absorbable}
\tilde P_0^r\subseteq X_0\tilde Q_0^{O_{C,d,Q}(r)}
\end{equation}
and
\begin{equation}\label{eq:make.integral.bdd}
X_0\tilde Q_0^r\subseteq\tilde P_0^{O_{d,Q}(r)}
\end{equation}
for all $r\in\N$. Note that $\Gamma_0=X_0\Gamma_0'$ by \eqref{eq:X.absorbable}.

Let $Q_0$ be the pullback to $\la P_0\ra$ of $\pi_0(\tilde Q_0)$, so that $Q_0$ is an integral Lie progression in $O_{C,d,Q}(1)$-upper-triangular form with symmetry group $H_0$. We claim that $\inj Q_0\ll_{C,d,Q}R$. Indeed, since $\inj P_0<R$, there exist $\ell_1,\ldots,\ell_d\in\Z$, not all zero, such that $u_1^{\ell_1}\cdots u_d^{\ell_d}\in\ker\pi\cap\tilde P_0^R$. \cref{lem:bg.rational.power} implies that there exist rational polynomials $p_1,\ldots,p_t$ (with $t\ll_d1$), the denominators of whose coefficients are all at most $O_d(1)$, such that for all $\eta\in\R$ we have $(u_1^{\ell_1}\cdots u_d^{\ell_d})^\eta=u_{i_1}^{p_1(\eta)\ell_{i_1}}\cdots u_{i_t}^{p_t(\eta)\ell_{i_t}}$. In particular, there exists a positive integer $\eta\ll_dM$ such that $(u_1^{\ell_1}\cdots u_d^{\ell_d})^\eta\in\Gamma_0'$. However, $(u_1^{\ell_1}\cdots u_d^{\ell_d})^\eta$ also belongs to $\ker\pi\cap\tilde P_0^{O_d(RM)}$, and hence, by \eqref{eq:X.absorbable}, to $\ker\pi\cap X_0\tilde Q_0^{O_{C,d,Q}(R)}$. Since $X_0\cap\Gamma_0'=\{1\}$, we deduce that $(u_1^{\ell_1}\cdots u_d^{\ell_d})^\eta\in\ker\pi\cap\tilde Q_0^{O_{C,d,Q}(R)}$, so that $\inj Q_0\ll_{C,d,Q}R$ as claimed.

Letting $\alpha=\alpha(C,d,Q)>0$ be a quantity to be determined later but depending on $C$, $d$ and $Q$ only, \cref{prop:reduce.dim.when.not.proper} therefore implies that there exists an integral Lie progression $Q_1$  in $O_d(1)$-upper-triangular form with dimension $d'<d$, injectivity radius at least $\alpha R$, and symmetry group $H_1'\subseteq Q_0^{O_{C,d,Q,R}(1)}$ containing $H_0$ such that, writing $N_1$ for the nilpotent Lie group from which $Q_1$ is projected, $\Gamma_1'$ for its lattice, and $\pi'_1:\Gamma_1'\to\la Q_0\ra/H'_1$ for its projector, there exits a surjective Lie group homomorphism $\beta:N_0\to N_1$ such that $\beta(\Gamma_0')=\Gamma_1'$, such that the diagram
\[
\begin{CD}
 \Gamma_0'                      @>\pi_0>>           \langle Q_0\rangle/H_0\\
@V\beta VV               @VVV\\
\Gamma_1'     @>\pi'_1>>    \langle Q_0\rangle/H_1'
\end{CD}
\]
commutes, such that $H_1'$ is the pullback to $\la P_0\ra$ of $\pi_0(\Gamma_0'\cap\ker\beta)$, and such that
\[
\beta(\tilde Q_0)\subseteq\tilde Q_1\subseteq\beta(\tilde Q_0^{O_{C,d,Q,R}(1)}).
\]
\cref{prop:reduce.dim.when.not.proper} also implies that there exist elements $z_1,\ldots,z_m\in\ker\beta\cap\tilde Q_0$ such that
\begin{equation}\label{eq:z_j.central}
[z_i,z_j]\in\Span_\R(z_{j+1}\ldots,z_m)
\end{equation}
whenever $i<j$, and such that $\ker\beta=P_\R(z,\infty)$.

Set $\Gamma_1=\beta(\Gamma_0)$, and let $H_1$ be the pullback to $\la P_0\ra$ of $\pi_0(\Gamma_0\cap\ker\beta)$, noting that $H_1'\le H_1\normal\la P_0\ra$. Write $\ph:\la P_0\ra/H_0\to\la P_0\ra/H_1$ for the quotient homomorphism, and define a homomorphism $\pi_1:\Gamma_1\to\la P_0\ra/H_1$ via $\pi_1\circ\beta=\ph\circ\pi_0$, so that the diagram \eqref{eq:quot.prog.diagram-bilu.revisited} becomes
\[
\begin{CD}
 \Gamma_0                      @>\pi_0>>           \langle P_0\rangle/H_0\\
@V\beta VV               @VV\ph V\\
\Gamma_1     @>\pi_1>>    \langle P_0\rangle/H_1
\end{CD}
\]
and commutes as required. Note that $H_1\cap\la Q_0\ra=H_1'$, and hence $\ker\pi_1\cap\Gamma'=\ker\pi_1'\cap\Gamma'$.

To prove \eqref{eq:pi(Gamma_0)small.diam}, let $g\in\Gamma_0\cap\ker\beta$, and note that $g=xp$ for some elements $x\in X_0$ and $p\in \Gamma_0'$. If there is some other element $q\in \Gamma_0'$ such that $xq\in\ker\beta$, then $q^{-1}p\in\Gamma_0'\cap\ker\beta$, so that $\pi_0(q^{-1}p)\in H_1'/H_0\subseteq Q_0^{O_{C,d,Q,R}(1)}/H_0\subseteq P_0^{O_{C,d,Q,R}(1)}/H_0$. It therefore suffices to show that there exists $q\in \Gamma_0'$ such that $xq\in\ker\beta$ and $\pi_0(xq)\in P_0^{O_{C,d,Q,R}(1)}/H_0$. In fact, we will prove a more precise statement, namely that there exists $q\in \Gamma_0'$ such that $xq\in\ker\beta\cap\tilde P_0^{O_{C,d,Q}(1)}$. First, note that by definition of the $z_i$ we have $xp=z_1^{r_1}\cdots z_m^{r_m}$ for some $r_i\in\R$. Applying \eqref{eq:z_j.central} repeatedly, we can therefore choose integers $\ell_1,\ell_2\ldots,\ell_m$ in turn such that $xpz_1^{\ell_1}\cdots z_m^{\ell_m}=z_1^{c_1}\cdots z_m^{c_m}$ for some $c_i\in[0,1)$. Set $q=pz_1^{\ell_1}\cdots z_m^{\ell_m}$, noting that $q\in\Gamma_0'$ and $xq\in\ker\beta$ as required. Since each $z_i\in P(u;ML)$, \cref{lem:bg.rational.power} implies that $xq\in P_\R(u,O_d(ML))^{O_d(1)}$, and so \cref{lem:pp.L2.1} implies that $xq\in P_\R(u;O_{C,d,Q}(L))$. Since $e$ is a strong Mal'cev basis and $xq\in\Gamma_0=P(u;\infty)$, this in turn implies that $xq\in P(u;O_{C,d,Q}(L))\subseteq\tilde P_0^{O_{C,d,Q}(1)}$ as required, giving \eqref{eq:pi(Gamma_0)small.diam} as claimed.

Write $\hat u_1,\ldots,\hat u_{d'}$ for the generators of $Q_1$, write $\hat e_1,\ldots,\hat e_{d'}$ for its basis (so that $\hat e_i=\log\hat u_i$), and write $\hat L_1,\ldots,\hat L_{d'}$ for its lengths. Since $\Gamma_1'$ has index at most $|X_0|=O_{d,Q}(1)$ in $\Gamma_1$, there exists
\begin{equation}\label{eq:Gamma1.m}
m\ll_{d,Q}1
\end{equation}
such that $g^m\in\Gamma_1'$ for all $g\in\Gamma_1$. In particular, this implies that $m\log g\in\la\hat e_1,\ldots,\hat e_{d'}\ra$ for all $g\in\Gamma_1$, and hence that $\Gamma_1\subseteq\exp\left\la\frac1m\hat e_1,\ldots,\frac1m\hat e_{d'}\right\ra$. By \cref{lem:divide.basis}, there exist natural numbers $k_1,\ldots,k_{d'}\ll_{d,Q}1$, each a multiple of $m$, such that $\exp\left\la\frac1{k_1}\hat e_1,\ldots,\frac1{k_{d'}}\hat e_{d'}\right\ra=\left\la\hat u_1^{1/k_1},\ldots,\hat u_{d'}^{1/k_{d'}}\right\ra=P(\hat u^{1/k},\infty)$, and such that $(\hat u^{1/k},\hat L)$ is in $O_{d,Q}(1)$-upper-triangular form.
We may therefore apply \cref{lem:gettingridofX} to conclude that there exists an $O_{d,Q}(1)$-rational raw Lie progression $\tilde P_1=P(u^{(1)},L^{(1)})$ in $O_{C,d,Q}(1)$-upper-triangular form such that $P(u^{(1)},\infty)=\Gamma_1$, and such that
\begin{equation}\label{eq:getridofX.i}
\tilde Q_1\subseteq P(u^{(1)},O_{C,d,Q}(L^{(1)}))
\end{equation}
and
\begin{equation}\label{eq:getridofX.ii}
\tilde P_1 \subseteq P(\hat u^{1/k},O_{C,d,Q}(\hat L)).
\end{equation}

Set $X_1=\beta(X_0)$, and let $P_1$ be the pullback to $\la P_0\ra$ of $\pi_1(\tilde P_1)$. It follows from \eqref{eq:X.absorbable} that $X_0^2\subseteq X_0\tilde Q_0^{O_{C,d,Q}(1)}$, so that $X_1^2\subseteq X_1\tilde Q_1^{O_{C,d,Q}(1)}$, and hence $X_1^2\subseteq X_1\tilde P_1^{O_{C,d,Q}(1)}$ by \eqref{eq:getridofX.i}. Since $X_1\subseteq\Gamma_1=\la\tilde P_1\ra$ by definition, \cref{lem:absorption} therefore implies that
\begin{equation}\label{eq:X1.absorbed}
X_1\subseteq\tilde P_1^{O_{C,d,Q}(1)}.
\end{equation}
Combined with \eqref{eq:X.absorbable}, this implies that
\[
\beta(\tilde P_0)\subseteq\beta(X_0\tilde Q_0^{O_{C,d,Q}(1)}))\subseteq X_1\tilde Q_1^{O_{C,d,Q}(1)}\subseteq\tilde P_1^{O_{C,d,Q}(1)}\tilde Q_1^{O_{C,d,Q}(1)},
\]
and then another application of \eqref{eq:getridofX.i} implies that $\beta(\tilde P_0)\subseteq\tilde P_1^{O_{C,d,Q}(1)}$. \cref{lem:pp.L2.1} then implies that $\beta(\tilde P_0)\subseteq P_\Q(u^{(1)},O_{C,d,Q}(L^{(1)}))$, and since $\beta(\tilde P_0)\subseteq\Gamma_1=\la\tilde P_1\ra$ and $e^{(1)}_1,\ldots,e^{(1)}_{d'}$ is a strong Mal'cev basis this in fact implies that $\beta(\tilde P_0)\subseteq P(u^{(1)},O_{C,d,Q}(L^{(1)}))$. Upon increasing the lengths $L^{(1)}_i$ by factors depending only on $C,d,Q$, we may therefore conclude that $\beta(\tilde P_0)\subseteq\tilde P_1$, which is the first inclusion of \eqref{eq:properprog.containment} (note that by \cref{lem:upper-tri.doubling.dilate} this does not affect the truth of \eqref{eq:getridofX.ii}).

To prove the second inclusion of \eqref{eq:properprog.containment}, first note that
\begin{equation}\label{eq:P1.in.X1.P}
\tilde P_1\subseteq X_1\Gamma_1'=X_1P(\hat u;\infty).
\end{equation}
Next, note that $\tilde P_1\subseteq P_\Q(\hat u;O_{C,d,Q}(\hat L))$ by \eqref{eq:getridofX.ii}, and that $X_1\subseteq P_\Q(\hat u;O_{C,d,Q}(\hat L))$ by \eqref{eq:X1.absorbed}, \eqref{eq:getridofX.ii} and \cref{lem:upper-tri.doubling.dilate}. Since $e^{(1)}_1,\ldots,e^{(1)}_{d'}$ is a strong Mal'cev basis, \eqref{eq:P1.in.X1.P} and \cref{lem:pp.L2.1} therefore combine to imply that
\[
\tilde P_1\subseteq X_1P(\hat u;O_{C,d,Q}(\hat L))\subseteq X_1\tilde Q_1^{O_{C,d,Q}(1)}\subseteq\beta(X_0\tilde Q_0^{O_{C,d,Q,R}(1)}),
\]
which by \eqref{eq:make.integral.bdd} completes the proof of \eqref{eq:properprog.containment}.

It remains to show that $\inj P_1\ge R$. To see this, note that if $p\in\tilde P_1^r\cap\ker\pi_1\setminus\{1\}$ for some $r\in\N$ then by \eqref{eq:Gamma1.m} and definition of $m$ we have $p^m\in\tilde P_1^{O_{d,Q}(r)}\cap\Gamma_1'\cap\ker\pi_1=\tilde P_1^{O_{d,Q}(r)}\cap\Gamma_1'\cap\ker\pi'_1$. By \eqref{eq:getridofX.ii} and \cref{lem:upper-tri.doubling.dilate}, this implies that $p^m\in P(\hat u^{1/k},\hat L)^{O_{C,d,Q}(r)}\cap\Gamma_1'\cap\ker\pi'_1$, and hence by \cref{prop:goodReducReal1} that
\[
p^m\in P_\R(\hat u,\hat L)^{O_{C,d,Q}(r)}\cap\Gamma_1'\cap\ker\pi'_1\subseteq P(\hat u,\hat L)^{O_{C,d,Q}(r)}\cap\ker\pi'_1=\tilde Q_1^{O_{C,d,Q}(r)}\cap\ker\pi'_1.
\]
Setting $\alpha$ to be the implied constant in the final term of this last expression, we deduce that $r>R$, which proves that $\inj P_1\ge R$ as required.
\end{proof}

\subsection{A chain of Lie progressions}
The first progression $P_0$ required by \cref{thm:multiscale} comes from the following result, precursors to which were originally proved implicitly by Breuillard and the second author~\cite{bt} and, independently, by Tao~\cite[(1.4)]{tao}.
\begin{prop}\label{prop:XP.orig}
Let $d,R\in\N_0$, and let $\eps_0>0$ and $n_0,k\in\N$ be such that \cref{thm:rel.hom.dim.k(d)} holds for this value of $d$. Then there exist $n_1=n_1(d,R,n_0,k)\in\N$ and $\eps=\eps(d,\eps_0)>0$ such that if $G$ is a group with finite symmetric generating set $S$ containing the identity and
\[
|S^n|\le\eps n^{d+1}|S|
\]
for some integer $n\ge n_1$ then there exist a set $X\subseteq S^k$ of cardinality at most $k$ containing the identity, and an $O_d(1)$-rational Lie progression $P$ of dimension at most $O_d(1)$ in $O_d(1)$-upper-triangular form and with injectivity radius at least $R$ generating a normal subgroup of $G$ such that
\[
XP^m\subseteq S^{mn}\subseteq XP^{O_{d,R}(m)}
\]
for every $m\in\N$, and such that distinct elements of $X$ belong to distinct cosets of $\la P\ra$.
\end{prop}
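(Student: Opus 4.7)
The plan is as follows. First I would apply \cref{thm:rel.hom.dim.k(d)} directly to the hypothesis $|S^n|\le\eps_0 n^{d+1}|S|$ to obtain normal subgroups $H\le\Gamma\normal G$ with $H\subseteq S^n$, $\Gamma/H$ nilpotent of class $\ell=O_d(1)$, and $[G:\Gamma]\le k$. I would then choose $X\subseteq S^{k-1}$ to be a system of left coset representatives for $G/\Gamma$ containing the identity; since $[G:\Gamma]\le k$, such representatives lie within a ball of radius $k-1$, and distinct elements of $X$ automatically lie in distinct cosets of $\Gamma$ and therefore of $\la P\ra$ once we have arranged $\la P\ra\le\Gamma$.

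Next I would use \cref{lem:poly.pigeon} (with $q=3$ and $\alpha$ close to $1/2$) to locate a scale $m_0$ with $n^{1/2}\le m_0\le n/O_d(1)$ for which $|S^{3m_0}|\ll_d|S^{m_0}|$; by (the discrete analogue of) \cref{lem:tripling->AG}, $S^{2m_0}$ is an $O_d(1)$-approximate group. Slicing against $\Gamma$ via \cref{lem:slicing}, the set $A:=S^{4m_0}\cap\Gamma$ remains an $O_d(1)$-approximate group. Projecting to $\Gamma/H$ and applying the nilpotent Freiman theorem (\cref{thm:nilp.frei}) to $\bar A=AH/H$ in the nilpotent group $\Gamma/H$ of class $\ell$, I obtain a finite subgroup $\bar H'\le\Gamma/H$ normalised by $\bar A$ and a nilpotent progression $\bar{\tilde P}$ of rank $O_d(1)$ with $\bar A\subseteq\bar H'\bar{\tilde P}\subseteq\bar H'\bar A^{O_d(1)}$. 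Pulling $\bar H'$ back to a subgroup $\hat H\le\Gamma$ containing $H$ and lifting the generators of $\bar{\tilde P}$ to $\Gamma$, \cref{prop:nilp.prog=>Lie.prog} realises $\bar{\tilde P}$ as the raw Lie progression underlying an $O_d(1)$-rational Lie progression $P_0$ of dimension $O_d(1)$ in $O_d(1)$-upper-triangular form with symmetry group $\hat H$, for which $A$ is approximated by $XP_0$ at scale $m_0$.

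To reach the target scale $n$ I would then apply \cref{prop:powergood.rational} to $P_0$ with exponent $\lceil n/(C_1m_0)\rceil$, where $C_1$ is the constant in $XP_0\subseteq S^{C_1m_0}$. This returns an $O_d(1)$-rational Lie progression $P_1$ in $O_d(1)$-upper-triangular form, of the same dimension and projected from the same Lie group, with $XP_1\subseteq S^n$ and $S^n\subseteq XP_1^{O_d(1)}$; the point here is that naively rescaling the lengths $L_i$ by a factor $\lambda$ would degrade the upper-triangular constant by a factor of $\lambda$, whereas \cref{prop:powergood.rational} preserves all constants independently of the exponent. \cref{lem:inclusions.local} then propagates the base case to $XP_1^m\subseteq S^{mn}\subseteq XP_1^{O_d(m)}$ for every $m\in\N$. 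If $\inj P_1\ge R$ we are done; otherwise I would iterate \cref{prop:properprog}, each application strictly reducing $\dim$ while enlarging the symmetry group only by elements of $P_1^{O_{d,R}(1)}$ (so that the approximation $XP^m\subseteq S^{mn}\subseteq XP^{O_{d,R}(m)}$ survives, with implicit constants depending now on $R$). Since the dimension drops by at least one at each step and begins at $O_d(1)$, the iteration terminates in $O_d(1)$ steps with a Lie progression $P$ of injectivity radius at least $R$.

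The main obstacle I anticipate is ensuring that $\la P\ra$ is normal in the full group $G$, not merely contained in $\Gamma\normal G$. Normality inside $\Gamma$ can be arranged by choosing $\hat H$ and $P_0$ to be $\Gamma$-equivariant in the nilpotent Freiman step, but conjugation by elements of $X\subseteq G\setminus\Gamma$ need not preserve $\la P\ra$. To fix this I would either apply the nilpotent Freiman construction to the $X$-conjugation-invariant hull of $A$ inside $\Gamma$ (enlarging $A$ by at most a factor of $|X|\le k$ in size, which is absorbed into the $O_d(1)$ approximate-group constant), or replace the final $\la P\ra$ by the intersection of its $X$-conjugates, using \cref{lem:finite.subgroup.of.proper} and \cref{lem:sym.grp.normal} to absorb any resulting symmetry-group discrepancies without degrading dimension, rationality, or upper-triangular-form constants beyond $O_d(1)$. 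No single step of the argument is conceptually novel; the real work lies in the careful quantitative bookkeeping as we pass through the pigeonhole, the nilpotent Freiman lift, the power-and-rescale step, and the dimension-reduction iteration, which is precisely what the supporting machinery of \cref{prop:nilp.prog=>Lie.prog,prop:powergood.rational,prop:properprog} has been designed to accommodate.
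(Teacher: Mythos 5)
Your proposal follows essentially the same route as the paper: pigeonhole (\cref{lem:poly.pigeon}) to a tripling scale, \cref{lem:tripling->AG} and \cref{lem:slicing} to land an approximate group inside $\Gamma$, the nilpotent Freiman theorem (\cref{thm:nilp.frei}) plus \cref{prop:nilp.prog=>Lie.prog} to get a base Lie progression $P_0$, then \cref{prop:powergood.rational} to rescale to scale $n$, then \cref{prop:properprog} to enforce the injectivity-radius bound, and \cref{lem:inclusions.local} to propagate across scales. Two of your worries are, however, over-engineered relative to what the paper actually does, and reflect small misreadings of the supporting machinery.

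First, \cref{prop:properprog} does not need to be iterated: its conclusion already produces a Lie progression of strictly smaller dimension \emph{and} with injectivity radius at least $R$. So a single application suffices; the ``iterate until the dimension drops out'' loop you sketch is harmless but unnecessary, and iterating would in fact degrade constants from $O_d(1)$ to something like $O_{d,R}(1)$ raised to $O_d(1)$ powers, which is exactly what the single-shot statement is designed to avoid.

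Second, your treatment of normality is more complicated than the paper's, and the two fixes you propose (conjugation-invariant hull, or intersecting $X$-conjugates of $\la P\ra$) are not needed. The key observation you are missing is that the Freiman step is applied to $S^{4j}\cap\Gamma$, which \emph{generates} $\Gamma$ by \cref{lem:fin.ind.gen} (once $j\ge k$, which is arranged by requiring $n_1\ge k^2$). Since \cref{thm:nilp.frei} gives $(S^{4j}\cap\Gamma)H\subseteq P_0\subseteq\Gamma$, the progression $P_0$ generates all of $\Gamma$, which is already normal in $G$; and \cref{prop:powergood.rational} and \cref{prop:properprog} both preserve the generated subgroup, so $\la P\ra=\Gamma\normal G$ automatically. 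Your ``intersection of conjugates'' fix would risk shrinking $\la P\ra$ to a proper subgroup of $\Gamma$, losing the clean identification with $\Gamma$ that the paper's argument relies on.

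One further bookkeeping point: the paper applies Statement \ref{thm:rel.hom.dim.k(d)} to $S^{\lfloor n/2\rfloor}$, not to $S^n$, precisely so that $H\subseteq S^{\lfloor n/2\rfloor}$; this spare factor of $n/2$ is what allows the inclusion $S^{ABrmj}H\subseteq S^{mn}$ to close at the end once $r$ is chosen as $\lfloor n/2ABj\rfloor$. With $H\subseteq S^n$ as in your step 1, the same chain of inclusions would land in $S^{mn+n}$ rather than $S^{mn}$, so you would need to either halve the scale at which you invoke the Statement or choose $r$ a constant factor smaller. This is fixable, but as written your numerology does not quite close.
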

\begin{proof}
We may certainly insist that $n_1\ge2$, so that $|S^{\lfloor n/2\rfloor}|\le3^{d+1}\eps\left\lfloor\frac{n}2\right\rfloor^{d+1}|S|$. Provided $n_1$ is large enough and $\eps$ is small enough, \cref{thm:rel.hom.dim.k(d)} therefore implies that there exist normal subgroups $H,\Gamma\normal G$ with $H\le\Gamma$ such that $H\subseteq S^{\lfloor n/2\rfloor}$, such that $\Gamma/H$ is nilpotent of class at most $O_d(1)$, and such that $[G:\Gamma]\le k$.

Let $\beta=\beta_{d,R}>0$ be a quantity to be determined shortly but depending only on $d$ and $R$. Provided $n_1$ is large enough in terms of $d$ and $R$, it then follows from \cref{lem:poly.pigeon} that there exists $j\in\N$ satisfying $n^{1/2}<j<\beta n$ such that $|S^{3j}|\ll_d|S^j|$, then from \cref{lem:tripling->AG} that $S^{2j}$ is an $O_d(1)$-approximate group, and then finally from \cref{lem:slicing} that $S^{4j}\cap\Gamma$ is an $O_d(1)$-approximate group. \cref{thm:nilp.frei} then implies that there is a nilpotent progression $Q_0$ of rank and step at most $O_d(1)$ in the quotient $\Gamma/H$, and a subgroup $H_0\le\Gamma/H$ normalised by $Q_0$, such that $(S^{4j}\cap\Gamma)H/H\subseteq Q_0H_0\subseteq S^{O_d(j)}H/H$. By \cref{prop:nilp.prog=>Lie.prog}, there therefore exists an $O_d(1)$-rational Lie progression $P_0\subseteq\Gamma$ of dimension $O_d(1)$ in $O_d(1)$-upper-triangular form, the symmetry group of which is the pullback to $\Gamma$ of $H_0$, such that $(S^{4j}\cap\Gamma)H\subseteq P_0\subseteq S^{O_d(j)}H$. This shows $P_0$ generates $\Gamma$, which is normal.

By \cref{lem:ball.cosets=>index}, we may pick a complete set $X\subseteq S^k$ of coset representatives for $\Gamma$ in $G$, including the identity. In particular, $|X|\le k$ as required, and $S^{2j}\subseteq X\Gamma$. By setting $n_1\ge k^2$ we may ensure that $j\ge k$, so that in fact $S^{2j}\subseteq X(S^{3j}\cap\Gamma)\subseteq XP_0$, and hence $S^{mj}\subseteq XP_0^m$ for all $m\in\N$ by \cref{lem:inclusions.local}. Since $X\subseteq S^j$, we may therefore fix $A=A_d\in\N$ such that
\[
S^{mj}\subseteq XP_0^m\subseteq S^{Amj}H
\]
for all $m\in\N$.

Let $B=B_{d,R}\in\N$ be a quantity to be determined shortly but depending only on $d$ and $R$, set $\beta=1/2AB$, and set $r=\lfloor n/2ABj\rfloor$, noting that $r\ge1$ and $r\ge n/4ABj$ by the choice of $\beta$. \cref{prop:powergood.rational} then implies that there exists an $O_d(1)$-rational Lie progression $P_1$ of dimension $O_d(1)$ in $O_d(1)$-upper-triangular form such that $P_0^r\subseteq P_1\subseteq P_0^{O_d(r)}$. If $\inj P_1\ge R$ then set $P=P_1$; if not, then by \cref{prop:properprog} there exists an $O_d(1)$-rational Lie progression $P$ in $O_d(1)$-upper-triangular form with dimension at most $O_d(1)$ and injectivity radius at least $R$ such that $P_1\subseteq P\subseteq P_1^{O_{d,R}(1)}$. In either case, there exists a choice of $B$ such that $P_0^r\subseteq P\subseteq P_0^{Br}$, and hence
\[
\begin{split}
XP^m\subseteq XP_0^{Brm} \subseteq S^{ABrmj}H \subseteq S^{\lfloor mn/2\rfloor}H\qquad\qquad\qquad\qquad\qquad\\
\subseteq S^{mn}\subseteq S^{\lceil mn/j\rceil j}\subseteq XP_0^{\lceil mn/j\rceil}\subseteq XP_0^{4ABmr+1}\subseteq XP^{O_{d,R}(m)},
\end{split}
\]
for all $m\in\N$, as required.
\end{proof}

We obtain the subsequent progressions required by \cref{thm:multiscale} via the following result.

\begin{prop}\label{prop:chain.of.progs}
Suppose that $P_0$ is a $Q$-rational Lie progression of dimension $d$ in $C$-upper-triangular form, and let $R\in\N$. Then there exist non-negative integers $d'\le d$ and $r_0=1<r_1<\cdots<r_{d'}$ such that $r_i\mid r_{i+1}$ for each $i$, and $O_{C,d,Q}(1)$-rational Lie progressions $P_1,\ldots,P_{d'}$ in $O_{C,d,Q}(1)$-upper-triangular form with injectivity radius at least $R$, each generating the same group as $P_0$, such that $P_{d'}$ has infinite injectivity radius and such that, writing $H_i$ for the symmetry group of $P_i$, $N_i$ for the nilpotent Lie group from which it is projected, $\Gamma_i<N_i$ for its lattice and $\pi_i:\Gamma_i\to\la P_0\ra/H_i$ for its projector, the following conditions are satisfied for $i=1,\ldots,d'$:
\begin{enumerate}[label=(\roman*)]
\item $\dim P_{i}<\dim P_{i-1}$;\label{cond:dim.drop}
\item $H_{i}\ge H_{i-1}$;\label{cond:subgrp}
\item there exists a surjective Lie group homomorphism $\beta_i:N_{i-1}\to N_{i}$ such that $\beta_i(\Gamma_{i-1})=\Gamma_i$ and
the diagram
\[
\begin{CD}
 \Gamma_{i-1}                      @>\pi_{i-1}>>           \langle P_0\rangle/H_{i-1}\\
@V\beta_iVV               @VVV\\
\Gamma_{i}     @>\pi_{i}>>    \langle P_0\rangle/H_{i}
\end{CD}
\]
commutes;\label{cond:diagram}
\item $H_i$ is the pullback to $\la P_0\ra$ of $\pi_0(\Gamma_0\cap\ker(\beta_i\circ\cdots\circ\beta_1))$;
\item $\inj P_{i-1}\ll_{C,d,Q}\frac{r_{i}}{r_{i-1}}\ll_{C,d,Q,R}\inj P_{i-1}$; and\label{cond:inj.rad}
\item for every integer $m\ge r_i$ we have $\tilde P_i^{\lceil m/r_i\rceil}\subseteq\beta_i\circ\cdots\circ\beta_1(\tilde P_0)^m\subseteq \tilde P_i^{O_{C,d,Q}(m/r_i)}$, hence $P_i^{\lceil m/r_i\rceil}\subseteq P_0^m\subseteq P_i^{O_{C,d,Q}(m/r_i)}$.\label{cond:approx}
\end{enumerate}
\end{prop}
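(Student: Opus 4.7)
The plan is to construct the Lie progressions $P_1,P_2,\ldots$ iteratively, at each step using \cref{prop:powergood.rational} to pass to a large power of the current progression inside the same nilpotent Lie group, and then \cref{prop:properprog} to collapse the dimension by one or more. Since the dimension drops strictly at every step and cannot become negative, the procedure terminates after some $d'\le d$ steps, with $P_{d'}$ of infinite injectivity radius as required.

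Concretely, set $r_0=1$. At the $i$th step, suppose inductively that $P_0,\ldots,P_{i-1}$ and $r_0<\cdots<r_{i-1}$ have been produced. If $\inj P_{i-1}=\infty$, set $d'=i-1$ and stop. Otherwise let $j_{i-1}=\inj P_{i-1}\in\N$, fix a positive integer $A=A(C,d,Q,R)$ to be pinned down below, and set $r_i=Aj_{i-1}r_{i-1}$, so that $r_{i-1}\mid r_i$ and the scale comparison $\inj P_{i-1}\ll_{C,d,Q}r_i/r_{i-1}\ll_{C,d,Q,R}\inj P_{i-1}$ of (v) is automatic. I would apply \cref{prop:powergood.rational} to $\tilde P_{i-1}$ with exponent $r_i/r_{i-1}$ to obtain an $O_{C,d,Q}(1)$-rational raw Lie progression $\tilde Q$ in $O_{C,d,Q}(1)$-upper-triangular form, living in $N_{i-1}$ with lattice $\Gamma_{i-1}$, and satisfying
\begin{equation*}
\tilde P_{i-1}^{r_i/r_{i-1}}\subseteq\tilde Q\subseteq\tilde P_{i-1}^{O_{C,d,Q}(r_i/r_{i-1})};
\end{equation*}
projecting through $\pi_{i-1}$ gives a Lie progression $Q$ with symmetry group $H_{i-1}$. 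Because $\tilde P_{i-1}^{Aj_{i-1}}\subseteq\tilde Q$ and $A\ge 2$, the set $\tilde Q$ meets $\ker\pi_{i-1}\setminus\{1\}$, so $\inj Q=0<R$; applying \cref{prop:properprog} to $Q$ therefore delivers $P_i$ with $\dim P_i<\dim P_{i-1}$ and $\inj P_i\ge R$, a surjective Lie group homomorphism $\beta_i\colon N_{i-1}\to N_i$ with $\beta_i(\Gamma_{i-1})=\Gamma_i$ fitting into the commutative diagram of (iii), and the two-sided inclusion $\beta_i(\tilde Q)\subseteq\tilde P_i\subseteq\beta_i(\tilde Q)^{O_{C,d,Q,R}(1)}$.

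Items (i)--(iii) are immediate from this construction. For (iv) I would chase the diagram: iterating the commutativity at every previous step and using surjectivity of $\Psi_{i-1}:=\beta_{i-1}\circ\cdots\circ\beta_1$ shows that $\pi_{i-1}(\Gamma_{i-1}\cap\ker\beta_i)$ equals the image of $\pi_0(\Gamma_0\cap\ker(\beta_i\circ\cdots\circ\beta_1))$ under the quotient $\la P_0\ra/H_0\to\la P_0\ra/H_{i-1}$, so their pullbacks to $\la P_0\ra$ coincide, which is (iv); condition (ii) then drops out since $\ker\Psi_i\supseteq\ker\Psi_{i-1}$. The identity $\la P_i\ra=\la P_0\ra$ propagates because the two inclusions defining $\tilde Q$ force $\la\tilde Q\ra=\Gamma_{i-1}$, whence $\la\tilde P_i\ra=\beta_i(\Gamma_{i-1})=\Gamma_i$, which the commutative diagram pushes down to $\la P_i\ra=\la P_0\ra$.

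The main technical point is (vi). A routine induction combining the inclusion $\tilde P_{i-1}^{r_i/r_{i-1}}\subseteq\tilde Q$ with $\beta_i(\tilde Q)\subseteq\tilde P_i$ and the inductive hypothesis yields the upper bound $\Phi_i(\tilde P_0)^m\subseteq\tilde P_i^{O(m/r_i)}$ (with $\Phi_i:=\beta_i\circ\cdots\circ\beta_1$) directly. The lower bound initially emerges in the weaker form $\tilde P_i^{\lceil m/r_i\rceil}\subseteq\Phi_i(\tilde P_0)^{Km}$ for some integer $K=O_{C,d,Q,R}(1)$, because the inclusion $\tilde P_i\subseteq\beta_i(\tilde Q)^{O_{C,d,Q,R}(1)}$ from \cref{prop:properprog} and the upper-bound side of the inductive hypothesis both contribute bounded multiplicative factors. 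I would absorb these by enlarging the integer $A$ at the outset by the factor $K$: since $K$ depends only on $C,d,Q,R$ and the number of iterations is at most $d$, this choice of $A$ is fixed once and for all, preserves (v), and delivers exactly the bound $\tilde P_i^{\lceil m/r_i\rceil}\subseteq\Phi_i(\tilde P_0)^m$. Finally, applying $\pi_i$ to the raw inclusions and pulling back via $\la P_0\ra\to\la P_0\ra/H_i$ — using that each $H_j$ lies inside a bounded power of $P_0$, courtesy of \eqref{eq:pi(Gamma_0)small.diam} iterated — yields the projected form of (vi).
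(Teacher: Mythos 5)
Your overall strategy — iterating \cref{prop:powergood.rational} and \cref{prop:properprog} to produce a chain of progressions of decreasing dimension — is exactly the engine of the paper's proof, and your derivations of (i)--(iv) and the injectivity-radius drop are fine. However, there is a genuine gap in the treatment of (v) and (vi), and it stems from working with the general parameter $R$ directly instead of first reducing to $R=1$ and then extracting a subsequence, which is what the paper does.

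Concretely, your ``absorb by enlarging $A$'' step does not achieve what you claim. Write $K_1=O_{C,d,Q,R}(1)$ for the constant in $\tilde P_i\subseteq\beta_i(\tilde Q)^{K_1}$ from \cref{prop:properprog} (applied with parameter $R$), and $K_2=O_{C,d,Q}(1)$ for the constant in $\tilde Q\subseteq\tilde P_{i-1}^{K_2\cdot e}$ from \cref{prop:powergood.rational} with exponent $e$. Your construction ties $e$ to $r_i$ via $e=r_i/r_{i-1}=Aj_{i-1}$, so when you enlarge $A$ both $r_i$ and $e$ are scaled by the same factor; tracing through, one always lands on $\tilde P_i^2\subseteq\Phi_i(\tilde P_0)^{K_1K_2 r_i}$, and the dangling factor $K_1K_2>1$ never disappears. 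So (vi) with the claimed $r_i$ fails.

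The natural repair — decoupling $e$ from $r_i$ and choosing $r_i$ \emph{after} constructing $P_i$ so that $\tilde P_i^2\subseteq\Phi_i(\tilde P_0)^{r_i}$, as the paper does with $r_1$ — fixes the lower bound but then breaks the upper bound of (vi). Indeed, one then has $e r_{i-1}\asymp r_i/(K_1K_2)$, so the chain $\Phi_i(\tilde P_0)^m\subseteq\beta_i(\tilde P_{i-1})^{O_{C,d,Q}(m/r_{i-1})}\subseteq\beta_i(\tilde P_{i-1}^e)^{O(m/(e r_{i-1}))}\subseteq\tilde P_i^{O(K_1K_2 m/r_i)}$ carries the $R$-dependent $K_1$, yielding $\tilde P_i^{O_{C,d,Q,R}(m/r_i)}$ rather than the $R$-\emph{independent} $\tilde P_i^{O_{C,d,Q}(m/r_i)}$ that the statement requires (and which is needed downstream, e.g.\ for \cref{thm:multiscale}\ref{item:multiscale.1st}). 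The paper sidesteps both problems by first proving the proposition for $R=1$, where $K_1=O_{C,d,Q}(1)$ has no $R$-dependence, and only afterwards extracting the subsequence of progressions with $\inj\ge R$; the resulting $R$-dependence then shows up exclusively in the scale ratios $r_{i+1}/r_i$, which is precisely where condition (v) allows it.
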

\begin{proof}
It suffices to prove the proposition in the special case $R=1$. Indeed, suppose we have a sequence $P_1,\ldots,P_{d'}$ of Lie progressions satisfying all the required properties except $\inj P_i\ge R$, with $\inj P_i\asymp_{C,d,Q}r_{i+1}/r_i$ for all $i=0,1,\ldots,d'-1$. Let $P_{i_1},\ldots,P_{i_\ell}$ be the subsequence of those progressions with injectivity radius at least $R$, noting that this subsequence is not empty because $\inj P_{d'}=\infty$. If $P_{i_n}$ and $P_{i_{n+1}}$ are two consecutive such progressions, we have
\[
\frac{r_{i_{n+1}}}{r_{i_n}}\ll_{C,d,Q}R^{i_{n+1}-r_{i_n}}\ll_{C,d,Q}R^{d'}\ll_{C,d,Q,R}1,
\]
and hence
\[
\frac{r_{i_{n+1}}}{r_{i_n}}\asymp_{C,d,Q,R}\frac{r_{i_n+1}}{r_{i_n}}\asymp_{C,d,Q}\inj P_{i_n}.
\]
The proposition is then satisfied by taking these progressions together with the surjective Lie group homomorphisms $\beta_{i_{n+1}}\circ\cdots\circ\beta_{i_n+1}$.

We now begin the proof of the case $R=1$. If $\inj P_0=\infty$, which is in particular the case when $d=0$, the proposition is satisfied by taking $d'=0$. We may therefore assume that $P_0$ has finite injectivity radius $j$, that $d\ge1$, and, by induction, that the proposition has been proven for all smaller values of $d$. By \cref{prop:powergood.rational} there exists an $O_{C,d,Q}(1)$-rational Lie progression $P'$ in $O_{C,d,Q}(1)$-upper-triangular form projected from $N_0$ and satisfying $\tilde P_0^j\subseteq \tilde P'\subseteq\tilde P_0^{O_{C,d,Q}(j)}$. This implies in particular that the injectivity radius of $P'$ is at most $1$, and so \cref{prop:properprog} implies that there exists
a normal subgroup $H_1\normal\la P_0\ra$ containing $H_0$ and satisfying $H_1\subseteq P_0^{O_{C,d,Q}(1)}$, and an $O_{C,d,Q}(1)$-rational Lie progression $P_1$ in $O_{C,d,Q}(1)$-upper-triangular form with dimension strictly less than $d$, injectivity radius at least $1$ and symmetry group $H_1$ such that, writing $N_1$ for the nilpotent Lie group from which $P_1$ is projected, $\Gamma_1\le N_1$ for its lattice, and $\pi_1:\Gamma_1\to\la P_1\ra/H_1$ for its projector, there exists a surjective Lie group homomorphism $\beta_1:N_0\to N_1$ such that $\beta_1(\Gamma_0)=\Gamma_1$, such that $\beta_1(\tilde P')\subseteq\tilde P_1\subseteq\beta_1(\tilde P')^{O_{C,d,Q}(1)}$, such that the diagram
\[
\begin{CD}
 \Gamma_0                      @>\pi_0>>           \langle P_0\rangle/H_0\\
@V\beta_1 VV               @VVV\\
\Gamma_1     @>\pi_1>>    \langle P_0\rangle/H_1
\end{CD}
\]
commutes, and such that $H_1$ is the pullback to $\la P_0\ra$ of $\pi_0(\Gamma_0\cap\ker\beta_1)$. Fixing $r_1\asymp_{C,d,Q}j$ such that $\tilde P_1^2\subseteq\beta_1(\tilde P_0)^{r_1}$, the required conditions all hold for $i=1$. The proposition then follows from applying the induction hypothesis to $P_1$.
\end{proof}

\subsection{Proof of the preliminary fine-scale volume theorem}

\begin{proof}[Proof of \cref{thm:multiscale}]
Note that $|S^{\lceil n^{1/2}\rceil}|\le\eps n^{d+1}|S|\le\eps\lceil n^{1/2}\rceil^{2(d+1)}|S|$. Setting $r_0=\lceil n^{1/2}\rceil$, and provided $n_1\ge n_0$, \cref{prop:XP.orig} then gives a set $X\subseteq S^k$ of cardinality at most $k$ containing the identity, and an $O_d(1)$-rational Lie progression $P_0$ of dimension at most $O_d(1)$ in $O_d(1)$-upper-triangular form and with injectivity radius at least $1$ generating a normal subgroup of $G$ such that
\[
XP_0^{\lfloor m/r_0\rfloor}\subseteq S^m\subseteq XP_i^{O_d(m/r_0)}
\]
for every $m\ge r_0$, and such that distinct elements of $X$ belong to distinct cosets of $\la P\ra$. \cref{prop:chain.of.progs} then gives non-negative integers $d'\ll_d1$ and $q_0=1<q_1<\cdots<q_{d'}$ such that $q_i\mid q_{i+1}$ for each $i$, and $O_d(1)$-rational Lie progressions $P_1,\ldots,P_{d'}$ in $O_d(1)$-upper-triangular form with injectivity radius at least $R$, each generating the same group as $P_0$, such that $P_{d'}$ has infinite injectivity radius and such that, writing $H_i$ for the symmetry group of $P_i$, $N_i$ for the nilpotent Lie group from which it is projected, $\Gamma_i<N_i$ for its lattice and $\pi_i:\Gamma_i\to\la P_0\ra/H_i$ for its projector, we have $\dim P_{i}<\dim P_{i-1}$ and $H_{i}\ge H_{i-1}$ for each $i$, there exists for each $i$ a surjective Lie group homomorphism $\beta_i:N_{i-1}\to N_{i}$ such that $\beta_i(\Gamma_{i-1})=\Gamma_i$ and
the diagram
\[
\begin{CD}
 \Gamma_{i-1}                      @>\pi_{i-1}>>           \langle P_0\rangle/H_{i-1}\\
@V\beta_iVV               @VVV\\
\Gamma_{i}     @>\pi_{i}>>    \langle P_0\rangle/H_{i}
\end{CD}
\]
commutes, each $H_i$ is the pullback to $\la P_0\ra$ of $\pi_0(\Gamma_0\cap\ker(\beta_i\circ\cdots\circ\beta_1))$, $\inj P_{i-1}\ll_{d}\frac{q_{i}}{q_{i-1}}\ll_{d,R}\inj P_{i-1}$ for each $i$, and for each $i$ and every integer $m\ge q_i$ we have $\tilde P_i^{\lceil m/q_i\rceil}\subseteq\beta_i\circ\cdots\circ\beta_1(\tilde P_0)^{m}\subseteq \tilde P_i^{O_{d}(m/q_i)}$, hence in particular $XP_i^{\lceil m/q_i\rceil}\subseteq S^{mr_0}\subseteq XP_i^{O_d(m/q_i)}$. Setting $r_i=q_ir_0$ for each $i$, this implies in particular that
\begin{equation}\label{eq:Sm.Pi}
XP_i^{\lfloor m/r_i\rfloor}\subseteq S^m\subseteq XP_i^{O_d(m/r_i)}
\end{equation}
for all $m\ge r_i$ and for each $i$. Note also that since $N_i$ is a quotient of $N_{i-1}$ of lower dimension for each $i$, it also has lower homogeneous dimension, so that $\hdim P_{i}<\hdim P_{i-1}$ for each $i$.

In light of \eqref{eq:Sm.Pi}, \cref{prop:dimension.bound} implies that there exists a constant $\gamma=\gamma(d,k)\ge1$ such that for each $i$ and all $m\in\N$ with $m\ge\gamma r_i$, if the injectivity radius of $P_i$ is at least $m/r_i$ then we have $|S^m|\gg_{d,k}m^{\hdim P_i}$ and $|S^m|\gg_{d,k}m^{\dim P_i}|S|$. We claim that there exist $\sigma=\sigma(d,k,R)\in(0,1)$ and a choice of $n_1$ such that for every $m\ge n$ there exists $j\in\{0,1,\ldots,d'\}$ such that
\begin{equation}\label{eq:sigma.m>gamma.q}
\lfloor\sigma^{d'-j}m\rfloor\ge\gamma r_j
\end{equation}
and $\inj P_j\ge\sigma^{d'-j}m/r_j$. Indeed, for any choice of $\sigma$, if we choose $n_1$ large enough to ensure that $r_0<\sigma^{d'}n/\gamma$, this certainly implies that \eqref{eq:sigma.m>gamma.q} holds for $j=0$ and every $m\ge n$. Moreover, if \eqref{eq:sigma.m>gamma.q} holds for a given $j$ and $m$ and the injectivity radius of $P_j$ is less than $\sigma^{d'-j}m/r_j$ then by definition of the $P_i$ we must have $j<d'$ and $r_{j+1}/r_j=q_{j+1}/q_j\ll_{d,R}\sigma^{d'-j}m/r_j$, and hence $\sigma^{d'-(j+1)}m\gg_{d,R}\sigma^{-1}r_{j+1}$. Provided $\sigma$ is chosen sufficiently small in terms of $d$, $k$ and $R$ only, this in turn implies that \eqref{eq:sigma.m>gamma.q} holds for $j+1$ and $m$. Since $P_{d'}$ has infinite injectivity radius, if for a given $m\ge n$ the claim is satisfied for no $j<d'$ then it must therefore be satisfied for that $m$ by $j=d'$.

Now fix some $i\in\{0,\ldots,d'\}$, let $m\ge n$ be such that $r_i\le m<r_{i+1}$, and let $j\in\{0,\ldots,d'\}$ be the integer satisfying the above claim. The fact that $j$ satisfies the claim implies by \cref{prop:dimension.bound} that
\[
|S^m|\ge|S^{\lfloor\sigma^{d'-j}m\rfloor}|\gg_{d,k}(\sigma^{d'-j}m/2)^{\hdim P_j}\gg_{d,k,R}m^{\hdim P_j}
\]
and
\[
|S^m|\ge|S^{\lfloor\sigma^{d'-j}m\rfloor}|\gg_{d,k}(\sigma^{d'-j}m/2)^{\dim P_j}|S|\gg_{d,k,R}m^{\dim P_j}|S|,
\]
where in the final bound of each line we used the fact that $\dim P_j\ll_d1$ and hence $\hdim P_j\ll_d1$. Moreover, the fact that $j$ satisfies \eqref{eq:sigma.m>gamma.q} implies in particular that $m\ge r_j$, and hence that $i\ge j$. Since $\dim P_i$ and $\hdim P_i$ are both decreasing in $i$, this implies that $|S^m|\gg_{d,k,R}m^{\hdim P_i}$ and $|S^m|\gg_{d,k,R}m^{\dim P_i}|S|$.

Define $i_0$ so that $r_{i_0}\le n<r_{i_0+1}$, and relabel each $P_i$ as $P_{i-i_0}$ and $r_i$ as $r_{i-i_0}$, so that $n^{1/2}\le r_0\le n<r_1$. Provided $n_1$ is large enough in terms of $d$ and $R$, property \ref{item:multiscale.prelim.inj} implies that $\inj P_0\ge R$.

Finally, property \ref{item:multiscale.dim.0} implies that $n^{\dim P_0}\ll_{d,k,R}|S^n|/|S|\le\eps n^{d+1}$. Since $\dim P_0$ is an integer, provided $\eps$ is chosen small enough in terms of $d$, $k$ and $R$, this forces $\dim P_0\le d$ as required. The fact that $\hdim P_0\le\frac12d(d-1)+1$ then follows from the fact that this is the largest possible homogeneous dimension of a simply connected nilpotent Lie group of dimension~$d$.
\end{proof}

\subsection{A fine-scale volume-doubling theorem}\label{section:doublingfinescale}
In this section we adapt the arguments from the previous sections to prove a variant of \cref{thm:multiscale}, with a volume-doubling hypothesis in place of the polynomial-volume hypothesis. Just as the bounds in \cref{thm:multiscale} depend on those in \cref{thm:bgt.gromov}, the bounds on our volume-doubling version will depend on the bounds we have in \cref{thm:bgt.doubling}, and so just as we defined \cref{thm:rel.hom.dim.k(d)} to describe these dependencies in \cref{thm:multiscale}, we define the following `statement' to record the dependencies of the bounds in the volume-doubling version.
\begin{statement}\label{st:bgt.doubling}
Let $G$ be a group with finite symmetric generating set $S$ containing the identity. If
\[
|S^{2n}|\le K|S^n|
\]
for some $n\ge n_0(K)$ then there exist normal subgroups $H,\Gamma\normal G$ with $H\le\Gamma$ such that $H\subseteq S^{O^*_K(n)}$, such that $\Gamma/H$ is nilpotent of class at most $6\log_2K$, and such that $[G:\Gamma]\le k_2(K)$. If
\[
|S^{2n}|\le K|S^n|
\]
for some $n\ge n_0(K)$ then there exist normal subgroups $H,\Gamma\normal G$ with $H\le\Gamma$ such that $H\subseteq S^{O^*_K(n)}$, such that $\Gamma/H$ is nilpotent of class at most $6\log_2K$, and such that $[G:\Gamma]\le k_3(K)$.
\end{statement}

\begin{theorem}[fine-scale doubling theorem]\label{thm:doubling.multiscale}
Let $K\ge1$ and $R\in\N_0$, and let $n_0,k_2,k_3\in\N$ with $k_3\le k_2$ be such that \cref{st:bgt.doubling} holds for this value of $K$. Suppose $G$ is a group with finite symmetric generating set $S$ containing the identity satisfying
\[
|S^{2n}|\le K|S^n|
\]
for some integer $n\ge\max\{n_0,k_2,2K^2\}$. Then there exist a set $X\subseteq S^{k_2}$ of cardinality at most $k_2$ containing the identity --  or, if
\begin{equation}\label{eq:tripling.multiscale}
|S^{3n}|\le K|S^n|,
\end{equation}
a set $X\subseteq S^{k_3}$ of cardinality at most $k_3$ containing the identity -- non-negative integers $d$ and $r_0<r_1<\cdots<r_d$ satisfying $n\le r_0\ll^*_{K,R}n$ and $r_i\mid r_{i+1}$ for each $i$, and $O_K(1)$-rational Lie progressions $P_0,P_1,\ldots,P_{d}$ of dimension at most $\exp(\exp(O(K^2)))$ -- or, if
\eqref{eq:tripling.multiscale} holds,
dimension at most $\exp(O(\log^3K))$ -- and class at most $6\log_2K$ in $O_K(1)$-upper-triangular form with injectivity radius at least $R$, each generating the same normal subgroup of $G$, such that writing $H_i$ for the symmetry group of $P_i$, $N_i$ for the nilpotent Lie group from which it is projected, $\Gamma_i<N_i$ for its lattice and $\pi_i:\Gamma_i\to\la P_0\ra/H_i$ for its projector, the following conditions are satisfied:
\begin{enumerate}[label=(\roman*)]
\item \label{item:XPi.doubling.multi}for each $i$ and every integer $m\ge r_i$ we have $XP_i^{\lfloor m/r_i\rfloor}\subseteq S^m\subseteq XP_i^{O^*_K(m/r_i)}$;
\item distinct elements of $X$ belong to distinct cosets of $\la P_0\ra$;
\item $\dim P_0>\dim P_1>\cdots>\dim P_{d}$;
\item $\hdim P_0>\hdim P_1>\cdots>\hdim P_{d}$;
\item $H_0\le H_1\le\cdots\le H_{d}$;
\item for each $i=1,\ldots,d$ there exists a surjective Lie group homomorphism $\beta_i:N_{i-1}\to N_{i}$ such that $\beta_i(\Gamma_{i-1})=\Gamma_i$ and
the diagram
\[
\begin{CD}
 \Gamma_{i-1}                      @>\pi_{i-1}>>           \langle P_0\rangle/H_{i-1}\\
@V\beta_iVV               @VVV\\
\Gamma_{i}     @>\pi_{i}>>    \langle P_0\rangle/H_{i}
\end{CD}
\]
commutes;
\item each $H_i$ is the pullback to $\la P_0\ra$ of $\pi_0(\Gamma_0\cap\ker(\beta_i\circ\cdots\circ\beta_1))$;
\item $\inj P_{i}\ll_K\frac{r_{i+1}}{r_{i}}\ll_{K,R}\inj P_{i}$ for $i=0,\ldots,d-1$, and $\inj P_{d}=\infty$.\label{item:doubling.inj}
\end{enumerate}
\end{theorem}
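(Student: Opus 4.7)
The argument parallels the proof of \cref{thm:multiscale}, replacing the polynomial-volume ingredients by their doubling counterparts. First I would apply \cref{st:bgt.doubling} to obtain normal subgroups $H,\Gamma\normal G$ with $H\subseteq S^{O^*_K(n)}$, with $\Gamma/H$ nilpotent of class at most $s:=6\log_2K$, and with $[G:\Gamma]\le k_2$ (or $\le k_3$ if \eqref{eq:tripling.multiscale} holds). If only the doubling hypothesis is assumed, apply \cref{thm:doubling=>tripling} to upgrade it to $|S^{3n}|\le K'|S^n|$ with $K'=\exp(\exp(O(K^2)))$, and to obtain that $S^{2n}$ is a $K'$-approximate group; under \eqref{eq:tripling.multiscale}, \cref{lem:tripling->AG} gives directly that $S^{2n}$ is an $O(K^3)$-approximate group.

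Next, by \cref{lem:slicing}, $S^{4n}\cap\Gamma$ is an $O_K(1)$-approximate group (with constant $K'$ in the doubling case, $O_K(1)$ under tripling). Applying the nilpotent Freiman theorem \cref{thm:nilp.frei} inside $\Gamma/H$, whose class is at most $s$, yields a nilpotent progression $Q_0$ of rank $r\le\exp(O(\log^2 K'))=\exp(\exp(O(K^2)))$ (respectively $r\le\exp(O(\log^2K))$ under tripling) and a finite subgroup normalised by $Q_0$, together bracketing $(S^{4n}\cap\Gamma)H/H$ between $Q_0H_0$ and $S^{O^*_K(n)}H/H$. Converting via \cref{prop:nilp.prog=>Lie.prog}, we obtain an $O_K(1)$-rational Lie progression $P_0'$ in $O_K(1)$-upper-triangular form and of dimension at most $(4r)^{s}$; a direct computation shows $(4r)^s\le\exp(\exp(O(K^2)))$ in the doubling case and $(4r)^s\le\exp(O(\log^3 K))$ under \eqref{eq:tripling.multiscale}, matching the claimed dimension bounds. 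The class is at most $s=6\log_2 K$ as required. Using \cref{lem:ball.cosets=>index}, pick $X\subseteq S^{k_2}$ (or $S^{k_3}$) a complete set of coset representatives for $\Gamma$ in $G$ containing the identity; since $n\ge k_2$, we have $S^{2n}\subseteq X(S^{3n}\cap\Gamma)\subseteq XP_0'$, and then \cref{lem:inclusions.local} gives $S^{mn}\subseteq X(P_0')^m$ for every $m\in\N$, while $XP_0'\subseteq S^{O^*_K(n)}$ follows from $H\subseteq S^{O^*_K(n)}$.

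Then, exactly as in \cref{prop:XP.orig}, apply \cref{prop:powergood.rational} to a bounded power of $P_0'$ (choosing the power of the form $\lfloor\alpha n\rfloor$ for a sufficiently small constant $\alpha$ depending on $K$) to obtain an $O_K(1)$-rational Lie progression $P_0$ in $O_K(1)$-upper-triangular form and an integer $r_0\ge n$ with $r_0\ll^*_Kn$ such that $XP_0^{\lfloor m/r_0\rfloor}\subseteq S^m\subseteq XP_0^{O^*_K(m/r_0)}$ for all $m\ge r_0$; if necessary, apply \cref{prop:properprog} to raise the injectivity radius of $P_0$ to at least $R$, which only inflates $r_0$ and the implicit constants by factors depending on $K$ and $R$. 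Note $\la P_0\ra=\Gamma$ is normal in $G$. Feeding $P_0$ into \cref{prop:chain.of.progs} produces the chain $P_1,\ldots,P_d$ together with integers $r_1<\cdots<r_d$ satisfying $r_i\mid r_{i+1}$, surjective Lie group homomorphisms $\beta_i:N_{i-1}\to N_i$ making the required diagrams commute, inclusions of symmetry groups, strict decrease of dimensions (and thus of homogeneous dimensions, since each $N_i$ is a quotient of $N_{i-1}$ of strictly smaller dimension), and the comparisons $\inj P_i\ll_K r_{i+1}/r_i\ll_{K,R}\inj P_i$ with $\inj P_d=\infty$; conclusion \ref{item:XPi.doubling.multi} then follows from pushing $XP_0^{\lfloor m/r_0\rfloor}\subseteq S^m\subseteq XP_0^{O^*_K(m/r_0)}$ through the containments $P_i^{\lceil m/r_i\rceil}\subseteq P_0^m\subseteq P_i^{O_K(m/r_i)}$ supplied by \cref{prop:chain.of.progs}.

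The main obstacle is the quantitative bookkeeping of the dimension bounds, because the class bound $s=6\log_2K$ is exponential in $\log K$ and the rank from \cref{thm:nilp.frei} depends on $s$ in a way that must be carefully balanced so that the dimension $(4r)^s$ lands in the claimed $\exp(\exp(O(K^2)))$ (respectively $\exp(O(\log^3K))$) range; beyond this the argument is essentially a mechanical adaptation of the polynomial-volume proof, simplified by the absence of any growth-degree conclusions (so \cref{prop:dimension.bound} and its attendant estimates are not needed).
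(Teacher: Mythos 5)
Your approach tracks the paper's closely: the paper factors the argument through an intermediate proposition, \cref{prop:XP.doubling}, which does precisely what your first two paragraphs (plus the \cref{prop:properprog} step of your third) do --- apply \cref{st:bgt.doubling}, pass from doubling to an approximate group via \cref{thm:doubling=>tripling} or \cref{lem:tripling->AG}, slice with \cref{lem:slicing}, apply \cref{thm:nilp.frei} and \cref{prop:nilp.prog=>Lie.prog}, pick coset representatives via \cref{lem:ball.cosets=>index}, and raise the injectivity radius via \cref{prop:properprog} --- before feeding the result into \cref{prop:chain.of.progs}. The paper invokes \cref{prop:XP.doubling} with target injectivity radius $1$ and handles a possibly-deficient $P_0$ by deletion at the very end, whereas you ensure $\inj P_0\ge R$ from the outset; both are fine.

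One slip is worth flagging: the \cref{prop:powergood.rational} step you interpolate ``exactly as in \cref{prop:XP.orig}'' is unnecessary here, and the parenthetical description of the exponent is wrong. In \cref{prop:XP.orig} the polynomial-volume hypothesis does not furnish bounded doubling at scale $n$ directly, so \cref{lem:poly.pigeon} is used to find a smaller scale $j\in[\sqrt n,\beta n]$ with bounded tripling; the nilpotent progression then sits at scale $\asymp j$, and \cref{prop:powergood.rational} is applied with exponent $\lfloor n/2ABj\rfloor$ (not $\lfloor\alpha n\rfloor$) to climb back up to scale $\asymp n$. In the doubling case the hypothesis itself is bounded doubling at scale $n$, so the progression produced by \cref{thm:nilp.frei} already sits at scale $\asymp^*_Kn$; powering it by $\lfloor\alpha n\rfloor$ would put $r_0$ at scale $\asymp\alpha n^2$, contradicting your own stated target $r_0\ll^*_Kn$. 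Simply delete this step. With that correction the argument is sound, and your dimension bookkeeping $\dim P_0\le\exp(\exp(O(K^2)))$, resp.\ $\exp(O(\log^3K))$ under tripling, is correct.
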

The proof of \cref{thm:doubling.multiscale} is essentially the same as that of \cref{thm:multiscale}, except that we start with the following result in place of \cref{prop:XP.orig}.

\begin{prop}\label{prop:XP.doubling}
Let $K\ge1$ and $R\in\N_0$, and let $n_0,k_2,k_3\in\N$ with $k_3\le k_2$ be such that \cref{st:bgt.doubling} holds for this value of $K$. Suppose $G$ is a group with finite symmetric generating set $S$ containing the identity satisfying
\[
|S^{2n}|\le K|S^n|
\]
for some integer $n\ge\max\{n_0,k_2,2K^2\}$. Then there exist a set $X\subseteq S^{k_2}$ of cardinality at most $k_2$ containing the identity, and an $O_K(1)$-rational Lie progression $P$ of dimension at most $\exp(\exp(O(K^2)))$ and class at most $6\log_2K$ in $O_K(1)$-upper-triangular form and with injectivity radius at least $R$ generating a normal subgroup of $G$ such that
\[
S^{mn}\subseteq XP^m\subseteq S^{O_{K,R}(mn)+O^*_{K,R}(n)}
\]
for every $m\in\N$, and such that distinct elements of $X$ belong to distinct cosets of $\la P\ra$. If
\begin{equation}\label{eq:XP.tripling}
|S^{3n}|\le K|S^n|,
\end{equation}
then we may in fact obtain the quasipolynomial bound $\dim P\le\exp(O(\log^3K))$ on the dimension of $P$, and also that $X\subseteq S^{k_3}$ and $|X|\le k_3$, whilst leaving all the other conclusions unchanged.
\end{prop}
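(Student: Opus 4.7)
The plan is to follow closely the structure of the proof of \cref{prop:XP.orig}, with two main modifications: replace the polynomial-pigeonhole step by a direct use of the (upgraded) doubling hypothesis, and replace \cref{thm:rel.hom.dim.k(d)} by \cref{st:bgt.doubling}. First, if the tripling hypothesis \eqref{eq:XP.tripling} holds, apply \cref{lem:tripling->AG} to conclude that $S^{2n}$ is a $K^3$-approximate group; otherwise, since $n\ge 2K^2$, apply \cref{thm:doubling=>tripling} to obtain that $S^{2n}$ is a $K_1$-approximate group with $K_1=\exp(\exp(O(K^2)))$. Next, apply \cref{st:bgt.doubling} to produce normal subgroups $H\le\Gamma\normal G$ with $H\subseteq S^{O^*_K(n)}$, with $\Gamma/H$ nilpotent of class $s:=6\log_2 K$, and with $[G:\Gamma]\le k_2$ (respectively $k_3$ under tripling). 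Then \cref{lem:slicing} gives that $S^{4n}\cap\Gamma$ is an approximate group whose constant is a fixed power of the one above, and the image $A=(S^{4n}\cap\Gamma)H/H\subseteq\Gamma/H$ inherits this as a quotient.

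Next, apply \cref{thm:nilp.frei} to $A$ in the $s$-step nilpotent group $\Gamma/H$. This yields a subgroup $H_0\le\Gamma/H$ normalised by $A$, and a nilpotent progression $Q_0\subseteq\Gamma/H$ of rank $r$ and step $s$ with $A\subseteq H_0 Q_0\subseteq H_0 A^M$; computing the two relevant constants in each regime gives $r,M\le\exp(\exp(O(K^2)))$ in the doubling case and $r\le\exp(O(\log^2 K))$, $M\le\exp(O(\log^3 K))$ in the tripling case. Since $Q_0\subseteq A^{O_{r,s}(1)}$, the subgroup $H_0$ is also normalised by $Q_0$. By \cref{prop:nilp.prog=>Lie.prog}, $Q_0$ is an $O_K(1)$-rational Lie progression of dimension at most $(4r)^s$ in $O_K(1)$-upper-triangular form, projected from the free nilpotent Lie group of rank $r$ and class $s$; a direct computation gives $(4r)^s\le\exp(\exp(O(K^2)))$ and $(4r)^s\le\exp(O(\log^3 K))$ respectively. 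Pulling back by the quotient $\Gamma\to\Gamma/H$ and enlarging the symmetry group to absorb $H_0$, one obtains an $O_K(1)$-rational Lie progression $P_0\subseteq\Gamma$ with the same dimension and upper-triangularity, with class at most $s$, with symmetry group the preimage of $H_0$ in $\Gamma$ (which contains $H$), and satisfying $(S^{4n}\cap\Gamma)H\subseteq P_0\subseteq S^{C_1 n}H$, where $C_1$ is the effective constant $\exp(\exp(O(K^2)))$ (resp.\ $\exp(O(\log^3 K))$).

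Finally, \cref{lem:ball.cosets=>index} provides a set $X\subseteq S^{k_2}$ (resp.\ $S^{k_3}$) of coset representatives for $\Gamma$ in $G$ containing the identity, with $|X|\le k_2$ (resp.\ $k_3$). Since $n\ge k_2$ we have $S^{2n}\subseteq X(S^{3n}\cap\Gamma)\subseteq XP_0$, and then \cref{lem:inclusions.local} gives $S^{mn}\subseteq XP_0^m$ for all $m\in\N$. In the other direction, since $H\normal G$ we have $(S^{C_1 n}H)^m=S^{C_1 mn}H$, so $XP_0^m\subseteq S^{k_2+C_1 mn}H\subseteq S^{O_K(mn)+O^*_K(n)}$, with the multiplicative constant effective. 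If $\inj P_0\ge R$ we take $P=P_0$; otherwise we apply \cref{prop:properprog} to obtain an $O_K(1)$-rational Lie progression $P$ in $O_K(1)$-upper-triangular form of strictly smaller dimension (hence the same quantitative bound), class no larger than $s$, injectivity radius at least $R$, and $P_0\subseteq P\subseteq P_0^{O_{K,R}(1)}$. Then $\la P\ra=\la P_0\ra=\Gamma\normal G$, distinct elements of $X$ remain in distinct cosets of $\la P\ra$, and $S^{mn}\subseteq XP^m\subseteq S^{O_{K,R}(mn)+O^*_{K,R}(n)}$ for all $m\in\N$. The only nontrivial obstacle is bookkeeping: one must carefully track the quantitative dependencies through \cref{thm:nilp.frei} and \cref{prop:nilp.prog=>Lie.prog} to verify the claimed $\exp(\exp(O(K^2)))$ and $\exp(O(\log^3 K))$ bounds on $\dim P$ in the two regimes.
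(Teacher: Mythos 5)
Your proposal is correct and follows essentially the same route as the paper's proof: apply \cref{st:bgt.doubling} to get $H\le\Gamma\normal G$; use \cref{thm:doubling=>tripling} (or \cref{lem:tripling->AG} under tripling) plus \cref{lem:slicing} to make $S^{4n}\cap\Gamma$ an approximate group; apply \cref{thm:nilp.frei} and \cref{prop:nilp.prog=>Lie.prog} to obtain the Lie progression $P_0$; pick $X$ via \cref{lem:ball.cosets=>index} and propagate the containments via \cref{lem:inclusions.local}; and pass to $P$ via \cref{prop:properprog} if $\inj P_0<R$. The only delta is cosmetic: you spell out why $H_0$ is normalised by $Q_0$ (which the paper asserts directly) and you track the $\exp(\exp(O(K^2)))$ and $\exp(O(\log^3K))$ bounds through the rank formula of \cref{thm:nilp.frei} and the $(4r)^s$ bound of \cref{prop:nilp.prog=>Lie.prog}, whereas the paper leaves these computations implicit.
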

\begin{proof}
Set $k=k_3$ if \eqref{eq:XP.tripling} holds, and $k=k_2$ otherwise. \cref{st:bgt.doubling} implies that there exist normal subgroups $H,\Gamma\normal G$ with $H\le\Gamma$ such that $H\subseteq S^{O_K^*(n)}$, such that $\Gamma/H$ is nilpotent of class at most $6\log_2K$, and such that $[G:\Gamma]\le k$. It follows from \cref{thm:doubling=>tripling} that $S^{2n}$ is an $\exp(\exp(O(K^2)))$-approximate group, and then from \cref{lem:slicing} that $S^{4n}\cap\Gamma$ is an $\exp(\exp(O(K^2)))$-approximate group. Under the stronger tripling bound \eqref{eq:XP.tripling}, it follows from \cref{lem:tripling->AG} that $S^{2n}$ is a $K^3$-approximate group, and then from \cref{lem:slicing} that $S^{4n}\cap\Gamma$ is a $K^9$-approximate group. \cref{thm:nilp.frei} then implies that there is a nilpotent progression $Q_0$ in the quotient $\Gamma/H$ with step at most $6\log_2K$ and rank at most $\exp(\exp(O(K^2)))$, or at most $\exp(O(\log^3K))$ if \eqref{eq:XP.tripling} holds, and a subgroup $H_0\le\Gamma/H$ normalised by $Q_0$ such that $(S^{4n}\cap\Gamma)H/H\subseteq Q_0H_0\subseteq S^{O_K(n)}H/H$.  By \cref{prop:nilp.prog=>Lie.prog}, there therefore exists an $O_K(1)$-rational Lie progression $P_0\subseteq\Gamma$ of dimension at most $\exp(\exp(O(K^2)))$, or at most $\exp(O(\log^3K))$ if \eqref{eq:XP.tripling} holds, and class at most $6\log_2K$ in $O_K(1)$-upper-triangular form, the symmetry group of which is the pullback to $\Gamma$ of $H_0$, such that $(S^{4n}\cap\Gamma)H\subseteq P_0\subseteq S^{O_K(n)}H$. This shows $P_0$ generates $\Gamma$, which is normal.

By \cref{lem:ball.cosets=>index}, we may pick a complete set $X\subseteq S^k$ of coset representatives for $\Gamma$ in $G$, including the identity. In particular, $|X|\le k$ as required, and $S^{2n}\subseteq X\Gamma$. Since $n\ge k$, we in fact have $S^{2n}\subseteq X(S^{3n}\cap\Gamma)\subseteq XP_0$, and hence $S^{mn}\subseteq XP_0^m$ for all $m\in\N$ by \cref{lem:inclusions.local}. Since $X,H\subseteq S^n$, we therefore have
\[
S^{mn}\subseteq XP_0^m\subseteq S^{O_K(mn)+O_K^*(n)}
\]
for all $m\in\N$. If $\inj P_0\ge R$ then set $P=P_0$; if not, then by \cref{prop:properprog} there exists an $O_K(1)$-rational Lie progression $P$ in $O_K(1)$-upper-triangular form with dimension at most $\exp(\exp(O(K^2)))$, or at most $\exp(O(\log^3K))$ if \eqref{eq:XP.tripling} holds, class at most $6\log_2K$ and injectivity radius at least $R$ such that $P_0\subseteq P\subseteq P_0^{O_{K,R}(1)}$.
\end{proof}

\begin{proof}[Proof of \cref{thm:doubling.multiscale}]
Set $k=k_3$ if \eqref{eq:tripling.multiscale} holds, and $k=k_2$ otherwise. \cref{prop:XP.doubling} gives a set $X\subseteq S^k$ of cardinality at most $k$ containing the identity, and an $O_K(1)$-rational Lie progression $P_0$ of dimension at most $\exp(\exp(O(K^2)))$ under the doubling assumption, or $\exp(O(\log^3K))$ under the tripling assumption, and class at most $6\log_2K$ in $O_K(1)$-upper-triangular form and with injectivity radius at least $1$ generating a normal subgroup of $G$ such that
\[
S^{mn}\subseteq XP_0^m\subseteq S^{O_K^*(mn)}
\]
for every $m\in\N$, and such that distinct elements of $X$ belong to distinct cosets of $\la P_0\ra$. \cref{prop:chain.of.progs} then gives non-negative integers $d\ll_K1$ and $q_0=1<q_1<\cdots<q_{d}$ such that $q_i\mid q_{i+1}$ for each $i$, and $O_K(1)$-rational Lie progressions $P_1,\ldots,P_{d}$ in $O_K(1)$-upper-triangular form with injectivity radius at least $R$, each generating the same group as $P_0$, such that $P_{d}$ has infinite injectivity radius and such that, writing $H_i$ for the symmetry group of $P_i$, $N_i$ for the nilpotent Lie group from which it is projected, $\Gamma_i<N_i$ for its lattice and $\pi_i:\Gamma_i\to\la P_0\ra/H_i$ for its projector, we have $\dim P_{i}<\dim P_{i-1}$ and $H_{i}\ge H_{i-1}$ for each $i$, there exists for each $i$ a surjective Lie group homomorphism $\beta_i:N_{i-1}\to N_{i}$ such that $\beta_i(\Gamma_{i-1})=\Gamma_i$ and
the diagram
\[
\begin{CD}
 \Gamma_{i-1}                      @>\pi_{i-1}>>           \langle P_0\rangle/H_{i-1}\\
@V\beta_iVV               @VVV\\
\Gamma_{i}     @>\pi_{i}>>    \langle P_0\rangle/H_{i}
\end{CD}
\]
commutes, each $H_i$ is the pullback to $\la P_0\ra$ of $\pi_0(\Gamma_0\cap\ker(\beta_i\circ\cdots\circ\beta_1))$, $\inj P_{i-1}\ll_K\frac{q_{i}}{q_{i-1}}\ll_{K,R}\inj P_{i-1}$ for each $i$, and for each $i$ and every integer $m\ge q_i$ we have $\tilde P_i^{\lceil m/q_i\rceil}\subseteq\beta_i\circ\cdots\circ\beta_1(\tilde P_0)^{m}\subseteq \tilde P_i^{O_K(m/q_i)}$, hence in particular $XP_i^{m}\subseteq S^{\lambda^* mq_in}\subseteq XP_i^{O_K(\lambda^* m)}$ for all $m$ and some integer $\lambda^*=\lambda^*(K)$. Setting $r_i=\lambda^*q_in$ for each $i$, this implies in particular that
\[
XP_i^{\lfloor m/r_i\rfloor}\subseteq S^m\subseteq XP_i^{O_K^*(m/r_i)}
\]
for all $m\ge r_i$ and for each $i$, as required. Note also that since $N_i$ is a quotient of $N_{i-1}$ of lower dimension for each $i$, it also has lower homogeneous dimension, so that $\hdim P_{i}<\hdim P_{i-1}$ for each $i$.

If $\inj P_0\ge R$ then the theorem is satisfied. If not, then by \ref{item:doubling.inj} we have $r_1\ll^*_{K,R}n$, so the theorem is satisfied by deleting $P_0$ and replacing each $r_i$ and $P_i$ respectively by $r_{i-1}$ and $P_{i-1}$.
\end{proof}

\section{Effective index bounds}
The primary purpose of this chapter is to prove that \cref{thm:rel.hom.dim.k(d)} holds with an effective value of $k$, as follows.
\begin{theorem}\label{thm:rel.hom.dim.k(d).effective}
For every non-negative integer $d$ there exist $n_0^*=n_0^*(d)\in\N$, and $k=k(d)\in\N$ such that if $G$ is a group with finite symmetric generating set $S$ containing the identity and
\[
|S^n|\le n^{d+1}|S|
\]
for some integer $n\ge n_0$ then there exist normal subgroups $H,\Gamma\normal G$ with $H\le\Gamma$ such that $H\subseteq S^n$, such that $\Gamma/H$ is nilpotent with class at most $O_d(1)$, and such that $[G:\Gamma]\le k$.
\end{theorem}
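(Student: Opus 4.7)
The plan is to deduce \cref{thm:rel.hom.dim.k(d).effective} from the preliminary fine-scale polynomial-volume theorem (\cref{thm:multiscale}) supplied with the ineffective bounds of the BGT theorem (\cref{thm:bgt.gromov}), and then to invoke the forthcoming finitary analogue of Mann's theorem (\cref{thm:index.bound}) to convert the resulting ineffective bound on the number of translates into an effective one. This corresponds to the ``bootstrapping'' arrow in \cref{fig} labelled ``finitary Mann theorem''.

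Fix $R=R(d)\in\N$ to be chosen sufficiently large in terms of $d$ below. By \cref{thm:bgt.gromov}, \cref{thm:rel.hom.dim.k(d)} holds with $\eps_0(2d+1)=1$ and $n_0(2d+1), k(2d+1)$ ineffective. Applying \cref{thm:multiscale} with parameter $R$ and this instantiation, and provided $n$ exceeds a (necessarily ineffective) threshold depending on $d$ and $R$, we obtain a set $X\subseteq G$ of ineffectively bounded size containing the identity, integers $n^{1/2}\le r_0\le n<r_1<\cdots<r_{d'}$ with $d'\ll_d 1$, and $O_d(1)$-rational Lie progressions $P_0,\ldots,P_{d'}$ of dimension at most $O_d(1)$ in $O_d(1)$-upper-triangular form, each with injectivity radius at least $R$ and each generating the same normal subgroup $\Gamma=\la P_0\ra\normal G$. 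We also obtain $XP_0^{\lfloor m/r_0\rfloor}\subseteq S^m\subseteq XP_0^{O_d(m/r_0)}$ for all $m\ge r_0$, and distinct elements of $X$ lie in distinct cosets of $\Gamma$.

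Let $H$ be the symmetry group of $P_0$. Then $H\subseteq P_0\subseteq S^{r_0}\subseteq S^n$, and $\Gamma/H$ embeds as a lattice in a simply connected nilpotent Lie group of dimension $\dim P_0=O_d(1)$, hence is nilpotent of class $O_d(1)$, as required. Writing $\eta=O_d(1)$ for a constant such that $S^2\subseteq XP_0^{\eta}$, and choosing $R\ge 2\eta$, we are in the hypothesis of \cref{lem:sym.grp.normal}, so $H\normal G$. It remains only to bound $[G:\Gamma]=|X|$ effectively.

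This last step is the heart of the matter and is where \cref{thm:index.bound} does the work. The progression $P_0$ has dimension $O_d(1)$, injectivity radius at least $R$, and $|X|$ locally disjoint translates of $P_0$ cover (a power of) $S$; this is precisely the configuration to which the finitary Mann theorem applies, yielding an effective bound $|X|\le k(d)$ depending only on $d$ provided $R$ was chosen sufficiently large in terms of $d$. The hard part is the proof of \cref{thm:index.bound} itself, which requires the quantitative large-scale simply connected covering theory developed in \cref{sec:sc} to handle potential torsion at very large scales; here we use it as a black box. Feeding the resulting effective $k(d)$ back into the construction above completes the proof.
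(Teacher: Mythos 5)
You have the right shape of argument --- apply the preliminary fine-scale theorem (\cref{thm:multiscale}) seeded with ineffective BGT bounds to obtain a Lie progression $P_0$ of dimension $O_d(1)$ and large injectivity radius, then invoke the finitary Mann theorem (\cref{thm:index.bound}) --- but the last step misreads what \cref{thm:index.bound} delivers, and this is a genuine gap. \cref{thm:index.bound} does \emph{not} bound $|X|$: it produces a \emph{new} normal subgroup $K\normal G$ satisfying $[G:K]\le g(d)$ and $\gamma_{c+1}(K)\subseteq S^{n+O_{c,k}(1)}$ (with $K\supseteq P_0$ under the local-normality hypothesis). Your chosen $\Gamma=\la P_0\ra$ has $[G:\Gamma]=|X|$, which is bounded only by the ineffective BGT constant $k^*$, and nothing in \cref{thm:index.bound} improves on that. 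This mirrors exactly Mann's original \cref{prop:mann}: the given torsion-free nilpotent subgroup $N$ there has arbitrary finite index $k$, and the content of the theorem is the existence of a \emph{larger} subgroup $K\supseteq N$ of index at most $g(d)$; one certainly cannot conclude $k\le g(d)$. Your sentence ``yielding an effective bound $|X|\le k(d)$'' has no basis in the statement of \cref{thm:index.bound}.

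The correct conclusion is what the paper draws: take $\Gamma=K$ and $H=\gamma_{c+1}(K)$ rather than $\Gamma=\la P_0\ra$ and $H$ the symmetry group of $P_0$. Then $H$ is characteristic in $K\normal G$, hence $H\normal G$ automatically and you do not need \cref{lem:sym.grp.normal}; $\Gamma/H$ is nilpotent of class at most $c\ll_d1$; $[G:\Gamma]\le g(\dim P_0)\ll_d1$ is effective since $\dim P_0\ll_d1$; and $\gamma_{c+1}(K)\subseteq S^{r_0+q^*}$ where $q^*$ is the constant in the $O_{c,k}(1)$ term (ineffective through its dependence on $k^*$, but harmless since it only inflates the threshold $n_0^*$, which is allowed to be ineffective). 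One small forced adjustment: you should apply \cref{thm:multiscale} to a smaller ball such as $S^{\lceil n/2\rceil}$, as the paper does, so that $r_0\le\lceil n/2\rceil$ leaves room for $r_0+q^*\le n$; applied directly to $S^n$ you only have $r_0\le n$, which is not enough slack.
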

Similar arguments will also allow us to prove \cref{thm:bgt.doubling.k.effective}.

Once we have \cref{thm:rel.hom.dim.k(d).effective}, this will then permit us to allow $\eps$ to depend on $k$ in \cref{thm:multiscale} without making $\eps$ ineffective, and hence bound $\dim P_0$ and $\hdim P_0$ in that result without making $\eps$ ineffective. These bounds are in turn crucial in obtaining the optimal bounds on the nilpotence, growth degree and index in our main results, and ultimately for proving that \cref{thm:rel.hom.dim.k(d)} in fact holds with $k=g(d)$.

The main technical proposition we need in order to prove \cref{thm:rel.hom.dim.k(d).effective} is the following proposition, which will also be essential to get the bounds on the size of the sets $X_i$ in \cref{thm:detailed.fine.scale} (see \cref{prop:fine-scale.optimise.index} and its proof).
\begin{prop}\label{thm:index.bound}
Given $C,d,k,t,\eta\in\N$, there positive integers $r=r(d,k,t,\eta)$, $j=j(C,d,k,t,\eta)$ and $M=M(C,d)$ such that, provided $\eta\ge M$, the following holds. Let $G$ be a group with finite symmetric generating set $S$ containing the identity, and suppose $P$ is a Lie progression of class $c$ and dimension $d$ with injectivity radius at least $r$ in $C$-upper-triangular form, and that $X\subseteq S^t$ is a set of size at most $k$ containing the identity such that for some $n\ge 1$ we have
\[
XP^m\subseteq S^{mn}\subseteq XP^{\eta m}
\]
for all $m\in\N$, and such that $xP^j\cap yP^j=\varnothing$ for all distinct $x,y\in X$. Then there exists a normal subgroup $K\normal G$ such that $[G:K]\le g(d)$ and $\gamma_{c+1}(K)\subseteq S^{n+O_{c,k}(1)}$. Moreover, if $xPx^{-1}\subseteq P^{2\eta}$ for every $x\in X$ then we may conclude in addition that $K\supseteq P$.
\end{prop}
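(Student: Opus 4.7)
The plan is to prove this as a quantitative, finitary analogue of Mann's classical theorem (\cref{thm:mann}), replacing Mann's hypothesis of a torsion-free nilpotent subgroup of finite index with the Lie progression $P$ of large injectivity radius whose few translates $xP$ cover $S^n$ locally disjointly. The key bridge between these two hypotheses is a discrete large-scale covering space construction, built from the machinery of \cref{sec:sc}: using \cref{prop:k-simply.conn} (path lifting) and \cref{thm:k-sc.covering} (uniqueness of lift from a simply connected cover), one constructs a covering map $\varphi\colon\tilde G\to G$ with the property that, provided $j$ is chosen large enough in terms of $C,d,k,t,\eta$, the preimages of the translates $xP$ split into globally disjoint sheets in $\tilde G$. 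The local-disjointness hypothesis $xP^j\cap yP^j=\varnothing$ is precisely the discrete simple-connectedness hypothesis needed to invoke these results, while the large injectivity radius of $P$ (chosen via $r$) guarantees that $P$ lifts to a raw Lie progression $\tilde P$ whose symmetry group is trivial at all scales of interest.

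Writing $\tilde\Gamma=\langle\tilde P\rangle$, the payoff of the lifting step is that $\tilde\Gamma$ is an honest-to-goodness torsion-free nilpotent group whose Mal'cev completion has dimension $d$ and class $c$, and that $\tilde\Gamma$ is a finite-index normal subgroup of $\tilde G$ with quotient of order at most $|X|\le k$. In this lifted setting I would then run Mann's argument directly, as reproduced quantitatively in \cref{prop:mann}: the conjugation action of $\tilde G$ on $\tilde\Gamma$ extends by Mal'cev rigidity to an action by Lie group automorphisms on the Mal'cev completion, and iterating up the upper central series yields a homomorphism from $\tilde G$ to a group of integer matrices of size at most $d$ whose image is finite and therefore of order at most $g(d)$. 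The kernel $\tilde K$ is a normal subgroup of $\tilde G$ of index at most $g(d)$, and $\gamma_{c+1}(\tilde K)$ is contained in a bounded-diameter ``torsion'' subgroup $\tilde H$. Pushing $\tilde K$ back down to $G$ via $\varphi$ gives the required $K$; the symmetry group of $P$, which lies in $P\subseteq S^n$, contributes the $S^n$ factor in the bound on $\gamma_{c+1}(K)$, while the finite quotient of order at most $k$ contributes the $O_{c,k}(1)$ correction. The hypothesis $xPx^{-1}\subseteq P^{2\eta}$ forces $\langle P\rangle\normal G$, which is precisely what ensures that $P$ itself survives inside $K$ rather than only up to a finite-index subgroup.

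The main obstacle, explicitly flagged in the introduction, is controlling potential torsion in $\langle P\rangle/H$ at scales much larger than $\inj P$: without the covering space, $\langle P\rangle$ may have accidental torsion at very large scales that Mann's argument cannot absorb, and this is exactly what the covering map $\varphi$ kills by unwrapping. A secondary technical difficulty is that the conjugation action of $X$ on $P$ is only defined locally, and one must ensure, using the approximation of $P$ by its continuous counterpart $P_\R$ developed in \cref{section:real/LieProg} together with the hypothesis $\eta\ge M(C,d)$, that this action extends uniquely to an automorphism of the Mal'cev completion so that Mann's argument can be applied.
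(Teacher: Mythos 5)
Your high-level plan — lift to a group $\hat G=\langle S\mid R_S(4\eta n)\rangle$ presented by truncating the relations of $(G,S)$ at scale $O(\eta n)$, use the large-scale covering-space machinery of \cref{sec:sc} to conclude the lifted progression $\hat P$ has infinite injectivity radius, apply Mann's argument (\cref{prop:mann}) there, bound the diameter of $\gamma_{c+1}$ with an auxiliary lemma, and project back along the natural surjection $\hat G\to G$ — is precisely the paper's strategy, and you have correctly identified the roles of \cref{prop:k-simply.conn}, \cref{thm:k-sc.covering}, and the hypothesis $\eta\ge M(C,d)$ (though its actual function, in \cref{prop:lift.prog}, is to ensure the upper-triangular-form relations of $\tilde P$ are short enough to survive in the truncated presentation, rather than to extend the conjugation action).

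There is however a genuine gap at the step where you assert that $\tilde\Gamma=\langle\tilde P\rangle$ is a \emph{torsion-free normal} subgroup of $\tilde G$ of index at most $k$. Neither claim is automatic. First, $\langle\hat P\rangle$ is not torsion-free: it still contains the finite symmetry group $\hat H$ of $\hat P$. What the lift actually buys you — and this is the content of \cref{prop:index.reduc.tf} — is that $\hat H=\overline\gamma_{c+1}(\langle\hat P\rangle)$ is exactly the torsion subgroup, hence characteristic. Second, and more seriously for your argument, $\langle\hat P\rangle$ need not be normal in $\hat G$; the hypotheses only give that it has at most $k$ cosets. Without normality, the conjugation action of $\hat G$ on the graded quotients is undefined and Mann's argument does not launch. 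The paper fills this in with \cref{prop:index.reduc.mann}: one passes to the normal cores $U=\bigcap_{x\in X}x\langle\hat P\rangle x^{-1}$ (index at most $k^k$, not $k$) and $H_0=\bigcap_{x\in X}x\hat H x^{-1}\subseteq\hat S^n$, and shows via \cref{lem:sym.grps.commensurable} that $U\cap\hat H=H_0$, so that $U/H_0$ is a normal torsion-free nilpotent subgroup to which \cref{prop:mann} can be applied. This normal-core step is where the local-disjointness hypothesis and the lower bound on $\inj P$ do their remaining work, and omitting it leaves the appeal to Mann's theorem unsupported as written.
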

The condition $xP^j\cap yP^j=\varnothing$ in the hypothesis of \cref{thm:index.bound} can be thought of as saying that the translates of $P$ (or rather, the subgroup generated by $P$) by elements of $X$ are `locally disjoint', and is a finitary analogue of the fact that distinct cosets of a subgroup are genuinely disjoint. The condition $xPx^{-1}\subseteq P^{2\eta}$ in the final sentence of the statement of \cref{thm:index.bound} can be thought of as a local version of normality of $\la P\ra$; in the case where $X\cap\la P\ra=\{1\}$, it is in fact exactly equivalent to normality of $\la P\ra$ in $G$, as follows.
\begin{lemma}\label{eq:locally.normal}
Suppose $G$ is a group with finite symmetric generating set $S$ containing the identity, and $X,Q\subseteq G$ are subsets of $G$ containing the identity such that $XQ\subseteq S$ and $S^2\subseteq XQ^\eta$, such that $\la Q\ra\normal G$, and such that $X\cap\la Q\ra=\{1\}$. Then $xQx^{-1}\subseteq Q^\eta$ for every $x\in X$.
\end{lemma}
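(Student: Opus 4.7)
The plan is to pick an arbitrary $x\in X$ and $q\in Q$ and show directly that $xqx^{-1}\in Q^\eta$, exploiting the hypothesis $XQ\subseteq S$ to keep the conjugate inside $S^2$ rather than $S^3$ (which is where a naive bound would land it).

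First I would observe that, since $1\in Q$ and $1\in X$, we have $X\subseteq XQ\subseteq S$ and $Q\subseteq XQ\subseteq S$. The key trick is then to write $xqx^{-1}=(xq)\cdot x^{-1}$ and note that $xq\in XQ\subseteq S$ by hypothesis, while $x^{-1}\in S$ by symmetry of $S$. Hence $xqx^{-1}\in S^2\subseteq XQ^\eta$, so there exist $x_0\in X$ and $q_0\in Q^\eta$ with
\[
xqx^{-1}=x_0q_0.
\]

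Next I would use normality: since $q\in Q\subseteq\la Q\ra$ and $\la Q\ra\normal G$, we have $xqx^{-1}\in\la Q\ra$. Combined with $q_0\in Q^\eta\subseteq\la Q\ra$, this forces $x_0=(xqx^{-1})q_0^{-1}\in\la Q\ra$. The hypothesis $X\cap\la Q\ra=\{1\}$ then gives $x_0=1$, so $xqx^{-1}=q_0\in Q^\eta$, as required.

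There is essentially no obstacle here — the whole argument is a short sequence of inclusions. The only subtle point, and the one worth flagging, is the reason the conjugate lies in $S^2$ rather than $S^3$: it is precisely the hypothesis $XQ\subseteq S$ that allows us to absorb $x$ into the factor containing $q$ before multiplying by $x^{-1}$. Without this, we would land in $S^3$ and the argument would not close.
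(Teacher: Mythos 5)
Your proof is correct and is essentially the same as the paper's: the paper writes the argument more compactly as $xQx^{-1}\subseteq S^2\cap\la Q\ra\subseteq XQ^\eta\cap\la Q\ra\subseteq Q^\eta$, but the two key observations — absorbing $x$ into $xQ\subseteq XQ\subseteq S$ so the conjugate lands in $S^2$, and then using $X\cap\la Q\ra=\{1\}$ to kill the $X$-component — are exactly the ones you identify.
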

\begin{proof}
Given $x\in X$, we have $xQx^{-1}\subseteq S^2\cap\la Q\ra\subseteq XQ^\eta\cap\la Q\ra$. Since $X\cap\la Q\ra=\{1\}$, this implies that $xQx^{-1}\subseteq Q^\eta$, as required.
\end{proof}

Our approach to \cref{thm:index.bound} is inspired by Mann's proof of \cref{thm:mann}, the principal content of which is the following result (although the quantitative aspects of this statement are not mentioned explicitly in Mann's book, they follow directly from the proof, which we provide for the reader's convenience).
\begin{prop}[{\cite[Theorem 9.8]{mann.book}}]\label{prop:mann}
Suppose $G$ is a group containing a normal torsion-free nilpotent subgroup $N$ of class $c$, Hirsch length $d$ and index $k$. Then there exists a normal subgroup $K\normal G$ containing $N$ such that $G/K$ is isomorphic to a finite group of automorphisms of $\Z^d$, such that $[K:Z_c(K)]\le k$, and such that $|\gamma_{c+1}(K)|\ll_{c,k}1$.
\end{prop}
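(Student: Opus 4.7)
The plan, following Mann's strategy, is to exhibit a natural action of $G$ on a free abelian group of rank $d$ whose kernel will be the desired $K$. Since $N$ is torsion-free nilpotent of class $c$, each successive quotient $A_i:=\gamma_i(N)/\gamma_{i+1}(N)$ in its lower central series is a finitely generated torsion-free abelian group, so $A_i\cong\Z^{d_i}$, and by the definition of the Hirsch length $\sum_{i=1}^{c}d_i=d$. Each $\gamma_i(N)$ is characteristic in $N$ and hence normal in $G$, so conjugation induces a homomorphism
\[
\rho\colon G \longrightarrow \prod_{i=1}^{c}\Aut(A_i) \cong \prod_{i=1}^{c}\GL_{d_i}(\Z)\le\GL_d(\Z),
\]
where the final inclusion is block-diagonal. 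I would then set $K:=\ker\rho$.

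First I would verify the easy properties: since $[N,\gamma_i(N)]\le\gamma_{i+1}(N)$ for every $i$, the subgroup $N$ acts trivially on each $A_i$, so $K\supseteq N$; consequently $K\normal G$ and $G/K$, being a quotient of $G/N$, is isomorphic to a finite subgroup of $\GL_d(\Z)$. Next, to obtain $[K:Z_c(K)]\le k$ it suffices to show $N\le Z_c(K)$, whereupon $[K:Z_c(K)]\le[K:N]\le[G:N]=k$. By definition of $K$, every $g\in K$ satisfies $[g,\gamma_j(N)]\le\gamma_{j+1}(N)$ for all $j\ge1$, so a straightforward induction on $j$ yields
\[
[n,g_1,\ldots,g_j]\in\gamma_{j+1}(N)
\]
for any $n\in N$ and $g_1,\ldots,g_j\in K$; taking $j=c$ gives $[n,g_1,\ldots,g_c]\in\gamma_{c+1}(N)=\{1\}$, so indeed $N\le Z_c(K)$.

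The last step is to bound $|\gamma_{c+1}(K)|$, and this is the one point in the argument that is not purely elementary. Here I would appeal to a classical generalisation of Schur's theorem due to Baer: for any group $K$ with $[K:Z_c(K)]$ finite, the subgroup $\gamma_{c+1}(K)$ is itself finite, with order bounded by a function of $c$ and $[K:Z_c(K)]$ alone. Since our $K$ satisfies $[K:Z_c(K)]\le k$, this yields $|\gamma_{c+1}(K)|\ll_{c,k}1$, as required. The hardest part of the whole argument is really only locating a quantitative form of Baer's theorem; but since we need no dependence beyond a bound in $c$ and $k$, the qualitative version suffices.
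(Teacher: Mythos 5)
Your overall strategy matches the paper's: define $K$ as the kernel of the conjugation action of $G$ on the graded pieces of a filtration of $N$, show $N\le Z_c(K)$ by induction along the filtration, and appeal to the Baer--Schur theorem. However, there is a genuine gap in the first sentence. You assert that each lower central quotient $A_i=\gamma_i(N)/\gamma_{i+1}(N)$ is torsion-free because $N$ is, but this is false in general. For a concrete counterexample, let $N=\la a,b^2,c\ra$ inside the discrete Heisenberg group $\la a,b,c\mid[a,b]=c,\,[a,c]=[b,c]=1\ra$; then $N$ is torsion-free of class $2$ and Hirsch length $3$, but $[a,b^2]=c^2$ so $\gamma_2(N)=\la c^2\ra$, and the image of $c$ has order $2$ in $N/\gamma_2(N)$. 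When some $A_i$ has torsion, $\Aut(A_i)$ is not a subgroup of $\GL_{d_i}(\Z)$, so your map $\rho$ does not land in $\GL_d(\Z)$ and you cannot conclude that $G/K$ is a finite group of automorphisms of $\Z^d$.

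The paper avoids this by replacing $\gamma_i(N)$ with the \emph{generalised commutator subgroups} (isolators) $\overline\gamma_i(N)=\{g\in N:\exists n\in\N,\ g^n\in\gamma_i(N)\}$, whose successive quotients $\overline\gamma_i(N)/\overline\gamma_{i+1}(N)$ \emph{are} torsion-free by construction, and which still satisfy the filtration relation $[\overline\gamma_i(N),\overline\gamma_j(N)]\le\overline\gamma_{i+j}(N)$ needed for both $N\le K$ and the centrality induction (and $\overline\gamma_{c+1}(N)=\{1\}$ precisely because $N$ is torsion-free). With that substitution your argument goes through verbatim. Note that your centrality induction and the appeal to Baer's theorem are correct as written; the only defect is the unjustified torsion-freeness of the $A_i$, which is exactly the point where the isolated series is required.
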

\begin{proof}
Consider the series of generalised commutator subgroups $N=\overline\gamma_1(N)\ge\cdots\ge\overline\gamma_{c+1}(N)$ of $N$ as defined in \eqref{eq:gen.comms}, noting that $\overline\gamma_{c+1}(N)=\{1\}$ since $N$ is torsion-free. Set
\[
K=\bigcap_{i=1}^cC_G(\overline\gamma_i(N)/\overline\gamma_{i+1}(N)),
\]
noting that $N\le K\normal G$ as required. Note that $G/K$ is the group of automorphisms that conjugation by $G$ induces on $\bigoplus_{i=1}^c\overline\gamma_i(N)/\overline\gamma_{i+1}(N)\cong\Z^d$. It is easy to check using \eqref{eq:gen.comm.filt} and induction on $i$ that $\overline\gamma_i(N)\le Z_{c-i+1}(K)$ for each $i$, and hence in particular that $N\le Z_c(K)$, and hence $[K:Z_c(K)]\le k$, as required. Finally, it is known that if $[K:Z_c(K)]\le k$ then $|\gamma_{c+1}(K)|\ll_{c,k}1$ \cite[Theorem 1.3]{TKP.baer} (see \cite{TKP.baer} for a detailed history of this result).
\end{proof}

\cref{thm:index.bound} requires not just a bound on the \emph{size} of $\gamma_{c+1}(K)$, but on its \emph{diameter}. We obtain this using the following lemma.
\begin{lemma}\label{lem:comms.bdd.diam}
Let $m,j\in\N$ with $j\ge2$, let $G$ be a group with symmetric generating set $S$ containing the identity, and suppose $|\gamma_j(G)|\le m$. Then $\gamma_j(G)\subseteq S^{3\cdot 2^{j-2}m+m^2}$.
\end{lemma}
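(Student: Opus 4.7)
The strategy combines two standard ingredients: a bound on the $S$-length of simple commutator generators of $\gamma_j(G)$, and the elementary diameter bound $m-1$ in a finite group of order $m$ with any symmetric generating set containing the identity.

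The first observation is that a straightforward induction on $j$, using $|[a,b]|_S\le 2|a|_S+2|b|_S$, shows every simple commutator $[s_1,\ldots,s_j]$ in elements of $S$ has $S$-length at most $L(j):=2L(j-1)+2$ with $L(1)=1$, giving $L(j)=3\cdot 2^{j-1}-2$. The set $T_0=\{[s_1,\ldots,s_j]:s_i\in S\}\subseteq \gamma_j(G)$ only \emph{normally} generates $\gamma_j(G)$ in $G$, so to obtain a subgroup-generating set I would add conjugates: for each $t\in T_0$ the $G$-conjugacy class of $t$ is contained in $\gamma_j(G)$ and hence has cardinality at most $m$. Breadth-first search on this orbit, with edges coming from conjugation by individual elements of $S$, then shows every conjugate of $t$ has the form $gtg^{-1}$ for some $g\in S^{m-1}$. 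This yields a symmetric subset $T\subseteq \gamma_j(G)\cap S^{L(j)+2(m-1)}$ containing the identity that generates $\gamma_j(G)$ as a subgroup.

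The elementary diameter bound then gives $T^{m-1}=\gamma_j(G)$, since the chain $\{1\}\subsetneq T\subsetneq T^2\subsetneq\cdots$ strictly grows inside $\gamma_j(G)$ and must stabilise within $|\gamma_j(G)|-1\le m-1$ steps. Hence every $h\in \gamma_j(G)$ is a product of at most $m-1$ elements of $T$. The naive bound on $|h|_S$ is $(m-1)(L(j)+2(m-1))$; to bring this down to the claimed $3\cdot 2^{j-2}m+m^2$, I would choose the $m-1$ conjugators along a single BFS path in $G$, so that they form nested prefixes $s_1,s_1s_2,\ldots,s_1\cdots s_{m-1}$. The telescoping identity
\[
\prod_{i=1}^{k}(s_1\cdots s_i)\,t_i\,(s_i^{-1}\cdots s_1^{-1})=s_1t_1s_2t_2\cdots s_kt_k\,s_k^{-1}\cdots s_1^{-1}
\]
then expresses the product with length $2k+\sum_i|t_i|$, trading the quadratic $2(m-1)^2$ conjugation cost for a single linear $2(m-1)$ outer wrapping.

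The main obstacle is sharpening the leading coefficient from $L(j)=3\cdot 2^{j-1}-2$ per multiplicative BFS step down to $3\cdot 2^{j-2}=L(j-1)+2$: in the regime where $m$ is small and $j$ is large, the bound $(m-1)L(j)$ alone exceeds the claimed $3\cdot 2^{j-2}m+m^2$ by roughly a factor of two. I expect this factor of two to be recovered by replacing each full-weight-$j$ multiplication inside the telescoping product by a ``half-commutator'' increment of the form $sas^{-1}a^{-1}$ with $a$ a weight-$(j-1)$ simple commutator; when such increments are chained along the same nested-prefix BFS path, the letter cost per multiplicative step collapses to $L(j-1)+2=3\cdot 2^{j-2}$, with the residual cross-terms between successive half-commutators absorbed into the $m^2$ overhead. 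The resulting bookkeeping is routine once the BFS is organised this way, but identifying the correct half-commutator structure is the delicate part of the argument.
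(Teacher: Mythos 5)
Your opening moves — the length bound $L(j)=3\cdot2^{j-1}-2$ for weight-$j$ simple commutators in $S$, and the use of BFS on conjugacy orbits inside $\gamma_j(G)$ to upgrade $T_0$ to a subgroup-generating set $T\subseteq\gamma_j(G)\cap S^{L(j)+2(m-1)}$ — are sound and track the paper's argument closely. The paper builds its generating set $S_k$ slightly differently, adding one conjugate $sxs^{-1}$ of the \emph{current} $S_k$ by a \emph{single} $s\in S$ per step and stopping as soon as $\langle S_k\rangle\normal G$; this terminates within $m$ steps because $S_k$ grows strictly inside $\gamma_j(G)$, and it yields the same word-length ceiling $L(j)+2(m-1)$ for the generators, so that part is equivalent in strength.

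Where your proof has a genuine gap is in closing the factor of two. The telescoping identity requires the $m-1$ conjugators to form nested prefixes of a single path, but the decomposition of an arbitrary $h\in\gamma_j(G)$ as a product of at most $m-1$ elements of $T$ hands you factors whose conjugators you do not control and which may lie on incompatible BFS paths (indeed in different conjugacy classes of $T_0$), so the nesting cannot be imposed. The ``half-commutator'' device is too vague to assess, and the cross-terms it purports to absorb into the $m^2$ overhead are not quantified; as stated it does not yield a proof.

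No such manoeuvre is in fact needed. The factor of two you are chasing is already available in \cref{lem:sphere.size.1}: a connected vertex-transitive graph on at most $m$ vertices has diameter at most $m/2$. Applied to the Cayley graph of $\gamma_j(G)$ with respect to the symmetric generating set $T$ (or the paper's $S_k$), this upgrades your diameter bound from $m-1$ to $\lfloor m/2\rfloor$, and then your own computation gives
\[
\gamma_j(G)=T^{\lfloor m/2\rfloor}\subseteq S^{\lfloor m/2\rfloor(3\cdot2^{j-1}+2m-4)}\subseteq S^{3\cdot2^{j-2}m+m^2-2m}\subseteq S^{3\cdot2^{j-2}m+m^2},
\]
which is exactly the claimed bound, with no telescoping or half-commutators required.
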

\begin{proof}
We will define a sequence $S_1\subseteq S_2\subseteq\ldots$ of symmetric subsets of $G$ recursively as follows. First, set $S_1=\{[s_1,\ldots,s_j]:s_i\in S\}\cup\{1\}$. Then, having defined $S_1,\ldots,S_k$, set $H_i=\langle S_i\rangle$ for each $i$. If $H_k\normal G$ then stop. If $H_k\not\normal G$, on the other hand, then there exist $x\in S_k$ and $s\in S$ such that $sxs^{-1}\notin H_k$. Fix one such pair of elements $x,s$, and let $S_{k+1}=S_k\cup\{(sxs^{-1})^{\pm1}\}$. Since all elements of each $S_i$ are simple commutators of weight $j$, we have $S_i\subseteq \gamma_j(G)$ for each $i$, so this process must therefore terminate at some $S_k$ with $k\le m$. By definition, this means that $H_k\normal G$, and so since $\gamma_j(G)$ is generated by $S_1$ as a normal subgroup of $G$, it must in fact be that $H_k=\gamma_j(G)$. Since $S_i\subseteq S^{\lambda(j)+2(i-1)}\subseteq S^{3\cdot2^{j-1}+2m}$ by construction and $\gamma_j(G)=S_k^{\lfloor m/2\rfloor}$ by \cref{lem:sphere.size.1}, the result follows.
\end{proof}
A more serious issue with applying \cref{prop:mann} to prove \cref{thm:index.bound} is that the group $G$ appearing in \cref{thm:index.bound} does not obviously satisfy the hypotheses of \cref{prop:mann}. We spend most of the rest of this chapter refining the structure of $G$ in order to overcome this.

\subsection{Large-scale simple connectedness and rough covering maps}\label{sec:sc}
Given a graph $X$, and some $k\in \N$, we define a $2$-dimensional CW-complex $P_k(X)$ whose 1-skeleton is $X$, and whose  $2$-cells are  $m$-gons for $0\leq m\leq k$, defined by simple loops $(x_0,\ldots,x_m=x_0)$ of length $m$ in $X$, up to cyclic permutations. As a topological object, every $2$-cell is a disc attached along its boundary to a simple loop, so that the intersection of $2$ different $2$-cells belongs to the $1$-skeleton. 
\begin{definition}[\cite{dlST}]
We say that a graph $X$ is \emph{simply connected at scale $k$}, or \emph{$k$-simply connected}, if $P_k(X)$ is simply connected. If there exists such a $k$, then we shall say that $X$ is \emph{large-scale simply connected}.
\end{definition}

A Cayley graph $\G(G,S)$ is $k$-simply connected if and only if $G$ has a presentation $\langle S\mid R\rangle$ with $R$ a set of relations of length at most $k$ (see for instance the proof of \cite[Lemma 7.91]{DrutuKapovich}).

It is shown in \cite[Theorem 2.2]{dlST} that if $X$ is $k$-simply connected and $f:X\to Y$ is a $(C,K)$-quasi-isometry then $Y$ is $O_{C,K,k}(1)$-simply connected. One can think of this as being analogous to the fact that topological simple connectedness is preserved under homeomorphisms. In this section we prove the following analogue of the fact that a topological covering map with simply connected range is always a homeomorphism.

\begin{theorem}\label{thm:k-sc.covering}
Let $C,C',K,k\in\N$. Then there exist $R_0=R_0(C,C',k)\in\N$ and $R'=R'(C,K,k)\in\N$ such that the following holds. Let $X$ and $Y$ be non-empty, connected graphs, and write $d_X$ and $d_Y$ for their respective graph metrics. Suppose that $Y$ is $k$-simply connected and that $f:X\to Y$ satisfies the following conditions for some integer $R\ge R_0$:
\begin{enumerate}[label=(\roman*)]
\item\label{enum:k-sc.lift} for every pair of vertices $x\in X$ and $y\in Y$ with $d_Y(f(x),y)\le R'$, there exists $x'\in X$ such that $d_X(x,x')\le R/2$ and $d_Y(f(x'),y)\le K$;
\item\label{enum:k-sc.Lip} every pair of neighbours $x,x'$ in $X$ satisfies $d_Y(f(x),f(x'))\le C$;
\item\label{enum:k-sc.biLip} for every $x,x'\in X$ with $d_X(x,x')\le R$, if $d_Y(f(x),f(x'))\le C+2K$ then $d_X(x,x')\le C'$.
\end{enumerate}
Then $f$ is a $(C\vee(C'/C),C'\vee K)$-quasi-isometry and $X$ is $O_{C,C',k}(1)$-simply connected.
\end{theorem}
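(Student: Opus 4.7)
Interpreting $f$ as a coarse analogue of a topological covering map, with (i) playing the role of path lifting, (iii) of local injectivity and (ii) of local Lipschitz control, the plan is to use the $k$-simple connectedness of $Y$ to force a quantitative form of trivial monodromy, which will then drive both the quasi-isometry and the inheritance of simple connectedness. The technical backbone is a path-lifting operation: given a path $(y_0,\ldots,y_n)$ in $Y$ and $x_0\in X$ with $d_Y(f(x_0),y_0)\le K$, iteratively applying (i) (with $R'\ge K+1$) produces $x_1,\ldots,x_n\in X$ with $d_X(x_i,x_{i+1})\le R/2$ and $d_Y(f(x_i),y_i)\le K$; condition (iii), applied using $d_Y(f(x_i),f(x_{i+1}))\le 2K+1\le C+2K$, then sharpens each step bound to $d_X(x_i,x_{i+1})\le C'$. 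The same triangle-inequality-then-(iii) strategy yields two further consequences I shall need: lifting a path and then its reverse from the same endpoint returns to within $C'$ of the start, and two lifts of the same path starting within $X$-distance $C'$ of each other stay within $C'$ throughout, both provided $R$ exceeds a small multiple of $C'$.

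The heart of the argument will be the Monodromy Lemma: any lift of a loop $\gamma$ in $Y$ returns to within $C'$ of its start, provided $R$ is large compared with $C'k$. I would prove this by induction on the number $N$ of cells in a filling of $\gamma$ by $m$-gons with $m\le k$, guaranteed by $k$-simple connectedness of $Y$. The base $N=1$ is immediate: the lift of a loop of length $\le k$ has $X$-diameter at most $C'k\le R$, whilst its endpoint satisfies $d_Y(f(x_0),f(x_n))\le 2K\le C+2K$, so (iii) forces the gap down to $C'$. For the inductive step, peel off a boundary cell to obtain a loop $\gamma'$ bounding a disc with $N-1$ cells that differs from $\gamma$ by one basic move through the removed cell; comparing the two lifts via the single-cell case, reverse lifting and parallel lifting pins the endpoints within $2C'$, and one final application of (iii) closes the induction at $C'$.

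From Monodromy two consequences follow almost immediately. First, any $a,b\in X$ with $d_Y(f(a),f(b))\le K$ satisfy $d_X(a,b)\ll_{C',K}1$: take any $X$-path from $a$ to $b$, project and interpolate to a $Y$-path, close it off with a geodesic from $f(b)$ to $f(a)$ of length $\le K$, and apply Monodromy while also tracking a chunked lift to see that this lift stays uniformly close to the original path. Second, for the quasi-isometry the upper bound $d_Y(f(x),f(y))\le Cd_X(x,y)$ comes directly from (ii); for the reverse direction I would lift a $Y$-geodesic $f(x)\to f(y)$ in blocks of $C$ consecutive vertices so that each block advances by a single step of $X$-length $\le C'$ via (i) followed by (iii), producing an $X$-path from $x$ to a lift of $f(y)$ of length at most $(C'/C)d_Y(f(x),f(y))$, and the fiber bound closes the gap to $y$. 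Coarse $K$-density is immediate from lifting an arbitrary $Y$-geodesic from a fixed basepoint.

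Finally, to inherit large-scale simple connectedness, I would take any loop in $X$, project and interpolate it to a loop in $Y$ of length bounded in terms of the original $X$-length, fill it with $k$-gons using $k$-simple connectedness of $Y$, and lift the filling cell by cell, traversing a spanning tree of its dual graph so that adjacent lifted cells share compatible boundary arcs. Monodromy ensures each lifted $k$-gon is a near-loop of $X$-length $\le k$ whose endpoints lie within $C'$ in $X$, which fills trivially as a single $(k+C')$-gon of $X$; gluing these along shared arcs produces a disc of $O_{C,C',k}(1)$-gons bounding the original loop. The hard part throughout is the Monodromy Lemma: one must choose $R$ and $R'$ large enough in terms of $C$, $C'$, $K$ and $k$ so that every application of (iii) comes equipped with the required $d_X\le R$ hypothesis, and managing this parameter juggling is the principal bookkeeping challenge of the proof.
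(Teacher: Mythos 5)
You take the same essential route as the paper — path lifting via (i) and (iii), controlling the monodromy of $f$ viewed as a rough covering map via $k$-simple connectedness of $Y$, and deducing both the quasi-isometry and the $O(1)$-simple connectedness of $X$ — but you package it differently. The paper's Proposition \ref{prop:k-simply.conn} works with the de la Salle--Tessera relation $\sim_{C,L}$ on $C$-paths: for $k$-simply connected $Y$ all $C$-paths with common endpoints are $\sim_{C,L}$-related (a result the paper simply quotes), and that proposition lifts a chain of basic homotopy moves in $Y$ to a corresponding chain in $X$. From that one gets at once that any two $X$-paths with common endpoints are $\sim_{C',O(1)}$-related (hence $O(1)$-simple connectedness), and also the quasi-isometry lower bound directly, because the lifted path $p'$ has the same length as $q$ \emph{and} the prescribed endpoints $x,x'$, so no separate fiber bound is needed. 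You instead formulate an explicit Monodromy Lemma and prove it by induction on the number of cells in a filling. This is correct in spirit, but the inductive step — "peel off a boundary cell to obtain a loop bounding a disc with $N-1$ cells" — hides the precise combinatorics you need: a filling in $P_k(Y)$ is a disc diagram, not necessarily an embedded subcomplex, and one needs a shelling or van Kampen diagram argument to guarantee a removable boundary cell exists whose removal leaves a well-formed smaller filling. That bookkeeping is exactly what the de la Salle--Tessera characterisation of $k$-simple connectedness in terms of $\sim_{C,L}$ has already done; the paper sidesteps it by quoting that result, whereas you essentially re-derive it. Your quasi-isometry lower bound also requires the additional fiber bound because you lift a geodesic from $x$ without endpoint control, where Proposition \ref{prop:k-simply.conn} builds the lift to land at $x'$. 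These are organisational rather than substantive differences — both routes arrive at the theorem — but the paper's factoring through $\sim_{C,L}$ is cleaner precisely because it outsources the filling combinatorics, which is the one genuinely delicate point your sketch leaves unaddressed.
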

To see how the map $f$ appearing in \cref{thm:k-sc.covering} can be thought of as a rough covering map, note that conditions \ref{enum:k-sc.lift}--\ref{enum:k-sc.biLip} say that the $R'$-neighbourhood of any point $f(x)$ in the image of $f$ is quasi-isometric via $f$ to the $R/2$-neighbourhood of $x$. One can think of as being analogous to the fact that for a topological covering map $\ph:U\to V$, every point $\ph(u)$ in the image of $\ph$ has a neighbourhood that is homeomorphic via $\ph$ to a neighbourhood of $u$.

In proving that large-scale simple connectedness is stable under quasi-isometries, de la Salle and the first author obtained a characterisation of large-scale simple connectedness in terms of a certain large-scale analogue of homotopy. First, given a constant $C\ge1$, define a \emph{$C$-path} between vertices $x,x'$ in a graph $\G$ to be a sequence $x=v_0,\ldots,v_n=x'$ of vertices of $\G$ such that $d(v_i,v_{i+1})\le C$ for all $i$; we call $n$ the \emph{length} of this $C$-path. Given another constant $L>0$, we then define a relation $\approx_{C,L,x,x'}$ on the set of $C$-paths between $x$ and $x'$ by saying $p\approx_{C,L,x,x'}q$ if, writing $p=(u_0,\ldots,n_m)$ and $q=(v_1,\ldots,v_n)$, there exist non-negative integers $j_1,j_2,j_2',j_3$ such that
\begin{itemize}
\item $m=j_1+j_2+j_3$ and $n=j_1+j_2'+j_3$;
\item $u_i=v_i$ for all $i\le j_1$;
\item $u_{j_1+j_2+i}=v_{j_1+j_2'+i}$ for all $i\le j_3$;
\item $j_2+j_2'\le L$.
\end{itemize}
Finally, we define the equivalence relation $\sim_{C,L,x,x'}$ on the set of $C$-paths between $x$ and $x'$ to be the equivalence relation generated by $\approx_{C,L,x,x'}$. One can think of $p\sim_{C,L,x,x'}q$ as meaning that $p$ and $q$ are `large-scale homotopic'.

De la Salle and the first author observe in the proof of \cite[Theorem 2.2]{dlST} that if $\G$ is $k$-simply connected then for every $C\ge1$ there exists $L=L(C,k)$ such that for all $x,x'\in\G$ the equivalence relation $\sim_{C,L,x,x'}$ has a unique equivalence class, and moreover that if there exists some $C\ge1$ and $L$ for which the equivalence relation $\sim_{C,L,x,x'}$ has a unique equivalence class for all $x,x'\in\G$ then $\G$ is $O_{C,L}(1)$-simply connected. This is analogous to the definition of topogical simple connectedness as meaning that any two paths with the same endpoints are homotopic.

Recall that if $f:X\to Y$ is a covering map between CW-complexes, $x$ and $x'$ are two points of $X$, $p$ is a path joining $x$ to $x'$, and $q$ is a path joining $f(x)$ to $f(x')$, homotopic to $f(p)$, then there exists a unique lift of $q$ joining $x$ to $x'$. Our next result is a large-scale analogue of this. Here, and throughout the rest of this section, if $p=(p_0,\ldots,p_n)$ is a $C$-path in a graph $X$, and $f$ is a map from $X$ to some other graph $Y$, then we define $f(p)=(f(p_0),\ldots,f(p_n))$.
\begin{prop}\label{prop:k-simply.conn}
Let $C,C',K,L,R\in\N$ with $R\ge\max\{2C'+1,2LC'\}$. Let $X$ and $Y$ be graphs, and write $d_X$ and $d_Y$ for their respective graph metrics. Suppose that $f:X\to Y$ is such that
\begin{enumerate}[label=(\roman*)]
\item\label{enum:k-sc.lift'} for every pair of vertices $x\in X$ and $y\in Y$ with $d_Y(f(x),y)\le CL+K$, there exists $x'\in X$ such that $d_X(x,x')\le R/2$ and $d_Y(f(x'),y)\le K$;
\item\label{enum:k-sc.Lip'} every pair of neighbours $x,x'$ in $X$ satisfies $d_Y(f(x),f(x'))\le C$;
\item\label{enum:k-sc.biLip'} for every $x,x'\in X$ with $d_X(x,x')\le R$, if $d_Y(f(x),f(x'))\le C+2K$ then $d_X(x,x')\le C'$.
\end{enumerate}
Suppose further that $x,x'\in X$, that $p$ is a path from $x$ to $x'$, that $q$ is a $C$-path from $f(x)$ to $f(x')$, and that the $C$-path $f(p)$ satisfies $f(p)\sim_{C,L,f(x),f(x')}q$. Then there exists a $C'$-path $p'$ from $x$ to $x'$ of the same length as $q$ such that $p\sim_{C',L,x,x'}p'$, and such that $d_Y(f(p'_i),q_i)\le K$ for each $i$. Moreover, if $q=f(r)$ for some other path $r$ from $x$ to $x'$ then $d_X(p'_i,r_i)\le C'$ for each $i$ and $p'\sim_{C',L\vee4C',x,x'}r$.
\end{prop}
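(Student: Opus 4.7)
The strategy is to reduce to the case of a single elementary move by induction on the length of the chain $f(p)=q^{(0)}\approx q^{(1)}\approx\cdots\approx q^{(\ell)}=q$ witnessing the equivalence. At each stage I will construct a $C'$-path $p^{(i)}$ from $x$ to $x'$ of the same length as $q^{(i)}$ satisfying $d_Y(f(p^{(i)}_k),q^{(i)}_k)\le K$ for every $k$ and $p^{(i-1)}\approx_{C',L,x,x'}p^{(i)}$; the base case is $p^{(0)}=p$. A single move $q^{(i)}\approx q^{(i+1)}$ has agreement parameters $j_1,j_3$ and middle lengths $j_2,j_2'$ with $j_2+j_2'\le L$. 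Note that although the intermediate $p^{(i)}$ are only $C'$-paths rather than $1$-paths, what the construction really needs is the bound $d_Y(f(p^{(i)}_k),f(p^{(i)}_{k+1}))\le C+2K$, which follows from the inductive $d_Y$-estimate and the $C$-path property of $q^{(i)}$.

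To construct $p^{(i+1)}$, set $p^{(i+1)}_k=p^{(i)}_k$ for $k\le j_1$ and $p^{(i+1)}_{j_1+j_2'+k}=p^{(i)}_{j_1+j_2+k}$ for $0\le k\le j_3$, then fill in the intermediate $j_2'-1$ vertices one by one using (i): given $p^{(i+1)}_{j_1+k-1}$ with image within $K$ of $q^{(i+1)}_{j_1+k-1}$, the triangle inequality and the $C$-path property of $q^{(i+1)}$ give $d_Y(f(p^{(i+1)}_{j_1+k-1}),q^{(i+1)}_{j_1+k})\le K+C\le CL+K$, so (i) produces $p^{(i+1)}_{j_1+k}$ within $R/2$ in $X$ and within $K$ of $q^{(i+1)}_{j_1+k}$ in $Y$. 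Since consecutive lifts have $Y$-images within $K+C+K=C+2K$, hypothesis (iii) upgrades $R/2$ to $C'$. The main obstacle is closing the gap at the end of the middle, namely showing $d_X(p^{(i+1)}_{j_1+j_2'-1},p^{(i)}_{j_1+j_2})\le C'$: chaining $C'$-steps along the lifted middle and along $p^{(i)}$ yields $d_X\le(j_2'-1)C'+j_2C'\le(L-1)C'\le R$, where the hypothesis $R\ge 2LC'$ is used; combining the inductive $d_Y$-estimates at both endpoints with the end-agreement $q^{(i+1)}_{j_1+j_2'}=q^{(i)}_{j_1+j_2}$ gives $d_Y(f(p^{(i+1)}_{j_1+j_2'-1}),f(p^{(i)}_{j_1+j_2}))\le C+2K$, so (iii) yields $\le C'$. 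The pair $(p^{(i)},p^{(i+1)})$ then realises an elementary move with parameters $(j_1,j_2,j_2',j_3)$, proving $p^{(i)}\approx_{C',L,x,x'}p^{(i+1)}$.

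For the moreover clause, an induction on $j$ shows $d_X(p'_j,r_j)\le C'$: the base is $p'_0=r_0=x$, and the triangle inequality gives $d_X(p'_j,r_j)\le d_X(p'_j,p'_{j-1})+d_X(p'_{j-1},r_{j-1})+d_X(r_{j-1},r_j)\le 2C'+1\le R$ (using the hypothesis $R\ge 2C'+1$), while $d_Y(f(p'_j),f(r_j))=d_Y(f(p'_j),q_j)\le K\le C+2K$, so (iii) yields the claim. The equivalence $p'\sim_{C',L\vee 4C',x,x'}r$ is then produced by the staircase $s^{(i)}=(r_0,\ldots,r_i,p'_i,p'_{i+1},\ldots,p'_n)$ for $i=0,\ldots,n$, each of which is a $C'$-path because transitions internal to $r$ or $p'$ are $C'$-steps and $d_X(r_i,p'_i)\le C'$. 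The consecutive moves $s^{(i)}\approx s^{(i+1)}$ replace the single vertex $p'_i$ by $r_{i+1}$ in the middle and satisfy $j_2+j_2'=4\le 4C'$, while the boundary moves $p'\approx s^{(0)}$ and $s^{(n)}\approx r$ are vertex insertions with $j_2+j_2'\le 1$, so every move in the chain respects the required bound $L\vee 4C'$.
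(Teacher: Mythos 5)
Your proof is correct and follows essentially the same approach as the paper's: induct along the chain of elementary $\approx$-moves from $f(p)$ to $q$, lift each middle segment via hypothesis (i), use (iii) to upgrade the resulting $R$-jumps to $C'$-steps, and handle the moreover clause by showing $d_X(p'_i,r_i)\le C'$ and then invoking a staircase homotopy (the paper wraps the last step in \cref{lem:k-conn.fellow.travel}, which you have simply inlined). The only cosmetic difference is that you apply (i) sequentially from the previously lifted vertex rather than, as the paper does, from the fixed base point $p^{(i+1)}_{j_1}$ — both work, using the hypothesis $R\ge 2LC'$ in the same essential way to close the gap at the end of the middle segment.
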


We isolate the following easy lemma.
\begin{lemma}\label{lem:k-conn.fellow.travel}
Let $C>0$. Let $X$ be a graph with graph metric $d_X$. Let $x,x'\in X$, and suppose $p,p'$ are $C$-paths of length $n$ from $x$ to $x'$ such that $d_X(p_i,p'_i)\le C$ for each $i$. Then $p\sim_{C,4C,x,x'}p'$.
\end{lemma}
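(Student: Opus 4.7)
The plan is to interpolate between $p$ and $p'$ by a sequence of $C$-paths that flip over from the $p$-side to the $p'$-side one vertex at a time, and to check that each consecutive pair in the sequence is directly $\approx_{C,4,x,x'}$-related. Write $p=(p_0,\ldots,p_n)$ and $p'=(p'_0,\ldots,p'_n)$ with $p_0=p'_0=x$ and $p_n=p'_n=x'$, and for each $k\in\{0,1,\ldots,n\}$ set
\[
q^{(k)}=(p'_0,p'_1,\ldots,p'_k,p_k,p_{k+1},\ldots,p_n),
\]
a sequence of $n+2$ vertices (hence a path of length $n+1$). First I would check that $q^{(k)}$ is a valid $C$-path: the only non-obvious edge is the middle one from $p'_k$ to $p_k$, which has length at most $C$ by the fellow-traveller hypothesis; the other consecutive pairs lie either along $p'$ or along $p$.

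Next I would compare $q^{(k)}$ and $q^{(k+1)}$ for $0\le k\le n-1$. They share the vertices at every position except position $k+1$, where $q^{(k)}$ has $p_k$ while $q^{(k+1)}$ has $p'_{k+1}$. Taking prefix length $j_1=k$, middle lengths $j_2=j_2'=2$ and suffix length $j_3=n-k-1$ in the definition of $\approx_{C,L,x,x'}$ (noting $j_1+j_2+j_3=j_1+j_2'+j_3=n+1$, that the first $j_1+1=k+1$ vertices agree, and that the last $j_3+1=n-k$ vertices agree), one obtains $q^{(k)}\approx_{C,4,x,x'}q^{(k+1)}$.

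It remains to bridge the endpoints of the interpolation to $p$ and $p'$ themselves. The path $q^{(0)}=(x,x,p_1,\ldots,p_n)$ is $p$ with an extra copy of $x$ prepended, and comparing to $p$ with $j_1=0$, $j_2=1$, $j_2'=2$, $j_3=n-1$ gives $p\approx_{C,3,x,x'}q^{(0)}$; symmetrically $q^{(n)}=(p'_0,\ldots,p'_n,x')$ satisfies $p'\approx_{C,3,x,x'}q^{(n)}$. Chaining all these relations yields $p\sim_{C,4,x,x'}p'$, which is stronger than the claimed $p\sim_{C,4C,x,x'}p'$ since $C\ge1$.

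There is no real obstacle here; the only thing to be careful about is the bookkeeping in the definition of $\approx_{C,L,x,x'}$ (counting edges versus vertices, and keeping track of the shared prefix and suffix), which is why I would lay out $q^{(k)}$ and $q^{(k+1)}$ position by position rather than trying a slicker one-shot argument.
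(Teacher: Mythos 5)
Your proof is correct and takes essentially the same route as the paper's: both interpolate between $p$ and $p'$ by a chain of length-$(n+1)$ $C$-paths that switch from one of $p,p'$ to the other at a single position, verifying that each consecutive pair of interpolants is $\approx$-related with middle lengths $j_2=j_2'=2$. The only differences are cosmetic (you flip from $p'$ to $p$ rather than from $p$ to $p'$, and you handle the endpoint comparisons with $q^{(0)}$ and $q^{(n)}$ more carefully than the paper's slightly loose displayed chain), and your implicit appeal to $C\ge1$ is harmless since for $0<C<1$ the integrality of the graph metric forces $p=p'$.
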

\begin{proof}
For each $j=1,\ldots,n-1$, let $p^{(j)}$ agree with path $p$ for the first $j$ steps, then move to $p'_j$, and then continue along the path $p'$ until $x'$. Then $p\approx_{C,4C,x,x'}p^{(2)}\approx_{C,4C,x,x'}p^{(3)}\approx_{C,4C,x,x'}\cdots\approx_{C,4C,x,x'}p^{(n-1)}\approx_{C,4C,x,x'}p'$.
\end{proof}

\begin{proof}[Proof of \cref{prop:k-simply.conn}]
Since $f(p)\sim_{C,L,f(x),f(x')}q$, there exist $C$-paths $q^{(1)},q^{(2)},\ldots,q^{(t)}$, say of lengths $n_1,\ldots,n_t$, respectively, with $q^{(1)}=f(p)$, $q^{(t)}=q$, and $q^{(j)}\approx_{C,L,f(x),f(x')}q^{(j+1)}$ for each $j$. We will construct $C'$-paths $p^{(1)},p^{(2)},\ldots,p^{(t)}$ between $x$ and $x'$ of lenghts $n_1,\ldots,n_t$, respectively, such that $d_Y(f(p^{(j)}_i),q^{(j)}_i)\le K$ for each $i$ and $j$, such that $p^{(j)}\approx_{C',L,f(x),f(x')}p^{(j+1)}$ for each $j$, and such that $p^{(1)}=p$. Setting $p'=p^{(t)}$ then satisfies the first requirement of the proposition.

We may simply define $p^{(1)}=p$, noting that then $f(p^{(1)})=q^{(1)}$ by definition. By induction, we may then assume that we have $C'$-paths $p^{(1)},p^{(2)},\ldots,p^{(j-1)}$ between $x$ and $x'$ satisfying the required properties, including in particular the property that $d_Y(f(p^{(j-1)}_i),q^{(j-1)}_i)\le K$ for each $i$. Since $q^{(j-1)}\approx_{C,L,f(x),f(x')}q^{(j)}$, there exist non-negative integers $i_1,i_2,i_2',i_3$ such that $n_{j-1}=i_1+i_2+i_3$ and $n_j=i_1+i_2'+i_3$, such that $q^{(j-1)}_i=q^{(j)}_i$ for all $i=0,\ldots,i_1$, such that $q^{(j-1)}_{i_1+i_2+i}=q^{(j)}_{i_1+i_2'+i}$ for all $i=0,\ldots,i_3$, and such that $i_2+i_2'\le L$. Define $p^{(j)}_i=p^{(j-1)}_i$ for all $i=0,\ldots,i_1$ and $p^{(j-1)}_{i_1+i_2+i}=p^{(j)}_{i_1+i_2'+i}$ for all $i=0,\ldots,i_3$. For each $i=1,\ldots,i_2'-1$, since $d_Y(q_{i_1}^{(j)},q_{i_1+i}^{(j)})\le CL$ and $d_Y(f(p_{i_1}^{(j)}),q_{i_1}^{(j)})\le K$, we have $d_Y(f(p_{i_1}^{(j)}),q_{i_1+i}^{(j)})\le CL+K$. Invoking property \ref{enum:k-sc.lift'} of $f$, for each such $i$ we may then find a point $p^{(j)}_{i_1+i}\in X$ such that $d_X(p^{(j)}_{i_1+i},p^{(j)}_{i_1})\le R/2$ and $d_Y(f(p^{(j)}_{i_1+i}),q^{(j)}_{i_1+i})\le K$. Note also that $d_X(p^{(j)}_{i_1+i_2'},p^{(j)}_{i_1})\le LC'\le R/2$. By the triangle inequality, we therefore have $d_X(p^{(j)}_{i_1+i-1},p^{(j)}_{i_1+i})\le R$ and $d_Y(f(p^{(j)}_{i_1+i-1}),f(p^{(j)}_{i_1+i}))\le C+2K$ for each $i=1,\ldots,i'_2$, so that $d_X(p^{(j)}_{i_1+i-1},p^{(j)}_{i_1+i})\le C'$ for each such $i$ by property \ref{enum:k-sc.biLip'} of $f$ and hence $p^{(j)}$ is a $C'$-path satisfying $p^{(j-1)}\approx_{C',L,f(x),f(x')}p^{(j)}$, as required.

In the event that $q=f(r)$ for some other path $r$ from $x$ to $x'$, we first show that $d_X(p^{(t)}_i,r_i)\le C'$ for each $i$. We have $p^{(t)}_0=x=r_0$ by definition, so by induction it suffices to prove that $d_X(p^{(t)}_i,r_i)\le C'$ for an arbitrary $i\ge1$ assuming $d_X(p^{(t)}_{i-1},r_{i-1})\le C'$. Since $d_Y(f(p^{(t)}_i),f(r_i))=d_Y(f(p^{(t)}_i),q^{(t)}_i)\le K$, and $d_X(p^{(t)}_i,r_i)\le2C'+1$ by the induction hypothesis and the triangle inequality, property \ref{enum:k-sc.biLip'} of $f$ implies that $d_X(p^{(t)}_i,r_i)\le C'$, as required. Finally, we then have $p^{(t)}\sim_{C',4C',x,x'}r$ by \cref{lem:k-conn.fellow.travel}.
\end{proof}

\begin{proof}[Proof of \cref{thm:k-sc.covering}]
Since $Y$ is $k$-simply connected, there exists $L=L(C,k)$ such that $\sim_{C,L,y,y'}$ has a single equivalence class for all $y,y'\in Y$. Defining $R_0=\max\{2C'+1,2LC'\}$ and $R'=CL+K$, \cref{prop:k-simply.conn} therefore immediately implies that $\sim_{C',L\vee4C',x,x'}$ has a single equivalence class for all $x,x'\in X$, and hence that $X$ is $O_{C,C',k}(1)$-simply connected.

We now prove that $f$ is a $(C\vee(C'/C),C'\vee K)$-quasi-isometry. Property \ref{enum:k-sc.Lip} of $f$ implies that $d_Y(f(x),f(x'))\le Cd_X(x,x')$ for all $x\in X$. On the other hand, given $x,x'\in X$, let $p$ be a path from $x$ to $x'$, and let $q$ be a $C$-path of minumum length from $f(x)$ to $f(x')$. Write $n$ for the length of $q$, and note that $n\le\lceil d_Y(f(x),f(x'))/C\rceil$. Since $f(p)$ is also a $C$-path from $f(x)$ to $f(x')$, we have $f(p)\sim_{C,L,f(x),f(x')}q$, and so \cref{prop:k-simply.conn} implies that there exists a $C'$-path $p'$ from $x$ to $x'$ of length $n$. In particular, $d(x,x')\le C'd_Y(f(x),f(x'))/C+C'$.

Finally, we claim that, given $y\in Y$, there exists $x\in X$ such that $d_Y(f(x),y)\le K$. Since $X$ is not empty, we may fix $x_0\in X$. The claim holds trivially for $y=f(x_0)$, so we may assume that $y\ne f(x_0)$ and, by induction on $d_Y(f(x_0),y)$ (using that $Y$ is connected), that $y'\in Y$ is a neighbour of $Y$ and that there exists $x'\in X$ with $d_Y(f(x'),y')\le K<R'$. The claim then follows from property \ref{enum:k-sc.lift} of $f$.
\end{proof}

\subsection{Reduction to infinite injectivity radius}\label{section:reducInftyRadius}
In this section we reduce \cref{thm:index.bound} to the setting in which the nilpotent quotient $\langle P\rangle/H$ is torsion-free. The basic idea is to lift $P$ and $H$ to a certain large-scale simply connected `covering group' $\hat G$ for $G$, and then use \cref{thm:k-sc.covering} to show that $\langle P\rangle/H$ is torsion-free in this lift. We can then prove \cref{thm:index.bound} in the lift, and project the resulting group $K$ back down to $G$.

We define $\hat G$ by means of a truncated presentation of $G$. More precisely, given a group $G$ with finite symmetric generating set $S$ containing the identity, fix some integer $r\ge2$, and write $R_S(r)$ for the set of relations of length at most $r$ in the elements of $S$ that are satisfied in $G$, which is to say the set of words of length at most $r$ in the elements of $S$ that evaluate to the identity in $G$. Define the group $\hat G$ to be the group with presentation $\la S\mid R_S(r)\ra$. Write $\hat S$ to mean the natural copy of $S$ in $\hat G$, and note that there is a unique homomorphism $\psi:\hat G\to G$ that takes each element of $\hat S$ to the corresponding element of $S$, which we will call the \emph{projection} from $\hat G$ to $G$. Moreover, $\psi$ is injective on $\hat S^{\lfloor r/2\rfloor}$, so there exists a unique inverse $\ph:S^{\lfloor r/2\rfloor}\to\hat S^{\lfloor r/2\rfloor}$ to $\psi$ on $S^{\lfloor r/2\rfloor}$. Given an element $x\in S^{\lfloor r/2\rfloor}$ or subset $A\subseteq S^{\lfloor r/2\rfloor}$, abbreviate $\ph(x)$ by $\hat x$ or $\ph(A)$ by $\hat A$, noting that this is consistent with the $\hat S$ notation. Note that $\ph$ is a local homomorphism on $S^{\lfloor r/4\rfloor}$ by \cref{lem:local.hom.inverse}, and hence moreover that if $H\subseteq S^{\lfloor r/4\rfloor}$ is a subgroup of $G$ then $\hat H\subseteq\hat S^{\lfloor r/4\rfloor}$ is a subgroup of $\hat G$ by \cref{lem:local.hom.pullback}.

\begin{prop}\label{prop:lift.prog}
Given $C,d\in\N$, there exists an integer $M=M(C,d)\ge2$ such that the following holds for every integer $\eta\ge M$. Suppose $G$ is a group with finite symmetric generating set $S$ containing the identity, and $P\subseteq S^r$ is a Lie progression of dimension $d$ in $C$-upper-triangular form with symmetry group $H$, lattice $\Gamma$, generators $u_i$ and projector $\pi$. Let $g_1,\ldots,g_d\in G$ be such that $\pi(u_i)=g_iH$, and set $\hat G=\la S\mid R_S(\eta r)\ra$. Then $\hat H$ is a subgroup of $\hat G$ normalised by each $\hat g_i$, and there exists a homomorphism $\hat\pi:\Gamma\to\la \hat g_1,\ldots,\hat g_d,\hat H\ra/\hat H$ such that $\hat\pi(u_i)= \hat g_i\hat H$ for each $i$, and such that $\hat P$ is the pullback of $\hat\pi(\tilde P)$ to $\la \hat g_1,\ldots,\hat g_d,\hat H\ra$. In particular, $\hat P$ is a Lie progression of dimension $d$ in $C$-upper-triangular form with symmetry group $\hat H$, lattice $\Gamma$, generators $u_i$ and projector $\hat\pi$.
\end{prop}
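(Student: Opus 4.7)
The plan is to set up a controlled interaction between $\hat G$ and $G$ and push the group-theoretic structure of $P$ through the projection $\psi:\hat G\to G$, which is injective on $\hat S^{\lfloor \eta r/2\rfloor}$. I will write $\hat{\cdot}$ for the inverse of $\psi|_{\hat S^{\lfloor \eta r/2\rfloor}}$, and take $M=M(C,d)=O_{C,d}(1)$ to be chosen large enough to accommodate the injectivity requirements below; the assumption $\eta\ge M$ will ensure each application of injectivity of $\psi$ operates on a set sitting inside $\hat S^{\lfloor \eta r/2\rfloor}$.

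I will begin with the easy structural claims. Since $H\subseteq P\subseteq S^r\subseteq S^{\lfloor \eta r/4\rfloor}$ whenever $\eta\ge 4$, \cref{lem:local.hom.pullback} gives immediately that $\hat H$ is a subgroup of $\hat G$. To see that each $\hat g_i$ normalises $\hat H$, note that for every $h\in H$ there exists $h'\in H$ with $g_ihg_i^{-1}=h'$ in $G$, a word identity of length at most $4r$ in $S$; this lifts to $\hat g_i\hat h\hat g_i^{-1}=\hat h'\in\hat H$ in $\hat G$, and combining it with the analogous lift using $g_i^{-1}$ yields $\hat g_i\hat H\hat g_i^{-1}=\hat H$.

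The bulk of the work is to construct $\hat\pi$. Since $\Gamma$ is a finitely generated torsion-free nilpotent group with strong Malcev basis $u_1,\ldots,u_d$ in $C$-upper-triangular form, it admits a presentation on these generators whose defining relations are the commutator identities $[u_j,u_i]=u_{j+1}^{m_{j+1}}\cdots u_d^{m_d}$ with $|m_k|\le CL_k/L_iL_j\le CL_k$ (for $i<j$). Setting $\hat\pi(u_i)=\hat g_i\hat H$, it suffices to verify that each such relation holds modulo $\hat H$ in $\la\hat g_1,\ldots,\hat g_d,\hat H\ra$. In $G$ this relation takes the concrete form $[g_j,g_i]=q\cdot h$ with $q:=g_{j+1}^{m_{j+1}}\cdots g_d^{m_d}\in P^{dC}\subseteq S^{dCr}$ and $h\in H\subseteq S^r$; the principal obstacle is that the literal product $\hat g_{j+1}^{m_{j+1}}\cdots\hat g_d^{m_d}$ in $\hat G$ has $\hat S$-length $\sum_k|m_k|r$, which may be arbitrarily large, so injectivity of $\psi$ cannot be applied to it directly.

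To circumvent this, I will prove by induction on $\sum_k|\ell_k|$ the sublemma that whenever $|\ell_k|\le CL_k$ for every $k$, the identity
\[
\hat g_1^{\ell_1}\cdots\hat g_d^{\ell_d}=\hat{g_1^{\ell_1}\cdots g_d^{\ell_d}}
\]
holds in $\hat G$; the right-hand side is well-defined because $g_1^{\ell_1}\cdots g_d^{\ell_d}\in P^{dC}\subseteq S^{dCr}$. The inductive step peels off a single generator $\hat g_k$, exhibits both sides as elements of $\hat S^{(dC+1)r}$ with the same image under $\psi$, and invokes injectivity of $\psi$ on $\hat S^{\lfloor \eta r/2\rfloor}$ (requiring $\eta\ge 2(dC+1)$). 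Applying the sublemma to the tuple $(m_{j+1},\ldots,m_d)$ identifies $\hat g_{j+1}^{m_{j+1}}\cdots\hat g_d^{m_d}$ with $\hat q$, so $[\hat g_j,\hat g_i]\hat q^{-1}$ lies in $\hat S^{(dC+4)r}$ and projects to $h$ under $\psi$; one final application of injectivity forces it to equal $\hat h\in\hat H$, provided $\eta\ge 2(dC+4)$. This verifies the commutator relation modulo $\hat H$, delivers $\hat\pi$, and shows that $\hat P=\{\hat g_1^{\ell_1}\cdots\hat g_d^{\ell_d}\hat h:|\ell_i|\le L_i,\,h\in H\}$ is precisely the pullback of $\hat\pi(\tilde P)$ to $\la\hat g_1,\ldots,\hat g_d,\hat H\ra$; the upper-triangular form, dimension, basis and lattice of $\hat P$ are then inherited unchanged from those of $\tilde P$.
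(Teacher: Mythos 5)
Your proof is correct and follows essentially the same route as the paper: both arguments establish the upper-triangular commutator relations among the $\hat g_i$ modulo $\hat H$ by lifting the corresponding word identities from $G$ through the injectivity of $\psi$ on $\hat S^{\lfloor\eta r/2\rfloor}$, and your sublemma identifying $\hat g_1^{\ell_1}\cdots\hat g_d^{\ell_d}$ with $\widehat{g_1^{\ell_1}\cdots g_d^{\ell_d}}$ usefully spells out a step the paper treats implicitly. One small slip: $\psi\bigl([\hat g_j,\hat g_i]\hat q^{-1}\bigr)=qhq^{-1}$ rather than $h$; this is still an element of $H$ because $\la P\ra$ normalises $H$, so the conclusion $[\hat g_j,\hat g_i]\hat q^{-1}\in\hat H$ survives (or multiply by $\hat q^{-1}$ on the left to get $h$ exactly).
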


When \cref{prop:lift.prog} holds, we will call $\hat P$ the \emph{lift} of $P$ to $\hat G$.

\begin{proof}
Write $\psi:\hat G\to G$ for the projection. By definition, for each $h\in H$ and each $i$ there exists $h'\in H$ such that $g_ih=h'g_i$. As discussed above, there is a unique inverse to $\psi$ on $S^{\lfloor Mr/2\rfloor}$ that is a local homomorphism on $S^{\lfloor\eta r/4\rfloor}$. Provided $\eta\ge4$, it follows that $\hat H$ is a subgroup and that $\hat g_i\hat h=\hat h'\hat g_i$ for each $\hat h\in\hat H$ and each $i$, so that $\hat H$ is normalised by each $\hat g_i$ as required.

Since every element of $\Gamma$ can be written uniquely in the form $u_1^{\ell_1}\cdots u_d^{\ell_d}$ with $\ell_i\in\Z$, we may define a map $\hat\pi:\Gamma\to\la\hat g_1,\ldots,\hat g_d,\hat H\ra/\hat H$ by setting $\hat\pi(u_1^{\ell_1}\cdots u_d^{\ell_d})=\hat g_1^{\ell_1}\cdots \hat g_d^{\ell_d}\hat H$. We need to show that this map is a homomorphism. To see this, first note that given $m_1,\ldots,m_d,n_1,\ldots,n_d\in\Z$, in order to write
\[
u_1^{m_1}\cdots u_d^{m_d}u_1^{n_1}\cdots u_d^{n_d}=u_1^{\ell_1}\cdots u_d^{\ell_d}
\]
with $\ell_i\in\Z$ it suffices to use the upper-triangular-form relations
\[
[u_i^{\pm1},u_j^{\pm1}]=u_{j+1}^{k_{j+1}}\cdots u_d^{k_d},
\]
which for each $i<j$ hold with some $|k_n|\le CL_n/L_iL_j$, where $L_n$ are the lengths of $P$. Provided $\eta$ is large enough in terms of $C$ and $d$ only, for every such relation there is a corresponding relation
\[
[\hat g_i^{\pm1},\hat g_j^{\pm1}]\in \hat g_{j+1}^{k_{j+1}}\cdots \hat g_d^{k_d}\hat H
\]
in $\la\hat g_1,\ldots,\hat g_d,\hat H\ra/\hat H$, so that
\[
\hat g_1^{m_1}\cdots \hat g_d^{m_d}\hat g_1^{n_1}\cdots \hat g_d^{n_d}\hat H=\hat g_1^{\ell_1}\cdots \hat g_d^{\ell_d}\hat H.
\]
It follows that $\hat\pi$ is a homomorphism as required.
\end{proof}

\begin{prop}\label{prop:index.reduc.tf}
Given $d,k,t,\eta\in\N$ and $C\ge1$, there exist $r=r(k,t,d,\eta)\in\N$ and $j=j(C,k,t,d,\eta)\in\N$ such that, provided $\eta$ is at least the quantity $M(C,d)$ appearing in \cref{prop:lift.prog}, the following holds. Suppose that $G$ is a group with finite symmetric generating set $S$ containing the identity, that $P$ is a Lie progression of class $c$ and dimension at most $d$ in $C$-upper-triangular form with injectivity radius at least $r$, and that $X\subseteq S^t$ is a set of size at most $k$ containing the identity such that for some $n\in\N$ we have
\[
XP^m\subseteq S^{mn}\subseteq XP^{\eta m}
\]
for all $m\in\N$, and such that
\[
xP^j\cap yP^j=\varnothing
\]
for all distinct $x,y\in X$. Write $H$ for the symmetry group of $P$, and let $\hat P$ be the lift of $P$ to $\hat G=\la S\mid R_S(4\eta n)\ra$. Then $\hat P$ has infinite injectivity radius, distinct elements of $\hat X$ belong to distinct cosets of $\la\hat P\ra$, we have
\begin{equation}\label{eq:inclusions'}
\hat X\hat P^m\subseteq\hat S^{mn}\subseteq\hat X\hat P^{2\eta m}
\end{equation}
for all $m\in\N$, and $\hat H=\overline\gamma_{c+1}(\la\hat P\ra)$, which is in particular characteristic in $\la\hat P\ra$. Moreover, if $xPx^{-1}\subseteq P^{2\eta}$ for every $x\in X$ then $\la\hat P\ra\normal\hat G$.
\end{prop}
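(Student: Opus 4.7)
The plan is to treat the projection $\psi: \hat G \to G$ as a large-scale covering map and invoke the machinery of \S\ref{sec:sc}. Since $\hat G$ is presented by relators of length at most $4\eta n$, its Cayley graph is $(4\eta n)$-simply connected, while $\psi$ is bijective between $\hat S^{2\eta n}$ and $S^{2\eta n}$ with local-homomorphism inverse $\ph$ on $S^{\eta n}$. I will choose $r = r(k,t,d,\eta)$ and $j = j(C,k,t,d,\eta)$ large enough to guarantee two things: every bounded-complexity identity involving the generators $u_i$ of $\tilde P$, the elements of $X$, and small powers of $P$ takes place inside the ball $S^{2\eta n}$ where $\psi$ is bijective, so it lifts verbatim; and the local disjointness $xP^j\cap yP^j=\varnothing$ yields a quantitative biLipschitz estimate allowing $\psi$ to satisfy the hypotheses of Proposition \ref{prop:k-simply.conn} on the appropriate portion of $\G(\hat G,\hat S)$.

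I would first settle the inclusions \eqref{eq:inclusions'}. The forward inclusion $\hat X\hat P^m\subseteq\hat S^{mn}$ follows by induction on $m$: the base case $\hat X\hat P\subseteq\hat S^n$ is just $\ph$ applied to $XP\subseteq S^n$ (all elements involved live inside $S^{2\eta n}$ provided $r,j$ are chosen accordingly), and the inductive step multiplies in $\hat P\subseteq\hat S^n$. For the reverse inclusion, given $\hat s=\hat s_1\cdots\hat s_{mn}\in\hat S^{mn}$, project to obtain $s_1\cdots s_{mn}\in S^{mn}\subseteq XP^{\eta m}$, and fix a decomposition $s=xp_1\cdots p_{\eta m}$ with $x\in X$, $p_i\in P$. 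The two $S$-words $s_1\cdots s_{mn}$ and $(x)(p_1)\cdots(p_{\eta m})$ give two paths from $1$ to $s$ in $\G(G,S)$, and their concatenation is a loop that is null-homotopic in $\G(\hat G,\hat S)$ by $(4\eta n)$-simple connectedness. Applying Proposition \ref{prop:k-simply.conn} to $\psi$, with one path being the lifted $\hat S$-word to $\hat s$ and the other being the candidate $\hat x\hat p_1\cdots\hat p_{\eta m}\hat h$ (for some $\hat h\in\hat H\subseteq\hat P^{\eta}$ absorbed into the progression), yields the second inclusion with the factor $2$.

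The infinite injectivity radius of $\hat P$ is the main obstacle. Suppose $\gamma=u_1^{\ell_1}\cdots u_d^{\ell_d}\ne 1$ in $\Gamma$ but $\hat\pi(\gamma)=1$, so that $\hat g_1^{\ell_1}\cdots\hat g_d^{\ell_d}=\hat h$ in $\hat G$ for some $\hat h\in\hat H$. Projecting, the relation $g_1^{\ell_1}\cdots g_d^{\ell_d}=h$ holds in $G$, and is witnessed by a van Kampen diagram $D$ over $\langle S\mid R_S(4\eta n)\rangle$. Each $2$-cell of $D$ is bounded by a relation of length at most $4\eta n$, and provided $r$ is chosen large enough in terms of $C,d,\eta$ so that every such relation involves only elements of $P$ and $X$ lying in $S^{2\eta n}$, each relation lifts (by $\ph$) to a genuine relation in $\hat G$ that we can cancel cell-by-cell. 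Working through $D$ we reduce the equation $\hat\pi(\gamma)=1$ to a relation $\gamma=1$ verified at the level of the lattice $\Gamma$ using only relators whose lifts hold identically; combined with the injectivity hypothesis that $\pi$ is injective on $\tilde P^r$ for $r$ much larger than the lengths of the relators, this forces $\gamma=1$, contradicting our choice. The key subtlety here, and the reason one needs both the injectivity of $P$ up to scale $r$ and the local disjointness up to scale $j$, is that the cells of $D$ can mix $\hat x$- and $\hat P$-type elements, and the disjointness hypothesis is what guarantees that distinct cosets $xP$ do not collide at the scales involved in the diagram.

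Once infinite injectivity is established, the remaining conclusions fall into place. The identification $\hat H=\overline\gamma_{c+1}(\la\hat P\ra)$ is now immediate: $\la\hat P\ra/\hat H\cong\Gamma$ is torsion-free nilpotent of class $c$, so $\gamma_{c+1}(\la\hat P\ra)\subseteq\hat H$ and hence $\overline\gamma_{c+1}(\la\hat P\ra)\subseteq\hat H$; conversely, $\hat H\cong H$ is finite, hence contained in the torsion subgroup of the finitely generated nilpotent group $\la\hat P\ra$, which equals $\overline\gamma_{c+1}(\la\hat P\ra)$ since the quotient by the latter is torsion-free. Distinctness of cosets for $\hat X$ modulo $\la\hat P\ra$ is a direct lift of the disjointness hypothesis $xP^j\cap yP^j=\varnothing$: any coincidence $\hat x\la\hat P\ra=\hat y\la\hat P\ra$ would, after projecting back via $\psi$ and using the infinite injectivity radius just proved, violate that hypothesis. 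Finally, under the additional assumption $xPx^{-1}\subseteq P^{2\eta}$, the identity takes place in $S^{2\eta n}$ (for $r,j$ large) and lifts verbatim to $\hat x\hat P\hat x^{-1}\subseteq\hat P^{2\eta}$; since $\hat S\subseteq\hat X\hat P^\eta$ by \eqref{eq:inclusions'}, the group $\hat G$ is generated by $\hat X\cup\hat P$ up to $\hat H\le\la\hat P\ra$, and so $\la\hat P\ra$ is normal in $\hat G$, as required.
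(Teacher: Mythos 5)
Your overall strategy (lift to $\hat G$, exploit large-scale simple connectedness) matches the paper's, but there are genuine gaps in the two hardest steps, and the easy step is overcomplicated.

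For the inclusions \eqref{eq:inclusions'}, your route through Proposition \ref{prop:k-simply.conn} is both unnecessary and problematic: you invoke $(4\eta n)$-simple connectedness of $\G(\hat G,\hat S)$, but that parameter grows with $n$, so it gives no uniform control; the paper avoids this entirely by noting that $S^{2n}\subseteq XP^{2\eta}\subseteq S^{2\eta n}$, applying $\ph$ (a local homomorphism on $S^{\eta n}$) to get $\hat S^{2n}\subseteq\hat X\hat P^{2\eta}$, and then feeding this into \cref{lem:inclusions.local}. For infinite injectivity radius, your van Kampen sketch leaves precisely the hard part unresolved. You assert that each cell "lifts (by $\ph$) to a genuine relation in $\hat G$ that we can cancel cell-by-cell," but every relation of $R_S(4\eta n)$ already holds in $\hat G$ by construction, so this says nothing; the actual difficulty is to \emph{produce} a short presentation of $\la\hat P\ra/\hat H$ with respect to a generating set adapted to $P$, and then check that the lattice $\Gamma$ satisfies it. The paper does this by working in the Cayley graph $\G(\hat G,\hat S^n)$, which is $4\eta$-simply connected (not $(4\eta n)$-simply connected), building the auxiliary generating set $T=\hat S^{2t+1}\cap(\hat P\cap\hat P^{-1})^{4\eta}$ of $\la\hat P\ra$, verifying conditions \ref{enum:k-sc.lift}--\ref{enum:k-sc.biLip} of \cref{thm:k-sc.covering} for the inclusion $\G(\la\hat P\ra,T^n)\hookrightarrow\G(\hat G,\hat S^n)$ (this is where the disjointness hypothesis on $X$ enters, via condition \ref{enum:k-sc.biLip}), and deducing a bounded-length presentation. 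Your sketch skips all of this.

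The distinct-cosets claim is the most serious gap: from $\hat x^{-1}\hat y\in\la\hat P\ra$ you cannot "project back and violate the disjointness hypothesis," because $\la\hat P\ra$ is infinite and you have no a priori bound placing $\hat x^{-1}\hat y$ in a controlled power $\hat P^j$. The paper obtains such a bound by a pigeonhole argument over the $k+1$ powers $(\hat x^{-1}\hat y)^0,\ldots,(\hat x^{-1}\hat y)^k\in\hat X\hat P^{4\eta k}$, which forces $(\hat x^{-1}\hat y)^{i'-i}\in\hat P^{4\eta(d+1)k}$ for some $0<i'-i\le k$, and then extracts an $(i'-i)$-th root inside the lattice via \cref{lem:bg.rational.power} and \cref{lem:pp.L2.1} (using that $\log u_1,\ldots,\log u_d$ is a strong Mal'cev basis). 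Without this root-extraction step the argument does not close. Your treatments of $\hat H=\overline\gamma_{c+1}(\la\hat P\ra)$ and of normality under $xPx^{-1}\subseteq P^{2\eta}$ are correct and match the paper.
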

\begin{proof}
Since $S^{2n}\subseteq XP^{2\eta}\subseteq S^{2\eta n}$, we have $\hat S^{2n}\subseteq\hat X\hat P^{2\eta}$, which in turn gives \eqref{eq:inclusions'} by \cref{lem:inclusions.local}.

We will now show that $\hat P$ has infinite injectivity radius. By \cref{prop:fromStoT} and \eqref{eq:inclusions'}, the set $T=\hat S^{2t+1}\cap(\hat P\cap\hat P^{-1})^{4\eta}$ satisfies
\begin{equation}\label{eq:index.TP}
\hat P\subseteq T^n\subseteq(\hat P\cap\hat P^{-1})^{4^k(4t+2)\eta}\subseteq\hat P^{4^k(4t+2)d\eta}.
\end{equation}
In particular, $T$ generates $\langle\hat P\rangle$. Now suppose $R,R'\in\N$ are such that $R$ is even and $R\ge4\eta R'$.
We claim that
\begin{enumerate}[label=(\roman*)]
\item\label{enum:k-sc.lift.apply} for every pair of elements $h\in\langle\hat P\rangle$ and $g\in G$ with $d_{\hat S^n}(h,g)\le R'$, there exists $h'\in\langle\hat P\rangle$ such that $d_{T^n}(h,h')\le R/2$ and $d_{\hat S^n}(h',g)\le1$; and
\item\label{enum:k-sc.Lip.apply} every pair of elements $h,h'\in\langle\hat P\rangle$ with $h\in h'T^n$ satisfies $d_{\hat S^n}(h,h')\le2t+1$.
\end{enumerate}
We also claim that, provided $j\ge4^k(4t+2)dR\eta$,
\begin{enumerate}[resume, label=(\roman*)]
\item\label{enum:k-sc.biLip.apply} for every $h,h'\in X$ with $d_{T^n}(h,h')\le R$, if $d_{\hat S^n}(h,h')\le2t+3$ then $d_{T^n}(h,h')\le2\eta(2t+3)$.
\end{enumerate}
To prove that condition \ref{enum:k-sc.lift.apply} holds, we note first that $\hat P\subseteq T^n$ by \cref{prop:fromStoT}, and hence that $\hat S^{mn}\subseteq\hat X\hat T^{2\eta mn}\subseteq\hat S^nT^{2\eta mn}$ for all $m$. Since $\hat S$ and $T$ are symmetric, this implies that $\hat S^{mn}\subseteq T^{2\eta mn}\hat S^n$ for all $m$. In particular, if $d_{\hat S^n}(h,g)\le R'$ then $h^{-1}g\in T^{2\eta R'n}\hat S^n$. This means that there exists $h_0\in T^{2\eta R'n}\subseteq T^{Rn/2}$ such that $d_{\hat S^n}(hh_0,g)\le1$, so that we may take $h'=hh_0$. Condition \ref{enum:k-sc.Lip.apply} holds because $T^n\subseteq(\hat S^n)^{2t+1}$ by definition. Finally, to prove that condition \ref{enum:k-sc.biLip.apply} holds provided $j\ge4^k(4t+2)dR\eta$, note first that $T^{Rn}\cap\hat S^{(2t+3)n}\subseteq T^{Rn}\cap\hat X\hat P^{2\eta(2t+3)}$. If $j\ge4^k(4t+2)dR\eta$ then, since $T^{Rn}\subseteq\hat P^{4^k(4t+2)dR\eta}$ by \eqref{eq:index.TP}, this in fact implies that $T^{Rn}\cap\hat S^{(2t+3)n}\subseteq\hat P^{2\eta(2t+3)}$; by \eqref{eq:index.TP}, this in turn gives $T^{Rn}\cap\hat S^{(2t+3)n}\subseteq T^{2\eta(2t+3)n}$, and hence \ref{enum:k-sc.biLip.apply} as claimed.

Let $R_0=R_0(2t+1,2\eta(2t+3),4\eta)\in\N$ and $R'=R'(2t+1,1,4\eta)\in\N$ be the integers appearing in \cref{thm:k-sc.covering}, set $R$ to be the smallest even integer that is at least $\max\{R_0,4\eta R'\}$, and note that the Cayley graph $\G(\hat G,\hat S^n)$ is $4\eta$-simply connected. Provided $j\ge4^k(4t+2)dR\eta$, we may therefore apply \cref{thm:k-sc.covering} to the map
\[
\begin{array}{ccccc}
f&:&\G(\langle\hat P\rangle,T^n)&\to&\G(\hat G,\hat S^n)\\
  &&x&\mapsto&x,
\end{array}
\]
to conclude that $\G(\langle\hat P\rangle,T^n)$ is $O_{t,\eta}(1)$-simply connected. In particular, this implies that there exists $\ell\ll_{t,\eta}1$ such that $\langle\hat P\rangle$ admits a presentation $\langle T^n\mid B\rangle$, with $B$ a set of relations of length at most $\ell$. The first containment of \eqref{eq:index.TP} implies in particular that $\hat H\subseteq T^n$, and so $\langle\hat P\rangle/\hat H$ also admits a presentation $\langle T^n\mid B'\rangle$, with $B'$ a set of relations of length at most $\ell$. Provided $r\ge4^k(4t+2)d\ell\eta$, each element of $T\hat H/\hat H$ has a unique preimage in $\tilde P$ under $\hat\pi$, and, by \eqref{eq:index.TP}, these preimages satisfy every relation in $B'$. In particular, $\hat\pi$ is invertible, and so $\hat P$ has infinite injectivity radius.

We will now prove that distinct elements of $\hat X$ belong to distinct cosets of $\la\hat P\ra$. Recall that the projection $\psi:\hat G\to G$ is injective on $\hat S^{2\eta n}$, so an arbitrary element of $\hat S^{2\eta n}$ is of the form $\hat x$ for some $x\in S$, with $\hat x=\hat y$ if and only if $x=y$. Since $\hat X\subseteq S^n$, we may therefore consider two arbitrary elements $\hat x,\hat y\in\hat X$. By \eqref{eq:inclusions'}, we have $(\hat x^{-1}\hat y)^i\in S^{2nk}\subseteq\hat X\hat P^{4\eta k}$ for each $i=0,1,\ldots,k$, which implies by the pigeonhole principle that there exist integers $i<i'$ between $0$ and $k$ and $\hat z\in\hat X$ such that $(\hat x^{-1}\hat y)^i,(\hat x^{-1}\hat y)^{i'}\in\hat z\hat P^{4\eta k}$, and hence $(\hat x^{-1}\hat y)^{i'-i}\in\hat P^{4\eta(d+1)k}$. By \cref{lem:upper-tri.doubling.dilate}, and writing $u_1,\ldots,u_d$ for the basis of $\hat P$, writing $L_1,\ldots,L_d$ for its lengths, and writing $\hat\pi$ for its projector, it follows that there exist $\ell_1,\ldots,\ell_d\in\Z$ with $|\ell_i|\ll_{C,d,k,\eta}L_i$ such that $(\hat x^{-1}\hat y)^{i'-i}\in\hat\pi(u_1^{\ell_1}\cdots u_d^{\ell_d})$.

Now suppose in addition that $\hat x^{-1}\hat y\in\la\hat P\ra$. This means that $\hat x^{-1}\hat y\in\hat\pi(v)$ for some $v$ belonging to the lattice $\Gamma$ of $\hat P$, and hence that $(\hat x^{-1}\hat y)^{i'-i}\in\hat\pi(v^{i'-i})$. Since $\hat P$ has infinite injectivity radius, this implies that $v^{i'-i}=u_1^{\ell_1}\cdots u_d^{\ell_d}$. Applying \cref{lem:bg.rational.power} then gives $v=u_{j_1}^{p_{1}(1/(i'-i))\ell_{j_1}}\cdots u_{j_m}^{p_{m}(1/(i'-i))\ell_{j_m}}$ for some $m\in\N$, some indices $j_1,\ldots,j_m$, and some rational polynomials $p_1,\ldots,p_m$, each depending only on $d$, and then \cref{lem:pp.L2.1} implies that $v\in P_\Q(u;O_{C,d,k,\eta}(L))^{O_d(1)}\subseteq P_\Q(u;O_{C,d,k,\eta}(L))$. Since $\log u_1,\ldots,\log u_d$ is a Mal'cev basis and $v\in\Gamma$, it follows that in fact $v\in P(u;O_{C,d,k,\eta}(L))$, hence $v\in P(u;L)^q$ for some $q=q(C,d,k,\eta)$ by \cref{lem:upper-tri.doubling.dilate}, and hence $\hat x^{-1}\hat y\in\hat P^q$. Setting $j\ge q$, this implies that $x^{-1}y\in P^j$, hence $x=y$, and hence $\hat x=\hat y$, as required.

Since $\langle\hat P\rangle/\hat H$ is torsion-free nilpotent of class $c$, we certainly have $\overline\gamma_{c+1}(\la\hat P\ra)\le\hat H$. Moreover, since $\langle\hat P\rangle/U$ is torsion-free, every subgroup of $\langle\hat P\rangle$ properly containing $\overline\gamma_{c+1}(\la\hat P\ra)$ is infinite; since $\hat H$ is finite, it follows that $\hat H=\overline\gamma_{c+1}(\la\hat P\ra)$ as required.

Finally, if $xPx^{-1}\subseteq P^{2\eta}$ for every $x\in X$ then, since $xPx^{-1}\subseteq S^{2n}$ and $P^{2\eta}\subseteq S^{2\eta n}$, we have $\hat x\hat P\hat x^{-1}\subseteq\hat P^{2\eta}$ for every $\hat x\in\hat X$, and hence $\la\hat P\ra\normal\hat G$ as required.
\end{proof}

\subsection{Reduction to the case of a normal torsion-free nilpotent subgroup}\label{section:H=1}
In the previous section we reduced \cref{thm:index.bound} to the case in which $P$ had infinite injectivity radius. In order to apply \cref{prop:mann}, in this section we further reduce to the case in which $H$ is trivial and $\langle P\rangle$ is normal.
\begin{prop}\label{prop:index.reduc.mann}
Suppose that $G$ is a group with finite symmetric generating set $S$ containing the identity, that $P$ is a Lie progression of class $c$ and dimension $d$ with infinite injectivity radius, and that for some $n,\eta\in\N$ there exists a subset $X\subseteq S^n$ of size at most $k$ containing the identity such that
\[
XP^m\subseteq S^{mn}\subseteq XP^{\eta m}
\]
for all $m\in\N$. Let $H$ be the symmetry group of $P$. Then the subgroups
\begin{align*}
H_0&=\bigcap_{x\in X}xHx^{-1},\\
U&=\bigcap_{x\in X}x\langle P\rangle x^{-1}
\end{align*}
are normal in $G$, and satisfy $[G:U]\le k^k$ and $U\cap H=H_0$. Moreover, the quotient $U/H_0$ is torsion-free nilpotent of class at most $c$ and Hirsch length at most $d$.
\end{prop}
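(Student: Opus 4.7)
The plan is to establish first that $\langle P\rangle$ has index at most $k$ in $G$ with coset representatives drawn from $X$, then to show that $U$ and $H_0$ are precisely the normal cores in $G$ of $\langle P\rangle$ and $H$ respectively, and finally to exploit torsion-freeness of $\langle P\rangle/H$ (which follows from the infinite injectivity radius of $P$) to identify $U\cap H$ with $H_0$ and embed $U/H_0$ into $\langle P\rangle/H$.

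First I would argue that $G=X\langle P\rangle$. Since $1\in X$, taking $m=1$ in the hypothesis gives $P\subseteq S^n$, and iterating $S^{mn}\subseteq XP^{\eta m}\subseteq X\langle P\rangle$ over all $m$ yields $G=\bigcup_m S^{mn}\subseteq X\langle P\rangle$. Hence $[G:\langle P\rangle]\le|X|\le k$, and any $g\in G$ can be written $g=xp$ with $x\in X$ and $p\in\langle P\rangle$. Because $H\normal\langle P\rangle$ by the definition of a Lie progression, we then have $g\langle P\rangle g^{-1}=x\langle P\rangle x^{-1}$ and $gHg^{-1}=xHx^{-1}$, so
\[
U=\bigcap_{g\in G}g\langle P\rangle g^{-1},\qquad H_0=\bigcap_{g\in G}gHg^{-1}
\]
are the normal cores of $\langle P\rangle$ and $H$ in $G$; in particular both are normal in $G$. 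The bound $[G:U]\le k^k$ then follows from the fact that $U$ is an intersection of at most $k$ subgroups, each of index at most $k$.

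Next I would prove the identity $U\cap H=H_0$. The inclusion $H_0\subseteq U\cap H$ is immediate, as $H_0\subseteq xHx^{-1}\subseteq x\langle P\rangle x^{-1}$ for every $x\in X$ and $1\in X$. For the reverse, let $h\in U\cap H$ and fix $x\in X$; then $x^{-1}hx\in\langle P\rangle$. Since $H$ is finite (as the symmetry group of a Lie progression), $h$ and hence $x^{-1}hx$ have finite order. The hypothesis that $P$ has infinite injectivity radius means that the projector $\pi$ is injective, so $\langle P\rangle/H$ is isomorphic to the lattice $\Gamma$ of $P$, which is torsion-free nilpotent of class $c$ and Hirsch length $d$. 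The torsion elements of $\langle P\rangle$ therefore all lie in $H$, so $x^{-1}hx\in H$, i.e.\ $h\in xHx^{-1}$. As $x$ was arbitrary, $h\in H_0$.

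Finally, since $U\le\langle P\rangle$ (taking $x=1$) and $U\cap H=H_0$, the second isomorphism theorem gives
\[
U/H_0=U/(U\cap H)\cong UH/H\le\langle P\rangle/H.
\]
As $\langle P\rangle/H\cong\Gamma$ is torsion-free nilpotent of class $c$ and Hirsch length $d$, any subgroup inherits these properties (with class and Hirsch length at most $c$ and $d$, respectively), so $U/H_0$ is torsion-free nilpotent of class at most $c$ and Hirsch length at most $d$, completing the proof. The only genuinely non-formal step is the identification $U\cap H=H_0$, and the main technical input there is that the infinite injectivity radius forces $\langle P\rangle/H$ to be torsion-free; everything else is routine manipulation with cosets and normal cores.
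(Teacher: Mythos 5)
Your proof is correct and reaches the conclusion by a shorter route than the paper does for the key step $U\cap H = H_0$. Your first two steps (identifying $U$ and $H_0$ as the normal cores of $\langle P\rangle$ and $H$ via the decomposition $G=X\langle P\rangle$ and the normality of $H$ in $\langle P\rangle$, and deriving $[G:U]\le k^k$ from $[G:\langle P\rangle]\le k$) and your last step (the isomorphism $U/H_0 = U/(U\cap H)\cong UH/H\le\langle P\rangle/H\cong\Gamma$) coincide with the paper's. Where you differ is the middle step. The paper conjugates $P$ and $H$ by each $x_i\in X$, invokes \cref{lem:sym.grps.commensurable} (which itself rests on a pigeonhole argument and the hypothesis on the injectivity radius) to get $[H:H\cap x_iHx_i^{-1}]\le k$, pigeonholes once more to produce a power $u^{j_i}\in x_iHx_i^{-1}$, and only then uses torsion-freeness of $\langle P\rangle/H$ to conclude $u\in x_iHx_i^{-1}$. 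You observe directly that $h\in U\cap H$ has finite order (since $H$ is finite), that $x^{-1}hx\in\langle P\rangle$ because $h\in U$, and that torsion-freeness of $\langle P\rangle/H\cong\Gamma$ then immediately forces $x^{-1}hx\in H$. This bypasses \cref{lem:sym.grps.commensurable} entirely and is a genuine simplification; the commensurability lemma is used because it is needed elsewhere in the paper, but it is not necessary here.

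One small imprecision worth fixing: finiteness of $H$ is \emph{not} an abstract consequence of $H$ being the symmetry group of a Lie progression --- the definition places no finiteness condition on $H$. It holds in the present context because the hypothesis with $m=1$ gives $P\subseteq XP\subseteq S^n$, which is finite since $S$ is finite, and $H\subseteq P$ (since $1\in\tilde P$ and $P$ is the pullback of $\pi(\tilde P)$). You should cite this chain of inclusions rather than the parenthetical justification you gave.
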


We start with a general lemma showing that if a subgroup is contained in a few translates of a Lie progression with large injectivity radius, then it is in fact contained in a few cosets of that Lie progression's symmetry group.
\begin{lemma}\label{lem:sym.grps.commensurable}
Let $d,k,\eta\in\N$. Suppose that $P$
is a Lie progression of dimension at most $d$ with symmetry group $H$ and injectivity radius at least $3^k(d+1)\eta$ in a group $G$, that $K\le G$, and that $X\subseteq G$ is a set of size at most $k$ such that $K\subseteq XP^\eta$. Then $[K:K\cap H]\le k$.
\end{lemma}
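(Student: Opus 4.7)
The plan is to bound $[K:K\cap H]$ by factoring through the intermediate subgroup $L:=K\cap\la P\ra$. Since $H\subseteq P\subseteq\la P\ra$ (because $1\in\pi(\tilde P)$, so $H$ is contained in the pullback $P$), one has $K\cap H=L\cap H$, so
\[
[K:K\cap H]=[K:L]\cdot[L:L\cap H].
\]
It therefore suffices to show that $[K:L]\le k$ and that $L\le H$.

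\textbf{Step 1: $[K:L]\le k$.} This is immediate pigeonhole: if $g_1,g_2\in K$ both lie in some translate $xP^\eta$, then writing $g_i=xp_i$ with $p_i\in P^\eta$ gives
\[
g_1^{-1}g_2=p_1^{-1}p_2\in(P^\eta)^{-1}P^\eta\subseteq P^{(d+1)\eta}\subseteq\la P\ra
\]
by \eqref{eq:inverseProg}, and hence $g_1L=g_2L$. Since $K\subseteq XP^\eta$ with $|X|\le k$, there are at most $k$ cosets.

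\textbf{Step 2: $L$ is finite.} Indeed $L\subseteq(X\cap\la P\ra)\cdot P^\eta$, which is a finite subset of $\la P\ra$.

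\textbf{Step 3: $L\le H$.} The plan is to apply \cref{lem:finite.subgroup.of.proper}, which concludes $L\le H$ as soon as $L$ is a finite subgroup contained in $P^{\lfloor\inj P/2\rfloor}$. The central task is therefore to establish the bound $L\subseteq P^M$ for some $M\le\lfloor\inj P/2\rfloor$. The quantitative aim is the more precise claim that every subgroup of $\la P\ra$ covered by $k$ translates of $P^\eta$ is contained in $P^{3^{k-1}(d+1)\eta}$, which, in view of the hypothesis $\inj P\ge 3^k(d+1)\eta$, suffices.

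I plan to establish this claim by induction on $k$. The base case $k=1$ is immediate: $1\in L\subseteq xP^\eta$ forces $x\in(P^\eta)^{-1}\subseteq P^{d\eta}$ by \eqref{eq:inverseProg}, and hence $L\subseteq P^{(d+1)\eta}$. For the inductive step, given a cover $L\subseteq\bigcup_{i=1}^k x_iP^\eta$, I plan to exploit the group structure of $L$ to merge two translates into one: fix an element $g\in L\cap x_iP^\eta$ for some non-identity translate, observe that $g^{-1}\in L$ must also lie in some translate $x_jP^\eta$, and use the identity $gg^{-1}=1$ together with $1\in x_1P^\eta$ to derive an algebraic relation of the form $x_ix_j\in P^{O_d(\eta)}$. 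This relation permits combining $x_iP^\eta$ and $x_jP^\eta$ into a single translate of $P^{O_d(\eta)}$ at the cost of a factor at most $3$ in the length, reducing to a cover by at most $k-1$ translates of $P^{3\eta}$ to which the inductive hypothesis applies.

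Once $L\subseteq P^{3^{k-1}(d+1)\eta}\subseteq P^{\lfloor\inj P/2\rfloor}$ is established, \cref{lem:finite.subgroup.of.proper} yields $L\le H$, completing the proof.

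\textbf{Main obstacle.} The technical core lies in the inductive step of Step 3: cleanly implementing the merging of two translates using only closure of $L$ under multiplication and inversion, while controlling the blow-up of the power of $P$ by a uniform factor of $3$. The exponent $3^k$ in the injectivity-radius hypothesis is calibrated to absorb precisely this blow-up across $k$ successive reductions, and any weaker hypothesis on $\inj P$ appears to break the argument.
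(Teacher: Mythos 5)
The plan is built on a false intermediate claim, and the acknowledged ``main obstacle'' (the merging step) cannot in fact be overcome, because it is attempting to prove something untrue. Specifically, Step~3 aims to show that $L=K\cap\la P\ra$ is contained in $P^{3^{k-1}(d+1)\eta}$ and hence, via \cref{lem:finite.subgroup.of.proper}, that $L\le H$. This is false under the hypotheses of the lemma. Take $N=\R$ with $\Gamma=\Z$, let $\tilde P=P(1;1)=\{-1,0,1\}$, let $G=G_0=\Z_n$ with $n$ even and large (say $n=20$), let $H=\{0\}$, and let $\pi:\Z\to\Z_n$ be reduction mod $n$. Then $P$ is the image of $\{-1,0,1\}$ in $\Z_n$, with $d=1$ and $\inj P=n-1$. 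Take $\eta=1$, $k=2$, $K=\{0,n/2\}\cong\Z_2$, $X=\{0,n/2\}$. The hypotheses of the lemma are met (one has $\inj P=19\ge 18=3^k(d+1)\eta$), yet $L=K$ and $L\not\le H$, indeed $n/2\notin P^{3^{k-1}(d+1)\eta}=P^6$. The lemma's conclusion $[K:K\cap H]\le 2$ does hold here, but not because $L\le H$: rather $[K:L]=1$ and $[L:L\cap H]=2$, so the $k$ must be ``spent'' on the second factor, which your decomposition fixes at $1$.

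Concretely, your proposed merging step would extract from the relation $gg^{-1}=1$ (with $g\in x_iP^\eta$, $g^{-1}\in x_jP^\eta$) a bound of the form $x_ix_j\in P^{O_d(\eta)}$. In the example above, the only relation this yields for $g=n/2$ is $2x_i=0$ in $\Z_n$, i.e.\ that $x_i=n/2$ has order $2$ — which places no constraint whatsoever on how far $x_i$ is from the identity in terms of powers of $P$. The point is that the subgroup $L$ can contain torsion elements that are short relations but metrically far from $1$; no amount of manipulating the group relations of $L$ can force such elements into a bounded power of $P$. This failure has nothing to do with the non-commutativity you flag as the obstacle; it already occurs in the abelian case.

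The paper avoids this entirely by never claiming anything like $L\le H$. Instead it fixes a nested chain of sets $K\cap P^{3^i(d+1)\eta}$ and shows, via a pigeonhole over the $k$ translates, that some scale $j\le 3^{k-1}(d+1)\eta$ satisfies the \emph{stabilisation} $K\cap P^j=K\cap P^{2j}$. It then uses the injectivity radius (by considering powers of a hypothetical $g\in(K\cap P^j)\setminus H$ leaving $\tilde P^j$) to upgrade this stabilisation to $K\cap P^j=K\cap H$. Finally, each set $(xP^\eta)\cap K$ lies in a single coset of $K\cap H$ because any two of its elements differ by an element of $P^{(d+1)\eta}\cap K\subseteq P^j\cap K=K\cap H$. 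This gives $[K:K\cap H]\le k$ directly. Your Step~1 is a correct and clean observation, but to complete the proof you should replace Steps~2 and~3 by this stabilisation-and-cosets argument rather than trying to bound $L$ inside a small power of $P$.
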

\begin{proof}
Recall that $P^{-1}\subseteq P^d$, so that, given $j\in\N$ and $x\in X$, if $P^j$ contains some element of $xP^\eta$, then $P^{j+(d+1)\eta}$ contains all of $xP^\eta$. Combined with the hypothesis that $K\subseteq XP^\eta$, this implies that for each integer $j\ge(d+1)\eta$, either $K\cap P^{2j}=K\cap P^j$, or there exists $x\in X$ such that $K\cap P^{2j}$ contains elements of $xP^\eta$ but $K\cap P^{j-(d+1)\eta}$ does not. In particular, for each $i\in\N$, either $K\cap P^{2\cdot 3^i(d+1)\eta}=K\cap P^{3^i(d+1)\eta}$, or there exists $x\in X$ such that $K\cap P^{2\cdot 3^i(d+1)\eta}$ contains elements of $xP^\eta$ but $K\cap P^{2\cdot3^{i-1}(d+1)\eta}$ does not. By the pigeonhole principle, it follows that there exists $j\le3^{k-1}(d+1)\eta$ such that
\begin{equation}\label{eq:H.cap.P^j.group}
K\cap P^j=K\cap P^{2j}.
\end{equation}
We claim that
\begin{equation}\label{H.cap.P=H.cap.H}
K\cap P^j=K\cap H,
\end{equation}
or equivalently that if $g\in K\cap P^j$ then $g\in H$. Indeed, given $g\in K\cap P^j$, pick $\tilde g\in\tilde P^j$ such that $\pi(\tilde g)\in gH$. If $\tilde g\ne1$ then we would be able to define $q$ to be the maximal positive integer such that $\tilde g^q\in\tilde P^j$. However, by definition of $g$ and $q$, we would then have $\tilde g^{q+1}\in\tilde P^{2j}\setminus\tilde P^j$; since $\inj P\ge2j$, this would in turn imply that $g^{q+1}\in P^{2j}\setminus P^j$, and hence in particular that $g^{q+1}\in(K\cap P^{2j})\setminus(K\cap P^j)$, contradicting \eqref{eq:H.cap.P^j.group}. It must therefore in fact be that case that $\tilde g=1$, and hence that $g\in H$ as claimed. This proves \eqref{H.cap.P=H.cap.H}.

Now note that if $g,g'\in(xP^\eta)\cap K$ for a given $x\in X$ then $g^{-1}g'\in P^{(d+1)\eta}\cap K\subseteq P^j\cap K=K\cap H$, so that the set $(xP^\eta)\cap K$ is contained in a single left coset of $K\cap H$. Since
\[
K=\bigcup_{x\in X}\left((xP^\eta)\cap K\right)
\]
by hypothesis, we conclude that $K$ is contained in a union of at most $k$ left cosets of $K\cap H$, as required.
\end{proof}

\begin{proof}[Proof of \cref{prop:index.reduc.mann}]
Since $H$ is normalised by $P$ and $G=X\langle P\rangle$, we have
\[
H_0=\bigcap_{g\in G}gHg^{-1},\qquad\qquad U=\bigcap_{g\in G}g\langle P\rangle g^{-1},
\]
and hence $H_0,U\normal G$ as claimed. Since $[G:\langle P\rangle]\le k$, we also trivially have $[G:U]\le k^k$.

Let $u\in U\cap H$. Enumerate the elements of $X$ as $x_1=1,x_2,\ldots,x_k$, and set $X_i=x_iXx_i^{-1}$, $H_i=x_iHx_i^{-1}$ and $P_i=x_iPx_i^{-1}$ for each $i$, noting that for each $i$ we have $x_i^{-1}Px_i\subseteq S^{3n}\subseteq XP^{3\eta}$, and hence $H\subseteq X_iP_i^{3\eta}$. Since $P$ has infinite injectivity radius, \cref{lem:sym.grps.commensurable} therefore implies that $[H:H\cap H_i]\le k$ for each $i$. Since $u$ normalises each $H\cap H_i$, for each $i$ there therefore exists an integer $j_i\in[0,k]$ such that $u^{j_i}\in H_i$. By definition of $U$, for each $i$ there exists $p_i\in\langle P\rangle$ such that $u=x_ip_ix_i^{-1}$, meaning that $p_i^{j_i}\in H$. Since $P$ has infinite injectivity radius, this in turn implies that $p_i\in H$, and hence that $u\in H_i$. Since this holds for all $i$, we therefore have $u\in H_0$, as required.

Finally, $U/H_0=U/(U\cap H)$ is isomorphic to a subgroup of $\langle P\rangle/H$, so is torsion-free nilpotent of class at most $c$ and Hirsch length at most $d$, as required.
\end{proof}

\subsection{Effective index bounds}\label{section:indexbounds}
It is a fairly straightforward matter to combine the results of the previous sections into a proof of \cref{thm:index.bound}, as follows.
\begin{proof}[Proof of \cref{thm:index.bound}]
Take $r$ and $j$ be as given by \cref{prop:index.reduc.tf}, and $M$ be as given by \cref{prop:lift.prog}. Let $\hat P$ be the lift of $P$ to $\hat G=\la S\mid R_S(4\eta n)\ra$ as in \cref{prop:index.reduc.tf}, noting in particular that if $xPx^{-1}\subseteq P^{2\eta}$ for every $x\in X$ then $\la\hat P\ra\normal\hat G$. \cref{prop:index.reduc.tf,prop:index.reduc.mann} then imply that $\hat G$ has normal subgroups $H_0\subseteq\hat S^n$ and $U$, with $U=\la\hat P\ra$ if $xPx^{-1}\subseteq P^{2\eta}$, such that $U/H_0$ is torsion-free nilpotent of class at most $c$, Hirsch length at most $d$, and index at most $k^k$ in $\hat G/H_0$. \cref{prop:mann} therefore implies that there exists a normal subgroup $\hat K\normal\hat G$ containing $U$ such that $[\hat G:\hat K]\le g(d)$ and $[\gamma_{c+1}(\hat K)H_0:H_0]\ll_{c,k}1$, and then \cref{lem:comms.bdd.diam} implies that $\gamma_{c+1}(\hat K)\subseteq\gamma_{c+1}(\hat K)H_0\subseteq\hat S^{O_{c,k}(1)}H_0\subseteq\hat S^{n+O_{c,k}(1)}$. We may therefore take $K=\psi(\hat K)$, where $\psi$ is the projection $\hat G\to G$.
\end{proof}

Finally, we combine \cref{thm:index.bound} with our earlier results to prove \cref{thm:rel.hom.dim.k(d).effective,thm:bgt.doubling.k.effective}.

\begin{proof}[Proof of \cref{thm:rel.hom.dim.k(d).effective}]
Let $R^*=R^*(d)$ be a quantity to be specified shortly but depending only on~$d$. Let $k_0^*=k_0^*(d)$ be the quantity $k^*(d+2)$ from \cref{thm:bgt.gromov}. Provided $n_0^*$ is large enough in terms of $d$, we have $|S^{\lceil n/2\rceil}|\le n^{d+1}|S|\le\lceil n/2\rceil^{d+2}|S|$. Provided $n_0^*$ is large enough in terms of~$d$ and~$R^*$, \cref{thm:bgt.gromov,thm:multiscale} therefore give a subset $X\subseteq S^{k_0^*}$ of cardinality at most $k_0^*$ containing the identity; a Lie progression $P_0$ of dimension at most $O_d(1)$, and hence class $c\ll_d1$, in $C$-upper-triangular form for some $C\ll_d1$ and with injectivity radius at least $R^*$; and a positive integer $r_0\le\lceil n/2\rceil$ such that $XP_0^m\subseteq S^{mr_0}\subseteq XP_0^{\eta m}$ for all $m\in\N$ and some $\eta=\eta(d)$ at least the quantity $M(C,d)$ coming from \cref{thm:index.bound}, and such that distinct elements of $X$ belong to distinct cosets of $\la P_0\ra$. Let $R^*$ be the maximum possible value of $r(\dim P_0,k_0^*,k_0^*,\eta)$ in \cref{thm:index.bound}, noting that this is indeed bounded in terms of $d$ because $\dim P_0\ll_d1$. Then \cref{thm:index.bound} gives a natural number $q^*=q^*(d)$ (coming from the $O_{c,k}(1)$ notation in the conclusion of that proposition, the ineffectiveness arising via the ineffectiveness of $k_0^*$) and a normal subgroup $K\normal G$ such that $[G:K]\le g(\dim P_0)\ll_d1$ and $\gamma_{c+1}(K)\subseteq S^{r_0+q^*}\subseteq S^{\lceil n/2\rceil+q^*}$. Provided $n^*_0>2q^*$, this in turn implies that $\gamma_{c+1}(K)\subseteq S^n$, so that if we take $\Gamma=K$ and $H=\gamma_{c+1}(K)$ then we have $H,\Gamma\normal G$ and $H\subseteq S^n\cap\Gamma$, with $[G:\Gamma]\ll_d1$ and $\Gamma/H$ nilpotent of class $O_d(1)$ as required.
\end{proof}

\begin{proof}[Proof of \cref{thm:bgt.doubling.k.effective}]
We first claim that \cref{st:bgt.doubling} holds with $k_2=g(\exp(\exp(O(K^2)))$ and $k_3=g(\exp(O(\log^3K)))$ (and some ineffective value of $n_0(K)$). Let $R^*=R^*(K)>0$ be a quantity to be specified shortly but depending only on~$K$. Let $k^*=k^*(K)$ be the quantity from \cref{thm:bgt.doubling}. Provided $n$ is large enough in terms of~$K$, \cref{thm:bgt.doubling,thm:doubling.multiscale} therefore give a subset $X\subseteq S^{k^*}$ of cardinality at most $k^*$ containing the identity; a Lie progression $P_0$ of class $c\le6\log_2K$ and dimension at most $\exp(\exp(O(K^2)))$ under the doubling assumption or $\exp(O(\log^3K))$ under the tripling assumption, in $C$-upper-triangular form for some $C\ll_K1$ and with injectivity radius at least $R^*$;  and a positive integer $r_0\ll^*_{K,R^*}n$ such that $XP_0^m\subseteq S^{mr_0}\subseteq XP_0^{\eta^*m}$ for all $m\in\N$ and some $\eta^*=\eta^*(K)$ at least the quantity $M(C,\exp(\exp(O(K^2))))$ coming from \cref{thm:index.bound}, and such that distinct elements of $X$ belong to distinct cosets of $\la P_0\ra$. Let $R^*$ be the maximum possible value of $r(\dim P_0,k^*,k^*,\eta^*)$ in \cref{thm:index.bound}. Then \cref{thm:index.bound} gives a normal subgroup $\Gamma\normal G$ such that $[G:\Gamma]\le g(\dim P_0)$ and $\gamma_{c+1}(\Gamma)\subseteq S^{r_0+O_{c,k^*}(1)}\subseteq S^{O_K^*(n)}$. The claim is therefore satisfied by taking $H=\gamma_{c+1}(K)$.

Let $i=2$ under the doubling assumption and $i=3$ under the tripling assumption. Now that we know \cref{st:bgt.doubling} holds with the desired value of $k_i$, provided $n$ is large enough in terms of~$K$, \cref{thm:doubling.multiscale} gives a subset $Y\subseteq S^{k_i}$ of cardinality at most $k_i$ containing the identity a Lie progression $Q_0$ of class at most $6\log_2K$ and dimension at most $\exp(\exp(O(K^2)))$ under the doubling assumption or $\exp(O(\log^3K))$ under the tripling assumption, with injectivity radius at least $4\eta^*$, and a positive integer $r_0'\ll^*_{K,\eta^*}n$ such that $YQ_0^m\subseteq S^{mr'_0}\subseteq YQ_0^{\eta^*m}$ for all $m\in\N$. In particular, $YQ_0\subseteq S^{r'_0}$ and $S^{2r'_0}\subseteq YQ_0^{2\eta^*}$, so \cref{lem:sym.grp.normal} implies that the symmetry group of $Q_0$ is normal in $G$. We may therefore take $\Gamma=\la Q_0\ra$, and $H$ the symmetry group of $Q_0$.
\end{proof}

\section{The fine-scale polynomial-volume theorem}\label{ch:proof}

The main aim of this chapter is to prove \cref{thm:detailed.fine.scale}. We also show (in \cref{prop:uniqueness}) that the $r_i$ and $P_i$ appearing in \cref{thm:fine.scale.intro} are essentially unique, and prove \cref{cor:bt}.

\subsection{Proof of Theorem \ref{thm:detailed.fine.scale}}\label{section:proofFinescale}

The main difference between the preliminary \cref{thm:multiscale} and the more refined \cref{thm:detailed.fine.scale} is in the bounds on the sets $X_i$, and hence the indices $[G:\la P_i\ra]$. The main tool for obtaining this greater precision is the following result.

\begin{prop}\label{prop:fine-scale.optimise.index}
Given $C,d,k,Q,R,t,\eta\in\N$ there exists $\rho=\rho(C,d,k,Q,R,t,\eta)\in\N$ and $n_0=n_0(d,k)\in\N$ such that the following holds. Suppose $G$ is a group with finite symmetric generating set $S$ containing the identity, $X_0\subseteq S^t$ is a subset of size at most $k$, and $P_0\subseteq G$ is a $Q$-rational Lie progression of dimension $d$ and injectivity radius at least $\rho$ in $C$-upper-triangular form projected from the simply connected nilpotent Lie group $N$, with symmetry group $H_0$, lattice $\Gamma_0$ and projector $\pi_0:\Gamma_0\to\la P_0\ra/H_0$, such that $\la P_0\ra\normal G$, such that distinct elements of $X_0$ belong to distinct cosets of $\la P_0\ra$, and such that for some integer $n\ge n_0$ we have
\[
X_0P_0^m\subseteq S^{mn}\subseteq X_0P_0^{\eta m}
\]
for all $m\in\N$. Then there exists a normal $O_{C,d,\eta}(1)$-rational Lie progression $P$ in $O_{C,d,\eta}(1)$-upper-triangular form with injectivity radius at least $R$ projected from $N$ satisfying the following properties, where we write $H$ for the symmetry group of $P$, $\Gamma<N$ for its lattice and $\pi:\Gamma\to\la P\ra/H$ for its projector:
\begin{enumerate}[label=(\roman*)]
\item \label{item:fine-scale.optimise.index:index}$[G:\la P\ra]\le g(d)$;
\item \label{item:fine-scale.optimise.index:P0<P}$P_0\subseteq P$;
\item \label{item:fine-scale.optimise.index:sym}$H_0=P_0\cap H=\la P_0\ra\cap H$;
\item \label{item:fine-scale.optimise.index:lattice}$\Gamma_0\le\Gamma$;
\item \label{item:fine-scale.optimise.index:restriction}writing $\psi:\la P_0\ra/H_0\to\la P_0\ra H/H$ for the canonical isomorphism $gH_0\mapsto gH$, the diagram
\[
\xymatrix{  
& \Gamma_0  \ar[rd]^{\pi}\ar[ld]_{\pi_0} \\
  \la P_0\ra/H_0  \ar[rr]_{\psi} &&  \la P\ra/H
  }
\]
commutes;
\item \label{item:fine-scale.optimise.index:XPS}there exists a positive integer $q\ll_{C,d,Q,R,t,\eta}1$ such that, given an arbitrary set $X\subseteq S^n$ of coset representatives for $\la P\ra$ in $G$ containing the identity, we have
\[
XP^m\subseteq S^{mqn}\subseteq XP^{O_{C,d,\eta}(m)}
\]
for all $m\in\N$.
\end{enumerate}
\end{prop}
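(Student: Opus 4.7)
The plan is to combine the finitary Mann-type theorem \cref{thm:index.bound} with a passage to the large-scale simply connected cover $\hat G$ developed in \cref{sec:sc,section:reducInftyRadius}, and then use the rationalisation machinery of \cref{section:integral/rational} to build the Lie progression explicitly.

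First I would apply \cref{thm:index.bound} to obtain a normal subgroup $K\normal G$ with $[G:K]\le g(d)$ and $\gamma_{c+1}(K)\subseteq S^{n+O_{c,k}(1)}$, where $c$ denotes the nilpotency class of $\la P_0\ra/H_0$. The ``moreover'' clause of that proposition applies here because the hypothesis that distinct elements of $X_0$ lie in distinct cosets of $\la P_0\ra$ forces $X_0\cap\la P_0\ra=\{1\}$, so \cref{eq:locally.normal} supplies the local normality $xP_0x^{-1}\subseteq P_0^{2\eta}$; hence $P_0\subseteq K$, i.e.\ $\la P_0\ra\le K$. To promote $K$ to a normal subgroup whose quotient by a ``small'' subgroup is genuinely torsion-free nilpotent, I would next lift to $\hat G=\la S\mid R_S(\Omega n)\ra$ for a sufficiently large multiple $\Omega$ of $\eta$; by \cref{prop:lift.prog} and \cref{prop:index.reduc.tf}, the lifted progression $\hat P_0$ has infinite injectivity radius, so $\la\hat P_0\ra/\hat H_0\cong\Gamma_0$ is torsion-free nilpotent of Hirsch length $d$, and the pullback $\hat K$ satisfies $[\hat K:\la\hat P_0\ra]\le k$.

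Apply Mann's theorem (\cref{thm:mann}) to the finitely generated virtually torsion-free nilpotent group $\hat K/\hat H_0$ to obtain a characteristic finite subgroup $\hat H/\hat H_0$, of order at most $k$, such that $\hat K/\hat H$ is torsion-free nilpotent of class at most $c$ and Hirsch length $d$. The key point is that finite-index supergroups of a torsion-free nilpotent lattice share its Mal'cev completion, so $\Gamma:=\hat K/\hat H$ embeds canonically as a lattice in the \emph{same} simply connected nilpotent Lie group $N$ from which $P_0$ is projected, and contains the image of $\Gamma_0$ with bounded index. To realise this abstract lattice by an explicit Lie progression, I would combine \cref{lem:divide.basis} (to cover $\Gamma$ by $\exp\la\tfrac{1}{k_1}\log u_1,\ldots,\tfrac{1}{k_d}\log u_d\ra$ for the basis $u_i$ of $\tilde P_0$ and bounded $k_i$) with \cref{lem:gettingridofX} to produce a rational raw Lie progression $\tilde P$ in $N$ of dimension $d$, in $O_{C,d,\eta}(1)$-upper-triangular form and $O_{C,d,\eta}(1)$-rational, with $\la\tilde P\ra=\Gamma$ and $\tilde P_0\subseteq\tilde P\subseteq\tilde P_0^{O_{C,d,\eta}(1)}$. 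Setting $P$ to be the pullback to $G$ of the image of $\tilde P$ under the natural map $\Gamma\to K/H$ (with $H:=\psi(\hat H)$, $\psi:\hat G\to G$ the projection) then produces the desired object. The injectivity-radius condition $\inj P\ge R$ is enforced by first replacing $P_0$ by a sufficiently large power via \cref{prop:powergood.rational} before lifting, so that $\tilde P$ inherits the required injectivity without any reduction in dimension.

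Conclusions \ref{item:fine-scale.optimise.index:P0<P}--\ref{item:fine-scale.optimise.index:restriction} are then built into the construction: \ref{item:fine-scale.optimise.index:P0<P} and \ref{item:fine-scale.optimise.index:lattice} come from $\tilde P_0\subseteq\tilde P$, \ref{item:fine-scale.optimise.index:sym} follows because $H\cap\la P_0\ra/H_0$ sits both in the finite group $\hat H/\hat H_0$ and in the torsion-free group $\Gamma_0$ and hence is trivial, and \ref{item:fine-scale.optimise.index:restriction} is a tautology of the projector definition. For \ref{item:fine-scale.optimise.index:XPS}, given any set $X\subseteq S^n$ of coset representatives for $\la P\ra$ in $G$, the bounded ratio $\tilde P_0\subseteq\tilde P\subseteq\tilde P_0^{O_{C,d,\eta,R}(1)}$ combined with the original inclusion $X_0P_0^m\subseteq S^{mn}\subseteq X_0P_0^{\eta m}$ yields $XP^m\subseteq S^{mqn}\subseteq XP^{O_{C,d,\eta}(m)}$ through a coset-representative swap together with \cref{lem:inclusions.local}. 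The main obstacle will be quantitative bookkeeping in the lift: one must choose $\Omega$ large enough, simultaneously, for the local homomorphism $\psi$ to be injective on $\hat P$ and on the bounded neighbourhood of $\hat H_0$ containing $\hat H$, for the power of $P_0$ used to secure $\inj P\ge R$, and for the upper-triangular-form constants inherited through \cref{lem:gettingridofX,lem:divide.basis}. Once these scales are consistently calibrated, Mann's theorem plugs neatly into the rationalisation framework and the required optimal index bound $[G:\la P\ra]\le g(d)$ comes for free.
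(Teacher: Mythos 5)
Your proposal shares the opening step with the paper's proof — applying \cref{thm:index.bound}, enabled by \cref{eq:locally.normal}, to obtain a normal $K\normal G$ with $[G:K]\le g(d)$ and small $\gamma_{c+1}(K)$ — but thereafter it takes a genuinely different route. The paper builds $P$ from scratch: it observes that $A=K\cap S^{4n}$ is an $O_{C,d,\eta}(1)$-approximate group, applies the nilpotent Freiman theorem \cref{thm:nilp.frei} in $K/\gamma_{c+1}(K)$ to get a Lie progression $Q_0$ with $A\subseteq Q_0\subseteq S^{\ell n}$ for bounded $\ell$, and then feeds $Q_0$ into \cref{prop:chain.of.progs} to boost the injectivity radius; only at the very end is a lifting via \cref{prop:powergood.rational,prop:index.reduc.tf} used to establish the lattice relations \ref{item:fine-scale.optimise.index:lattice} and \ref{item:fine-scale.optimise.index:restriction}. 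You instead lift first, identify the lattice $\Gamma$ algebraically by an application of Mann, and try to construct $P$ directly from $\tilde P_0$ inside $N$ via \cref{lem:divide.basis,lem:gettingridofX}.

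The most concrete gap in your route is that it cannot deliver the advertised $Q$-independence of the constants. The proposition asserts $P$ is $O_{C,d,\eta}(1)$-rational in $O_{C,d,\eta}(1)$-upper-triangular form, with no dependence on $Q$. Any construction that starts from the basis of $\tilde P_0$ and runs it through \cref{lem:make.integral,lem:divide.basis,lem:gettingridofX} inherits $Q$-dependence: \cref{lem:gettingridofX} outputs an $O_{Q,d,k}(1)$-rational progression, and the divisors in \cref{lem:divide.basis} depend on $Q$ too. The Freiman step in the paper is exactly what avoids this — it discards $P_0$'s parameterisation and rebuilds from the approximate group $K\cap S^{4n}$, whose constant is $O_{C,d,\eta}(1)$ by \cref{lem:upper-tri.doubling}, so the resulting progression's rationality constant depends only on the approximate-group constant and the class, never on $Q$. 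Beyond this, two smaller issues: the asserted inclusion $\tilde P\subseteq\tilde P_0^{O(1)}$ is impossible as stated, since $\tilde P_0^{O(1)}\subseteq\Gamma_0$ whereas $\la\tilde P\ra=\Gamma\supsetneq\Gamma_0$ whenever $[K:\la P_0\ra]>1$ (the correct bound from \cref{lem:gettingridofX} involves a bounded root $u^{1/k}$); and invoking \cref{thm:mann} a second time inside $\hat K/\hat H_0$ risks degrading the index below $g(d)$ and does not directly supply a torsion-free nilpotent quotient — what you should use is the bound $\gamma_{c+1}(K)\subseteq S^{n+O_{c,k}(1)}$ already provided by \cref{thm:index.bound}, together with the identification $\hat H=\overline\gamma_{c+1}(\la\hat P\ra)$ from \cref{prop:index.reduc.tf}. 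The verification of conclusion \ref{item:fine-scale.optimise.index:XPS} also requires more than a "coset-representative swap": one must show $K\cap S^{O(n)}\subseteq P^{O(1)}$, which needs \cref{lem:ball.cosets=>index} together with the hypothesis to control elements of $X_0\cap K$, whereas the paper gets this for free because $Q_0$ is built to contain $K\cap S^{4n}$.
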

Morally, conclusion \ref{item:fine-scale.optimise.index:restriction} says that $\pi_0$ is the `restriction' of $\pi$ to $\Gamma_0$.
\begin{proof}
All bounds depending on $\eta$ in this proposition also depend on $C$ and $d$, so on increasing $\eta$ if necessary we may assume without loss of generality that $\eta$ is at least the quantity $M(C,d)$ appearing in \cref{thm:index.bound,prop:index.reduc.tf}.

\cref{eq:locally.normal} implies that $xP_0x^{-1}\subseteq P_0^{2\eta}$ for every $x\in X_0$. Provided $\rho$ is bigger than the quantity $r(d,k,t,\eta)$ appearing in \cref{thm:index.bound}, that proposition therefore implies that there exists a normal subgroup $K\normal G$ containing $P_0$ such that $[G:K]\le g(d)$ and $\gamma_{c+1}(K)\subseteq S^{n+O_{c,k}(1)}$, where $c$ is the class of $P_0$. On setting $n_0$ large enough, we may assume that $\gamma_{c+1}(K)\subseteq S^{2n}$.

Set $A=K\cap S^{4n}$, noting that $P_0\subseteq A$. We may require that $n\ge k$, so that \cref{lem:fin.ind.gen} implies that $K=\la A\ra$. Moreover, the hypothesis implies that $|S^{3n}|/|S^n|\le|P_0^{3\eta}|/|P_0|$, so that $|S^{3n}|\ll_{C,d,\eta}|S^n|$ by \cref{lem:upper-tri.doubling}. It therefore follows from \cref{lem:tripling->AG} that $S^{2n}$ is an $O_{C,d,\eta}(1)$-approximate group, and then from \cref{lem:slicing} that $A$ is an $O_{C,d,\eta}(1)$-approximate group. Applying \cref{thm:nilp.frei} in the quotient $K/\gamma_{c+1}(K)$ and \cref{prop:nilp.prog=>Lie.prog} therefore implies that there exists an $O_{C,d,\eta}(1)$-rational Lie progression $Q_0$ of dimension at most $O_{C,d,\eta}(1)$ in $O_{C,d,\eta}(1)$-upper-triangular form such that $A\subseteq Q_0\subseteq A^{O_{C,d,\eta}(1)}\gamma_{c+1}(K)\subseteq S^{\ell n}$ for some natural number $\ell\ll_{C,d,\eta}1$. This implies in particular that $Q_0$ generates $K$.

Let $R'=R'(C,d,Q,t,\eta)\ge R$ be a positive integer to be chosen later but depending only on $C,d,Q,R,t,\eta$. From now on, whenever any other parameters depend on the choice of $R'$ we state this explicitly, even if they otherwise depend only on some subset of $C,d,Q,t,\eta$; this is to make it clear that there is no circularity in the definition of our parameters. \cref{prop:chain.of.progs} implies that there exist a positive integer $r_1\ll_{C,d,R',\eta}1$, and an $O_{C,d,\eta}(1)$-rational Lie progression $P$ in $O_{C,d,\eta}(1)$-upper-triangular form with injectivity radius at least $R'$, dimension at most $O_{C,d,\eta}(1)$ such that $K=\la P\ra$ -- so that \ref{item:fine-scale.optimise.index:index} holds -- and such that $P\subseteq Q_0^{r_1}\subseteq P^{O_{C,d,\eta}(1)}$. Furthermore, it follows from \cref{lem:upper-tri.doubling.dilate} that on increasing the lengths of $P$ and $r_1$ by multiples bounded in terms of $C$, $d$ and $\eta$ only we may assume in addition that $Q_0\subseteq P$, so that \ref{item:fine-scale.optimise.index:P0<P} holds.

Write $H$ for the symmetry group of $P$, $M$ for the nilpotent Lie group from which it is projected, $\Gamma<M$ for its lattice and $\pi:\Gamma\to\la P\ra/H$ for its projector. Set $q=r_1\ell+1$.

Let $X\subseteq S^n$ be a set of coset representatives for $\la P\ra=K$ in $G$. Note that $S^{2n}\subseteq XK$, and hence $S^{2n}\subseteq XA$. \cref{lem:inclusions.local} therefore implies that $S^{mn}\subseteq XA^{m-1}\subseteq XQ_0^{m-1}$ for all $m\in\N$, and hence
\begin{equation}\label{eq:main.P.S}
XP^m\subseteq S^{mqn}\subseteq XP^{O_{C,d,\eta}(m)}
\end{equation}
for all $m\in\N$, giving conclusion \ref{item:fine-scale.optimise.index:XPS}. Provided $R'$ is set large enough in terms of $C$, $d$ and $\eta$,  \cref{lem:sym.grp.normal} then implies that $H\normal G$, so that $P$ is a normal Lie progression as required.

To prove \ref{item:fine-scale.optimise.index:sym} it suffices to prove that $H_0\le H$ and $\la P_0\ra\cap H\subseteq H_0$. Note that $H_0\subseteq P_0\subseteq P$, so provided $R'\ge2$ we have $H_0\le H$ by \cref{lem:finite.subgroup.of.proper}. On the other hand, by \eqref{eq:main.P.S} we have $H\subseteq S^{qn}\subseteq X_0P_0^{\eta q}$, which since $X_0\cap\la P_0\ra=\{1\}$ implies that $\la P_0\ra\cap H\subseteq P_0^{\eta q}$. Provided we set $\rho\ge2\eta q$ (and noting therefore that the choice of $\rho$ depends on the choice of $R'$), \cref{lem:finite.subgroup.of.proper} implies that $\la P_0\ra\cap H\subseteq H_0$, as required.

\cref{prop:powergood.rational} implies that there exists an $O_{d,Q}(1)$-rational Lie progression $Q_1$ in $O_{C,d,Q}(1)$-upper-triangular form with symmetry group $H_0$, lattice $\Gamma_0$ and projector $\pi_0$ such that $\tilde P_0\subseteq\tilde P_0^{\Omega_{C,d,Q}(q)}\subseteq\tilde Q_1\subseteq\tilde P_0^{q}$. This implies that $\inj Q_1\gg_{C,d,Q}\rho$, and that
\begin{equation}\label{eq:comparison.P.S}
X_0Q_1^m\subseteq S^{mqn}\subseteq X_0Q_1^{O_{C,d,Q,\eta}(m)}.
\end{equation}
Let $\alpha=\alpha(C,d,Q,\eta)\in\N$ be the smallest integer large enough to replace the implied constants in both \eqref{eq:main.P.S} and \eqref{eq:comparison.P.S}. Choose $\rho$ large enough that $\inj Q_1$ is at least the quantity $r(k,t,d,\alpha)$ appearing in \cref{prop:index.reduc.tf}, and $R'$ large enough that $\inj P$ is at least the quantity $r([G:K],t,\dim P,\alpha)$ from the same proposition. \cref{prop:index.reduc.tf} then implies that if we write $\hat Q_1,\hat P,\hat H_0,\hat H$ for the lifts to $\hat G=\la S\mid R_S(4\alpha qn)\ra$ of $Q_1,P,H_0,H$ respectively then $\inj\hat P=\inj\hat Q_1=\infty$, and hence $\Gamma\cong\la\hat P\ra/\hat H$ and $\Gamma_0\cong\la\hat Q_1\ra/\hat H_0$, and also $[\hat G:\la\hat Q_1\ra]=[G:\la P_0\ra]<\infty$. Moreover, the same argument we used to prove \ref{item:fine-scale.optimise.index:sym} implies that $\hat H_0=\hat P_0\cap\hat H=\la\hat P_0\ra\cap\hat H$, so that $\la\hat Q_1\ra/\hat H_0$ is isomorphic to the finite-index subgroup $\la\hat Q_1\ra H/H$ of $\la\hat P\ra/\hat H$ via $\hat\psi:g\hat H_0\mapsto g\hat H$. The corresponding embedding $\iota:\Gamma_0\hookrightarrow\Gamma$ induces an isomorphism $\ph:N\to M$ of Mal'cev completions such that the diagram
\[
\begin{CD}
N                   @>\ph>>           M\\
@AAA               @AAA\\
\Gamma_0                   @>\iota>>           \Gamma\\
@V\hat\pi_0 VV               @V\hat\pi VV\\
\la\hat Q_1\ra/\hat H_0    @>\hat\psi>>    \langle\hat P\rangle/\hat H\\
@VVV               @VVV\\
\la Q_1\ra/H_0    @>\psi>>    \langle P\rangle/H
\end{CD}
\]
commutes. Identifying $M$ with $N$ via $\ph$, we may therefore assume that $P$ is projected from $N$ as required, and that \ref{item:fine-scale.optimise.index:lattice} and \ref{item:fine-scale.optimise.index:restriction} hold.
\end{proof}

We now use the above result to give a refined version of \cref{prop:chain.of.progs}.
\begin{prop}\label{prop:chain.of.progs-index}
Given a non-negative integer $d$ and $k\in\N$ there exists $n_0=n_0(d,k)\in\N$ such that the following holds. Suppose that $G$ is a group with finite symmetric generating set $S$ containing the identity, that $P_0$ is a $Q$-rational Lie progression of dimension $d$ in $C$-upper-triangular form, and that $X_0\subseteq S^t$ is a subset of size at most $k$ such that for some $r_0,\eta\in\N$ with $r_0\ge n_0$ we have $X_0P_0^m\subseteq S^{mr_0}\subseteq X_0P_0^{\eta m}$ for all $m\in\N$. Let $R\in\N$. Then there exist non-negative integers $d'\le d$ and $r_1<\cdots<r_{d'}$ such that $r_i\mid r_{i+1}$ for each $i$, and normal $O_{C,d,Q,\eta}(1)$-rational Lie progressions $P_1,\ldots,P_{d'}$ in $O_{C,d,Q,\eta}(1)$-upper-triangular form with injectivity radius at least $R$ such that $P_{d'}$ has infinite injectivity radius and such that, writing $H_i$ for the symmetry group of $P_i$, $N_i$ for the nilpotent Lie group from which it is projected, $\Gamma_i<N_i$ for its lattice and $\pi_i:\Gamma_i\to\la P_0\ra/H_i$ for its projector, the following conditions are satisfied for $i=1,\ldots,d'$:
\begin{enumerate}[label=(\roman*)]
\item $\dim P_{i}<\dim P_{i-1}$;
\item $[G:\la P_i\ra]\le g(\dim P_i)$;
\item for any set $X_i\subseteq S^{r_{i-1}}$ of coset representatives for $\la P_i\ra$ in $G$ we have $X_iP_i^{\lfloor m/r_i\rfloor}\subseteq S^m\subseteq X_iP_i^{O_{C,d,Q,\eta}(m/r_i)}$ for every $m\in\N$;\label{conc:XiPiS-index}
\item $\la P_{i-1}\ra\le\la P_i\ra$;
\item $H_{i}\ge H_{i-1}$;
\item there exists a surjective Lie group homomorphism $\beta_i:N_{i-1}\to N_{i}$ such that $\beta_i(\Gamma_{i-1})\subseteq\Gamma_i$ and
the diagram
\[
\begin{CD}
 \Gamma_{i-1}                      @>\pi_{i-1}>>           \langle P_{i-1}\rangle/H_{i-1}\\
@V\beta_iVV               @VVV\\
\Gamma_{i}     @>\pi_{i}>>    \langle P_i\rangle/H_{i}
\end{CD}
\]
commutes;
\item $\inj P_{i-1}\ll_{C,d,Q}\frac{r_{i}}{r_{i-1}}\ll_{C,d,k,Q,R,t,\eta}\inj P_{i-1}$.
\end{enumerate}
\end{prop}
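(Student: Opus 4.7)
The plan is to combine \cref{prop:chain.of.progs} --- which produces a preliminary chain of Lie progressions of strictly decreasing dimension --- with \cref{prop:fine-scale.optimise.index} --- which optimises the index of a single progression. First I would fix a large injectivity threshold $R' = R'(C,d,k,Q,R,t,\eta)$, chosen so that any progression produced at threshold $R'$ by \cref{prop:chain.of.progs} will satisfy the injectivity hypothesis $\rho$ of \cref{prop:fine-scale.optimise.index}. Applying \cref{prop:chain.of.progs} to $P_0$ at threshold $R'$ then produces integers $d' \le d$ and $1 = s_0 < s_1 < \cdots < s_{d'}$ with $s_i \mid s_{i+1}$, preliminary $O_{C,d,Q}(1)$-rational Lie progressions $Q_1, \ldots, Q_{d'}$ of strictly decreasing dimension and injectivity radius at least $R'$, each generating $\la P_0\ra$, with $Q_{d'}$ of infinite injectivity radius, together with surjective Lie group morphisms $\beta_i^\circ : N_{i-1}^\circ \to N_i^\circ$ between the consecutive Mal'cev completions making the corresponding projector diagrams commute.

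Second, combining the sandwich $Q_i^{\lceil m/s_i\rceil} \subseteq P_0^m \subseteq Q_i^{O_{C,d,Q}(m/s_i)}$ of \cref{prop:chain.of.progs} with the hypothesis $X_0 P_0^m \subseteq S^{m r_0} \subseteq X_0 P_0^{\eta m}$ yields, at scale $s_i r_0$, the two-sided inclusion $X_0 Q_i^m \subseteq S^{m s_i r_0} \subseteq X_0 Q_i^{O_{C,d,Q,\eta}(m)}$ for every $m \in \N$. Each $Q_i$ then meets the hypotheses of \cref{prop:fine-scale.optimise.index} at scale $s_i r_0$, and applying that proposition produces a normal $O_{C,d,Q,\eta}(1)$-rational Lie progression $P_i$ projected from $N_i^\circ$, of dimension $\dim Q_i$, injectivity radius at least $R$, in $O_{C,d,Q,\eta}(1)$-upper-triangular form, satisfying $[G : \la P_i\ra] \le g(\dim P_i)$, $Q_i \subseteq P_i$, $H_{Q_i} = \la Q_i\ra \cap H_{P_i}$, and the XPS bound $X P_i^m \subseteq S^{m q_i s_i r_0} \subseteq X P_i^{O_{C,d,Q,\eta}(m)}$ for any coset representatives $X \subseteq S^{s_i r_0}$ of $\la P_i\ra$ containing the identity, with $q_i \ll_{C,d,Q,R,t,\eta} 1$. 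Setting $r_i := q_i s_i r_0$, and replacing each $r_i$ by a common refinement to restore the divisibility $r_i \mid r_{i+1}$, yields \ref{conc:XiPiS-index}; the injectivity-radius inequalities then follow from those in \cref{prop:chain.of.progs} together with $Q_i \subseteq P_i \subseteq Q_i^{O(1)}$, the latter coming from the XPS comparisons and \cref{lem:upper-tri.doubling.dilate}. The implicit normality $\la P_0\ra \normal G$ required to apply \cref{prop:fine-scale.optimise.index} to each $Q_i$ may be arranged by a preliminary application of \cref{prop:fine-scale.optimise.index} to $P_0$ itself, absorbing the resulting enlargement into the data of the chain.

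The main obstacle is establishing the chain compatibility between consecutive $P_i$: the inclusions $\la P_{i-1}\ra \subseteq \la P_i\ra$ and $H_{i-1} \subseteq H_i$, and the commutativity of the projector squares via a surjective Lie group morphism $\beta_i : N_{i-1} \to N_i$. Independent applications of \cref{prop:fine-scale.optimise.index} do not automatically produce nested normal subgroups, since each runs its own Mann-type construction. To overcome this, I would lift the whole chain simultaneously into a common covering group $\hat G = \la S \mid R_S(Mn)\ra$ for $M$ sufficiently large (compare \cref{prop:lift.prog,prop:index.reduc.tf}). In $\hat G$ every lift $\hat Q_i$ has infinite injectivity radius, so the $\hat Q_i$ generate torsion-free nilpotent subgroups $\hat N_i$; the morphisms $\beta_i^\circ$ then realise $\hat N_i$ as a quotient of $\hat N_{i-1}$ by a central subgroup. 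The Mann-type centralisers $\hat K_i := \bigcap_j C_{\hat G}(\overline\gamma_j(\hat N_i)/\overline\gamma_{j+1}(\hat N_i))$ then form an increasing chain in $i$, since centralising a central quotient is a weaker condition than centralising the original. Projecting each $\hat K_i$ back to $G$ gives $\la P_i\ra$ as an increasing chain of normal subgroups of $G$, with commutative projector squares and morphisms $\beta_i$ induced by the $\beta_i^\circ$; the symmetry-group chain $H_{i-1} \subseteq H_i$ then follows from $H_i$ being the projection of $\overline\gamma_{c_i+1}(\hat K_i)$, which is increasing in $i$. Once this coordinated construction replaces the independent applications of \cref{prop:fine-scale.optimise.index}, the remaining conclusions read off directly from the outputs of \cref{prop:chain.of.progs,prop:fine-scale.optimise.index}.
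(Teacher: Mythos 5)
Your proposal correctly isolates the two main ingredients (\cref{prop:chain.of.progs} for the preliminary chain, \cref{prop:fine-scale.optimise.index} to repair normality and the index bound), and correctly flags chain-compatibility as the heart of the matter. But the fix you propose --- lifting all the $Q_i$ simultaneously to a single covering group $\hat G = \la S\mid R_S(Mn)\ra$ --- fails. The mechanism of \cref{prop:index.reduc.tf} makes $\hat Q_i$ have infinite injectivity radius precisely because the relation length of the presentation is taken \emph{commensurate} with the scale $s_ir_0$ at which $Q_i$ is a progression: large enough to see the local structure of $Q_i$, but small enough not to recapture the relation that makes $\inj_G Q_i$ finite. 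The ratios $s_{i+1}/s_i\asymp\inj Q_i$ are group-dependent and unbounded in terms of the fixed parameters of the proposition, so the scales $s_ir_0$ can be spread over arbitrarily many orders of magnitude, and no single $M$ works for all of them. Concretely, if the lifting scale $Mn$ is at least $(\inj_G Q_1)\cdot s_1 r_0$ --- as it must be if $Q_{d'}$ is to be lifted at all --- then the short relation witnessing $\inj_G Q_1<\infty$ already follows from relations of length at most $Mn$, so $\hat Q_1$ has the same finite injectivity radius in $\hat G$ as $Q_1$ had in $G$. The premise ``in $\hat G$ every lift $\hat Q_i$ has infinite injectivity radius'' is therefore false, and the proposed monotonicity argument for the Mann centralisers $\hat K_i$ never gets started. (There is also a minor slip: even when the lift does have infinite injectivity radius, $\la\hat Q_i\ra$ is finite-by-torsion-free-nilpotent rather than torsion-free, since the lifted symmetry group $\hat H_i$ persists.)

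The paper sidesteps all of this by \emph{inducting on $d$}. One applies \cref{prop:chain.of.progs} just \emph{once} to drop dimension, getting a single $Q_1$ with $\la Q_1\ra=\la P_0\ra$; then applies \cref{prop:fine-scale.optimise.index} to $Q_1$ to produce a normal $P_1$ with $\la P_1\ra\supseteq\la Q_1\ra$, $[G:\la P_1\ra]\le g(\dim P_1)$, and the commutative projector square; and then invokes the inductive hypothesis on $P_1$, which has strictly smaller dimension. The nesting $\la P_0\ra\le\la P_1\ra\le\cdots$ and $H_0\le H_1\le\cdots$ then fall out for free from the inclusions already built into \cref{prop:fine-scale.optimise.index} and the inductive hypothesis, with no simultaneous Mann-type construction and no comparison of generalised-commutator filtrations required.
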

\begin{proof}
The result is trivial if $P_0$ has infinite injectivity radius, which is in particular the case if $d=0$, so we may assume that $\inj P_0<\infty$ and $d\ge1$ and, by induction, that the result is known for all smaller values of $d$. Let $R_1=R_1(C,d,k,Q,R,t,\eta)\in\N$ be a natural number to be determined shortly, but depending only on $C,d,k,Q,R,t,\eta$. Applying \cref{prop:chain.of.progs}, we obtain a natural number $p_1$ satisfying $\inj P_0\ll_{C,d,Q}p_1\ll_{C,d,Q,R_1}\inj P_0$ and an $O_{C,d,Q}(1)$-rational Lie progression $Q_1$ in $O_{C,d,Q}(1)$-upper-triangular form with dimension strictly less than $d$ and injectivity radius at least $R_1$ such that $\la Q_1\ra=\la P_0\ra$ and such that, writing $U_1$ for the symmetry group of $Q_1$, $N_1$ for the nilpotent Lie group from which it is projected, $\Lambda_1<N_1$ for its lattice and $\ph_1:\Lambda_1\to\la P_0\ra/U_1$ for its projector, we have $U_1\ge H_0$ and $Q_1^{\lceil m/p_1\rceil}\subseteq P_0^m\subseteq Q_1^{O_{C,d,Q}(m/p_1)}$ for every integer $m\ge p_1$, and there exists a surjective Lie group homomorphism $\beta_1:N_{0}\to N_{1}$ such that $\beta_1(\Gamma_0)=\Lambda_i$ and
the diagram
\[
\begin{CD}
 \Gamma_0                    @>\pi_0>>           \langle P_0\rangle/H_0\\
@V\beta_1VV               @VVV\\
\Lambda_1     @>\ph_1>>    \langle Q_1\rangle/U_1
\end{CD}
\]
commutes. The fact that $Q_1^{\lceil m/p_1\rceil}\subseteq P_0^m\subseteq Q_1^{O_{C,d,Q}(m/p_1)}$ implies also that $X_0Q_1^m\subseteq S^{mp_1r_0}\subseteq X_0Q_1^{O_{C,d,Q}(\eta m)}$ for all $m\in\N$.

Provided $R_1$ and $n_0$ are set sufficiently large, we may now apply \cref{prop:fine-scale.optimise.index} to obtain a positive integer $q\ll_{C,d,Q,R,t,\eta}1$ and a normal $O_{C,d,Q,\eta}(1)$-rational Lie progression $P_1$ in $O_{C,d,Q,\eta}(1)$-upper-triangular form with injectivity radius at least $R$ projected from $N_1$ with symmetry group $H_1$ satisfying $U_1=\la Q_1\ra\cap H_1$ and a lattice $\Gamma_1\ge\Lambda_1$ such that the diagram 
\[
\begin{CD}
\Lambda_1     @>\ph_1>>    \langle Q_1\rangle/U_1\\
@V\text{inclusion}VV               @VVV\\
\Gamma_1     @>\pi_1>>    \langle P_1\rangle/H_1
\end{CD}
\]
commutes, such that $[G:\la P_1\ra]\le g(\dim P_1)$, such that $\la P_1\ra\ge\la Q_1\ra$, and such that for any set $X_1\subseteq S^{r_0}$ of coset representatives for $\la P_1\ra$ in $G$ we have $X_1P_1^m\subseteq S^{mqp_1r_0}\subseteq X_1P_1^{O_{C,d,Q,\eta}(m)}$ for all $m\in\N$.

Setting $r_1=qp_1r_0$, the required properties are all satisfied for $i=1$. Moreover, since $\la P_1\ra\ge\la P_0\ra$, the set $X_0$ contains a set of coset representatives for $\la P_1\ra$ in $G$, which is in particular contained in both $S^t$ and $S^{r_0}$, and of size at most $k$. Taking this set for $X_1$ in \ref{conc:XiPiS-index}, we see that the induction hypothesis applies, and hence we obtain the remaining progressions.
\end{proof}

\begin{proof}[Proof of \cref{thm:detailed.fine.scale}]
Let $\alpha=\alpha(d,R)\in(0,1)$ and $R_0=R_0(d,R)\in\N$ be quantities to be specified shortly. Set $n_0^*$ large enough that $\lceil\alpha n\rceil\le n$, noting that this introduces some dependence of $n_0^*$ on $\alpha$, so that $|S^{\lceil\alpha n\rceil}|\le|S^n|\le\eps\alpha^{-(d+1)}\lceil\alpha n\rceil^{d+1}|S|$. Provided $\eps$ is small enough in terms of $d$, $R_0$ and $\alpha$, \cref{thm:multiscale,thm:rel.hom.dim.k(d).effective} then imply that there exist a set $Z_0\subseteq S^{O_d(1)}$ of cardinality at most $O_d(1)$ containing the identity, an $O_d(1)$-rational Lie progressions $Q_0$ of dimension at most $d$ in $O_d(1)$-upper-triangular form with injectivity radius at least $R_0$, and a natural number $p_0$ satisfying $(\alpha n)^{1/2}\le p_0\le\alpha n$ such that $Z_0Q_0^{\lfloor m/p_0\rfloor}\subseteq S^m\subseteq Z_0Q_0^{O_d(m/p_0)}$ for all $m\in\N$. Provided $n_0^*$ and $R_0$ are set large enough, \cref{prop:fine-scale.optimise.index} then implies that there exists a normal $O_d(1)$-rational Lie progression $P_0$ in $O_d(1)$-upper-triangular form with dimension at most $d$ and injectivity radius at least $R$ satisfying $[G:\la P_0\ra]\le g(\dim P_0)$, and a positive integer $q\ll_{d,R}1$ such that for any set $X_0\subseteq S^{p_0}$ of coset representatives for $\la P\ra$ in $G$ containing the identity we have $X_0P_0^m\subseteq S^{mqp_0}\subseteq X_0P_0^{O_d(m)}$ for all $m\in\N$. Set $r_0=qp_0$, noting that $r_0\le n$ as long as $\alpha$ is set small enough.

Provided $n_0^*$ is large enough, \cref{prop:chain.of.progs-index} now yields non-negative integers $d'\le d$ and $r_1<\cdots<r_{d'}$ such that $r_i\mid r_{i+1}$ for each $i$, and normal $O_d(1)$-rational Lie progressions $P_1,\ldots,P_{d'}$ in $O_d(1)$-upper-triangular form with injectivity radius at least $R$ such that $P_{d'}$ has infinite injectivity radius and such that, writing $H_i$ for the symmetry group of $P_i$, $N_i$ for the nilpotent Lie group from which it is projected, $\Gamma_i<N_i$ for its lattice and $\pi_i:\Gamma_i\to\la P_0\ra/H_i$ for its projector, the following conditions are satisfied for $i=1,\ldots,d'$:
\begin{enumerate}[label=(\alph*)]
\item $\dim P_{i}<\dim P_{i-1}$;
\item $[G:\la P_i\ra]\le g(\dim P_i)$;\label{prelim:[G:P]}
\item for any set $X_i\subseteq S^{r_{i-1}}$ of coset representatives for $\la P_i\ra$ in $G$ we have $X_iP_i^{\lfloor m/r_i\rfloor}\subseteq S^m\subseteq X_iP_i^{O_d(m/r_i)}$ for every $m\in\N$;\label{prelim:XiPiS}
\item $\la P_{i-1}\ra\le\la P_i\ra$;\label{prelim:P<P}
\item $H_{i}\ge H_{i-1}$;
\item there exists a surjective Lie group homomorphism $\beta_i:N_{i-1}\to N_{i}$ such that $\beta_i(\Gamma_{i-1})\subseteq\Gamma_i$ and
the diagram
\[
\begin{CD}
 \Gamma_{i-1}                      @>\pi_{i-1}>>           \langle P_{i-1}\rangle/H_{i-1}\\
@V\beta_iVV               @VVV\\
\Gamma_{i}     @>\pi_{i}>>    \langle P_i\rangle/H_{i}
\end{CD}
\]
commutes;\label{prelim:comm.diag}
\item $\inj P_{i-1}\ll_{d}\frac{r_{i}}{r_{i-1}}\ll_{d,R}\inj P_{i-1}$.
\end{enumerate}
Since each $N_i$ is a quotient of $N_{i-1}$ of lower dimension, it also has lower homogeneous dimension, and we certainly have $\hdim P_0\le\frac12d(d-1)+1$, since this is the largest possible homogeneous dimension of a simply connected nilpotent Lie group of dimension at most $d$.

We now claim that for each $i$ there exists a set $X_i$ of coset representatives for $\la P_i\ra$ in $G$ containing the identity such that $X_i\subseteq S^{([G:\la P_i\ra]-1)\,\wedge\,r_{i-1}}$, such that $X_0\supseteq X_1\supseteq\cdots\supseteq X_{d'}$; combined with \ref{prelim:[G:P]} and \ref{prelim:XiPiS} above, this will give conclusions \ref{conc:S=XiPi}--\ref{conc:cosets}. Provided we set $n_0^*$ large enough, we may ensure that $r_0\ge g(\dim P_i)$ for each $i$, so that in fact it is enough to obtain $X_i\subseteq S^{[G:\la P_i\ra]-1}$. For $i=d'$, the existence of the necessary $X_{d'}$ follows from \ref{prelim:[G:P]} above and \cref{lem:ball.cosets=>index}. More generally, suppose we have already found $X_{i+1}$ with the desired properties. Since $\la P_{i}\ra\le\la P_{i+1}\ra$, \cref{lem:ball.cosets=>index} implies that we may find a set $Y_i\subseteq S^{[\la P_{i+1}\ra:\la P_i\ra]-1}$ of coset representatives for $\la P_{i}\ra$ in $\la P_{i+1}\ra$ containing the identity, and it then follows that $X_{i+1}Y_i\subseteq S^{[G:\la P_{i+1}\ra]+[\la P_{i+1}\ra:\la P_i\ra]-2}$ is a set of coset representatives for $\la P_{i}\ra$ in $G$. Since $[G:\la P_{i+1}\ra]+[\la P_{i+1}\ra:\la P_i\ra]-2<[G:\la P_{i+1}\ra][\la P_{i+1}\ra:\la P_i\ra]=[G:\la P_i\ra]$, we may therefore take $X_i=X_{i+1}Y_i$.

We prove \ref{conc:dim.bound} and \ref{conc:hdim.bound} by essentially the same argument we used in the proof of \cref{thm:multiscale}. In light of conclusions \ref{conc:S=XiPi}--\ref{conc:X<S^g} of the theorem we are proving, \cref{prop:dimension.bound} implies that there exists a constant $\gamma=\gamma(d)\ge1$ such that for each $i$ and all $m\in\N$ with $m\ge\gamma r_i$, if the injectivity radius of $P_i$ is at least $m/r_i$ then we have $|S^m|\gg_{d}m^{\hdim P_i}$ and $|S^m|\gg_{d}m^{\dim P_i}|S|$. We claim that there exist $\sigma=\sigma(d,R)\in(0,1)$ and a choice of $\alpha$ such that for every $m\ge n$ there exists $j\in\{0,1,\ldots,d'\}$ such that
\begin{equation}\label{eq:sigma.m>gamma.q.2}
\lfloor\sigma^{d'-j}m\rfloor\ge\gamma r_j
\end{equation}
and $\inj P_j\ge\sigma^{d'-j}m/r_j$. Indeed, for any choice of $\sigma$, if we choose $\alpha$ small enough to ensure that $r_0<\sigma^{d'}n/\gamma$, this certainly implies that \eqref{eq:sigma.m>gamma.q.2} holds for $j=0$ and every $m\ge n$. Moreover, if \eqref{eq:sigma.m>gamma.q.2} holds for a given $j$ and $m$ and the injectivity radius of $P_j$ is less than $\sigma^{d'-j}m/r_j$ then by definition of the $P_i$ we must have $j<d'$ and $r_{j+1}/r_j=p_{j+1}/p_j\ll_{d,R}\sigma^{d'-j}m/r_j$, and hence $\sigma^{d'-(j+1)}m\gg_{d,R}\sigma^{-1}r_{j+1}$. Provided $\sigma$ is chosen sufficiently small in terms of $d$ and $R$ only, this in turn implies that \eqref{eq:sigma.m>gamma.q.2} holds for $j+1$ and $m$. Since $P_{d'}$ has infinite injectivity radius, if for a given $m\ge n$ the claim is satisfied for no $j<d'$ then it must therefore be satisfied for that $m$ by $j=d'$.

Now fix some $i\in\{0,\ldots,d'\}$, let $m\ge n$ be such that $r_i\le m<r_{i+1}$, and let $j\in\{0,\ldots,d'\}$ be the integer satisfying the above claim. The fact that $j$ satisfies the claim implies by \cref{prop:dimension.bound} that
\[
|S^m|\ge|S^{\lfloor\sigma^{d'-j}m\rfloor}|\gg_{d}(\sigma^{d'-j}m/2)^{\hdim P_j}\gg_{d,R}m^{\hdim P_j}
\]
and
\[
|S^m|\ge|S^{\lfloor\sigma^{d'-j}m\rfloor}|\gg_{d}(\sigma^{d'-j}m/2)^{\dim P_j}|S|\gg_{d,R}m^{\dim P_j}|S|,
\]
where in the final bound of each line we used the fact that $\dim P_j\le d$ and $\hdim P_0\le\frac12d(d-1)+1$. Moreover, the fact that $j$ satisfies \eqref{eq:sigma.m>gamma.q.2} implies in particular that $m\ge r_j$, and hence that $i\ge j$. Since $\dim P_i$ and $\hdim P_i$ are both decreasing in $i$, this implies that $|S^m|\gg_{d,R}m^{\hdim P_i}$ and $|S^m|\gg_{d,R}m^{\dim P_i}|S|$, as required.

Conclusion \ref{conc:growth} follows from \ref{conc:S=XiPi}, \ref{conc:inj.rad}, \cref{prop:growthfunction} and the fact that if $f:[1,\infty)\to[1,\infty)$ is a non-decreasing continuous piecewise-monomial function with finitely many pieces, each of which has degree at most $d$, then $f(cx)\le c^df(x)$ for every $x,c\ge1$.

To prove \ref{item:inj.mod.z}, let $A=A(d,R)>1$ be a parameter to be specified shortly, and fix some $i<d'$. We may assume that $r_{i+1}/A>r_i$, and may therefore let $k\in\N$ be maximal such that $kr_i\le r_{i+1}/A$. We may also assume that $c_i\ge2$. It follows from \cref{prop:powergood.rational} that there exists an $O_{d}(1)$-rational Lie progression $Q_i$ in $O_{d}(1)$-upper-triangular form with the same lattice, symmetry group and projector as $P_i$ such that $P_i^k\subseteq Q_i\subseteq P_i^{O_d(k)}$. It follows that $\inj Q_i\asymp_{d,R}r_{i+1}/kr_i\ge A$, and also that there exist positive integers $q_i\ge kr_i\gg_{d,R}r_{i+1}$ and $\eta\ll_d1$ such that $X_iQ_i^m\subseteq S^{mq_i}\subseteq X_iQ_i^{\eta m}$ for all $m\in\N$. Letting $T=S^{2g(\dim P_i)-1}\cap(Q_i\cup Q_i^{-1})^{2\eta}$, \cref{prop:fromStoT} therefore implies that $Q_i\subseteq T^{q_i}\subseteq Q_i^{O_d(1)}$. Setting $A$ large enough in terms of $d$ and $R$ only, it follows that we may lift $T$ to a unique subset $\tilde T$ in the lattice of $Q_i$ such that $\tilde Q_i\subseteq\tilde T^{q_i}\subseteq\tilde Q_i^{O_d(1)}$. Writing $\tilde Q_i=P(u;L)$, it then  follows from \cref{lem:upper-tri.doubling.dilate} that $\tilde T^{q_i}\subseteq P(u;O_d(L))$; since $P(u;O_d(L))$ is in $O_d(1)$-upper-triangular form, \cref{prop:LowerBoundL_i} therefore implies that $L_j\gg_d q_i\gg_{d,R}r_{i+1}$ for each length $L_j$ of $Q_i$. Finally, provided $A$ is large enough, it follows from \cref{prop:ProperCenter} that $\injz Q_i\gg_{d,R}r_{i+1}^{1/(c_i-1)}$, and hence by definition of $Q_i$ that $\injz P_i\gg_{d,R}kr_{i+1}^{1/(c_i-1)}\gg_{d,R}r_{i+1}^{c_i/(c_i-1)}/r_i$, as claimed.

It remains to prove \ref{item:sqrt.scale}. Abbreviate $\gamma=\diam_S(G)$, and let $M=M(d,R)$ be a natural number to be chosen shortly but depending only on $d$ and $R$. The fact that $|G|<\infty$ implies that $\dim P_{d'}=0$, and in particular that $P_{d'}$ is abelian. It therefore suffices to show that if $i+1$ is such that $r_{i+1}>M\gamma^{1/2}$ and $P_{i+1}$ is abelian then $P_i$ is also abelian. Given such an $i$, it follows from \ref{item:inj.mod.z} that
\begin{equation}\label{eq:injz.first.lace}
\injz P_{i}\gg_{d,R}r_{i+1}^{c_{i}/(c_{i}-1)}/r_{i}\ge(M\gamma^{1/2})^{1/(c_{i}-1)}(r_{i+1}/r_{i}).
\end{equation}
We claim first that $P_i$ has class at most $2$. To see this, first note that since $H_{i+1}\subseteq S^{r_{i+1}}\subseteq X_iP_i^{O_d(r_{i+1}/r_i)}$, and since distinct elements of $X_i$ belong to distinct cosets of $\la P_i\ra$, we have that $H_{i+1}\cap\la P_i\ra\subseteq P_i^{O_d(r_{i+1}/r_i)}$. Provided $M$ is large enough, this implies by \eqref{eq:injz.first.lace} that $H_{i+1}\cap\la P_i\ra\subseteq P_i^{\lfloor\injz P_{i}/2\rfloor}$, and hence by \cref{lem:finite<injz} that $(H_{i+1}\cap\la P_i\ra)/H_i\le Z(\la P_i\ra/H_i)$. In particular, $\la P_i\ra/(H_{i+1}\cap\la P_i\ra)$ is a central quotient of $\la P_i\ra/H_i$. However, $\la P_i\ra/(H_{i+1}\cap\la P_i\ra)$ is isomorphic to a subgroup of the abelian group $\la P_{i+1}\ra/H_{i+1}$, and hence abelian, so that $\la P_i\ra/H_i$ is nilpotent of class at most $2$. Assuming as we may that $R\ge10$, this implies that whenever $u,v,w$ are generators of $\tilde P_i$ we have $[[u,v],w]=1$, and hence that $P_i$ has class at most $2$ as required.

We may now use the bound $c_i\ge2$ to bootstrap the estimate \eqref{eq:injz.first.lace} and obtain
 \[
 \injz P_{i}\gg_{d,R}M\gamma^{1/2}\frac{r_{i+1}}{r_{i}}.
 \]
On the other hand, it follows from \ref{conc:S=XiPi} that
\[
G=S^\gamma=S^{\lceil\gamma^{1/2}/M\rceil r_{i+1}}\subseteq X_iP_i^{O_d(\lceil\gamma^{1/2}/M\rceil r_{i+1}/r_i)}.
\]
Provided $M$ is large enough, it follows that $G\subseteq X_iP_i^{\lfloor\injz P_i/2\rfloor}$, and hence that $\la P_i\ra\subseteq P_i^{\lfloor\injz P_i/2\rfloor}$. \cref{lem:finite<injz} then implies that $\la P_i\ra/H_i$ is abelian, and hence that $P_i$ is abelian.
\end{proof}

\subsection{Uniqueness in the fine-scale polynomial-volume theorem}\label{section:uniqueness}
In this section we prove \cref{prop:uniqueness}. The main content lies in the following lemma.

\begin{lemma}\label{lem:uniqueness}
Given $C,D,k,Q,t,\eta\in\N$ there exists $R=R(C,D,k,Q,t,\eta)\in\N$ such that the following holds. Suppose $G$ is a group with finite symmetric generating set $S$, and there are natural numbers $r\ge r'$, subsets $X,X'\subseteq S^t$ of size at most $k$, and $Q$-rational Lie progressions $P,P'\subseteq G$ of dimension at most $D$ in $C$-upper-triangular form such that $XP^m\subseteq S^{mr}\subseteq XP^{\eta m}$ and $X'(P')^m\subseteq S^{mr'}\subseteq X'(P')^{\eta m}$ for all $m\in\N$, such that distinct elements of $X$ belong to distinct cosets of $\la P\ra$ and distinct elements of $X'$ belong to distinct cosets of $\la P'\ra$, and such that $\inj P\ge R$ and $\inj P'\ge Rr/r'$. Then writing $N$ and $N'$ for the respective nilpotent Lie groups from which $P$ and $P'$ are projected, $\Gamma$ and $\Gamma'$ for their respective lattices, $H$ and $H'$ for their respective symmetry groups, and $\pi$ and $\pi'$ for their respective projectors, the following conditions are satisfied:
\begin{enumerate}[label=(\roman*)]
\item $[H:H\cap H']\le|X'|$ and $[H':H'\cap H]\le|X|$;
\item $H\cap\la P'\ra=H\cap H'=\la P\ra\cap H'$;
\item there exist a sublattice $\Lambda$ of index at most $|X'|$ in $\Gamma$ such that
\[
\pi(\Lambda)=\frac{(\la P\ra\cap\la P'\ra)H}{H},
\]
a sublattice $\Lambda'$ of index at most $|X|$ in $\Gamma'$ such that
\[
\pi'(\Lambda')=\frac{(\la P\ra\cap\la P'\ra)H'}{H'},
\]
and an isomorphism $\psi:\Lambda\to\Lambda'$ such that the diagram
\[
\xymatrix{  
  \Lambda  \ar[d]_{\pi}\ar[rrrr]^{\psi} &&&&  \Lambda' \ar[d]^{\pi'}\\
\displaystyle{\frac{(\la P\ra\cap\la P'\ra)H}{H}}&\cong&\displaystyle{\frac{\la P\ra\cap\la P'\ra}{H\cap H'}}&\cong&\displaystyle{\frac{(\la P\ra\cap\la P'\ra) H'}{H'}}
  }
\]
commutes;
\item $N\cong N'$.
\end{enumerate}
\end{lemma}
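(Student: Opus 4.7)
My plan is to exploit the large injectivity radii together with the compatible ball descriptions $XP^m \subseteq S^{mr} \subseteq XP^{\eta m}$ and $X'(P')^m \subseteq S^{mr'} \subseteq X'(P')^{\eta m}$ to transfer between the two structures. First, from $P \subseteq S^r \subseteq X'(P')^{\eta \lceil r/r'\rceil}$ and the disjointness of the cosets of $\la P'\ra$ represented by $X'$, I get $\la P\ra \subseteq X'\la P'\ra$, hence $[\la P\ra : \la P\ra \cap \la P'\ra] \le |X'|$; a symmetric argument using $P' \subseteq S^{r'} \subseteq S^r \subseteq XP^\eta$ gives $[\la P'\ra : \la P\ra \cap \la P'\ra] \le |X|$. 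Part (i) will then follow by applying \cref{lem:sym.grps.commensurable} to $K = H \subseteq X'(P')^{\eta r/r'}$ (valid once $R$ is chosen so that $\inj P' \ge Rr/r'$ exceeds $3^{|X'|}(D+1)\eta r/r'$), and symmetrically to $K = H' \subseteq XP^\eta$.

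For (ii), given $h \in H \cap \la P'\ra$, the inclusion $H \subseteq X'(P')^{\eta r/r'}$ and the disjointness of the $X'$-cosets in $\la P'\ra$ place $h$ inside $(P')^{\eta r/r'}$; taking $R \ge 2\eta$, this is contained in $(P')^{\lfloor \inj P'/2\rfloor}$, so \cref{lem:finite.subgroup.of.proper} applied to $\la h\ra$ gives $h \in H'$. Reversing the roles of $P$ and $P'$ handles $\la P\ra \cap H' \subseteq H$. For the lattice portion of (iii), I would set $\Lambda = \pi^{-1}((\la P\ra \cap \la P'\ra)H/H)$ and $\Lambda' = (\pi')^{-1}((\la P\ra \cap \la P'\ra)H'/H')$; since $\pi$ surjects $\Gamma$ onto $\la P\ra/H$ (because $\tilde P$ generates $\Gamma$) and analogously for $\pi'$, the indices $[\Gamma : \Lambda]$ and $[\Gamma' : \Lambda']$ are bounded by the coset counts above, giving at most $|X'|$ and $|X|$ respectively.

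The main obstacle is the construction of the isomorphism $\psi \colon \Lambda \to \Lambda'$. The second isomorphism theorem combined with (ii) canonically identifies both outer quotients in the diagram with the central quotient $(\la P\ra \cap \la P'\ra)/(H \cap H')$, but the restrictions $\pi|_\Lambda$ and $\pi'|_{\Lambda'}$ need not themselves be injective---their kernels reflect the finiteness of $\inj P$ and $\inj P'$---so $\psi$ cannot be read off directly from the three identifications. I plan to overcome this by lifting to a presentation $\hat G = \la S \mid R_S(L)\ra$ as in \cref{prop:index.reduc.tf}, with $L$ chosen in terms of $C,D,k,Q,R,t,\eta$ large enough that the conclusions of that proposition apply to both $P$ and $P'$ simultaneously, so that the lifts $\hat P, \hat P'$ have infinite injectivity radius. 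Then $\hat\pi \colon \Gamma \to \la\hat P\ra/\hat H$ and $\hat\pi' \colon \Gamma' \to \la\hat P'\ra/\hat H'$ are genuine isomorphisms, and repeating the arguments of (i) and (ii) in $\hat G$ identifies $(\la\hat P\ra\cap\la\hat P'\ra)\hat H/\hat H$ and $(\la\hat P\ra\cap\la\hat P'\ra)\hat H'/\hat H'$ canonically (via $(\la\hat P\ra\cap\la\hat P'\ra)/(\hat H \cap \hat H')$). The composition of $(\hat\pi'|_{\hat\Lambda'})^{-1}$, this chain of canonical identifications, and $\hat\pi|_{\hat\Lambda}$ then defines $\psi$ on lifted lattices; since the projection $\hat G \to G$ sends the pullback data defining $\hat P, \hat P'$ to that defining $P, P'$, the lifted lattices agree with $\Lambda, \Lambda'$ and the diagram in $G$ inherits commutativity. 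Finally, (iv) is immediate from (iii): $\Lambda, \Lambda'$ are finite-index subgroups of the torsion-free nilpotent lattices $\Gamma, \Gamma'$ and therefore have Mal'cev completions $N, N'$ respectively, so the abstract isomorphism $\psi$ extends by Mal'cev rigidity to a Lie-group isomorphism $N \to N'$.
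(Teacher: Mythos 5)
Your strategy closely mirrors the paper's: bound the commensurability indices using \cref{lem:sym.grps.commensurable}, identify the finite subgroups using \cref{lem:finite.subgroup.of.proper}, and then build the isomorphism $\psi$ by lifting to a truncated presentation $\hat G$ where the lifted progressions have infinite injectivity radius, so that the projectors become bijections. Parts (i), (ii) and (iv) are essentially correct as you have them (up to minor constant adjustments: the exponent appearing when you write $H\subseteq X'(P')^{\eta\lceil r/r'\rceil}$ is $\eta\lceil r/r'\rceil$, not $\eta r/r'$, but this only costs a factor of $2$ in $R$).

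There is, however, a genuine gap in the lifting step for (iii). You want to choose one $\hat G = \la S\mid R_S(L)\ra$ so that \cref{prop:index.reduc.tf} applies to both $P$ and $P'$ simultaneously. But that proposition's conclusion about $\inj\hat P=\infty$ is tied to lifting at the right scale: for $P$ at scale $r$ it produces a lift to $\la S\mid R_S(4\eta r)\ra$, while for $P'$ at scale $r'$ it produces a lift to $\la S\mid R_S(4\eta r')\ra$, and these are different groups when $r>r'$ (the former is a proper quotient of the latter). You cannot take $L$ depending only on $C,D,k,Q,R,t,\eta$ as you suggest, and even fixing $L=4\eta r$ does not make $\inj\hat P'=\infty$ follow from \cref{prop:index.reduc.tf}, because the covering argument there requires the Cayley graph $\G(\hat G,\hat S^{r'})$ to be $4\eta$-simply connected, which holds for $\la S\mid R_S(4\eta r')\ra$ but need not persist after passing to the quotient $\la S\mid R_S(4\eta r)\ra$. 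The paper resolves this scale mismatch as a first move: it applies \cref{prop:powergood.rational} to $P'$ to obtain a $Q$-rational Lie progression $P''$ with the \emph{same} lattice $\Gamma'$, symmetry group $H'$ and projector $\pi'$, but satisfying $X'(P'')^m\subseteq S^{mr}\subseteq X'(P'')^{\alpha m}$ at scale $r$ (with $\alpha\asymp_{C,D,Q}\eta$) and $\inj P''\ge R$. With both $P$ and $P''$ at scale $r$, \cref{prop:index.reduc.tf} applies to both in the common group $\hat G=\la S\mid R_S(4\alpha r)\ra$, giving $\inj\hat P=\inj\hat P''=\infty$; one then recovers the statements about $\hat P'$ via $\la\hat P'\ra=\la\hat P''\ra$ and the shared projector. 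The paper also uses $P''$ rather than $P'$ in steps (i)--(ii), which avoids the $r/r'$ bookkeeping in your version and keeps the quantity entering \cref{lem:sym.grps.commensurable} bounded purely in terms of $C,D,Q,\eta$. Without the $P''$ step, the lifting argument you sketch does not go through.
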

\begin{proof}
\cref{prop:powergood.rational} implies that there exists an $O_{D,Q}(1)$-rational Lie progression $P''$ in $O_{C,D,Q}(1)$-upper-triangular form with symmetry group $H'$, lattice $\Gamma'$ and projector $\pi'$ such that
\[
\tilde P'\subseteq(\tilde P')^{\Omega_{C,D,Q}(r/r')}\subseteq\tilde P''\subseteq(\tilde P')^{\lfloor r/r'\rfloor},
\]
and hence $\inj P''\ge R$ and
\[
X'(P'')^m\subseteq S^{mr}\subseteq X'(P'')^{\alpha m}.
\]
for some $\alpha\in\N$ satisfying $\eta\le\alpha\ll_{C,D,Q}\eta$ and at least the quantity $M(C,d)$ appearing in \cref{prop:index.reduc.tf}.

Note that $H\subseteq S^{r}\subseteq X'(P'')^\alpha$ and $H'\subseteq S^{r}\subseteq XP^\eta$. By \cref{lem:sym.grps.commensurable}, and choosing $R\ge3^k(D+1)\alpha$, this implies that
\[
[H:H\cap H']\le|X'|\qquad\text{and}\qquad[H':H'\cap H]\le|X|.
\]
It also implies that $H\cap\la P'\ra\subseteq S^{r}\cap\la P''\ra\subseteq(P'')^\alpha$ and $\la P\ra\cap H'\subseteq S^{r}\cap\la P\ra\subseteq P^\eta$, and hence by \cref{lem:finite.subgroup.of.proper} that
\[
H\cap\la P'\ra=H\cap H'=\la P\ra\cap H',
\]
and hence in particular that
\[
\frac{(\la P\ra\cap\la P'\ra)H}{H}\cong\frac{\la P\ra\cap\la P'\ra}{H\cap H'}\cong\frac{(\la P\ra\cap\la P'\ra) H'}{H'}.
\]

Choosing $R$ to be at least the quantity $r(k,t,D,\alpha)$ appearing in \cref{prop:index.reduc.tf}, that proposition implies that the lifts $\hat P,\hat P',\hat P'',\hat H,\hat H'$ to $\hat G=\la S\mid R_S(4\alpha r)\ra$ of $P,P',P'',H,H'$, respectively, satisfy $\inj\hat P''=\inj\hat P=\infty$, hence $\Gamma'\cong\la\hat P''\ra/H'=\la\hat P'\ra/H'$ and $\Gamma\cong\la\hat P\ra/H$, and also $[\la\hat P\ra:\la\hat P\ra\cap\la\hat P'\ra]\le|X'|$ and $[\la\hat P'\ra:\la\hat P\ra\cap\la\hat P'\ra]\le|X|$. \cref{lem:finite.subgroup.of.proper} implies that $\hat H'\cap\la\hat P\ra\le\hat H$ and $\hat H\cap\la\hat P'\ra\le\hat H'$, hence
\[
\frac{(\la\hat P\ra\cap\la\hat P'\ra)\hat H}{\hat H}\cong\frac{\la\hat P\ra\cap\la\hat P'\ra}{\hat H\cap\hat H'}\cong\frac{(\la\hat P\ra\cap\la\hat P'\ra)\hat H'}{\hat H'}.
\]
Setting $\Lambda=\hat\pi^{-1}((\la\hat P\ra\cap\la\hat P'\ra)\hat H/\hat H)$ and $\Lambda'=(\hat\pi')^{-1}((\la\hat P\ra\cap\la\hat P'\ra)\hat H'/\hat H')$, where $\hat\pi$ and $\hat\pi'$ are the projectors of $\hat P$ and $\hat P'$.  We therefore have $[\Gamma:\Lambda]\le|X'|$ and $[\Gamma':\Lambda']\le|X|$. Since $\inj\hat P'=\inj\hat P=\infty$, $\hat\pi$ and $\hat\pi'$ are injective, hence we have an isomorphism $\psi:\Lambda\to\Lambda'$ such that the diagram
\[
\xymatrix{  
  \Lambda  \ar[d]_{\hat\pi}\ar[rrrr]^{\psi} &&&&  \Lambda' \ar[d]^{\hat\pi'}\\
\displaystyle{\frac{(\la\hat P\ra\cap\la\hat P'\ra)\hat H}{\hat H}}\ar[d]&\cong&\displaystyle{\frac{\la\hat P\ra\cap\la\hat P'\ra}{\hat H\cap\hat H'}}&\cong&\displaystyle{\frac{(\la\hat P\ra\cap\la\hat P'\ra)\hat H'}{\hat H'}}\ar[d]\\
\displaystyle{\frac{(\la P\ra\cap\la P'\ra)H}{H}}&\cong&\displaystyle{\frac{\la P\ra\cap\la P'\ra}{H\cap H'}}&\cong&\displaystyle{\frac{(\la P\ra\cap\la P'\ra) H'}{H'}}
  }
\]
commutes. This also implies that $\Gamma$ and $\Gamma'$ are commensurable, and hence that $N\cong N'$.
\end{proof}

\begin{proof}[Proof of \cref{prop:uniqueness}]
Take $R$ as in \cref{lem:uniqueness}. We first claim that for each $i\in\{1,\ldots,d\}$ there exists $j\in\{1,\ldots,d'\}$ such that $r_i/r_j'\in[1/AR,AR]$. Let $j$ be maximal such that $r_j'\le r_i$, noting that there exists such a $j$ because $r_1'<r_i$. If $r_i\ge r_{j+1}'/AR$ then the claim is satisfied by $j+1$. If not then either $r_i\le r_{j+1}'/AR$ or $j=d'$, in which case $\inj P_j'\ge Rr_i/r_j'$ and \cref{lem:uniqueness} implies that $\dim P_i=\dim P_j'$. If $r_{i-1}\ge r_j'$ then the same argument would imply that $\dim P_{i-1}=\dim P_j'$, contrary to our hypotheses, so we must have $r_{i-1}<r_j'\le r_i$. If $r_j'\le r_i/AR$ then the same argument again implies that $\dim P_j'=\dim P_{i-1}$, so it must be that $r_j'\ge r_i/AR$, and the claim is satisfied.

Now let $\ell$ be maximal such that $r_\ell\le ARr_1$. The previous claim implies that $r_\ell'/r_1\in [1/AR,AR]$, and if $\ell\ne d'$ then by definition it satisfies $r_1<r_{\ell+1}'$. The same argument as in the previous claim then implies that for all $j\in\{\ell+1,\ldots,d'\}$ there exists $i\in\{1,\ldots,d\}$ such that $r_i/r_j'\in[1/AR,AR]$, proving \ref{item:unique.Haus}.

Now suppose that $i\in\{1,\ldots,d\}$ is such that either $i=d$ or $r_{i+1}\ge(AR)^2r_i$, and let $j$ be maximal such that $r_j'\le ARr_i$. If $r_i\ge r_j'$ then \ref{item:unique.Haus} implies that either $r_{j+1}'\ge r_{i+1}/AR\ge ARr_i$ or $j=d'$, whilst if $r_i<r_j'$ then we have by definition of $i$ and $j$ that either $i=d$ or $r_{i+1}\ge(AR)^2r_i\ge ARr_j'$. In either case, \ref{item:unique.progs} follows from \cref{lem:uniqueness}.
\end{proof}

\section{Applications}\label{chap:apps}

In this chapter we provide details of some of the applications described in \cref{sec:apps}.

\subsection{Finite groups}
\begin{proof}[Proof of \cref{cor:bt}]
Let $\eps=\eps(d)$, $n_0^*=n_0^*(d,1)$ and $M=M(d,1)$ be as in \cref{thm:detailed.fine.scale}. Abbreviate $\gamma=\diam_S(G)$. Given $r\le\gamma$ we have $\diam_{S^r}(G)=\lceil\gamma/r\rceil$, and hence $|G|\ge\lceil\gamma/r\rceil|S^r|/3$ by \cref{lem:growth.lb.rel.lin}. Taking $r=\lceil\gamma^{1/2}\rceil$, this implies that
\[
|G|\ge\frac16\gamma^{1/2}|S^{\lceil\gamma^{1/2}\rceil}|,
\]
and hence by \eqref{eq:bt} that
\[
|S^{\lceil\gamma^{1/2}\rceil}|\le6A^{-\frac{d+2}2}(\gamma^{1/2})^{d+1}|S|.
\]
Set $A$ so that $6A^{-(d+2)/2}=\eps$ and $D^*$ so that $\gamma^{1/2}>n_0^*$. Apply \cref{thm:detailed.fine.scale} with $n=\lceil\gamma^{1/2}\rceil$, and let $i$ be maximal such that the resulting $r_i$ is at most $M\gamma^{1/2}$. It follows from conclusion \ref{item:sqrt.scale} of that theorem that $P_i$ is abelian, from conclusion \ref{conc:dimPi} that $\dim P_i\le d$, from conclusion \ref{conc:|X|} that $[G:\la P_i\ra]\le g(d)$, and from conclusion \ref{conc:S=XiPi} that $H_i\subseteq S^{r_i}\subseteq S^{M\gamma^{1/2}}$. Since $H_i\normal G$ and $\la P_i\ra/H_i$ is abelian of rank at most $\dim P_i$, this proves the result.
\end{proof}

\subsection{Vertex-transitive graphs}\label{sec:VTGs}
As we mentioned in the introduction, in a recent paper \cite{ttTrof} we proved a finitary version of Trofimov's result that a vertex-transitive graph of polynomial growth is quasi-isometric to a Cayley graph. More precisely, one of the main results of that paper showed that if we have a polynomial upper bound on the volume of a ball of sufficiently large radius in a vertex-transitive graph then there is a virtually nilpotent group that admits a certain `algebraic' quasi-isometry to that graph \cite[Corollary 2.4]{ttTrof}.

In the first result of this section, we refine this result by obtaining the optimal bounds on the index and complexity of a nilpotent subgroup of the group, and more generally showing that it satisfies the conclusions of \cref{thm:detailed.fine.scale}.

Before we state this result, let us recall some notation from our previous paper \cite{ttTrof}. Given a vertex-transitive graph $\G$ and a group $G\le\Aut(\G)$, we write $G_x$ for the stabiliser of a vertex $x\in\G$. Moreover, if $\G$ is a vertex-transitive graph and $H\le\Aut(\G)$ is a subgroup then we define $\G/H$ to be the quotient graph with vertices $\{H(x):x\in\G\}$, and $H(x)\sim H(y)$ in $\G/H$ if and only if there exists $x_0\in H(x)$ and $y_0\in H(y)$ such that $x_0\sim y_0$ in $\G$. Note that $\G/H$ is trivial if and only if $H$ is transitive. We call the sets $H(x)\subseteq\G$ with $x\in\G$ the \emph{fibres} of the projection $\G\to\G/H$. If $G$ is another subgroup of $\Aut(\G)$, we say that the quotient graph $\G/H$ is \emph{invariant under the action of $G$ on $\G$} if for every $g\in G$ and $x\in\G$ there exists $y\in\G$ such that $gH(x)=H(y)$. If $H$ is normalised by $G$ then $\G/H$ is invariant under the action of $G$ \cite[Lemma 3.1]{ttTrof}, and the action of $G$ on $\G$ descends to an action of $G$ on $\G/H$ \cite[Lemma 3.2]{ttTrof}. We write $G_{\G/H}$ for the image of $G$ in $\Aut(\G/H)$ induced by this action; thus $G_{\G/H}$ is the quotient of $G$ by the normal subgroup $\{g\in G:gH(x)=H(x)\text{ for every }x\in\G\}$.
\begin{corollary}\label{cor:trof}
For every integer $d\ge0$, every $\lambda\in(0,1)$ and every $R\in\N$ there exist $\eps=\eps(d)>0$, $n_0^*=n_0^*(d,R,\lambda)\in\N$ and $M=M(d,R)\in\N$ such that the following holds. Suppose that $\G$ is a connected, locally finite vertex-transitive graph such that
\begin{equation}\label{eq:trof.one.scale}
\beta_\G(n)\le\eps n^{d+1}\beta_\G(1)
\end{equation}
for some $n\ge n_0^*$. Let $o\in\G$, and let $G\le\Aut(\G)$ be a transitive subgroup. Then there is a normal subgroup $H\normal G$ such that
\begin{enumerate}[label=(\roman*)]
\item\label{item:c.i} every fibre of the projection $\G\to\G/H$ has diameter at most $n^\lambda$;
\item\label{item:c.ii} $G_{\G/H}=G/H$;
\item\label{item:c.iii} the group $G_{\G/H}$ has a normal $d$-nilpotent subgroup with index at most $g(d)$;
\item\label{item:c.iv} the set $S=\{g\in G_{\G/H}:d_{\G/H}(g(H(o)),H(o))\le1\}$ is a finite symmetric generating set for $G_{\G/H}$;
\item\label{item:c.v} every vertex stabiliser of the action of $G_{\G/H}$ on $\G/H$ has cardinality $O_{d,\lambda}(1)$; and
\item\label{item:c.vi} the quotient map $\G\to\G/H$ is a $(1,n^\lambda)$-quasi-isometry, and the map $\Cay(G_{\G/H},S)\to\G/H$, $gH\mapsto gH(o)$ is a $(1,1)$-quasi-isometry.
\end{enumerate}
\end{corollary}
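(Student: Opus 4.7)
The plan is to follow the proof of the corresponding result \cite[Corollary 2.4]{ttTrof}, substituting our sharper \cref{thm:detailed.fine.scale} for the use of the older \cref{thm:bgt.gromov} in that argument. The new theorem automatically upgrades the two conclusions that lacked optimal bounds before: the nilpotence becomes sharp $d$-nilpotent and the index becomes sharp $g(d)$.

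First I would translate the hypothesis into a group-theoretic polynomial-volume condition. Fix $o\in\G$ and set $S=\{g\in G:d_\G(g(o),o)\le1\}$. Since $\G$ is connected and $G$ acts transitively, $S$ is a finite symmetric generating set for $G$ containing the identity, and the orbit-stabiliser relation gives $|S^m|=|G_o|\beta_\G(m)$ for every $m\in\N$. Hypothesis \eqref{eq:trof.one.scale} therefore becomes $|S^n|/|S|\le\eps n^{d+1}$, so \cref{thm:detailed.fine.scale} applies to $(G,S,n)$ with the given $R$, producing normal Lie progressions $P_0,\ldots,P_{d'}$ at scales $r_0<\cdots<r_{d'}$ with $r_0\gg_{d,R}n^{1/2}$, dimensions $\dim P_i\le d$, symmetry groups $H_i\normal G$ contained in $S^{r_i}$, coset systems $X_i$ with $|X_i|\le g(\dim P_i)\le g(d)$, and the nesting $X_iP_i^{\lfloor m/r_i\rfloor}\subseteq S^m\subseteq X_iP_i^{O_d(m/r_i)}$.

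Next I would select $i^\star$ to be the largest index with $r_{i^\star}\le n^\lambda$; this is well defined for $\lambda>1/2$ once $n_0^*(d,R,\lambda)$ is chosen so that the lower bound $r_0\gg_{d,R}n^{1/2}$ still permits $r_0\le n^\lambda$ (for $\lambda\le1/2$ one takes $n_0^*$ large enough to make the hypothesis vacuous). Set $H=H_{i^\star}$. Then $H\normal G$ directly; the orbits of $H$ on $\G$ have diameter at most $2r_{i^\star}\le 2n^\lambda$ since $H\subseteq S^{r_{i^\star}}$ and $S$ moves $o$ a unit distance, giving \ref{item:c.i}. The quotient $\la P_{i^\star}\ra/H_{i^\star}$ is $d$-nilpotent with $[G:\la P_{i^\star}\ra]=|X_{i^\star}|\le g(d)$ by conclusions \ref{conc:S=XiPi}, \ref{conc:|X|}, \ref{conc:cosets}, \ref{conc:dimPi} of \cref{thm:detailed.fine.scale}, proving \ref{item:c.iii}. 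The graph-theoretic conclusions \ref{item:c.ii}, \ref{item:c.iv} and \ref{item:c.vi} follow from the normality of $H$ together with the approximation $X_{i^\star}P_{i^\star}^{\lfloor m/r_{i^\star}\rfloor}\subseteq S^m\subseteq X_{i^\star}P_{i^\star}^{O_d(m/r_{i^\star})}$ modulo $H$, by the same bookkeeping with quotient-graph actions as in \cite[\S3--4]{ttTrof}.

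The main obstacle will be the vertex-stabiliser bound \ref{item:c.v}. An element $g\in G_{\G/H}$ fixing $H(o)$ is represented by some $g_0\in G$ with $g_0(o)\in H(o)$, hence $g_0\in S^{r_{i^\star}}$; modulo $H$, the set $S^{r_{i^\star}}\cap\la P_{i^\star}\ra H/H$ has cardinality $O_{d,\lambda}(1)$ via \cref{lem:upper-tri.doubling} applied to the Lie progression $P_{i^\star}$, and one multiplies by $|X_{i^\star}|\le g(d)$ to account for the cosets. This is exactly the computation performed in \cite{ttTrof} and transfers unchanged; it is essential here that $H$ is the symmetry group of a Lie progression rather than a generic normal subgroup, so that the fibre structure of $S^{r_{i^\star}}$ over $\G/H$ is rigidly controlled. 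Modulo this verification, the proof is a direct substitution of the finer theorem into the established argument.
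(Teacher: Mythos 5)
Your proposal fails at the very first step. You define $S=\{g\in G:d_\G(g(o),o)\le1\}$ and assert $|S^m|=|G_o|\beta_\G(m)$, planning to apply \cref{thm:detailed.fine.scale} directly to $(G,S)$. But $G$ is an arbitrary transitive subgroup of $\Aut(\G)$, and for a locally finite vertex-transitive graph $\G$ the stabiliser $G_o$ can be \emph{infinite} (it is a compact open subgroup, hence profinite, and can be uncountable even for polynomial-growth $\G$ --- e.g.\ the ``thickened path'' $\Z\times\{0,\dots,k\}$ with $(m,i)\sim(m',j)\iff|m-m'|=1$, whose full automorphism group contains $\Sym(k+1)^\Z$). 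Then $S$ is not finite and \cref{thm:detailed.fine.scale} cannot be applied. Converting the graph hypothesis into a tractable group hypothesis is precisely the hard content of the cited \cite[Corollary 2.4]{ttTrof}, which you cannot simply bypass: it is that result which first manufactures the normal subgroup $H$ so that the quotient $G_{\G/H}$ has \emph{finite} vertex stabilisers --- conclusion \ref{item:c.v} --- and hence a finite generating set $S$ to which the group theorem can be applied.

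This also means the logical order in your sketch is reversed. You propose to \emph{read off} $H$ as the symmetry group $H_{i^\star}$ of one of the Lie progressions produced by \cref{thm:detailed.fine.scale}, but those progressions are only available after one has a finite generating set, i.e.\ after $H$ already exists. The paper's proof is shorter and avoids this circularity: it invokes \cite[Corollary 2.4]{ttTrof} as a black box to get $H$ satisfying \ref{item:c.i}, \ref{item:c.ii}, \ref{item:c.iv}, \ref{item:c.v} (and \ref{item:c.vi} via \cite[Lemmas 5.2 \& 5.3]{ttTrof}), then uses \cref{lem:growth.VT->Cay} to transport the bound $\beta_\G(n)\le\eps n^{d+1}\beta_\G(1)$ to the inequality $|S^{\lceil n/2\rceil}|/|S|\le\eps n^{d+1}$ in the quotient group $G_{\G/H}$, and finally applies \cref{thm:detailed.fine.scale} there to upgrade the nilpotence and index bounds and obtain \ref{item:c.iii}. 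A secondary problem in your argument for \ref{item:c.v}: the stabiliser of $H(o)$ in $G/H$ is $G_o/(G_o\cap H)$, so bounding it requires control over $[G_o:G_o\cap H]$, which is not something the size of $P_{i^\star}$ or $X_{i^\star}$ alone gives you; your appeal to \cref{lem:upper-tri.doubling} does not engage with this quantity.
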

\begin{remark}
In the proof of \cref{cor:trof}, in order to obtain conclusion \ref{item:c.iii} we apply \cref{thm:detailed.fine.scale} to the ball $S^{\lceil n/2\rceil}$. Note, therefore, that one can also deduce that the group $G_{\G/H}$ and the generating set $S$ satisfy the other conclusions of \cref{thm:detailed.fine.scale}. Moreover, using \cref{lem:growth.VT->Cay}, below, one can essentially replace $|S^m|$ in any of those conclusions with $\beta_\G(m)$. We omit the (routine) details for brevity, but see e.g. the proof of \cref{cor:tao}, below. In any case, we plan to investigate the structure of balls with polynomial volume in vertex-transitive graphs in greater detail in forthcoming work \cite{ttLie}.
\end{remark}
The second result of this section is the following more detailed version of \cref{thm:trof.finite}.

\begin{corollary}\label{cor:trof.finite}
For every $d\in\N_0$ there exist $A=A(d)>0$ and $n_0^*=n_0^*(d)\in\N$ such that the following holds. Suppose $\G$ is a connected, finite vertex-transitive graph such that $\diam(\G)\ge n_0$ and
\begin{equation}\label{eq:trof.finite.hyp}
\diam(\G)\ge A\left(\frac{|\G|}{\beta_\G(1)}\right)^\frac2{d+2}.
\end{equation}
Let $o\in\G$, and let $G\le\Aut(\G)$ be a transitive subgroup. Then there is a normal subgroup $H\normal G$ such that
\begin{enumerate}[label=(\roman*)]
\item\label{item:c.i.fin} every fibre of the projection $\G\to\G/H$ has diameter at most $O_d(\diam(\G)^\frac{1}{2})$;
\item\label{item:c.ii.fin} $G_{\G/H}=G/H$;
\item\label{item:c.iii.fin} $G_{\G/H}$ has an abelian subgroup of rank at most $d$ and index at most $g(d)$;
\item\label{item:c.iv.fin} the set $S=\{g\in G_{\G/H}:d_{\G/H}(g(H(o)),H(o))\le1\}$ is a symmetric generating set for $G_{\G/H}$;
\item\label{item:c.v.fin} every vertex stabiliser of the action of $G_{\G/H}$ on $\G/H$ has cardinality $O_d(1)$; and
\item\label{item:c.vi.fin} the quotient map $\G\to\G/H$ is a $(1,O_d(\diam(\G)^{\frac{1}{2}}))$-quasi-isometry, and the map $\Cay(G_{\G/H},S)\to\G/H$, $gH\mapsto gH(o)$ is a $(1,1)$-quasi-isometry.
\end{enumerate}
\end{corollary}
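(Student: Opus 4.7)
The plan is to deduce \cref{cor:trof.finite} by combining \cref{cor:trof} with \cref{cor:bt}, paralleling the proof of \cite[Corollary 2.5]{ttTrof} but exploiting our sharpened inputs. We first translate the diameter hypothesis \eqref{eq:trof.finite.hyp} into a polynomial-volume condition of the form required by \cref{cor:trof}. Setting $n=\lceil\diam(\G)^{1/2}\rceil$ and picking vertices along a geodesic of length $\diam(\G)$ at spacings of $2n+1$, the balls of radius $n$ around these vertices are pairwise disjoint and by vertex-transitivity each has size $\beta_\G(n)$, so that
\[
|\G|\ge\beta_\G(n)\left\lfloor\frac{\diam(\G)}{2n+1}\right\rfloor\gg\beta_\G(n)\diam(\G)^{1/2}.
\]
Combined with \eqref{eq:trof.finite.hyp} this yields $\beta_\G(n)\ll A^{-(d+2)/2}n^{d+1}\beta_\G(1)$, so choosing $A=A(d)$ large enough verifies the hypothesis of \cref{cor:trof}.

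Apply \cref{cor:trof} with any fixed $\lambda\in(0,1)$, say $\lambda=1/2$, to obtain a normal subgroup $H_0\normal G$ for which the fibres of $\G\to\G/H_0$ have diameter $\ll\diam(\G)^{1/4}$, $G_{\G/H_0}=G/H_0$, vertex stabilisers of the action of $G_{\G/H_0}$ on $\G/H_0$ have size $O_d(1)$, and the natural map $\Cay(G_{\G/H_0},S_0)\to\G/H_0$ is a $(1,1)$-quasi-isometry, where $S_0$ is as in \cref{cor:trof}\,\ref{item:c.iv}. Now apply \cref{cor:bt} to the finite group $G_{\G/H_0}$ with respect to $S_0$: its hypothesis follows by translating \eqref{eq:trof.finite.hyp} through the two quasi-isometries of \cref{cor:trof}\,\ref{item:c.vi} and the $O_d(1)$ stabiliser bound, which together yield $\diam_{S_0}(G_{\G/H_0})\asymp\diam(\G/H_0)\asymp\diam(\G)$ (the second comparison using that $\G\to\G/H_0$ is a $(1,n^\lambda)$-quasi-isometry with $n^\lambda\ll\diam(\G)^{1/4}$) while a routine orbit-counting argument gives $|G_{\G/H_0}|/|S_0|\asymp_d|\G/H_0|/\beta_{\G/H_0}(1)\le2|\G|/\beta_\G(1)$. \cref{cor:bt} therefore produces a normal subgroup $K\normal G_{\G/H_0}$ contained in $S_0^{O_d(\diam(\G)^{1/2})}$ such that $G_{\G/H_0}/K$ admits an abelian subgroup of rank at most $d$ and index at most $g(d)$.

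Finally, let $H\normal G$ be the kernel of the $G$-action on $(\G/H_0)/K$; since $H$ contains the preimage $\pi^{-1}(K)$ of $K$ under the quotient $\pi:G\to G_{\G/H_0}$, and the latter already acts transitively on each of its orbits in $\G$, the $H$-orbits coincide with the $\pi^{-1}(K)$-orbits, so $\G/H=(\G/H_0)/K$ as graphs. Conclusions \ref{item:c.ii.fin} and \ref{item:c.iv.fin}--\ref{item:c.vi.fin} follow directly from this identification together with the corresponding conclusions of \cref{cor:trof} applied to $\G/H_0$; \ref{item:c.iii.fin} is inherited by the quotient $G/H$ of $G_{\G/H_0}/K$; and for \ref{item:c.i.fin}, the fibres of $\G/H_0\to(\G/H_0)/K$ have diameter $O_d(\diam(\G)^{1/2})$ since $K\subseteq S_0^{O_d(\diam(\G)^{1/2})}$ and $\Cay(G_{\G/H_0},S_0)\to\G/H_0$ is a $(1,1)$-quasi-isometry, so composing with the $O_d(\diam(\G)^{1/4})$ bound on the fibres of $\G\to\G/H_0$ yields $O_d(\diam(\G)^{1/2})$ overall. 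The main obstacle is the bookkeeping required to verify the hypothesis of \cref{cor:bt} with the correct $d$-dependence and to confirm that the composite $H$ retains the faithfulness condition $G_{\G/H}=G/H$; once one carefully tracks how the quasi-isometries from \cref{cor:trof}\,\ref{item:c.vi} interact with stabilisers and generating sets, all remaining verifications are routine.
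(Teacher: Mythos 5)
Your proof is correct and follows the same two-step strategy as the paper: reduce to a finite Cayley graph via \cref{cor:trof}, then apply \cref{cor:bt} and lift back to $G$. The only (harmless) differences are that you apply \cref{cor:trof} at scale $n\approx\diam(\G)^{1/2}$ rather than $n=\diam(\G)$, and that you define $H$ as the kernel of the $G$-action on $(\G/H_0)/K$ rather than pulling $K$ back to $G$ and invoking \cite[Lemma~3.6]{ttTrof} to restore the faithfulness condition \ref{item:c.ii.fin}; your remaining verifications mirror the paper's appeals to Lemmas 3.4, 3.5, 4.8, 5.2 and 5.3 of \cite{ttTrof}.
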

The fact that the quasi-isometries defined in \ref{item:c.vi.fin} combine to show that $\G$ is quasi-isometric to $G_{\G/H}$ as required for \cref{thm:trof.finite} is a straightforward exercise; see e.g. \cite[Lemma 5.1]{ttTrof}.

\medskip

A straightforward but technically very useful fact that we use often in the proofs of \cref{cor:trof,cor:trof.finite} is that if $\G$ is a vertex-transitive graph, $o\in\G$ is a vertex and $G\le\Aut(\G)$ is a transitive subgroup then the set $S=\{g\in G:g(o)\in B_\G(o,1)\}$ is a symmetric generating set for $G$, and in fact $S^n=\{g\in G:g(o)\in B_\G(o,n)\}$ for all $n\in\N$ \cite[Lemma 3.4]{ttTrof}. In particular, this implies that if we write $\rad_o(A)=\inf\{n\in\N_0:A\subseteq B_\G(o,n)\}$ for a subset $A\subseteq\G$ then we have
\begin{equation}\label{eq:orbit.diameter}
\rad_o(H(o))=\inf\{k\in\N:H\subseteq S^k\}
\end{equation}
for an arbitrary subgroup $H\le G$, a fact that we will often use without explicit mention.

We obtain the improved conclusions in \cref{cor:trof,cor:trof.finite} by using the following lemma to compare the balls in the graphs $\G$ to those in the groups $G_{\G/H}$.
\begin{lemma}\label{lem:growth.VT->Cay}
Suppose $\G$ is a vertex-transitive graph, $o\in\G$ is a vertex, $G\le\Aut(G)$ is a transitive subgroup, and $H\normal G$ is a normal finite subgroup, and set $S=\{g\in G_{\G/H}:d_{\G/H}(g(H(o)),H(o))\le1\}$. Then
\[
\frac{|(G_{\G/H})_{H(o)}|}{|H(o)|}\beta_\G(m)\le|S^m|\le\frac{|(G_{\G/H})_{H(o)}|}{|H(o)|}\beta_\G(m+\rad_o(H(o)))
\]
for every $m\in\N$.
\end{lemma}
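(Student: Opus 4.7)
The plan is to convert $|S^m|$ into a count of fibres in $\G/H$ via orbit--stabiliser, and then sandwich that count between $\beta_\G(m)$ and $\beta_\G(m+\rad_o(H(o)))$ using the two basic facts that the quotient map $\G\to\G/H$ is distance non-increasing and that every fibre of this map has diameter at most $\rad_o(H(o))$ in $\G$.

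First I would note the analogue of the standard identity $S^n=\{g\in G:g(o)\in B_\G(o,n)\}$ at the level of $G_{\G/H}$ acting on $\G/H$: by the same argument that gives \eqref{eq:orbit.diameter}, the set $S^m$ is exactly $\{g\in G_{\G/H}:d_{\G/H}(g(H(o)),H(o))\le m\}$. Applying orbit--stabiliser to the transitive action of $G_{\G/H}$ on $\G/H$ then yields
\[
|S^m|=|(G_{\G/H})_{H(o)}|\cdot|B_{\G/H}(H(o),m)|,
\]
so it suffices to show that $|H(o)|\cdot|B_{\G/H}(H(o),m)|$ lies between $\beta_\G(m)$ and $\beta_\G(m+\rad_o(H(o)))$. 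Using normality of $H$ in $G$ and the fact that $G$ acts transitively, every fibre $H(x)=gH(o)$ (for $g\in G$ with $g(o)=x$) has the same cardinality $|H(o)|$, so $|H(o)|\cdot|B_{\G/H}(H(o),m)|$ counts the vertices of $\G$ lying in fibres at $\G/H$-distance at most $m$ from $H(o)$.

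The lower bound is then essentially immediate: the projection $\G\to\G/H$ is a graph homomorphism, hence distance non-increasing, so any $v\in B_\G(o,m)$ satisfies $d_{\G/H}(H(o),H(v))\le d_\G(o,v)\le m$, and therefore $B_\G(o,m)$ is contained in the union of the fibres indexed by $B_{\G/H}(H(o),m)$. This gives $\beta_\G(m)\le|H(o)|\cdot|B_{\G/H}(H(o),m)|$.

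The upper bound is where the real work lies, and amounts to the converse comparison
\[
d_\G(o,v)\le d_{\G/H}(H(o),H(v))+\rad_o(H(o))
\]
for every $v\in\G$. I would prove this by a lifting argument: given a geodesic $H(o)=H(x_0),H(x_1),\dots,H(x_k)=H(v)$ in $\G/H$, inductively build a path $o=w_0\sim w_1\sim\cdots\sim w_k$ in $\G$ with $w_i\in H(x_i)$, by at each step choosing a witness edge $(a_i,b_i)$ with $a_i\in H(x_i)$, $b_i\in H(x_{i+1})$ and then using the fact that $H$ acts transitively on $H(x_i)$ together with normality of $H$ in $G$ to translate this edge so that its left endpoint is $w_i$; the right endpoint is then an element of $h_iH(x_{i+1})=H(h_i(x_{i+1}))=H(x_{i+1})$, which becomes $w_{i+1}$. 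This gives a vertex $w_k\in H(v)$ with $d_\G(o,w_k)\le k$; since $H$ is a subgroup, for $h_1,h_2\in H$ one has $d_\G(h_1(o),h_2(o))=d_\G(o,h_1^{-1}h_2(o))\le\rad_o(H(o))$, so $\diam_\G(H(o))\le\rad_o(H(o))$ and, by $G$-translation, $\diam_\G(H(v))\le\rad_o(H(o))$ as well. Thus $d_\G(w_k,v)\le\rad_o(H(o))$, proving the claim. Applying it termwise shows that every fibre in $B_{\G/H}(H(o),m)$ is contained in $B_\G(o,m+\rad_o(H(o)))$, which gives the upper bound. The main obstacle is the lifting step, which uses normality of $H$ in a slightly subtle way; everything else is bookkeeping with orbit--stabiliser.
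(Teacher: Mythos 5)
Your argument is correct and self-contained, whereas the paper discharges this lemma by citing the proof of Proposition 9.1 of the authors' earlier paper \cite{ttTrof} with one cosmetic change; the argument you give (orbit--stabiliser to reduce $|S^m|$ to a fibre count, distance non-increase of the quotient map for the lower bound, and a fibre path-lifting argument plus the bound $\diam_\G(H(v))\le\rad_o(H(o))$ for the upper bound) is exactly the expected content of that cited proof. One small attribution slip: in the inductive lifting step, normality of $H$ is not actually needed (you have $h_i\in H$, so $h_i(H(x_{i+1}))=H(x_{i+1})$ trivially, as $H$-orbits are $H$-invariant); where normality \emph{is} genuinely used is afterwards, to conclude $g(H(o))=H(g(o))$ for arbitrary $g\in G$, which gives both that all fibres have cardinality $|H(o)|$ and that $\diam_\G(H(v))=\diam_\G(H(o))$. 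This doesn't affect the validity of your proof.
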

\begin{proof}
This is essentially given by the proof of \cite[Proposition 9.1]{ttTrof}; the only required modification is to replace the $O_K(n)$ appearing in (9.3) of that proof with $\rad_o(H(o))$ (indeed, that argument takes place in the setting of specific $\G$, $o$, $G$ and $H$, and the $O_K(n)$ arises precisely because that is the bound we have on $\rad_o(H(o))$ in that setting).
\end{proof}

\begin{proof}[Proof of \cref{cor:trof}]
Provided $n_0^*$ is large enough, \cite[Corollary 2.4]{ttTrof} implies the existence of $H\normal G$ satisfying conclusions \ref{item:c.i}, \ref{item:c.ii}, \ref{item:c.iv} and \ref{item:c.v}. Conclusion \ref{item:c.vi} follows from \cite[Lemmas 5.2 \& 5.3]{ttTrof}. Finally, to obtain conclusion \ref{item:c.iii}, note that provided $n_0^*$ is sufficiently large to force $n/2>n^\lambda$ we have by \cref{lem:growth.VT->Cay} that
\[
\frac{|S^{\lceil n/2\rceil}|}{|S|}\le\frac{\beta_\G(n)}{\beta_\G(1)}\le\eps n^{d+1}.
\]
Provided $\eps$ is small enough in terms of $d$, we may therefore apply \cref{thm:detailed.fine.scale} to the ball $S^{\lceil n/2\rceil}$.
\end{proof}

\begin{proof}[Proof of \cref{cor:trof.finite}]
Writing $\gamma=\diam(\G)$, the hypothesis \eqref{eq:trof.finite.hyp} translates as
\[
\beta_\G(\gamma)\le A^{-(d+2)/2}\gamma^{(d+2)/2}\beta_\G(1).
\]
Provided $A$ is small enough and $\gamma$ is large enough in terms of $d$ only, \cref{cor:trof} therefore gives a normal subgroup $H_0\normal G$ such that
\begin{enumerate}[label=(\alph*)]
\item\label{item:c.a} every fibre of the projection $\G\to\G/H_0$ has diameter at most $\gamma^{1/2}$;
\item\label{item:c.b} $G_{\G/H_0}=G/H_0$;
\item\label{item:c.c} the set $S_1=\{g\in G_{\G/H_0}:d_{\G/H_0}(g(H_0(o)),H_0(o))\le1\}$ is a symmetric generating set for $G_{\G/H_0}$; and
\item\label{item:c.d} every vertex stabiliser of the action of $G_{\G/H_0}$ on $\G/H_0$ has cardinality $O_d(1)$.
\end{enumerate}
We will use property \ref{item:c.b} implicitly throughout this proof in order to interchange $G_{\G/H_0}$ and $G/H_0$. Setting $S_0=\{g\in G:d(g(e),e)\le1\}$, property \ref{item:c.a} implies that
\begin{equation}\label{eq:trof.fin.H0}
H_0\subseteq S_0^{\lfloor\gamma^{1/2}\rfloor}.
\end{equation}
Write $\pi:G\to G/H_0$ and $\psi:\G\to\G/H_0$ for the quotient maps, and note that \cite[Lemma 3.7]{ttTrof} combines with \eqref{eq:trof.fin.H0} to imply that $\psi^{-1}(B_{\G/H_0}(H_0(o),\gamma-\lfloor\gamma^{1/2}\rfloor-1))\subseteq B_\G(o,\gamma-1)\subsetneqq\G$,
and hence that $\diam(\G/H_0)\ge\gamma-\gamma^{1/2}$.  It follows from \eqref{eq:orbit.diameter} that $\diam_{S_1}(G/H_0)=\diam(\G/H_0)$, and so as long as $\gamma\ge4$ we conclude that
\begin{equation}\label{eq:diam.quot.large}
\gamma\ge\diam_{S_1}(G/H_0)\ge\gamma-\gamma^{1/2}\ge\frac\gamma2.
\end{equation}
On the other hand, \cite[Lemma 4.8]{ttTrof} implies that
\[
\frac{|G|}{|S_0|}=\frac{|\G|}{\beta_{\G}(1)},
\]
whilst \cite[Lemma 3.5]{ttTrof} implies that $|S_1|\ge|S_0|/|H_0|$, so \eqref{eq:trof.finite.hyp} and \eqref{eq:diam.quot.large} combine to imply that
\[
\diam_{S_1}(G/H_0)\ge\frac{A}2\left(\frac{|G/H_0|}{|S_1|}\right)^\frac2{d+2}.
\]
Provided $A$ is small enough and $\gamma$ is large enough in terms of $d$, it therefore follows from \cref{cor:bt}, \eqref{eq:diam.quot.large} and \cite[Lemma 3.5]{ttTrof} that there exists $H\normal G$ satisfying $H_0\subseteq H\subseteq S_0^{O_d(\gamma^{1/2})}H_0\subseteq S_0^{O_d(\gamma^{1/2})}$ such that $G/H$ has an abelian subgroup of rank at most $d$ and index at most $g(d)$, giving conclusions \ref{item:c.i.fin} and \ref{item:c.iii.fin} of the corollary. We may also assume that conclusion \ref{item:c.ii.fin} holds by \cite[Lemma 3.6]{ttTrof}, whilst conclusion \ref{item:c.iv.fin} follows from \cite[Lemma 3.4]{ttTrof}, and conclusion \ref{item:c.v.fin} follows from \ref{item:c.d} and \cite[Lemma 3.5]{ttTrof}. Finally, conclusion \ref{item:c.vi.fin} follows from \cite[Lemmas 5.2 \& 5.3]{ttTrof}.
\end{proof}

\subsection{Growth of balls with polynomial volume}
\begin{proof}[Proof of \cref{cor:tao}]
We start by proving that the corollary holds in the special case of a Cayley graph. More precisely, we show that there exist $\eps=\eps(d)>0$ and $n_0^*=n_0^*(d)\in\N$ such that if $S$ is a finite symmetric generating set of a group containing the identity such that $|S^n|\le \eps n^{d+1}|S|$ for some $n\ge n_0$ then there exists a non-decreasing continuous piecewise-monomial function $f:[1,\infty)\to[1,\infty)$ with $f(1)=1$ satisfying conditions \ref{item:degrees-rel}--\ref{item:increases}, and condition (i$'$) if $|S^n|\le \eps n^{d+1}$, such that
\begin{equation}\label{eq:tao.grp}
|S^m|\asymp_df(m/n)|S^n|\qquad\text{for all $m\ge n$}.
\end{equation}
For suitable choices of $n_0^*$ and $\eps$ we can apply \cref{thm:detailed.fine.scale} with $R=1$. Let $r_0,r_1,\ldots,r_{d'}$ and $P_0,P_1,\ldots,P_{d'}$ be the resulting integers and progressions, noting in particular that $d'\le d$. Note also that if we choose $\eps$ small enough then conclusion \ref{conc:hdim.bound} of \cref{thm:detailed.fine.scale} implies that if $|S^n|\le \eps n^{d+1}$ then $\hdim P_0\le d$. \cref{prop:growthfunction} implies that for each $i$ there exists a continuous, increasing, piecewise-monomial function $f_i$, with degree increasing, bounded below by $\dim P_i$, and bounded above by $\hdim P_i$, such that $|P_i^m|\ll_df_i(m)$ for all $m\in\N$ and $|P_i^m|\asymp_df_i(m)$ for all $m\le\inj P_i$. Since $f_i(\lambda x)\le\lambda^{\deg f_i}f_i(x)$ for all $\lambda>1$ and $x\ge1$, and since $r_{i+1}/r_i\ll_d\inj P_i$, we in fact have $|P_i^m|\asymp_df_i(m)$ for all $m\le r_{i+1}/r_i$. Conclusion \ref{conc:S=XiPi} of \cref{thm:detailed.fine.scale} and \cref{prop:growthfunction} then imply that $|S^m|\asymp_d|P_i^{\lfloor m/r_i\rfloor}|\asymp_df_i(m/r_i)$ for all integers $m$ satisfying $r_i\le m\le r_{i+1}$.

We now define $f$ inductively on each interval $[r_i/n,r_{i+1}/n]$, $i=0,1,\ldots,d'$, with the convention that $r_{d'+1}/n=\infty$. First, define $f(1)=1$, and define. Next, assuming $f$ has been defined up to $r_i/n$, for $x\in[r_i/n,r_{i+1}/n]$ define
\[
f(x)=\frac{f_i\left(\frac{xn}{r_i}\right)}{f_i(1)}f\left(\frac{r_i}{n}\right).
\]
This $f$ is then a non-decreasing continuous piecewise-monomial function with $f(1)=1$ satisfying \eqref{eq:tao.grp}. The pieces of $f$ have degree at most $\hdim P_0$, which is at most $\frac12d(d-1)+1$ as required, and at most $d$ if $|S^n|\le \eps n^{d+1}$. The only places where the degree of $f$ can decrease are at $r_i/n$ with $i=1,\ldots,d'$, and there are at most $d$ of these. Finally, since the degree of $f_i$ is increasing from something at least $\dim P_i$ to something at most $\hdim P_i$, the number of boundaries of $f$ across which the degree increases is at most $\sum_{i=0}^{d'}(\hdim P_i-\dim P_i)$. This in turn is at most $\sum_{i=1}^d(\frac12i(i-1)+1-i)=\sum_{i=1}^d(\frac12i(i-3)+1)$, and it is easy to verify by induction that this last sum is equal to $\frac16d^3-\frac12d^2+\frac13d$. This completes the proof in the Cayley case.

We now prove the general case. First, apply \cref{cor:trof} with $G=\Aut(\G)$ and $\lambda=1/2$, say, and $n_0^*>4$ so that $n/2>n^{1/2}$. Let $H$ be the resulting subgroup and $S$ the resulting set. \cref{lem:growth.VT->Cay} then implies that
\[
\frac{|S^{\lceil n/2\rceil}|}{|S|}\le\frac{\beta_\G(n)}{\beta_\G(1)}\le\eps n^{d+1};
\]
by conclusion \ref{item:c.v} of \cref{cor:trof}, it also implies that
\[
|S^{\lceil n/2\rceil}|\ll_d\beta_\G(n).
\]
Provided $\eps$ is small enough in terms of $d$, we may therefore apply the Cayley case of the corollary to the ball $S^{\lceil n/2\rceil}$ and obtain a non-decreasing continuous piecewise-monomial function $f:[1,\infty)\to[1,\infty)$ with $f(1)=1$ satisfying conditions \ref{item:degrees-rel}--\ref{item:increases}, and condition (i$'$) if $\beta_\G(n)\le \eps n^{d+1}$, such that
\[
|S^m|\asymp_df(2m/n)|S^{\lceil n/2\rceil}|
\]
for all $m\ge n/2$. Since \cref{lem:growth.VT->Cay} implies that
\[
\frac{|S^{m-\lfloor n/2\rfloor}|}{|S^n|}\le\frac{\beta_\G(m)}{\beta_\G(n)}\le\frac{|S^m|}{|S^{\lceil n/2\rceil}|}
\]
for all $m\ge n$, the corollary follows.
\end{proof}

\cref{cor:Tao.doubling} results from using \cite[Theorem 2.3]{ttTrof} in place of \cref{cor:trof} and using \cref{thm:doubling.multiscale} in place of \cref{thm:detailed.fine.scale} in the above argument, combined with an application of \cite[Corollary 1.4]{ttdoubling} to show that \cite[Theorem 2.3]{ttTrof} holds under the hypothesis $\beta_\G(2n)\le K\beta_\G(n)$.

\subsection{Uniform finite presentation for groups of polynomial growth}\label{sec:fin.pres}
In this section we prove \cref{cor:fin.pres.poly}. We also recover \cref{thm:fin.pres} with bounds that do not depend on the size of the generating set, as follows.
\begin{corollary}\label{cor:fin.pres}
For each $K>0$ there exists $n_0^*=n_0^*(K)\in\N$ such that if $G$ is a group and $S$ is a finite symmetric generating set for $G$ containing the identity and satisfying $|S^{2n}|\le K|S^n|$ for some integer $n\ge n_0^*$ then
\[
\#\Bigl\{m\in \mathbb{N} : m\ge \log_2n \text{ and $(G,S)$ has a new relation on scale $m$} \Bigr\}\ll^*_K1.
\]
\end{corollary}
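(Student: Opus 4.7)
The plan is to apply \cref{thm:doubling.multiscale} and argue that new relations can only appear inside a bounded (in $K$) number of narrow windows of scales, one window near each transition between successive Lie progressions in the conclusion of that theorem. First I would apply \cref{thm:doubling.multiscale} with a sufficiently large constant $R = R(K)$ to obtain a sequence of scales $n \le r_0 < r_1 < \cdots < r_d$ with $d \ll^*_K 1$, a set $X \subseteq S^{k_2}$ with $|X| \ll_K 1$, and normal Lie progressions $P_0, \ldots, P_d$ of dimension $\ll^*_K 1$ and class $\le 6\log_2 K$ such that $XP_i^{\lfloor m/r_i\rfloor} \subseteq S^m \subseteq XP_i^{O^*_K(m/r_i)}$ for all $m \ge r_i$, with $\inj P_i \asymp_{K,R} r_{i+1}/r_i$ for $i < d$ and $\inj P_d = \infty$.

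The key claim is that there is a constant $C = C(K)$ such that, for each $i = 0, 1, \ldots, d$, the Cayley graph $\G(G,S)$ is $(Cr_{i+1})$-simply connected (with the convention $r_{d+1} = \infty$, so that in particular $\G(G,S)$ is fully $(Cr_d)$-simply connected). Granted the claim, a new relation on scale $m$ with $m \ge \log_2 n$ can only occur when $2^m$ falls in one of the transition ranges $(Cr_i, Cr_{i+1}]$ whose logarithm contains a break, which confines such $m$ to $d+1 \ll^*_K 1$ intervals of width $O^*_K(1)$, yielding the stated bound.

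To prove the claim I would combine \cref{thm:k-sc.covering} with the fact that the lattice $\Gamma_i$ of $P_i$ sits in a simply connected nilpotent Lie group of bounded dimension and class, hence is finitely presented with $O^*_K(1)$ generators and $O^*_K(1)$ relations of bounded length given by the upper-triangular-form commutator identities. These relations pull back, through the projector $\pi_i$, to relations in $\la P_i\ra/H_i$; since $\inj P_i \ge R$, \cref{lem:local.hom.pullback} together with the rough-covering framework of \cref{sec:sc} lets us transfer them into relations of length $O^*_K(r_i)$ in $S$ modulo the finite subgroup $H_i \subseteq S^{O^*_K(r_i)}$. One then uses $X$ as coset representatives to lift the presentation from $\la P_i\ra$ to $G$, paying a further $O^*_K(1)$ relations of length $O^*_K(r_i)$ to encode the finite-index inclusion. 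When $i = d$ the hypothesis $\inj P_d = \infty$ means that these relations present $G$ itself, giving full simple connectedness.

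The main obstacle will be the careful bookkeeping in \cref{thm:k-sc.covering}: we must verify the rough-covering conditions \ref{enum:k-sc.lift}--\ref{enum:k-sc.biLip} with constants depending only on $K$, both when lifting presentations from $\Gamma_i$ through $\pi_i$ (where $\inj P_i \ge R$ is essential for \ref{enum:k-sc.biLip}) and when extending from $\la P_i\ra$ to $G$ using the translates $xP_i$. This is analogous in spirit to the use of \cref{thm:k-sc.covering} in the proof of \cref{prop:index.reduc.tf}, but here we must in addition track the scale at which each relation appears, so as to localise the new-relation windows to $O^*_K(1)$ scales near each $\log_2 r_i$ rather than spreading them out across the entire range $[\log_2 n, \log_2(Cr_d)]$.
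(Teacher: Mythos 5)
Your high-level strategy matches the paper's in spirit: apply \cref{thm:doubling.multiscale} and argue that new relations can only occur near the transition scales $\log_2 r_i$. However, your key claim is wrong as stated. You assert that $\G(G,S)$ is $(Cr_{i+1})$-simply connected for every $i=0,\ldots,d$; since a graph that is $k$-simply connected is automatically $k'$-simply connected for every $k'\geq k$, the conjunction of these assertions is just the $i=0$ case, i.e.\ that $\G(G,S)$ is $(Cr_1)$-simply connected. That would say there are \emph{no} new relations above scale $\log_2(Cr_1)$, which is false in general (take $G=\Z_{m_1}\times\Z_{m_2}$ with $m_1\ll m_2$ and standard generators: $r_1\asymp m_1$, but the relation $a_2^{m_2}=1$ lives at scale $\log_2 m_2$ and does not follow from relations of length $O(m_1)$). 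The correct localisation statement, which is what the paper proves, concerns the truncated presentation groups rather than $\G(G,S)$ itself: writing $\hat G_i=\la S\mid R_S(4\eta r_i)\ra$ with projection $\psi_i\colon\hat G_i\to G$, one shows $\ker\psi_i\cap\hat S^{\lfloor\alpha r_{i+1}\rfloor}=\{1\}$ for some $\alpha=\alpha(K)>0$, which says that relations of length at most $\alpha r_{i+1}$ already follow from relations of length at most $4\eta r_i$; this confines new relations above $\log_2(4\eta r_0)$ to the windows $\log_2 r_i+[\log_2\alpha,\log_2(4\eta)]$.

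Your proposed mechanism for proving the claim is also genuinely different from the paper's. You would construct an explicit presentation of $G$ by combining the upper-triangular relations presenting $\Gamma_i$, a presentation of the finite extension by $H_i$, and relations from the coset representatives $X$. The paper never writes down a presentation of $\Gamma_i$ or $H_i$: it applies \cref{prop:index.reduc.tf} to the lift $\hat P_i$ in $\hat G_i$, obtaining $\inj\hat P_i=\infty$ and $\hat X\hat P_i^m\subseteq\hat S^{mr_i}\subseteq\hat X\hat P_i^{2\eta m}$, and the injectivity of $\psi_i$ on $\hat S^{\lfloor\alpha r_{i+1}\rfloor}$ is then a short three-case analysis on elements $a=\hat xp$ with $x\in X$, $p\in\hat P_i$ (whether $x\neq1$; whether $a\in\hat H_i$; whether $a\in\hat P_i\setminus\hat H_i$). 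If you wanted to push through a presentation-stitching argument you would first have to reformulate the claim as above, and then handle, for instance, the fact that $H_i\subseteq S^{r_i}$ only bounds the diameter of $H_i$, not the length of relators needed to present the extension $1\to H_i\to\la P_i\ra\to\la P_i\ra/H_i\to1$, and the fact that $\pi_i$ is only a local homomorphism up to the injectivity radius of $P_i$.
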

There is considerable overlap between the proofs of \cref{cor:fin.pres.poly,cor:fin.pres}, so we prove them simultaneously.
\begin{proof}[Proof of \cref{cor:fin.pres.poly,cor:fin.pres}]
In the case of \cref{cor:fin.pres}, let $d=\exp(\exp(O(K^2)))$ be the bound on the dimension of the progressions given by \cref{thm:doubling.multiscale}, and let $k$ be the number $k_2$ appearing in \cref{thm:bgt.doubling}. In the case of \cref{cor:fin.pres.poly}, leave $d$ as defined in the statement of the corollary, and let $k=g(2d+1)$. Let $R$ be a quantity to be determined shortly, but depending only on $K$ in the case of \cref{cor:fin.pres} and on $d$ in the case of \cref{cor:fin.pres.poly}.

We start by applying \cref{thm:doubling.multiscale,thm:bgt.doubling} in the case of \cref{cor:fin.pres}, or \cref{thm:multiscale,thm:rel.hom.dim} in the case of \cref{cor:fin.pres.poly}. In the case of \cref{cor:fin.pres.poly}, we actually apply \cref{thm:multiscale} to the set $S^{n'}$ for some $n'\in\N$ satisfying $n\ll_d n'\le n$; we can do this as long as $\eps$ is chosen small enough, since it means that $|S^{n'}|\le\eps n^{d+1}|S|\ll_d\eps(n')^{d+1}|S|$. In either case, we obtain non-negative integers $C$, $d'$ and $\eta$ depending only on $K$ or $d$, with $\eta$ at least the quantity $M(C,d)$ appearing in \cref{prop:lift.prog}; a set $X\subseteq S^k$ of size at most $k$ containing the identity; natural numbers $r_0<r_1<\cdots<r_{d'}$ such that $r_i\mid r_{i+1}$ for each $i$, and such that $n\le r_0\ll^*_{K,R}n$ in the case of \cref{cor:fin.pres} and $(n')^{1/2}\le r_0\le n'<r_1$ in the case of \cref{cor:fin.pres.poly}; and Lie progressions $P_0,P_1,\ldots,P_{d'}$ in $C$-upper-triangular form with injectivity radius at least $R$, each generating the same normal subgroup of $G$, such that for each $i$ and every integer $m\ge r_i$ we have $XP_i^{\lfloor m/r_i\rfloor}\subseteq S^m\subseteq XP_i^{\eta m/r_i}$, such that distinct elements of $X$ belong to distinct cosets of $\la P_0\ra$, and such that $\inj P_{i}\ll_K\frac{r_{i+1}}{r_{i}}\ll_{K,R}\inj P_{i}$ in the case of \cref{cor:fin.pres}, $\inj P_{i}\ll_{d}\frac{r_{i+1}}{r_{i}}\ll_{d,R}\inj P_{i}$ in the case of \cref{cor:fin.pres.poly}, and $\inj P_{d'}=\infty$. Provided we take $\eps$ small enough in terms of $d$, $k$ and $\eta$ in the case of \cref{cor:fin.pres.poly} (and unconditionally in the case of \cref{cor:fin.pres}), we may also conclude that $\dim P_i\le d$ for each $i$ and $d'\le d$. Let $R=\max\{4\eta,r(k,k,d,\eta)\}$, where $r(k,k,d,\eta)$ is given by \cref{prop:index.reduc.tf}. We should point out that although $\eta$ is ineffective in the case of \cref{cor:fin.pres}, it is effective in the case of \cref{cor:fin.pres.poly}, and hence so is $R$. Moreover, this means that in the case of \cref{cor:fin.pres.poly} we may choose $n'=\lfloor n/4\eta\rfloor-1$, so that $4\eta r_0<n$.

Let $i\in\{1,\ldots,d'\}$, lift $P_i$ to the Lie progression $\hat P_i$ in the group $\hat G=\la S\mid R_S(4\eta r_i)\ra$, and write $\psi_i:\hat G\to G$ for the projection. Write $H_i$ for the symmetry group of $P_i$, and note that $\psi_i$ is injective on $\hat H_i$. \cref{prop:index.reduc.tf} implies that $\hat X\hat P^m\subseteq\hat S^{mr_i}\subseteq\hat X\hat P^{2\eta m}$ for each $m\in\N$, that distinct elements of $\hat X$ belong to distinct cosets of $\la\hat P_i\ra$, and that $\hat P_i$ has infinite injectivity radius.

We claim that there exists $\alpha\in(0,1]$ depending only on $K$ or $d$ such that 
$\ker\psi_i\cap\hat S^{\lfloor\alpha r_{i+1}\rfloor}=\{1\}$, where we interpret $\alpha r_{d'+1}$ to mean $\infty$. To see this, suppose $a\in\hat S^{\lfloor\alpha r_{i+1}\rfloor}\subseteq\hat S^{\lceil\alpha(r_{i+1}/r_i)\rceil r_i}$ with $a\ne1$. This implies in particular that $a=\hat xp$ for some $x\in X$ and $p\in\hat P_i^{2\eta\lceil\alpha(r_{i+1}/r_i)\rceil}$.
Setting $\alpha$ small enough we may therefore ensure that $p\in\hat P_i^{\inj P_i}$. Since $a\ne1$, we may consider the following three cases: $x\ne1$ (case 1); $a=p$ and $a\in\hat H_i$ (case 2); $a=p$ and $a\notin\hat H_i$ (case 3). In case 1 we have $\psi_i(a)\in x\la P_i\ra$, and hence $\psi_i(a)\ne1$ as claimed. In case 2 we have $\psi_i(a)\ne1$ as claimed because $\psi_i$ is injective on $\hat H_i$. Finally, in case 3, writing $\pi_i$ for the projector of $P_i$, we have $p\in\hat\pi_i(q)$ for some $q\in\tilde P_i^{\inj P_i}\setminus\{1\}$, hence $\psi_i(a)\in\pi_i(q)$ by \cref{prop:lift.prog}, and hence $\psi_i(a)\ne1$ as claimed.

This claim implies that if a word of length at most $\alpha r_{i+1}$ in the elements of $S$ evaluates to the identity in $G$ then it also evaluates to the identity in $\la S\mid R_S(4\eta r_i)\ra$. This implies that $(G,S)$ has no new relations on scales between $\log_2(4\eta r_i)$ and $\log_2(\alpha r_{i+1})$. In particular, any new relations on scales greater than $\log_2(4\eta r_0)$ must lie in one of the intervals $\log_2r_i+[\log_2\alpha,\log_2(4\eta)]$ with $i=1,\ldots,d'$. This confirms \cref{cor:fin.pres.poly}, since $4\eta r_0<n$ in that case. It also confirms \cref{cor:fin.pres}, since $r_0\ll^*_Kn$ in that case.
\end{proof}

 \footnotesize{
  \bibliographystyle{abbrv}
  \bibliography{biblio.bib}
  }
  
\end{document}